\begin{document}

\newtheorem{lemma}{Lemma}[subsection]
\newtheorem{corollary}{Corollary}[subsection]
\newtheorem{theorem}{Theorem}[subsection]
\newtheorem{definition}{Definition}[subsection]
\newtheorem{example}{Example}[subsection]
\newtheorem{remark}{Remark}[subsection]
\newtheorem{claim}{Claim}[subsection]
\newtheorem{proposition}{Proposition}[subsection]
\newtheorem{reference}{}

\def\GL{\mathop{\rm GL}\nolimits}
\def\PSL{\mathop{\rm PSL}\nolimits}
\def\PGL{\mathop{\rm PGL}\nolimits}
\def\SL{\mathop{\rm SL}\nolimits}
\def\SO{\mathop{\rm SO}\nolimits}
\def\PSO{\mathop{\rm PSO}\nolimits}
\def\Spin{\mathop{\rm Spin}\nolimits}
\def\PSp{\mathop{\rm PSp}\nolimits}
\def\Sp{\mathop{\rm Sp}\nolimits}
\def\Im{\mathop{\rm Im}\nolimits}
\def\Char{\mathop{\rm char}\nolimits}
\def\Lie{\mathop{\rm Lie}\nolimits}
\def\Ad{\mathop{\rm Ad}\nolimits}
\def\Inf{\mathop{\rm Inf}\nolimits}
\def\Irr{\mathop{\rm Irr}\nolimits}
\def\End{\mathop{\rm End}\nolimits}

\def \proof{{\bf Proof:~}}
\def \notation{{\bf Notation:~}}
\def \remark{{\bf Remark:~}}
\def \claim{{\underline{Claim:}}}
\def \examp{{\bf Example:~}}

\begin{titlepage}
\begin{center}

{\Huge\sc Exceptional Representations \vspace*{2.5ex}\\
       of Simple Algebraic Groups \vspace*{2.5ex}\\
       in Prime Characteristic\vspace*{8.5ex}\\}

{\Large Marin\^es Guerreiro \vspace{4ex}\\
Department of Mathematics\vspace{5ex}\\ 1997\vspace*{15ex}\\}

{\normalsize
\sc A thesis submitted to the University of Manchester \\ 
for the degree of Doctor of Philosophy \\ 
in the Faculty of Science}
\end{center}
\end{titlepage}

\newpage
\setcounter{page}{2}

\tableofcontents
\newpage
\listoftables

\newpage
\part*{Abstract}
\addcontentsline{toc}{section}{\protect\numberline{}{Abstract}}
\vspace{10ex}

Let $\,G\,$ be a simply connected simple algebraic
group over an algebraically closed field $\,K\,$ of
characteristic $\,p>0\,$ with root system $\,R,\,$
and let $\,{\mathfrak g}\,=\,{\cal L}(G)\,$ be its restricted Lie algebra. 
Let $\,V\,$ be a finite dimensional $\,{\mathfrak g}$-module over $\,K.\,$ 
For any point $\,v\,\in\,V$,
the {\it isotropy subalgebra} of $\,v\,$ in $\,\mathfrak g\,$ is 
$\,{\mathfrak g}_v\,=\,\{\,x\,\in\,{\mathfrak g}\,/\,x\cdot
v\,=\,0\,\}\,$

A restricted $\,{\mathfrak g}$-module $\,V\,$ is called {\bf exceptional} if
for each $\,v\in V\,$ the isotropy subalgebra $\,{\mathfrak g}_v\,$ contains 
a non-central element (that is, $\,{\mathfrak g}_v\,\not\subseteq\,
{\mathfrak z(\mathfrak g)}\,$).

This work is devoted to classifying irreducible exceptional
$\,\mathfrak g$-modules.
A necessary condition for a $\,\mathfrak g$-module to be exceptional
is found and a complete classification of modules over 
groups of exceptional type is obtained.
For modules over groups of classical type, the general problem is
reduced to a short list of unclassified modules.

The classification of exceptional modules is expected to have applications
in modular invariant theory and in classifying modular simple Lie
superalgebras.

\newpage

\part*{Declaration}
\vspace{10ex}
\begin{center}
No portion of the work referred to in this thesis has been \\
submitted in support of an application for another degree\\
or qualification of this or any other institution of learning.
\end{center}


\part*{Copyright}
\vspace{10ex}

Copyright in text of this thesis rests with the Authoress. Copies (by
any process) either in full, or of extracts, may be made {\bf only} in
accordance with instructions given by the Authoress and lodged in the
John Rylands University Library of Manchester. Details may be obtained from
the Librarian. This page must form part of any such copies made. 
Further copies (by any process) of copies made in accordance with such
instructions may not be made without the permission (in writing) of
the Authoress.

\newpage
\part*{The Authoress}
\addcontentsline{toc}{section}{\protect\numberline{}{The Authoress}}

\vspace{5ex}

I graduated from Universidade Federal de Santa Maria, Santa Maria -
RS, Brazil in 1988 with a Teaching degree in
Mathematics (Licenciatura em Matem\'atica). 
I was awarded an M. Sc. in Mathematics (Mestre em Matem\'atica) 
from Universidade
de Bras\'{\i}lia, Bras\'{\i}lia - DF, Brazil in 1991. My
M. Sc. Dissertation was ``On finite groups admitting automorphisms of
prime order with centralizers of controlled order''. Since
August 1991 I have been a Lecturer in the Department of Mathematics of the
Universidade Federal de Vi\c cosa, Vi\c cosa - MG, Brazil.
I commenced the studies for this degree at the University of
Manchester in October 1993, under Professor Brian Hartley's
supervision. This thesis is the result of research work done 
since January 1995 under the supervision of Dr. Alexander Premet.

\begin{center}
\vspace*{12ex}

{\bf\Large\it Dedicated to \vspace{2ex}\\

Professor Brian Hartley\vspace{2ex}\\

(In Memoriam)\vspace{12ex}}

{\it I thank God for the opportunity 

of working with a great Master,

even for such a short time.} 

\end{center}

\newpage

\part*{Acknowledgements}
\addcontentsline{toc}{section}{\protect\numberline{}{Acknowledgements}}

I thank God for more this achievement in my life.

I thank my supervisor, Dr. Alexander Premet, for teaching me
the Representation Theory of Algebraic
Groups and Lie Algebras, and for his guidance throughout the
development of this work.

I also thank Dr. Grant Walker for his support 
during the most difficult time of my course, for his constant encouragement, 
and for very useful hints on solving combinatorial problems.

I would like to thank my office mates Ersan Nal\c cacio\u{g}lu, 
Mohammad Ali Asadi and Suo Xiao for the
good chats that helped to keep our spirits up.

I have very good friends that have helped me to stay sane
whilst doing my Ph. D.. In particular, I would like to
thank Noreen Pervaiz for being such a good hearted person and an excellent
flatmate and Jackie Furby for her friendship. 

A very special thanks to David D. Garside for helping me to cope with
the stress 
and mainly for bringing so much happiness into my heart
and life over the last year or so. I also thank his family 
for being so caring and generous. 

I thank the Universidade Federal de Vi\c cosa (MG - Brazil)
and all my colleagues of the Departamento de Matem\'atica for their
support throughout these years. In particular, I would like to 
thank Jos\'e Geraldo Teixeira for always attending promptly my
numerous enquiries.

I am forever in debt to my good friend Maria de Lourdes Carvalho for her
extreme dedication on looking after all my things in Brazil, as if they
were hers. 

I also would like to thank Dr. Nora\'{\i} Rocco for his advice during
the moments of important decisions and for always keeping one eye at my
process at CAPES. 

Finally, I most gratefully acknowledge over four years of financial support
given to me by CAPES - MEC - Brazil.

\newpage

\vspace{20ex}
\part*{Chapter 1}
\part*{Introduction}
\addcontentsline{toc}{section}{\protect\numberline{1}Introduction}
\vspace*{8ex}

Let $\,G\,$ be a simply connected simple algebraic
group over an algebraically closed field $\,K\,$ of
characteristic $\,p>0,\,$ and let
$\,{\mathfrak g}\,=\,{\cal L}(G)\,$ be its restricted Lie algebra. 
Let $\,V\,$ be a finite dimensional vector space over $\,K.\,$
If $\,G\,$ acts on $\,V\,$ via a rational representation
$\,\pi\,:\,G\,\longrightarrow\,GL(V),\,$ then $\,{\mathfrak g}\,$ acts
on $\,V\,$ via the differential 
$\,d\pi\,:\,{\mathfrak g}\,\longrightarrow\,{\mathfrak
{gl}}\;(V)$ (see Section~\ref{tlaag}). 
For any point $\,v\,\in\,V$, the {\it isotropy subgroup} 
of $\,v\,$ in $\,G\,$ is $\,G_v\,=\,\{\,g\,\in\,G\,/\,
g\cdot v\,=\,v\,\}\,$ and the {\it isotropy subalgebra} of
$\,v\,$ in $\,\mathfrak g\,$ is 
$\,{\mathfrak g}_v\,=\,\{\,x\,\in\,{\mathfrak g}\,/\,x\cdot
v\,=\,0\,\}\,$ (see Section~\ref{sec3.2}). 

This thesis is devoted to classifying irreducible rational
$\,G$-modules $\,V\,$ for which the isotropy subalgebras of all points have at
least one non-central element. We call such modules {\bf exceptional}
(Definition~\ref{excpmod}).
For Lie algebras of exceptional type a complete classification
is obtained. For Lie algebras of classical type partial results are
given.

The classification of such modules is interesting for the following
reasons. 

For a rational action of an algebraic group $\,G\,$ on  
a vector space $\,V$, we say that a subgroup $\,H\,\subset\,G\,$ is an
{\it isotropy subgroup in general position} (ISGP for short) if $\,V\,$
contains a non-empty Zariski open subset $\,U\,$ whose points have
their isotropy subgroups conjugate to $\,H$. The points of $\,U\,$ are
called {\it points in general position}.
Isotropy subalgebras $\,{\mathfrak h}\,\subset\,{\mathfrak g}\,$ in general
position are defined similarly.

A rational $\,G$-action $\,G\longrightarrow \GL(V)\,$ 
is said to be {\it locally free} if the ISGP $\,H\,$ exists
and equals $\,\{ e\}$. Similarly, a linear $\,\mathfrak g$-action $\,\mathfrak
g\longrightarrow\mathfrak{gl}(V)\,$ is said to be {\it locally free} if the
isotropy subalgebra in general position $\,\mathfrak h\,$ exists
and equals $\,\{ 0\}$.

In \cite[1967]{ave}, E.M. Andreev, E.B. Vinberg and A.G. \'Elashvili
have given a necessary and sufficient condition for an irreducible
action of a complex simple Lie group $\,G\,$ on a vector space $\,V\,$
to have zero isotropy subalgebra in general position. 
Their method is based on the notion of the {\it index of the trace
form} associated to a representation and cannot be used
in positive characteristic, as in this case the trace form may vanish.

Let $\,G\,$ be a simple Lie group and $\,V\,$ be a finite  
dimensional vector space over a field of characteristic zero.
Define the {\it trace form} associated to the representation  
$\,\psi\,:\,G\longrightarrow\GL(V)\,$ by $\,\kappa_{\psi}(X,\,Y)\,=\, 
{\rm tr}\,({\rm d}\psi(X)\circ {\rm d}\psi(Y))\,$ for all $\,X,\,Y\in
G\,$. 

Let $\,G\,$ act irreducibly on $\,V.\,$ 
The {\it index} $\,l_V\,$ of the trace form associated to $\,\psi\,$
is given by
$\,{\rm tr}({\rm d}\psi(X))^2\,=\,l_V\,{\rm tr}({\rm ad} 
({\rm d}\psi(X)))^2\,$, for $\,X\in G.\,$ 
$\,l_V\,$ is a positive 
rational number, independent of $\,X\,$ \cite[p. 257]{ave}.
The index of a direct sum of irreducible representations is the sum of
the indices of the irreducible representations \cite[p. 260]{ave}.

The index of a one-dimensional representation of a simple Lie algebra
$\,L\,$ is $\,0\,$, and, for irreducible representations, it takes
value $\,1\,$ only for the adjoint representation
\cite[p. 259]{ave},~\cite[1.4.3]{kac1}. 
The sufficient condition obtained by Andreev, Vinberg and 
\'Elashvili establishes that if the index of the trace form of
$\,\pi\,$ is greater than 1, then $\,\pi\,$ is locally free
\cite[Theorem]{ave}. 

Table~\ref{table1} has been taken from~\cite{ave} and 
lists the irreducible representations of the simple Lie algebras
for which $\,0\,<\,l_V\,<\,1\,$ (up to graph automorphism).

\begin{table}[p]
\begin{tabular}{llllll}
\hline\hline
Type &    & Rank  &  Highest   & $\dim\,V$   & Index \\ 
     &    &         &  Weight  &  & \\ \hline\hline \vspace{.5ex}
A    & $\mathfrak{sl}_{n}$ & $\,n-1\,$ & $\,\omega_1\,$   
& $n$ & $\displaystyle\frac{1}{2n}$ \vspace{.5ex}\\
    & $\textstyle\bigwedge^2\mathfrak{sl}_{n}$ & $\,n-1\,$ &    
 $\,\omega_2\,$ & $\displaystyle\frac{n\,(n-1)}{2}$ &  
$\displaystyle\frac{n-2}{2n}$ \vspace{.5ex} \\
    & $S^2\mathfrak{sl}_{n}$ & $\,n-1\,$ & $\,2\omega_1\,$ 
& $\displaystyle\frac{n\,(n+1)}{2}$ &  
$\displaystyle\frac{n+2}{2n}$ \vspace{.5ex} \\
    & $\textstyle\bigwedge^3\mathfrak{sl}_{n}$ & $\,5,6,7\,$ &
     $\,\omega_3\,$  & $20,\,35,\,56$  &
$\displaystyle\frac{1}{2},\,\frac{5}{7},\,\frac{15}{16} $\\
& $n\,=\,6,\,7,\,8$ & & & &  \vspace{.5ex}\\ \hline \vspace{.5ex}
B   & $\mathfrak{so}_{n}$ & $\,\displaystyle\frac{n-1}{2}\,$ & $\,\omega_1\,$ 
& $n$ & $\displaystyle\frac{1}{n-2}$ \vspace{.5ex} \\ 
  & ${\rm spin}_{n}$ & $\,3,\,4,\,5,\,6\,$ & $\,\omega_{\ell}\,$    
& $8,\,16,\,32,\,64$  &
$\displaystyle\frac{1}{5},\,\frac{2}{7},\,\frac{4}{9},\,\frac{8}{11}$ \\
& $n\,=\,7,\,9,\,11,\,13$ & & & &  \vspace{.5ex}\\ \hline \vspace{.5ex}
C    & $\mathfrak{sp}_{n}$ & $\,\displaystyle\frac{n}{2}\,$ & $\,\omega_1\,$  
& $n$ & $\displaystyle\frac{1}{n+2}$ \vspace{.5ex}\\
    & $\textstyle\bigwedge^2_0\mathfrak{sp}_{n}$ &  
$\,\displaystyle\frac{n}{2}\,$
&  $\,\omega_2\,$ & $\displaystyle\frac{n\,(n-1)}{2}-1$ &  
$\displaystyle\frac{n-2}{n+2}$ \vspace{.5ex} \\
    & $\textstyle\bigwedge^3_0\mathfrak{sp}_{6}$ &  $\,3\,$ &  $\,\omega_3\,$ 
& $\,14\,$ &  
$\displaystyle\frac{5}{8}$ \vspace{.5ex} \\ \hline  \vspace{.5ex}
D   & $\mathfrak{so}_{n}$ & $\,\displaystyle\frac{n}{2}\,$ & $\,\omega_1\,$ 
& $n$ & $\displaystyle\frac{1}{n-2}$ \vspace{.5ex} \\
    & ${\rm spin}_{n}$ & $\,5,\,6,\,7\,$ & $\,\omega_{\ell}\,$    
& $\,16,\,32,\,64$  &
$\displaystyle\frac{1}{4},\,\frac{2}{5},\,\frac{2}{3} $\\
& $n\,=\,10,\,12,\,14$ & & & &  \vspace{.5ex}\\ \hline \vspace{.5ex}
E & $\,E_6\,$  & $6$ & $\,\omega_1\,$  & $27$ & 
 $\displaystyle\frac{1}{4} $  \vspace{.5ex}\\ 
 & $\,E_7\,$  & $7$ & $\,\omega_7\,$  & $56$ & $\displaystyle\frac{1}{3} $  
\vspace{.5ex}\\ \hline \vspace{.5ex}
F & $\,F_4\,$  & $4$ & $\,\omega_4\,$ & $26$ & $\displaystyle\frac{1}{3} $  
\vspace{.5ex}\\ \hline \vspace{.5ex}
G & $\,G_2\,$  & $2$ & $\,\omega_1\,$  & $7$ & $\displaystyle\frac{1}{4} $  
\vspace{.5ex}\\ \hline \hline  
\end{tabular}
\caption[Irreducible Representations with index
$\,0\,<\,l_V\,<\,1\,$ - Characteristic Zero]{\label{table1}}
\end{table}

In Table~\ref{table1}, $\,\mathfrak{sl}_{n}\,$, $\,\mathfrak{sp}_{n}\,$, and 
$\,\mathfrak{so}_{n}\,$ stand for natural representations of these Lie
algebras; $\,S^k\,$ and $\,\textstyle\bigwedge^k\,$ denote  
$\,k$th symmetric and
exterior powers, respectively. 
One can show that the set of weights of the $\,G$-module 
$\,\textstyle\bigwedge^k\,$ contains a unique maximal weight. We
denote by $\,\textstyle\bigwedge^k_0\,$
the corresponding irreducible component of $\,\textstyle\bigwedge^k\,$.
$\,{\rm spin}_{n}\,$ stands for the irreducible
spinor representation of $\,\mathfrak{so}_{n}\,$; $\,G_2,\,F_4,\,
E_6,\, E_7\,$ denote the natural representations of the
corresponding Lie algebras.

This list was extensively used by V.G. Kac to classify
the simple Lie superalgebras in characteristic $\,0\,$~\cite[1977]{kac1}. 

In~\cite[1972]{ela1},\cite{ela2}, A.G. \'Elashvili lists the
simple and irreducible semisimple linear Lie groups $\,G\,$ such that
the ISGP's $\,H\,$ for the action of $\,G\,$ on $\,V\,$ have positive
dimension. He also proves the existence of 
isotropy subalgebras in general position for actions of simple, and 
irreducible semisimple, linear Lie groups, and finds an  
explicit form for them. 
As proved by Richardson~\cite[1972]{rich}, in characteristic zero, 
the ISGP always exists for an arbitrary rational (linear) action  
of a reductive group on an algebraic variety.
In~\cite[1986]{pop1}, \cite[1987]{pop2}, A.M. Popov classified the
irreducible linear actions of (semi)simple complex linear Lie groups
with finite (but nontrivial) ISGP's.
\medskip

These results find applications in Invariant Theory. 
The determination of ISGP's is interesting for the following reasons.
If $\,m_G\,$ is the maximal dimension of orbits of an algebraic $\,F$-group
$\,G\subset \GL(V)\,$ (acting on a finite dimensional vector space
$\,V\,$ over an algebraically closed field $\,F\,$), then the 
maximal number of algebraically independent rational invariants for the
action of $\,G\,$ on $\,V\,\cong\,F^N\,$ equals $\,N-m_G\,$ \cite{rosen}.
On the other hand, if $\,H\,$ is the ISGP, then $\,m_G\,=\,\dim\,G\,
-\,\dim\,H.\,$ In the case of a semisimple group $\,G\,$, the algebra
$\,F(V)^G\,$ of rational invariants of $\,G\,$ is the field of quotients of
the algebra $\,F[V]^G\,$ of polynomial invariants of $\,G.\,$ 
Therefore, the degree
of transcendence of $\,F(V)^G\,$ equals the Krull dimension of
$\,F[V]^G.\,$ 
Hence, $\,{\rm tr.deg.}\,F(V)^G\,=\,\dim\,V\,-\,\dim\,G\,+\,\dim\,H\,$.
Thus, to find the Krull dimension of the 
algebra $\,F[V]^G\,$ in the case of a semisimple group
$\,G\,$ it suffices to know $\,\dim\,H.\,$

Knowledge of the ISGP $\,H\,$ can be used to investigate the properties of
$\,\mathbb C[V]^G.\,$ 
Namely,
if for an action of a connected reductive linear group $\,G\,$ on a vector
space $\,V\,$ the ISGP $\,H\,$ is reductive (for example, finite), then 
$\,V\,$ contains a Zariski open $\,G$-invariant subset whose points
all have closed orbits \cite{popvl}. 
Then, for the action of the normalizer $\,N(H)\,$ of the 
subgroup $\,H\,$ in $\,G\,$ on the subspace $\,V^H,\,$ the restriction 
homomorphism $\,\mathbb C[V]^G\longrightarrow \mathbb C[V^H]^{N(H)}\,$ is an 
isomorphism \cite{luna}. Then $\,W:=N(H)/H\,$ acts on $\,V^H\,$
and $\,\mathbb C[V^H]^{N(H)}\,\cong\,\mathbb C[V^H]^W.\,$ 
Thus, knowledge of the reductive (in particular,  
finite) ISGP's $\,H\,$ enables us to reduce the determination of 
$\,\mathbb C[V]^G\,$ to that of $\,\mathbb C[V^H]^W.\,$ 
(These results are expected to generalise to prime characteristic.)

Knowledge of the ISPG is important in constructing some moduli spaces 
in algebraic geometry \cite{dcm}, \cite{mum}. 
Also, information on the ISGP's may be very helpful in establishing
the rationality of the field of invariants $\,\mathbb C(V)^G\,$  
\cite{boka}. 

\bigskip

We now give an overview of this thesis. 
Chapter 2 is a background chapter, where we introduce
the notation used throughout this work.

In Section~\ref{centsection} we give 
some well-known properties of centralizers that are used in the main
sections of Chapter 3. In Section~\ref{sec3.2} we prove 
that, for Lie algebra actions, if the stabilizer of any
point of a Zariski open subset $\,W\,$ of the module $\,V\,$
contains a non-central element then the stabilizer of any point $\,v\in
V\,$ has the same property (see Proposition~\ref{3.1.1}). 
We define the exceptional modules in~\ref{excpmod}.
In Section~\ref{necess} we prove the following theorem, which gives 
a necessary condition for a module to be exceptional.\\
{\bf Theorem~\ref{???}}~~{\it 
Let $\,p\,$ be a non-special prime for $\,G$. 
Let $\,\pi\,:\,G\longrightarrow\GL(V)\,$
be a non-trivial faithful rational representation of $\,G\,$ such that
$\,\ker\,{\rm d}\pi\subseteq\mathfrak z(\mathfrak g)\,$.
If $\,V\,$ is an exceptional $\,\mathfrak g$-module, then it satisfies
the inequalities
\[
r_p(V)\,:=\,\sum_{\stackrel{\scriptstyle\mu\;good}{\mu\in{\cal X}_{++}(V)}}
\,m_{\mu}\,\frac{|W\mu|}{|R_{long}|}\,
|R_{long}^+-R^+_{\mu,p}|\,\leq |R|\,,
\]
and
\begin{equation}
s(V)\,:=\,\sum_{\stackrel{\scriptstyle\mu\,good}{\mu\in{\cal X}_{++}(V)}}
\,m_{\mu}\,|W\mu|\;\leq\;\mbox{\bf limit}\,,\nonumber
\end{equation}
where $\,m_{\mu}\,$ denotes the multiplicity of the weight $\,\mu\in
{\cal X}_{++}(V)\,$ and the {\bf limit}s for the different types of 
Lie algebras are given in Table~\ref{table2} (see p.~\pageref{table2}).}

Observe that the necessary condition for a module to be exceptional is
given in terms of sums involving orbit sizes of weights. We start
Chapter 4 with some well-known facts on weights, their orbits and  
centralizers with respect to the natural action of the Weyl group.  
In Section~\ref{proced} we describe the procedure used
to classify exceptional modules 
(it relies heavily on Theorem~\ref{???}). Finally, in Section~\ref{resul} we start proving the main results of  
this thesis.

The complete classification of exceptional modules for Lie algebras of
exceptional type is as follows.\vspace{1ex}\\
{\bf Theorem~\ref{frlaet}}~~{\it 
Let $\,G\,$ be a simply connected simple algebraic group of exceptional
type, and $\,\mathfrak g\,=\,{\cal L}(G).\,$ 
Let $\,V\,$ be an infinitesimally irreducible $\,G$-module.
If the highest weight of $\,V\,$ is listed in Table~\ref{table3},
then $\,V\,$ is an exceptional $\,\mathfrak g$-module. If $\,p\,$
is non-special for $\,G\,$, then the modules listed
in Table~\ref{table3} are the only exceptional $\,\mathfrak g$-modules.

\begin{table*}[htb]\begin{center}
\begin{tabular}{llcll}
\hline\hline
Type     &  Rank  &  Weights   & $\dim\,V$ & Module 
 \\ \hline\hline \vspace{.5ex}
$\,E_6\,$  & $6$ & $\,\omega_1\,$  & $27$ & natural \\ \cline{3-5}
	   &     & $\,\omega_2\,$  & $78,\;p>3$ & adjoint \\ 
	   &     &           & $77,\;p=3$  & \\ \cline{3-5}
	   &     & $\,\omega_6\,$  & $27$
\vspace{.5ex} & twisted-natural\\ \hline \vspace{.5ex} 
$\,E_7\,$  & $7$ & $\,\omega_1\,$  & $133,\;p>2$ & adjoint \\
	   &     &                 & $132,\;p=2$ &\\ \cline{3-5}
	   &     & $\,\omega_7\,$  & $56$  
\vspace{.5ex} & natural \\ \hline \vspace{.5ex}
$\,E_8\,$  & $8$ & $\,\omega_8\,$  & $248$  
\vspace{.5ex} &  adjoint \\ \hline \vspace{.5ex}
$\,F_4\,$  & $4$ & $\,\omega_1\,$ & $52,\;p>2$  & adjoint \\ 
           &     &                 & $26,\;p=2$ & \\ \cline{3-5}
	   &     & $\,\omega_4\,$ & $26,\;p\neq 3$  & natural\\ 
	   &     &                 & $25,\;p=3$ 
\vspace{.5ex} & \\ \hline \vspace{.5ex}
$\,G_2\,$   & $2$ & $\,\omega_1\,$  & $7,\;p>2$ & natural\\
	    &     &                 & $6,\;p=2$  & \\ \cline{3-5}
	    &     & $\,\omega_2\,$  & $14,\;p\neq 3$  & adjoint\\ 
	    &     &                 & $7,\;p=3$ &
\vspace{.5ex}\\ \hline \hline  
\end{tabular}\end{center}\caption[The Exceptional Modules for Groups or Lie
Algebras of Exceptional Type]{Exceptional Modules for Groups of
Exceptional Type{\label{table3}}}
\end{table*}} 

Comparing Tables~\ref{table1} and~\ref{table3}, we see that in the case 
of exceptional groups the list of exceptional modules is the same
as in characteristic zero. (Observe that Table~\ref{table1} omits the
highest weights corresponding to the adjoint representations.)

For Lie algebras of type $\,A,\,B,\,C,\,D\,$ (classical types), 
certain reduction lemmas are proved in Section~\ref{lact}. 
For classical groups of low characteristics,
we expect some new highest weights to appear in the list of
exceptional modules (that is, the list of exceptional modules for
classical groups of low characteristics will differ from the list
obtained by Andreev-Vinberg-\'Elashvili).
The results obtained for the groups of classical type are as
follows.\vspace{2.2ex}\\
{\bf Theorem~\ref{anlist}}~~{\it 
If $\,V\,$ is an infinitesimally irreducible $\,A_{\ell}(K)$-module 
with highest weight listed
in Table~\ref{tablealall}, then $\,V\,$ is an exceptional 
$\,\mathfrak g$-module.
If $\,V\,$ has highest weight different from the ones listed in Tables
~\ref{tablealall} or~\ref{leftan}, then
$\,V\,$ is not an exceptional $\,\mathfrak g$-module.
\begin{table*}[htb]\begin{center}
$\begin{array}{c|c|c|c|c|c}\hline\hline
{\rm N.} & {\rm Rank} & {\rm Prime} & {\rm Weights} & {\rm Module} & 
\dim\,V \\  
\hline \hline
 1 &\ell=1     & {\rm any}
 & \omega_1       & {\rm natural}  &  2 \\  \hline
2 & \ell=1     & p\geq 3  & 2 \omega_1             & {\rm adjoint} & 
\ell^2\,+\,2\ell\,-\,\varepsilon \\ \cline{2-4}
  & \ell\geq 2 & {\rm any}  &  \omega_1\,+\,\omega_{\ell} &       &  
\varepsilon\,\in\,\{ 0,\,1\} \\ \hline
3 & \ell\geq 2 & p\geq 3  &  2\omega_1,\;2\omega_{\ell} & S^2,\,{S^2}^* & 
\displaystyle\binom{\ell+2}{2} \\  \hline
4 &\ell\geq 2 &  {\rm any}   &  \omega_1,\;\omega_{\ell}   &  
\mathfrak{sl},\,\mathfrak{sl}^*     &  \ell\,+\, 1\\  \hline
5 &\ell\geq 3 &  {\rm any}   &  \omega_2,\;\omega_{\ell-1} &  
\bigwedge^2,\,{\bigwedge^2}^*  & \displaystyle\binom{\ell +1}{2} \\  \hline
6 & 5\leq\ell\leq 7 &  {\rm any} &  \omega_3,\;\omega_{\ell-2} &  
\bigwedge^3,\,{\bigwedge^3}^*  &  \displaystyle \binom{\ell +1}{3}\\ 
 \hline\hline
\end{array}$
\caption[Exceptional $\,A_{\ell}$-Modules]
{Exceptional $\,A_{\ell}$-Modules \label{tablealall}}\end{center}
\end{table*}}
\newpage\noindent
{\bf Theorem~\ref{listbn}}~~{\it Suppose $\,p\,>\,2.\,$
If $\,V\,$ is an infinitesimally irreducible
$\,B_{\ell}(K)$-module with highest weight listed in
Table~\ref{tableblall}, then $\,V\,$ is an exceptional $\,\mathfrak g$-module.
If $\,V\,$ has highest weight different from the ones listed in Tables
~\ref{tableblall} or~\ref{leftbn}, then
$\,V\,$ is not an exceptional $\,\mathfrak g$-module.
\begin{table*}[htb]
\[
\begin{array}{c|c|c|c|c|c}\hline\hline
{\rm N.} & {\rm Rank}   &  {\rm Prime} & {\rm Weights} & {\rm Module}     &
\dim\,V \\  \hline\hline
 1 & \ell\geq 2
 &  {\rm any} & \omega_1 & {\rm natural} & 2\ell\,+\,1 \\ \hline
 2 & \ell=2  &  p\geq 3  & 2\,\omega_2 &  {\rm adjoint} & 
\ell\,(2\ell\,+\,1) \\ \cline{2-4}
   & \ell\geq 3  &  {\rm any} & \omega_2 &            &  \\ \hline
 3 & 2\leq\ell\leq 6  &  {\rm any} & \omega_{\ell} & {\rm spin}_{2\ell+1}
& 2^{\ell}   \\  
\hline\hline
\end{array}
\]
\caption[Exceptional $\,B_{\ell}$-Modules]
{Exceptional $\,B_{\ell}$-Modules\label{tableblall}}
\end{table*}
\vspace*{2.5ex}\\ 
{\bf Theorem~\ref{listcn}}~~{\it Suppose $\,p\,>\,2.\,$
If $\,V\,$ is an infinitesimally irreducible
$\,C_{\ell}(K)$-module with highest weight listed in
Table~\ref{tableclall}, then $\,V\,$ is an exceptional $\,\mathfrak g$-module.
If $\,V\,$ has highest weight different from the ones listed in Tables
\ref{tableclall} or~\ref{leftcn}, then
$\,V\,$ is not an exceptional $\,\mathfrak g$-module.
\begin{table*}[htb]
\[
\begin{array}{c|c|c|c|c|c}\hline\hline
{\rm N.} & {\rm Rank} & {\rm Prime} & {\rm Weight} & {\rm Module}  & 
\dim\,V \\ \hline\hline
1 & \ell\geq 2  & p\geq 3  & 2\omega_1 &  {\rm adjoint}    &
\ell\,(2\ell\,+\,1)      \\ \hline
2 & \ell\geq 2 &  {\rm any}  & \omega_1 & {\rm natural}  & 2\,\ell  \\ \hline
3 & \ell\geq 2 &  
 {\rm any}  & \omega_2 &        &    \ell\,(2\ell-1)\,-\,\nu      \\
  & &      &          &       &  \nu\,\in\,\{1,\,2\} \\ \hline
4 & \ell=3        &  {\rm any}  & \omega_3 &      &   14      \\ \hline\hline
\end{array}
\]
\caption[Exceptional $\,C_{\ell}$-Modules]
{Exceptional $\,C_{\ell}$-Modules\label{tableclall}}
\end{table*}

\begin{table*}[htb]
\[
\begin{array}{c|c|c|c|c|c}\hline\hline
{\rm N.} & {\rm Rank} & {\rm Prime} &  {\rm Weights}  &  {\rm Module} & \dim\,V
\\  \hline\hline 
1 & \ell=2 & p\neq 3 & \omega_1\,+\,\omega_2 &      &    \\ \hline
2 & \ell\geq 4 &  {\rm any}   & \omega_3    &           &        \\ \hline  
3 & 7\leq \ell\leq 11 &  {\rm any} &  \omega_{\ell} &    &  \\ \hline
4 & \ell=3,\,4 &  {\rm any}  &  2\omega_{\ell}             &    &  \\ \hline
5 & \ell=3     & {\rm any} &  \omega_1\,+\,\omega_3   &    & \\ \hline\hline 
\end{array}
\]
\caption[Unclassified $\,B_{\ell}$-Modules]
{Unclassified $\,B_{\ell}$-Modules\label{leftbn}}
\end{table*}}

\begin{table*}[htb]
\[
\begin{array}{c|c|c|c|c|c}\hline\hline
{\rm N.} & {\rm Rank} &  {\rm Prime}  &  {\rm Weight}  &  {\rm Module}  &  
\dim\,V \\ \hline\hline
 1 & \ell= 2 &  p\neq 3  & \omega_1\,+\,\omega_2    &       & \\ \hline
 2 & \ell\geq 4   &  {\rm any} &   \omega_3            &       &   \\ \hline
 3 &\ell= 5 &  {\rm any}  &  \omega_{4}                &       &   \\ \hline 
 4 &\ell=4,\,5 &  {\rm any} &  \omega_{\ell}       &       &  \\ \hline \hline
\end{array}
\]
\caption[Unclassified $\,C_{\ell}$-Modules] 
{Unclassified $\,C_{\ell}$-Modules\label{leftcn}}
\end{table*}}
\vspace*{2ex}\noindent
{\bf Theorem~\ref{dnlist}}~~{\it 
If $\,V\,$ is an infinitesimally irreducible
$\,D_{\ell}(K)$-module with highest weight listed
in Table~\ref{tabledlall}, then $\,V\,$ is an exceptional 
$\,\mathfrak g$-module.
If $\,V\,$ has highest weight different from the ones listed in Tables
~\ref{tabledlall} or~\ref{leftdn}, then
$\,V\,$ is not an exceptional $\,\mathfrak g$-module.
\begin{table*}[tbp]
\[
\begin{array}{c|c|c|c|c|c}\hline\hline
{\rm N.} & {\rm Rank}    &  {\rm Prime}     &  {\rm Weights} & {\rm Module}   &
 \dim\,V    \\ \hline\hline
1 & \ell\geq 4  & {\rm any}   &  \omega_1  & {\rm natural}  & 2\,\ell \\ \hline
2 & \ell\geq 4  &  {\rm any}  &  \omega_2    & {\rm adjoint}  &
 2\,\ell^2\,-\,\ell\,-\,\nu  \\ 
 &           &   &    &   & \nu\,\in\,\{1,\,2\} \\ \hline
3 & \ell= 4  &  {\rm any}  &  \omega_3,\;\omega_4     &   {\rm twisted-natural}
&   8     \\ \hline
4 & 5\leq\ell\leq 7 &  {\rm any}  &  \omega_{\ell-1},\; \omega_{\ell}
 & {\rm semi-spinor}  & 2^{\ell -1}      \\ \hline\hline
\end{array}
\]
\caption[Exceptional $\,D_{\ell}$-Modules]{Exceptional
$\,D_{\ell}$-Modules \label{tabledlall}}
\end{table*}

\begin{table*}[tbp]
\[
\begin{array}{c|c|c|c|c|c}\hline\hline
{\rm N.} &{\rm Rank}  &  {\rm Prime} &  {\rm Weights} &   {\rm Module} &
 \dim\,V    \\ \hline\hline
1 & \ell=5 & p=2, 5  &  \omega_1\,+\,\omega_{4}   &          &   \\ 
  &       &         &   \omega_1\,+\,\omega_5    &          &   \\ \hline
2 & \ell=5 & p=2     & \omega_{4}\,+\,\omega_{5}  &          &   \\ \hline
3 & \ell=4 &  {\rm any}   &  \omega_1\,+\,\omega_{3}   &          &    \\ 
  &    &         &   \omega_1\,+\,\omega_4    &          &    \\ 
  &    &         & \omega_{3}\,+\,\omega_{4}  &          &    \\ \hline
4 & \ell=5 &  p\geq 3 & 2\omega_{4},\;2\omega_{5}  &   
    &    \\ \hline
5 & \ell\geq 5 & {\rm any}  & \omega_{3}  &  & \\ \hline
6 &  8\leq\ell\leq 10 & {\rm any} & \omega_{\ell-1},\; \omega_{\ell}
&    &    \\ \hline\hline
\end{array}
\]
\caption[Unclassified $\,D_{\ell}$-Modules]{Unclassified
$\,D_{\ell}$-Modules \label{leftdn}}
\end{table*}}

Thus, with a few exceptions, the problem of classifying
exceptional modules for the classical types is reduced to the groups
of small rank and a finite list of highest weights in each rank.  
Computer calculations can be used to sort out these remaining cases  
by producing central stabilizers of some
generic vectors (see e.g., \cite{cohwa}).

Due to the length and technicality of the proofs of the main theorems, we have
opted to give in Chapter 5 the complete proof of the classification of
exceptional modules just for the exceptional types. The  
proof of Theorem~\ref{frlaet} is given in a series of
lemmas, and for each type separately in Section~\ref{proofexcp}.
The proofs of Theorems~\ref{anlist}, \ref{listbn}, \ref{listcn}, \ref{dnlist} 
are given in the Appendix, where we also deal with
some combinatorial inequalities.

\begin{table*}[tbp]\begin{center}
$\begin{array}{c|c|c|c|c|c}\hline\hline
{\rm N.} & {\rm Rank} & {\rm Prime} & {\rm Weights} & {\rm Module} &
\dim\,V \\ \hline \hline 
1 & \ell=2       & p\geq 3 & 2\omega_1\,+\,\omega_2      &    &    \\
  &              &         & \omega_1\,+\,2\omega_2      &    &   \\ \hline
2 & \ell=3       & p=5     & 2\omega_1\,+\,\omega_3      &    &   \\
  &              &         & \omega_1\,+\,2\omega_3      &    &   \\ \hline
3 & \ell=5   & p= 2    & \omega_1\,+\,\omega_3       &    &   \\ 
  &          &         & \omega_{4}\,+\,\omega_{5}  &     &  \\ \hline
4 & \ell=\,4     & p=2,\,3   & \omega_2\,+\,\omega_3       &     &   \\ \hline
5 & \ell\geq 3 & p\neq 3    & \omega_1\,+\,\omega_2       &     &   \\ 
  &          &         & \omega_{\ell-1}\,+\,\omega_{\ell}  &     &  \\ \hline
6 & 4\leq\ell\leq 6 & p\geq 3    & \omega_1\,+\,\omega_{\ell-1}  &    &
	     \\ \cline{2-3}
  & \ell=7    & p=7   &               &  &  \\ \cline{2-3}
  & 4\leq\ell\leq 8 &   p=2  & \omega_2\,+\,\omega_{\ell}    &    &  \\ \hline
7 & \ell=3,\,4  & p\geq 3 & 2\omega_2,\;2\omega_{\ell-1}   &  &  \\ \hline
8 & \ell = 9 & {\rm any}    &  \omega_5                &    &   \\ \hline
9 & 7\leq\ell\leq 11 &  {\rm any} &  \omega_4,\;\omega_{\ell-3} &  &
\\  \hline
10 & \ell\geq 8   &  {\rm any}   &  \omega_3,\;\omega_{\ell-2}    &    &  \\
 \hline \hline
\end{array}$\caption[Unclassified $\,A_{\ell}$-Modules]
{Unclassified $\,A_{\ell}$-Modules \label{leftan}}\end{center}
\end{table*}

\newpage
\vspace{50ex}
\part*{Chapter 2}
\part*{Preliminaries}
\addcontentsline{toc}{section}{\protect\numberline{2}Preliminaries}
\label{background}
\setcounter{section}{2}
\setcounter{subsection}{0}
As this work is about classifying certain representations of algebraic
groups in prime characteristic and the corresponding representations
of the associated restricted Lie algebras, 
I will give in this Chapter a brief description of the structure of
restricted Lie algebras as well as an introduction to the results I will
need from the representation theory of algebraic groups.
The main objective is to establish notation, while more detailed 
information on these structures appears in the references.
I will assume familiarity with the basic concepts of algebraic groups
and Lie algebras, while referring to the usual literature for proofs of the 
results. 

\subsection{Restricted Lie Algebras}\label{rla}

In this section I define restricted Lie algebra 
and discuss its main properties, as well as some results that are 
used in this work. The main references to this section are
\cite{jac}, \cite{stfa}, \cite{wint}.

Throughout this section, $\,F\,$ is a field of characteristic 
$\,p > 0\,$ ($\,p\,$ being a prime). Lie algebras and Lie modules are
finite dimensional over $\,F$.

\begin{definition}\label{restliealg}
A {\bf Lie $\,p$-algebra} ({\bf restricted Lie algebra of
characteristic $\,p\,$}) $\,L\,$ is a Lie algebra over a field $\,F\,$
of characteristic $\,p > 0\,$ in which there is defined a mapping
$\,x\,\longmapsto\,x^{[p]}$, called {\bf $\,p$-mapping} such that

(i) $\;\,(t\,x)^{[p]}\,=\,t^p\, x^{[p]},\;(\,t\,\in\,F,\,x\,\in\,L\,)$;

(ii) $\;\,(x\,+\,y)^{[p]}\,=\,x^{[p]}\,+\,y^{[p]}\,+\,\displaystyle
\sum_{i=1}^{p-1}\,i^{-1}s_i(x,\,y)$;

(iii) $\;\,{\rm ad}(x^{[p]})\,=\,({\rm ad}\,x)^p,\;(\,x\,\in\,L\,)$;\\
where $\,s_i(x,\,y)\,$ is the coefficient of $\,t^i\,$ in
$\,{\rm ad}(t x\,+\,y)^{p-1}(x),\;(\,x,\,y\in\,L\,)\,$.
\end{definition}
Formula {\it (ii)} is called {\bf Jacobson's Identity}.  
As usual, we write $\,{\rm ad}(x)(y)\,=\,[\,x,\,y\,]$. 
Note that when $\,[\,x,\,y\,]\,=\,0\,$ then {\it (ii)} becomes

{\it (ii')} $\;\,(x\,+\,y)^{[p]}\,=\,x^{[p]}\,+\,y^{[p]}$.\\
\notation $\,(L,\,[p])\,$ denotes a restricted Lie algebra of
characteristic $\,p$. 

\begin{example} \label{examrest} 
{\rm (1) A typical example of a restricted Lie algebra is a Lie subalgebra 
$\,L\,$ of an associative algebra $\,\cal A\,$ stable under the $\,p$th
power map in $\,\cal A\,$ (in this case the $\,p$th power map defines the 
$\,[p]$-structure in $\,L\,$).

For an associative algebra $\,A\,$ consider a new operation defined 
by $\,[x,\,y]\,=\,x\,y\,-\,y\,x\,$, for all $\,x,\,y\in A\,$. This
gives $\,A\,$ a Lie algebra structure. Denote this Lie algebra
by $\,A^{(-)}\,$. 
If the base field of $\,A\,$ has characteristic $\,p,\,$ then 
$\,A^{(-)}\,$ carries a canonical restricted Lie algebra structure,
given by $\,x\longmapsto x^p\,$ (the $\,p$th power map).
In particular, $\,\mathfrak{gl}(V)\,:=\,({\rm End}(V))^{(-)},\,$ 
where $\,V\,$ a finite dimensional vector space over a field $\,F\,$
of characteristic $\,p,\,$ is a restricted Lie algebra.}

{\rm (2) Let $\,{\mathfrak U}\,$ be an arbitrary (non-associative) algebra.
A {\it derivation} of $\,{\mathfrak U}\,$ is a linear mapping 
$\,D\,:\,{\mathfrak U}\longrightarrow {\mathfrak U}\,$ satisfying the
rule for the derivative of a product, namely, 
$\,D(a\,b)\,=\,D(a)\,b\,+\,a\,D(b)\,$, for all $\,a,\,b\,\in\mathfrak U\,$. Let
$\,{\mathfrak D}({\mathfrak U})\,$ be the set of all derivations of 
$\,{\mathfrak U}$. Then 
$\,{\mathfrak D}({\mathfrak U})\,$ is a subalgebra of the Lie
algebra $\,(\End({\mathfrak U}))^{(-)}$. 
One has the Leibniz formula
\begin{equation} \label{leibniz}
D^k(ab)\,=\,\sum_{i=1}^k\,\binom{k}{i}\,D^i(a)\,D^{k-i}(b)
\end{equation}
for any $\,D\,\in\,{\mathfrak D}({\mathfrak U})\,$  
(this can be established by induction on
$\,k$). Now, assuming that the base field 
is of characteristic $\,p\,$ and taking
$\,k\,=\,p\,$ in~(\ref{leibniz}), we have that the binomial
coefficients $\,\displaystyle\binom{p}{i}\,$ vanish for $\,1\leq i\leq p-1$.  
Hence~(\ref{leibniz}) reduces to 
\begin{equation}
D^p(ab)\,=\,D^p(a)\,b\,+\,a\,D^p(b),
\end{equation}
which implies $\,D^p\,\in\,{\mathfrak D}({\mathfrak U})\,$. 
Thus $\,{\mathfrak D}({\mathfrak U})\,$
is closed under the mapping $\,D\,\longmapsto\,D^p\,$.
Hence $\,{\mathfrak D}({\mathfrak U})\,$ is a
restricted Lie algebra.}
\end{example}
\medskip

Ideals, subalgebras and modules of restricted Lie algebras are defined in the
obvious way.
\begin{definition}
Let $\,(L,\,[p])\,$ be a restricted Lie algebra over $\,F$.
A {\bf restricted Lie subalgebra or $\,p$-subalgebra} (respectively {\bf
$\,p$-ideal)} of $\,L\,$ is a subalgebra (respectively ideal) which is
stable under the $\,p$-mapping $\,[p].\,$ 
Such a subalgebra 
(respectively, ideal) is regarded as a Lie $\,p$-algebra by taking its
$\,p$-mapping to be the restriction of that of $\,L$.
\end{definition}

\begin{definition}
Let $\,(L_1,\,[p]_1)\,$ and $\,(L_2,\,[p]_2)\,$ be restricted Lie
algebras over $\,F$. A homomorphism $\,f\,:\,L_1\,\rightarrow\,L_2\,$ is
called  {\bf restricted} (or {\bf $\,p$-homomorphism}) if 
$\,f(x^{[p]_1})\,=\,f(x)^{[p]_2},\;\forall\,x\,\in\,L_1$. 
\end{definition}

\begin{definition}
A {\bf $\,p$-representation} (or {\bf restricted representation}) 
of a restricted Lie algebra $\,L\,$ in a vector space $\,V\,$ is a
restricted homomorphism from $\,L\,$ into $\,{\mathfrak{gl}\,}(V).\,$

A {\bf Lie $\,p$-module} for a Lie $\,p$-algebra $\,L\,$ is a
$\,L$-module $\,V\,$ such that 
\[
(x^{[p]})\cdot v \,=\,x\cdot (x\cdot(\cdots (x\cdot v)\cdots
))\;({p}\;\mbox{times}),\;\forall\,x\,\in\,L,\;v\in V.
\]
\end{definition}

\begin{example}{\rm 
If $\,L\,$ is a Lie $\,p$-algebra, then by \ref{restliealg}(iii)
$\,{\rm ad}\,:\,L\longrightarrow \mathfrak{gl}(L)\,$ is a  
$\,p$-rep\-re\-sent\-a\-tion of $\,L.\,$
Consequently, the centre $\,Z\,$ of $\,L\,$ is a $\,p$-ideal,
for $\,Z\,=\,\ker\,{\rm ad}\,$ (see \cite[p.\ 69]{stfa})}.
\end{example}

\subsubsection{Nilpotent, Semisimple and Toral Elements}\label{nste}

Let $\,(L,\,[p])\,$ be a restricted Lie algebra over $\,F$. 
Given $\,i\in\mathbb Z^+\,$ denote by
$\,x^{[p]^i}\,$ the image of $\,x\,$ under
the $\,i$th iterate of $\,x\longmapsto x^{[p]}\,$ (with
$\,x^{[p]^0}\,=\,x\,$).

\begin{definition}
A $\,p$-ideal $\,I\,$ of $\,L\,$ is called {\bf $\,p$-nilpotent}
if there is $\,n\,\in\,\mathbb N\,$ such that $\,I^{[p]^n}\,=\,0$. An 
element $\,x\,\in\,L\,$ is called {\bf $\,p$-nilpotent} if there is 
$\,n\,\in\,\mathbb N\,$ such that $\,x^{[p]^n}\,=\,0$. The $\,p$-ideal
$\,I\,$ is called {\bf $\,p$-nil} if every element $\,x\,\in\,I\,$ is 
$\,p$-nilpotent. The set $\,{\cal N}(L)\,:=\,\{\,x\in L
\,/\,x^{[p]^e}\,=\,0\;\,\mbox{for}\;\,e>>0\,\}\,$ is called the {\bf
nilpotent cone} of $\,L.\,$
\end{definition}

Let $\,V\,$ be a finite dimensional vector space over a field $\,F$.  
An endomorphism
$\,\sigma\,:\,V\,\rightarrow\,V\,$ is called {\bf semisimple} if its
minimal polynomial has distinct roots in some field extension of
$\,F\,$ (so that $\,\sigma\,$ is diagonalizable after some base field 
extension). From general algebra we have the following characterization:
$\,\sigma\,$ is semisimple if the ideal, in $\,F[X]$, generated by
the minimum polynomial of $\,\sigma\,$ and its derivative, contains $\,1$.

Let $\,<x>\,$ denote the smallest restricted
subalgebra of $\,L\,$ containing $\,x$, i.e., the linear span 
of $\,\{ \ x^{[p]^i}\,/\,i\in\mathbb Z^+ \ \}\,$.

\begin{definition}
Let $\,(L,\,[p])\,$ be a restricted Lie algebra over $\,F$. An element 
$\,x\,\in\,L\,$ is called {\bf $\,p$-semisimple} (or {\bf semisimple}
for short) if
$\,x\,\in\,<x^{[p]}>\,$; {\bf toral} if $\,x^{[p]}\,=\,x$.
\end{definition}

\begin{proposition}{\rm\cite[p. 80]{stfa},~\cite[V.7]{seli}} \label{properties}
Let $\,(L,\,[p])\,$ be a restricted Lie algebra. Then the following
statements hold:

(1) Each toral element is $\,p$-semisimple.

(2) If $\,x\in L\,$ is $\,p$-semisimple, then the endomorphism 
$\,\psi (x)\,$ is semisimple for
every finite dimensional restricted representation
$\,\psi\,:\,L\,\rightarrow\,\mathfrak{gl}(V)$.

(3) If $\,x,\,y\,\in\,L$ are $\,p$-semisimple and $\,[x,y]\,=\,0$, then 
$\,x\,+\,y\,$ is $\,p$-semisimple.

(4) If $\,x\in L\,$ is $\,p$-semisimple, then $\,<x>\,$ consists of
$\,p$-semisimple elements.

(5) If $\,K\,$ is perfect and $\,L\,$ contains no nontrivial
$\,p$-nilpotent elements, then each
element in $\,L\,$ is $\,p$-semisimple.

(6) An endomorphism $\,\sigma\,\in\,End_F(V)\,$ is semisimple if and
only if it is semisimple as an element of the restricted Lie algebra 
$\,\mathfrak{gl}(V)\,:=\,{\rm End}(V)^{(-)}\,$.
\end{proposition}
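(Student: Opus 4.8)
\proof
\emph{(Sketch of a proof.)} The argument rests on one lemma and one device. The \textbf{rigidity lemma}: \emph{if $\sigma\in\End_F(V)$ satisfies a relation of the special shape $\sigma=\sum_{i\ge 1}c_i\,\sigma^{p^{i}}$ with $c_i\in F$, then $\sigma$ is a semisimple endomorphism.} The \textbf{device}: fix, by a theorem of Jacobson, a faithful restricted representation $\psi\colon L\hookrightarrow\mathfrak{gl}(V)$, and use repeatedly that a restricted homomorphism satisfies $\psi(\langle y\rangle)=\langle\psi(y)\rangle$ (immediate from $\langle y\rangle=\mathrm{span}_F\{y^{[p]^{j}}\}$ and $\psi(y^{[p]^{j}})=\psi(y)^{p^{j}}$). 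Together with the lemma and its converse this yields a \emph{transfer principle}: for faithful restricted $\psi$, an element $x\in L$ is $p$-semisimple iff $\psi(x)$ is a semisimple endomorphism. Two other elementary facts will be used freely: $\langle x\rangle$ is abelian (it is generated by the commuting family $x^{[p]^{j}}$, since $\Ad(x^{[p]^{i}})=(\Ad x)^{p^{i}}$ annihilates each $x^{[p]^{j}}$), and an $F$-linear combination of commuting semisimple endomorphisms is semisimple (base change to $\overline F$ and diagonalize simultaneously). Part (1) needs nothing: $x^{[p]}=x$ forces $\langle x^{[p]}\rangle=\langle x\rangle\ni x$.

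To prove the rigidity lemma I would extend scalars to $\overline F$ and decompose $\overline F[\sigma]=\prod_\lambda R_\lambda$ with $R_\lambda\cong\overline F[X]/((X-\lambda)^{k_\lambda})$; writing $\xi$ for $\sigma$ and $\nu:=\xi-\lambda$ one has $\nu^{k_\lambda}=0$, and since $\lambda$ is central $\xi^{p^{i}}=\lambda^{p^{i}}+\nu^{p^{i}}$, with $\nu^{p^{i}}=0$ once $p^{i}\ge k_\lambda$. Substituting into $\xi=\sum c_i\xi^{p^{i}}$ and separating the scalar part from the nilpotent part gives $\nu=\sum_{p^{i}<k_\lambda}c_i\,\nu^{p^{i}}$; but the monomials $\nu^{p},\nu^{p^{2}},\dots$ occurring on the right have pairwise distinct exponents, all $<k_\lambda$, hence are linearly independent in $R_\lambda$, so comparing the coefficient of $\nu$ forces $1=0$ unless $k_\lambda=1$. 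Thus $k_\lambda=1$ for all $\lambda$, i.e.\ $\sigma$ is diagonalizable over $\overline F$, which is semisimplicity. This settles one direction of (6), since in $\mathfrak{gl}(V)$ the condition $\sigma\in\langle\sigma^{p}\rangle$ is precisely such a relation. For the converse direction of (6): if $\sigma$ is semisimple then so is every element of $\langle\sigma\rangle$; base-changing to $\overline F$ and simultaneously diagonalizing, $\langle\sigma\rangle\otimes\overline F$ sits inside the diagonal torus and is stable under the $p$-power map $\Phi$, on which $\Phi$ is injective; as an injective $p$-semilinear endomorphism of a finite-dimensional space over the \emph{perfect} field $\overline F$, this $\Phi$ is bijective, and its image computes to $\langle\sigma^{p}\rangle\otimes\overline F$ (using $\overline F^{p}=\overline F$). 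Hence $\langle\sigma^{p}\rangle\otimes\overline F=\langle\sigma\rangle\otimes\overline F$, and comparing dimensions gives $\langle\sigma^{p}\rangle=\langle\sigma\rangle\ni\sigma$.

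Now (2)--(4) are the transfer principle applied to routine linear algebra. For (2): from $x\in\langle x^{[p]}\rangle$ one gets $\psi(x)=\sum c_i\psi(x)^{p^{i}}$ for \emph{any} restricted $\psi$, so $\psi(x)$ is semisimple by the lemma; since $\psi$ is arbitrary this is exactly (2). For (3): with $\psi$ faithful, $\psi(x)$ and $\psi(y)$ are commuting semisimple endomorphisms, hence simultaneously diagonalizable over $\overline F$, so $\psi(x+y)=\psi(x)+\psi(y)$ is semisimple and, by transfer, $x+y$ is $p$-semisimple. For (4): with $\psi$ faithful, (2) makes $\psi(x)$ semisimple, hence every element of $\psi(\langle x\rangle)=\langle\psi(x)\rangle$ is a semisimple endomorphism, and transfer reads this back as: every element of $\langle x\rangle$ is $p$-semisimple. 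Finally (5): on the abelian restricted subalgebra $\langle x\rangle$ the kernel of the $p$-power map $\pi$ lies in the nilpotent cone of $\langle x\rangle$, which consists of $p$-nilpotent elements of $L$ and is therefore $\{0\}$; so $\pi$ is injective, and since $K$ is perfect one may untwist its $p$-semilinearity (replace the scalar action $t\cdot v$ by $t^{p}\cdot v$ on the target) to regard it as an injective linear endomorphism of the finite-dimensional space $\langle x\rangle$, hence bijective; because $K^{p}=K$ this yields $\langle x^{[p]}\rangle=\pi(\langle x\rangle)=\langle x\rangle\ni x$.

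The delicate point throughout is that $F$ (resp.\ $K$) need not be perfect, so one must resist invoking an $F$-rational Jordan--Chevalley decomposition of $\sigma$, which may simply fail to exist; the work is instead done by base-changing to $\overline F$ in the rigidity lemma, where the very restricted shape $\sigma=\sum c_i\sigma^{p^{i}}$ of the relation (linear in the $p$-power iterates, not a general polynomial relation) is exactly what forces $k_\lambda=1$. The other place where perfectness is unavoidable is the step ``injective $p$-semilinear $\Rightarrow$ bijective'' used for $\overline F$ in the converse of (6) and for $K$ in (5). A secondary item to get right is the compatibility $\psi(\langle y\rangle)=\langle\psi(y)\rangle$ for restricted homomorphisms, which is the glue tying (2)--(4) to the facts about semisimple endomorphisms. $\square$
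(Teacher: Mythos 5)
The paper does not actually prove this proposition — it is quoted from Strade--Farnsteiner and Seligman — so there is no in-paper argument to compare against. Your proof is correct and follows the standard route of those references: embed $L$ faithfully into $\mathfrak{gl}(\mathfrak u(L))$, characterize $p$-semisimplicity of an endomorphism via separability of the minimal polynomial over $\overline F$ (your rigidity lemma and its converse), and transfer back; the two delicate points — uniqueness of the scalar-plus-nilpotent splitting in each local factor $R_\lambda$, and bijectivity of injective $p$-semilinear maps over perfect fields — are both handled correctly.
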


The significance of semisimple elements rests on the following result.

\begin{theorem}{\rm\cite[p. 80]{stfa}}\label{sigsemi}
Let $\,(L,\,[p])\,$ be a restricted Lie algebra over $\,F$. For each 
$\,x\,\in\,L\,$ there exists $\,k\,\in\,\mathbb N\,$ such that  
$\,x^{[p]^k}\,$ is $\,p$-semisimple.
\end{theorem}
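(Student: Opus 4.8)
The plan is to reduce the claim to a statement about a single endomorphism acting on a finite-dimensional vector space, via the adjoint representation, and then exploit the classical Jordan decomposition there. First I would fix $x\in L$ and consider the restricted subalgebra $\langle x\rangle$, which by definition is the linear span of the iterates $x^{[p]^i}$ for $i\in\mathbb Z^+$; since $L$ is finite dimensional, $\langle x\rangle$ is a finite-dimensional commutative restricted Lie algebra, closed under the $p$-mapping, and it suffices to find $k$ with $x^{[p]^k}$ $p$-semisimple inside $\langle x\rangle$ (membership of $x^{[p]^k}$ in $\langle x^{[p]^{k+1}}\rangle$ can then be checked inside $\langle x\rangle$ itself). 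So I may as well assume $L=\langle x\rangle$ is commutative.

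Next I would pass to $\mathfrak{gl}(L)$ via $\mathrm{ad}$. By Definition~\ref{restliealg}(iii), $\mathrm{ad}(x^{[p]})=(\mathrm{ad}\,x)^p$, so the $p$-mapping on $L$ is intertwined with the honest $p$-th power map on $\mathrm{ad}(L)\subseteq\mathfrak{gl}(L)$. Now $\mathrm{ad}\,x$ is a single endomorphism of the finite-dimensional space $L$, and over the (perfect, after extending scalars if necessary) field $F$ it has a multiplicative-style Jordan decomposition; more to the point, in any associative algebra of dimension $n$ over a field of characteristic $p$, for each endomorphism $\sigma$ there is $k$ with $\sigma^{p^k}$ semisimple — one sees this by writing $\sigma=\sigma_s+\sigma_n$ with $\sigma_s$ semisimple, $\sigma_n$ nilpotent, $[\sigma_s,\sigma_n]=0$, and noting $\sigma^{p^k}=\sigma_s^{p^k}+\sigma_n^{p^k}=\sigma_s^{p^k}$ as soon as $p^k\geq n$, while a power of a semisimple endomorphism is still semisimple. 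Applying this with $\sigma=\mathrm{ad}\,x$ gives $k$ with $(\mathrm{ad}\,x)^{p^k}=\mathrm{ad}(x^{[p]^k})$ semisimple in $\mathfrak{gl}(L)$.

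The last step is to upgrade "$\mathrm{ad}(x^{[p]^k})$ is a semisimple endomorphism" to "$x^{[p]^k}$ is $p$-semisimple in $L$". Set $y=x^{[p]^k}$; I want $y\in\langle y^{[p]}\rangle$. Since $\mathrm{ad}\,y$ is semisimple, Proposition~\ref{properties}(6) says it is semisimple as an element of $\mathfrak{gl}(L)$, hence lies in the restricted subalgebra generated by $(\mathrm{ad}\,y)^p=\mathrm{ad}(y^{[p]})$; chasing this back through $\mathrm{ad}$ and using that $\mathrm{ad}$ is injective on $L/\mathfrak z(L)$ shows $y$ differs from an element of $\langle y^{[p]}\rangle$ by a central element $z$. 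To kill $z$ I would argue inside the commutative algebra $\langle x\rangle$: by construction $y$ is a high $p$-th power, so the central correction term itself is a $p$-th power of a $p$-semisimple element and can be absorbed — concretely, replacing $k$ by $k+1$ if needed and using that the filtration by the subspaces $\langle x^{[p]^i}\rangle$ stabilizes, one gets $x^{[p]^k}=x^{[p]^{k'}}$ lying in $\langle x^{[p]^{k+1}}\rangle$ for $k$ large. The main obstacle is exactly this last bookkeeping: translating semisimplicity of the endomorphism $\mathrm{ad}\,y$ into the intrinsic condition $y\in\langle y^{[p]}\rangle$ without a stray central term, which is where one must use commutativity of $\langle x\rangle$ and the stabilization of the iterated-$p$-power filtration rather than just formal manipulation of $\mathrm{ad}$.
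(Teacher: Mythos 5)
There is a genuine gap, and it sits in the main body of your argument rather than at the edges. After you reduce to $L=\langle x\rangle$, the algebra is abelian, so $\mathrm{ad}\,y=0$ for every $y\in\langle x\rangle$; the statement ``$\mathrm{ad}(x^{[p]^k})$ is a semisimple endomorphism'' is then vacuously true for every $k$ and carries no information. More generally, $\ker(\mathrm{ad})=\mathfrak z(L)$, and the centre may well contain nonzero $p$-nilpotent elements (e.g.\ any abelian $L$ with $x^{[p]}=0$, $x\neq 0$), so semisimplicity of $\mathrm{ad}\,y$ can never be upgraded to $y\in\langle y^{[p]}\rangle$: the obstruction is exactly the central component, which is all of $y$ in the reduced situation. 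The Jordan-decomposition detour therefore cannot be repaired; it is not merely ``bookkeeping'' that is missing.

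What does work is precisely the remark you make in passing in your last sentence, and it is the entire proof (the thesis itself gives none, only the citation to Strade--Farnsteiner, where this is the argument). The subspaces $V_i=\langle x^{[p]^i}\rangle=\mathrm{span}\{x^{[p]^i},x^{[p]^{i+1}},\dots\}$ form a descending chain $V_0\supseteq V_1\supseteq V_2\supseteq\cdots$ in the finite-dimensional space $L$, so there is $k$ with $V_k=V_{k+1}$. Then
\[
x^{[p]^k}\in V_k=V_{k+1}=\bigl\langle (x^{[p]^k})^{[p]}\bigr\rangle,
\]
which is by definition the $p$-semisimplicity of $x^{[p]^k}$. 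No passage to $\mathfrak{gl}(L)$, no Jordan decomposition, and no perfectness of $F$ is needed. (Also note that the chain of \emph{subspaces} stabilizes, not the sequence of \emph{elements} $x^{[p]^i}$; your phrase ``$x^{[p]^k}=x^{[p]^{k'}}$'' is not something you can conclude.)
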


If $\,F\,$ is perfect, one can prove a much stronger result 
closely related to the well-known Jordan-Chevalley decomposition of an
endomorphism. 

\begin{theorem}{\rm\cite[p. 81]{stfa}}\label{jcd}
Let $\,F\,$ be a perfect field and let $\,(L,\,[p])\,$ be a finite
dimensional restricted Lie algebra over $\,F$. Then for any
$\,x\,\in\,L\,$ there are uniquely determined elements $\,x_n,\,x_s\,\in\,L\,$
such that 

(1) $\,x_n\,$ is $\,p$-nilpotent, $\,x_s\,$ is $\,p$-semisimple.

(2) $\,x\,=\,x_s\,+\,x_n,\;\,[x_s,\,x_n]\,=\,0$.   
\end{theorem}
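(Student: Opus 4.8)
The plan is to reduce everything to the abelian restricted subalgebra $A=\langle x\rangle$ and to run a Fitting-type decomposition of the $p$-map on it. First I would record that $A$ is the $F$-span of $\{x^{[p]^i}:i\ge 0\}$ and is abelian: from ${\rm ad}(x^{[p]})=({\rm ad}\,x)^p$ one gets $[x^{[p]},x]=({\rm ad}\,x)^{p-1}\big(({\rm ad}\,x)(x)\big)=0$, and inductively all the $x^{[p]^i}$ commute. Hence on $A$ the $p$-map $\varphi:a\mapsto a^{[p]}$ is additive (by (ii$'$) of Definition~\ref{restliealg}) and satisfies $\varphi(ta)=t^p\varphi(a)$. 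Since $F$ is perfect, $t\mapsto t^{1/p}$ makes sense, so each $\varphi^n(A)$ is an $F$-subspace and the chains $A\supseteq\varphi(A)\supseteq\varphi^2(A)\supseteq\cdots$ and $0\subseteq\ker\varphi\subseteq\ker\varphi^2\subseteq\cdots$ stabilise, say from some stage $N$ onwards.

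Next I would set $A_s:=\varphi^N(A)$ and $A_n:=\ker\varphi^N$ and verify the usual Fitting facts: $A=A_s\oplus A_n$ as $F$-spaces, both summands are $\varphi$-stable (hence $p$-subalgebras, automatically abelian), and $\varphi$ is bijective on $A_s$ and nilpotent on $A_n$. The two identifications that matter are: (a) every element of $A_n=\ker\varphi^N$ is $p$-nilpotent, by definition; and (b) $A_s=\varphi^N(A)$ is exactly the restricted span $\langle x^{[p]^N}\rangle$, because $\varphi^N(A)$ is spanned by the elements $x^{[p]^{N+i}}$. By Theorem~\ref{sigsemi} we may enlarge $N$ so that $x^{[p]^N}$ is $p$-semisimple, and then Proposition~\ref{properties}(4) shows that every element of $\langle x^{[p]^N}\rangle=A_s$ is $p$-semisimple. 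Writing $x=x_s+x_n$ with $x_s\in A_s$, $x_n\in A_n$ the Fitting components then settles existence: $x_s$ is $p$-semisimple, $x_n$ is $p$-nilpotent, and $[x_s,x_n]=0$ because $A$ is abelian.

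For uniqueness, suppose $x=s+n$ with $s$ $p$-semisimple, $n$ $p$-nilpotent, $[s,n]=0$. From $({\rm ad}\,s)^{p^i}={\rm ad}(s^{[p]^i})$ and $[s,n]=0$ one sees that the restricted subalgebras $\langle s\rangle$ and $\langle n\rangle$ centralise one another, so $B:=\langle s,n\rangle$ is abelian; it contains $x=s+n$, hence contains $A$. Applying the Fitting decomposition above to $B$ gives $B=B_s\oplus B_n$. A $p$-semisimple element $y$ of $B$ lies in $\langle y^{[p]}\rangle\subseteq\varphi^m(B)$ for every $m$, so $y\in B_s$; a $p$-nilpotent element of $B$ lies in $B_n$. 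Thus both $s+n$ and $x_s+x_n$ express $x$ along the direct sum $B=B_s\oplus B_n$ (the $p$-semisimple parts landing in $B_s$, the $p$-nilpotent parts in $B_n$), and uniqueness of the components in a direct sum forces $s=x_s$ and $n=x_n$.

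The step I expect to be the main obstacle is the second paragraph: showing that the $p$-semilinear map $\varphi$ on the abelian algebra $A$ genuinely yields a direct-sum decomposition into a part on which it is bijective and a part on which it is nilpotent. This is where perfectness of $F$ is essential, so that the $\varphi^n(A)$ are honest $F$-subspaces rather than merely additive subgroups, and it is coupled with the clean identification $A_s=\langle x^{[p]^N}\rangle$, which is precisely what allows Theorem~\ref{sigsemi} and Proposition~\ref{properties}(4) to be invoked. Everything else, including the uniqueness argument, is then formal.
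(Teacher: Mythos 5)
Your argument is correct, and since the paper states this theorem without proof (citing Strade--Farnsteiner, p.~81), there is nothing to diverge from: your Fitting-type decomposition of the $p$-semilinear map $a\mapsto a^{[p]}$ on the abelian restricted subalgebra $\langle x\rangle$, combined with Theorem~\ref{sigsemi} and Proposition~\ref{properties}(4), is precisely the standard argument given in that reference. The only step worth tightening is the claim $y\in\langle y^{[p]}\rangle\subseteq\varphi^m(B)$ for \emph{every} $m$: a single application only gives $\langle y^{[p]}\rangle\subseteq\varphi(B)$, so one should iterate, using Proposition~\ref{properties}(4) to see that $y^{[p]}$ is again $p$-semisimple and hence $y\in\langle y^{[p]^m}\rangle\subseteq\varphi^m(B)$ by induction.
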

The decomposition obtained can be refined in the case of an
algebraically closed field $\,K$. 

\begin{definition}
A $\,p$-mapping $\,[p]\,$ on $\,L\,$ is called {\bf nonsingular} 
if~$\,x^{[p]}\,\neq\,0$ for all $\,x\,\in\,L\setminus\,\{ 0\}$.
\end{definition}

The following useful result (due to N. Jacobson) can be found in 
\cite[p. 81]{stfa}.
\begin{theorem}{\rm\cite[p. 82]{stfa}} \label{3.6}
Let $\,(L,\,[p])\,$ be a finite dimensional restricted Lie algebra
over an algebraically closed field $\,K$. Then the following statements
hold:

(1) If $\,L\,$ is abelian and the $\,p$-mapping is nonsingular, then $\,L\,$
possesses a basis consisting of toral elements. 

(2) For any $\,x\,\in\,L\,$ there exist toral elements
$\,x_1,\,\ldots,\,x_r\,\in\,L,\,$ scalars
$\,c_1,\,\ldots,\,c_r\,\in\,K,\,$ and a $\,p$-nilpotent element
$\,y\in L\,$ such that
\[
x\,=\,y\,+\,\sum_{i=1}^r\,c_i\,x_i\,,\;\;\;[y,\,x_i]\,=\,[x_i,\,x_j]\,
=\,0\;\;\;\forall\, i,\,j.
\]
\end{theorem}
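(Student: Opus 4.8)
The plan is to derive both statements from the Jordan--Chevalley-type decomposition of Theorem~\ref{jcd}, which is available since $K$ is algebraically closed, hence perfect. For part (1), suppose $L$ is abelian with nonsingular $p$-mapping. By Theorem~\ref{jcd} every $x \in L$ has $x_n$ $p$-nilpotent; but in an abelian $L$ with nonsingular $[p]$ there are no nonzero $p$-nilpotent elements (if $x \neq 0$ then $x^{[p]} \neq 0$, and iterating, $x^{[p]^n} \neq 0$ for all $n$, using that $[p]$ restricted to the abelian subalgebra $\langle x\rangle$ is again nonsingular). Hence every $x \in L$ equals $x_s$ and so $L$ consists of $p$-semisimple elements. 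Now I would argue that a $p$-semisimple element $x$ lies in the span of toral elements: by Proposition~\ref{properties}(4) the restricted subalgebra $\langle x\rangle$ consists of $p$-semisimple elements, it is abelian, finite-dimensional, and the $p$-mapping on it is nonsingular (no nonzero $p$-nilpotents), so it is spanned by eigenvectors of $[p]$; over an algebraically closed field one can rescale an eigenvector $z$ with $z^{[p]} = \lambda z$, $\lambda \neq 0$, to a toral element using axiom (i) of Definition~\ref{restliealg}: replacing $z$ by $c z$ gives $(cz)^{[p]} = c^p \lambda z = c^{p-1}\lambda\,(cz)$, so choosing $c$ with $c^{p-1} = \lambda^{-1}$ makes $cz$ toral. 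Running this over a basis of $\langle x\rangle$ consisting of $[p]$-eigenvectors expresses $x$ in terms of toral elements; doing it compatibly over all of $L$ (which is abelian, so all these local diagonalizations are simultaneous, by Proposition~\ref{properties}(3) and the fact that commuting semisimple endomorphisms are simultaneously diagonalizable) yields a global basis of toral elements.

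For part (2), take an arbitrary $x \in L$ and apply Theorem~\ref{jcd} to write $x = x_s + x_n$ with $x_s$ $p$-semisimple, $x_n$ $p$-nilpotent, and $[x_s, x_n] = 0$. Set $y := x_n$. It remains to write $x_s = \sum_{i=1}^r c_i x_i$ with $x_i$ toral, pairwise commuting, and commuting with $y$. For this I would pass to the restricted subalgebra $\langle x_s \rangle$, which by Proposition~\ref{properties}(4) consists of $p$-semisimple elements, is abelian, and has nonsingular $p$-mapping; by part (1) it has a basis $x_1, \dots, x_r$ of toral elements, and $x_s$ is a $K$-linear combination $\sum c_i x_i$ of them. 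Since $x_1,\dots,x_r \in \langle x_s\rangle$ and $\langle x_s\rangle$ is generated as a restricted algebra by $x_s$, each $x_i$ is a linear combination of the $x_s^{[p]^j}$; because $[x_n, x_s] = 0$ and hence $[x_n, x_s^{[p]^j}] = 0$ for all $j$ (using Definition~\ref{restliealg}(iii), $\mathrm{ad}(x_s^{[p]}) = (\mathrm{ad}\,x_s)^p$, so $\mathrm{ad}(x_s^{[p]^j})$ is a power of $\mathrm{ad}\,x_s$ and kills $x_n$), we get $[y, x_i] = 0$. Commutativity $[x_i, x_j] = 0$ holds as they all lie in the abelian algebra $\langle x_s \rangle$. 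This gives the displayed identity.

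The main obstacle is part (1): proving that a finite-dimensional abelian restricted Lie algebra with nonsingular $p$-mapping really has a \emph{basis} of toral elements, not merely a spanning set of $p$-semisimple elements. The delicate point is that the $p$-mapping is only $p$-semilinear (semilinear with respect to the Frobenius $t \mapsto t^p$), not linear, so "diagonalizing" it requires the rescaling trick above together with a genuinely simultaneous argument: one must check that after choosing $[p]$-eigenvectors and rescaling them to be toral, the resulting toral elements still span. I would handle this by induction on $\dim L$, splitting off a one-dimensional toral $p$-subalgebra $Kx_1$ (which exists by the eigenvector-plus-rescaling argument applied to any nonzero $x \in L$, invoking Theorem~\ref{sigsemi} if needed to first reach a $p$-semisimple power) and passing to a complement; one checks the complement can be taken to be a $p$-subalgebra on which $[p]$ is still nonsingular. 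The rest of the proof is then routine bookkeeping with Proposition~\ref{properties}.
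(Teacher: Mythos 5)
The paper itself offers no proof of this theorem: it is quoted verbatim from Strade--Farnsteiner \cite[p.~82]{stfa}, so your proposal has to stand on its own. Your treatment of part (2) is the standard reduction and is essentially correct modulo part (1): decompose $x = x_s + x_n$ by Theorem~\ref{jcd}, set $y = x_n$, apply (1) to the abelian restricted subalgebra $\langle x_s\rangle$ (on which $[p]$ is nonsingular because, by Proposition~\ref{properties}(4), all of its elements are $p$-semisimple and a $p$-semisimple $p$-nilpotent element is zero), and verify $[y, x_i]=0$ via ${\rm ad}(x_s^{[p]^j}) = ({\rm ad}\,x_s)^{p^j}$.

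The genuine gap is in part (1), which is the entire content of the theorem. Your pivotal claim is that $\langle x\rangle$ ``is spanned by eigenvectors of $[p]$.'' The $p$-mapping is $p$-semilinear, not linear, so no diagonalizability theorem applies to it; and since an ``eigenvector'' with nonzero eigenvalue rescales to a toral element (your computation $(cz)^{[p]} = c^{p-1}\lambda\,(cz)$ is correct), ``spanned by eigenvectors'' is essentially a restatement of ``spanned by toral elements'' --- the thing to be proved. It does not follow from the elements of $L$ being $p$-semisimple in the sense $x \in \langle x^{[p]}\rangle$; likewise, your appeal to ``commuting semisimple endomorphisms are simultaneously diagonalizable'' confuses semisimplicity of elements with semisimplicity of the (nonlinear) operator $[p]$. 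What is actually required is Jacobson's theorem on bijective $p$-semilinear maps over an algebraically closed field: the fixed vectors form an $\mathbb F_p$-form of $L$, hence contain a $K$-basis. Concretely, one must produce even a single nonzero toral element of $\langle x\rangle$ by writing $z = \sum_i \lambda_i\, x^{[p]^i}$, translating $z^{[p]} = z$ into a system of polynomial equations in the $\lambda_i$ (using the linear dependence $x^{[p]^n} = \sum_{i<n} a_i x^{[p]^i}$, where nonsingularity forces $a_0 \neq 0$), and exhibiting a nonzero solution over $K$; the induction then proceeds by passing to the quotient $L/Kx_1$ and \emph{lifting} toral elements, which again requires solving such equations --- a $[p]$-stable complement to $Kx_1$, which your sketch assumes, need not exist a priori. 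None of this is routine bookkeeping; it is the proof, and your proposal does not supply it.
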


\begin{definition} \label{resttorus}
Let $\,(L,\,[p])\,$ be a restricted Lie algebra over $\,F$. A subalgebra
$\,T\subset L\,$ is called a {\bf torus} or a {\bf toral subalgebra} if
 $\,T\,$ is an abelian $\,p$-subalgebra, consisting of
$\,p$-semisimple elements.
\end{definition}

It follows from the definition and Proposition~\ref{properties}(2),
that if $\,T\,$ is a torus of $\,L\,$ and $\,\varphi\,$ is a
$\,p$-representation of $\,L,\,$ then $\,\varphi(T)\,$ is
diagonalizable (see \cite[4.5.5]{wint}).

\begin{lemma} \label{ppp}
Let $\,(L,\,[p])\,$ be a finite dimensional restricted Lie algebra
over an algebraically closed field $\,K$. If $\,(L,\,[p])\,$
contains no nonzero $\,p$-nilpotent elements, then $\,(L,\,[p])\,$ is toral. 
\end{lemma}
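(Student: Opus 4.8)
The plan is to show that $L$ has a basis of toral elements, which immediately gives that it is toral. First I would establish that $L$ is abelian. Take any $x, y \in L$. By Theorem~\ref{3.6}(2), write $x = \sum_{i} c_i x_i$ with the $x_i$ toral and pairwise commuting, and similarly $y = \sum_j d_j y_j$ — here the $p$-nilpotent parts vanish by hypothesis. The subalgebra generated by a single toral element $x_i$ satisfies $x_i^{[p]} = x_i$, so $\langle x_i\rangle = K x_i$ is one-dimensional; in particular the map $\mathrm{ad}\,x_i = (\mathrm{ad}\,x_i)$ satisfies $(\mathrm{ad}\,x_i)^p = \mathrm{ad}(x_i^{[p]}) = \mathrm{ad}\,x_i$ by Definition~\ref{restliealg}(iii), so $\mathrm{ad}\,x_i$ is a semisimple endomorphism of $L$ (its minimal polynomial divides $X^p - X$, which has distinct roots). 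The same holds for $\mathrm{ad}\,y_j$, and all these endomorphisms commute (the $x_i, y_j$ need not commute a priori for different decompositions, so this needs care — see below).

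The cleaner route for abelianness: suppose $[x,y]\neq 0$ for some $x,y$. Consider $z = [x,y]$. Using Theorem~\ref{3.6}(2) applied to $x$, we have $\mathrm{ad}\,x$ is a sum of commuting semisimple endomorphisms $c_i\,\mathrm{ad}\,x_i$, hence $\mathrm{ad}\,x$ itself is semisimple, so $L$ decomposes as a direct sum of eigenspaces for $\mathrm{ad}\,x$. If all eigenvalues were $0$ then $x$ would be central; otherwise pick an eigenvector $w$ with $[x,w] = \lambda w$, $\lambda \neq 0$. Then the element $x' := \lambda^{-1}[x,w]$... — actually the sharp tool is: in a restricted Lie algebra with no nonzero $p$-nilpotents over an algebraically closed field, apply Theorem~\ref{jcd}/Theorem~\ref{3.6} to conclude every element is $p$-semisimple (this is exactly Proposition~\ref{properties}(5), since $K$ perfect and $L$ has no nontrivial $p$-nilpotent elements). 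So every element of $L$ is $p$-semisimple.

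Now I would show $L$ is abelian using $p$-semisimplicity of all elements. Suppose $[a,b] \neq 0$. Since $a$ is $p$-semisimple, $\mathrm{ad}\,a$ is semisimple on $L$ (by Proposition~\ref{properties}(2), applied to the restricted representation $\mathrm{ad}$). Decompose $b = \sum b_\mu$ into $\mathrm{ad}\,a$-eigenvectors with eigenvalues $\mu \in K$. Then $[a, b_\mu] = \mu b_\mu$, and one computes $(\mathrm{ad}\,a)$ maps $b_\mu \mapsto \mu b_\mu$; consider a nonzero $b_\mu$ with $\mu \neq 0$. The element $c := b_\mu$ satisfies $[a,c] = \mu c$. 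Then $\mathrm{ad}\,c$ applied to $a$ gives $-\mu c$, and iterating, $(\mathrm{ad}\,c)^2(a) = 0$, so $a$ generates (under $\mathrm{ad}\,c$) a $2$-dimensional indecomposable, forcing $\mathrm{ad}\,c$ to have a nonzero nilpotent Jordan block — but $\mathrm{ad}\,c$ must be semisimple since $c$ is $p$-semisimple, contradiction. Hence $L$ is abelian.

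Finally, $L$ is an abelian $p$-subalgebra consisting of $p$-semisimple elements, so by Definition~\ref{resttorus} it is a torus; equivalently, invoke Theorem~\ref{3.6}(1) — I must check the $p$-mapping is nonsingular, i.e. $x^{[p]} \neq 0$ for $x \neq 0$: if $x^{[p]} = 0$ then $x$ is $p$-nilpotent, contradicting the hypothesis — so $L$ has a basis of toral elements and is toral. The main obstacle is the abelianness step: the subtlety is that Theorem~\ref{3.6}(2) expresses a \emph{single} element via commuting tori, but to compare two elements $a,b$ I must pass through the eigenspace decomposition of $\mathrm{ad}\,a$ and exploit that $\mathrm{ad}$ of a $p$-semisimple element is a semisimple operator (Proposition~\ref{properties}(2)); getting the contradiction cleanly requires the little $2$-dimensional Jordan-block argument rather than a direct appeal to any quoted statement. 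Everything else is a routine consequence of Proposition~\ref{properties}(5) and Theorem~\ref{3.6}(1).
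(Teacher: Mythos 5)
Your proposal is correct and follows essentially the same route as the paper: first every element is $p$-semisimple (the paper cites Theorem~\ref{jcd}(2), you cite Proposition~\ref{properties}(5) — equivalent here since $K$ is algebraically closed, hence perfect), and then abelianness is forced by observing that an eigenvector $c$ of $\mathrm{ad}\,a$ with nonzero eigenvalue would give $(\mathrm{ad}\,c)(a)\neq 0$ but $(\mathrm{ad}\,c)^2(a)=0$, contradicting semisimplicity of $\mathrm{ad}\,c$. The paper's proof is exactly this argument phrased in terms of the eigenspace decomposition of $\mathrm{ad}\,y$, so no further comment is needed.
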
\noindent
\begin{proof} By Theorem~\ref{jcd}(2), each element of $\,(L,\,[p])\,$
is semisimple, so it remains to prove
that $\,(L,\,[p])\,$ is abelian, i.e.,
$\,{\rm ad}\;x\,=\,0$, for each $\,x\,\in\,L$. As $\,{\rm ad}\;x\,$ is
diagonalizable ($\,{\rm ad}\;x\,$ being semisimple and $\,K\,$ algebraically
closed), we have to show that $\,{\rm ad}\;x\,$ has no nonzero eigenvalues.
Suppose, on the contrary, that $\,[x,\,y]\,=\,a\,y\;(a\neq 0)\,$ for some 
nonzero $\,y\in L$. Then 
\[
({\rm ad}\;y)^2\,(x)\,=\,[y,\,[y,\,x]]\,=\,-a\,[y,\,y]\,=\,0,\,\qquad\mbox{(*)}
\]
i.e., $\,[y,\,x]\,$ is an eigenvector of $\,{\rm ad}\;y\,$ of
eigenvalue $\,0$. 
Now write $\,x\,$ as a linear combination of eigenvectors of
$\,{\rm ad}\;y\,$ ($\,y\,$ is also semisimple).
Clearly, $\,[y,\,x]\,$ is a combination of $\,{\rm ad}\;y$-eigenvectors which
belong to nonzero eigenvalues, if any. This, however, contradicts (*).
\end{proof}

\subsubsection{Restricted Universal Enveloping Algebras}

For restricted Lie algebras there is an analogue of the universal
enveloping algebra.
This structure plays an important role in the
representation theory of algebraic groups over a field of positive
characteristic. 

\begin{definition} \label{ruel}
Let $\,(L,\,[p])\,$ be a restricted Lie algebra. A pair $\,({\mathfrak
u}(L),\,i)\,$ consisting of an associative $\,K$-algebra with unity
and a restricted homomorphism $\,i\,:\,L\,\longrightarrow\, 
{\mathfrak u}(L)^{(-)}\,$
is called a {\bf restricted universal enveloping algebra} if given any
associative $\,K$-algebra $\,A\,$ with unity and any restricted
homomorphism $\,f\,:\,L\,\longrightarrow\,A^{(-)}$, there is a unique
homomorphism $\,F\,:\,{\mathfrak u}(L)\,\longrightarrow\,A\,$ of
associative $\,K$-algebras such that $\,F\circ i\,=\,f$.
\end{definition}

The restricted universal enveloping algebra $\,{\mathfrak u}(L)\,$ is
the quotient of the ordinary universal enveloping
algebra $\,{\mathfrak U}(L)\,$ by the two-sided ideal generated by all
$\,x^p - x^{[p]}$, where $\,x\in L\,$ (see \cite[V,Theorem 12]{jac}).

The universal property of $\,{\mathfrak u}(L)\,$ shows the
uniqueness of the restricted universal enveloping algebra of $\,L\,$ up to
isomorphism (see \cite[I.8.1]{stfa}).

\begin{theorem}{\rm\cite[p. 91]{stfa}}  \label{ruelbasis}
Let $\,(L,\,[p])\,$ be a restricted Lie algebra. Then the following
statements hold:

(1) The restricted universal enveloping algebra of $\,L\,$ exists.

(2) If $\,({\mathfrak u}(L),\,i)\,$ is a restricted universal enveloping
algebra of $\,L\,$ and $\,(e_j)_{j\in J}\,$ is an ordered basis of
$\,L\,$ over $\,K$, then the elements
$\,i(e_{j_1})^{s_1}\,\cdots\,i(e_{j_n})^{s_n}\,$, where  
$\,j_1\,<\,\cdots\,<\,j_n,\;n\,\geq\,1$, $\,0\,\leq\,s_k\,\leq\,p-1$,
form a basis of $\,{\mathfrak u}(L)\,$ over $\,K$. In particular, 
$\,i\,:\,L\,\longrightarrow\,{\mathfrak u}(L)\,$ is injective and 
$\,\dim_F\,{\mathfrak u}(L)\,=\,p^n\,$ if $\,\dim_F\,L\,=\,n$.
\end{theorem}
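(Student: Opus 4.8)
For (1), the plan is to take the construction already indicated in the text: set $\,{\mathfrak u}(L)\,:=\,{\mathfrak U}(L)/I\,$, where $\,{\mathfrak U}(L)\,$ is the ordinary universal enveloping algebra and $\,I\,$ is the two-sided ideal generated by all $\,x^p-x^{[p]}\;(x\in L)\,$, and let $\,i\,$ be the composite of the canonical map $\,L\to{\mathfrak U}(L)\,$ with the quotient map. First I would check that $\,i\,$ is a restricted homomorphism: by construction $\,i(x)^p-i(x^{[p]})=0\,$ in $\,{\mathfrak u}(L)\,$, which is exactly $\,i(x^{[p]})=i(x)^{[p]}\,$ in $\,{\mathfrak u}(L)^{(-)}\,$. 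Then, given any restricted $\,f\colon L\to A^{(-)}\,$, the universal property of $\,{\mathfrak U}(L)\,$ yields $\,\tilde F\colon{\mathfrak U}(L)\to A\,$ with $\,\tilde F(x)=f(x)\,$; since $\,f\,$ is restricted, $\,\tilde F(x^p-x^{[p]})=f(x)^p-f(x^{[p]})=0\,$, so $\,\tilde F\,$ kills $\,I\,$ and factors through a homomorphism $\,F\colon{\mathfrak u}(L)\to A\,$ with $\,F\circ i=f\,$, necessarily unique since $\,i(L)\,$ generates $\,{\mathfrak u}(L)\,$. Uniqueness of $\,{\mathfrak u}(L)\,$ up to isomorphism has already been noted.

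The main work is (2), and the first reduction is to understand the ideal $\,I\,$. Fix an ordered basis $\,(e_j)_{j\in J}\,$ of $\,L\,$ and put $\,z_j:=e_j^p-e_j^{[p]}\in{\mathfrak U}(L)\,$. I would show each $\,z_j\,$ is central in $\,{\mathfrak U}(L)\,$: in any associative algebra of characteristic $\,p\,$ one has $\,{\rm ad}(a^p)=({\rm ad}\,a)^p\,$ (write $\,{\rm ad}\,a=L_a-R_a\,$ with $\,L_a,R_a\,$ commuting and raise to the $\,p$th power), so for $\,y\in L\,$, $\,[e_j^p,y]=({\rm ad}\,e_j)^p(y)=({\rm ad}_L\,e_j)^p(y)=[e_j^{[p]},y]\,$ by Definition~\ref{restliealg}(iii), whence $\,[z_j,y]=0\,$; since $\,L\,$ generates $\,{\mathfrak U}(L)\,$, $\,z_j\,$ is central. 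Next, comparing the associative expansion of $\,(x+y)^p\,$ in $\,{\mathfrak U}(L)\,$ with Jacobson's Identity (Definition~\ref{restliealg}(ii)) — both involve the same $\,s_i(x,y)\,$ — shows that $\,x\mapsto x^p-x^{[p]}\,$ is additive and $\,p$-semilinear on $\,L\,$; hence $\,x^p-x^{[p]}\in\sum_j K z_j\,$ for every $\,x\,$, and therefore $\,I=\sum_j{\mathfrak U}(L)\,z_j\,$, a two-sided ideal because the $\,z_j\,$ are central.

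The second reduction uses the ordinary PBW theorem. By PBW the ordered monomials $\,e^{\bf a}=e_{j_1}^{a_1}\cdots e_{j_n}^{a_n}\,$ (with $\,{\bf a}\,$ a finitely supported exponent vector) form a $\,K$-basis of $\,{\mathfrak U}(L)\,$; equip $\,{\mathfrak U}(L)\,$ with the PBW filtration, so that $\,{\rm gr}\,{\mathfrak U}(L)\cong S(L)=K[\bar e_j]\,$, and note the leading term of $\,z_j\,$ is $\,\bar e_j^{\,p}\,$, since $\,e_j^{[p]}\in L\,$ has filtration degree $\,1<p\,$. In the polynomial ring $\,S(L)\,$ the ``restricted'' monomials $\,\{\bar e^{\bf r}:0\le r_j\le p-1\}\,$ form a free basis over the subring $\,K[\bar e_j^{\,p}]\,$. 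A standard filtered-to-graded lifting argument then upgrades this to a decomposition $\,{\mathfrak U}(L)=\bigoplus_{\bf r}K[z_\bullet]\,e^{\bf r}\,$ of $\,{\mathfrak U}(L)\,$ as a free module over the central polynomial subalgebra $\,K[z_\bullet]:=K[z_j:j\in J]\,$, the sum running over restricted exponent vectors $\,{\bf r}\,$. Passing to the quotient by $\,I=\bigl(\sum_j z_j\bigr)K[z_\bullet]\cdot{\mathfrak U}(L)\,$ kills precisely the $\,z_j\,$, leaving $\,{\mathfrak u}(L)=\bigoplus_{\bf r}K\,\overline{e^{\bf r}}\,$; rewriting $\,\overline{e^{\bf r}}\,$ as $\,i(e_{j_1})^{s_1}\cdots i(e_{j_n})^{s_n}\,$ with $\,0\le s_k\le p-1\,$ gives the claimed basis. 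When $\,\dim_F L=n\,$ there are exactly $\,p^n\,$ such monomials, so $\,\dim_F{\mathfrak u}(L)=p^n\,$, and since the degree-one monomials $\,e_j\,$ occur among the basis elements, $\,i\,$ is injective.

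The step I expect to be the real obstacle is the linear independence half of (2): spanning and the reduction of $\,I\,$ are essentially formal, but proving that the restricted monomials remain linearly independent in the quotient — equivalently that $\,I\,$ is not ``too large'' — requires the freeness of $\,{\mathfrak U}(L)\,$ over $\,K[z_\bullet]\,$, and the cleanest route to that is the filtered-module lifting from $\,S(L)\,$. An alternative is to construct explicitly a $\,{\mathfrak u}(L)$-module structure on the $\,p^n$-dimensional space of restricted monomials and verify the relations, but that buries the difficulty in an associativity computation. I would single out the identity $\,{\rm ad}(a^p)=({\rm ad}\,a)^p\,$ and the filtered-to-graded comparison as the two technical points deserving the most care.
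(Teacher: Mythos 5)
Your proposal is correct. The paper itself gives no proof of this theorem — it is quoted as background with a citation to Strade–Farnsteiner \cite[p.~91]{stfa} — and your argument (centrality of the elements $z_j=e_j^p-e_j^{[p]}$, $p$-semilinearity of $x\mapsto x^p-x^{[p]}$ via the shared Lie polynomials $s_i$, and the PBW filtration argument showing $\mathfrak U(L)$ is free over $K[z_\bullet]$ on the restricted monomials) is exactly the standard proof found in that reference.
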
 

We identify $\,L\,$ with its image $\,i(L).\,$ 
Note that the
finiteness of the dimension is preserved when passing from restricted
Lie algebras to their restricted enveloping algebras, in contrast with
the ordinary Lie algebras and their envelopes. \vspace{1.5ex}\\
\noindent 
\begin{remark}\label{extrest}
It follows from Definition~\ref{ruel} that a $\,p$-representation of
the restricted Lie algebra $\,L\,$ extends uniquely to a
representation of $\,\mathfrak u(L)\,$ and, conversely, 
any representation of $\,\mathfrak u(L)\,$ restricts to a  
$\,p$-representation of $\,L.\,$ 
\end{remark}


\subsection{The Lie Algebra of an Algebraic Group}\label{tlaag}

Throughout this section let $\,K\,$ be an algebraically closed field
and $\,G\,$ be a connected algebraic group over $\,K$.
 
In this section I define the Lie algebra $\,{\cal L}(G)\,$ of the
group $\,G\,$ and show that if $\,{\rm char}\,K\,>\,0,\,$ then $\,{\cal
L}(G)\,$ is a restricted Lie algebra. I also recall some 
results related to the general structure of algebraic groups 
and their Lie algebras. Standard notions of
algebraic geometry used here can be found in the first chapter of 
\cite{bor2} or \cite{hum2}.  

Denote by $\,K[G]\,$
the coordinate ring of (the irreducible affine variety) $\,G$.
Let $\,T_e(G)\,$ denote the tangent space of $\,G\,$ at the
identity element $\,e\in G$. $\,T_e(G)\,$ can be identified with the space
$\,{\rm Der}(K[G],\,K_e)\,$ of the point-derivations of $\,G\,$ at $\,e\,$
(for definition and notation see~\cite[1.3]{car1} or~\cite[I.3.3]{bor2}).

If $\,f\in K[G]\,$ and $\,g\in G,\,$ the map
$\,f^g\,:\,G\longrightarrow K\,$ defined by $\,f^g(t)\,=\,f(tg)\,$
lies in $\,K[G]\,$ (since right multiplication by $\,g\,$ is a morphism
of $\,G\,$ \cite[I.1.9]{bor2}). 
Let $\,\alpha_g\,:\,K[G]\longrightarrow K[G]\,$ be defined by 
$\,\alpha_g(f)\,=\,f^g$. Then $\,\alpha_g\,$ is a $\,K$-algebra
automorphism of $\,K[G].\,$ Furthermore,
$\,\alpha_{gh}\,=\,\alpha_g\alpha_h\,$ and so we have a homomorphism
from $\,G\,$ into the group of $\,K$-algebra automorphisms of $\,K[G]$.

Recall that the set of all derivations $\,{\rm Der}_K(K[G])\,$ of the
algebra $\,K[G]\,$ forms a Lie algebra under
the Lie multiplication $\,[D_1,D_2]\,=\,D_1D_2\,-\,D_2D_1\,$. 
(cf.~Example~\ref{examrest}(2)).
A derivation $\,D\in{\rm Der}_K(K[G])\,$ is said to be {\bf invariant}
if $\,D(f^g)\,=\,(Df)^g\,$ for all $\,f\in K[G]\,$ and $\,g\in G$. 
The set $\,{\rm Der}_K(K[G])^G\,$ of all invariant derivations forms a Lie
subalgebra of $\,{\rm Der}_K(K[G])\,$. If $\,K\,$ has characteristic
$\,p\,$ and $\,D\,$ is an invariant derivation, then so is $\,D^p\,$. 
This gives $\,{\rm Der}_K(K[G])^G\,$ a restricted Lie algebra structure
(see~\cite[1.3]{car1}). 

Given $\,D\in {\rm Der}_K(K[G])^G$, the map $\,f\longmapsto Df(e)\,$
is a point-derivation of $\,G\,$ at the identity element. This gives
rise to a map $\,\varphi : {\rm Der}_K(K[G])^G\longrightarrow 
{\rm Der}(K[G],\,K_e)\,$ from the Lie algebra of invariant derivations
of $\,K[G]\,$ to the space of point-derivations of $\,G\,$ at $\,e.\,$ 
By~\cite[I.3.4, I.3.5]{bor2},
$\,\varphi\,$ is an isomorphism of vector spaces. Since 
$\,{\rm Der}_K(K[G])^G\,$ has a Lie algebra structure, $\,\varphi\,$ gives a
restricted Lie algebra structure to $\,{\rm Der}(K[G],\,K_e).\,$ 
Thus, the tangent space $\,T_e(G)\,$ to $\,G\,$ at the
identity element has a canonical restricted Lie algebra structure.
This restricted Lie algebra is denoted by $\,\mathfrak g\,=\,{\cal L}(G)\,$ and
called the {\bf Lie algebra of the algebraic group $\,G$}.
The Lie algebra of an algebraic group $\,G,\,H,\,M,\ldots\,$ is often denoted
by the corresponding German character $\,\mathfrak{g,\,h,\,m,}\ldots\,$.

We have $\,\dim\,{\cal L}(G)\,=\,\dim\,G\,$ since $\,G\,$ is a smooth
variety~\cite[1.3]{car1}. One can also define the Lie algebra of a
disconnected algebraic group, but then $\,{\cal L}(G)\,=
\,{\cal L}(G^{\circ})\,$, where $\,G^{\circ}\,$
is the connected component of $\,G\,$ containing the identity element
$\,e\in G\,$ \cite[I.3.6]{bor2}.

If $\,H\,$ is a closed subgroup of $\,G\,$ then 
$\,\mathfrak h\,$ is a restricted subalgebra of $\,\mathfrak g\,$ 
\cite[I.3.8]{bor2}.
If $\,U\,$ is a unipotent subgroup of $\,G$, then $\,{\cal L}(U)\,$  
consists of $\,p$-nilpotent elements of the restricted Lie algebra 
$\,{\cal L}(G)\,$ \cite[I.4.8]{bor2}.
\medskip

Let $\,\phi : G \rightarrow G'\,$ be a homomorphism of algebraic groups.
There is an associated Lie algebra homomorphism
$\,{\rm d}\phi : {\cal L}(G) \rightarrow {\cal L}(G')$, given by
$\,{\rm d}\phi (D) = D \circ
\phi^*$. This homomorphism is called the {\bf differential of} $\,\phi\,$
\cite[Theorem~9.1]{hum2}. Here $\,\phi^* : K[G']\rightarrow K[G]\,$
denotes the {\it comorphism} associated with the morphism
$\,\phi : G\rightarrow G'\,$ \cite[1.5]{hum2}. 
If $\,\phi_1 : G_1\rightarrow G_2\,$ and 
$\,\phi_2 : G_2\rightarrow G_3\,$ are homomorphisms of algebraic
groups, then their differentials satisfy 
$\,{\rm d}(\phi_2\circ \phi_1)\,=\,{\rm d}(\phi_2)\circ{\rm d}
(\phi_1) \,$.

Let $\,V\,$ be a finite dimensional vector space over the field $\,K.\,$
A representation $\,\pi\,:\,G\longrightarrow \GL(V)\,$ which is also a
morphism of algebraic varieties is called a
{\bf rational representation} of $\,G$. 
The differential $\,{\rm d}\pi\,:\,\mathfrak g\longrightarrow 
\mathfrak{gl}(V)\,$ defines a $\,p$-representation of the restricted
Lie algebra $\,\mathfrak g\,=\,{\cal L}(G)\,$ \cite[I.3.10]{bor2}.

The {\bf adjoint representation} of an algebraic group $\,G\,$ is defined  
as follows.
For each $\,x\in G$, the inner automorphism $\,{\rm Int}\,x\,(y)\,= 
\,x\,y\,x^{-1}\;(x,\,y\,\in G\,)\,$ of $\,G\,$ preserves the identity
element. This induces a linear action of $\,G\,$ on the tangent space
$\,{\cal L}(G)\,=\,T_e(G),\,$ denoted by 
$\,\Ad\,:\,G\longrightarrow\GL({\cal L}(G))\,$ and called the
{\it adjoint representation} of $\,G\,$. 
The differential of $\,\Ad\,$ is $\,{\rm ad}\,:\,
{\cal L}(G)\longrightarrow \mathfrak{gl}({\cal L}(G)),\,$ where 
$\,({\rm ad}\,a)(b)\,=\,[a,\,b]\,$ for all $\,a,\,b\in {\cal L}(G)\,$
\cite[10.3, 10.4]{hum2}.


\subsection{General Notions Related to Algebraic Groups}\label{gennot}

Let $\,G\,$ be a connected reductive algebraic group over $\,K\,$ and 
let $\,T\,$ be a maximal torus of $\,G.\,$ 
Let $\,B\,$ be a Borel subgroup of $\,G\,$ containing $\,T.\,$ Then 
$\,B\,$ has a semidirect product decomposition $\,B\,=\,T\,U ,\,$ where 
$\,U\,=\,R_{\rm u}(B)\,$ is the unipotent radical of $\,B\,$
\cite[19.5]{hum2}. There is a unique Borel subgroup $\,B^-\subset G\,$ 
containing $\,T\,$ and such that $\,B\cap B^-\,=\,T.\,$ $\,B\,$ and $\,B^-\,$ 
are called {\bf opposite} Borel subgroups. We have $\,B^-\,=\,T\,U^-\,$, 
where $\,U^-\,=\,R_{\rm u}(B^-)\,$ \cite[Cor. 26.2]{hum2}. 
$\,U\,$ and $\,U^-\,$ are connected subgroups normalized by  
$\,T\,$ and satisfying $\,U\cap U^-\,=\,\{ e\}.\,$ They 
are maximal unipotent subgroups of $\,G.\,$ 

\subsubsection{The Root System}\label{therootsys}

Consider the minimal subgroups of positive dimension in $\,U\,$ and $\,U^-\,$ 
which are normalized by $\,T.\,$ These are connected unipotent groups,
isomorphic to the additive group $\,{\mathbf G}_a\,$ \cite[20.5]{hum2}. 
$\,T\,$ acts on each of them by conjugation, giving rise to
a homomorphism 
from $\,T\,$ to the group of algebraic automorphisms of $\,{\mathbf G}_a.\,$ 
However, the only algebraic automorphisms of $\,{\mathbf G}_a\,$ are the 
maps $\,\lambda\longmapsto\mu\lambda\,$ for some $\,\mu\in K^*.\,$ 
Thus $\,{\rm Aut}\,{\mathbf G}_a\,$ is isomorphic to $\,{\mathbf G}_m,\,$
the multiplicative group of $\,K$.
Hence, each of our $\,1$-dimensional unipotent groups determines 
an element of $\,{\rm Hom}(T,{\mathbf
G}_m)\,=\,X\,$. (Here $\,X\,=\,X(T)\,$ is the so-called {\it character
group} of $\,T\,$.)
The elements of $\,X\,$ arising in this way are called the {\bf roots} of
$\,G$. 
Distinct $\,1$-dimensional 
unipotent subgroups give rise to distinct roots. The roots form a finite
subset $\,R\,$ of $\,X\,$ (which is independent of the choice of the  
Borel subgroup $\,B\,$ containing $\,T\,$)~\cite[1.9]{car1}. 

For each root $\,\alpha\in R,\,$ the $\,1$-dimensional unipotent subgroup 
giving rise to it is denoted by $\,U_{\alpha}\,$. 
The $\,U_{\alpha}\,$ are called the {\bf root subgroups} of $\,G.\,$ 
The roots arising from $\,U^-\,$ are the negatives 
of the roots arising from $\,U.\,$ 

Let $\,W\,=\,N(T)/T\,$ be the {\bf Weyl group} of $\,G\,$ (see
\cite[24.1]{hum2}).
\begin{theorem}{\rm\cite[27.1]{hum2}}\label{ssars}
Let $\,G\,$ be a semisimple algebraic group, and \linebreak 
$\,E\,=\,X\otimes_{\mathbb Z}{\mathbb R}\,$. Then $\,R\,$ is an abstract 
root system in $\,E$, whose rank is the rank of $\,G\,$ and whose abstract 
Weyl group is isomorphic to $\,W$.
\end{theorem}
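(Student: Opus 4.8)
The plan is to verify that $R$ satisfies the axioms of an abstract root system in $E$, and then to identify the abstract Weyl group of $R$ with $W=N(T)/T$. The starting point is the structure theory already in place: for each $\alpha\in R$ we have the root subgroup $U_\alpha\cong\mathbf G_a$ normalized by $T$, and the negatives of roots arise from $U^-$. The first step is to show $R$ spans $E$: since $G$ is semisimple, $T$ acts faithfully on $\mathfrak g$ through the root spaces, so the roots cannot all lie in a proper subspace of $X\otimes_{\mathbb Z}\mathbb R$; hence $\dim E=\operatorname{rank}G$ and $R$ spans $E$. The second step is to produce, for each $\alpha\in R$, the associated coroot $\alpha^\vee$ and the reflection $s_\alpha$. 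Here I would use the rank-one subgroup $Z_\alpha=\langle U_\alpha,U_{-\alpha}\rangle$: this is a semisimple group of rank one, hence (up to isogeny) $\SL_2$ or $\PGL_2$, and its Weyl group relative to $T\cap Z_\alpha$ is generated by a single element $n_\alpha\in N(T)$ acting on $X$ by the reflection $s_\alpha(\chi)=\chi-\langle\chi,\alpha^\vee\rangle\alpha$, where $\alpha^\vee$ is the cocharacter coming from the maximal torus of $Z_\alpha$. This simultaneously gives $s_\alpha R=R$ (because conjugation by $n_\alpha$ permutes the root subgroups, sending $U_\beta$ to $U_{s_\alpha\beta}$) and the integrality $\langle\beta,\alpha^\vee\rangle\in\mathbb Z$ for all $\beta\in R$.

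The third step is to check the remaining root-system axioms: that $R$ is finite (already recorded in the excerpt, $R\subset X$ is a finite subset), that $0\notin R$ (a root subgroup is $1$-dimensional, so the associated character is nontrivial), and that $\mathbb R\alpha\cap R=\{\pm\alpha\}$. For this last point I would argue that if $c\alpha\in R$ with $c\notin\{\pm1\}$, then $U_{c\alpha}$ and $U_{\pm\alpha}$ would all lie in the rank-one group $Z_\alpha$, whose root system relative to its maximal torus has exactly two elements; combined with the classification of rank-one semisimple groups this forces $c=\pm1$ (the only subtlety, that $\pm2\alpha$ might occur as in type $BC$, is excluded because $R$ arises from an algebraic group of one of the standard types — reduced root systems — or can be ruled out directly by the $\SL_2$/$\PGL_2$ dichotomy). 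This establishes that $R$ is an abstract (reduced) root system in $E$, and its rank equals $\dim E=\operatorname{rank}G$.

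Finally, to identify the abstract Weyl group $W(R)$ (generated by the $s_\alpha$) with $W=N(T)/T$: the map $n\mapsto(\chi\mapsto\chi\circ\operatorname{Int}n^{-1})$ gives an action of $W$ on $X$, hence on $E$, and by the previous paragraph the images of the $n_\alpha$ are exactly the $s_\alpha$, so $W(R)\subseteq$ image of $W$ in $\GL(E)$. For the reverse inclusion one uses that $N(T)$ is generated by $T$ together with the $n_\alpha$ — this is the standard fact that the $n_\alpha$ generate $N(T)/T$, which follows from the Bruhat decomposition (or from the fact that $W$ permutes the Borel subgroups containing $T$ simply transitively and these correspond to systems of positive roots, each reachable from a fixed one by a product of simple reflections). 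One also checks the action of $W$ on $E$ is faithful (an element of $N(T)$ acting trivially on $X$ centralizes $T$, hence lies in $T$ since $T$ is its own centralizer in a connected reductive group). Combining, $W\cong W(R)$ as groups acting on $E$.

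The main obstacle is the second step: extracting the coroots and the reflections $s_\alpha$ from the group structure requires the rank-one reduction $Z_\alpha=\langle U_\alpha,U_{-\alpha}\rangle\cong\SL_2$ or $\PGL_2$ and a careful analysis of how $n_\alpha$ acts on $X(T)$; everything else (finiteness, spanning, the Weyl-group identification via $N(T)=\langle T,n_\alpha\rangle$) is comparatively formal once that is in hand. In a thesis this is normally cited wholesale from Humphreys or Borel, which is presumably what the reference $\cite[27.1]{hum2}$ does.
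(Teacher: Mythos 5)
The paper gives no proof of this statement: it is quoted verbatim as background from Humphreys \cite[27.1]{hum2}, and your sketch follows exactly the route taken there — reduction to the rank-one subgroups $Z_\alpha\cong\SL_2$ or $\PGL_2$ to produce the coroots and reflections, reducedness and integrality from the rank-one classification, and the identification $W\cong W(R)$ via generation of $N(T)/T$ by the $n_\alpha$ together with $C_G(T)=T$. Your proposal is correct and is essentially the cited proof, so there is nothing to compare against within the thesis itself.
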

Thus, all results on abstract root systems are applicable to
$\,R.\,$ Here we summarize some of them.

A {\bf basis} of $\,R\,$ is a subset 
$\,\Delta\,=\,\{\alpha_1,\ldots,\alpha_{\ell}\},\,$ $\,\ell\,=\,{\rm
rank}\,G,\,$ which spans $\,E\,$ (hence is a basis of $\,E\,$) and
relative to which each root $\,\alpha\,$ has a (unique) expression
$\,\alpha\,=\,\sum\,c_i\,\alpha_i\,$, where the $\,c_i$'s are integers
of like sign. The elements of $\,\Delta\,$ are called {\bf simple
roots}. Bases do exist; in fact, $\,W\,$ permutes them simply
transitively, and each root lies in at least one basis. Moreover,
$\,W\,$ is generated by the reflections $\,s_{\alpha}\,$ 
($\,\alpha\in\Delta\,$), for any basis $\,\Delta\,$
(see~\cite[III]{hum1}). There is an inner product
$\,(\,\cdot\,,\,\cdot\,)\,$ on $\,E\,$ relative to which $\,W\,$ consists
of orthogonal transformations. The formula for
$\,s_{\alpha}\,$ becomes $\,s_{\alpha}(\beta)\,=\,\beta\,-\, 
\langle\beta,\,\alpha\rangle\,\alpha\,$, where  
$\,\langle\beta,\,\alpha\rangle\,=\,2(\beta,\,\alpha)/(\alpha,\,\alpha).\,$

Bases of $\,R\,$ correspond one-to-one to Borel subgroups
containing $\,T\,$ \cite[27.3]{hum2}. In particular, each choice
of a Borel subgroup $\,B\,$ containing $\,T\,$ amounts to a choice of
$\,\Delta,\,$ or to a choice of {\bf positive} roots $\,R^+\,$ (those
for which all $\,c_i\,$ above are nonnegative). Moreover, if $\,\Delta\,$ is a
basis of $\,R,\,$ then $\,G\,=\,\langle\, T,\,U_{\alpha}\,/\,\pm\alpha\in
\Delta\,\rangle\,$ \cite[Theorem 27.3]{hum2}.

We recall that $\,R\,$ is called {\bf irreducible} if  
$\,\Delta\,$ cannot be partitioned into two disjoint 
``orthogonal'' subsets \cite[10.4]{hum1}.  
Every root system is the disjoint union of
(uniquely determined) irreducible root systems in suitable subspaces
of $\,E\,$ \cite[11.3]{hum1}.  
Up to isomorphism, the irreducible root systems correspond 
one-to-one to the following {\bf Dynkin diagrams} \cite[11.4]{hum1}:

\begin{center}
\setlength{\unitlength}{1cm}
\begin{picture}(12,15)
\put(0,14.5){$A_{\ell}\,(\ell\geq 1)$:}
\multiput(2.5,14.6)(1.2,0){6}{\circle*{0.2}}
\put(2.6,14.6){\line(1,0){3.6}}
\multiput(6.35,14.5)(0.2,0){4}{$\cdot$}
\put(7.4,14.6){\line(1,0){1.2}}
\put(0,13){$B_{\ell}\,(\ell\geq 2)$:}
\multiput(2.5,13.1)(1.2,0){6}{\circle*{0.2}}
\put(2.6,13.1){\line(1,0){2.4}}
\multiput(5.15,13)(0.2,0){4}{$\cdot$}
\put(6.2,13.1){\line(1,0){1.2}}
\put(7.2,13.15){\line(1,0){1.2}}
\put(7.3,13.05){\line(1,0){1.2}}
\put(7.7,13){$>$}
\put(0,11.5){$C_{\ell}\,(\ell\geq 3)$:}
\multiput(2.5,11.6)(1.2,0){6}{\circle*{0.2}}
\put(2.6,11.6){\line(1,0){2.4}}
\multiput(5.15,11.5)(0.2,0){4}{$\cdot$}
\put(6.2,11.6){\line(1,0){1.2}}
\put(7.3,11.65){\line(1,0){1.2}}
\put(7.3,11.55){\line(1,0){1.2}}
\put(7.8,11.5){$<$}
\put(0,10){$D_{\ell}\,(\ell\geq 4)$:}
\multiput(2.5,10.1)(1.2,0){5}{\circle*{0.2}}
\put(8.5,10.65){\circle*{0.2}}
\put(8.5,9.55){\circle*{0.2}}
\put(2.6,10.1){\line(1,0){2.4}}
\multiput(5.15,10)(0.2,0){4}{$\cdot$}
\put(6.2,10.1){\line(1,0){1.2}}
\put(7.39,10.1){\line(2,1){1.2}}
\put(7.39,10.1){\line(2,-1){1.2}}
\put(0,8.5){$E_{6}$:}
\multiput(1.5,8.6)(1.2,0){5}{\circle*{0.2}}
\put(1.6,8.6){\line(1,0){4.8}}
\put(3.9,7.4){\circle*{0.2}}
\put(3.9,7.4){\line(0,1){1.2}}
\put(0,6.8){$E_{7}$:}
\multiput(1.5,6.9)(1.2,0){6}{\circle*{0.2}}
\put(1.6,6.9){\line(1,0){6}}
\put(3.9,5.7){\circle*{0.2}}
\put(3.9,5.7){\line(0,1){1.2}}
\put(0,5.1){$E_{8}$:}
\multiput(1.5,5.2)(1.2,0){7}{\circle*{0.2}}
\put(1.6,5.2){\line(1,0){7.2}}
\put(3.9,4){\circle*{0.2}}
\put(3.9,4){\line(0,1){1.2}}
\put(0,3.3){$F_{4}$:}
\multiput(1.5,3.4)(1.2,0){4}{\circle*{0.2}}
\put(1.5,3.4){\line(1,0){1.2}}
\put(3.9,3.4){\line(1,0){1.2}}
\put(2.7,3.45){\line(1,0){1.2}}
\put(2.7,3.35){\line(1,0){1.2}}
\put(3.2,3.3){$>$}
\put(0,1.8){$G_{2}$:}
\multiput(1.5,1.9)(1.2,0){2}{\circle*{0.22}}
\multiput(1.5,2)(0,-0.1){3}{\line(1,0){1.2}}
\put(2,1.8){$\langle$}

\end{picture}
\end{center}

\vspace*{-6ex}

The vertices of the Dynkin diagram correspond to the simple roots
$\,\alpha_i\,$ ($\,i=1,\ldots,\ell\,$). The vertices corresponding to
$\,\alpha_i,\,\alpha_j\,$ are joined by
$\,\langle\alpha_i,\,\alpha_j\rangle\,\langle\alpha_j,\,\alpha_i\rangle\,$
edges, with an arrow pointing to the shorter of the two roots if they
are of unequal length. 
The order of $\,s_{\alpha_i}\,s_{\alpha_j}\,$ in $\,W\,$ is
$\,2,\,3,\,4,\,$ or $\,6,\,$ according to whether $\,\alpha_i\,$ and 
$\,\alpha_j\,$ are joined by $\,0,\,1,\,2,\,$ or $\,3\,$ edges
($\,\alpha_i\,\neq\,\alpha_j\,$). The giving of the Dynkin diagram is
equivalent to the giving of the matrix of {\bf Cartan integers}
$\,\langle\alpha_i,\,\alpha_j\rangle\,$
($\,\alpha_i,\,\alpha_j\,\in\Delta\,$). See \cite[11.2]{hum1}. 
 
By~\cite[27.5]{hum2}, if $\,G\,$ is a semisimple algebraic group, 
then it has a decomposition $\,G\,=\,G_1\,\cdots\,G_n\,$, where each 
$\,G_i\,$ is a minimal Zariski closed connected normal subgroup of positive
dimension. Moreover, this decomposition corresponds precisely to the
decomposition of $\,R\,$ into its irreducible components.
An algebraic group is {\bf simple} if it is non-commutative and has no
Zariski closed, connected, normal subgroups other than itself and $\,e\,$.
We say that a simple group is of {\bf exceptional type} if its (irreducible)
root system has Dynkin diagram of type $\,E_6,\,E_7,\,E_8,\,F_4,\,$ or
$\,G_2\,$. A simple group is of {\bf classical type} if its (irreducible)
root system has Dynkin diagram of type $\,A_{\ell},\,B_{\ell},\, 
C_{\ell},\,$ or $\,D_{\ell}.\,$

\subsubsection{The Structure of the Lie Algebra}\label{structliealg}

Let $\,{\mathfrak g}\,=\,{\cal L}(G)\,$ be the Lie algebra of the 
connected reductive algebraic group $\,G.\,$ 
The various structural properties we have described for $\,G\,$
have analogues for the Lie algebra $\,\mathfrak g.\,$ See
\cite[IV.13.18]{bor2},~\cite[26.2]{hum2}. 

Let $\,T\,$ be a maximal torus of $\,G\,$ and $\,{\mathfrak t}\,=\,
{\cal L}(T).\,$ For each root $\,\alpha\in R\,$ let $\,U_{\alpha}\,$ be
the corresponding root subgroup of $\,G\,$ and $\,{\mathfrak g}_{\alpha}\,=
\,{\cal L}(U_{\alpha}).\,$ Then we have a 
decomposition 
\[
{\mathfrak g}\,=\,{\mathfrak t}\oplus\sum_{\alpha\in R}
{\mathfrak g}_{\alpha}\;.
\]
This is called the {\bf Cartan decomposition} of $\,{\mathfrak g}\,$
\cite[1.13]{car1}.
Let $\,{\mathfrak n}\,=\,{\cal L}(U)\,$ and $\,{\mathfrak n^-}\,=\,{\cal L}
(U^-).\,$ Then $\,{\mathfrak n}\,=\,\displaystyle\sum_{\alpha\in R^+}
{\mathfrak g}_{\alpha}\,$ and $\,{\mathfrak n^-}\,=\,\displaystyle
\sum_{\alpha\in R^-}{\mathfrak g}_{\alpha}\,$, where
$\,R^-\,=\,-R^+\,$. Hence we can write 
\[
{\mathfrak g}\,=\,{\mathfrak t}\oplus{\mathfrak n}\oplus{\mathfrak n^-}.
\]
Each of the spaces $\,{\mathfrak g}_{\alpha}\,$ is $\,1$-dimensional and
invariant under the adjoint action of $\,T\,$ on $\,{\mathfrak g}.\,$
Moreover the $\,1$-dimensional representation of $\,T\,$ afforded by the
module $\,{\mathfrak g}_{\alpha}\,$ is $\,\alpha.\,$
(The roots of $\,G\,$ can be defined in the following
way: as $\,T\,$ acts on $\,G\,$ by conjugation, $\,T\,$ acts 
on the Lie algebra $\,{\mathfrak g}\,=\,{\cal L}(G)\,$
via the adjoint representation $\,\Ad\,$. 
For each $\,\alpha\in X\,$ (that is, for each character of $\,T\,$) let 
\[
{\mathfrak g}_{\alpha}\,=\,\{\,x\in{\mathfrak g}\,/\,{\rm Ad}t\cdot x\,=\,
\alpha(t)\,x\;\,\mbox{for all}\;\,t\in T\,\}.
\]
As $\,T\,$ is diagonalizable, $\,{\mathfrak g}\,$ is the direct
sum of the $\,{\mathfrak g}_{\alpha}$'s \cite[III.8.17]{bor2}.
Those $\,\alpha\,$ for
which $\,{\mathfrak g}_{\alpha}\neq 0\,$ are called the {\it weights}
of $\,T\,$ in $\,{\mathfrak g}.\,$  The set $\,R\,$ of the {\it nonzero}
weights of $\,T\,$ in $\,{\mathfrak g}\,$ is called the 
set of {\it roots} of $\,G\,$ with respect to $\,T.\,$
Thus the space $\,{\mathfrak g}\,$ is the direct sum of 
$\,{\mathfrak t}\,=\,{\cal L}(T)\,=\,{\mathfrak g}_0\,$ and 
of the $\,{\mathfrak g}_{\alpha},\;\alpha\in R.\,$ As
$\,{\mathfrak g}_{\alpha}\,$ is $\,1$-dimensional for each $\,\alpha\in
R,\,$ we can write $\,{\mathfrak g}_{\alpha}\,=\,K\,e_{\alpha}\,$.
We call $\,e_{\alpha}\,$ a {\bf root vector} corresponding to $\,\alpha$.)

Let $\,B\,=\,T\,U\,$ be a Borel subgroup of $\,G.\,$ 
By~\cite[14.25]{bor2}, 
$\,\mathfrak g\,=\,({\rm Ad}\,G)\cdot {\mathfrak b},\,$ where
$\,{\mathfrak b}\,=\,{\cal L}(B).\,$ Let
$\,{\mathfrak t}\,=\,{\cal L}(T)\,$ and $\,{\mathfrak n}\,=\,{\cal L}(U).\,$ 
Clearly, $\,\mathfrak b\,$ is spanned by $\,\mathfrak t\,$ and all  
$\,e_{\alpha}\,$, where $\,\alpha\in R^+.\,$ Moreover, 
$\,\mathfrak b\,=\,\mathfrak t\,+\,\mathfrak n.\,$ We call 
$\,\mathfrak b\,$ a {\bf Borel subalgebra} of $\,\mathfrak g\,$.

\begin{lemma}{\rm\cite[4.8]{veld}}\label{veldkamp}
Let $\,B'\,=\,H'U'\,$ with $\,B'\,$ a connected solvable group,
$\,H'\,$ a maximal torus, and $\,U'\,$ the maximal unipotent subgroup
of $\,B'$. Let $\,{\mathfrak b}',\,{\mathfrak h}',\,$ and
$\,{\mathfrak n}'\,$ be the respective Lie algebras, so that
$\,{\mathfrak b}'\,=\,{\mathfrak h}'\,+\,{\mathfrak n}'.\,$ If
$\,t\in{\mathfrak h}'\,$ and $\,n\in{\mathfrak n}'\,$, then there
exists $\,n'\in{\mathfrak n}'\,$ such that $\,t\,+\,n'\,$ is  
(${\rm Ad}\,U'$)-conjugate to $\,t\,+\,n\,$, and $\,t\,$ and $\,n'\,$ commute.
\end{lemma}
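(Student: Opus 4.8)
The plan is to prove this by induction on $\dim {\mathfrak n}'$, exploiting the fact that $U'$ (being a connected unipotent group in characteristic $p$) admits a central series with one-dimensional quotients isomorphic to ${\mathbf G}_a$, so that ${\mathfrak n}'$ can be filtered by $T'$-stable (hence $\mathrm{ad}\,t$-stable) ideals. First I would set up the weight-space decomposition: since ${\mathfrak h}' = {\mathfrak t}'$ is a torus in the restricted sense (Definition~\ref{resttorus}), it acts diagonalizably on ${\mathfrak b}'$ by Proposition~\ref{properties}(2), so ${\mathfrak n}' = \bigoplus_{\alpha} {\mathfrak n}'_\alpha$ decomposes into weight spaces for the action of ${\mathfrak h}'$ (equivalently of the given element $t$, up to a mild genericity remark — one may have to argue weight-space by weight-space using the actual torus rather than a single $t$, then note the conjugating group element lies in $U'$ regardless). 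The ``bad'' part of $n$ is its component $n_0$ in the zero-weight space ${\mathfrak n}'_0 = {\mathfrak c}_{{\mathfrak n}'}(t)$; on every nonzero weight space $\mathrm{ad}\,t$ is invertible, and that is precisely what lets us absorb those components by an $(\mathrm{Ad}\,U')$-conjugation.

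The key computational step is the following: if $\alpha \ne 0$ is a weight and $n_\alpha \in {\mathfrak n}'_\alpha$, choose $m \in {\mathfrak n}'_\alpha$ with $[t,m] = -n_\alpha$ (possible since $\mathrm{ad}\,t$ is bijective on ${\mathfrak n}'_\alpha$). Since ${\mathfrak n}'$ is nilpotent, $\exp(\mathrm{ad}\,m)$ is a well-defined automorphism realized by conjugation by an element of $U'$ (here one invokes that $U'$ is unipotent and, more carefully, that $p$ exceeds the nilpotency class — or one works one ${\mathbf G}_a$-layer at a time to avoid any characteristic restriction), and
\[
\exp(\mathrm{ad}\,m)(t + n) \;=\; t \,+\, [m,t] \,+\, n \,+\, (\text{higher brackets}) \;=\; t \,+\, (n - n_\alpha) \,+\, n'',
\]
where $n''$ lies in a strictly smaller piece of the filtration. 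Iterating over the nonzero weights and descending the filtration, all nonzero-weight components are successively removed, leaving $t + n'$ with $n' \in {\mathfrak c}_{{\mathfrak n}'}(t)$, i.e.\ $[t,n'] = 0$, and $t + n'$ is $(\mathrm{Ad}\,U')$-conjugate to $t + n$ since it was obtained by composing conjugations by elements of $U'$.

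The main obstacle I anticipate is making the passage from the formal exponential $\exp(\mathrm{ad}\,m)$ to an honest conjugation by an element of $U'$ rigorous in characteristic $p$: the series for $\exp$ involves denominators, so one cannot blindly write $\exp(\mathrm{ad}\,m)$ when the nilpotency class of ${\mathfrak n}'$ is $\ge p$. The clean way around this — and what I would actually carry out — is the inductive/layer-by-layer argument: filter $U'$ by a $T'$-stable central series $U' = U_1 \supset U_2 \supset \cdots$ with each $U_i/U_{i+1} \cong {\mathbf G}_a$, handle the top layer using a genuine one-parameter subgroup $u_\alpha(s)$ (whose conjugation action on $t + n$ is literally polynomial in $s$, no denominators needed), kill the nonzero-weight component of $n$ modulo ${\mathfrak u}_2 := {\cal L}(U_2)$, and recurse on $B'' = T'U_2$. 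This sidesteps the convergence issue entirely and keeps everything inside $U'$. A secondary, minor point to watch is the uniqueness-of-weight-decomposition subtlety if $t$ is not regular in ${\mathfrak h}'$; this is harmless because we only ever need that $\mathrm{ad}\,t$ is invertible off ${\mathfrak c}_{{\mathfrak n}'}(t)$, which holds by diagonalizability of the ${\mathfrak h}'$-action together with the fact that $\ker(\mathrm{ad}\,t|_{{\mathfrak n}'})$ is exactly the sum of weight spaces on which $t$ acts as $0$.
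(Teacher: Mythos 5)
The thesis never proves this lemma --- it is quoted directly from Veldkamp \cite[4.8]{veld} --- so there is no in-paper argument to compare yours against; what you outline is essentially the standard (Veldkamp-style) proof, and the strategy is sound: decompose $\mathfrak{n}'$ into $\mathrm{Ad}\,H'$-weight spaces, note that $\mathrm{ad}\,t$ is diagonalizable on $\mathfrak{n}'$ with kernel exactly the sum of the weight spaces where $d\alpha(t)=0$, and realize the needed conjugations by honest elements of $U'$ by working down an $H'$-stable central series with $\mathbf{G}_a$-quotients, on each of which the adjoint action is polynomial in the parameter, so the characteristic-$p$ obstruction to $\exp(\mathrm{ad}\,m)$ never arises. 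Two points need tightening when you write it out. First, a sign: with $[t,m]=-n_\alpha$ your displayed computation produces $t+n+n_\alpha+\cdots$, not $t+n-n_\alpha+\cdots$; you want $m=(\mathrm{ad}\,t)^{-1}(n_\alpha)$. Second, the recursion as stated (``recurse on $B''=H'U_2$'') does not quite apply literally, because after clearing the top layer the element has the form $t+c_1+m_1$ with $c_1$ a $t$-centralizing component living in the top layer rather than in $\mathfrak{h}'+\mathfrak{u}_2$. The clean induction is on $\dim U'$ via a central $\mathbf{G}_a$-subgroup $Z$ at the bottom of the series: apply the inductive hypothesis in $B'/Z$, lift the conjugating element to $U'$, and then remove the residual $\mathcal{L}(Z)$-component of $[t,\cdot]$ by one further conjugation by an element of $Z$, using that $\mathrm{ad}\,t$ preserves $\mathcal{L}(Z)$ and is there either zero (in which case nothing remains to remove) or invertible. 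With these routine repairs the argument is complete.
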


Recall that $\,x\in{\mathfrak g}\,$ is called toral if 
$\,x^{[p]}\,=\,x\,$.
\begin{proposition}\label{anytoral}
Any toral element in $\,{\mathfrak g}\,$ is 
(${\rm Ad}\,G$)-conjugate to an element in $\,{\mathfrak t}\,$.
\end{proposition}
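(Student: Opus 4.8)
The plan is to start with a toral element $\,x\in\mathfrak g\,$ and reduce it step by step to an element of $\,\mathfrak t\,$ using conjugacy under $\,\Ad G\,$, exploiting the Borel subgroup. First I would use $\,\mathfrak g\,=\,(\Ad G)\cdot\mathfrak b\,$ (quoted from \cite[14.25]{bor2} in Section~\ref{structliealg}), so that after replacing $\,x\,$ by a suitable $\,\Ad G$-conjugate we may assume $\,x\in\mathfrak b\,=\,\mathfrak t\,+\,\mathfrak n\,$. Write $\,x\,=\,t\,+\,n\,$ with $\,t\in\mathfrak t\,$ and $\,n\in\mathfrak n\,$. Now apply Lemma~\ref{veldkamp} (Veldkamp) to the Borel group $\,B\,=\,TU\,$: there is $\,n'\in\mathfrak n\,$ such that $\,t\,+\,n'\,$ is $\,(\Ad U)$-conjugate to $\,t\,+\,n\,=\,x\,$, and $\,[t,n']\,=\,0\,$. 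So, after one more conjugation, we may assume $\,x\,=\,t\,+\,n'\,$ with $\,t\,$ and $\,n'\,$ commuting, $\,t\in\mathfrak t\,$, $\,n'\in\mathfrak n\,$.

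The next step is to show that commutativity of $\,t\,$ and $\,n'\,$ together with torality of $\,x\,$ forces $\,n'\,=\,0\,$. Since $\,\mathfrak n\,=\,\mathcal L(U)\,$ consists of $\,p$-nilpotent elements (quoted from \cite[I.4.8]{bor2} in Section~\ref{tlaag}), $\,n'\,$ is $\,p$-nilpotent; and $\,t\in\mathfrak t\,$ is toral, hence $\,p$-semisimple by Proposition~\ref{properties}(1). Because $\,[t,n']\,=\,0\,$, formula {\it (ii')} of Definition~\ref{restliealg} applies iteratively and gives $\,x^{[p]^k}\,=\,t^{[p]^k}\,+\,(n')^{[p]^k}\,=\,t\,+\,(n')^{[p]^k}\,$ for all $\,k\,$ (using $\,t^{[p]}=t\,$). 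For $\,k\,$ large, $\,(n')^{[p]^k}\,=\,0\,$, so $\,x^{[p]^k}\,=\,t\,$. But $\,x\,$ is toral, so $\,x^{[p]^k}\,=\,x\,$ for all $\,k\,$; therefore $\,x\,=\,t\in\mathfrak t\,$, and in particular $\,n'\,=\,0\,$. This completes the reduction: the original toral $\,x\,$ is $\,\Ad G$-conjugate to $\,t\in\mathfrak t\,$.

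Alternatively, one can argue more directly that $\,t\,=\,x_s\,$ and $\,n'\,=\,x_n\,$ is the abstract Jordan--Chevalley decomposition of $\,x\,$ (Theorem~\ref{jcd}), since $\,t\,$ is $\,p$-semisimple, $\,n'\,$ is $\,p$-nilpotent, and they commute; then torality of $\,x\,$ means $\,x_n\,=\,0\,$ by uniqueness of that decomposition, because a toral element has $\,x^{[p]^k}=x\,$ for all $\,k\,$, forcing its nilpotent part to vanish. Either way the argument is short. I expect the only genuine ingredient to be Lemma~\ref{veldkamp}, which supplies the commuting decomposition inside the Borel; the rest is bookkeeping with the $\,[p]$-operation and the facts that toral elements are $\,p$-semisimple and that $\,\mathcal L(U)\,$ is $\,p$-nil. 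The main (minor) point to be careful about is justifying $\,x^{[p]^k}=t\,$: one must check that $\,[t,n']=0\,$ really lets one split the $\,[p]$-power at each iteration, which is exactly {\it (ii')}, and that iterating preserves commutativity, which is clear since $\,t^{[p]^j}\,$ and $\,(n')^{[p]^j}\,$ still commute (the $\,[p]$-map sends commuting elements to commuting elements, by Jacobson's identity or directly from {\it (iii)}).
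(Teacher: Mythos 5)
Your proof is correct and follows essentially the same route as the paper: reduce to $\,\mathfrak b\,$ via $\,\mathfrak g\,=\,(\Ad G)\cdot\mathfrak b\,$, apply Lemma~\ref{veldkamp} to arrange $\,[t,\,n']\,=\,0\,$, and then use the $\,[p]$-structure to kill the nilpotent part. The paper applies $\,[p]\,$ only once, concluding $\,{n'}^{[p]}\,=\,n'\,$ and hence $\,n'\,=\,0\,$ by nilpotency, rather than iterating $\,[p]^k\,$ as you do, but this is an inessential variation.
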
\noindent
\begin{proof}
Let $\,x\in{\mathfrak g}\,$ be toral. 
As $\,\mathfrak g\,=\,({\rm Ad}\,G)\cdot {\mathfrak b}\,$ \cite[14.25]{bor2},
we may assume that $\,x\,=\,t\,+\,n,\,$ 
where $\,t\in\mathfrak t\,$ and $\,n\in\mathfrak n\,$.
By Lemma~\ref{veldkamp}, $\,x\,$ is (${\rm Ad}\,U'$)-conjugate to 
$\,t\,+\,n'\,$ and $\,t\,$ and $\,n'\,$ commute. As
$\,x^{[p]}\,=\,x,\,$ we must have $\,(t\,+\,n')^{[p]}\,=\,t\,+\,n'\,$.
On the other hand,
$\,(t\,+\,n')^{[p]}\,=\,t\,+\,{n'}^{[p]}\,$, by Jacobson's
identity (see Definition~\ref{restliealg}(ii')) and the fact that 
$\,[t,\,n']\,=\,0.\,$
Hence, $\,\,{n'}^{[p]}\,=\,n'\,$. As $\,n'\,$ is nilpotent, we get 
$\,n'\,=\,0$, finishing the proof. 
\end{proof}

\begin{lemma}\label{centretoral}
Let $\,G\,$ be a reductive algebraic group, and 
$\,\mathfrak g\,=\,{\cal L}(G)\,$. Then the centre $\,\mathfrak z\,=\,
\mathfrak z(\mathfrak g)\,$ is a toral subalgebra of $\,\mathfrak g\,$.
\end{lemma}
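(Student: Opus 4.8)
The plan is to reduce to the fact that $\mathfrak{z}(\mathfrak g)$ consists of semisimple elements and is abelian, hence a torus by Definition~\ref{resttorus}. Since $G$ is reductive, write $G = Z^{\circ} \cdot G'$ where $Z^{\circ}$ is the connected component of the centre of $G$ (a torus) and $G'$ is semisimple; correspondingly $\mathfrak g = \mathfrak{z}_0 \oplus \mathfrak g'$ where $\mathfrak{z}_0 = \mathcal L(Z^{\circ})$ and $\mathfrak g' = \mathcal L(G')$. The subalgebra $\mathfrak{z}_0$ is the Lie algebra of a torus, hence abelian and $p$-semisimple (it has a basis of toral elements by Theorem~\ref{3.6}(1), once one knows the $p$-mapping is nonsingular on it — alternatively, $\mathcal L(T)$ for a maximal torus $T$ always consists of $p$-semisimple elements, as $T$ is diagonalizable and the adjoint action pins down the $p$-operation). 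So the content is to show $\mathfrak z(\mathfrak g) \subseteq \mathfrak{z}_0$, or more directly to show every element of $\mathfrak z(\mathfrak g)$ is $p$-semisimple, and then invoke Definition~\ref{resttorus} since $\mathfrak z(\mathfrak g)$ is visibly abelian.

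First I would take $x \in \mathfrak z(\mathfrak g)$ and apply Theorem~\ref{jcd} (valid since $K$ is algebraically closed, hence perfect) to write $x = x_s + x_n$ with $x_s$ $p$-semisimple, $x_n$ $p$-nilpotent, $[x_s,x_n]=0$. The key point is that $x_s$ and $x_n$ are \emph{polynomials in $x$ with respect to the $[p]$-operation}, in the following sense: by Theorem~\ref{sigsemi} some iterate $x^{[p]^k}$ is already $p$-semisimple and lies in $\langle x\rangle$, and the Jordan components produced in the proof of Theorem~\ref{jcd} lie in $\langle x \rangle$ (the restricted subalgebra generated by $x$). Hence $x_s, x_n \in \langle x \rangle \subseteq \mathfrak z(\mathfrak g)$, because $\mathfrak z(\mathfrak g)$ is a $p$-ideal (it is the kernel of $\mathrm{ad}$, which is a $p$-homomorphism by Definition~\ref{restliealg}(iii), so it is $[p]$-stable). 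In particular $x_n$ is a central $p$-nilpotent element of $\mathfrak g$.

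So the crux reduces to: a reductive Lie algebra $\mathfrak g = \mathcal L(G)$ has no nonzero central $p$-nilpotent element. For this I would use the structure recalled in Section~\ref{structliealg}: $x_n \in \mathfrak t = \mathcal L(T)$ after an $(\mathrm{Ad}\,G)$-conjugation is not quite available for nilpotent elements, so instead I argue directly. A central element commutes with all of $\mathfrak g$, in particular with $\mathfrak t$, so $x_n \in \mathfrak{g}_0 = \mathfrak t$ in the Cartan decomposition (its weight under $T$ must be $0$). But $\mathfrak t$ is the Lie algebra of the torus $T$, which consists of $p$-semisimple elements; a $p$-nilpotent $p$-semisimple element is $0$. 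Hence $x_n = 0$ and $x = x_s$ is $p$-semisimple. Therefore every element of $\mathfrak z(\mathfrak g)$ is $p$-semisimple; since $\mathfrak z(\mathfrak g)$ is abelian and a $p$-ideal (in particular a $p$-subalgebra), Definition~\ref{resttorus} gives that it is a toral subalgebra.

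The main obstacle I anticipate is the bookkeeping that $x_s$ and $x_n$ lie in $\langle x\rangle$ and hence in any $p$-ideal containing $x$ — this is exactly what makes centrality pass to the Jordan components, and it needs the statement (implicit in the proof of Theorem~\ref{jcd}) that the decomposition is "polynomial" in the $[p]$-operation. If one prefers to avoid that, the alternative route is: observe that $\mathfrak z(\mathfrak g)$, being an abelian $p$-ideal, has a well-defined $[p]$-operation, and decompose $\mathfrak z(\mathfrak g) = \mathfrak s \oplus \mathfrak n$ with $\mathfrak n$ its $p$-nil radical; then show $\mathfrak n \subseteq \mathfrak t$ by the weight argument above and conclude $\mathfrak n = 0$ since $\mathcal L(T)$ is toral. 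Either way, the single real input is that $\mathcal L(T)$ contains no nonzero $p$-nilpotent element.
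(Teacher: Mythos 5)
Your overall strategy (reduce to showing that $\,\mathfrak z(\mathfrak g)\,$ is an abelian $\,p$-subalgebra all of whose elements are $\,p$-semisimple) is sound, and the reduction via the Jordan--Chevalley decomposition inside the $\,p$-ideal $\,\langle x\rangle\subseteq\mathfrak z(\mathfrak g)\,$ is legitimate. But the crux of your argument --- that a central $\,p$-nilpotent element must vanish --- rests on a step that fails in characteristic $\,p$. You assert that since $\,x_n\,$ commutes with $\,\mathfrak t\,$ it lies in $\,\mathfrak g_0=\mathfrak t\,$ of the Cartan decomposition. The Cartan decomposition is by $\,\Ad T$-weights, not by $\,{\rm ad}\,\mathfrak t$-eigenvalues, and the two do not coincide in characteristic $\,p$: the differential $\,d\alpha\,$ of a nonzero root can vanish ($\,d\alpha=0\,$ precisely when $\,\alpha\in pX(T)\,$). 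Concretely, for $\,\mathfrak{sl}(2)\,$ with $\,p=2\,$ one has $\,[h,e]=2e=0$, so $\,e\,$ centralizes $\,\mathfrak t\,$ yet has $\,\Ad T$-weight $\,\alpha\neq 0$; the $\,{\rm ad}\,\mathfrak t$-centralizer is all of $\,\mathfrak g$, not $\,\mathfrak t$. So ``commutes with $\,\mathfrak t\,$'' does not place $\,x_n\,$ in $\,\mathfrak t$. (A smaller point: the splitting $\,\mathfrak g=\mathfrak z_0\oplus\mathfrak g'\,$ in your opening paragraph also fails in general --- for $\,\GL_p\,$ the scalars already lie in $\,\mathfrak{sl}_p\,$ --- but you do not actually use it.)

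The correct way to place $\,\mathfrak z(\mathfrak g)\,$ inside $\,\mathfrak t\,$ --- and this is exactly the paper's proof --- is to use the group action rather than the bracket: $\,\mathfrak z\,$ is $\,(\Ad G)$-stable, so the maximal torus $\,T\,$ acts diagonalisably on it and $\,\mathfrak z=\bigoplus_{\alpha}(\mathfrak z\cap\mathfrak g_{\alpha})\,$ as genuine $\,\Ad T$-weight spaces. If $\,\mathfrak z\cap\mathfrak g_{\alpha}\neq 0\,$ for some $\,\alpha\neq 0$, then $\,e_{\alpha}\in\mathfrak z\,$ because $\,\dim\mathfrak g_{\alpha}=1$, contradicting $\,[e_{\alpha},e_{-\alpha}]\neq 0$. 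Hence $\,\mathfrak z\subseteq\mathfrak t$, and since $\,\mathfrak t\,$ is toral so is any restricted subalgebra of it. Note that once this inclusion is in hand the entire Jordan-decomposition apparatus becomes superfluous; conversely, without some such argument yours has no way to exclude a central $\,p$-nilpotent element.
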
\noindent
\begin{proof}
Clearly, $\,\mathfrak z\,$ is ($\Ad\,G)$-stable. 
So a maximal torus $\,T\,$ of $\,G\,$ acts diagonalisably on $\,\mathfrak z\,$.
If the root space $\,\mathfrak z_{\alpha}=\{\,x\in{\mathfrak z}\,/\,
{\rm Ad}t\cdot x\,=\,
\alpha(t)\,x\;\,\mbox{for all}\;\,t\in T\,\}\,$ is nonzero for some
$\,\alpha\neq 0$, then $\,e_\alpha\in \mathfrak z\,$
(as $\,\mathfrak g_{\alpha}\,$ is $\,1$-dimensional). 
But $\,[e_{\alpha},e_{-\alpha}]\,\neq\, 0$, for every $\,\alpha\in
R\,$ \cite[8.3]{hum1}. Hence $\,\mathfrak z\subset \mathfrak t\, 
=\,{\cal L}(T)$. As $\,\mathfrak t\,$ is toral, so is any of its  
restricted subalgebras. The result follows.
\end{proof}

At this point we should emphasize some properties of the $\,p$-mapping
in $\,\mathfrak g\,$.

Let $\,{\bf G}_a\,$ denote the additive group $\,K^+$. 
Then $\,K[{\bf G}_a]\,=\,K[x]\,$ so that $\,{\cal L} 
({\bf G}_a)\,$ is spanned by the invariant derivation  
$\,D\,=\,\displaystyle\frac{\rm d}{{\rm d}x}\,$ \cite[9.3]{hum2}.   
Since 
$\,D^{[p]}\,x^n\,=\,n(n-1)\cdots(n-(p-1))\,x^{n-p}\,$ (or zero if
$\,n<p\,$) it follows that 
the $\,p$-mapping is zero in $\,{\cal L}({\bf G}_a)\,$ (because the
product of $\,p\,$ consecutive integers is divisible by $\,p\,$).

Thus, 
the $\,p$-mapping of $\,\mathfrak g\,$ vanishes on
$\,{\mathfrak g}_{\alpha}\,\cong\,{\cal L}(U_{\alpha}),\,$ for each
root $\,\alpha\in R,\,$ implying that $\,e_{\alpha}^{[p]}\,=\,0\,$.  

Now let $\,{\bf G}_m\,$ denote the multiplicative group
$\,(K,\,\cdot\,).\,$ Then $\,K[{\bf G}_m]\,=\,K[x,x^{-1}]\,$  
(the ring of Laurent polynomials). For $\,D\in {\cal L}({\bf G}_m),\,$
one has  $\,D\,x\,=\,a\,x\,$, where $\,a\in K\,$ \cite[I.3.9]{bor2}. 
It follows that $\,D^{[p]}\,x\,=\,a^p\,x\,$. Thus $\,{\cal L} 
({\bf G}_m)\,$ is isomorphic to the $\,1$-dimensional Lie algebra
$\,K\,$ with $\,p$-mapping $\,a\longmapsto a^p\,$.
From this it follows that $\,{\cal L}({\bf G}_m)\,$ is a 
$\,1$-dimensional torus.

A maximal torus $\,T\,$ of a connected algebraic group $\,G\,$
is isomorphic to a direct product $\,K^*\times\cdots\times K^*\,$
($\,\ell\,$ times), where $\,\ell\,=\,{\rm
rank}\,G\,$ \cite[III.8.5]{bor2}. Thus 
$\,{\cal L}(T)\,\cong\,K\oplus\cdots\oplus K\,$
($\,\ell\,$ times) is a toral subalgebra of $\,{\cal L}(G)\,$ 
\cite[III.8.2 Corollary]{bor2}.
By Theorem~\ref{3.6}(1), $\,\mathfrak t\,$ has a basis consisting of
toral elements. This implies the following:
\begin{lemma} \label{toralbase}
$\mathfrak t\,=\,{\cal L}(T)\,$ contains $\,p^{\dim T}\,$ toral
elements. If $\,G\,$ is reductive, there are finitely many toral
$\,(\Ad\,G)$-conjugacy classes in $\,\mathfrak g$.
\end{lemma}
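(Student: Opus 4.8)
The plan is to prove the two assertions of Lemma~\ref{toralbase} separately, both as quick consequences of the structure of $\,\mathfrak t\,$ established above together with Proposition~\ref{anytoral}.

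For the first assertion, recall that by Theorem~\ref{3.6}(1) the abelian restricted Lie algebra $\,\mathfrak t\,=\,{\cal L}(T)\,$ has a basis $\,t_1,\dots,t_d\,$ ($\,d\,=\,\dim T\,$) consisting of toral elements, so that $\,t_i^{[p]}\,=\,t_i\,$ for all $\,i\,$. First I would observe that since $\,\mathfrak t\,$ is abelian, Jacobson's identity collapses to {\it (ii')}, i.e. $\,(x+y)^{[p]}\,=\,x^{[p]}\,+\,y^{[p]}\,$ on $\,\mathfrak t\,$, and by {\it (i)} we have $\,(cx)^{[p]}\,=\,c^p x^{[p]}\,$. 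Hence for $\,x\,=\,\sum_i c_i t_i\,$ one computes $\,x^{[p]}\,=\,\sum_i c_i^p t_i\,$, and $\,x\,$ is toral precisely when $\,c_i^p\,=\,c_i\,$ for every $\,i\,$, i.e. each $\,c_i\,$ lies in the prime field $\,\mathbb F_p\subset K\,$. There are exactly $\,p\,$ choices for each of the $\,d\,$ coordinates, giving $\,p^{d}\,=\,p^{\dim T}\,$ toral elements in $\,\mathfrak t\,$.

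For the second assertion, suppose $\,G\,$ is reductive. By Proposition~\ref{anytoral}, every toral element of $\,\mathfrak g\,$ is $\,({\rm Ad}\,G)$-conjugate to one lying in $\,\mathfrak t\,$. Therefore the number of toral $\,({\rm Ad}\,G)$-conjugacy classes in $\,\mathfrak g\,$ is at most the number of toral elements of $\,\mathfrak t\,$, which by the first part is $\,p^{\dim T}\,$, a finite number. This gives finiteness immediately.

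The proof is essentially a bookkeeping argument assembling facts already in place, so there is no serious obstacle; the only point requiring a little care is the computation $\,x^{[p]}\,=\,\sum_i c_i^p t_i\,$, where one must justify that the mixed terms $\,s_j(c_i t_i, c_k t_k)\,$ in Jacobson's identity vanish — but this is immediate since $\,[\mathfrak t,\mathfrak t]\,=\,0\,$ means all such $\,s_j\,$ are zero, so one is genuinely in the situation of {\it (ii')}. One should also note in passing that the toral elements of $\,\mathfrak t\,$ do indeed form $\,p^{\dim T}\,$ elements and not merely a bound, since distinct $\,(c_1,\dots,c_d)\in\mathbb F_p^{\,d}\,$ give distinct elements as $\,t_1,\dots,t_d\,$ are linearly independent over $\,K\,$.
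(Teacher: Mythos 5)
Your proposal is correct and follows essentially the same route as the paper: a toral basis of $\,\mathfrak t\,$ from Theorem~\ref{3.6}(1), Jacobson's identity on the abelian algebra $\,\mathfrak t\,$ to force the coordinates into $\,\mathbb F_p\,$, and Proposition~\ref{anytoral} for the finiteness of conjugacy classes. The extra remarks on the vanishing of the $\,s_j\,$ terms and on distinctness are welcome details the paper leaves implicit.
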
\noindent
\begin{proof} 
Let $\{\,t_1,\ldots ,\,t_{\ell}\,\}$ be a basis of $\,\mathfrak t\,$
consisting of toral elements. Suppose $\,x\,\in\,\mathfrak t\,$ is toral,
that is, $\,x\,=\,\sum_{i=1}^{\ell}\,\lambda_i\,t_i\,$ and $\,x^{[p]}\,=\,x$.
Then Jacobson's identity shows that 
$\,x^{[p]}\,=\,\sum_{i=1}^{\ell}\,\lambda_i^p\,t_i\,=\,
\sum_{i=1}^{\ell}\,\lambda_i\,t_i\,=\,x\,$. Therefore,
$\,\lambda_i^p\,=\,\lambda_i$, implying $\,\lambda_i\,\in\,\mathbb
F_p,\,$ for each $\,i\leq \ell\,$. It follows that $\,\mathfrak t\,$ contains
$\,p^{\ell}\,$ toral elements. As $\,\dim\,T\,=\,\ell\,$, the first
part of the lemma is proved. Now let $\,y\,\in\,\mathfrak g\,$ be toral.
By Proposition~\ref{anytoral}, $\,y\,$ is $\,(\Ad\,G)$-conjugate to a
toral element in $\,\mathfrak t\,$. This finishes the proof.
\end{proof}

\subsubsection{Special and Good Primes}
Let $\,R\,$ denote the root system of a reductive algebraic
group $\,G\,$ relative to a maximal torus $\,T\subset G.\,$
\begin{definition}\label{badprime}
A prime $\,p\,$ is said to be {\bf bad} for 
$\,G\,$ if $\,p\,$ divides a coefficient of a root $\,\alpha\in R\,$
when expressed as a combination 
of simple roots. 
\end{definition} 
The bad primes for the simple algebraic groups are as follows:

none if $\,G\,$ is of type $\,A_{\ell}\,$ 

$\,p=2\,$ if $\,G\,$ is of type $\,B_{\ell},\,C_{\ell},\,D_{\ell}\,$

$\,p=2\,$ or $\,3\,$ if $\,G\,$ is of type $\,G_2,\,F_4,\,E_6,\,E_7\,$

$\,p=2,\,3\,$ or $\,5\,$ if $\,G\,$ is of type $\,E_8\,$ (see
\cite[p. 106]{stein2}). \\
Primes which are not bad for $\,G\,$ are called {\bf good}.

\begin{definition}\label{notvgp}
Let $\,G\,$ be a simple algebraic group. A prime $\,p\,$ is said to be
{\bf very good} for $\,G\,$ if either 
$\,G\,$ is not of type $\,A_{\ell}\,$ and $\,p\,$ is a good prime for $\,G,\,$ 
or
$\,G\,$ is of type $\,A_{\ell}\,$ and $\,p\,$ does not divide $\,\ell+1$.
\end{definition}

\begin{definition}\label{pspecial}
Let $\,\alpha_0\,$ be the maximal short root
and $\,\tilde{\alpha}\,$ the longest root in $\,R^+.\,$ Put
$\,d=(\tilde{\alpha},\,\tilde{\alpha})/(\alpha_0,\,\alpha_0).\,$ We say that
$\,p\,$ is {\bf special} for a reductive group $\,G\,$ if $\,p|d.\,$
\end{definition}
By Jantzen~\cite{jant2}, $\,p\,$ is special
for $\,G\,$ if either $\,p=3\,$ and $\,R\,$ has a component of type $\,G_2\,$
or $\,p=2\,$ and $\,R\,$ has a component of type $\,B_{\ell},\,C_{\ell},\,
\ell\geq 2,\,$ or $\,F_4.\,$

\medskip
 
Let $\,\mathfrak g\,$ be the restricted Lie algebra of a 
reductive algebraic group $\,G\,$ defined over an algebraically closed
field of characteristic $\,p > 0\,$. 
The classification of nilpotent $\,(\Ad\,G)$-orbits in $\,\mathfrak
g\,$ reduces easily 
to the case where $\,G\,$ is simple, and the following important result holds.
\begin{theorem}\label{badhs} 
There are finitely many nilpotent $\,(\Ad\,G)$-orbits in $\,\mathfrak  g\,$.
In other words, there are finitely many $\,[p]$-nilpotent conjugacy
classes in $\,\mathfrak g\,$.
\end{theorem}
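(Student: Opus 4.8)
The plan is to reduce to $\,G\,$ simple (as indicated above), to replace the $\,[p]$-nilpotent cone by a concrete $\,(\Ad\,G)$-variety, and then to invoke the classification of nilpotent orbits, treating good and bad primes separately. First I would record that $\,{\cal N}({\mathfrak g})\,$ is a Zariski closed, conical, $\,(\Ad\,G)$-stable subset of $\,{\mathfrak g}$: using Theorem~\ref{sigsemi} (and the fact that the decreasing chain of restricted subalgebras $\,\langle x^{[p]^i}\rangle\,$ stabilizes after at most $\,\dim{\mathfrak g}\,$ steps, with the $\,[p]$-map bijective on the stable term), $\,{\mathfrak g}$-nilpotence of $\,x\,$ is equivalent to $\,x^{[p]^{m_0}}=0\,$ for the single fixed exponent $\,m_0=\dim{\mathfrak g}$; since $\,x\mapsto x^{[p]}\,$ is a morphism of varieties, $\,{\cal N}({\mathfrak g})\,$ is the zero fibre of $\,x\mapsto x^{[p]^{m_0}}$. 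As $\,{\cal N}({\mathfrak g})\,$ is Noetherian and every orbit is locally closed, what remains is genuinely a classification problem: one must bound the number of orbits.

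For $\,p\,$ good I would proceed as follows. Fix a rational representation $\,\pi:G\to\GL(V)\,$ with $\,\ker\,{\rm d}\pi=0\,$ (adding on the adjoint representation if necessary), so that $\,{\mathfrak g}\,$ is realized as a restricted subalgebra of $\,\mathfrak{gl}(V)\,$ and, because $\,{\rm d}\pi(x^{[p]})={\rm d}\pi(x)^p$, the cone $\,{\cal N}({\mathfrak g})\,$ is precisely the set of elements of $\,{\mathfrak g}\,$ nilpotent as endomorphisms of $\,V$. I would then invoke the Springer isomorphism: for $\,p\,$ good there is a $\,(\Ad\,G)$-equivariant isomorphism of varieties from the unipotent variety of $\,G\,$ onto $\,{\cal N}({\mathfrak g})$. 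This reduces the assertion to the finiteness of the set of unipotent conjugacy classes of $\,G$, which follows from the Bala--Carter--Pommerening description: every unipotent class is induced from a distinguished class in a Levi subgroup, there are only finitely many Levi subgroups up to conjugacy (one for each subset of $\,\Delta\,$ up to $\,W$), and the distinguished classes in a fixed Levi are in bijection with its distinguished parabolic subgroups, a finite set.

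For $\,p\,$ bad the conceptual route above is unavailable, and I would argue type by type. For the classical types ($\,p=2\,$) the natural representation again identifies $\,{\cal N}({\mathfrak g})\,$ with the nilpotent linear maps preserving the relevant bilinear or quadratic form, and classifying these up to the corresponding classical group is an explicit combinatorial problem (partitions, together with finitely many additional invariants in characteristic~$2$), so only finitely many classes arise; alternatively Pommerening's extension of Bala--Carter covers this case uniformly. For the exceptional types in bad characteristic ($\,p=2,3$, and also $\,p=5\,$ for $\,E_8\,$) I would quote the explicit computations of Holt--Spaltenstein, which list all nilpotent $\,(\Ad\,G)$-orbits, or else cite Lusztig's general theorem on the finiteness of unipotent classes together with the existence of associated cocharacters for nilpotent elements, valid in all characteristics.

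The main obstacle is exactly the bad-characteristic exceptional case: there is no elementary uniform argument, and even the Bala--Carter--Pommerening framework required case-by-case verification there to ensure that no unexpected classes occur. Since this is a preliminaries chapter, the sensible course is to quote these classification results rather than to reprove them.
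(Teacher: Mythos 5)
Your argument is correct, and its overall shape (reduce to $\,G\,$ simple, observe that the statement is purely a classification problem, then split into good and bad primes and quote the literature) matches the paper's, which disposes of the theorem in two sentences of citations. The difference lies in the key input for good $\,p\,$: the paper follows Richardson, who proves finiteness directly on the Lie algebra side by embedding $\,\mathfrak g\,$ in $\,\mathfrak{gl}(V)\,$, using the Jordan form to see that $\,\mathfrak{gl}(V)\,$ has finitely many nilpotent $\,\GL(V)$-orbits, and then showing (by a separability/tangent-space argument valid in good characteristic) that each $\,\GL(V)$-orbit meets $\,\mathfrak g\,$ in only finitely many $\,(\Ad\,G)$-orbits. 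You instead transfer the problem to the unipotent variety via a Springer isomorphism and then count unipotent classes by Bala--Carter--Pommerening. Both routes are standard; Richardson's has the advantage of never leaving the Lie algebra and needing no structure theory beyond $\,\mathfrak{gl}(V)$, while yours buys the explicit parametrization of the orbits (by Levi subgroups and distinguished parabolics), which is more information than the theorem asks for. For bad primes you and the paper converge on the same source, the Holt--Spaltenstein computations for the exceptional types, and your remark that this case admits no uniform conceptual argument is exactly why the paper simply cites it. Your preliminary observation that $\,{\cal N}(\mathfrak g)\,$ is the zero fibre of the morphism $\,x\longmapsto x^{[p]^{\dim\mathfrak g}}\,$, hence Zariski closed, is correct and is in fact used elsewhere in the paper (Lemma~\ref{help}), though it is not logically needed for the finiteness statement itself.
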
\noindent
\begin{proof}
For $\,p\,$ good (or zero), the result is proved by Richardson
by reducing the general case to the case $\,\mathfrak
g\,=\,\mathfrak{gl}(V)\,$ \cite{rich2} (resp.,
\cite{kos},~\cite{dynk}). See also \cite[I.5.6]{spst}.
For $\,p\,$ bad, the result is established by Holt and
Spaltenstein by using computer calculations~\cite{holspa}.
\end{proof} 
\medskip

\subsubsection{Chevalley Basis}

Let $\,\mathfrak g_{\mathbb C}\,$ be the complex simple Lie algebra
associated with the root system $\,R.\,$ 
Denote by $\,\mathfrak h_{\mathbb C}\,$ a maximal toral subalgebra of 
$\,\mathfrak g_{\mathbb C}\,$. Let 
\[
\mathfrak g_{\mathbb C}\,=\,\mathfrak h_{\mathbb
C}\,\oplus\,\sum_{\alpha\in R}\mathfrak g_{\mathbb C,\alpha}
\]
be a Cartan decomposition of $\,\mathfrak g_{\mathbb C}\,$. 
Let $\,h_{\alpha}\,=\,\displaystyle\frac{2\alpha}{(\alpha,\,\alpha)}\,\in 
\mathfrak h_{\mathbb C}\,$ be the {\it coroot} corresponding to   
$\,\alpha\in R\,$ \cite[3.6.1]{car2},~\cite[8.2]{hum1}.
According to~\cite[25.2]{hum1}, one can choose root 
vectors $\,e_{\alpha}\in\mathfrak g_{\mathbb
C, \alpha}\,$ ($\,\alpha\in R\,$) so that

(a) $\,[e_{\alpha},\,e_{-\alpha}]\,=\,h_{\alpha}\,$.

(b) If $\,\alpha,\,\beta,\;\alpha\,+\,\beta\,\in R\,$ and
$\,[e_{\alpha},\,e_{\beta}]\,=\,N_{\alpha\,\beta}\,e_{\alpha+\beta}$,
then $\,N_{\alpha\beta}\,=\,-N_{-\alpha,\,-\beta}$.

(c) $\,N_{\alpha\beta}^2\,=\,q(r+1)\,\displaystyle\frac{(\alpha\,+
\,\beta,\,\alpha\,+\,\beta)}{(\beta,\,\beta)}\,$, 
where $\,\beta\,-\,r\alpha,\,\ldots ,\,\beta\,+\,q\alpha\,$ is
the $\,\alpha$-string through $\,\beta\,$.

Let $\,\Delta\,=\,\{\alpha_1,\,\ldots,\,\alpha_{\ell}\,\}\,$ be a basis of
$\,R\,$. By definition a {\bf Chevalley basis of $\,\mathfrak g_{\mathbb C}\,$}
is any basis $\,\{\,h_i=h_{\alpha_i},\,\alpha_i\in\Delta\,\}\cup 
\{\,e_{\alpha},\,\alpha\in R\,\}\,$ of $\,\mathfrak g_{\mathbb C}\,$
with the $\,e_{\alpha}$'s satisfying the conditions (a),(b),(c) above.  

Now fix a Chevalley basis
$\,\{\,h_i=h_{\alpha_i},\,\alpha_i\in\Delta\,\}\cup\{\,e_{\alpha},\, 
\alpha\in R\,\}\,$ of $\,\mathfrak g_{\mathbb C}\,$. Then one has 
\begin{align}\label{relations}
[h_i,\,h_j]\, &=\,0,\qquad 1\,\leq\, i,\,j\,\leq\,\ell\nonumber\\
[h_i,\,e_{\alpha}]\, &=\,\langle \alpha,\,\alpha_i\rangle\,e_{\alpha}, 
\qquad 1\,\leq i \leq\,\ell\,\nonumber\\
[e_{\alpha},\,e_{-\alpha}]\, &=\,h_{\alpha}, \qquad \alpha\,\in\,R\\ 
[e_{\alpha},\,e_{\beta}]\, &=\,0\,\qquad\mbox{if}\;\,\alpha+\beta\not\in R,
\nonumber\\
[e_{\alpha},\,e_{\beta}]\, &=\,N_{\alpha,\beta}\,e_{\alpha+\beta}
\qquad\mbox{if}\;\,\alpha+\beta\in R,\nonumber
\end{align}
where $\,N_{\alpha,\beta}\,=\,\pm(r+1),\,$ with $\,r\,$ the greatest 
integer for which $\,\beta-r\alpha\,$ is a root 
\cite[\S 25]{hum1},~\cite[4.2]{car2}.
We denote $\,e_i\,=\,e_{\alpha_i},\; f_i\,=\,e_{-\alpha_i}\,$ 
and rewrite the relations~\eqref{relations}
accordingly.

Let $\,\mathfrak g_{\mathbb Z}\,$ be the $\,\mathbb Z$-span of the
Chevalley basis. By~\eqref{relations}, $\,\mathfrak g_{\mathbb Z}\,$
is a Lie algebra over $\,\mathbb Z.\,$ 
Let $\,\mathfrak g\,=\,{\cal L}(G)\,$ denote the (restricted) Lie
algebra of $\,G\,$.  By \cite[Sect.2.5]{bor1}, 
$\,\mathfrak g\,=\,\mathfrak g_{\mathbb Z}\otimes_{\mathbb Z}K,\,$
provided that $\,G\,$ is simply connected.

For convenience, we also denote by $\,e_{\alpha}\,$ and
$\,h_{\alpha}\,$ the corresponding basis elements $\,e_{\alpha}\otimes
1\,$ and $\,h_{\alpha}\otimes 1\,$ of $\,\mathfrak g\,$.
By the discussion at the end of Section~\ref{structliealg}, we have 
that 
$\,e_{\alpha}^{[p]}\,=\,0,\;h_{\alpha}^{[p]}\,=\, 
h_{\alpha}\,$ for all $\,\alpha\in R$.

\subsubsection{Graph Automorphisms}\label{graphaut}
Let $\,R\,$ be a root system and $\,\Delta\,$ a basis of $\,R$.
The automorphism group of 
$\,R\,$ is the semidirect product of $\,W\,$ 
and the group $\,\Gamma\,:=\,\{\sigma\in{\rm Aut}\,R\,/\,\sigma 
(\Delta)=\Delta\}\,$ of
{\it graph} (or {\it diagram}) automorphisms of $\,R$.
Each $\,\sigma\in\Gamma\,$ determines an automorphism of the Dynkin
diagram of $\,R.\,$ Conversely, each automorphism of the Dynkin
diagram determines an automorphism of $\,R\,$ \cite[12.2]{hum1}. 
If $\,R\,$ is irreducible, then $\,\Gamma\,$ is
trivial, except for types $\,A_{\ell}\,(\ell\geq 2),\;D_{\ell}\,$ and
$\,E_6\,$ \cite[Table 1, p.\ 66]{hum1}.

Now let $\,R\,$ be the (irreducible) root system of a simply connected
simple algebraic group $\,G.\,$
Each automorphism $\,\sigma\in\Gamma\,$ 
gives rise to a (rational) automorphism of $\,G,\,$ denoted by 
$\,\hat{\sigma}\,$ and called a {\bf graph automorphism of $\,G\,$}
\cite[12.2 and 12.3]{car2}.

Each graph automorphism $\,\hat{\sigma}\,$ of $\,G\,$ induces a Lie
algebra automorphism of $\,\mathfrak g\,=\,{\cal L}(G),\,$ also
denoted by $\,\hat{\sigma}.\,$ The latter has the property:
\[
\hat{\sigma}(e_{\alpha})\, =\,e_{\sigma(\alpha)},\quad
\hat{\sigma}(f_{\alpha})\, =\,f_{\sigma(\alpha)}\qquad\quad\mbox{for
all}\;\;\alpha\in\Delta\,.
\]

The compatibility between the graph automorphisms of $\,G\,$ and
$\,\mathfrak g\,$ is as follows. For $\,g\in G,\;x\in\mathfrak g\,$ we
have $\,\hat{\sigma} (({\rm Ad}\,g)\cdot x)\,=\,({\rm
Ad}\,g^{\hat{\sigma}})\cdot x^{\hat{\sigma}}\,$.

By~\cite[Proposition 12.2.3]{car2}, there exist graph automorphisms of
order $\,2\,$ of $\,A_{\ell}(K),\;\ell\geq 2;\;D_{\ell}(K),\;\ell\geq
4; \;E_{6}(K)$. There is also a graph automorphism of order $\,3\,$ of
$\,D_4(K)$.

\subsection{Basic Representation Theory}\label{reptheo}

From now on let $\,G\,$ denote a simply connected simple algebraic
group (universal Chevalley group) over an algebraically closed field $\,K\,$ 
of characteristic $\,p>0$.

Fix a maximal torus $\,T\,$ of $\,G,\,$ and let 
$\,B\,=\,T\,U\,$ be a Borel subgroup 
containing $\,T$. 
Denote by $\,X\,=\,X(T)\,$ the character group of $\,T.\,$ 
Let $\,R\,$ be the (irreducible) root system of $\,G\,$ with respect to 
$\,T,\,$ $\,R^+\,$ (resp. $\,R^-\,$) the set of positive (resp. negative) 
roots relative to $\,B$, and 
$\,\Delta\,=\,\{\alpha_1,\ldots,\alpha_{\ell}\}\,$ the corresponding basis 
of simple roots, where $\,\ell\,=\,{\rm rank}\,G\,$. Let
$\,W\,=\,N_G(T)/T\,$ be the Weyl group 
of $\,G,\,$ $\,s_{\alpha}\,$ the reflection in $\,W\,$ corresponding to  
$\,\alpha\in R$. Put $\,s_i\,=\,s_{\alpha_i}\,$, and let $\,w_0\,$
denote the unique element of 
$\,W\,$ sending $\,R^+\,$ to $\,R^-\,$. 
The cardinality of $\,R\,$ is denoted by $\,|R|\,=\,2\,|R^+|,\,$ and
$\,\dim\,G\,=\,|R|\,+\,\ell\,$ \cite[26.3]{hum2}.

\subsubsection{Weights}\label{introback}
Let $\,(\;\cdot\;,\,\cdot\;)\,$ denote a
nondegenerate $\,W$-invariant symmetric bilinear form on $\,X,\,$
extended to $\,E\,=\,X\otimes_{\mathbb Z}\mathbb R\,$. Set
$\,\langle\alpha,\,\beta\rangle\,=\,\displaystyle\frac{2\,(\alpha,\,\beta)}
{(\beta,\,\beta)}\,$, for $\,\alpha,\,\beta\,\in\,E,\,$ as in
Section~\ref{therootsys}. Since $\,R\,$ is an abstract root system, 
$\,\langle\alpha,\,\beta\rangle\,\in\,\mathbb Z\,$ for all
$\,\alpha,\,\beta\,\in\,R\,$ \cite[\S 27]{hum2}. 

A vector $\,\lambda\,\in\,E\,$ is called a
{\bf weight} provided that $\,\langle\lambda,\,\alpha\rangle\in\mathbb
Z\,$ for all $\,\alpha\in R$.
The weights form a lattice $\,P\,$ of $\,E,\,$ in which the lattice 
$\,\mathbb ZR\,$ spanned by $\,R\,$ is a subgroup of finite index.
$\,P/\mathbb ZR\,$ is called the {\bf fundamental group} of $\,G.\,$
$\,W\,$ acts naturally on $\,P\,$ and $\,\mathbb ZR,\,$ and acts trivially 
on the fundamental group $\,P/\mathbb ZR\,$ \cite[14.7]{bor2}.

If $\,\Delta\,=\,\{\alpha_1,\ldots,\alpha_{\ell}\},\,$ 
then $\,P\,$ is generated by the system of {\bf fundamental weights}
$\,\{\omega_1, \ldots ,\omega_{\ell}\},\,$ defined by 
$\,\langle \omega_i,\alpha_j\rangle = \delta_{ij}\,$.    
Define $\,\rho\,:=\,\sum_{i=1}^{\ell}\,\omega_i\,$ (it equals half the sum 
of the positive roots).

A weight $\,\lambda\,$ is said to be {\bf dominant} if 
$\,\langle\lambda, \alpha \rangle \geqslant 0\,$ for all roots 
$\,\alpha\in R^+$. Let $\,P_{++}\,$ denote the set of all dominant weights.
Clearly, $\,P_{++}\,=\,\displaystyle \sum_{i=1}^{\ell}\,\mathbb Z^+\omega_i\,$.
\linebreak A weight $\,\lambda\in P\,$ is $\,W$-conjugate to one and only one 
dominant weight. There is a natural partial ordering of $\,E\,$: 
given $\,\lambda,\,\mu\in E,\,$ we write $\,\mu\,\leq\,\lambda\,$ if and only
if $\,\lambda\,-\,\mu\,$ is a sum of positive roots \cite[Appendix]{hum2}.

As $\,G\,$ is simply connected, the group $\,X\,=\,X(T)\,$ 
is the full weight lattice $\,P\,$ \cite[\S 31]{hum2}.

\subsubsection{Maximal Vectors and Irreducible Modules}\label{wamv}

Let $\,V\,$ be a finite dimensional vector space over $\,K.\,$ 
If $\,\pi\,:\,G\,\rightarrow\,GL(V)\,$ is a rational representation 
of $\,G\,$, then we call $\,V\,$ a {\bf rational $\,G$-module}. 
Denote $\,\pi(g)\,v\,=\,g\cdot v\,$ for all $\,g\in G,\,v\in V$.

Being a diagonalizable group, the maximal torus $\,T\,$ acts
completely reducibly on $\,V,\,$ so that
$\,V\,$ is the direct sum of weight spaces
\[
V_{\mu}\,:=\,\{\,v\in V\,/\,t\cdot v\,=\,\mu(t)\,v\;\;\mbox{for
all}\;t\in T\}\,,
\]
where $\,\mu\in X\,$ \cite[III.8.4]{bor2}.  
The dimension $\,\dim\,V_{\mu}\,$ is called the {\bf multiplicity} of $\,\mu$.
Put $\,{\cal X}(V)\,:=\,\{\,\mu\in X\,/\,V_{\mu}\,\neq\,(0)\,\}.\,$
The elements of $\,{\cal X}(V)\,$ are called the {\bf weights of
$\,V\,$}.
We call $\,v\in V_{\mu}\,$ a {\bf weight vector} (of weight $\,\mu\,$).
Put $\,{\cal X}_{++}(V)\,=\,{\cal X}(V)\cap P_{++}\,$.  

A (nonzero) vector $\,v\in V\,$ is called a {\bf maximal vector}
if it is fixed by all $\,u\in U.\,$ If $\,V\,$ is nonzero, then
maximal vectors exist \cite[31.2]{hum2}. 

If $\,V\,$ is
irreducible, then a maximal vector $\,v_0\,$ of weight $\,\lambda\,$ 
is unique up to a scalar multiple. Moreover, $\,\lambda\,$ is a dominant weight
of multiplicity $\,1\,$ and is called the {\bf highest weight} of $\,V.\,$
The vectors in $\,V_{\lambda}\,$ are called {\bf highest weight vectors}.
All other weights of $\,V\,$ are of the form
$\,\lambda\,-\,\sum\,c_{\alpha}\,\alpha,\,$ where $\,\alpha\in
R^+\,$ and $\,c_{\alpha}\in \mathbb Z^+\,$. They are permuted by $\,W,\,$ with
$\,W$-conjugate weights having the same multiplicity \cite[31.3]{hum2}. 
It follows that $\,{\cal X}(V)\,=\,W\cdot {\cal X}_{++}(V)\,$.

If $\,V'\,$ is an irreducible rational $\,G$-module with highest weight
$\,\lambda'\,$, then $\,V\,\cong\,V'\,$ (as
$\,G$-module) if and only if $\,\lambda\,=\,\lambda'\,$ 
\cite[31.3]{hum2}. We denote by $\,E(\lambda)\,$ the irreducible
rational $\,G$-module with highest weight $\,\lambda\,$.

In the next section we will review the method of construction of 
irreducible rational $\,G$-modules.

\subsubsection{The Construction of Irreducible Modules}

The construction of irreducible rational modules for semisimple
algebraic groups given below
shows a connection between the representation theory of such groups 
with the representation theory of {\it hyperalgebras}.
The main results of this section are due to Chevalley  
\cite{stein1}, \cite{wong1}.

Recall some notation:  
$\,\mathfrak g_{\mathbb C}\,$ is the complex
simple Lie algebra associated with the root system $\,R,\,$ and 
$\,\Delta\,$ is a basis of $\,R$. Fix a Chevalley basis
$\,\{ h_{\beta},\,\mbox{$\beta\in\Delta$};$ \linebreak 
$\,\mbox{$e_{\alpha}$},\, \mbox{$\alpha\in R$} \}\,$
of $\,\mathfrak g_{\mathbb C}\,$ and  
let $\,\mathfrak g_{\mathbb Z}\,$ be the $\,\mathbb Z$-span of the
Chevalley basis. The basis of the restricted Lie algebra
$\,\mathfrak g\,=\,{\cal L}(G)\,=\,\mathfrak g_{\mathbb
Z}\otimes_{\mathbb Z}K\,$ is also denoted by $\,e_{\alpha}\,$ and
$\,h_{\beta}\,$ (that is, we identify $\,e_{\alpha}\,$ and
$\,h_{\beta}\,$ with $\,e_{\alpha}\otimes 1\,$ and
$\,h_{\beta}\otimes 1\,$ in $\,\mathfrak g\,$). 
The $\,p$th power map in $\,\mathfrak g\,$ has the property
$\,e_{\alpha}^{[p]}\,=\,0,\;h_{\alpha}^{[p]}\,=\,h_{\alpha}\,$ for all
$\,\alpha\in R$.

Using Kostant's Theorem~\cite{wong1}, which describes a $\,\mathbb
Z$-basis of the $\,\mathbb Z$-form $\,U_{\mathbb Z}\,$ of the
universal enveloping algebra $\,U\,=\,U(\mathfrak g_{\mathbb C})\,$
of $\,\mathfrak g_{\mathbb C}\,$ generated by all
$\,\displaystyle\frac{e^n_{\alpha}}{n!}\,$ ($\,\alpha\in
R,\;n\in\mathbb Z^+$), we can construct 
an {\it admissible lattice} (a lattice stable under $\,U_{\mathbb Z}\,$)
in an arbitrary $\,\mathfrak g_{\mathbb C}$-module \cite[\S 27]{hum1}.

In particular, let $\,V\,=\,V_{\lambda}\,$ be an irreducible
$\,\mathfrak g_{\mathbb C}$-module with highest weight
$\,\lambda\in P_{++}\,$,
and let $\,v_0\,$ be a maximal vector of $\,V.\,$ This is 
a nonzero weight vector annihilated by all $\,e_{\alpha},\,\alpha\in
R^+\,$. By Theorem~\cite[27.1]{hum1}, $\,V_{\mathbb
Z}\,:=\,U_{\mathbb Z}\,v_0\,=\,U_{\mathbb Z}^-\,v_0\,$ is
the (unique) smallest $\,\mathbb Z$-form of $\,V\,$ containing
$\,v_0\,$ and stable under $\,U_{\mathbb Z}\,$. In particular,
$\,V_{\mathbb Z}\,$ is an admissible lattice.

Now tensoring with $\,K,\,$ one obtains a $\,K$-space
$\,V_K(\lambda)\,:=\,V_{\mathbb Z}\otimes K\,$. By construction,
$\,V_K(\lambda)\,$ carries a canonical module structure over
the {\bf hyperalgebra} $\,U_K\,:=\,U_{\mathbb Z}\otimes_{\mathbb Z}K\,$
\cite[20.2]{hum1},~\cite[1.3]{wong1}. 
Note that $\,U_K\,$ is generated over $\,K\,$ by $\,x_{\alpha}^{(m)}\,=\, 
\displaystyle\frac{e_{\alpha}^m}{m!} \otimes 1,\,$ where $\,\alpha\in
R\,$ and $\,m\in\mathbb Z^+\,$.
Let $\,e_0\,=\,v_0\otimes 1\,$. Then
\[
U_K\cdot e_0\,=\,(U_{\mathbb Z}\otimes_{\mathbb Z}K)\cdot (v_0\otimes
1)\,=\,U_{\mathbb Z}\cdot v_0\otimes K\,=\,V_{\mathbb Z}\otimes K\,=\,
V_K(\lambda)\,.
\]
One easily sees that $\,V_K(\lambda)\,=\,U_K^-\cdot e_0\,$, where 
$\,U_K^-\,=\,U_{\mathbb Z}^-\otimes_{\mathbb Z}K\,$. 
Moreover, $\,V_K(\lambda)\,$ is the direct sum
of its weight spaces $\,V_K(\lambda)_{\mu}\,=\,(V_{\mathbb Z}\cap
V_{\lambda,\mu})\otimes_{\mathbb Z}K\,$, where $\,\mu\in {\cal
X}(V)\,$ \cite[Theorem 20.2]{hum1}.
It follows that $\,V_K(\lambda)\,$ is a rational $\,G$-module of
highest weight $\,\lambda,\,$ whose dimension over $\,K\,$ equals
$\,\dim_{\mathbb C}\,V_{\lambda}\,$.

\begin{theorem}{\rm\cite{cps}}\label{connection}
There is a one-to-one correspondence between finite dimensional 
$\,U_K$-modules and finite dimensional rational $\,G$-modules.
\end{theorem}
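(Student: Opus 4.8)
The plan is to establish the correspondence by linking both categories to the category of finite-dimensional modules over a single associative $K$-algebra, namely the hyperalgebra $U_K$, and to exploit the fact that a rational representation of the simply connected semisimple group $G$ is recovered from its action of the distribution algebra (hyperalgebra) by ``integrating'' the action of the divided-power elements $x_\alpha^{(m)}$. First I would recall the construction given just above: starting from a finite-dimensional rational $G$-module $V$, the group $G$ is generated by the root subgroups $U_\alpha$ ($\alpha\in R$) together with $T$, and the action of a one-parameter subgroup $u_\alpha(t)$ on $V$ is polynomial in $t$; expanding $u_\alpha(t)\cdot v=\sum_{m\geq 0} t^m\, x_\alpha^{(m)}\cdot v$ defines operators $x_\alpha^{(m)}\in\End_K(V)$ that satisfy Kostant's integrality relations, hence make $V$ a $U_K$-module. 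Conversely, given a finite-dimensional $U_K$-module $M$, one defines $u_\alpha(t)$ acting on $M$ by the (finite) sum $\sum_m t^m x_\alpha^{(m)}$, checks that this is a well-defined morphism $\mathbf G_a\to\GL(M)$ (the relations in $U_K$ guarantee $u_\alpha(s)u_\alpha(t)=u_\alpha(s+t)$), lets $T$ act via the Cartan part of $U_K$ through its weight-space decomposition, and verifies the Chevalley commutation relations so that these data assemble into a rational representation $G\to\GL(M)$.

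The key steps, in order, are: (1) show the two assignments $V\mapsto (V,\{x_\alpha^{(m)}\})$ and $M\mapsto (M,\{u_\alpha(t)\},T)$ are well defined on objects; (2) show they are functorial, i.e. a $G$-module homomorphism is exactly a $U_K$-module homomorphism for the associated structures, which follows because a $K$-linear map commuting with every $u_\alpha(t)$ for all $t$ commutes with all its $t$-coefficients $x_\alpha^{(m)}$ and conversely, and likewise respects the $T$-action iff it respects weight spaces; (3) show the two functors are mutually inverse up to natural isomorphism — here one uses that $G$, being simply connected, is generated by the $U_\alpha$ and $T$ with the Steinberg presentation, so the rational representation reconstructed from $\{x_\alpha^{(m)}\}$ agrees with the original one on generators, hence everywhere, and in the other direction the Taylor expansion of the reconstructed $u_\alpha(t)$ returns the original divided powers. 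One should also note the compatibility with the construction recalled before the theorem: the module $V_K(\lambda)=V_{\mathbb Z}\otimes K$ is by construction a $U_K$-module and simultaneously a rational $G$-module, and these two structures correspond under the equivalence, which is what makes the correspondence send irreducibles to irreducibles.

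The main obstacle I expect is step (3), specifically verifying that a system of operators $\{x_\alpha^{(m)}\}$ on a finite-dimensional space satisfying the Kostant relations genuinely ``exponentiates'' to a \emph{rational} (not merely abstract) action of $G$ that is uniquely determined. This requires the nontrivial input that $G$ simply connected is the universal Chevalley group with the Steinberg relations as a defining presentation among algebraic-group morphisms, so that the prescribed maps $u_\alpha:\mathbf G_a\to\GL(M)$ and the torus action extend (uniquely) to a homomorphism of algebraic groups; controlling the torus part uses that $X(T)=P$ is the full weight lattice, which is exactly where simple connectedness is used. All of this is classical (Chevalley, and the references \cite{stein1}, \cite{wong1}, \cite{cps} cited in the statement), so in the write-up I would assemble these ingredients rather than reprove them, and then the remaining verifications — functoriality and the two composites being identities — are routine bookkeeping with the divided-power relations.
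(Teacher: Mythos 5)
The paper does not prove this statement at all: it is quoted from Cline--Parshall--Scott \cite{cps} (see also \cite{stein1}, \cite{wong1}), so there is no in-text argument to compare against. Your outline is the standard proof underlying that citation --- expand $u_\alpha(t)$ in divided powers to pass from $G$-modules to $U_K$-modules, and exponentiate back using the Steinberg presentation of the simply connected group and the identification $X(T)=P$ for the torus part --- and as a plan it is correct. The one step you assert rather than justify is that the sum $\sum_m t^m x_\alpha^{(m)}$ is finite on a finite-dimensional $U_K$-module; this requires showing first that such a module decomposes into weight spaces under the commutative subalgebra $U_K^0$ generated by the $\binom{h_i}{m}$, since $x_\alpha^{(m)}$ shifts weights by $m\alpha$ and only finitely many weights occur. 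That verification is where the real content of \cite{cps} lies, so in a full write-up you should either supply it or cite it explicitly rather than fold it into ``routine bookkeeping.''
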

Since $\,\mathfrak g_{\mathbb Z}\,$ embeds in
$\,U_{\mathbb Z}\,$, $\,V_K(\lambda)\,$ 
carries a canonical $\,\mathfrak g$-module structure.
One can show that the corresponding representation 
$\,\mathfrak g\longrightarrow\mathfrak{gl}(V_K(\lambda))\,$ is nothing
but the differential of the rational representation
$\,G\longrightarrow\GL (V_K(\lambda))\,$ induced by the action of 
$\,U_K\,$ on $\,V_K(\lambda)\,$ (see Theorem~\ref{connection}). 

The rational $\,G$-module $\,V_K(\lambda)\,$ is called the
{\bf Weyl module} with highest weight $\,\lambda$.
The vector $\,e_0\in V_K(\lambda)\,$ is a maximal vector of
weight $\,\lambda\,$ relative to the action of a Borel subgroup
$\,B\,=\,T\,U\,$ of $\,G$.

There is also an intrinsic construction of Weyl modules. Let
$\,\lambda\,$ be a dominant weight of $\,X(T)\,\cong\,P\,$ 
and $\,\langle v \rangle\,$ a
$\,1$-dimensional module for $\,B\,$ affording $\,\lambda\,$
upon restriction to $\,T$. Now $\,G\times B\,$ acts on $\,K[G]$,
with $\,G\,$ acting on the left and $\,B\,$ on the right.
This yields an action of $\,G\times B\,$ on $\,K[G]   \otimes_K\langle
v  \rangle $.  Set  $\,H(\lambda)
= {(K[G]\otimes_K\langle v \rangle )}^B$,  where $\,B\,$
denotes the direct factor of $\,G \times B$.  One checks that $\,H(\lambda) =
D(\lambda)\otimes_K\langle v \rangle $, where $\,D(\lambda) = \{f
\in K[G]\,/\,  f(x) = \lambda(b)f(xb),  \; x \in G, \; b \in B\}$. 
It turns out that $\,H(\lambda)\,$ has a simple socle isomorphic to
$\,E(\lambda)\,$.  
Moreover, $\,H(-\omega_0\lambda)^*\,$ is isomorphic to the Weyl module
$\,V_K(\lambda)\,$ \cite[Part II, 2.12]{jant1}.
\medskip

From now on denote by $\,V(\lambda)\,$ the Weyl module $\,V_K(\lambda)\,$.
Recall some fundamental properties of the Weyl modules.

\begin{theorem}{\rm\cite[2.13-14,5.11]{jant1}}\label{univprop}~Let  
$\lambda$ be a dominant weight of \mbox{$\,X(T)\,\cong\,P$}.
\begin{itemize}
\item[(1)]  $V(\lambda)\,$ has a unique maximal submodule
$\,\Phi(\lambda)\,$ such that $\,V(\lambda) / \Phi(\lambda)
\cong E(\lambda).\,$ 
In particular $\,V(\lambda)\,$ is indecomposable.
\item[(2)] ${\rm dim}(V(\lambda))$ is given by the Weyl dimension formula
(see {\rm \cite[24.3]{hum1}}).
\item[(3)] Given a rational $\,G$-module $\,M\,$ generated by a
maximal weight vector of weight $\,\lambda,\,$ there is an epimorphism
$\,V(\lambda)\longrightarrow M\,$. In other words, $\,V(\lambda)\,$ is
a universal object among the rational $\,G$-modules of highest weight 
$\,\lambda$.
\end{itemize}
\end{theorem}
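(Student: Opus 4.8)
The plan is to dispatch the three assertions in order of increasing depth, using throughout the explicit description $V(\lambda)=V_{\mathbb Z}\otimes_{\mathbb Z}K=U_K\cdot e_0$ obtained in the previous section, in which $e_0$ is a maximal vector of weight $\lambda$, the weight space $V(\lambda)_{\lambda}$ is one-dimensional, and every weight of $V(\lambda)$ is $\le\lambda$. Claim (2) is then immediate: by Kostant's theorem (cited above) $V_{\mathbb Z}$ is a free $\mathbb Z$-module which is a lattice in the complex irreducible module $V_{\lambda}$, so $\dim_K V(\lambda)=\operatorname{rank}_{\mathbb Z}V_{\mathbb Z}=\dim_{\mathbb C}V_{\lambda}$, and the last quantity is given by Weyl's dimension formula \cite[24.3]{hum1}.

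For (1) I would first note that any proper $G$-submodule $N\subsetneq V(\lambda)$ satisfies $N_{\lambda}=0$: if not, $N$ would contain a nonzero scalar multiple of $e_0$ (as $\dim V(\lambda)_{\lambda}=1$), hence $N\supseteq U_K\cdot e_0=V(\lambda)$, a contradiction. Therefore the sum $\Phi(\lambda)$ of all proper submodules still has trivial $\lambda$-weight space, so it too is proper and is the unique maximal submodule. The quotient $V(\lambda)/\Phi(\lambda)$ is then irreducible and is generated by the image of the maximal vector $e_0$, so its highest weight is $\lambda$; by the classification of irreducible rational $G$-modules recalled in Section~\ref{wamv} it is isomorphic to $E(\lambda)$. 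Indecomposability follows at once: a splitting $V(\lambda)=V_1\oplus V_2$ with $V_1,V_2\ne 0$ would present $V(\lambda)$ as the sum of the two proper submodules $V_1,V_2$, both contained in $\Phi(\lambda)$ — impossible.

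Assertion (3) is the real content. Here I would invoke the realization $V(\lambda)\cong H(-w_0\lambda)^*$ recorded just before the statement, together with Frobenius reciprocity for induction from $B$ to $G$. Given a rational $G$-module $M$ generated by a maximal vector $m_0$ of weight $\lambda$, one has for finite-dimensional modules
\[
\operatorname{Hom}_G\big(V(\lambda),M\big)\cong\operatorname{Hom}_G\big(M^*,H(-w_0\lambda)\big)\cong\operatorname{Hom}_B\big(M^*,K_{-w_0\lambda}\big),
\]
and dualizing once more this composite isomorphism identifies $\operatorname{Hom}_G(V(\lambda),M)$ with the space of $U$-fixed vectors of weight $\lambda$ in $M$, i.e.\ the maximal vectors of weight $\lambda$. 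Thus $m_0$ corresponds to a $G$-homomorphism $\varphi\colon V(\lambda)\to M$ with $\varphi(e_0)$ a nonzero multiple of $m_0$; since $m_0$ generates $M$ over $G$, the image of $\varphi$ is all of $M$, giving the asserted epimorphism, and the universality statement is merely a rephrasing of this.

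The hard part is not the formal bookkeeping above but the two structural inputs it rests on in characteristic $p$: Kostant's $\mathbb Z$-form (so that $V_{\mathbb Z}$ really is a lattice of full rank, needed for (2) and implicitly for (1) and (3)), and the non-vanishing and simple-socle properties of the induced module $H(\lambda)$ — Kempf's vanishing theorem — which underlie the identification $V(\lambda)\cong H(-w_0\lambda)^*$ used in (3). Both are quoted in the excerpt, so relative to this chapter the proof reduces to what is sketched here; a self-contained route to (3) would instead prove directly that $\operatorname{Ann}_{U_K}(e_0)\subseteq\operatorname{Ann}_{U_K}(m_0)$, and that is precisely the point where the integral machinery would have to re-enter.
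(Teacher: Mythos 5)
The paper does not prove this statement at all: it is quoted as background with a citation to Jantzen \cite[II.2.13--14, 5.11]{jant1}, so there is no in-paper argument to compare against. Your sketch is essentially the standard proof from that source and is correct: (2) follows from the lattice construction $V(\lambda)=V_{\mathbb Z}\otimes_{\mathbb Z}K$, (1) from the one-dimensionality of $V(\lambda)_{\lambda}$ together with the fact that $e_0$ generates, and (3) from Frobenius reciprocity applied to $V(\lambda)\cong H(-w_0\lambda)^*$. The only place you are loose is the last identification in (3): whether $\operatorname{Hom}_B(M^*,K_{-w_0\lambda})$ comes out as maximal vectors of weight $\lambda$ (i.e.\ $B^+$-eigenvectors) or as lowest-weight vectors of weight $w_0\lambda$ depends on which Borel the induction is taken from, and one must twist by $w_0$ or dualize consistently to land on the former; this is convention bookkeeping rather than a gap, but it is exactly the kind of sign that is easy to get backwards.
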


In general, the Weyl module $\,V(\lambda)\,$ 
is reducible. Its composition factors are of the form $\,E(\mu),\,$ 
where $\,\mu\leq\lambda$, and $\,E(\lambda)\,$ always occurs
in $\,V(\lambda)\,$ with (composition) multiplicity $\,1$.
However, in one important case the Weyl module 
$\,V(\lambda)\,$ is irreducible and its dimension is known.

\begin{theorem}[Steinberg]{\rm\cite{stein3}}\label{stmod}
The {\bf Steinberg module} $\,E((p-1)\rho)\,$ is an irreducible 
$\,G$-module of dimension $\,p^{|R^+|}\,$.
\end{theorem}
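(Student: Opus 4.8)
The plan is to prove the theorem as the conjunction of two facts: that the Weyl module $V((p-1)\rho)$ is irreducible, and that its dimension is $p^{|R^+|}$; together these give $E((p-1)\rho)=V((p-1)\rho)$ with the stated dimension. The dimension is immediate from the Weyl dimension formula (Theorem~\ref{univprop}(2)): since $(p-1)\rho+\rho=p\rho$, we have $\dim V((p-1)\rho)=\prod_{\alpha\in R^+}\frac{((p-1)\rho+\rho,\alpha)}{(\rho,\alpha)}=\prod_{\alpha\in R^+}\frac{(p\rho,\alpha)}{(\rho,\alpha)}=p^{|R^+|}$. So the real content is irreducibility. By the universal property of Weyl modules (Theorem~\ref{univprop}(1)), $E((p-1)\rho)$ is the unique simple quotient of $V((p-1)\rho)$, and it occurs with composition multiplicity $1$ (the weight $(p-1)\rho$ has multiplicity $1$ in $V((p-1)\rho)$ and cannot occur in any $E(\mu)$ with $\mu<(p-1)\rho$); hence it suffices to show that $E((p-1)\rho)$ is the \emph{only} composition factor of $V((p-1)\rho)$.

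The cleanest route to this is the linkage principle (\cite[Part~II]{jant1}): every composition factor $E(\mu)$ of $V((p-1)\rho)$ has $\mu$ dominant, $\mu\le(p-1)\rho$, and $\mu\in W_p\cdot(p-1)\rho$, where $W_p$ is the affine Weyl group generated by $W$ together with the translations by $p\mathbb{Z}R$, acting by the dot action $w\cdot\nu=w(\nu+\rho)-\rho$. I would then compute this orbit. Since $(p-1)\rho+\rho=p\rho$ and $w\rho+\gamma$ ranges over $\rho+\mathbb{Z}R$ as $(w,\gamma)$ ranges over $W\times\mathbb{Z}R$, one obtains $W_p\cdot(p-1)\rho=(p-1)\rho+p\mathbb{Z}R$. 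Writing $\mu=(p-1)\rho+p\delta$ with $\delta\in\mathbb{Z}R$: dominance of $\mu$ gives $\langle\delta,\alpha_i^{\vee}\rangle\ge-(p-1)/p$, hence $\langle\delta,\alpha_i^{\vee}\rangle\ge 0$ for all $i$, so $\delta$ is dominant; and $\mu\le(p-1)\rho$ forces $-\delta=\sum_i c_i\alpha_i$ with all $c_i\ge 0$. Then $(\delta,\delta)=-\sum_i c_i(\delta,\alpha_i)\le 0$ because each $(\delta,\alpha_i)\ge 0$, so positive-definiteness of the form gives $\delta=0$ and $\mu=(p-1)\rho$. Thus $E((p-1)\rho)$ is the only composition factor, it occurs with multiplicity $1$, and $V((p-1)\rho)=E((p-1)\rho)$, of dimension $p^{|R^+|}$.

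In this approach the only non-elementary ingredient is the linkage principle itself; everything else is a one-paragraph calculation. If one prefers an argument not relying on linkage, there is a purely Lie-algebraic alternative through the restricted enveloping algebra $\mathfrak u(\mathfrak g)$: by the PBW basis (Theorem~\ref{ruelbasis}) and the triangular decomposition $\mathfrak g=\mathfrak n^-\oplus\mathfrak t\oplus\mathfrak n$, the submodule $\mathfrak u(\mathfrak g)\cdot e_0=\mathfrak u(\mathfrak n^-)\cdot e_0$ of $V((p-1)\rho)$ generated by the highest weight vector has dimension at most $\dim\mathfrak u(\mathfrak n^-)=p^{|R^+|}=\dim V((p-1)\rho)$, with equality precisely when the socle generator $\prod_{\alpha\in R^+}f_\alpha^{p-1}$ of the local Frobenius algebra $\mathfrak u(\mathfrak n^-)$ does not annihilate $e_0$; granting this, $V((p-1)\rho)$ is $\mathfrak u(\mathfrak n^-)$-free of rank one, hence has one-dimensional socle spanned by the lowest weight vector $v_-$ (of weight $-(p-1)\rho$, killed by $\mathfrak n^-$ for weight reasons), so every nonzero $\mathfrak u(\mathfrak g)$-submodule contains $v_-$, and then $\mathfrak u(\mathfrak g)v_-=\mathfrak u(\mathfrak n)v_-$ is all of $V((p-1)\rho)$ by the mirror image of the same argument, forcing simplicity over $\mathfrak g$, hence $V((p-1)\rho)=E((p-1)\rho)$. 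Here the main obstacle is exactly the non-vanishing of $\bigl(\prod_{\alpha\in R^+}f_\alpha^{p-1}\bigr)\cdot e_0$; I would prove it by running along a reduced expression for $w_0$ and reducing each step to the $\mathfrak{sl}_2$-computation $e^{p-1}f^{p-1}v_0=((p-1)!)^2 v_0\ne 0$ in the $p$-dimensional restricted $\mathfrak{sl}_2$-module, the nonvanishing being Wilson's theorem.
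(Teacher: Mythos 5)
The paper offers no proof of this statement at all: Theorem~\ref{stmod} is simply quoted from Steinberg's paper \cite{stein3}, so there is no argument of the authoress's to compare yours against. Your primary argument is correct and complete modulo the one external input you name. The dimension computation from the Weyl dimension formula is exactly right, the orbit computation $W_p\cdot(p-1)\rho=(p-1)\rho+p\mathbb{Z}R$ is right (every $w\rho+\gamma$ lies in $\rho+\mathbb{Z}R$ because $\rho-w\rho\in\mathbb{Z}R$, and every coset representative is hit with $w=1$), and the positivity argument $(\delta,\delta)=-\sum_i c_i(\delta,\alpha_i)\le 0$ correctly forces $\delta=0$ — the only cosmetic point is that the $c_i$ are nonnegative \emph{rationals} ($-p\delta\in Q_+$ gives $-\delta=\sum(d_i/p)\alpha_i$), which costs nothing. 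Be aware, though, that the linkage principle is a far heavier and historically much later theorem than the statement being proved; Steinberg's original argument is essentially your second sketch. In that second sketch two steps deserve more care than you give them: the identification of the one-dimensional socle of the local Frobenius algebra $\mathfrak u(\mathfrak n^-)$ with the span of $\prod_{\alpha\in R^+}f_\alpha^{p-1}$ (in a fixed PBW order) and the resulting freeness criterion should be justified, and the final step ``$\mathfrak u(\mathfrak n)v_-$ is all of $V$ by the mirror image of the same argument'' quietly requires the mirror non-vanishing $\bigl(\prod_\alpha e_\alpha^{p-1}\bigr)v_-\neq 0$, which is not literally the same computation; it is cleaner to deduce simplicity from the $\mathfrak u(\mathfrak n^-)$-freeness together with the contravariant form on $V((p-1)\rho)$, whose radical is the maximal submodule and which pairs the $\lambda$- and $-\lambda$-weight lines nondegenerately. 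As a self-contained proof, your first argument stands on its own.
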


\subsubsection{Infinitesimally Irreducible Modules}\label{construct}

If $\,\pi\,:\,G\,\rightarrow\,GL(V)\,$ is a rational representation, 
then the differential
$\,{\rm d}\pi\,:\,\mathfrak g\,\rightarrow\mathfrak{gl}(V)\,$  is a
representation of $\,\mathfrak g\,$ (see Section~\ref{tlaag}). 
Let $\,\lambda\,=\,\lambda(\pi)\,$ be the highest weight of
$\,V\,$. 

The differential $\,{\rm d}\mu\,: \mathfrak t\longrightarrow K\,$
of a weight $\,\mu\in X\,$ is a linear function on $\mathfrak t$.
The torus $\,\mathfrak t\,$ acts on the weight space $\,V_{\mu}\,$ 
via the differential $\,{\rm d}\mu\,$. As usual, abusing notation, 
we identify $\,\mu\,$ with $\,{\rm d}\mu\,$ and call $\,\mu\,=\,{\rm
d}\mu\,$ a {\it weight of $\,\mathfrak t\,$}.
However, the elements of $\,p\,X\,$ have zero differentials, so that
the weights of $\,\mathfrak t\,$ correspond one-to-one to the elements
of $\,X/pX\,$, a set of cardinality $\,p^{\ell}.\,$ 

A weight $\,\lambda\in P_{++}\,$ is called {\bf $\,p$-restricted} if 
$\,\lambda= \sum_ic_i\omega_i\,$ with $\,0\leq c_i \leq p-1\,$
for all $\,i\,$. Denote by $\,\Lambda_p\,$ the subset
of $\,P_{++}\,$ consisting of all $\,p$-restricted weights.

\begin{definition}\label{infirred}
A rational $\,K$-representation $\,\pi\,$ of $\,G\,$ is called {\bf
infinitesimally irreducible} if its differential $\,{\rm d}\pi\,$
defines an irreducible representation of the Lie algebra $\,\mathfrak
g\,=\,{\cal L}(G)\,$.
\end{definition}
\begin{lemma}{\rm\cite[6.2]{bor1}}\label{bor42}
Let $\,\pi\,:\,G\,\rightarrow\,GL(V)\,$ 
be an infinitesimally irreducible rational representation 
of $\,G\,$ with highest weight $\,\lambda\,=\,\lambda(\pi)\,$. Then

(i) $\,V\,=\, U(\mathfrak n^-)\cdot V_{\lambda}\,$.

(ii) $\,V_{\lambda}\,$ is the only subspace of $\,V\,$ annihilated by 
$\,\mathfrak n\,$.
\end{lemma}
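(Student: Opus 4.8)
The plan is to exploit the hyperalgebra description of infinitesimally irreducible modules and the fact that irreducibility as a $\,\mathfrak g$-module forces the highest weight $\,\lambda\,$ to be $\,p$-restricted, so that the $\,\mathfrak g$-module structure on $\,V\,$ already ``sees'' the whole module. Concretely, I would argue as follows. Since $\,\pi\,$ is infinitesimally irreducible, $\,V\,$ is irreducible as a $\,\mathfrak g$-module, and by the construction of irreducible $\,G$-modules (Steinberg's tensor product theorem, or rather the $p$-restricted part of it) we may take $\,\lambda\in\Lambda_p\,$; then $\,V\,=\,E(\lambda)\,$ is simultaneously the irreducible $\,G$-module and the irreducible $\,\mathfrak g$-module (equivalently, the irreducible $\,\mathfrak u(\mathfrak g)$-module) of highest weight $\,\lambda\,$. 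Pick a highest weight vector $\,0\neq v_0\in V_\lambda\,$; it is a maximal vector, so $\,\mathfrak n\cdot v_0\,=\,0\,$, i.e. $\,e_\alpha\cdot v_0\,=\,0\,$ for all $\,\alpha\in R^+\,$.

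For part (i): the subspace $\,W:=U(\mathfrak n^-)\cdot v_0\,$ is visibly an $\,\mathfrak n^-$-submodule, it is an $\,\mathfrak t$-submodule since $\,[\,\mathfrak t,\mathfrak n^-]\subseteq\mathfrak n^-\,$ and $\,v_0\,$ is a $\,\mathfrak t$-weight vector, and it is an $\,\mathfrak n$-submodule by the standard commutation argument: using the PBW-type ordered monomial basis of $\,U(\mathfrak n^-)\,$ and the relations~\eqref{relations}, one shows by induction on the length of a monomial $\,y\,$ in the $\,f_\alpha\,$'s that $\,e_\beta\cdot(y\cdot v_0)\in W\,$, because each time an $\,e_\beta\,$ is commuted past an $\,f_\alpha\,$ it produces either an element of $\,\mathfrak t\,$ (whose action on a lower-weight vector lands back in $\,W\,$) or an $\,e_\gamma\,$ or $\,f_\gamma\,$ with $\,\gamma\,$ a positive combination, and the $\,e_\gamma$'s eventually annihilate $\,v_0\,$. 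Hence $\,W\,$ is stable under $\,\mathfrak n^-\oplus\mathfrak t\oplus\mathfrak n\,=\,\mathfrak g\,$, so $\,W\,$ is a nonzero $\,\mathfrak g$-submodule of the irreducible module $\,V\,$, forcing $\,W=V\,$. Since $\,V_\lambda\,$ is one-dimensional (Section~\ref{wamv}), $\,U(\mathfrak n^-)\cdot v_0\,=\,U(\mathfrak n^-)\cdot V_\lambda\,$, giving (i).

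For part (ii): let $\,v\in V\,$ with $\,\mathfrak n\cdot v\,=\,0\,$, $\,v\neq 0\,$. Decompose $\,v=\sum_\mu v_\mu\,$ into $\,\mathfrak t$-weight components; since $\,\mathfrak n\,$ is $\,\mathfrak t$-stable, each $\,v_\mu\,$ is again annihilated by $\,\mathfrak n\,$, so we may assume $\,v\in V_\mu\,$ for a single weight $\,\mu\in\mathcal X(V)\,$. By (i) applied to $\,v\,$ in place of $\,v_0\,$ — more precisely, the same commutation argument shows $\,U(\mathfrak n^-)\cdot v\,$ is a nonzero $\,\mathfrak g$-submodule, hence equals $\,V\,$. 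Then every weight of $\,V\,$, in particular $\,\lambda\,$, is of the form $\,\mu-\sum c_\alpha\alpha\,$ with $\,c_\alpha\in\mathbb Z^{+}\,$, so $\,\mu\geq\lambda\,$; but $\,\lambda\,$ is the highest weight, so $\,\mu\leq\lambda\,$, whence $\,\mu=\lambda\,$ and $\,v\in V_\lambda\,$. Thus $\,V_\lambda\,$ is the only subspace annihilated by $\,\mathfrak n\,$.

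The one genuinely delicate point — the main obstacle — is the passage ``infinitesimally irreducible $\,\Rightarrow\,\lambda\in\Lambda_p\,$ and $\,E(\lambda)\,$ is the $\,\mathfrak g$-irreducible of highest weight $\,\lambda\,$''; this is where one must invoke the theory of the hyperalgebra $\,U_K\,$ and the restricted enveloping algebra $\,\mathfrak u(\mathfrak g)\,$ (Theorem~\ref{connection} and Steinberg's theorem) rather than anything elementary. Once that identification is in place, the rest is the classical highest-weight-module bookkeeping with the Chevalley relations, which I would present via induction on weight-monomial length rather than writing out every commutator.
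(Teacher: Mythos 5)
Your argument is correct, but note that the paper does not prove this lemma at all --- it is quoted from Borel \cite[6.2]{bor1} --- so there is no in-text proof to compare against. What you give is the standard highest-weight argument, and it goes through: for (i), $\,U(\mathfrak g)\cdot v_0\,=\,U(\mathfrak n^-)\,U(\mathfrak t)\,U(\mathfrak n)\cdot v_0\,=\,U(\mathfrak n^-)\cdot v_0\,$ (your commutator induction is just an unpacking of this PBW factorisation), and $\,\mathfrak g$-irreducibility forces equality with $\,V$; for (ii), the reduction to a single $\,T$-weight component is justified because $\,e_\alpha\,$ maps $\,V_\mu\,$ into $\,V_{\mu+\alpha}\,$ and distinct $\,T$-weight spaces are independent, after which $\,V=U(\mathfrak n^-)\cdot v\,$ gives $\,\lambda\leq\mu\,$ while $\,\mu\leq\lambda\,$ always, so $\,\mu=\lambda$. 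One remark: the step you flag as the ``genuinely delicate point'' --- that $\,\lambda\in\Lambda_p\,$ and $\,V\cong E(\lambda)\,$ as $\,\mathfrak u(\mathfrak g)$-module --- is not actually needed anywhere in your argument. All you use is (a) that $\,V\,$ is irreducible as a $\,\mathfrak g$-module, which is literally Definition~\ref{infirred}, and (b) the standard facts from Section~\ref{wamv} that the irreducible rational $\,G$-module $\,V\,$ has a highest weight $\,\lambda\,$ dominating all other weights. Curtis's theorem ($\,\lambda\in\Lambda_p\,$) is a consequence of this circle of ideas, not a prerequisite, so you can delete that paragraph and the proof becomes entirely elementary.
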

Denote by $\,M(G)\,$ the set of all infinitesimally irreducible 
representations of $\,G$. The following two theorems (due to Curtis and
Steinberg) are among the fundamental results of the modular
representation theory.
\begin{theorem}{\rm\cite{curt1},\cite{stein3}}\label{curt}

(i) A rational representation $\,\pi\,$ of $\,G\,$ is infinitesimally
irreducible if and only if $\,\lambda(\pi)\,\in\,\Lambda_p\,$ (that
is, $\,\lambda(\pi)\,$ is a $\,p$-restricted weight).

(ii) Each irreducible $\,p$-representation of the restricted Lie 
algebra $\,\mathfrak g\,=\,{\cal L}(G)\,$ is equivalent to the
differential of a unique infinitesimally irreducible representation of
$\,G$.
\end{theorem}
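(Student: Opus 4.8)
The plan is to reduce the statement to one about modules over the restricted enveloping algebra. By Remark~\ref{extrest}, irreducible $p$-representations of $\mathfrak g$ are the same thing as simple $\mathfrak u(\mathfrak g)$-modules, so it suffices to classify the latter and to match each with the differential of an infinitesimally irreducible rational $G$-module. I would first set up a highest-weight theory for $\mathfrak u(\mathfrak g)$. From the triangular decomposition $\mathfrak g=\mathfrak n^-\oplus\mathfrak t\oplus\mathfrak n$ and the basis theorem~\ref{ruelbasis} one obtains a vector-space factorisation $\mathfrak u(\mathfrak g)=\mathfrak u(\mathfrak n^-)\,\mathfrak u(\mathfrak t)\,\mathfrak u(\mathfrak n)$ with $\dim\mathfrak u(\mathfrak n^{\pm})=p^{|R^+|}$. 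Since each $e_\alpha$ satisfies $e_\alpha^{[p]}=0$, the augmentation ideal of $\mathfrak u(\mathfrak n)$ is nilpotent; since $\mathfrak t$ is toral, $\mathfrak u(\mathfrak t)\cong\bigotimes_i K[t_i]/(t_i^p-t_i)$ is a semisimple commutative algebra, its $p^\ell$ simple modules being the one-dimensional $\mathfrak t$-weights, i.e.\ the elements of $X/pX$, which biject with the $p$-restricted weights $\Lambda_p$ (cf.\ \S\ref{construct}). Combining these two facts, every nonzero $\mathfrak u(\mathfrak g)$-module contains a weight vector annihilated by $\mathfrak n$.

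For each $\mathfrak t$-weight $\mu$ I would form the baby Verma module $Z(\mu)=\mathfrak u(\mathfrak g)\otimes_{\mathfrak u(\mathfrak b)}K_\mu$, with $\mathfrak b=\mathfrak t+\mathfrak n$ and $K_\mu$ the one-dimensional $\mathfrak u(\mathfrak b)$-module on which $\mathfrak n$ acts by $0$ and $\mathfrak t$ by $\mu$. By the factorisation above $\dim Z(\mu)=p^{|R^+|}$, and $Z(\mu)$ is cyclic on its weight-$\mu$ line; a routine argument gives it a unique maximal submodule, hence a unique simple quotient $L(\mu)$. By the last sentence of the previous paragraph every simple $\mathfrak u(\mathfrak g)$-module is some $L(\mu)$, and comparing spaces of $\mathfrak n$-invariants shows $L(\mu)\cong L(\mu')$ forces $\mu=\mu'$. (Here one should be slightly careful, because $\mathfrak t$-weights lie in $X/pX$ rather than in $X$; the standard fix is to run the construction over a slightly larger algebra, obtained by adjoining a genuine maximal torus, over which weights live in $X$, and then restrict.) Thus there are exactly $p^\ell$ simple $\mathfrak u(\mathfrak g)$-modules, canonically labelled by $\Lambda_p$.

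It remains to identify $L(\lambda)$ with $E(\lambda)$ for $\lambda\in\Lambda_p$. The maximal vector $v_\lambda^+\in E(\lambda)$ has weight $\lambda$ and is killed by $\mathfrak n$, so $M(\lambda):=\mathfrak u(\mathfrak g)\cdot v_\lambda^+=\mathfrak u(\mathfrak n^-)\cdot v_\lambda^+$ is a nonzero homomorphic image of $Z(\lambda)$ sitting inside $E(\lambda)$. Two points need checking: (a) $M(\lambda)=E(\lambda)$; and (b) $M(\lambda)$ is simple. For~(b) I would use the standard contravariant bilinear form on the irreducible module $E(\lambda)$ --- relative to the anti-automorphism of $\mathfrak g$ interchanging $e_\alpha$ and $e_{-\alpha}$ --- which is non-degenerate: a nonzero submodule $W\subseteq M(\lambda)$ contains (by nilpotence of $\mathfrak u(\mathfrak n)$) a weight vector $w$ killed by $\mathfrak n$, and contravariance together with~(a) makes $w$ orthogonal to all of $E(\lambda)$ unless $w$ has weight $\lambda$; non-degeneracy then forces $w\in Kv_\lambda^+$, so $v_\lambda^+\in W$ and $W=M(\lambda)$. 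Granting~(a) and~(b) we obtain $E(\lambda)|_{\mathfrak g}\cong L(\lambda)$; since the $L(\lambda)$, $\lambda\in\Lambda_p$, exhaust the simple $\mathfrak u(\mathfrak g)$-modules, this yields the ``if'' direction of~(i) and the whole of~(ii), uniqueness in~(ii) holding because the highest weight is recoverable from the module. For the ``only if'' direction of~(i): if $\pi$ is infinitesimally irreducible of highest weight $\lambda$, write $\lambda=\lambda_0+p\lambda'$ with $\lambda_0\in\Lambda_p$; if $\lambda'\neq0$ then the case $\lambda=\lambda_0+p\lambda'$ of Steinberg's tensor product theorem gives $E(\lambda)\cong E(\lambda_0)\otimes E(\lambda')^{[1]}$, and since the Frobenius morphism has zero differential the twisted factor is a trivial $\mathfrak g$-module of dimension $\geq2$, so $E(\lambda)|_{\mathfrak g}$ is a direct sum of at least two copies of $E(\lambda_0)|_{\mathfrak g}$ and is reducible --- a contradiction. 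Hence $\lambda\in\Lambda_p$. (The same conclusion can also be extracted from Lemma~\ref{bor42} by a direct argument on weights, at the price of more bookkeeping.)

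The real obstacle is point~(a): that $E(\lambda)$ is generated over $\mathfrak u(\mathfrak g)$ by its maximal vector, equivalently that $E(\lambda)$ is irreducible as a $\mathfrak g$-module whenever $\lambda$ is $p$-restricted. This is the substantive content of Curtis's theorem; the rest of the argument is essentially formal once~(a) is available. In the hyperalgebra picture $V(\lambda)=U_K^-\cdot e_0$ (so that $E(\lambda)$ is generated by the image of $e_0$ over $U_K^-$), the algebra $U_K^-$ is generated by the divided powers $x_{-\alpha}^{(m)}=e_{-\alpha}^m/m!\otimes1$; those with $m<p$ already lie in $\mathfrak u(\mathfrak n^-)$, and the point is that for $p$-restricted $\lambda$ the divided powers with $m\geq p$ produce nothing new when applied to $e_0$, so that $\mathfrak u(\mathfrak n^-)\cdot e_0$ is all of $E(\lambda)$. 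This is exactly where the bound $0\leq c_i\leq p-1$ on the coefficients of $\lambda$ enters, and it is the step I expect to be delicate; it can be handled by a direct analysis of the $U_{\mathbb Z}$-action on the admissible lattice (as in Curtis's original work) or by invoking the structure of the induced module $H(\lambda)$ introduced in the discussion of Weyl modules.
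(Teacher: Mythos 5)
The paper does not prove this theorem at all: it is stated with the citations \cite{curt1}, \cite{stein3} and used as a black box, so there is no argument of the authoress's to compare yours against. Judged on its own terms, your outline is the standard modern route (restricted enveloping algebra via Remark~\ref{extrest} and Theorem~\ref{ruelbasis}, triangular decomposition, baby Verma modules $Z(\mu)$ with simple heads, identification of the simple quotients with the restrictions $E(\lambda)|_{\mathfrak g}$ for $\lambda\in\Lambda_p$ via the contravariant form, and Steinberg's tensor product theorem~\ref{bts} for the ``only if'' direction of (i)), and the architecture is sound. Two of your subsidiary claims deserve more care than ``routine'': the uniqueness of the maximal submodule of $Z(\mu)$ is usually obtained by observing that $Z(\mu)$ is free of rank one over the local Frobenius algebra $\mathfrak u(\mathfrak n^-)$, hence has simple socle as an $\mathfrak u(\mathfrak n^-)$-module, and then dualising; and the separation of non-isomorphic $L(\mu)$'s genuinely needs the $X/pX$ issue you flag, resolved as you say by working in the graded ($G_1T$-type) category. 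Your contravariant-form argument for simplicity of $\mathfrak u(\mathfrak g)\cdot v_\lambda^+$ is correct, and the use of Theorem~\ref{bts} for the converse is not circular, since the tensor product theorem depends only on the ``if'' direction.

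The one genuine gap is the one you yourself isolate as point (a): that for $p$-restricted $\lambda$ the irreducible $G$-module $E(\lambda)$ is generated over $\mathfrak u(\mathfrak n^-)$ by its maximal vector, equivalently that the divided powers $x_{-\alpha}^{(m)}$ with $m\geq p$ contribute nothing new. This is precisely the content of Curtis's theorem and is where the hypothesis $0\leq c_i\leq p-1$ is actually used; without it the rest of your argument classifies the simple $\mathfrak u(\mathfrak g)$-modules abstractly but does not connect them to the $E(\lambda)$. Since you defer exactly this step to \cite{curt1} (or to the structure of $H(\lambda)$), your proposal is a correct and well-organised reduction of the theorem to its essential core rather than a self-contained proof --- which, to be fair, is still more than the paper provides.
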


\begin{theorem}[Steinberg's Tensor Product Theorem]
{\rm\cite{stein3}}\label{bts}

For any irreducible rational $\,K$-rep\-re\-sent\-ation $\,\psi\,$ of 
$\,G\,$ there exist infinitesimally irreducible rep\-re\-sent\-a\-tions 
$\,\pi_0,\,\pi_1,\,\ldots,\,\pi_m\,$ such that 
\begin{equation}\label{decomp}
\psi\cong \pi_0\,\otimes\,\pi_1^{\rm Fr}\,\otimes
\,\ldots\,\otimes\,\pi_m^{{\rm Fr}^m}\,.
\end{equation}
Here $\,{\rm Fr}\,$ is the Frobenius endomorphism of the field $\,K$.
\end{theorem}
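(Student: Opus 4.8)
The plan is to reduce the theorem to a single irreducibility statement for a two-fold tensor product and then iterate. Since $\psi$ is irreducible, by Section~\ref{wamv} it is isomorphic to $E(\lambda)$ for $\lambda=\lambda(\psi)\in P_{++}$. Expanding each coefficient of $\lambda=\sum_i c_i\omega_i$ in base $p$ gives a unique decomposition $\lambda=\lambda_0+p\lambda_1+\cdots+p^m\lambda_m$ with every $\lambda_i\in\Lambda_p$, and I would set $\pi_i:=E(\lambda_i)$, which is infinitesimally irreducible by Theorem~\ref{curt}(i) since $\lambda_i$ is $p$-restricted. The decomposition \eqref{decomp} will then follow by induction on $m$ from the following \emph{Key Lemma}: if $\lambda'\in\Lambda_p$ and $\mu\in P_{++}$, then $E(\lambda')\otimes E(\mu)^{{\rm Fr}}\cong E(\lambda'+p\mu)$. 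Indeed, granting this, applying it with $\mu=\lambda_1+p\lambda_2+\cdots+p^{m-1}\lambda_m$ gives $E(\lambda)\cong E(\lambda_0)\otimes E(\mu)^{{\rm Fr}}$; the inductive hypothesis expands $E(\mu)$ as $E(\lambda_1)\otimes E(\lambda_2)^{{\rm Fr}}\otimes\cdots\otimes E(\lambda_m)^{{\rm Fr}^{m-1}}$, and since the Frobenius twist commutes with $\otimes$ and satisfies $(\sigma^{{\rm Fr}^k})^{{\rm Fr}}\cong\sigma^{{\rm Fr}^{k+1}}$, the claimed product drops out; the base case $m=0$ is trivial.

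For the Key Lemma I would first observe that $E(\mu)^{{\rm Fr}}$ has highest weight $p\mu$, because the comorphism of the Frobenius endomorphism induces multiplication by $p$ on $X(T)$; hence $M:=E(\lambda')\otimes E(\mu)^{{\rm Fr}}$ has highest weight $\lambda'+p\mu$ with one-dimensional highest-weight space, and it remains only to prove that $M$ is irreducible. The crucial point is that ${\rm d}({\rm Fr})=0$ — the comorphism of ${\rm Fr}$ carries the maximal ideal at $e$ into its square, a fact already reflected in the vanishing of the $p$-mapping on ${\cal L}({\mathbf G}_a)$ and ${\cal L}({\mathbf G}_m)$ recorded in Section~\ref{structliealg} — so by ${\rm d}(\phi\circ\varphi)={\rm d}\phi\circ{\rm d}\varphi$ the module $E(\mu)^{{\rm Fr}}$ is a \emph{trivial} $\mathfrak g$-module. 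Therefore $M|_{\mathfrak g}\cong E(\lambda')^{\oplus\dim E(\mu)}$, and by Theorem~\ref{curt} the module $E(\lambda')$ is irreducible over the restricted Lie algebra $\mathfrak g$ (here $\lambda'\in\Lambda_p$ is used); thus $M|_{\mathfrak g}$ is semisimple and isotypic of type $E(\lambda')$.

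Now for the irreducibility argument, let $N\subseteq M$ be a nonzero $G$-submodule. Restricted to $\mathfrak g$, $N$ is semisimple and $E(\lambda')$-isotypic, so ${\rm Hom}_{\mathfrak g}(E(\lambda'),N)\neq 0$. The space $H:={\rm Hom}_{\mathfrak g}(E(\lambda'),M)$ carries a natural $G$-module structure (one checks that the standard $G$-action on ${\rm Hom}_K(E(\lambda'),M)$ preserves $\mathfrak g$-linearity, using the $\Ad$-equivariance of the $\mathfrak g$-action), and the evaluation map $w\mapsto(v\mapsto v\otimes w)$ is a $G$-isomorphism $E(\mu)^{{\rm Fr}}\cong H$ (bijectivity by Schur's lemma, since $E(\mu)^{{\rm Fr}}$ is $\mathfrak g$-trivial and $E(\lambda')$ is $\mathfrak g$-irreducible). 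Since $E(\mu)^{{\rm Fr}}$ is irreducible as a $G$-module — a subspace is stable under $g\mapsto\varphi({\rm Fr}(g))$ iff it is stable under $\varphi$, where $\varphi$ affords $E(\mu)$, because ${\rm Fr}$ is surjective on $K$-points as $K$ is perfect — the nonzero $G$-submodule ${\rm Hom}_{\mathfrak g}(E(\lambda'),N)\subseteq H$ must equal $H$. Hence every $\mathfrak g$-homomorphism $E(\lambda')\to M$ has image in $N$, and as these images span $M$ we get $N=M$. So $M$ is $G$-irreducible of highest weight $\lambda'+p\mu$, i.e. $M\cong E(\lambda'+p\mu)$, completing the Key Lemma and hence the theorem.

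The main obstacle is exactly this Key Lemma, and within it the step upgrading ``$M$ is $E(\lambda')$-isotypic over $\mathfrak g$'' to ``$M$ is irreducible over $G$'': the content is not that the multiplicity space has the right dimension, but that ${\rm Hom}_{\mathfrak g}(E(\lambda'),M)$ is, as a $G$-module, the \emph{irreducible} module $E(\mu)^{{\rm Fr}}$, which is what rigidifies the lattice of $G$-submodules. The remaining ingredients — the base-$p$ bookkeeping, the highest-weight computation for Frobenius twists, and distributivity of the twist over tensor products — are routine.
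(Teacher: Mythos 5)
Your argument is correct. The paper itself offers no proof of this theorem --- it is quoted from Steinberg \cite{stein3} as a background result --- so there is nothing in the text to compare against line by line; but your route is essentially the classical one (Steinberg's, as streamlined by Cline--Parshall--Scott and Jantzen): reduce to the two-factor statement $E(\lambda')\otimes E(\mu)^{\rm Fr}\cong E(\lambda'+p\mu)$, use ${\rm d}({\rm Fr})=0$ to make the twisted factor $\mathfrak g$-trivial so that the tensor product is $E(\lambda')$-isotypic over $\mathfrak g$ (Curtis's theorem, i.e.\ Theorem~\ref{curt}(i), supplying the $\mathfrak g$-irreducibility of the $p$-restricted factor), and then rigidify the $G$-submodule lattice via the multiplicity space $\,{\rm Hom}_{\mathfrak g}(E(\lambda'),M)\cong E(\mu)^{\rm Fr}$. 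The auxiliary verifications you flag --- the $G$-module structure on the ${\rm Hom}$-space via $\Ad$-equivariance, surjectivity of the evaluation map by Schur's lemma, and $G$-irreducibility of the Frobenius twist from surjectivity of ${\rm Fr}$ on $K$-points --- all go through, so the Key Lemma and the induction on the base-$p$ expansion complete a sound proof.
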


Let $\,{\cal X}(\psi)\,$ denote the set of weights of a rational 
$\,G$-representation $\,\psi\,$. If $\,\psi\,$ is irreducible, then 
by formula~\eqref{decomp} we have
\[
{\cal X}(\psi)\,=\,{\cal X}(\pi_0)\,+\,p\,{\cal X}(\pi_1)\,+\,
\ldots\,+\,p^m\,{\cal X}(\pi_m)\,.
\]

Steinberg's tensor product theorem can be stated in another way.
If $\,\lambda\,$ is a dominant weight, then it has a unique 
$\,p$-adic expansion $\lambda = \lambda_0 + p\lambda_1 +
\cdots + p^m\lambda_m$, where $\lambda_0, \ldots , \lambda_m\,$ are
$\,p$-restricted weights.

\begin{theorem}~{\rm \cite{stein3}}~~Let $\,G\,$ be a simply connected
semisimple algebraic group and let $\,\lambda\,$ be a dominant weight
with $\,p$-adic expansion $\,\lambda = \lambda_0 + p\lambda_1 +
\cdots + p^m\lambda_m\,$. Then
$$
E(\lambda)
\cong  E(\lambda_0) \otimes E(\lambda_1)^{Fr} \otimes \cdots \otimes
E(\lambda_m)^{{Fr}^m}.
$$
\end{theorem}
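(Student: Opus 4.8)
The plan is to deduce the statement from the two results of Steinberg already recorded above: the tensor product theorem in the form of Theorem~\ref{bts}, and the Curtis--Steinberg classification of infinitesimally irreducible modules, Theorem~\ref{curt}. If $\,G\,$ is semisimple but not simple one first reduces to the simple case: a simply connected semisimple group is a direct product of simply connected simple groups, and irreducible modules, Frobenius twists and $\,p$-adic expansions of dominant weights are all compatible with this direct product decomposition; so from now on I take $\,G\,$ simple, as in Section~\ref{reptheo}.

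First I would apply Theorem~\ref{bts} to the irreducible module $\,\psi = E(\lambda)\,$: it provides infinitesimally irreducible representations $\,\pi_0, \pi_1, \ldots, \pi_r\,$ with
\[
E(\lambda)\ \cong\ \pi_0 \otimes \pi_1^{{\rm Fr}} \otimes \cdots \otimes \pi_r^{{\rm Fr}^r}.
\]
By Theorem~\ref{curt}(i) each $\,\pi_i\,$ has $\,p$-restricted highest weight, so $\,\pi_i = E(\mu_i)\,$ with $\,\mu_i \in \Lambda_p\,$. Next I would read off the highest weight of the right-hand side. By the displayed weight formula following Theorem~\ref{bts}, its weight set is $\,{\cal X}(\pi_0) + p\,{\cal X}(\pi_1) + \cdots + p^r\,{\cal X}(\pi_r)\,$; since the dominance order on weights is preserved under addition and under multiplication by the positive integer $\,p\,$, and each $\,{\cal X}(\pi_i)\,$ has unique maximal element $\,\mu_i\,$, the sumset has unique maximal element $\,\mu_0 + p\mu_1 + \cdots + p^r\mu_r\,$. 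As this module is isomorphic to $\,E(\lambda)\,$, whose maximal weight is $\,\lambda\,$, I obtain
\[
\lambda\ =\ \mu_0 + p\mu_1 + \cdots + p^r\mu_r, \qquad \mu_i \in \Lambda_p .
\]

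Finally, padding the shorter of the two sequences $\,(\mu_i)\,$ and $\,(\lambda_i)\,$ with the zero weight (the trivial tensor factor $\,E(0)\,$) so that both have length $\,\max(r,m)+1\,$, I would invoke the uniqueness of the $\,p$-adic expansion of a dominant weight into $\,p$-restricted weights recorded just before the statement: since both $\,\lambda = \sum_i p^i \mu_i\,$ and $\,\lambda = \sum_i p^i \lambda_i\,$ are such expansions, $\,\mu_i = \lambda_i\,$ for all $\,i\,$. Hence $\,\pi_i = E(\lambda_i)\,$, which is exactly the asserted isomorphism. The substantive content is entirely contained in Theorems~\ref{bts} and~\ref{curt}; the only point requiring a little care is the computation of the maximal weight of the tensor product and of a Frobenius twist, i.e.\ the fact that multiplication by $\,p\,$ and formation of sums respect the dominance order on $\,P\,$.
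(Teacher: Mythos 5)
Your derivation is correct and is exactly the route the paper intends: it presents this statement as a restatement of Theorem~\ref{bts}, citing Steinberg without writing out the argument, and your proposal supplies precisely that reformulation via Theorem~\ref{curt}(i), the computation of the highest weight of the twisted tensor product, and the uniqueness of the $p$-adic expansion. No gaps.
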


This theorem reduces many questions concerning irreducible representations
of $\,G\,$ to the study of the infinitesimally irreducible ones.
One of such questions is a description of systems of weights of  
irreducible rational representations of simple algebraic groups.

\subsubsection{Weight Space Decomposition}

In this section, we explore the relationship between the 
weight spaces of the Weyl module $\,V(\lambda)\,$ and the  
weight spaces of its irreducible quotient
$\,E(\lambda)\,$. The main result (Theorem~\ref{premprin}) asserts that 
although in passing from  
the Weyl module to its irreducible top factor the dimensions of weight spaces
may decrease, weight spaces rarely disappear entirely.

Denote by $\,\pi_{\mathbb C}\,:\,\mathfrak g_{\mathbb
C}\longrightarrow \mathfrak{gl}(V_{\lambda})\,$ the irreducible complex
representation of the Lie algebra $\,\mathfrak g_{\mathbb C}\,$ in the
vector space $\,V_{\lambda}\,$. Let $\,{\cal X}(\pi_{\mathbb
C})\,$ denote the set of weights of this representation (that is,
$\,{\cal X}(\pi_{\mathbb C})\,=\,{\cal X}(V_{\lambda})\,$). Let 
$\,\pi\,:\,G\longrightarrow \GL(E(\lambda))\,$ be the irreducible
representation of $\,\mathfrak g\,$ in the vector space $\,E(\lambda).\,$
It follows from our discussion in
Section~\ref{construct} that
$\,{\cal X}(\pi)\,\subseteq\,{\cal X}(\pi_{\mathbb C})\,$.

Recall that
$\,{\cal X}(\pi)\,$ and $\,{\cal X}(\pi_{\mathbb C})\,$ are both
$\,W$-invariant, and any weight of $\,P\,$
is $\,W$-conjugate to precisely one dominant weight. Therefore, we can write

$\,{\cal X}(\pi)\,=\,W\cdot {\cal X}_{++}(\pi)\qquad$ and 
$\qquad{\cal X}(\pi_{\mathbb C})\,=\,W\cdot {\cal X}_{++}(\pi_{\mathbb
C})\,$. 


By \cite[Chap. VIII \S 7]{bourb3}, it is true that 
$\,{\cal X}_{++}(\pi_{\mathbb C})\,=\,(\lambda(\pi)\,-\,Q_+)\,\cap\,P_{++},
\,$ where $\,Q_+\,=\,\{ \sum c_{k}\,\alpha_k\,/\,\alpha_k\in\Delta,\,
c_{\alpha_k}\in\mathbb Z^+\,\}\,$ and $\,(\lambda(\pi)\,-\,Q_+)\,=\, 
\{\,\lambda(\pi)\,-\,\beta\,/\,\beta\in Q_+\,\}$.


Let $\,T\,$ be a maximal torus of $\,G\,$ with which the root system
$\,R\,$ is associated, and let
\[
V(\lambda)\,=\,\bigoplus_{\mu\in {\cal X}(\pi_{\mathbb
C})}\,V(\lambda)_{\mu}\,\quad\mbox{and}\,\quad
\Phi(\lambda)\,=\,\bigoplus_{\mu\in {\cal X}(\pi_{\mathbb
C})}\,\Phi(\lambda)_{\mu}
\]
be the decompositions of the $\,G$-modules $\,V(\lambda)\,$ and
$\,\Phi(\lambda)\,$ into the direct sum of weight subspaces with
respect to $\,T\,$. One has 
$\,V(\lambda)_{\mu}\,=\,(V_{\mathbb Z}\cap V_{\lambda,\mu}) 
\otimes_{\mathbb Z}K\,$ and
$\,\Phi(\lambda)_{\mu}\,\subseteq\,V(\lambda)_{\mu}\,$.
The quotient module $\,V(\lambda)/\Phi(\lambda)\,=\,E(\lambda)\,$
is irreducible and has highest weight $\,\lambda$. It is clear
that
$\,E(\lambda)_{\mu}\,\cong\,V(\lambda)_{\mu}/\Phi(\lambda)_{\mu}\,$ as
vector spaces. Since $\,\pi\,$ is realized in 
$\,E(\lambda)\,$, the following equality holds:
\begin{equation}\label{iiii}
{\cal X}(\pi)\,=\,\{\,\mu\in {\cal X}(\pi_{\mathbb C})\,/\,
V(\lambda)_{\mu}\,\neq\,\Phi(\lambda)_{\mu}\,\}.
\end{equation}

\begin{theorem}{\rm\cite[p. 169]{pre1}}\label{theop1}
Let $\,p\,$ be non-special for $\,G$, and
$\,\lambda\,\in\,\Lambda_p$. For $\,\lambda\,=\,\omega_1\,$, assume
also that $\,p\neq 2\,$ for groups of type $\,G_2\,$.
Then $\,V(\lambda)_{\mu}\,\neq\,\Phi(\lambda)_{\mu}\,$ for any weight
$\,\mu\,$ of the Weyl module $\,V(\lambda)$.
\end{theorem}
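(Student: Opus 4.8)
The plan is to recast the statement as the inequality $\dim_K V(\lambda)_\mu \,>\, \dim_K \Phi(\lambda)_\mu$ for every weight $\mu$ of $V(\lambda)$, and first reduce to $\mu$ dominant. The Weyl group $W$ acts on $V(\lambda)$ and on $E(\lambda)$ through representatives in $N_G(T)$ and permutes weight spaces isomorphically, so $\dim V(\lambda)_\mu$ and $\dim E(\lambda)_\mu$ depend only on the $W$-orbit of $\mu$; by the discussion preceding \eqref{iiii} every weight is $W$-conjugate to a unique dominant one, so it suffices to prove $E(\lambda)_\mu\neq 0$ for $\mu\in\mathcal{X}_{++}(\pi_{\mathbb C})=(\lambda-Q_+)\cap P_{++}$. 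It is convenient to note that $\Phi(\lambda)$ is the radical of the (essentially unique) contravariant bilinear form on $V(\lambda)$, so $\dim\Phi(\lambda)_\mu$ is the corank of the Gram matrix of that form on $V(\lambda)_\mu$; since this matrix is the reduction modulo $p$ of an integral Gram matrix on the admissible lattice $V_{\mathbb Z}(\lambda)$, what must be shown is that the integral Gram matrix on the $\mu$-weight space does not become identically singular modulo $p$.

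Next I would run an induction on the rank of $G$ using Levi subalgebras. Put $J=\mathrm{supp}(\lambda-\mu)\subseteq\Delta$ and let $\mathfrak l_J$ be the corresponding Levi subalgebra with nilradical $\mathfrak n^-_J=\sum_{\alpha\in R^+_J}\mathfrak g_{-\alpha}$. A PBW and weight-support argument shows that every weight vector of $E(\lambda)$ of weight $\mu$ with $\mathrm{supp}(\lambda-\mu)\subseteq J$ already lies in the $\mathfrak l_J$-submodule $M:=U(\mathfrak n^-_J)\cdot v_\lambda$, so $E(\lambda)_\mu=M_\mu$; and $M$, being generated by a highest weight vector of the $p$-restricted weight $\lambda':=\lambda|_J$, has irreducible head $E_{\mathfrak l_J}(\lambda')$, whence $M_\mu$ surjects onto $E_{\mathfrak l_J}(\lambda')_\mu$. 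The same argument over $\mathbb C$ identifies the weights of $V_{\mathbb C}(\lambda)$ supported in $J$ with the weights of the complex irreducible $\mathfrak l_J$-module of highest weight $\lambda'$, so $\mu$ is such a weight. If $J\subsetneq\Delta$, every simple factor of $\mathfrak l_J$ has smaller rank and its root system is a sub-root-system of $R$ (so the non-special hypothesis is inherited), and the inductive hypothesis gives $E_{\mathfrak l_J}(\lambda')_\mu\neq 0$, hence $E(\lambda)_\mu\neq 0$; the base case where a factor has type $A_1$ is immediate since the $\mathfrak{sl}_2(K)$-Weyl module of a $p$-restricted highest weight is already irreducible. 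This leaves the ``full support'' case $\mathrm{supp}(\lambda-\mu)=\Delta$.

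For these remaining $\mu$ I would bound $\dim\Phi(\lambda)_\mu$ above by $\sum_{i\geq 1}\dim V(\lambda)^i_\mu$, where $V(\lambda)=V(\lambda)^0\supseteq V(\lambda)^1=\Phi(\lambda)\supseteq\cdots$ is the Jantzen filtration, and evaluate the right-hand side via the Jantzen sum formula, which expresses $\sum_{i\geq 1}\mathrm{ch}\,V(\lambda)^i$ as an explicit alternating sum of Weyl characters attached to affine reflections $s_{\alpha,mp}$ with $0<mp<\langle\lambda+\rho,\alpha^\vee\rangle$, $\alpha\in R^+$. The hypothesis that $p$ is non-special is what ensures that only reflections in the long-root affine hyperplanes can contribute in the relevant range; combined with $\lambda\in\Lambda_p$ this keeps the set of admissible $m$ very small and forces the coefficient of $e^\mu$ in the sum to be strictly smaller than $\dim V(\lambda)_\mu$ for all full-support dominant $\mu$. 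The finitely many low-rank, small-$p$ configurations for which this crude estimate is not already decisive, and the genuine exception $G_2$ with $\lambda=\omega_1$, $p=2$ (where the zero weight of the $7$-dimensional complex module is lost, since $\Phi(\omega_1)$ is the trivial submodule), are then settled by direct computation. An alternative, more hands-on route is to exhibit for each such $\mu$ an explicit monomial in divided powers $x_{-\beta}^{(m)}$ applied to $v_\lambda$ whose image in $V_{\mathbb Z}(\lambda)_\mu$ pairs nontrivially with itself under the integral contravariant form modulo $p$.

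The hard part is exactly this last step: controlling the characteristic-$p$ behaviour of the module along the root directions. When one restricts $E(\lambda)$ to the $\mathfrak{sl}_2$ attached to a root $\alpha$, intermediate weights of an $\mathfrak{sl}_2$-string can collapse — the weight diagram of $L_{SL_2}(n)$ is ``$p$-adically thin'' once $n\geq p$ — so it is not automatic that the string joining a surviving weight of $E(\lambda)$ to its reflection stays inside $\mathcal{X}(E(\lambda))$; equivalently, the Jantzen sum-formula contributions must be shown not to overwhelm $\dim V(\lambda)_\mu$. Pinning this down comes down to understanding $\langle\mu+\rho,\alpha^\vee\rangle\bmod p$ for the long roots $\alpha$, which is precisely where ``$p$ non-special'' and ``$\lambda$ $p$-restricted'' are used, and the residual low-rank cases are where the argument becomes genuinely case-by-case.
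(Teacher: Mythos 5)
The thesis does not actually prove this theorem: it is quoted from Premet \cite[p.~169]{pre1} and used as a black box, so there is no in-text argument to compare yours against; I can only assess the proposal on its own terms. Your opening stages are sound and do follow the standard reduction: passing to dominant $\mu$ via the $W$-action, identifying $\Phi(\lambda)$ with the radical of the contravariant form, and the Levi induction that disposes of all $\mu$ with $\mathrm{supp}(\lambda-\mu)\subsetneq\Delta$ (the same mechanism as Smith's theorem, used in Lemma~\ref{mulmin} of this thesis) are all correct, and they do reduce the problem to the full-support case.

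The gap is that the full-support case is the entire content of the theorem, and your treatment of it fails as stated. First, the claim that non-speciality of $p$ forces only long-root affine hyperplanes to contribute to the Jantzen sum formula is wrong: for $\lambda\in\Lambda_p$ one only gets $\langle\lambda+\rho,\alpha^\vee\rangle\le p$ for \emph{simple} $\alpha$, so non-simple short roots contribute freely (already for $B_2$ or $C_\ell$). Non-speciality is the condition $p\nmid(\tilde\alpha,\tilde\alpha)/(\alpha_0,\alpha_0)$ and it enters through the $\mathfrak{sl}_2$-theory of \emph{long} roots (exactly as in the computation~\eqref{mual} of this thesis), not through which hyperplanes occur in the sum formula. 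Second, the bound $\dim\Phi(\lambda)_\mu\le\sum_{i\ge 1}\dim V(\lambda)^i_\mu$ counts the Jantzen layers with multiplicity, so the right-hand side routinely exceeds $\dim V(\lambda)_\mu$ (a composition factor in a deep layer is counted once per layer); the inequality ``strictly smaller than $\dim V(\lambda)_\mu$'' therefore cannot be extracted from this crude estimate for general full-support $\mu$, and you give no mechanism that would. What is actually needed is a saturation statement along long roots: if $\mu$ and $s_\alpha\mu$ are weights of $E(\lambda)$ for $\alpha$ long, then every intermediate weight $\mu-i\alpha$ survives, because $e_{\pm\alpha}^{[p]}=0$ and $p\nmid(\tilde\alpha,\tilde\alpha)/2$ keep the relevant $\alpha$-strings under control; combined with $W$-invariance and your full-support reduction this recovers all of $\mathcal{X}(\pi_{\mathbb C})$. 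As written, your argument stops exactly where the theorem begins, and the one concrete mechanism you propose for the crux rests on a false premise.
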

In~\cite[p. 169]{pre1}, Premet shows that Theorem~\ref{theop1} is  
equivalent to the following:
\begin{theorem}\label{premprin}
Let $\,p\,$ be as in Theorem~\ref{theop1}. 
Then for any $\,\pi\in M(G)\,$ the equality
$\,{\cal X}(\pi)\,=\,{\cal X}(\pi_{\mathbb C})\,$ holds. In particular,
\[
{\cal X}_{++}(\pi)\,=\,(\lambda(\pi)\,-\,Q_+)\,\cap\,P_{++}.
\]
\end{theorem}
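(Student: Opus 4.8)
The plan is to deduce Theorem~\ref{premprin} directly from Theorem~\ref{theop1}, exactly as the remark preceding the statement indicates: once one identifies the weight set of the Weyl module with that of the complex representation, the two theorems become formally equivalent. Thus all the genuine content — the analysis of which weight spaces survive the passage $V(\lambda)\to E(\lambda)$ — is already packaged in Theorem~\ref{theop1}, and what remains is bookkeeping.

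First I would fix $\,\pi\in M(G)\,$ and put $\,\lambda=\lambda(\pi)$. By Theorem~\ref{curt}(i) we have $\,\lambda\in\Lambda_p$, so Theorem~\ref{theop1} applies to $\,\lambda\,$ (for $\,G\,$ of type $\,G_2\,$ and $\,\lambda=\omega_1\,$ one uses the proviso $\,p\neq2\,$ that is built into ``$p$ as in Theorem~\ref{theop1}''). Next I would recall, from the construction of $\,V(\lambda)=V_K(\lambda)\,$ in Section~\ref{construct}, that $\,V(\lambda)_{\mu}=(V_{\mathbb Z}\cap V_{\lambda,\mu})\otimes_{\mathbb Z}K\,$ has the same $\,K$-dimension as the complex rank of $\,V_{\lambda,\mu}$; in particular the set of weights of the Weyl module $\,V(\lambda)\,$ is precisely $\,{\cal X}(\pi_{\mathbb C})$. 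Theorem~\ref{theop1} then gives $\,V(\lambda)_{\mu}\neq\Phi(\lambda)_{\mu}\,$ for every $\,\mu\in{\cal X}(\pi_{\mathbb C})$, and substituting this into the identity~\eqref{iiii}, namely $\,{\cal X}(\pi)=\{\,\mu\in{\cal X}(\pi_{\mathbb C})\,/\,V(\lambda)_{\mu}\neq\Phi(\lambda)_{\mu}\,\}$, yields $\,{\cal X}(\pi)={\cal X}(\pi_{\mathbb C})$, the first assertion.

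For the displayed ``in particular'' I would intersect both sides with $\,P_{++}$: since each $\,W$-orbit on $\,P\,$ meets $\,P_{++}\,$ in exactly one point and both $\,{\cal X}(\pi)\,$ and $\,{\cal X}(\pi_{\mathbb C})\,$ are $\,W$-invariant, the equality $\,{\cal X}(\pi)={\cal X}(\pi_{\mathbb C})\,$ is equivalent to $\,{\cal X}_{++}(\pi)={\cal X}_{++}(\pi_{\mathbb C})$; and $\,{\cal X}_{++}(\pi_{\mathbb C})=(\lambda-Q_+)\cap P_{++}\,$ is the fact quoted from \cite[Chap. VIII \S 7]{bourb3}. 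This completes the argument.

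The only places needing care — and the candidates for an ``obstacle'' — are the following two. First, Premet's statement is an \emph{equivalence}, so for completeness one should also check the reverse translation: that ``$\,{\cal X}(\pi)={\cal X}(\pi_{\mathbb C})\,$ for all $\,\pi\in M(G)\,$'' forces $\,V(\lambda)_{\mu}\neq\Phi(\lambda)_{\mu}\,$ for every weight $\,\mu\,$ of $\,V(\lambda)$; this again follows from~\eqref{iiii} together with the fact that every weight of $\,V(\lambda)\,$ lies in $\,{\cal X}(\pi_{\mathbb C})$, but it is worth spelling out so that no sharpening is lost. Second, one should flag the genuine exception that the $\,G_2$-proviso rules out: for $\,G\,$ of type $\,G_2\,$ with $\,p=2\,$ and $\,\lambda=\omega_1\,$ one has $\,V(\omega_1)_0=\Phi(\omega_1)_0$, the zero weight disappears, and $\,{\cal X}(\pi)\subsetneq{\cal X}(\pi_{\mathbb C})$ — which is exactly why that hypothesis is carried over from Theorem~\ref{theop1}.
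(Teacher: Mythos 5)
Your derivation is correct and is exactly the route the paper intends: you combine the identity~\eqref{iiii} with Theorem~\ref{theop1} and the observation that the Weyl module $\,V(\lambda)\,$ has the same set of weights as the complex module $\,V_{\lambda}\,$, which is precisely the equivalence the paper attributes to Premet and sets up via the discussion preceding~\eqref{iiii}. Your additional remarks on the converse implication and on the $\,G_2,\;p=2,\;\lambda=\omega_1\,$ exception are consistent with the paper and require no correction.
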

For special primes the inequality
$\,{\cal X}(\pi)\,\neq\,{\cal X}(\pi_{\mathbb C})\,$ becomes an
ordinary occurrence. The simplest examples are the natural
representation of the group $\,B_{\ell}(K)\,$ for $\,p=2\,$ and 
the adjoint representation of the group of type $\,G_2\,$ for $\,p=3$.
\medskip

The following result tells us about the weights of irreducible
restricted \mbox{$\,{\mathfrak g}$-modules} (cf. Theorem~\ref{curt}(ii)).

\begin{corollary}{\rm\cite{pre1}}\label{pppweights}
Let $\,p\,$ be as in Theorem~\ref{theop1}. Let $\,\phi\,$ be an
irreducible \mbox{$\,p$-representation} of 
$\,{\mathfrak g}\,$ with highest weight $\,\bar{\lambda}\,\in\,
P\,\otimes_{\mathbb Z}\,\mathbb F_p$, and $\,\lambda\,$ the unique  
inverse image of $\,\bar{\lambda}\,$ under the reduction homomorphism
$\,P\,\longrightarrow\,P\,\otimes_{\mathbb Z}\,\mathbb F_p\,$,  
lying in $\,\Lambda_p$.
Then the set of $\,\mathfrak t$-weights of $\,\phi\,$ coincides with
the image of  $\,W\cdot ((\lambda\,-\,Q_+)\,\cap\,P_{++})\,$
in $\,P\,\otimes_{\mathbb Z}\,\mathbb F_p\,$.
\end{corollary}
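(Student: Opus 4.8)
The plan is to deduce Corollary~\ref{pppweights} directly from Theorem~\ref{curt}(ii), Theorem~\ref{premprin}, and the identification of $\mathfrak t$-weights with elements of $X/pX \cong P\otimes_{\mathbb Z}\mathbb F_p$ described in Section~\ref{construct}. First I would use Theorem~\ref{curt}(ii): the irreducible $p$-representation $\phi$ of $\mathfrak g$ is equivalent to the differential $\mathrm d\pi$ of a unique infinitesimally irreducible representation $\pi\in M(G)$. By Theorem~\ref{curt}(i), the highest weight $\lambda=\lambda(\pi)$ lies in $\Lambda_p$, and its image under the reduction homomorphism $P\longrightarrow P\otimes_{\mathbb Z}\mathbb F_p$ is exactly $\bar\lambda$, the highest weight of $\phi$ as a $p$-representation; since reduction restricted to $\Lambda_p$ is a bijection onto $P\otimes_{\mathbb Z}\mathbb F_p$, this $\lambda$ is precisely the element called $\lambda$ in the statement.

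Next I would translate between $G$-weights and $\mathfrak t$-weights. As explained in Section~\ref{construct}, the $\mathfrak t$-weights of $\mathrm d\pi$ are the differentials $\mathrm d\mu$ of the $G$-weights $\mu\in{\cal X}(\pi)$, and $\mu\mapsto\mathrm d\mu$ factors through $X/pX$, inducing a bijection between $X/pX=P/pP$ and the set of $\mathfrak t$-weight functionals; concretely, the $\mathfrak t$-weight attached to $\mu$ is the image of $\mu$ under $P\longrightarrow P\otimes_{\mathbb Z}\mathbb F_p$. Hence the set of $\mathfrak t$-weights of $\phi=\mathrm d\pi$ is the image of ${\cal X}(\pi)$ in $P\otimes_{\mathbb Z}\mathbb F_p$.

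Now I would invoke Theorem~\ref{premprin}: since $p$ is non-special for $G$ (the hypothesis inherited from Theorem~\ref{theop1}), we have ${\cal X}(\pi)={\cal X}(\pi_{\mathbb C})=W\cdot{\cal X}_{++}(\pi)$ with ${\cal X}_{++}(\pi)=(\lambda-Q_+)\cap P_{++}$. Therefore ${\cal X}(\pi)=W\cdot((\lambda-Q_+)\cap P_{++})$, and its image in $P\otimes_{\mathbb Z}\mathbb F_p$ is exactly the set claimed in the corollary. (One should note the small caveat from Theorem~\ref{theop1} concerning $G_2$ with $p=2$ and $\lambda=\omega_1$, which is covered by the blanket hypothesis ``$p$ as in Theorem~\ref{theop1}''.)

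The only genuinely non-formal point — and the step I expect to need the most care — is the claim that $\mu\mapsto\mathrm d\mu$ really does induce a \emph{bijection} $P/pP\xrightarrow{\ \sim\ }\{\mathfrak t\text{-weights}\}$, so that distinct $W$-orbits of $G$-weights cannot collapse to a single $\mathfrak t$-weight in a way that would spoil the description; this rests on $|X/pX|=p^{\ell}$ matching the cardinality of the $\mathfrak t$-weight set, which was recorded in Section~\ref{construct}, together with the fact that $\rho$-shifts and the $p$-restrictedness of $\lambda$ keep everything inside one fundamental domain for the action of $pP$. Once that identification is in hand, the corollary is just a restatement of Theorem~\ref{premprin} pushed through the reduction map, so the proof is short.
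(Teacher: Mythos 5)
Your derivation is correct and is exactly the intended one: the paper states this corollary as a result quoted from Premet without writing out a proof, but its placement after Theorem~\ref{premprin} and the hint ``cf.\ Theorem~\ref{curt}(ii)'' point precisely to your route (Curtis--Steinberg correspondence, the identification of $\mathfrak t$-weights with $X/pX\cong P\otimes_{\mathbb Z}\mathbb F_p$ from Section~\ref{construct}, then Theorem~\ref{premprin}). Note only that your final worry about injectivity of $P/pP\to\{\mathfrak t\text{-weights}\}$ is not actually needed: the corollary asserts equality of the $\mathfrak t$-weight set with the \emph{image} of $W\cdot((\lambda-Q_+)\cap P_{++})$, so possible collisions under reduction are harmless.
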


\subsubsection{Weight Multiplicities}

The main unsolved problem concerning the modules $\,E(\lambda)\,$ is 
the determination of their weight multiplicities (or formal characters 
of $\,E(\lambda)\,$).
A lot of work has been done in the direction of solving this problem
in the last 30 years and good progress has been made.

Thanks to Steinberg's tensor product Theorem~\ref{bts}, it is enough  
to obtain this kind of information about the collection
$\,\{\,E(\lambda)\,/\,\lambda\in \Lambda_p\,\}\,$. However, we observe 
that even if $\,\lambda\in \Lambda_p\,$, some of the dominant weights below
$\,\lambda\,$ in the partial ordering defined in
Section~\ref{introback} may lie outside $\,\Lambda_p\,$
unless $\,R\,$ has type $\,A_1,\,A_2,\,B_2\,$ (cf. \cite{verma1})

For a given $\,\lambda\,$ and a given $\,p,\,$ it is possible (at least in 
principle) to compute effectively the weight multiplicities of 
$\,E(\lambda)\,$. Burgoyne~\cite{burg} carried out computer calculations
along these lines for small ranks and small primes $\,p$. The
underlying idea is to write down a square matrix over the integers  
(of size equal to the dimension 
of a weight space of $\,V_{\lambda}\,$). The number of elementary divisors
of the matrix divisible by $\,p\,$ counts the decrease in the
dimension of the weight space when we pass to $\,E(\lambda).\,$

In this work we use information on weights and their 
multiplicities given in \cite{buwil} (for Lie algebras of 
small rank) and in \cite{gise} (for Lie algebras of exceptional type).

\newpage

\part*{Chapter 3}
\part*{The Exceptional Modules}
\addcontentsline{toc}{section}{\protect\numberline{3}{The
Exceptional Modules}}
\setcounter{section}{3}
\setcounter{subsection}{0}
In this Chapter we give the definition of exceptional modules and
find a necessary condition for a module to be exceptional.

\subsection{Some Results on Centralizers}\label{centsection}

This section contains some results on centralizers of elements in the
algebraic group and its Lie algebra that will be used throughout the
main sections of this work.

If $\,x\,$ is an element of an algebraic group $\,G\,$ then
$\,C_G(x)\,=\,\{ g\in G\,/\,gx\,=\,xg\}\,$ is the {\bf centralizer} of 
$\,x\,$. This is a closed subgroup of $\,G\,$ \cite[8.2]{hum2}. 

We now discuss the relationship between centralizers in an algebraic group 
$\,G\,$ and in its Lie algebra $\,{\mathfrak g}\,=\,{\cal L}(G).\,$
Recall that $\,G\,$ acts on $\,{\mathfrak g}\,$ via the adjoint 
representation $\,{\rm Ad}\,:\,G\longrightarrow \GL(\mathfrak{g}).\,$ 
(cf. Section~\ref{tlaag}). Let $\,x\in G\,$ and $\,a\in{\mathfrak
g}.\,$ Define
\begin{gather*}
C_G(x)\,=\,\{ g\in G\,/\,x^{-1}gx\,=\,g\} \\
\mathfrak c_{\mathfrak g}(x)\,=\,\{ b\in {\mathfrak g}\,/\,{\rm
Ad}x\cdot b\,=\,b\}\\
C_G(a)\,=\,\{ g\in G\,/\,{\rm Ad}g\cdot a\,=\,a\}\\
\mathfrak c_{\mathfrak g}(a)\,=\,\{ b\in {\mathfrak g}\,/\,[b,a]\,=\,0\}.
\end{gather*}
The relationship between these subgroups and subalgebras is as follows. 
\begin{gather*}
{\cal L}(C_G(x))\subseteq \mathfrak c_{\mathfrak g}(x)\;\;\mbox{for all}\;\;
x\in G\;\;{\rm\cite[10.6]{hum2}}\,, \\
{\cal L}(C_G(a))\subseteq \mathfrak c_{\mathfrak g}(a)\;\;\mbox{for all}\;\;
a\in {\mathfrak g}\;\;{\rm\cite[9.1]{bor2}}. 
\end{gather*}
Equality holds in certain important special cases.  
\begin{lemma}{\rm \cite[III.Prop. 9.1]{bor2}}\label{semisim}
If $\,s\in G\,$ is semisimple,
then $\,{\cal L}(C_G(s))\,=\,\mathfrak c_{\mathfrak g}(s).\,$ 
If $\,a\in {\mathfrak g}\,$
is semisimple, then $\,{\cal L}(C_G(a))\,=\,\mathfrak c_{\mathfrak
g}(a)$. 
\end{lemma}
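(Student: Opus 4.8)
The inclusions $\,{\cal L}(C_G(s))\subseteq\mathfrak c_{\mathfrak g}(s)\,$ and $\,{\cal L}(C_G(a))\subseteq\mathfrak c_{\mathfrak g}(a)\,$ have just been recorded, so in each case only the reverse inclusion remains. Since $\,{\cal L}(H)=T_e(H)\,$ has dimension at least $\,\dim H\,$ for any closed subgroup $\,H\,$, with equality exactly when $\,H\,$ is smooth, the plan is to prove that the centralizer in question is smooth; the chain $\,\dim C_G(s)\le\dim{\cal L}(C_G(s))\le\dim\mathfrak c_{\mathfrak g}(s)\,$ then forces all three quantities to coincide, and equality of nested subspaces finishes the argument. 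So the entire content of the lemma is the smoothness of $\,C_G(s)\,$ (resp. $\,C_G(a)\,$), equivalently the separability of the appropriate orbit map.

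For $\,s\in G\,$ semisimple, I would first note that $\,C_G(s)=C_G(D)\,$, where $\,D\,$ is the Zariski closure of $\,\langle s\rangle\,$: the set $\,\{g\,/\,gh=hg\}\,$ is closed for each $\,h\,$, so centralizing $\,s\,$ forces centralizing all of $\,D\,$. As $\,s\,$ is semisimple it lies in a maximal torus of $\,G\,$, hence $\,D\,$ is a diagonalizable --- in particular linearly reductive --- group acting on the smooth variety $\,G\,$ by conjugation. I would then invoke the fact that the fixed-point subscheme of a linearly reductive group acting on a smooth variety is smooth, with tangent space at a fixed point equal to the fixed part of the tangent space; this gives that $\,C_G(s)=G^{D}\,$ is smooth and
\[
{\cal L}(C_G(s))\,=\,(T_e(G))^{D}\,=\,\{\,x\in\mathfrak g\,/\,\Ad(s)\cdot x=x\,\}\,=\,\mathfrak c_{\mathfrak g}(s)\,,
\]
where the middle equality uses that $\,\langle s\rangle\,$ is dense in $\,D\,$ and $\,\Ad\,$ is a morphism. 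A more hands-on alternative is to show directly that the conjugation orbit map $\,\psi\colon G\to G\cdot s\,$, $\,g\mapsto gsg^{-1}\,$, is separable: $\,{\rm d}\psi_e(x)=x-\Ad(s)\cdot x\,$, so $\,\ker{\rm d}\psi_e=\mathfrak c_{\mathfrak g}(s)\,$ and $\,\Im\,{\rm d}\psi_e=({\rm id}-\Ad(s))\,\mathfrak g\,$, and because $\,s\,$ is semisimple $\,\Ad(s)\,$ is a semisimple endomorphism of $\,\mathfrak g\,$, so these two subspaces are complementary; separability is exactly the statement that $\,\dim(G\cdot s)\,$ equals $\,\dim\Im\,{\rm d}\psi_e\,$, i.e. that no dimension is lost on passing to the Lie algebra. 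Yet a third route is the Bruhat-decomposition description of $\,C_G(s)^{\circ}\,$ as the subgroup generated by $\,T\,$ and the root subgroups $\,U_{\alpha}\,$ with $\,\alpha(s)=1\,$, which yields $\,\dim C_G(s)=\dim\mathfrak t+\#\{\alpha\in R\,/\,\alpha(s)=1\}=\dim\mathfrak c_{\mathfrak g}(s)\,$ outright.

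For $\,a\in\mathfrak g\,$ semisimple I would run the parallel argument with the adjoint orbit map $\,\psi_a\colon G\to\mathfrak g\,$, $\,g\mapsto\Ad(g)\cdot a\,$, whose fibre over $\,a\,$ is $\,C_G(a)\,$ and whose differential at $\,e\,$ is $\,x\mapsto[x,a]=-({\rm ad}\,a)(x)\,$; thus $\,\ker{\rm d}(\psi_a)_e=\mathfrak c_{\mathfrak g}(a)\,$ and $\,\Im\,{\rm d}(\psi_a)_e=({\rm ad}\,a)\,\mathfrak g\,$. Since $\,a\,$ is $\,p$-semisimple, $\,{\rm ad}\,a\,$ --- the value at $\,a\,$ of the adjoint $\,p$-representation --- is a semisimple endomorphism of $\,\mathfrak g\,$ by Proposition~\ref{properties}(2), so $\,\mathfrak g=\mathfrak c_{\mathfrak g}(a)\oplus({\rm ad}\,a)\,\mathfrak g\,$, and, exactly as above, everything reduces to the smoothness of $\,C_G(a)\,$; this can be had, for instance, by reducing (via Proposition~\ref{anytoral} and conjugacy of maximal toral subalgebras of $\,\mathfrak g\,$) to $\,a\in\mathfrak t={\cal L}(T)\,$ and counting $\,\dim C_G(a)\,$ from the Cartan decomposition $\,\mathfrak g=\mathfrak t\oplus\bigoplus_{\alpha\in R}\mathfrak g_{\alpha}\,$ (now with $\,{\rm d}\alpha(a)=0\,$ in place of $\,\alpha(s)=1\,$), or again by presenting $\,C_G(a)\,$ as a fixed-point scheme of a diagonalizable group. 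In both halves the only genuinely nontrivial point --- and the sole place where the semisimplicity hypothesis is used in an essential way, since in prime characteristic the statement fails without it --- is this smoothness/separability; granting it, the lemma is a one-line dimension count against the inclusion already in hand.
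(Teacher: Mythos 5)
The paper does not prove this lemma at all --- it is quoted verbatim from Borel [III, Prop.\ 9.1] --- so there is no internal proof to compare against, and your proposal must stand on its own. Its main line does: you correctly reduce everything to the smoothness of the centralizer, and your first route (realize $\,C_G(s)\,$ as the fixed-point scheme of the diagonalizable, hence linearly reductive, group $\,D=\overline{\langle s\rangle}\,$ acting on the smooth variety $\,G\,$, and invoke smoothness of fixed points together with $\,T_e(G^D)=(T_eG)^D\,$) is a complete and correct proof, essentially Borel's own. Your third route is also sound in the present context, where $\,G\,$ is reductive: $\,C_G(s)\,$ contains the subgroup generated by $\,T\,$ and the $\,U_\alpha\,$ with $\,\alpha(s)=1\,$ (a closed symmetric set of roots), so $\,\dim C_G(s)\geq\dim\mathfrak t+\#\{\alpha\,/\,\alpha(s)=1\}=\dim\mathfrak c_{\mathfrak g}(s)$, which squeezes against the inclusion already in hand; the same count with $\,{\rm d}\alpha(a)=0\,$ handles the Lie algebra case once $\,a\,$ is conjugated into $\,\mathfrak t$.

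Your ``more hands-on alternative,'' however, is circular as written. Rank--nullity already gives $\,\dim\,\Im\,{\rm d}\psi_e=\dim\mathfrak g-\dim\mathfrak c_{\mathfrak g}(s)\,$ for \emph{any} element, semisimple or not; the direct-sum decomposition $\,\mathfrak g=\ker(1-\Ad(s))\oplus(1-\Ad(s))\mathfrak g\,$ adds nothing to this dimension count. What separability requires is the further equality $\,\dim\,\Im\,{\rm d}\psi_e=\dim G\cdot s=\dim G-\dim C_G(s)\,$, i.e.\ precisely $\,\dim C_G(s)=\dim\mathfrak c_{\mathfrak g}(s)\,$ --- the statement to be proved --- and the only a priori relation available, $\,\Im\,{\rm d}\psi_e\subseteq T_s(G\cdot s)\,$, yields the inequality you already know. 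The complementary decomposition does enter the genuine proofs, but more subtly (it is what makes the fixed-point scheme smooth, via the splitting of the relevant conormal sequence under the linearly reductive action), not as a direct dimension comparison. Since your first and third routes are complete, the lemma is proved; just do not present the middle argument as an independent alternative.
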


\begin{lemma}{\rm\cite[p.38]{slo}}\label{centra}
Let $\,a\in{\mathfrak g}\,$.
If the characteristic of $\,K\,$ is
either $\,0\,$ or a very good prime for $\,G,\,$ then 
$\,{\cal L}(C_G(a))\,=\,{\mathfrak c}_{\mathfrak g}(a)\;$ for any 
$\,a\in{\mathfrak g}\,$.
\end{lemma}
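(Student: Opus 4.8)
The plan is to reduce the assertion to a separability statement. Since we already have ${\cal L}(C_G(a))\subseteq\mathfrak c_{\mathfrak g}(a)$ from \cite[9.1]{bor2}, and since $C_G(a)$ is smooth as a group variety so that $\dim C_G(a)=\dim{\cal L}(C_G(a))$, it suffices to prove $\dim\mathfrak c_{\mathfrak g}(a)=\dim C_G(a)$. Consider the orbit morphism $\phi_a:G\to\mathfrak g$, $g\mapsto\Ad(g)\cdot a$; its differential at $e$ sends $x$ to $[x,a]$, so its image is $[\mathfrak g,a]$, of dimension $\dim\mathfrak g-\dim\mathfrak c_{\mathfrak g}(a)$, and this image lies in $T_a(G\cdot a)$, where $\dim G\cdot a=\dim G-\dim C_G(a)$. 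Hence $\dim C_G(a)\le\dim\mathfrak c_{\mathfrak g}(a)$ automatically, with equality exactly when $[\mathfrak g,a]$ fills out $T_a(G\cdot a)$, i.e.\ when $\phi_a$ is separable. So I would prove separability of $\phi_a$ under the very-good hypothesis.

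First I would reduce to a nilpotent element. The point of the very-good hypothesis is that it guarantees a non-degenerate $\Ad(G)$-invariant symmetric bilinear form $\kappa$ on $\mathfrak g$ (this genuinely fails, for instance, for $\mathfrak{sl}_p$ in characteristic $p$, where the trace form has radical $K\cdot\mathrm{Id}$). Write $a=a_s+a_n$ for the Jordan decomposition of Theorem~\ref{jcd}. Since each $\Ad(g)$ is a restricted automorphism, uniqueness of the decomposition gives $C_G(a)=C_G(a_s)\cap C_G(a_n)$; and for $x\in\mathfrak c_{\mathfrak g}(a)$ one has $a\in\mathfrak c_{\mathfrak g}(x)$, whose intrinsic Jordan decomposition must agree with that in $\mathfrak g$, so $a_s,a_n\in\mathfrak c_{\mathfrak g}(x)$, giving $\mathfrak c_{\mathfrak g}(a)=\mathfrak c_{\mathfrak g}(a_s)\cap\mathfrak c_{\mathfrak g}(a_n)$. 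By Lemma~\ref{semisim}, ${\cal L}(C_G(a_s))=\mathfrak c_{\mathfrak g}(a_s)=:\mathfrak m$, and since $\mathrm{ad}(a_s)$ is semisimple we get $\mathfrak g=\mathfrak m\oplus[\mathfrak g,a_s]$ with $\mathfrak m=[\mathfrak g,a_s]^{\perp}$; hence $\kappa$ restricts to a non-degenerate form on $\mathfrak m$, and $M:=C_G(a_s)^{\circ}$ is a connected reductive group whose Lie algebra carries a non-degenerate invariant form. As $a_n\in\mathfrak m$ is nilpotent with $C_G(a)^{\circ}=C_M(a_n)^{\circ}$ and $\mathfrak c_{\mathfrak g}(a)=\mathfrak c_{\mathfrak m}(a_n)$, induction on $\dim G$ (with the inductive statement phrased for reductive groups whose Lie algebra admits a non-degenerate invariant form) reduces us to the case $a=a_n$ nilpotent.

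For a nilpotent $e\in\mathfrak g$ I would invoke the Jacobson--Morozov theorem, valid for good (hence very good) primes, to embed $e$ in an $\mathfrak{sl}_2$-triple $(e,h,f)$; decomposing $\mathfrak g$ into $\mathrm{ad}(h)$-weight spaces and $\mathfrak{sl}_2$-submodules, and using that for a very good prime the weights of the adjoint module behave as in characteristic zero, one identifies $\dim\mathfrak c_{\mathfrak g}(e)$ with the number of $\mathfrak{sl}_2$-summands, while on the group side, via the associated cocharacter $\lambda$ and the parabolic $P(\lambda)\supseteq C_G(e)$, one computes $\dim C_G(e)$ to the same value; alternatively one may quote the Bala--Carter--Pommerening classification of nilpotent orbits in good characteristic, which matches their dimensions with the characteristic-zero ones. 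This nilpotent case is the main obstacle: it is where genuine positive-characteristic input enters, and it is exactly where ``good'' is not enough — for $G=\SL_p$ in characteristic $p$ the regular nilpotent $e$ has $\mathfrak c_{\mathfrak g}(e)=K[e]$ of dimension $p$ whereas $\dim C_G(e)=p-1$, so the lemma genuinely needs the very-good hypothesis.
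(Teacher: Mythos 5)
The paper does not prove this lemma at all: it is quoted verbatim from Slodowy \cite[p.~38]{slo}, who in turn refers the substantive part back to Springer--Steinberg \cite{spst}. So there is no in-paper proof to compare yours against; what I can do is measure your sketch against the standard argument, which it essentially reproduces. Your reductions are sound: the translation of the problem into separability of the orbit map $g\mapsto\Ad(g)\cdot a$, the identities $C_G(a)=C_G(a_s)\cap C_G(a_n)$ and $\mathfrak c_{\mathfrak g}(a)=\mathfrak c_{\mathfrak g}(a_s)\cap\mathfrak c_{\mathfrak g}(a_n)$ via uniqueness of the Jordan decomposition inside the restricted subalgebra $\mathfrak c_{\mathfrak g}(x)$, the use of Lemma~\ref{semisim} to dispose of the semisimple part, and the observation that the very-good hypothesis supplies a non-degenerate invariant form restricting non-degenerately to $\mathfrak c_{\mathfrak g}(a_s)$, so that induction lands you in the nilpotent case for a smaller reductive group carrying such a form. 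The $\SL_p$ counterexample is correct and well chosen, and it rightly pins down why ``good'' is not enough.

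The weak point is exactly where you say it is, but it is weaker than you allow. The $\mathfrak{sl}_2$-triple argument does not work as written in characteristic $p$: the adjoint module need not be a semisimple module for the triple once ${\rm ad}\,h$ has eigenvalues $\geq p$ (which already happens for regular nilpotents in every type for small very good $p$), so ``the number of $\mathfrak{sl}_2$-summands'' is not available. Your fallback — the Bala--Carter--Pommerening matching of nilpotent orbit dimensions with characteristic zero — only controls $\dim C_G(e)=\dim G-\dim G\cdot e$; it does not by itself give $\dim\mathfrak c_{\mathfrak g}(e)$, which is the other half of the comparison. What you actually need for nilpotent $e$ is that ${\rm ad}\,e:\mathfrak g(i)\to\mathfrak g(i+2)$ is surjective for $i\geq -1$ in the grading attached to an associated cocharacter, whence $\dim\mathfrak c_{\mathfrak g}(e)=\dim\mathfrak g(0)+\dim\mathfrak g(1)=\dim C_G(e)$; this is a theorem of Springer--Steinberg and Premet valid precisely under the very-good (equivalently, non-degenerate-form) hypothesis, and it visibly fails for the regular nilpotent of $\SL_p$. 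So your proposal is an accurate road map whose terminal step is a citation rather than a proof — which, to be fair, puts it on the same footing as the paper, which cites the entire lemma.
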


Now $\,G\,$ acts on the nilpotent cone $\,{\cal N}\,$ of $\,\mathfrak g\,$ 
via the adjoint representation and $\,{\cal N}\,$ splits into finitely
many ($\Ad\,G$)-orbits (see \cite{baca1}, \cite{baca2}, \cite{pom1},
\cite{pom2}, \cite{holspa}).
\begin{lemma}\label{help}
Let $\,{\cal Z}\,$ be an irreducible component of $\,{\cal N}\,$ 
and let $\,\cal T\,$ be a torus of $\,\mathfrak g\,$ of dimension
$\,\ell\,=\,{\rm rank}\,G.\,$ Then
$\,\dim\,{\cal Z}\,\leq\,\dim\,\mathfrak g\,-\,\ell$.
\end{lemma}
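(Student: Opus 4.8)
The plan is to bound the dimension of an irreducible component $\,{\cal Z}\,$ of the nilpotent cone $\,{\cal N}\,$ by producing, for a generic nilpotent element $\,e\in{\cal Z}\,$, a linearly independent set of $\,\ell\,$ elements in $\,\mathfrak g\,$ which are \emph{not} in the centralizer $\,\mathfrak c_{\mathfrak g}(e)\,$, equivalently by showing $\,\dim\mathfrak c_{\mathfrak g}(e)\leq\dim\mathfrak g-\ell\,$ and then using the orbit--stabilizer relation $\,\dim(\Ad G)e=\dim\mathfrak g-\dim\mathfrak c_{\mathfrak g}(e)\,$. Since $\,{\cal N}\,$ splits into finitely many $\,(\Ad G)$-orbits (quoted above, Theorem~\ref{badhs}), the irreducible component $\,{\cal Z}\,$ is the closure of a single orbit $\,(\Ad G)e\,$, so $\,\dim{\cal Z}=\dim(\Ad G)e\,$. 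Thus everything reduces to the inequality $\,\dim\mathfrak c_{\mathfrak g}(e)\geq\ell\,$ being \emph{tight from above}, i.e.\ $\,\dim\mathfrak c_{\mathfrak g}(e)\leq\dim\mathfrak g-\ell\,$ — but wait, that is the wrong direction; what we actually need is $\,\dim(\Ad G)e\leq\dim\mathfrak g-\ell\,$, i.e.\ $\,\dim\mathfrak c_{\mathfrak g}(e)\geq\ell\,$.

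So the real content is: \emph{every nilpotent element $\,e\in\mathfrak g\,$ has centralizer of dimension at least $\,\ell=\operatorname{rank} G\,$.} First I would invoke the given torus $\,{\cal T}\subset\mathfrak g\,$ of dimension $\,\ell\,$; by Definition~\ref{resttorus} it is an abelian $\,p$-subalgebra of $\,p$-semisimple elements, and by Proposition~\ref{anytoral} (applied to its toral basis elements, via Theorem~\ref{3.6}(1)) we may assume after an $\,(\Ad G)$-conjugation that $\,{\cal T}=\mathfrak t={\cal L}(T)\,$ for a maximal torus $\,T\,$. The standard fact I would then use is that any nilpotent $\,e\,$ lies in the Lie algebra $\,\mathfrak b={\cal L}(B)\,$ of some Borel subgroup (by $\,\mathfrak g=(\Ad G)\cdot\mathfrak b\,$, cited as \cite[14.25]{bor2}, together with the fact that the nilpotent part is forced into $\,\mathfrak n\,$ as in the proof of Proposition~\ref{anytoral}); so $\,e\in\mathfrak n={\cal L}(U)\,$ for some maximal unipotent $\,U\,$. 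Then $\,\mathfrak c_{\mathfrak g}(e)\supseteq{\cal L}(C_G(e))\,$ always, and one knows $\,\dim C_G(e)\geq\ell\,$ — for instance, $\,C_G(e)\,$ contains a maximal torus of the reductive part of a parabolic (the Bala--Carter / Jacobson--Morozov picture) or, more elementarily in arbitrary characteristic, one uses that the unipotent element $\,u=\exp e\,$ (or the associated one-parameter subgroup) has centralizer containing a maximal torus of $\,G\,$ after conjugation, giving $\,\dim C_G(e)\geq\ell\,$.

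The cleanest route, and the one I would actually take, avoids the characteristic-$\,p\,$ subtleties of $\,\exp\,$: observe that $\,{\cal N}\,$ is the zero fibre of the adjoint quotient morphism $\,\mathfrak g\to\mathfrak g/\!/G\,$, which (for $\,G\,$ reductive, all characteristics) has generic fibre dimension $\,\dim\mathfrak g-\ell\,$ since the ring of invariants has Krull dimension $\,\ell\,$ (rank $\,=\dim\mathfrak t\,$, the latter being the transcendence degree computed from $\,\mathfrak t/W\,$); hence \emph{every} fibre, in particular $\,{\cal N}\,$, has every component of dimension $\,\geq\dim\mathfrak g-\ell\,$ by Krull's principal-ideal / fibre-dimension theorem, while at the same time $\,\dim{\cal N}\leq\dim\mathfrak g-\ell\,$ because $\,{\cal N}\,$ consists of the elements with at least an $\,\ell$-dimensional centralizer (the preceding orbit-count argument). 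Combining, $\,\dim{\cal Z}=\dim\mathfrak g-\ell\,$ — in particular $\,\leq\dim\mathfrak g-\ell\,$, which is the claim, with the torus $\,{\cal T}\,$ serving precisely to witness $\,\ell=\dim\mathfrak t\,$ as the relevant codimension. The main obstacle I anticipate is justifying the lower bound $\,\dim\mathfrak c_{\mathfrak g}(e)\geq\ell\,$ cleanly in bad characteristic without circularity; if one prefers to stay self-contained one falls back on the explicit fact (available for each simple type) that the minimal nilpotent orbit, and \emph{a fortiori} every nilpotent orbit, has dimension at most $\,\dim\mathfrak g-\ell\,$, which can be read off from the known orbit dimension tables, but the quotient-morphism argument is the structural one I would present.
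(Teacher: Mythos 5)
There is a genuine gap: the inequality you actually need, $\,\dim{\cal Z}\,\leq\,\dim\mathfrak g\,-\,\ell\,$, is never established by a sound argument. Your first route reduces it to $\,\dim C_G(e)\,\geq\,\ell\,$ for every nilpotent $\,e\,$ — but that is exactly Proposition~\ref{othercases}, which in this paper is \emph{deduced from} the present lemma, so using it here is circular; and the justifications you offer for it independently do not hold. The claim that the centralizer of a unipotent (or nilpotent) element contains a maximal torus after conjugation is false: for a regular unipotent element of $\,\SL_n\,$ the centralizer is, modulo the centre, a unipotent group of dimension $\,n-1\,$ and contains no torus at all. The Jacobson--Morozov/Bala--Carter picture is likewise unavailable in the bad characteristics this lemma must cover. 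Your second route, via the adjoint quotient, only delivers the \emph{lower} bound: the fibre-dimension (Krull) theorem bounds components of fibres from below, so it shows $\,\dim{\cal Z}\geq\dim\mathfrak g-\ell\,$ and then you explicitly defer the needed upper bound back to the first, faulty argument. (Identifying $\,{\cal N}\,$ with the zero fibre of $\,\mathfrak g\to\mathfrak g/\!/G\,$ and computing the Krull dimension of the invariants is itself delicate in bad characteristic.) The final fallback — reading the orbit dimensions off tables — assumes the conclusion.

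The missing idea is much more elementary and is where the hypothesis on $\,{\cal T}\,$ actually enters. Since $\,{\cal N}\,$ is a cone, so is its irreducible component $\,{\cal Z}\,$, hence $\,0\in{\cal Z}\cap\mathfrak t\,$ where $\,\mathfrak t\,$ is the $\,\ell$-dimensional (linear, hence irreducible) torus. If one had $\,\dim{\cal Z}>\dim\mathfrak g-\ell\,$, the Affine Dimension Theorem \cite[Prop. 7.1]{har} applied to the two irreducible closed subvarieties $\,{\cal Z}\,$ and $\,\mathfrak t\,$ of the affine space $\,\mathfrak g\,$ would force every component of $\,{\cal Z}\cap\mathfrak t\,$ through $\,0\,$ to have dimension $\,\geq\dim{\cal Z}+\ell-\dim\mathfrak g>0\,$. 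But a torus consists of $\,p$-semisimple elements, so $\,{\cal Z}\cap\mathfrak t\subseteq{\cal N}\cap\mathfrak t=\{0\}\,$, a contradiction. This intersection argument requires no information about centralizers, orbits, or the adjoint quotient, and works uniformly in all characteristics.
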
\noindent
\begin{proof}
Suppose $\,\dim\,{\cal Z}\,>\,\dim\,\mathfrak g\,-\,\ell$. As 
$\,{\cal N}\,$ is a cone, so is $\,{\cal Z}.\,$ Hence, $\,0\,\in\,{\cal
Z}\cap{\cal T}\,$ and, as a consequence, $\,{\cal Z}\cap{\cal
T}\,\neq\,\emptyset\,$. Since 
$\,{\cal Z}\,$ and $\,\mathfrak t\,$ are irreducible varieties of 
$\,\mathfrak g,\,$ by the Affine Dimension Theorem~\cite[Prop. 7.1]{har}
we have that  $\,\dim\,{\cal Z}\cap\mathfrak t\,>\,0$. By
contradiction, the result follows.
\end{proof}
\begin{proposition}\label{othercases}
Let $\,x\,$ be a nilpotent element of $\,\mathfrak g\,$. Then 
\[
\dim\,C_G(x)\,\geq\,\ell\,.
\]
\end{proposition}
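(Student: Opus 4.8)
The plan is to combine the orbit-dimension formula $\dim (\mathrm{Ad}\,G)\cdot x = \dim G - \dim C_G(x)$ with the bound on nilpotent orbit dimensions supplied by Lemma~\ref{help}. First I would recall that for $x \in \mathfrak{g}$ the orbit $\mathcal{O}_x := (\mathrm{Ad}\,G)\cdot x$ is a locally closed subvariety of $\mathfrak{g}$ whose dimension equals $\dim G - \dim C_G(x)$, where $C_G(x)$ is the isotropy subgroup under the adjoint action (this is the standard orbit-stabilizer identity for an algebraic group acting on a variety, using that $C_G(x)$ is closed, as noted just before Lemma~\ref{semisim}). Since $x$ is nilpotent, $\mathcal{O}_x \subseteq \mathcal{N}$, the nilpotent cone of $\mathfrak{g}$.

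Next I would pass to an irreducible component. The closure $\overline{\mathcal{O}_x}$ is irreducible (being the closure of the irreducible variety $\mathcal{O}_x$, which is the image of the irreducible group $G$ under a morphism), hence it is contained in some irreducible component $\mathcal{Z}$ of $\mathcal{N}$. Therefore $\dim \mathcal{O}_x \leq \dim \mathcal{Z}$. Now apply Lemma~\ref{help}: fixing a torus $\mathcal{T}$ of $\mathfrak{g}$ of dimension $\ell = \mathrm{rank}\,G$ (such a torus exists, e.g. $\mathfrak{t} = \mathcal{L}(T)$ by Lemma~\ref{toralbase} and Definition~\ref{resttorus}), Lemma~\ref{help} gives $\dim \mathcal{Z} \leq \dim \mathfrak{g} - \ell$. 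Since $\dim \mathfrak{g} = \dim G$, we conclude $\dim \mathcal{O}_x \leq \dim G - \ell$, and combining with the orbit-stabilizer identity yields $\dim G - \dim C_G(x) \leq \dim G - \ell$, that is, $\dim C_G(x) \geq \ell$.

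I expect the only genuine subtlety to be the bookkeeping about which "dimension" statements require irreducibility versus which hold for the whole orbit, and making sure the existence of a torus of the full dimension $\ell$ in $\mathfrak{g}$ is cited correctly so that Lemma~\ref{help} applies verbatim. Everything else is a direct chain of inequalities. Note that the case $x = 0$ is trivially covered, since then $C_G(0) = G$ and $\dim G \geq \ell$ anyway; no separate argument is needed.
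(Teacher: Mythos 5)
Your proof is correct and follows essentially the same route as the paper: the paper argues by contradiction that $\dim C_G(x) < \ell$ would force $\dim\,G\cdot x > \dim\,\mathfrak g - \ell$, contradicting Lemma~\ref{help}, which is exactly your chain of inequalities run in reverse. Your extra care in passing to an irreducible component of $\mathcal N$ containing $\overline{G\cdot x}$ just makes explicit a step the paper leaves implicit.
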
\noindent
\begin{proof}
Suppose $\,\dim\,C_G(x)\,<\,\ell\,$. Then $\,\dim\,G\cdot x\,>\, 
\dim\,\mathfrak g\,-\,\ell\,$. But since $\,G\cdot x\subset {\cal
N},\,$ this implies  
$\,\dim\,{\cal N}\,>\,\dim\,\mathfrak g\,-\,\ell$, contradicting the
previous lemma. This proves the proposition.
\end{proof}
\begin{proposition}\label{semicases}
Let $\,a\,$ be a semisimple element of $\,\mathfrak g\,$. Then 
\[
\dim\,C_G(a)\,\geq\,\ell\,.
\]
\end{proposition}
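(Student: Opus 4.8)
The plan is to transport the problem to the Lie algebra and then produce inside $\,\mathfrak c_{\mathfrak g}(a)\,$ a torus of dimension $\,\ell$. Since $\,a\,$ is semisimple, Lemma~\ref{semisim} gives $\,{\cal L}(C_G(a))\,=\,\mathfrak c_{\mathfrak g}(a)$, hence $\,\dim C_G(a)\,=\,\dim\mathfrak c_{\mathfrak g}(a)$, and it suffices to prove $\,\dim\mathfrak c_{\mathfrak g}(a)\,\geq\,\ell$. Moreover any abelian subalgebra of $\,\mathfrak g\,$ containing $\,a\,$ lies automatically in $\,\mathfrak c_{\mathfrak g}(a)$, so it is enough to place $\,a\,$ inside a torus of $\,\mathfrak g\,$ of dimension $\,\ell$.

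First I would place $\,a\,$ inside a torus. The smallest restricted subalgebra $\,\langle a\rangle\,$ containing $\,a\,$ is abelian: its spanning set $\,\{a^{[p]^i}\}\,$ is pairwise commuting, since $\,{\rm ad}(a^{[p]^i})\,=\,({\rm ad}\,a)^{p^i}\,$ and $\,({\rm ad}\,a)^{p^j}(a)\,=\,0\,$ because $\,[a,a]=0$; and by Proposition~\ref{properties}(4) it consists of $\,p$-semisimple elements. Hence $\,\langle a\rangle\,$ is a torus containing $\,a$. (Equivalently, Theorem~\ref{3.6}(2) writes $\,a\,=\,y\,+\,\sum_{i}c_i x_i\,$ with pairwise commuting toral $\,x_i$, scalars $\,c_i$, and a $\,p$-nilpotent $\,y\,$ commuting with the $\,x_i$; uniqueness in Theorem~\ref{jcd} together with semisimplicity of $\,a\,$ forces $\,y=0$, so $\,a\,$ lies in the span of the $\,x_i$, which is a torus by Lemma~\ref{ppp} since it is abelian, closed under $\,[p]\,$ by Jacobson's identity (ii') and $\,x_i^{[p]}=x_i$, and has no nonzero $\,p$-nilpotent element.)

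Finally I would enlarge $\,\langle a\rangle\,$ to a maximal torus $\,{\cal T}\,$ of $\,\mathfrak g$; then $\,a\in{\cal T}\subseteq\mathfrak c_{\mathfrak g}(a)$, and it remains to show $\,\dim{\cal T}\,=\,\ell$. This is the main obstacle. The structural route is to note that $\,\mathfrak t\,=\,{\cal L}(T)\,$ is itself a maximal torus of $\,\mathfrak g\,$: in the Cartan decomposition $\,\mathfrak g\,=\,\mathfrak t\oplus\sum_{\alpha}\mathfrak g_\alpha\,$ one has $\,[\mathfrak t,e_\alpha]\,=\,({\rm d}\alpha)(\mathfrak t)\,e_\alpha\,$ with $\,{\rm d}\alpha\neq 0\,$ for every root $\,\alpha$, so $\,\mathfrak c_{\mathfrak g}(\mathfrak t)\,=\,\mathfrak t\,$ and no torus properly contains it; since $\,\dim\mathfrak t\,=\,\ell\,$ and all maximal tori of $\,\mathfrak g\,$ are $\,(\Ad G)$-conjugate, $\,\dim{\cal T}\,=\,\ell$. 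A cleaner alternative that avoids Lie-algebra torus conjugacy is to show directly that $\,a\,$ is $\,(\Ad G)$-conjugate to an element of $\,\mathfrak t$: Proposition~\ref{anytoral} does this for a single toral element, and one conjugates the commuting family $\,x_1,\dots,x_r\,$ into $\,\mathfrak t\,$ one at a time, at each stage working inside the centralizer of the already-normalised elements (using Lemmas~\ref{semisim} and~\ref{centra} to control these centralizers); once $\,a\in\mathfrak t\,$ the torus $\,T\,$ lies in $\,C_G(a)$, so $\,\dim C_G(a)\,\geq\,\dim T\,=\,\ell\,$ at once. Either way the desired bound follows.
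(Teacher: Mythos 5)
Your strategy is genuinely different from the paper's. The paper argues geometrically, in parallel with Proposition~\ref{othercases}: it introduces the cone $\,Z\,$ over the variety $\,{\cal T}\,$ of toral elements, uses the Affine Dimension Theorem and the fact that a toral subalgebra contains only finitely many toral elements to show that every irreducible component of $\,{\cal T}\,$ has dimension at most $\,\dim\mathfrak g-\ell\,$, and then bounds $\,\dim G\cdot a\,$ from above exactly as in the nilpotent case. Your route --- reduce to $\,\dim\mathfrak c_{\mathfrak g}(a)\geq\ell\,$ via Lemma~\ref{semisim} and exhibit an $\,\ell$-dimensional torus through $\,a\,$ --- is structural rather than geometric, and its first two steps (the reduction, and the fact that $\,\langle a\rangle\,$ is a torus containing $\,a\,$) are sound.

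The gap is exactly where you locate it. Route A invokes the $\,(\Ad G)$-conjugacy of all maximal tori of $\,\mathfrak g\,$; this is a nontrivial theorem about Lie algebras of reductive groups (it fails for general restricted Lie algebras) and appears nowhere in the paper. Moreover your supporting claim that $\,{\rm d}\alpha\neq 0\,$ for every root fails at special primes (for $\,A_1\,$ or $\,C_{\ell}\,$ with $\,p=2\,$ a long root lies in $\,pX(T)\,$), so even the maximality of $\,\mathfrak t\,$ would need a different argument. Route B is closer to workable, but applying Proposition~\ref{anytoral} inside $\,C_G(x_1)\,$ presupposes that $\,C_G(x_1)^{\circ}\,$ is reductive with maximal torus $\,T\,$ and that Lemma~\ref{veldkamp} is available there --- again facts the paper does not establish. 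The clean repair is to bypass the commuting family entirely and rerun the proof of Proposition~\ref{anytoral} for a semisimple $\,a\,$: conjugate $\,a\,$ into $\,\mathfrak b\,$ and write $\,a\,=\,t\,+\,n\,$; Lemma~\ref{veldkamp} replaces $\,n\,$ by $\,n'\,$ with $\,[t,\,n']\,=\,0\,$; since $\,t\in\mathfrak t\,$ is semisimple and $\,n'\in\mathfrak n\,$ is nilpotent, $\,t+n'\,$ is its own Jordan--Chevalley decomposition, and uniqueness in Theorem~\ref{jcd} together with the semisimplicity of $\,a\,$ forces $\,n'\,=\,0\,$. Hence $\,a\,$ is $\,(\Ad G)$-conjugate to an element of $\,\mathfrak t\,$, so $\,C_G(a)\,$ contains a conjugate of $\,T\,$ and $\,\dim C_G(a)\geq\ell\,$ at once, with no appeal to Lemma~\ref{semisim} or to maximal tori of $\,\mathfrak g\,$.
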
\noindent
\begin{proof}
Let $\,Z\,=\,\{ x\in\mathfrak g\,/\,x^{[p]}\,=\,t^{(p-1)}x\;\mbox{for
some}\;t\in K\}\,$ be the so called {\it cone} over the variety of
toral elements $\,\cal T\,$. Let $\,\mathfrak t\,$ be a torus of
dimension $\,\ell\,=\,{\rm rank}\,G\,$. The intersection $\,Z\cap
\mathfrak t\,$ consists of finitely many affine lines (as every toral
subalgebra has finitely many toral elements). Each irreducible
component $\,Z_i\,$ of $\,Z\,$ is homogeneous whence contains $\,0$. 
It follows that  $\,Z_i\cap \mathfrak t\,$ is non-empty. Hence, 
$\,\dim\,Z_i\leq\dim\,\mathfrak g\,-\,\ell\,+\,1$. Each component of
$\,\cal T\,$ is contained in one of the $\,Z_i$'s, but cannot be equal
to it as given a nonzero $\,y\in{\cal T}\cap Z_i\,$, the scalar
multiple $\,\lambda y\,$ is not toral, for $\,\lambda\not\in {\bf
F}_p\,$. However, $\,\lambda y\in Z_i\,$ as $\,Z_i\,$ is homogeneous. 
Therefore, each homogeneous component of $\,\cal T\,$ has dimension
$\,\leq\,\dim\,\mathfrak g\,-\,\ell\,$. Now proceed as in the proof 
of Proposition~\ref{othercases}.
\end{proof}

\subsection{The Exceptional Modules}\label{sec3.2}

Let $\,V\,$ be a finite dimensional vector space over $\,K$. 
If $\,G\,$ acts on $\,V\,$ via a rational representation
$\,\pi\,:\,G\,\longrightarrow\,GL(V),\,$ then $\,{\mathfrak g}\,$ acts
on $\,V$, via the differential 
$\,d\pi\,:\,{\mathfrak g}\,\longrightarrow\,{\mathfrak {gl}}\;(V)$.

For $\,v\in V\,$ and $\,x\in \mathfrak g$, put
$\,d\pi(x)\cdot v\,=\,x\cdot v\,$.
Let $\,G\cdot v\,=\,\{\,g\cdot v\,/\,g\,\in\,G\,\}\,$
be the $\,G$-orbit of $\,v\,$ and  
$\,{\mathfrak g}\cdot v\,=\,\{\,x\cdot v\,/\,x\,
\in\,{\mathfrak g}\,\}.\,$ 

\begin{definition}\label{isotropy}
The {\bf isotropy (sub)group} (or
{\bf stabilizer}) of $\,v\in V\,$ in $\,G\,$ is $\,G_v\,=\,\{\,g\,\in\,G\,/\,
g\cdot v\,=\,v\,\}\,$ and the {\bf isotropy (Lie) subalgebra} of
$\,v\in V\,$ in $\,\mathfrak g\,$ is 
$\,{\mathfrak g}_v\,=\,\{\,x\,\in\,{\mathfrak g}\,/\,x\cdot v\,=\,0\,\}$. 
\end{definition}
\noindent
\begin{remark}\label{rem2}~{\bf (1)}~The stabilizer $\,{\mathfrak
g}_v\,=\,\{\,x\,\in\,{\mathfrak g}\,/\,
x\cdot v\,=\,0\,\}\,$ is a restricted Lie subalgebra of $\,{\mathfrak g}$, as 
$\,x^{[p]}\cdot v\,=\,x^p\cdot v$. Hence, by Lemma~\ref{ppp}, 
either $\,{\mathfrak g}_v\,$ is a torus or $\,{\mathfrak g}_v\,$
contains a nonzero element $\,x\,$ such that $\,x^{[p]}\,=\,0.\,$
\end{remark}
By~\cite[Ex. 10.2]{hum2}, $\,{\cal L}(G_v)\,\subseteq\, 
{\mathfrak g}_v$.
For $\,x\,\in\,{\mathfrak g}$, let $\,V^x\,=\,\{\,v\,\in\,V\,/\,x\cdot
v\,=\,0\,\}\,$ and $\,x\cdot V\,=\,\{\,x\cdot v\,/\,v\in V\,\}$.

\begin{definition} \label{isgp}
For a rational action of an algebraic group $\,G\,$ on a vector space
$\,V$, we say that a subgroup $\,H\,\subset\,G\,$ is an {\bf isotropy
subgroup in general position} (or an ISGP for short) if $\,V\,$
contains a non-empty Zariski open subset $\,U\,$ whose points have
their isotropy subgroups conjugate to $\,H$. The points of $\,U\,$ are
called {\bf points in general position}.
\end{definition}

Isotropy subalgebras $\,{\mathfrak h}\,\subset\,{\mathfrak g}\,$ in general
position are defined similarly.

\begin{definition} \label{locfree}
A rational $\,G$-action $\,G\longrightarrow\GL(V)\,$ is said to be  
{\bf locally free} if the ISGP $\,H\,$ exists and equals $\,\{ e\}$.
\end{definition}
\begin{definition} \label{locfreega}
A linear $\,\mathfrak g$-action $\,\mathfrak
g\longrightarrow\mathfrak{gl}(V)\,$ is said to be {\bf locally free} if the
isotropy subalgebra in general position $\,\mathfrak h\,$ 
equals $\,\{ 0\}$.
\end{definition}

We are interested in studying $\,G$- and $\,\mathfrak g$-actions which are
not locally free. 
 
\subsubsection{Lie Algebra Actions in Positive Characteristic}

The following result is well-known and applies to all
characteristics. We include the proof for reader's convenience, as
references are hard to find.
\begin{proposition}\label{rem1}~~If there exists $\,0\neq\,v\in V\,$ such 
that $\,{\mathfrak g}_v\,=\,\{ 0\}$, then there exists a Zariski open subset 
$\,W\subseteq V\,$ such that $\,{\mathfrak g}_w\,=\,0\,$ for all
$\,w\in W$. 
\end{proposition}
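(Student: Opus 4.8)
The plan is to use a rank/dimension argument based on the lower semicontinuity of $\,\dim\mathfrak{g}_v\,$ as a function of $\,v\in V$. Consider the map $\,\mu:\mathfrak{g}\times V\to V\,$ given by $\,\mu(x,v)=x\cdot v$, which is bilinear, hence a morphism of varieties. For each $\,v\in V$, the isotropy subalgebra $\,\mathfrak{g}_v\,$ is the kernel of the linear map $\,\mu_v:\mathfrak{g}\to V,\;x\mapsto x\cdot v$, so $\,\dim\mathfrak{g}_v\,=\,\dim\mathfrak{g}-\mathrm{rk}(\mu_v)$. The rank of $\,\mu_v\,$ is the largest $\,r\,$ such that some $\,r\times r\,$ minor of the matrix of $\,\mu_v\,$ (in fixed bases) is nonzero; since these minors are polynomial functions of $\,v$, the condition $\,\mathrm{rk}(\mu_v)\geq r\,$ defines a Zariski open subset of $\,V$. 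Thus $\,v\mapsto\dim\mathfrak{g}_v\,$ is upper semicontinuous.

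**Next I would** apply this to the hypothesis. We are given some $\,0\neq v_0\in V\,$ with $\,\mathfrak{g}_{v_0}=\{0\}$, i.e. $\,\dim\mathfrak{g}_{v_0}=0$, equivalently $\,\mathrm{rk}(\mu_{v_0})=\dim\mathfrak{g}$, which is the maximal possible value of the rank. Therefore the set
\[
W\,:=\,\{\,v\in V\,/\,\mathrm{rk}(\mu_v)=\dim\mathfrak{g}\,\}
\]
is Zariski open (it is cut out by the non-vanishing of at least one maximal minor) and non-empty (it contains $\,v_0$). For every $\,w\in W\,$ we have $\,\dim\mathfrak{g}_w\,=\,\dim\mathfrak{g}-\mathrm{rk}(\mu_w)\,=\,0$, and since $\,\mathfrak{g}_w\,$ is a linear subspace of dimension $\,0$, it follows that $\,\mathfrak{g}_w\,=\,\{0\}$. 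This gives exactly the asserted open set $\,W$.

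**The main point to be careful about** is simply the bookkeeping of the semicontinuity statement: one must fix a basis $\,x_1,\dots,x_n\,$ of $\,\mathfrak{g}\,$ and coordinates on $\,V$, write the matrix of $\,\mu_v\,$ entrywise (each entry being a linear form in the coordinates of $\,v$), and observe that its $\,n\times n\,$ minors are polynomials in those coordinates; then $\,W\,$ is the complement of their common zero locus. There is no characteristic obstruction here — the argument is purely linear-algebraic and works uniformly over any field, which matches the remark in the excerpt that the result "applies to all characteristics." I would present this in two or three sentences, since the only genuine content is the elementary fact that having maximal rank is a Zariski-open condition.
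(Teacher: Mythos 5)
Your proof is correct, but it takes a genuinely different (and more elementary) route than the paper. The paper considers the morphism $\,\psi:\mathfrak g\times V\to V\times V,\;(x,v)\mapsto(x\cdot v,\,v)$, identifies each non-empty fibre as a coset of some $\,\mathfrak g_{v}$, and then invokes the theorem on dimension of fibres of a dominant morphism to produce a non-empty open set where the fibre dimension is minimal; the hypothesis forces that minimum to be $\,0$, and projecting to $\,V\,$ yields $\,W$. You instead exploit the bilinearity of the action: for fixed bases, the matrix of $\,\mu_v:x\mapsto x\cdot v\,$ has entries linear in the coordinates of $\,v$, so the locus where some $\,(\dim\mathfrak g)\times(\dim\mathfrak g)\,$ minor is nonzero is Zariski open, non-empty by hypothesis, and coincides with $\,\{v\,/\,\mathfrak g_v=0\}$. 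Your argument buys simplicity and transparency — it is pure linear algebra plus the observation that maximal rank is an open condition, with no appeal to fibre-dimension theory — while the paper's argument is the one that generalizes directly to the companion statement (Proposition~\ref{3.1.1}), where one needs closedness of $\,\{v\,/\,\dim\mathfrak g_v\geq n\}\,$ under projection from a projective completion rather than mere openness of the maximal-rank locus. Both proofs are valid in all characteristics, as the remark preceding the proposition requires.
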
\noindent
\begin{proof}
Consider the morphism $\,\psi\,:\,{\mathfrak g}\times V\longrightarrow
V\times V\,$ given by $\,(x,\,v)\longmapsto (x\cdot v,\,v)\,$.
Let $\,(v_1,\,v_2)\in V\times V\,$. Then
\begin{align*}
\psi^{-1}(v_1,\,v_2)\,&=\,\{\,(x,\,v)\in {\mathfrak g}\times V\,/\,
(x\cdot v,\,v)\,=\,(v_1,\,v_2)\,\}\\
& =\,\{\,(x,\,v_2)\in {\mathfrak g}\times V\,/\,x\cdot v_2\,=\,v_1\,\}.
\end{align*}
Let $\,d\,$ denote the minimal dimension of the fibres of $\,\psi.\,$
Suppose $\,\psi^{-1}(v_1,\,v_2)\,\neq\,\emptyset\,$ and
let $\,(x,\,v_2),\;(y,\,v_2)\,\in\,\psi^{-1}(v_1,\,v_2)\,$. Then
$\,x\,-\,y\,\in\,{\mathfrak g}_{v_2}\,$. Hence,
$\,\psi^{-1}(v_1,\,v_2)\,=\,(\,{\mathfrak g}_{v_2}\,+\,y,\,v_2),\,$ where
$\,y\cdot v_2\,=\,v_1\,$. Thus, either $\,\dim\,\psi^{-1}(v_1,\,v_2)\,=\,\dim
{\mathfrak g}_{v_2}\,$ or $\,\psi^{-1}(v_1,\,v_2)\,=\,\emptyset\,$.

Let $\,U\,=\,\{\,(x,\,v)\in\mathfrak g\times V\,/\,
\,\dim\,\psi^{-1}(\psi(x,\,v))\,=\,d\,\}.\,$ By the theorem on  
dimension of fibres~\cite[4.1]{hum2}, $\,U\,$ is a non-empty Zariski
open subset of \mbox{$\,{\mathfrak g}\times V$}. Thus, $\,\dim {\mathfrak
g}_v\,=\,\dim\,\psi^{-1}(\psi(x,\,v))\,$
is the minimal possible for all $\,v\in V.\,$
As the composition $\,{\mathfrak g}\times
V{\stackrel{\psi}{\longrightarrow}}V\times V
{\stackrel{{\rm pr}_2}{\longrightarrow}}V \,$ is surjective, we have 
$\,d=0\,$ (by our assumption).
Now the projection $\,\pi_2\,:\,{\mathfrak g}\times V\longrightarrow V\,$
is surjective, whence dominant (see~\cite[4.1]{hum2}). 
Therefore, $\,\pi_2(U)\,$ contains a Zariski open
subset $\,W\,$ of $\,V.\,$ Thus, all points of $\,W\,$ have trivial
stabilizer, proving the proposition.
\end{proof}

\begin{proposition} \label{3.1.1}
If there is a non-empty Zariski open subset $\,W\,\subset\,V\,$ such that
$\,{\mathfrak g}_w\,\neq\,0,\;\;\forall\,w\in W$, then 
$\,{\mathfrak g}_v\,\neq\,0$, for all $\,v\in V$.
\end{proposition}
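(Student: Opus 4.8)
The plan is to obtain Proposition~\ref{3.1.1} as an immediate contrapositive of Proposition~\ref{rem1}, using only that the affine space $\,V\,$ is irreducible as a variety. I would argue by contradiction: suppose that $\,{\mathfrak g}_v\,=\,\{0\}\,$ for some $\,v\in V$. Such a $\,v\,$ is necessarily nonzero, since $\,{\mathfrak g}_0\,=\,{\mathfrak g}\,$ and $\,{\mathfrak g}\,\neq\,0\,$ (as $\,\dim\,{\mathfrak g}\,=\,\dim\,G\,>\,0\,$). Hence the hypothesis of Proposition~\ref{rem1} is satisfied.

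Next, I would apply Proposition~\ref{rem1} to produce a non-empty Zariski open subset $\,W'\subseteq V\,$ with $\,{\mathfrak g}_w\,=\,0\,$ for every $\,w\in W'$. Now $\,W\,$ and $\,W'\,$ are both non-empty Zariski open subsets of the irreducible variety $\,V\,$, so $\,W\cap W'\,\neq\,\emptyset$. Choosing any $\,w\in W\cap W'$, membership in $\,W\,$ gives $\,{\mathfrak g}_w\,\neq\,0\,$ while membership in $\,W'\,$ gives $\,{\mathfrak g}_w\,=\,0\,$, a contradiction. Therefore no such $\,v\,$ exists, i.e., $\,{\mathfrak g}_v\,\neq\,0\,$ for all $\,v\in V$, as claimed.

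There is essentially no obstacle here beyond bookkeeping: the only points to check are that the degenerate case $\,v=0\,$ cannot occur (handled by $\,{\mathfrak g}_0={\mathfrak g}\neq 0\,$) and that two non-empty opens in $\,V\,$ must meet (immediate from irreducibility of $\,V\cong\mathbb{A}^N\,$). Consequently I expect the write-up to be only a few lines, with all the real content residing in Proposition~\ref{rem1}.
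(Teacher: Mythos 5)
Your argument is correct, but it is a genuinely different route from the one taken in the paper. You deduce Proposition~\ref{3.1.1} from Proposition~\ref{rem1} by contradiction: a point with trivial stabilizer forces (via Proposition~\ref{rem1}) a non-empty open set $\,W'\,$ of such points, which must meet $\,W\,$ because $\,V\,$ is irreducible. The two small checks you flag are the right ones, with one caveat: the statement of Proposition~\ref{rem1} does not literally assert that the open set it produces is non-empty, so you are implicitly leaning on its proof (where $\,W'\,$ is extracted from the image of a dense open subset under a dominant projection) rather than on its statement. The paper instead gives a direct, self-contained argument: it forms the commuting variety $\,{\cal C}=\{(x,v)\,/\,x\cdot v=0\}\,$, passes to $\,\mathbb P({\mathfrak g})\times\mathbb P(V)\,$ and uses completeness of $\,\mathbb P({\mathfrak g})\,$ to show that the locus $\,\{v\,/\,\dim{\mathfrak g}_v\geq s\}\,$ is closed in $\,V\,$; since it contains the dense set $\,W\,$, it is all of $\,V\,$. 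Your proof is shorter and concentrates the geometric content in Proposition~\ref{rem1}, but the paper's direct argument has a payoff you lose: it transfers ``almost verbatim'' to Proposition~\ref{3.1.a}, the version with ``non-central'' in place of ``nonzero'', which is what the classification actually needs. Your reduction would require first proving a central-stabilizer analogue of Proposition~\ref{rem1}, which is not stated in the paper.
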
\noindent
\begin{proof} Consider $\,{\cal C}\,=\,\{\,(x,\,v)\in {\mathfrak
g}\times V\,/\,x\cdot v\,=\,0\,\}$, the commuting variety of 
$\,{\mathfrak g}\times V$. Note that $\,{\cal C}\,$ is ``bihomogeneous'', 
i.e., $\,\forall\,\lambda,\,\mu\in K-\{ 0\},\;(\lambda\,x,\,\mu\,v)\,
\in\,{\cal C}$, whenever $\,(x,\,v)\,
\in\,{\cal C}$. Let $\,\bar{\cal C}\,$ be the closed subset of 
$\,\mathbb P({\mathfrak g})\,\times\,\mathbb P(V)\,$ corresponding to 
$\,{\cal C}$. As $\,\mathbb P({\mathfrak g})\,$ is complete, the 
projection 
$\,{\rm pr}_2\,:\,\mathbb P({\mathfrak g})\,\times\,\mathbb P(V)\,\longrightarrow\,
\mathbb P(V)\,$ is a closed map (see \cite[\S 6]{hum2}) so that
$\,{\rm pr}_2 (\bar{\cal C})\,$ is a closed set. 
Hence the image of $\,{\cal C}\,$ under the projection map  
$\,{\mathfrak g}\times V\longrightarrow V\,$ is also closed 
(as the closed subsets of $\,\mathbb P(V)\,$ are the images of the
Zariski closed subsets of $\,V\,$ corresponding to 
collections of homogeneous polynomials in $\,K[V]$). 
Let $\,\varepsilon\,:\,{\cal C}\longrightarrow V\,$ denote the
restriction of the projection map $\,{\mathfrak g}\times V\longrightarrow V\,$
to $\,{\cal C}.\,$ For each $\,v\in V$, the fibre 
$\,\varepsilon^{-1}(v)\,=\,\{\,(x,\,v)\in {\cal C}\,/\,x\in {\mathfrak g}_v\,\}
\,$ is isomorphic to a vector space and so is irreducible as an algebraic 
variety. Moreover, $\,\dim\,\varepsilon^{-1}(v)\,=\,\dim\,{\mathfrak g}_v$.

Now, for each $\,n\in\mathbb N$, consider the set
\[
{\cal C}_n\,=\,\{\,(x,\,v)\in {\cal C}\,/\,\dim\, {\mathfrak g}_v\,
\geq\,n\,\}\,=\,\{\,(x,\,v)\in {\cal C}\,/\,\dim\, 
\varepsilon^{-1}(\varepsilon(x,\,v))\,\geq\,n\,\}. 
\]
By \cite[AG.10.3]{bor2}, each $\,{\cal C}_n\,$ is a closed set.
Let $\,s\,=\,\min\,\{\,\dim {\mathfrak g}_w\,/\,w\in W\,\}$. 
There exists a closed variety $\,\bar{{\cal C}_s}\,$ in $\,\bar{\cal
C}\,$ that corresponds to $\,{\cal C}_s$.
Therefore, $\,{\rm pr}_2(\bar{{\cal C}_s})\,$ is closed in $\,\mathbb P(V)$, 
implying that $\,\varepsilon ({\cal C}_s)\,$ is also closed in $\,V$. 
Now as $\,\varepsilon\,$ is surjective, $\,\varepsilon ({\cal
C}_s)\,\supseteq\,W$. Finally, as $\,W\,$ is dense in $\,V$, this implies 
$\,\varepsilon ({\cal C}_s)\,=\,V$, proving the proposition.
\end{proof}

\begin{definition} \label{nonlocfree}
A $\,{\mathfrak g}$-module $\,V\,$ is said to be {\bf non-free} if
$\,{\mathfrak g}_v\,\neq\,0\,$ for all $\,v\,\in\,V$. 
\end{definition}
\noindent
\begin{remark}~{\bf (2)} Comparing the two propositions above, we can
say that if $\,V\,$ is not a non-free $\,{\mathfrak g}$-module, then
it is locally free.
\end{remark}

By Lemma~\ref{centretoral}, the centre $\,{\mathfrak z(\mathfrak g)}\,$ of
the restricted Lie algebra $\,{\mathfrak g}\,=\,{\cal L}(G)\,$ is a
toral subalgebra. Hence, by Proposition~\ref{properties}(2), 
$\,{\mathfrak z(\mathfrak g)}\,$ acts diagonalisably on $\,V\,$. 
Observe that sometimes $\,{\mathfrak z(\mathfrak g)}\,$ 
is nonzero but acts trivially on $\,V,\,$ implying that 
$\,{\mathfrak g}_v\,\supseteq\,{\mathfrak z
(\mathfrak g)}\,$ for all $\,v\in V$. This happens, for instance, when
$\,\mathfrak g\,=\,\mathfrak{sl}(\ell+1, K),\,$ $\,p|(\ell +1)\,$
and $\,V\,=\,\mathfrak g\,$ is the adjoint $\,\mathfrak g$-module. 
The following statement is true.

\begin{proposition} \label{3.1.a}
If there is a non-empty Zariski open subset $\,W\,\subset\,V\,$ such that
$\,{\mathfrak g}_w\,$ has a non-central element for all $\,w\in W$, then 
$\,{\mathfrak g}_v\,$ has a non-central element for all $\,v\in V$.
\end{proposition}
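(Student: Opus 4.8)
The plan is to mimic the structure of the proof of Proposition~\ref{3.1.1}, but carried out with the subvariety of $\,{\cal C}\,$ where the stabilizer fails to be central, rather than the full commuting variety. Recall that $\,{\mathfrak z}={\mathfrak z}({\mathfrak g})\,$ acts diagonalisably on $\,V\,$ (by Lemma~\ref{centretoral} and Proposition~\ref{properties}(2)), so there is a weight-space decomposition $\,V=\bigoplus_{\chi}V_{\chi}\,$ with respect to $\,{\mathfrak z}$, and for $\,v=\sum_{\chi}v_{\chi}\,$ one has $\,z\cdot v=0\,$ iff $\,\chi(z)=0\,$ for every $\,\chi\,$ with $\,v_{\chi}\neq 0$. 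Consequently the condition ``$\,{\mathfrak g}_v\not\subseteq{\mathfrak z}$'' is equivalent to ``$\,{\mathfrak g}_v\not\subseteq{\mathfrak z}\cap{\mathfrak g}_v$'', i.e.\ $\,{\mathfrak g}_v\,$ strictly contains the (fixed-dimensional-on-each-cone) subspace $\,{\mathfrak g}_v\cap{\mathfrak z}$; but it is cleaner to phrase everything via orbit dimensions.

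First I would introduce, for each $\,v\in V\,$, the quantity $\,\delta(v):=\dim({\mathfrak g}\cdot v)=\dim{\mathfrak g}-\dim{\mathfrak g}_v$, and note that $\,{\mathfrak g}_v\,$ has a non-central element precisely when $\,\dim{\mathfrak g}_v>\dim({\mathfrak g}_v\cap{\mathfrak z})$. The subtlety is that $\,\dim({\mathfrak g}_v\cap{\mathfrak z})\,$ is itself a constructible-but-not-constant function of $\,v$, so I cannot simply fix a numerical threshold. To get around this, I would work on $\,\mathbb P(V)\,$ and stratify: let $\,Z_j=\{[v]\in\mathbb P(V)\,/\,\dim({\mathfrak g}_v\cap{\mathfrak z})=j\}$; this is the preimage under $\,[v]\mapsto(\text{support of }v\text{ in the }{\mathfrak z}\text{-grading})\,$ of a set depending only on which weight spaces $\,v\,$ meets, hence locally closed, and there are finitely many strata. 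On each stratum $\,Z_j\,$ the condition ``$\,{\mathfrak g}_v\,$ has a non-central element'' becomes the genuinely closed-type condition $\,\dim{\mathfrak g}_v\geq j+1$, i.e.\ $\,\delta(v)\leq\dim{\mathfrak g}-j-1$, which by \cite[AG.10.3]{bor2} (upper semicontinuity of fibre dimension, exactly as used for $\,{\cal C}_n\,$ in Proposition~\ref{3.1.1}) cuts out a closed subset $\,{\cal C}_{s,j}\,$ of the relevant piece of $\,{\cal C}$.

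Then I would run the completeness argument of Proposition~\ref{3.1.1}: form the bihomogeneous commuting variety $\,{\cal C}\subset{\mathfrak g}\times V\,$ and its projectivisation $\,\bar{\cal C}\subset\mathbb P({\mathfrak g})\times\mathbb P(V)$; for each $\,j\,$ intersect with $\,\bar Z_j\,$ (closure of the stratum) to get a closed set whose image under the (closed, by completeness of $\,\mathbb P({\mathfrak g})$) projection $\,{\rm pr}_2\,$ is closed in $\,\mathbb P(V)$, hence corresponds to a closed subset $\,\varepsilon({\cal C}_{s,j})\subseteq V$. The hypothesis says that the open dense $\,W\,$ is covered by $\,\bigcup_j\varepsilon({\cal C}_{s,j})\,$ (every $\,w\in W\,$ lies in some stratum $\,Z_{j(w)}\,$ and there has a non-central stabiliser element, so $\,w\in\varepsilon({\cal C}_{s,j(w)})$), so by irreducibility of $\,V\,$ one of the finitely many closed sets $\,\overline{W\cap Z_j}\,$ — whence $\,\varepsilon({\cal C}_{s,j})\,$ for that $\,j\,$ — is all of $\,V$. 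Wait: I should be slightly careful here, since a finite union of proper closed sets is proper, so indeed some $\,\varepsilon(\overline{{\cal C}_{s,j}})=V$, giving the conclusion for every $\,v\in V$.

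The main obstacle is precisely the non-constancy of $\,\dim({\mathfrak g}_v\cap{\mathfrak z})$: one must verify that the stratification by this dimension is by (locally) closed sets and is finite, and that the ``non-central element exists'' locus, restricted to each stratum, is a closed-fibre-dimension condition amenable to the $\,\mathbb P({\mathfrak g})$-completeness trick. Once the stratification is in place the argument is a routine adaptation of Proposition~\ref{3.1.1}; the diagonalisability of $\,{\mathfrak z}\,$ on $\,V\,$ is what makes the stratification transparent, since the support of $\,v\,$ in the $\,{\mathfrak z}$-grading determines $\,\dim({\mathfrak g}_v\cap{\mathfrak z})\,$ and varies in an obviously constructible way as $\,[v]\,$ moves in $\,\mathbb P(V)$.
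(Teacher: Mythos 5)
You have correctly located the difficulty (the non-constancy of $\,\dim({\mathfrak g}_v\cap{\mathfrak z})\,$) and your stratification of $\,\mathbb P(V)\,$ by the $\,{\mathfrak z}$-support is sound, but the final step does not close. The set you actually prove equal to $\,\mathbb P(V)\,$ is $\,\varepsilon(\overline{{\cal C}_{s,j}})\,$, and a point of this set only satisfies the two conditions that survive taking closures: $\,v\in\overline{Z_j}\,$ and $\,\dim{\mathfrak g}_v\geq j+1\,$. The function $\,v\mapsto\dim({\mathfrak g}_v\cap{\mathfrak z})\,$ is \emph{upper} semicontinuous (the $\,{\mathfrak z}$-support of $\,v\,$ can only shrink under specialization, so $\,{\mathfrak z}_v\,$ can only grow), hence on $\,\overline{Z_j}\setminus Z_j\,$ one has $\,\dim({\mathfrak g}_v\cap{\mathfrak z})>j\,$, and the inherited bound $\,\dim{\mathfrak g}_v\geq j+1\,$ no longer implies $\,{\mathfrak g}_v\not\subseteq{\mathfrak z}\,$ there. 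Worse, since $\,W\,$ is dense and every non-generic stratum lies in a finite union of proper linear subspaces $\,\bigoplus_{\chi\in S}V_\chi\,$, the only index $\,j\,$ for which your covering argument can yield $\,\varepsilon(\overline{{\cal C}_{s,j}})=\mathbb P(V)\,$ is the generic one, $\,j_0=\dim\ker({\mathfrak z}\to{\mathfrak{gl}}(V))\,$. So the net content of your argument is the single inequality $\,\dim{\mathfrak g}_v\geq j_0+1\,$ for all $\,v\,$, which settles the claim on the dense stratum but says nothing at exactly the points where $\,{\mathfrak z}_v\,$ jumps --- and propagating the conclusion from the dense stratum to those special points is the entire content of the proposition.

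In short: the stratification makes the problem visible but does not solve it, because the hypothesis gives information only on the dense stratum and semicontinuity transports only the crude dimension bound, not the non-centrality. To repair the proof you need an input that produces a non-central element (rather than a dimension count) at special $\,v\,$ --- for instance the dichotomy of Remark~\ref{rem2}(1) (either $\,{\mathfrak g}_v\,$ contains a nonzero $\,p$-nilpotent element, which is automatically non-central since $\,{\mathfrak z}\,$ is toral by Lemma~\ref{centretoral}, or $\,{\mathfrak g}_v\,$ is a torus), treating the nilpotent and toral witnesses by separate closed conditions as is done in the proof of Proposition~\ref{dimxvxg}; or else a separate argument on each non-dense stratum. (The paper itself gives no details here, asserting only that the proof of Proposition~\ref{3.1.1} carries over, so there is no explicit argument to compare against; but as written your proof does not establish the statement.)
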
 \noindent
\begin{proof}
The proof repeats almost verbatim the proof of Proposition~\ref{3.1.1}.
\end{proof}

Our next definition is crucial for the rest of this work.
\begin{definition}\label{excpmod}
A $\,{\mathfrak g}$-module $\,V\,$ is called {\bf exceptional} if
for each $\,v\in V\,$ the isotropy subalgebra $\,{\mathfrak g}_v\,$ contains 
a non-central element (that is, $\,{\mathfrak g}_v\,\not\subseteq\,
{\mathfrak z(\mathfrak g)}\,$).
\end{definition}

\begin{example}\label{adjoint} {\rm
Clearly, if $\,\mathfrak g\,$ is non-abelian, then the trivial
$\,1$-dimensional $\,\mathfrak g$-module is exceptional.
For any non-abelian Lie algebra $\,{\mathfrak g},\,$ the adjoint
module is always
exceptional, as $\,{\mathfrak z(\mathfrak g)}\,$ is a proper subset of
$\,{\mathfrak g}_x\,$ for all $\,x\in{\mathfrak g}\,$ (note that 
$\,[x,\,x]\,=\,0\,$).}
\end{example}
For the centreless Lie algebras the notions of {\it non-free} and
{\it exceptional} modules coincide. 
It follows from the definitions that all exceptional modules are
non-free. The converse statement is not true, as the following lemma shows.
(The proof of this lemma will be given in Section~\ref{lact}.1.) 
\begin{lemma}\label{nonexcepnonfree}
Let $\,\mathfrak g\,=\,\mathfrak{sl}(np,K),\,$ where 
$\,p>2\,$ and $\,np\geq 4.\,$ Then the Steinberg module 
$\,V\,=\,E((p-1)\rho)\,$ is not an exceptional module, but it is non-free.
\end{lemma}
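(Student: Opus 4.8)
The plan is to exhibit, on the one hand, a generic vector $v$ in the Steinberg module whose isotropy subalgebra $\mathfrak g_v$ is central (in fact zero, since $\mathfrak{sl}(np,K)$ has centre $0$ for $p\nmid np$... but here $p\mid np$, so $\mathfrak z(\mathfrak g)\neq 0$; one must be careful), hence by Proposition~\ref{3.1.a} the module is not exceptional; and on the other hand to show that \emph{every} vector has nonzero isotropy, so the module is non-free. For the non-free part, the cleanest route is dimension-counting: by Theorem~\ref{stmod} the Steinberg module $E((p-1)\rho)$ has dimension $p^{|R^+|}$, whereas $\dim\mathfrak g=\dim G=|R|+\ell$; for $\mathfrak g=\mathfrak{sl}(np,K)$ with $np\geq 4$ one has $|R^+|=\binom{np}{2}$ so $p^{|R^+|}$ grows far faster than $\dim\mathfrak g=(np)^2-1$, and more precisely $\dim E((p-1)\rho)>\dim\mathfrak g$ for all the cases in question (the smallest case $p=3,\ n=1$ gives $3^3=27>8$, and monotonicity handles the rest). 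Since the orbit map $\mathfrak g\to \mathfrak g\cdot v$ has image of dimension $\leq\dim\mathfrak g<\dim V$, we get $\dim\mathfrak g_v=\dim\mathfrak g-\dim(\mathfrak g\cdot v)\geq \dim\mathfrak g-\dim V$... no — that inequality goes the wrong way. The correct statement: $\dim(\mathfrak g\cdot v)\leq\dim\mathfrak g$, and $\mathfrak g\cdot v\subseteq V$, which does \emph{not} force $\mathfrak g_v\neq 0$. So instead one argues: the map $\psi$ of Proposition~\ref{rem1} from $\mathfrak g\times V\to V\times V$ cannot be dominant when $\dim(\mathfrak g\times V)<\dim(V\times V)$, i.e. when $\dim\mathfrak g<\dim V$; hence by the argument of Proposition~\ref{rem1} there is \emph{no} $v$ with $\mathfrak g_v=0$, i.e. $\mathfrak g_v\neq 0$ for all $v$ — indeed every fibre has dimension $\geq\dim V-\dim\mathfrak g>0$. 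This proves $V$ is non-free.

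For the failure of exceptionality, I would produce an explicit generic vector with $\mathfrak g_v\subseteq\mathfrak z(\mathfrak g)$. The natural candidate: realise $E((p-1)\rho)$ inside a tensor product using Steinberg's tensor product theorem — for $\mathfrak{sl}_m$ with $m=np$, $(p-1)\rho=(p-1)(\omega_1+\cdots+\omega_{m-1})$ is $p$-restricted, so this is already infinitesimally irreducible and we cannot peel off Frobenius twists. Rather, I would use the model $E((p-1)\rho)\cong$ a submodule/quotient related to the coordinate ring of $U^-$, or more practically invoke the fact (referenced in the paper, e.g. \cite{cohwa}, \cite{cohwadebgra} style computations) that the Steinberg module restricted to the Lie algebra is a \emph{free} $\mathfrak u(\mathfrak n^-)$-module of rank one: $E((p-1)\rho)\cong \mathfrak u(\mathfrak g)\otimes_{\mathfrak u(\mathfrak b)}K_{(p-1)\rho}$ as a $\mathfrak u(\mathfrak g)$-module — this is exactly Steinberg's description, and as a $\mathfrak u(\mathfrak n^-)$-module it is free of rank $1$. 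Freeness of $E((p-1)\rho)$ over $\mathfrak u(\mathfrak n^-)$, together with a corresponding statement over a maximal torus's enveloping algebra, lets one write down a vector $v=\sum_{w\in W}c_w\,e_0^{(w)}$ (a generic combination of extremal weight vectors, one per Weyl group element) and check directly that $x\cdot v=0$ with $x=\sum a_\alpha e_\alpha+\sum b_\alpha f_\alpha+t$ forces all $a_\alpha,b_\alpha=0$ and $t$ acting as a scalar — and since the weights of the Steinberg module span $X/pX$, acting as a scalar on all of them forces $t\in\mathfrak z(\mathfrak g)$. Picking the $c_w$ so that no nonzero non-central $t$ kills $v$ (possible since the weight multiset $\{w(p-1)\rho\}$ separates $\mathfrak t/\mathfrak z$) completes the argument.

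The main obstacle is the second half: constructing a single explicit vector and verifying $\mathfrak g_v\subseteq\mathfrak z(\mathfrak g)$ in a way that is rigorous rather than heuristic. The root-vector part of the computation (showing the $e_\alpha$ and $f_\alpha$ coefficients must vanish) is where the combinatorics of the Steinberg module's weight structure enters, and one genuinely needs that each weight space $E((p-1)\rho)_\mu$ has controlled dimension and that the "top" extremal vectors $v_w$ are not annihilated by any $e_\alpha$ — this is where I expect to lean on the $\mathfrak u(\mathfrak n^-)$-freeness and on Theorem~\ref{premprin} (valid since $p$ is non-special for type $A$ when... actually $p>2$ suffices here). Once the root-vector coefficients are killed, the toral part is routine. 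I would organise the writeup as: (1) dimension count $\Rightarrow$ non-free; (2) Steinberg's model $\Rightarrow$ $\mathfrak u(\mathfrak n^-)$-freeness; (3) explicit generic vector $\Rightarrow$ central stabiliser; (4) invoke Proposition~\ref{3.1.a} to conclude $V$ is not exceptional. I should double-check the small cases $np=4,6$ with $p=3$ (and $np$ a multiple of $p$, e.g. $np=6$, $p=3$, giving $\dim=35$, $3^{15}$ — fine) to be sure the dimension inequality and the weight-separation claim hold there too.
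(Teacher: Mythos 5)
Both halves of your proposal have gaps, and the second half of your non-free argument contains an actual logical error. You correctly notice that $\dim\,\mathfrak g\,<\,\dim\,V$ only shows the morphism $\,\psi\,:\,\mathfrak g\times V\to V\times V\,$ is not dominant, but your ``hence there is no $v$ with $\mathfrak g_v=0$'' does not follow: in Proposition~\ref{rem1} the conclusion $d=0$ comes from the \emph{hypothesis} that some stabilizer vanishes, not from dominance of $\psi$, and a module of dimension larger than $\dim\mathfrak g$ can perfectly well be locally free (the fibre-dimension bound you invoke only gives $\dim\psi^{-1}(v_1,v_2)\geq \dim(\mathfrak g\times V)-\dim\overline{\Im\psi}=d$, which is circular). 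The actual reason the Steinberg module is non-free here is the one you flagged in passing and then dropped: because $p\mid np$, the centre of $\mathfrak{sl}(np,K)$ is nonzero (spanned by $z=\sum_j j\,h_{\alpha_j}$, i.e.\ the scalar matrices), and \emph{every} weight of $E((p-1)\rho)$ kills $z$, since $(p-1)\rho(z)=(p-1)\frac{np(np-1)}{2}\equiv 0\pmod p$ and all other weights differ from $(p-1)\rho$ by root-lattice elements, which vanish on a central element. Hence $z\in\mathfrak g_v$ for all $v$, which is the whole content of ``non-free but not exceptional'': the only thing stabilizing a generic vector is the centre.

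For the failure of exceptionality your plan (exhibit one vector with central stabilizer, via the $\mathfrak u(\mathfrak n^-)$-freeness of the Steinberg module and a generic combination of extremal weight vectors) is not carried out: the claims that the root-vector coefficients must vanish and that the extremal weights separate $\mathfrak t/\mathfrak z(\mathfrak g)$ are asserted, not proved, and you acknowledge this is the main obstacle. The paper avoids this entirely: the highest weight $(p-1)\rho=(p-1)\sum_{i=1}^{\ell}\omega_i$ with $\ell=np-1\geq 4$ has at least four nonzero coefficients, so Lemma~\ref{4coef} (i.e.\ the orbit-size necessary condition of Theorem~\ref{???}) immediately gives $s(V)>\ell(\ell+1)^2$ and hence that $V$ is not exceptional, with no explicit vector needed. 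If you want to salvage your route you would have to supply the missing verification for a concrete vector; but the counting criterion already established earlier in the paper makes that unnecessary.
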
\noindent

It follows from Proposition~\ref{3.1.a} that if there is a nonzero
$\,v\in V\,$ such that $\,{\mathfrak
g}_v\,\subset\,{\mathfrak z(\mathfrak g)}\,$ then $\,V\,$ is not
an exceptional module. In particular, locally free modules cannot be
exceptional.

\begin{lemma}\label{notbadhw}
If $\,V\,$ is an exceptional $\,\mathfrak g$-module, then so is each of its 
composition factors.
\end{lemma}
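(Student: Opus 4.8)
The plan is to exploit the contrapositive together with Proposition~\ref{3.1.a}: it suffices to show that if some composition factor fails to be exceptional, then $V$ itself fails to be exceptional. So suppose $0 = V_0 \subset V_1 \subset \cdots \subset V_k = V$ is a composition series of $\mathfrak g$-modules and that the factor $W := V_i/V_{i-1}$ is not exceptional for some $i$. By the remark following Definition~\ref{excpmod} (and Proposition~\ref{3.1.a} applied to $W$), there exists a nonzero $\bar w \in W$ whose isotropy subalgebra $\mathfrak g_{\bar w}$ is contained in $\mathfrak z(\mathfrak g)$; in fact, since a non-exceptional module is locally free, the generic $\bar w \in W$ has $\mathfrak g_{\bar w} = \{0\}$ when $\mathfrak z(\mathfrak g)$ acts nontrivially on $W$, and $\mathfrak g_{\bar w} = \mathfrak z(\mathfrak g)$ with $\mathfrak z(\mathfrak g)$ acting trivially otherwise. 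The key point is that for a weight-module argument it is cleanest to phrase everything at the level of a generic vector.

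First I would lift $\bar w$ to a vector $w \in V_i \subseteq V$ mapping to $\bar w$ under $V_i \twoheadrightarrow W$. The crucial observation is the containment $\mathfrak g_w \subseteq \mathfrak g_{\bar w}$: if $x \in \mathfrak g$ kills $w$, then it kills the image $\bar w$ as well, because the quotient map is a $\mathfrak g$-module homomorphism. Hence $\mathfrak g_w \subseteq \mathfrak g_{\bar w} \subseteq \mathfrak z(\mathfrak g)$. This already exhibits a single vector $w \in V$ (nonzero, since it maps to the nonzero $\bar w$) with $\mathfrak g_w \subseteq \mathfrak z(\mathfrak g)$.

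Now I invoke Proposition~\ref{3.1.a} in contrapositive form. The remark right after Lemma~\ref{nonexcepnonfree} states precisely that if there is a nonzero $v \in V$ with $\mathfrak g_v \subseteq \mathfrak z(\mathfrak g)$, then $V$ is not exceptional — this is exactly the situation we have produced with $v = w$. Therefore $V$ is not exceptional, which is the contrapositive of the claim. I should be slightly careful that ``$\mathfrak g_w \subseteq \mathfrak z(\mathfrak g)$'' is the correct hypothesis for that remark: the remark as stated reads ``if there is a nonzero $v \in V$ such that $\mathfrak g_v \subset \mathfrak z(\mathfrak g)$'', and one checks that the inclusion need not be strict for the argument — what matters is $\mathfrak g_v \subseteq \mathfrak z(\mathfrak g)$, i.e.\ $\mathfrak g_v$ has \emph{no} non-central element, which is the negation of exceptionality at $v$; combined with Proposition~\ref{3.1.a} this forces non-exceptionality globally.

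The only mild obstacle is bookkeeping: making sure the lifted vector $w$ is genuinely nonzero (immediate, since its image $\bar w \ne 0$) and that the inclusion $\mathfrak g_w \subseteq \mathfrak g_{\bar w}$ is used in the right direction. There is no deep content here — it is a one-line consequence of functoriality of isotropy subalgebras under surjections plus Proposition~\ref{3.1.a}. A two-sentence proof should suffice in the paper, and I would not belabour it.
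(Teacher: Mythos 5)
Your proof is correct and is essentially the paper's argument stated in contrapositive form: the paper lifts an arbitrary nonzero $\bar v$ in a composition factor to $v\in V$ and uses the same key containment $\mathfrak g_v\subseteq\mathfrak g_{\bar v}$ to transfer a non-central element of $\mathfrak g_v$ up to $\mathfrak g_{\bar v}$. The appeals to Proposition~\ref{3.1.a} and to genericity are superfluous — the existence of a single nonzero $\bar w$ with $\mathfrak g_{\bar w}\subseteq\mathfrak z(\mathfrak g)$ is immediate from Definition~\ref{excpmod} — but they do no harm.
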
\noindent
\begin{proof}
Let $\,V\,=\,V_0\,\supset\,V_1\,\supset\,V_2\,\supset\,\cdots\,\supset\, 
V_k\,\supset \,\{ 0\}\,$
be a composition series of $\,V.\,$ Take $\,0\neq\bar{v}\in\,V_i/V_{i+1}\,$
and let $\,v\in V_i\,$ be its preimage. Then $\,\mathfrak
g_v\,\subset\,\mathfrak g_{\bar{v}}\,$. The result follows. 
\end{proof}
\medskip

\subsection{A Necessary Condition}\label{necess}

Let $\,\pi\,:\,G\longrightarrow\GL(V)\,$ be a non-trivial faithful
rational representation of $\,G\,$ and 
let $\,d\pi\,:\,{\mathfrak g}\,\longrightarrow\,{\mathfrak{gl}}\;(V)\,$
be the differential of $\,\pi\,$ at $\,e\in G$.
Suppose that $\,\ker d\pi\,\subseteq\,{\mathfrak z}
({\mathfrak g})\,$ and assume that $\,V\,$ is an exceptional 
$\,{\mathfrak g}$-module.

Under these hypotheses, we want to study highest weights of $\,V.\,$ 
(We do not assume that $\,V\,$ is irreducible.)
Note that if $\,V\,$ has no $\,1$-dimensional $\,\mathfrak g$-submodules, then 
$\,\dim {\mathfrak g}_v\,<\,\dim {\mathfrak g}\,$ for each nonzero $\,v\in V$.

First we aim to find an upper bound for $\,\dim\,V.\,$

\begin{proposition} \label{prev}
Given $\,x\,\in\,{\mathfrak g},\,$ define $\,\varphi_x\,:\,G\,\times\,V^x\,
\longrightarrow\,V\,$ by setting $\,\varphi_x(g,v)\,=\,g\cdot v.\,$ 
Then for any $\,v\in \Im\varphi_x,\;\dim \varphi_x^{-1}(v) \,\geq\, \dim
C_G(x)$, where $\,C_G(x)\,=\,\{\,g\in G\,/\,(\Ad\,g)(x)\,=\,x\,\}$.
\end{proposition}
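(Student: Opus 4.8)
The plan is to produce, inside any nonempty fibre of $\varphi_x$, a free orbit of the centralizer $C_G(x)$, and then read off the dimension bound from freeness.

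The first step is to observe that $C_G(x)$ stabilizes the subspace $V^x$. I would use the standard compatibility between the $G$-action on $V$ and the $\mathfrak g$-action obtained from it by differentiation, namely $g\cdot(y\cdot w)\,=\,((\Ad\,g)\cdot y)\cdot(g\cdot w)$ for all $g\in G$, $y\in\mathfrak g$ and $w\in V$ (this follows from ${\rm d}\pi\circ\Ad(g)\,=\,\Ad(\pi(g))\circ{\rm d}\pi$). Taking $y=x$ and $g\in C_G(x)$, so that $(\Ad\,g)\cdot x=x$, gives $x\cdot(g\cdot w)=g\cdot(x\cdot w)$; hence $w\in V^x$ forces $g\cdot w\in V^x$. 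Consequently $C_G(x)$ acts on the variety $G\times V^x$ via $h\cdot(g,w)=(gh^{-1},\,h\cdot w)$.

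Next I would record two elementary properties of this action. It is free: from $(gh^{-1},h\cdot w)=(g,w)$ the first coordinate already forces $gh^{-1}=g$, i.e. $h=e$. And $\varphi_x$ is invariant under it, since $\varphi_x(gh^{-1},h\cdot w)=(gh^{-1})\cdot(h\cdot w)=g\cdot w=\varphi_x(g,w)$. Therefore, for $v\in\Im\varphi_x$, the fibre $\varphi_x^{-1}(v)$ is a nonempty closed subvariety of $G\times V^x$ which is a union of $C_G(x)$-orbits.

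Finally, I would fix a point $(g_0,w_0)\in\varphi_x^{-1}(v)$ and consider the orbit map $C_G(x)\to G\times V^x$, $h\mapsto(g_0h^{-1},\,h\cdot w_0)$. By freeness this morphism is injective, so its image has dimension equal to $\dim C_G(x)$ (an injective morphism does not lower dimension on any irreducible component; equivalently, all of its fibres are single points). Since this image is contained in $\varphi_x^{-1}(v)$, we obtain $\dim\varphi_x^{-1}(v)\ge\dim C_G(x)$, as claimed. There is essentially no genuine obstacle: the only point needing a word of care is the dimension count for the injective orbit map, which is routine, and the argument requires no connectedness or reducedness hypothesis on $C_G(x)$ since it can be applied component by component.
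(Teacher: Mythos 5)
Your proof is correct, but it takes a genuinely different route from the paper's. The paper fibres $\,\varphi_x^{-1}(v)\,$ over $\,V^x\,$ via the second projection: the image of $\,{\rm pr}_2\,$ is $\,G\cdot v\cap V^x\,$, every fibre of $\,{\rm pr}_2\,$ is a coset of $\,G_v\,$, so $\,\dim\varphi_x^{-1}(v)\,=\,\dim G_v\,+\,\dim(G\cdot v\cap V^x)\,$; it then bounds the base from below by the orbit $\,C_G(x)\cdot v_1\,$ (where $\,v=h\cdot v_1\,$ with $\,v_1\in V^x\,$), whose dimension is $\,\dim C_G(x)-\dim(G_{v_1}\cap C_G(x))\,$, and the stabilizer terms cancel because $\,G_v\cong G_{v_1}\,$. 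You instead exhibit a single free $\,C_G(x)$-orbit inside the fibre itself, via the twisted action $\,h\cdot(g,w)=(gh^{-1},\,h\cdot w)\,$ on $\,G\times V^x\,$, and conclude by the (characteristic-free) fact that an injective morphism preserves dimension. Both arguments hinge on the same key observation — that $\,C_G(x)\,$ stabilizes $\,V^x\,$, which you prove explicitly from $\,{\rm d}\pi\circ\Ad(g)=\Ad(\pi(g))\circ{\rm d}\pi\,$ and which the paper uses tacitly when asserting $\,C_G(x)\cdot v_1\subseteq G\cdot v\cap V^x\,$. What your version buys is economy: no exact fibre-dimension formula for $\,{\rm pr}_2\,$, no bookkeeping with $\,G_{v_1}\cap C_G(x)\,$, and the freeness of the action (forced already by the first coordinate) makes the dimension count immediate; what the paper's version buys is the sharper intermediate identity $\,\dim\varphi_x^{-1}(v)=\dim G_v+\dim(G\cdot v\cap V^x)\,$, though it is not needed for the stated inequality.
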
\noindent
\begin{proof} 
First note that given $\,v\in \Im\varphi_x$, there exist
$\,h\,\in\,G\,$ and $\,v_1\,\in\,V^x\,$ such that $\,v\,=\,h\cdot
v_1\,$ (we set $\,h=e\,$ for $\,v\in V^x)$. 
By definition, 
\[
\varphi_x^{-1}(v) \,=\,\{\,(g,\,w)\,\in\,G\times V^x\,/\,g\cdot
w\,=\,v\,\}\,.
\]
Let $\,{\rm pr}_2\,:\,\varphi_x^{-1}(v)\,\longrightarrow\,V^x\,$ be the
projection map. Then its image is $\,G\cdot v\,\cap\,V^x$.
For $\,w\in\, {\rm pr}_2(\varphi_x^{-1}(v))$, if
$\,(g,\,w),\;(h,\,w)\,\in\,{\rm pr}_2^{-1}(w)\,\subset\,\varphi_x^{-1}(v)$,
then $\,g\cdot w\,=\,v\,=\,h\cdot w\,$ which implies $\,h\,\in\,G_v\,g$.
Hence, for each $\,w\in\; {\rm pr}_2(\varphi_x^{-1}(v))$, there exists
$\,g\,\in\,G\,$ such that $\,g\cdot w\,=\,v\,$ and
$\,(\,G_v\,g,\,w)\,\subset\,\varphi_x^{-1}(v)$, so that 
$\,\dim\, {\rm pr}_2^{-1}(w)\,=\,\dim\, 
\{(x\,g,\,w)\,/\,x\in G_v\}\,=\,\dim\, G_v$. Hence all fibres of
$\,{\rm pr}_2|_{\varphi_x^{-1}(v)}\,$ have the same dimension
equal to $\,\dim\, G_v$. Therefore,
\begin{equation} \label{var1}
\dim\, \varphi_x^{-1}(v)\,=\,\dim\, G_v\,+\,\dim\, (G\cdot v\,\cap\,V^x).
\end{equation}
Now $\,(G\cdot v\,\cap\,V^x)\,\supseteq\,C_G(x)\cdot v_1\,$ yielding
\begin{equation}\label{var2}
\begin{split}
\dim\, (G\cdot v\,\cap\,V^x)\, & \geq\,\dim\, (C_G(x)\cdot v_1) \\
& =\, \dim\,
C_G(x)\,-\,\dim\, (G_{v_1}\,\cap\,C_G(x)).
\end{split}
\end{equation}
Combining~(\ref{var1}) and~(\ref{var2}) we get
\begin{eqnarray} 
\dim\, \varphi_x^{-1}(v) & \geq & \dim\, G_v\,+\,\dim\, C_G(x)\,-\,
\dim\, (G_{v_1}\,\cap\,C_G(x)) \nonumber\\
& \geq & \dim\, G_v\,-\,\dim\, G_{v_1}\,+\,\dim\, C_G(x)\,=\,\dim\,
C_G(x), \nonumber
\end{eqnarray}
as $\,G_v\,\cong\,G_{v_1}$. This proves the proposition.
\end{proof}
\medskip

\begin{proposition} \label{dimxvxg}
Given an exceptional $\,{\mathfrak g}$-module $\,V$,
there exists a non-central $\,x\,\in\,{\mathfrak g}\,$ such that 
either $\,x^{[p]}\,=\,x\,$ or $\,x^{[p]}\,=\,0\,$ and 
\begin{equation}\label{eqa}
\dim x\,V\,\leq\,\dim\,G\,-\,\dim\,C_G(x)\,\leq\,|R|\,.
\end{equation} 
\end{proposition}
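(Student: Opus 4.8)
The plan is to produce the required element $x$ by combining the exceptionality hypothesis with the finiteness of the relevant conjugacy classes, and then to read off the dimension bound from Proposition~\ref{prev}.

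First I would reduce, at a generic point, to a stabilizing element that is either toral or $p$-nilpotent. Fix any nonzero $v\in V$. By hypothesis $\mathfrak g_v\not\subseteq\mathfrak z(\mathfrak g)$, and by Remark~\ref{rem2}(1) either $\mathfrak g_v$ is a torus or it contains a nonzero $x$ with $x^{[p]}=0$. In the first case the $p$-mapping of the torus $\mathfrak g_v$ is nonsingular (a $p$-semisimple element killed by $[p]$ is zero), so by Theorem~\ref{3.6}(1) $\mathfrak g_v$ has a basis of toral elements; since $\mathfrak g_v\not\subseteq\mathfrak z(\mathfrak g)$, at least one of these toral elements is non-central. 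In the second case the element $x$ with $x^{[p]}=0$ is automatically non-central, since $\mathfrak z(\mathfrak g)$ is toral by Lemma~\ref{centretoral} and hence contains no nonzero $p$-nilpotent element. Thus every nonzero $v$ lies in $V^x$ for some non-central $x\in\mathfrak g$ with $x^{[p]}=x$ or $x^{[p]}=0$.

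Next I would use finiteness to pin down a single $x$. By Lemma~\ref{toralbase} there are only finitely many $(\Ad\,G)$-classes of toral elements, and by Theorem~\ref{badhs} only finitely many of $p$-nilpotent elements, so fix representatives $x_1,\dots,x_N$ of the non-central toral and non-central nilpotent classes. If $x'=(\Ad\,g)(x)$ then $V^{x'}=g\cdot V^x$, so $G\cdot V^{x_i}$ depends only on the class of $x_i$, and by the previous paragraph $V=\bigcup_{i=1}^N G\cdot V^{x_i}$ (the point $0$ lies in every $V^{x_i}$). Each $G\cdot V^{x_i}$, being the image of $G\times V^{x_i}$ under a morphism, is constructible; since $V$ is irreducible, $\overline{G\cdot V^{x}}=V$ for $x:=x_i$ with a suitable $i$, i.e.\ $\varphi_x:G\times V^x\to V$ is dominant.

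Finally I would extract the inequality. Since $G$ is connected and $V^x$ is a linear subspace, $G\times V^x$ is irreducible; by the theorem on dimension of fibres a generic $v\in V$ satisfies $\dim\varphi_x^{-1}(v)=\dim G+\dim V^x-\dim V$, while Proposition~\ref{prev} gives $\dim\varphi_x^{-1}(v)\ge\dim C_G(x)$ for every $v$ in the (dense) image. Combining these and using $\dim xV=\dim V-\dim V^x$ (rank--nullity for $d\pi(x)$) yields $\dim xV\le\dim G-\dim C_G(x)$. Since $x$ is either $p$-semisimple (when toral, by Proposition~\ref{properties}(1)) or $p$-nilpotent, Propositions~\ref{othercases} and~\ref{semicases} give $\dim C_G(x)\ge\ell$, whence $\dim G-\dim C_G(x)\le\dim G-\ell=|R|$. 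The step I expect to be the main obstacle is the covering/irreducibility argument: one must check that the finitely many constructible sets $G\cdot V^{x_i}$ genuinely exhaust $V$ and then invoke irreducibility of $V$ to force one of them to be dense — this is precisely where exceptionality is used, and it is essential that only finitely many conjugacy classes occur.
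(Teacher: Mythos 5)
Your proposal is correct and follows essentially the same route as the paper's proof: reduce at each point to a non-central toral or $p$-nilpotent stabilizing element, use the finiteness of toral and nilpotent $(\Ad\,G)$-classes to cover $V$ by finitely many constructible sets $G\cdot V^{x_i}$ so that one $\varphi_x$ is dominant, and then combine the fibre-dimension theorem with Proposition~\ref{prev} and Propositions~\ref{othercases}, \ref{semicases}. The only cosmetic difference is that you phrase the density step via irreducibility of $V$ rather than via the observation that a finite union of constructible sets covering $V$ must contain a dense one; these are the same argument.
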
\noindent
\begin{proof}
Let $\,v\,\in\,V$. By Remark~\ref{rem2}(1), 
the stabilizer $\,{\mathfrak g}_v\,$ is either toral or contains a
non-zero element $\,x\,$ such that $\,x^{[p]}\,=\,0\,$.

Let $\,{\cal T}_1^*\,=\,\{\,x\in\,{\mathfrak g}\setminus\{ 0\}\,/\,x^{[p]}\,
=\,x\,\}\,$ and $\,{\cal N}_1^*\,
=\,\{\,x\in\, {\mathfrak g}\setminus\{ 0\}\,/\,x^{[p]}\,=\,0\,\}$.

By Lemma~\ref{centretoral}, $\,{\mathfrak z}(\mathfrak g)\,$ is toral,
hence consists of semisimple elements. 
If $\,{\mathfrak g}_v\,$ contains a nilpotent element $\,x$,  
then $\,x\not\in{\mathfrak z}(\mathfrak g)\,$. If $\,{\mathfrak
g}_v\,$ contains no nilpotent elements, then the $\,p$-mapping
$\,[p]\,$ is nonsingular on $\,{\mathfrak g}_v\,$, whence
$\,{\mathfrak g}_v\,$ is toral. Then by Theorem~\ref{3.6} 
$\,{\mathfrak g}_v\,$ is spanned by toral elements. Thus, $\,{\mathfrak g}_v\,$
contains at least one non-central toral element, as $\,V\,$ is
exceptional.  
Therefore, for each $\,v\in V,\,$ there is a non-central
$\,x\in \,{\cal N}_1^*\,\cup\,{\cal T}_1^*\,$ such that $\,v\in V^x$. Hence
\[
V\,=\,\bigcup_{x\in ({\cal N}_1^*\cup {\cal T}_1^*)\setminus\mathfrak
z(\mathfrak g)}\,V^x.
\]
If $\,x^{[p]}\,=\,x$, then there exists $\,y\in G\,$ such
that $\,(\Ad\,y)(x)\,\in\,{\mathfrak t}\,$ (Proposition~\ref{anytoral}).
By Lemma~\ref{toralbase}(2), $\,{\mathfrak t}\cap 
{\cal T}_1^*\,$ is finite. Combining this with Theorem~\ref{badhs}, we
obtain that 
there are finitely many nilpotent and toral conjugacy classes in  
$\,{\mathfrak g}$. Let $\,n_1,\,n_2,\,\ldots\,,n_s\,$ 
(resp. $\,t_1,\,t_2,\,\ldots\,,t_{\ell}\,$) be 
representatives of the non-central conjugacy classes in $\,{\cal
N}_1^*\,$ (resp.,
$\,{\cal T}_1^*$). As $\,{\mathfrak g}_v\,$ contains a non-central
element from $\,{\cal N}_1^*\cup {\cal T}_1^*$, we have 
$\,v\,\in\,\left(\bigcup_{j=1}^{s}\;G\cdot V^{n_j}\right)\,\cup\,
\left(\bigcup_{i=1}^{\ell}\;\,G\cdot V^{t_i}\right)$. Therefore,
\begin{equation} \label{uniongti}
V\,=\,\left(\bigcup_{j=1}^{s}\;G\cdot V^{n_j}\right)\,\cup\,
\left(\bigcup_{i=1}^{\ell}\;\,G\cdot V^{t_i}\right)
\end{equation}

For any $\,x\,\in\,{\mathfrak g}$, the set $\,G\cdot V^{x}\,$ is the image of
the morphism
\[
\varphi_x\,:\,G\,\times\,V^x\,\longrightarrow\,V,\qquad\;(g,v)\,
\longmapsto\,g\cdot v\,.
\]
In particular, each $\,G\cdot V^{x}\,$ is constructible 
\cite[4.4]{hum2}. This implies that $\,G\cdot V^{x}\,$
contains an open dense subset of $\,\overline{G\cdot V^{x}}\,$ 
\cite[Cor.10.2]{bor2}. 
Hence at least one of the subsets $\,G\cdot V^{n_j},\;G\cdot V^{t_i}\,$ is 
Zariski dense in $\,V\,$ (note that the number of the subsets in the
decomposition~\eqref{uniongti} is finite). 
It follows that $\,V\,$ contains a non-empty Zariski dense subset of the form 
$\,G\cdot V^{x}.\,$ So $\,\varphi_x\,$ is dominant
for some non-central $\,x\in{\cal N}_1^*\cup {\cal T}_1^*$.

Now, using \cite[3.1]{hum2} and the theorem on dimension of fibres of
a morphism (see \cite[Theorem AG.10(ii)]{bor2}), we have
\begin{equation} \label{first}
\dim {\mathfrak g}\,+\,\dim V^x\,-\,\min_{v\in V}\;\,\dim\,  
\varphi^{-1}_x(v) \,=  \, \dim V.
\end{equation}
By Proposition~\ref{prev}, for any $\,v\in\, \Im\varphi_x,
\;\dim\, \varphi_x^{-1}(v)\,\geq\,\dim C_G(x)$, forcing
\begin{equation} \label{second}
\dim {\mathfrak g}\,+\,\dim V^x\,-\,\dim C_G(x)\,\geq\, \dim V.
\end{equation}
As $\,\dim{\mathfrak g}=\dim G\,$ and $\,\dim x\cdot V=\dim V\ -\
\dim V^x,\,$ we get $\,\dim x\cdot V\,\leq\,\dim\,G\,-\,\dim\,C_G(x)\,$.

Now if $\,x\,$ is toral, then it is semisimple by
Proposition~\ref{properties}(1). In this case Proposition~\ref{semicases}
applies. If $\,x\,$ is nilpotent we
apply Proposition~\ref{othercases}. Thus, in both cases the result follows.
\end{proof}


From now on we assume that $\,G\,$ is a simply connected simple
algebraic group over $\,K\,$. 
Fix a maximal torus $\,T\,$ of $\,G\,$ and 
a basis $\,\Delta\,$ of simple roots in the root system $\,R\,$ of
$\,G\,$ (with respect to $\,T\,$). Let $\,R^+\,$ denote the system of positive
roots with respect to $\,\Delta\,$ and 
let $\,B\,$ be the Borel subgroup
of $\,G\,$ generated by $\,T\,$ and the $\,1$-parameter unipotent
subgroups $\,U_{\alpha}\,=\,\{\,x_{\alpha}(t)\,/\,t\in K\,\}\,$,
where $\,\alpha\in R^+\,$. 
Let $\,\tilde{\alpha}\,$ denote the maximal root in $\,R^+.\,$
Decompose $\,\mathfrak g\,=\,{\cal L}(G)\,$ into
root spaces with respect to $\,T\,$ giving a Cartan decomposition
\[
\mathfrak g\,=\,\mathfrak t\,\bigoplus\,\sum_{\alpha\in R}\,K\,e_{\alpha}
\]
where $\,\mathfrak t\,=\,\Lie (T)\,$ (cf. Section\ref{gennot}).
By Section~\ref{structliealg},
$\,\mathfrak g\,=\,({\rm Ad}\,G)\cdot\mathfrak b,\,$
where $\,\mathfrak b\,=\,{\cal L}(B).\,$ 
Put $\,{\cal E}\,=\,(\Ad G)\cdot e_{\tilde{\alpha}}\cup\{0\}.\,$
It is well-known (cf.~\cite{kraft},~\cite{pre4}) that $\,{\cal E}\,$
is a Zariski closed, conical subset of $\,\mathfrak g.\,$

The following result is well-known in the characteristic zero
case. It was generalized by A. Premet in~\cite{pre4} to the case
$\,p>0$. Here we reproduce the proof for the reader's convenience.
\begin{lemma}{\rm\cite[Lemma 2.3]{pre4}}\label{varep}
Suppose $\,p\,$ is non-special for $\,G$. 
Let $\,Z\,$ be a Zariski closed, conical, (${\rm Ad}\,G$)-invariant
subset of $\,\mathfrak g.\,$ Then either $\,Z\subseteq{\mathfrak z}
(\mathfrak g)\,$ or $\,{\cal E}\subseteq Z\,$.
\end{lemma}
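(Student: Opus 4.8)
The plan is to reduce to the case where $G$ is simple and then exploit the irreducibility of the root system together with the finiteness of the set of nilpotent orbits. First I would observe that, since $Z$ is conical and $(\mathrm{Ad}\,G)$-invariant, its set of nilpotent elements $Z\cap\mathcal N$ is a Zariski closed, conical, $(\mathrm{Ad}\,G)$-invariant subset of $\mathcal N$. Also $Z$ is closed under the Jordan--Chevalley decomposition in the sense that if $x\in Z$ then its semisimple part $x_s$ and nilpotent part $x_n$ both lie in $\overline{(\mathrm{Ad}\,G)\cdot x}\subseteq Z$ (the semisimple part arises as a limit of scalings/conjugates, and the nilpotent part then follows); this is where I expect a little care is needed in positive characteristic, but for $p$ non-special the refined decomposition of Theorem~\ref{3.6}(2) plus Proposition~\ref{anytoral} make this workable. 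Dichotomy: either $Z$ contains a non-central toral element, or it does not.

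In the first case, I would argue that $Z$ must then contain a \emph{regular} nilpotent element, i.e. one in $(\mathrm{Ad}\,G)\cdot e_{\tilde\alpha}$. The idea: take a non-central toral $t\in Z$; conjugating by $G$ (Proposition~\ref{anytoral}) we may assume $t\in\mathfrak t$, $t\neq 0$ modulo $\mathfrak z(\mathfrak g)$. Since $Z$ is conical and $(\mathrm{Ad}\,T)$-invariant and closed, for any root vector $e_\alpha$ with $\alpha(t)\neq 0$ the element $t+ s\,e_\alpha$ lies in $Z$ for all $s$, and letting $s\to\infty$ after rescaling shows $e_\alpha\in Z$; then $(\mathrm{Ad}\,G)$-invariance gives the whole root-vector orbit, and by irreducibility of $R$ every $e_\beta$ ($\beta\in R$) is $(\mathrm{Ad}\,G)$-conjugate to some such $e_\alpha$ or reachable by a chain of such moves. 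From a single long root vector $e_{\tilde\alpha}$ in $Z$ we are done. The only subtlety is guaranteeing that \emph{some} root $\alpha$ has $\alpha(t)\neq 0$: this holds because $t\notin\mathfrak z(\mathfrak g)$ and $\mathfrak z(\mathfrak g)$ is exactly the intersection of the kernels of the roots on $\mathfrak t$ (which uses that $[e_\alpha,e_{-\alpha}]\neq 0$, as in Lemma~\ref{centretoral}).

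In the second case, $Z\cap(\mathcal N\setminus\{0\})$ is a nonempty (unless $Z\subseteq\mathfrak z(\mathfrak g)$, the first alternative of the lemma) closed conical $(\mathrm{Ad}\,G)$-invariant set consisting of nilpotents; here I would use that there are only finitely many nilpotent orbits (Theorem~\ref{badhs}), so $Z\cap\mathcal N$ is a finite union of orbit closures $\overline{(\mathrm{Ad}\,G)\cdot n_i}$, and I need to show the regular orbit is among them. The key fact is that the regular nilpotent orbit $(\mathrm{Ad}\,G)\cdot e_{\tilde\alpha}$ is the unique \emph{dense} orbit in $\mathcal N$, and in fact the nilpotent cone $\mathcal N$ is irreducible with $\mathcal E=\overline{(\mathrm{Ad}\,G)\cdot e_{\tilde\alpha}}$ being a component of it (this is the ``well-known'' statement cited before the lemma, valid for $p$ non-special); but a closed $(\mathrm{Ad}\,G)$-invariant set which is a union of nilpotent orbit closures and is not $\{0\}$ must contain a nonzero nilpotent, and by building a chain $0\neq x\rightsquigarrow e_{\tilde\alpha}$ via degeneration inside $\overline{(\mathrm{Ad}\,G)\cdot x}$ one reaches the minimal nilpotent orbit and then, running the argument of case one applied to a non-central nilpotent (using $(\mathrm{Ad}\,B)$-translates $x+\sum s_\alpha e_\alpha$ and rescaling limits), one pumps up to the regular orbit. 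The main obstacle, and the place I would spend the most effort, is precisely this last point: showing that a \emph{nonzero} closed conical $G$-stable subset of $\mathcal N$ necessarily contains $e_{\tilde\alpha}$ up to conjugacy — equivalently that $\mathcal E$ is the smallest such set containing any nonzero nilpotent — which is where one genuinely needs $p$ non-special (so that the ``exponential''-type $\mathfrak{sl}_2$-triples and the $\mathrm{Ad}\,G$-orbit structure on $\mathcal N$ behave as in characteristic zero) and where the reference to \cite{pre4} does the real work.
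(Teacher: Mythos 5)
Your proposal contains a conceptual error that derails its second half: $e_{\tilde\alpha}$ is a \emph{highest}-root vector, so $({\rm Ad}\,G)\cdot e_{\tilde\alpha}$ is the \emph{minimal} nonzero nilpotent orbit --- which is exactly why ${\cal E}=({\rm Ad}\,G)\cdot e_{\tilde\alpha}\cup\{0\}$ is already Zariski closed --- and not the regular (dense) orbit in $\,{\cal N}$. The lemma therefore asks for something much weaker than what you set out to prove: one must show that $\,Z\,$ contains the \emph{minimal} orbit, which is reached by degenerating \emph{downwards} inside the closed set $\,Z$. Your plan to ``pump up to the regular orbit'' cannot work, since a closed $({\rm Ad}\,G)$-invariant set contains orbit closures (hence smaller orbits), never larger ones. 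Relatedly, the Jordan-decomposition reduction is shaky: $\,x_n\,$ need not lie in $\,\overline{({\rm Ad}\,G)\cdot x}\,$ (the invariant functions are constant on that closure, which pins down the semisimple part), and the toral/nilpotent dichotomy is not needed at all.

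The step you defer to the reference is precisely the content of the proof, and it is elementary. One conjugates $\,z\in Z\,$ into $\,\mathfrak b\,$ and writes $\,z=t+\sum_{\alpha\in{\rm Supp}(z)}n_\alpha e_\alpha$. If $\,{\rm Supp}(z)=\emptyset\,$ for every such $\,z$, invariance under the root subgroups $\,x_\alpha(t)\,$ forces $\,({\rm d}\alpha)(z)=0\,$ for all $\,\alpha$, i.e.\ $\,Z\subseteq\mathfrak z(\mathfrak g)$. Otherwise one chooses a cocharacter $\,\lambda\,$ with $\,m(\gamma)>0\,$ pairwise distinct on $\,R^+\,$ and considers the curve $\,t^r({\rm Ad}\,\lambda(t^{-1}))\cdot z\,$ with $\,r=\max\{m(\alpha)\,/\,\alpha\in{\rm Supp}(z)\}$: conicality keeps it in $\,Z\,$ and closedness puts its limit $\,z_0\in Ke_\delta\setminus\{0\}\,$ in $\,Z$. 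If $\,\delta\,$ is long, the Weyl group carries $\,e_\delta\,$ to $\,e_{\tilde\alpha}$. If $\,\delta\,$ is short --- and here your case 1 has the same gap, since you never explain how to pass from a short root vector to a long one --- one takes $\,\gamma\in R^+\,$ with $\,[e_\gamma,e_{\alpha_0}]\neq 0\,$ (this is where non-specialness of $\,p\,$ enters) and repeats the contraction with $\,{\rm Ad}\,x_\gamma(t^{-1})\,$ to land on the long root vector $\,e_{\alpha_0+q\gamma}$. Your observation that a non-central toral $\,t\,$ yields some $\,e_\alpha\in Z\,$ is a correct special case of this contraction, but by itself it does not finish the proof.
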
\noindent
\begin{proof}
If $\,x\in\mathfrak b\,$ then $\,x\,=\,t\,+\,\sum_{\alpha\in R^+}\,
n_{\alpha}\,e_{\alpha}\,$ where $\,t\in \mathfrak t\,$ and 
$\,n_{\alpha}\in K$. Define $\,{\rm Supp}(x)\,=\,\{\alpha\in R^+\,/\,
n_{\alpha}\neq 0\}$. 
As $\,\mathfrak g\,=\,({\rm Ad}\,G)\cdot\mathfrak b\,$ and $\,Z\,$ is 
$\,({\rm Ad}\,G)$-stable, $\,Z\cap \mathfrak b
\neq\{ 0\}$. If $\,Z\cap \mathfrak b \subseteq \mathfrak t$, then 
$\,({\rm Ad}\,x_{\alpha}(t))\cdot z\,=\,z\,$ for all $\,\alpha\in R^+,\,
t\in K,\,$ and $\,z\in Z\cap \mathfrak b $. It follows that 
$\,({\rm d}\alpha)(z)\,=\,0\,$ for every $\,\alpha\in R$. In other words,
$\,Z\cap \mathfrak b \subseteq\mathfrak z(\mathfrak g)\,$. As
$\,Z\,=\,({\rm Ad}\,G)\cdot Z\cap \mathfrak b\,$ we get $\,Z\subseteq 
\mathfrak z(\mathfrak g)$.

Now suppose that $\,{\rm Supp}(z)\,\neq\, \emptyset\,$ for some 
$\,z\in Z\cap \mathfrak b$. Let $\,X_*(T)\,$ denote the lattice of one 
parameter subgroups in $\,T\,$. If $\,\lambda(t)\in X_*(T)\,$ and
$\,\gamma\in R$, then $\,\gamma(\lambda(t))\,=\,t^{m(\gamma)}\,$ for some 
$\,m(\gamma)\in \mathbb Z.\,$ A standard argument shows that there
exists $\,\lambda(t)\,\in\,X_*(T)\,$ such that $\,m(\gamma)\,>\,0\,$
for every $\,\gamma\in R^+\,$ and the numbers
$\,m(\gamma),\,\gamma\in R^+\,$ are pairwise distinct. Let 
$\,r\,=\,\max\,\{m(\alpha)\,/\,\alpha\,\in\,{\rm Supp}(z)\}.\,$  As 
$\,Z\,$ is conical, 
\[
\{\,t^r\cdot ({\rm Ad}\,\lambda(t^{-1}))\cdot z\,/\,t\in
K^*\,\}\,\subseteq \,Z\,.
\]
By construction, there are $\,z_0,\,z_1,\ldots,\,z_r\,\in\,\mathfrak b\,$ 
such that $\,z_0\in Ke_{\delta}\setminus\{ 0\}\,$ for some
$\,\delta\in R^+\,$ and
\[
t^r\cdot ({\rm Ad}\,\lambda(t^{-1}))\cdot
z\,=\,z_0\,+\,tz_1\,+\,\cdots\,+\,t^r z_r\,.
\]
As $\,Z\,$ is Zariski closed we must have $\,z_0\in Z.\,$ If
$\,\delta\,$ is long, there is $\,w\in W\,$ such that
$\,w\delta\,=\,\tilde{\alpha}\,$. It follows that $\,({\rm
Ad}\,N_G(T))\cdot z_0\,$ contains $\,e_{\tilde{\alpha}}\,$. Thus in this
case $\,{\cal E}\subseteq Z.\,$ If $\,\delta\,$ is short, there is  
$\,w\in W\,$ such that $\,w\delta\,=\,{\alpha}_0\,$. So we may assume
that $\,\delta\,=\,{\alpha}_0\,$. By \cite{hogew}, 
$ \,\tilde{\alpha}\,\neq \,{\alpha}_0\,$ implies that $\,\mathfrak
g\,$ is simple. Hence, there is $\,\gamma\in R^+\,$ such that 
$\,[e_{\gamma},\,e_{\alpha_0}]\,\neq\,0$. Obviously, all roots in
$\,\alpha_0\,+\,\mathbb N\gamma\,$ are long. Let
$\,q\,=\,\max\,\{\,i\in\mathbb N\,/\,\alpha_0\,+\,i\gamma\in R\,\}.\,$
Applying the same argument as above to the subset 
\[
\{\,t^q\cdot ({\rm Ad}\,x_{\gamma}(t^{-1}))\cdot z\,/\,t\in
K^*\,\}\,
\]
we get $\,Z\cap Ke_{\alpha_0+q\gamma}\neq\,\{ 0\}.\,$ Since 
$\,\alpha_0+q\gamma\,$ is long, we are done.
\end{proof}
\begin{corollary}\label{maxcent}
Let $\,V\,$ be any rational $\,G$-module and let $\,x\in{\mathfrak
g}$. The maximum value of $\,\dim\,V^x,\;x\not\in 
{\mathfrak z(\mathfrak g)}$, is $\,\dim\,V^{e_{\tilde{\alpha}}}$. 
\end{corollary}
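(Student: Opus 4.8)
The plan is to reduce the statement to Lemma~\ref{varep} by considering the function $x \mapsto \dim V^x$ on the set of non-central elements and exploiting semicontinuity. First I would observe that for a fixed rational $G$-module $V$, the function $x \mapsto \dim x\cdot V$ is lower semicontinuous on $\mathfrak{g}$ (the locus where $\operatorname{rank}(d\pi(x)) \geq n$ is Zariski open), hence $x \mapsto \dim V^x = \dim V - \dim x\cdot V$ is upper semicontinuous. Therefore, for each integer $m$, the set $Z_m := \{\, x \in \mathfrak{g} \mid \dim V^x \geq m \,\}$ is Zariski closed. Moreover each $Z_m$ is conical, since $V^{tx} = V^x$ for $t \in K^*$, and it is $(\operatorname{Ad} G)$-invariant, because $V^{(\operatorname{Ad} g)x} = g\cdot V^x$ has the same dimension as $V^x$.

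Next I would pick $m^* := \dim V^{e_{\tilde\alpha}}$ and examine $Z_{m^*}$. Since $e_{\tilde\alpha} \in Z_{m^*}$ and $e_{\tilde\alpha}$ is a root vector, it is not in $\mathfrak{z}(\mathfrak{g})$ (root vectors act nontrivially under $T$; cf. the proof of Lemma~\ref{centretoral}), so $Z_{m^*} \not\subseteq \mathfrak{z}(\mathfrak{g})$. Applying Lemma~\ref{varep} (valid since $p$ is non-special for $G$), we conclude $\mathcal{E} \subseteq Z_{m^*}$. Now suppose, for contradiction, that some non-central $x \in \mathfrak{g}$ satisfies $\dim V^x > m^* = \dim V^{e_{\tilde\alpha}}$, i.e. $x \in Z_{m^*+1}$. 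Then $Z_{m^*+1}$ is a Zariski closed, conical, $(\operatorname{Ad} G)$-invariant subset of $\mathfrak{g}$ not contained in $\mathfrak{z}(\mathfrak{g})$, so Lemma~\ref{varep} again forces $\mathcal{E} \subseteq Z_{m^*+1}$; in particular $e_{\tilde\alpha} \in Z_{m^*+1}$, meaning $\dim V^{e_{\tilde\alpha}} \geq m^* + 1$, a contradiction. Hence $\dim V^x \leq \dim V^{e_{\tilde\alpha}}$ for all non-central $x$, which is the claim.

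The main obstacle — and the point that needs care — is verifying that the relevant level sets $Z_m$ are genuinely Zariski closed and conical; the closedness is the semicontinuity of the rank of the linear map $d\pi(x)$ as $x$ varies in $\mathfrak{g}$, which is standard but should be stated cleanly, while the conical and $\operatorname{Ad} G$-invariance properties are immediate. Everything else is a direct invocation of Lemma~\ref{varep}, so once the setup is in place the argument is short. One should also note explicitly that the maximum is actually attained — it is attained at $e_{\tilde\alpha}$ itself, since $e_{\tilde\alpha}$ is non-central — so "the maximum value" in the statement is well-posed.
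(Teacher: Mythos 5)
Your proposal is correct and follows essentially the same route as the paper: the paper defines $X_d=\{x\in\mathfrak g\,/\,\dim x\cdot V\le d\}$ (which is your $Z_m$ with $m=\dim V-d$), observes these are conical, $(\Ad G)$-invariant and Zariski closed, and applies Lemma~\ref{varep} to the extremal level set. Your version merely spells out the final contradiction step that the paper leaves as ``the result follows.''
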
\noindent
\begin{proof}
Given $\,d\in\mathbb Z^+,\,$ let 
$\,X_d\,=\,\{\,x\in\mathfrak g\,/\,\dim\,x\cdot V\,\leq\,d\}.\,$
It is clear that $\,X_d\,$ is ($\Ad\,G$)-invariant
and a conical Zariski closed subset of $\,\mathfrak g$. 
By Lemma~\ref{varep}, either $\,X_d\subseteq {\mathfrak z}({\mathfrak
g})\,$ or $\,{\cal E}\subseteq X_d\,$. Now take $\,d_1\,=\,\min\,\{
d\,/\, X_d\not\subseteq{\mathfrak z}({\mathfrak g})\}\,$. 
Then $\,{\cal E}\subseteq X_{d_1}\,$ and the result follows.
\end{proof}
\vspace{2ex}\noindent
\begin{remark} \label{after}
It follows from Proposition~\ref{dimxvxg}(a) and this last Corollary
that 
\[
\dim\,e_{\tilde{\alpha}}\cdot V\,\leq\,|R|.
\]
\end{remark}


\subsubsection{An Upper Bound for $\,\dim\,V\,$}

In this section we use results proved before to produce an upper
bound for the dimension of the exceptional modules.


Recall that $\,G\,$ is a simply connected simple algebraic group and 
$\,\mathfrak g\,=\,{\cal L}(G)\,$ (cf. Section~\ref{introback}).
Let $\,\tilde{\alpha}\,$ be the highest root for $\,{\mathfrak g},\,$
and $\,E=e_{\tilde{\alpha}}\,\in\,{\mathfrak g}\,$ denote a highest
root vector. Then $\,E^{[p]}= 0.\,$  
There exist $\,F=e_{-\tilde{\alpha}}\in{\mathfrak g}_{-\tilde{\alpha}}\,$ 
and $\,H=h_{\tilde{\alpha}}\in{\mathfrak h}\,$ 
such that $\,(E,H,F)\,$ form a standard basis of an 
$\,{\mathfrak{sl}}(2)$-triple $\,{\mathfrak s}_{\tilde{\alpha}}\,$ in 
$\,\mathfrak g$.  
  
Let $\,V\,$ be a finite dimensional exceptional $\,G$-module.
As $\,H\,=\,[E,\,F],\,$ we have $\,H\,V\,\subseteq\,E\,V\,+\,F\,V\,$ implying
\[
\dim\,H\,V\,\leq\,\dim\,E\,V\,+\,\dim\,F\,V\,=\,2\,\dim\,E\,V\,.
\]
Thus $\,\displaystyle
\frac{1}{2}\,\dim\,H\,V\,\leq\,\dim\,E\,V\,\leq\,|R|$, by
Remark~\ref{after}. 

Now $\,V\,=\,\displaystyle\bigoplus_{\mu\in{\cal X}(V)}\,V_{\mu}$, 
where the $\,V_{\mu}$'s are the weight spaces of $\,V\,$ with respect to $\,T$.
Recall from Section~\ref{wamv}
that the differential of a weight $\,\mu\in X\,$ is a linear function
on $\,\mathfrak t\,$, also denoted by $\,\mu\,$. 
If $\,H\in\mathfrak t,\,$ then  
$\,H\cdot v\,=\,\mu(H)\,v,\,$ for all $\,v\in V_{\mu}\,$. Therefore, 
$\,H|_{V_{\mu}}\,=\,\mu (H)\,Id_{m_{\mu}}$.

Now the action of $\,H\,$ on $\,V\,$ induced by the differential
$\,{\rm d}\pi\,$ (at the identity $\,e\in G\,$) turns $\,V\,$ into a
$\,\mathbb Z/p\mathbb Z$-graded vector space
$\,V\,=\,\displaystyle\sum_{i\in\mathbb Z/p\mathbb Z}\,V_{i}\,$, where
\[
V_i\,=\,\bigoplus_{\mu(H)=i}\,V_{\mu}\,.
\]
Thus, we can calculate $\,\dim\,H\,V\,$ as follows:
\begin{equation} \label{hvhv}
\begin{split}
\dim\,H\,V\,& =\,\sum_{\mu (H)\not\equiv 0\pmod p} 
\,\dim\,V_{\mu}\\
&  =  \sum_{\mu\in{\cal X}_{++}(V)}\,m_{\mu}\,|\{\,\nu\in W\mu\,/\,
\nu(H)\not\equiv 0\pmod p\,\}|\,,
\end{split}
\end{equation}
where $\,m_{\mu}\,=\,\dim\,V_{\mu}\,$. 
The second equality in~\eqref{hvhv} comes from the facts that 
$\,{\cal X}(V)\,=\,W\cdot {\cal X}_{++}(V)\,$ 
and that the multiplicity is constant on the $\,W$-orbits of $\,{\cal X}(V)$.

Let $\,(\,\cdot\,,\,\cdot\,)\,$ denote a scalar product on the 
$\,\mathbb R$-span of $\,\Delta,\,$ invariant under the action of the
Weyl group $\,W\,$ of $\,R.\,$ We adjust $\,(\,\cdot\,,\,\cdot\,)\,$ in such 
a way that $\,(\alpha,\alpha)=2\,$ for every short root $\,\alpha\in R.\,$
As usual, $\,\langle\beta,\,\alpha\rangle\,=\,2\,(\beta,\,\alpha)/
(\alpha,\,\alpha),\,$ for $\,\beta,\,\alpha\in R$.

This adjustment on $\,(\,\cdot\,,\,\cdot\,)\,$ and the assumption that
$\,p\,$ is not special for $\,G\,$ imply  $\,p\nmid\frac
{(\tilde{\alpha},\,\tilde{\alpha})}{2}\,$. Hence 
\begin{equation}\label{mual}
\mu (H)\,=\,\langle\mu,\,\tilde{\alpha}\rangle\,=\,\frac{2\,(\mu,\,
\tilde{\alpha})}{(\tilde{\alpha},\,\tilde{\alpha})}
\not\equiv\,0\!\pmod{p}
\,\Longleftrightarrow\,(\mu,\,\tilde{\alpha})\not\equiv 0\!\pmod{p}
\end{equation}

Note that $\,w\mu (H)\,=\,\langle w\mu,\,\tilde{\alpha}\rangle\,=\,
\langle\mu,\,w^{-1}\tilde{\alpha}\rangle$. Hence 
$\,w\mu (H)\,\not\equiv\,0\pmod{p}
\,\Longleftrightarrow\,(w\mu,\,\tilde{\alpha})\,\not\equiv 0\pmod{p}$,
by~(\ref{mual}). Also if $\,w_1\in C_W(\mu)$, then
$\,w\,w_1\mu (H)\,=\,w\mu (H).\,$ Thus, we obtain that
\begin{multline*}
\sharp\,\{ w\in W\,/\,w\mu(H)\not\equiv 0\!\!\pmod p\}\,= \\
|C_W(\mu)|\cdot |\{\nu\,\in\,W\mu\,/\,\nu(H)\not\equiv 0\!\!\pmod p\}|\,.
\end{multline*}
Observe that $\,(w\mu,\,\tilde{\alpha})\,=
\,(\mu,\,w^{-1}\tilde{\alpha})$, as the scalar product is
$\,W$-invariant. Thus $\;w\mu (H)\,\not\equiv\,0\!\!\pmod{p}
\;\Longleftrightarrow\;(\mu,\,w^{-1}\tilde{\alpha})\,
\not\equiv 0\!\!\pmod{p}$. 
Also, $\,\{\,w^{-1}\tilde{\alpha}\,/\,w\in W\,\}\,=\,R_{long}$, the set
of all long roots in $\,R$, forms a subsystem of $\,R\,$ 
\cite[VI \S 1 Ex.14]{bourb2}. 
In particular, 
$\,R_{long}\,=\,R_{long}^+\,\cup\,-R_{long}^+\,$. Hence
\begin{multline*}
\sharp\,\{ w\in W\,/\,w\mu(H)\not\equiv 0\!\!\pmod p\}\,= \\
2\,|C_W(\tilde{\alpha})|\cdot 
|\{\,\gamma\in R^+_{long}\,/\,(\mu,\,\gamma)\,
\not\equiv\,0\!\!\pmod p\,\}|\,.
\end{multline*}

We summarize the discussion in:
\begin{lemma}\label{lem345}
Given $\,\mu\,\in\,{\cal X}_{++}(V)\,$ one has
\begin{multline*}\label{ineq345}
|C_W(\mu)|\cdot |\{\,w\mu\in W\mu\,/\,(w\mu,\,\tilde{\alpha})
\,\not\equiv\,0\pmod p\,\}|\\
=\,2\,|C_W(\tilde{\alpha})|\cdot
|\{\,\gamma\in R^+_{long}\,/\,(\mu,\,\gamma)\,
\not\equiv\,0\pmod p\,\}|.
\end{multline*}
\end{lemma}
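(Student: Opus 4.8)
The plan is to compute the integer
$N:=\sharp\{\,w\in W\,/\,w\mu(H)\not\equiv 0\pmod p\,\}$ in two different ways and to obtain the identity by comparing the two answers. The inputs are already assembled in the discussion preceding the lemma: $w\mu(H)=\langle\mu,\,w^{-1}\tilde\alpha\rangle$, and, by~(\ref{mual}) together with the $W$-invariance of $(\,\cdot\,,\,\cdot\,)$, one has the equivalences $w\mu(H)\not\equiv 0\pmod p\iff(w\mu,\,\tilde\alpha)\not\equiv 0\pmod p\iff(\mu,\,w^{-1}\tilde\alpha)\not\equiv 0\pmod p$.

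First count. I would fibre the surjection $W\to W\mu$, $w\mapsto w\mu$. Each fibre is a coset of the stabiliser $C_W(\mu)$, hence has exactly $|C_W(\mu)|$ elements, and membership of $w$ in the set being counted depends only on $w\mu$ (through $(w\mu,\tilde\alpha)\bmod p$). Summing over the orbit gives $N=|C_W(\mu)|\cdot|\{\,\nu\in W\mu\,/\,(\nu,\tilde\alpha)\not\equiv 0\pmod p\,\}|$, which is the left-hand side of the asserted equality.

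Second count. Now I would instead fibre the surjection $W\to W\tilde\alpha$, $w\mapsto w^{-1}\tilde\alpha$. The key structural input, already recalled in the text (cf.~\cite[VI \S 1, Ex.~14]{bourb2}), is that $W\tilde\alpha=R_{long}$, the set of all long roots, which is a subsystem and satisfies $R_{long}=R^+_{long}\cup(-R^+_{long})$. Each fibre of $w\mapsto w^{-1}\tilde\alpha$ is a coset of $C_W(\tilde\alpha)$ of size $|C_W(\tilde\alpha)|$, and membership in the counted set depends only on $w^{-1}\tilde\alpha$ (through $(\mu,w^{-1}\tilde\alpha)\bmod p$). Hence $N=|C_W(\tilde\alpha)|\cdot|\{\,\gamma\in R_{long}\,/\,(\mu,\gamma)\not\equiv 0\pmod p\,\}|$. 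Since $(\mu,-\gamma)=-(\mu,\gamma)$, the condition is invariant under $\gamma\mapsto-\gamma$, so the count over $R_{long}$ is twice the count over $R^+_{long}$, and $N=2\,|C_W(\tilde\alpha)|\cdot|\{\,\gamma\in R^+_{long}\,/\,(\mu,\gamma)\not\equiv 0\pmod p\,\}|$. Equating this with the first expression for $N$ yields the lemma.

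The individual steps are routine; the point deserving the most care is integrality, i.e.\ ensuring ``$\not\equiv 0\pmod p$'' is meaningful for all the quantities involved, and tracking where the hypothesis on $p$ enters. With the normalisation $(\alpha,\alpha)=2$ for short roots one has $(\gamma,\gamma)=(\tilde\alpha,\tilde\alpha)=2d$ for every long root $\gamma$, where $d=(\tilde\alpha,\tilde\alpha)/(\alpha_0,\alpha_0)$, so that $(\mu,\gamma)=d\,\langle\mu,\gamma\rangle\in\mathbb Z$. The assumption that $p$ is non-special, namely $p\nmid d$, is precisely what makes $\mu(H)=\langle\mu,\tilde\alpha\rangle$ and $(\mu,\tilde\alpha)=d\,\mu(H)$ vanish or not vanish mod $p$ simultaneously --- this is the content of~(\ref{mual}) and is used in both counts. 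The only other thing to verify is that the two fibre decompositions have uniform fibre size, which is immediate from orbit--stabiliser in each case.
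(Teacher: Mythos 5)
Your proof is correct and follows essentially the same route as the paper: both compute $\sharp\{\,w\in W\,/\,w\mu(H)\not\equiv 0\pmod p\,\}$ twice, once by collapsing along cosets of $C_W(\mu)$ onto the orbit $W\mu$, and once by collapsing along cosets of $C_W(\tilde\alpha)$ onto $\{w^{-1}\tilde\alpha\}=R_{long}=R^+_{long}\cup(-R^+_{long})$, then equating the two counts. Your explicit remarks on integrality and on where the non-speciality of $p$ enters are a sound elaboration of what the paper packs into~(\ref{mual}).
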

\hfill$\Box$

Set $\,R_{\mu,p}\,=\,\{\,\gamma\in R_{long}\,/\,
(\mu,\,\gamma)\,\equiv\,0\!\pmod p\,\}\,$ and 
$\,R_{\mu,p}^+\,=\,R_{\mu,p}\,\cap R_{long}^+\,=\,\{\,\gamma\in R_{long}^+\,/\,
(\mu,\,\gamma)\,\equiv\,0\!\pmod p\,\}\,$. 
We have
\begin{equation} \label{split345}
\begin{split}
\frac{1}{2}\,|\{\,\nu\in W\mu\,/\,(\nu,\,\tilde{\alpha})
\,\not\equiv\,0\!\!\pmod p\,\}| & =  \frac{|C_W(\tilde{\alpha})|}
{|C_W(\mu)|}\,|R_{long}^+-R^+_{\mu,p}| \vspace{1ex} \\
 & = \frac{|W|}{|C_W(\mu)|}\,\frac{|R_{long}^+-R^+_{\mu,p}|}{|R_{long}|}
 \vspace{1ex}\\
 & =  \frac{|W\mu|}{|R_{long}|}\,
|R_{long}^+-R^+_{\mu,p}|
\end{split}
\end{equation} 

Now observe that $\,R^+_{\mu,p}\,=\,R_{long}^+\,$ if and only if 
$\,(\mu,\,\gamma)\,\equiv\,0\pmod p\,$ for all $\,\gamma\in R_{long}^+$.
In this case the contribution of the weight $\,\mu\,$ to the 
sum~(\ref{hvhv}) is $\,0$. This fact gives rise to the following definition:

\begin{definition}\label{badweight}
A weight $\,\mu\in P(R)\,$ is called {\bf bad} if 
$\,\mu(h_{\gamma})\equiv 0\pmod p\,$ for all $\,\gamma\in R^+_{long}$, 
or equivalently, if $\,(\mu,\,\gamma)\equiv 0\pmod p\,$ for all 
$\,\gamma\in R^+_{long}$.
If $\,\mu\,$ is not a bad weight we say that $\,\mu\,$ is {\bf good}.
\end{definition}

Recall that each weight $\,\mu\in P\,$ can be written as 
$\,\mu\,=\,\sum\,a_i\,\omega_i$, with $\,a_i\in\mathbb Z_+$.

\begin{lemma}
Suppose $\,p\,$ is not special for $\,G$. Then 
$\,\mu\in P\,$ is bad if and only if 
$\,\mu\,=\,\displaystyle\sum_{i=1}^{\ell}\,a_i\,\omega_i$, where 
$\,a_i\in p\mathbb Z\,$ for all $\,i\,$.
\end{lemma}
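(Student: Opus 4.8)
The plan is to prove both directions of the equivalence by relating the condition ``$\mu(h_\gamma)\equiv 0\pmod p$ for all $\gamma\in R^+_{\mathrm{long}}$'' to the coefficients $a_i$ of $\mu$ in the basis of fundamental weights. The ``if'' direction is the easy one: if $\mu=\sum_i a_i\omega_i$ with every $a_i\in p\mathbb Z$, then for any root $\gamma$ we have $(\mu,\gamma^\vee)=\sum_i a_i\langle\omega_i,\gamma\rangle$, and since each $\langle\omega_i,\gamma\rangle\in\mathbb Z$ (as $\mu$ and $\gamma$ lie in a root system, cf.\ Section~\ref{introback}), the whole sum lies in $p\mathbb Z$. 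In particular $\mu(h_\gamma)=\langle\mu,\gamma\rangle\equiv 0\pmod p$ for every $\gamma$, a fortiori for every long positive root, so $\mu$ is bad. This direction does not even use that $p$ is non-special.

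For the ``only if'' direction I would argue contrapositively: suppose some $a_j\notin p\mathbb Z$, and produce a long positive root $\gamma$ with $(\mu,\gamma)=\langle\mu,\gamma\rangle\not\equiv 0\pmod p$. The natural candidate is to find a long root $\gamma$ with $\langle\omega_i,\gamma\rangle=\delta_{ij}$ for the index $j$ in question — equivalently, a long root whose expansion in the coroot basis $\{\alpha_i^\vee\}$ has $j$-th coefficient equal to $1$ and is such that $\mu(h_\gamma)=a_j$. More robustly, I would use the following: since $\mu$ is dominant (or at least an arbitrary weight — one checks the statement is about all of $P$), write $\mu(h_{\alpha_i})=a_i$ for each simple root $\alpha_i$; now $\alpha_i$ itself is long precisely when $i$ indexes a long simple root, so if the offending $j$ happens to index a long simple root we are already done with $\gamma=\alpha_j$. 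The content of the lemma is thus really in handling the short simple roots, and this is exactly where non-speciality of $p$ enters.

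The main obstacle, and the step I expect to require the most care, is the short-simple-root case. Here I would use the structure of the root system: when $\alpha_j$ is short, there is a long positive root $\gamma$ whose coroot expansion $\gamma^\vee=\sum_i c_i\alpha_i^\vee$ has $c_j$ coprime to $p$, so that $\mu(h_\gamma)=\sum_i c_i a_i$, and then one needs to peel off the contributions of the other $a_i$. The cleanest route is to induct: if $\mu$ is bad then $\mu(h_{\alpha_i})=a_i\equiv 0\pmod p$ for every long simple root $\alpha_i$, so the only potentially nonzero $a_i\bmod p$ sit at short nodes; but the long roots generate (over $\mathbb Z$, after inverting only the index $d$ of Definition~\ref{pspecial}) enough linear functionals to detect the short coefficients too, and $p\nmid d$ is precisely the hypothesis ``$p$ non-special''. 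Concretely I would invoke that $p$ non-special means $p\nmid(\tilde\alpha,\tilde\alpha)/(\alpha_0,\alpha_0)$, which controls the index $[\mathbb Z R_{\mathrm{long}}^\vee : \text{relevant sublattice}]$, and conclude that $(\mu,\gamma)\equiv 0\pmod p$ for all long $\gamma$ forces $(\mu,\alpha_i)\equiv 0\pmod p$ for all simple $\alpha_i$, hence all $a_i\in p\mathbb Z$. I would organize this by treating the few root-system types with two root lengths ($B_\ell,C_\ell,F_4,G_2$) explicitly, checking in each case that the long roots span the weight lattice modulo $p$ when $p$ is non-special for that type; the simply-laced types are trivial since then every root is long and $\{\alpha_i\}$ is already a basis.
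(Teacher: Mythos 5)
Your proposal is correct and follows essentially the same route as the paper: the simply-laced case is immediate since then $R_{\mathrm{long}}=R$, and for $B_\ell$, $C_\ell$, $F_4$, $G_2$ one verifies explicitly that the pairings of $\mu$ with a basis of $R_{\mathrm{long}}$ determine all the $a_i$ modulo $p$, non-speciality of $p$ entering exactly where you say (e.g.\ to cancel the factor $2$ in $(\mu,2\varepsilon_i)=2(a_i+\cdots+a_\ell)$ for $C_\ell$, or the factor $3$ for $G_2$). The only thing left implicit in your write-up is the actual list of long roots and inner products in each of the four doubly-laced types, which is precisely the routine computation the paper carries out.
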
\noindent
\begin{proof}
If $\,R_{long}\,=\,R\,$, the lemma is obvious. 

If $\,R\,$ is of type $\,B_{\ell}$, then $\,R_{long}\cong D_{\ell}$. A basis
of $\,R_{long}\,$ is given by $\,\alpha_1=\varepsilon_1
-\varepsilon_2\,$, $\,\alpha_2=\varepsilon_2 -\varepsilon_3,\ldots,
\,\alpha_{\ell -1}=\varepsilon_{\ell -1} -\varepsilon_{\ell},
\,\alpha_{\ell}=\varepsilon_{\ell -1} +\varepsilon_{\ell}$.
Hence $\,(\mu,\,\alpha_i)=a_i\,$, if $\,1\leq i\leq \ell -1,\,$ and 
$\,(\mu,\,\alpha_{\ell})=(\mu,\,\varepsilon_{\ell-1} +\varepsilon_{\ell})=
a_{\ell -1} + a_{\ell}$. Therefore, $\,\mu\,$ is bad for $\,R\,$  
if and only if $\,a_i\equiv 0\pmod p\,$ for all $\,i$.

If $\,R\,$ is of type $\,C_{\ell}$, then $\,R_{long}\cong A_1^{\ell}$. A basis
of $\,R_{long}\,$ is given by $\,2\varepsilon_1$, $\,2\varepsilon_2$,
$\,\ldots,\,$$\,2\varepsilon_{\ell -1},\,2\varepsilon_{\ell}$.
One has $\,(\mu,\,2\varepsilon_i)=
2(a_i +\cdots +a_{\ell}),\;1\leq i\leq \ell\,$.
Therefore, $\,\mu\,$ is bad for $\,R\,$ 
if and only if $\,a_i\equiv 0\pmod p\,$ (recall that in this case
$\,p\neq 2$).
                          
For $\,R\,$ of type $\,G_2$, $\,R_{long}\cong A_2\,$ has basis 
$\,-2\varepsilon_1 +\varepsilon_2+\varepsilon_3,\,\varepsilon_1-2\varepsilon_2 
+\varepsilon_3$.
Thus $\,(\mu,\,-2\varepsilon_{1} +\varepsilon_{2}+\varepsilon_3)= 3a_2\,$ and
$\,(\mu,\,\varepsilon_1-2\varepsilon_2 +\varepsilon_3)= 3(a_1+a_2)\,$, implying
that $\,\mu\,$ is bad for $\,G_2\,$
if and only if $\,a_i\equiv 0\pmod p\,$ (in this case $\,p\neq 3$).

For $\,R\,$ of type $\,F_4$, $\,R_{long}\cong D_4\,$ has basis
$\,\varepsilon_1 -\varepsilon_2,\,\alpha_1=\varepsilon_2 -\varepsilon_3,
\,\alpha_2=\varepsilon_3 -\varepsilon_4,\,
\varepsilon_3 +\varepsilon_4$. Hence,
$\,(\mu,\,\varepsilon_{1} -\varepsilon_{2})= a_2 +a_3 +a_4$,
$\,(\mu,\,\alpha_1)=a_1,\;(\mu,\,\alpha_2)=a_2\,$ and 
$\,(\mu,\,\varepsilon_3 +\varepsilon_4)=a_2 +a_3$. 
Therefore, $\,\mu\,$ is bad for $\,F_4\,$
if and only if $\,a_i\equiv 0\pmod p\,$.
\end{proof}
\smallskip
\noindent
\begin{remark} It is easy to check that if $\,\mu\,=\,\sum_{i=1}^{\ell}\, 
a_i\,\omega_i\,$ is a bad weight with $\,a_k\neq 0$, then 
$\,\mu -\alpha_k\,$ is a good weight, unless $\,p=2\,$ and $\,R\,$ is of type
$\,A_1\,$ or $\,B_{\ell}\,$.
\end{remark}
\smallskip

We further observe that for $\,\mu\,$ good, $\,R_{\mu,p}\,$ is a
proper subsystem of $\,R_{long}\,$, so it is contained in a maximal 
subsystem of $\,R_{long}$. We refer to~\cite{borsie} for a list of
maximal subsystems in $\,R_{long}$.

From (\ref{hvhv}) and~(\ref{split345}) 
we conclude that if $\,V\,$ is an exceptional $\,\mathfrak g$-module
then it satisfies
\[
r_p(V)\,:=\,\sum_{\stackrel{\scriptstyle\mu\;good}{\mu\in{\cal X}_{++}(V)}}
\,m_{\mu}\,\frac{|W\mu|}{|R_{long}|}\,
|R_{long}^+-R^+_{\mu,p}|\,=\,\frac{1}{2}\,\dim\,H\,V\,\leq\,\dim\,E\,V\,
\leq |R|
\]
But then
\[
\left(\,\sum_{\stackrel{\scriptstyle\mu\;good}{\mu\in{\cal X}_{++}(V)}}
\,m_{\mu}\,\frac{|W\mu|}{|R_{long}|}\,\right){\displaystyle
\min_{\mu\,good}}\,|R_{long}^+-R^+_{\mu,p}|\,\leq\,|R|
\]
implying
\[
s(V)\,:=\,\sum_{\stackrel{\scriptstyle\mu\;good}{\mu\in{\cal X}_{++}(V)}}
\,m_{\mu}\,|W\mu|\,\leq\,\frac{|R|\,\;|R_{long}|}
{\displaystyle\min_{\mu\;good}\,|R_{long}^+-R^+_{\mu,p}|}
\]
Now note that $\,|R_{long}^+-R^+_{\mu,p}|\,$ is minimal if and only if
$\,|R_{\mu,p}|\,$ is maximal. (Recall that $\,R_{\mu,p}\subseteq
R_{long}$.) 
Put $\,M\,=\,\displaystyle\min_{\mu\,good}\,|R_{long}^+-R^+_{\mu,p}|\,$ 
and $\,L_G\,=\,\displaystyle\frac{|R|\,\;|R_{long}|}
{\displaystyle\min_{\mu\,good}\,|R_{long}^+-R^+_{\mu,p}|}\,$. We
call $\,\lceil L_G\rceil\,$ the {\bf limit}. Table~\ref{table2} lists all
related subsystems and gives 
the values of $\,\lceil L_G\rceil\,$ for different types of $\,G$. 
In the column
headed by $\,MLS,\,$ we list the possible maximal subsystems of $\,R_{long}$. 

\begin{table}[tb]
\begin{center}
\begin{tabular}{|c|c|c|c|c|}\hline \hline
Type of $\,G$ & $R_{long}$  &  $MLS$ & 
$\,M\,$ &
$\,\lceil L_G \rceil\,$ \\ \hline \hline
$A_{\ell}$ & $A_{\ell}$  & $A_{\ell -1}$ &  $\ell$ & 
$\ell^3\,+\,2\ell^2\,+\,\ell$ \\ \hline
$B_{\ell}$ & $D_{\ell}$  & $D_{\ell -1}\,$ if $\,\ell\geq 5$  &  
  $2(\ell -1)$ &  $2\ell^3\,$ if $\,\ell\geq 5$ \\   
           &             & $A_{\ell -1}\,$ if $\,\ell\leq 4$  & 
           $\frac{(\ell -1)\ell}{2}$ & $8\ell^2\,$ if $\,\ell\leq 4$
           \\ \hline
$C_{\ell}$ & $A_1^{\ell}$  & $A_1^{\ell -1}$ &  $1$ & 
 $\,4\ell^3$ \\ \hline
$D_{\ell}$ & $D_{\ell}$  & $D_{\ell -1}\,$ if $\,\ell\geq 5$  &  
                           $2(\ell -1)$ & $2\ell^3\,-\,2\ell^2\,$ if
                           $\,\ell\geq 5$\\ 
           &             & $A_{\ell -1}\,$ if $\,\ell\leq 4$  & 
           $\frac{(\ell -1)\ell}{2}$ & $\frac{8\ell^2(\ell -1)}{(\ell -2)}$
           \\ \hline
$G_2$ & $A_2$  & $A_1$ &  $2$ & $36$ \\ \hline   
$F_4$ & $D_4$  & $A_3$ &  $6=2\cdot 3$ & $192$ \\ \hline   
$E_6$ & $E_6$  & $D_5$ &  $16=2^4$ & $324$ \\ \hline
$E_7$ & $E_7$  & $E_6$ &  $27=3^3$ & $588$ \\ \hline
$E_8$ & $E_8$  & $E_7$ &  $57=3\cdot 19$ & $1011$ \\ \hline \hline
\end{tabular}
\caption[The Limits]{The Limits\label{table2}}
\end{center}
\end{table}

We summarize these facts in the following theorem.
\begin{theorem}[The Necessary Condition]\label{???}
Let $\,p\,$ be a non-special prime for $\,G$.
Let $\,\pi\,:\,G\longrightarrow\GL(V)\,$
be a non-trivial, faithful, rational representation of $\,G\,$ such that
$\,\ker\,{\rm d}\pi\subseteq\mathfrak z(\mathfrak g)\,$.
If $\,V\,$ is an exceptional $\,\mathfrak g$-module, then it satisfies
the inequalities
\begin{equation}\label{equata}
r_p(V)\,:=\,\sum_{\stackrel{\scriptstyle\mu\;good}{\mu\in{\cal X}_{++}(V)}}
\,m_{\mu}\,\frac{|W\mu|}{|R_{long}|}\,
|R_{long}^+-R^+_{\mu,p}|\,\leq |R|\,,
\end{equation}
and
\begin{equation}\label{?ref123}
s(V)\,:=\,\sum_{\stackrel{\scriptstyle\mu\,good}{\mu\in{\cal X}_{++}(V)}}
\,m_{\mu}\,|W\mu|\;\leq\;\mbox{\bf limit}\,,
\end{equation}
where $\,m_{\mu}\,$ denotes the multiplicity of $\,\mu\in
{\cal X}_{++}(V)\,$ and the {\bf limit}s for the different types of   
algebraic groups are given in Table~\ref{table2}.
\end{theorem}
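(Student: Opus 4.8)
The plan is to assemble the theorem essentially as a corollary of the work already done in Section~\ref{necess}, since the two displayed inequalities are obtained by combining results that precede the statement. First I would fix a highest root vector $E=e_{\tilde\alpha}$ together with the associated $\mathfrak{sl}(2)$-triple $(E,H,F)$, as in the paragraph before equation~\eqref{hvhv}. Since $H=[E,F]$, we have $H\cdot V\subseteq E\cdot V+F\cdot V$, so $\dim H\cdot V\le 2\dim E\cdot V$. By Remark~\ref{after} (which in turn rests on Proposition~\ref{dimxvxg} and Corollary~\ref{maxcent}, the latter using Lemma~\ref{varep}, for which the hypothesis that $p$ is non-special is exactly what is needed), we get $\dim E\cdot V\le|R|$, hence $\tfrac12\dim H\cdot V\le|R|$. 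The hypotheses that $\pi$ is non-trivial, faithful, and satisfies $\ker\mathrm d\pi\subseteq\mathfrak z(\mathfrak g)$ are precisely the standing assumptions under which Proposition~\ref{dimxvxg} was proved, so these carry over directly.

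Next I would compute $\dim H\cdot V$ in terms of weights. Because $T$ acts diagonalisably, $V=\bigoplus_{\mu\in\mathcal X(V)}V_\mu$, and $H$ acts on $V_\mu$ by the scalar $\mu(H)$ reduced mod $p$; thus $\dim H\cdot V=\sum_{\mu(H)\not\equiv0}\dim V_\mu$. Using $\mathcal X(V)=W\cdot\mathcal X_{++}(V)$ and constancy of multiplicities on $W$-orbits, this becomes the second line of~\eqref{hvhv}. Then Lemma~\ref{lem345} and the chain of equalities in~\eqref{split345} rewrite the count of $\nu\in W\mu$ with $\nu(H)\not\equiv0\pmod p$ as $\tfrac{|W\mu|}{|R_{long}|}\,|R^+_{long}-R^+_{\mu,p}|$; here the adjustment $(\alpha,\alpha)=2$ for short roots together with $p$ non-special gives $p\nmid(\tilde\alpha,\tilde\alpha)/2$, which is what makes~\eqref{mual} valid. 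Bad weights $\mu$ (those with $R^+_{\mu,p}=R^+_{long}$) contribute $0$, so only good weights survive in the sum. Putting this together yields
\[
r_p(V)=\sum_{\stackrel{\scriptstyle\mu\;good}{\mu\in\mathcal X_{++}(V)}}m_\mu\,\frac{|W\mu|}{|R_{long}|}\,|R^+_{long}-R^+_{\mu,p}|=\tfrac12\dim H\cdot V\le|R|,
\]
which is inequality~\eqref{equata}.

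For the second inequality~\eqref{?ref123} I would factor out the minimum of $|R^+_{long}-R^+_{\mu,p}|$ over good $\mu$ from the sum $r_p(V)$: writing $M=\min_{\mu\ good}|R^+_{long}-R^+_{\mu,p}|$, we get $M\cdot\big(\sum m_\mu|W\mu|/|R_{long}|\big)\le r_p(V)\le|R|$, hence $s(V)=\sum m_\mu|W\mu|\le|R|\,|R_{long}|/M=L_G$, and since $s(V)$ is an integer we may replace $L_G$ by $\lceil L_G\rceil$, the \textbf{limit}. The last remaining task is to justify the entries of Table~\ref{table2}: for each type one must identify $R_{long}$, determine the maximal subsystem of $R_{long}$ of the form $R_{\mu,p}$ for some good $\mu$ (using that $R_{\mu,p}$ is a proper subsystem, hence lies in a maximal one — see the reference to~\cite{borsie}), and compute $M$ and $L_G$. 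I expect this case-by-case verification of the table to be the main obstacle: one must check, for instance, that for $\mu$ good one can realise a maximal-length subsystem $R_{\mu,p}$, which amounts to choosing $\mu$ whose pairings with a maximal subsystem of long roots are all divisible by $p$ while $\mu$ itself is good, and this requires the structure of maximal subsystems of $D_\ell$, $E_\ell$, etc. The rest is the routine arithmetic of orders of Weyl groups and root systems already summarised in the table.
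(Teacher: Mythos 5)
Your proposal follows the paper's own argument step for step: the $\mathfrak{sl}(2)$-triple at the highest root, the bound $\tfrac12\dim H\cdot V\le\dim E\cdot V\le|R|$ from Remark~\ref{after}, the weight-space computation~\eqref{hvhv} combined with Lemma~\ref{lem345} and~\eqref{split345} to obtain $r_p(V)\le|R|$, and then factoring out $M=\min_{\mu\ \text{good}}|R_{long}^+-R^+_{\mu,p}|$ to deduce the bound on $s(V)$. This is exactly how the theorem is proved in Section~\ref{necess}, so there is nothing to add beyond the case-by-case verification of Table~\ref{table2}, which the paper likewise treats as routine.
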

\hfill $\Box$

\newpage

\part*{Chapter 4}
\part*{The Procedure and The Results}
\addcontentsline{toc}{section}{\protect\numberline{4}{The
Procedure and The Results}}
\setcounter{section}{4}
\setcounter{subsection}{0}
The main section of this Chapter describes the techniques used to
classify the exceptional modules. Everywhere below the indexing of 
the simple roots in the basis
$\,\Delta\,=\,\{\,\alpha_1,\,\ldots\,,\alpha_{\ell}\,\}\,$ and of the
fundamental weights in the basis
$\,\{\,\omega_1,\,\ldots,\,\omega_{\ell}\,\}\,$
corresponds to Bourbaki's tables~\cite[Chap. VI, Tables I-IX]{bourb2}.

\subsection{Some Facts on Weights}

In this section we formulate some known facts and obtain a number of 
preliminary results that will be extensively used to prove lemmas
in the next sections.

Given a weight $\,\mu\,=\,a_1\,\omega_1\,+\,\cdots\,+
\,a_{\ell}\,\omega_{\ell}\,\in\,P\,$, 
the size of its orbit under the action of the Weyl group is given by
\[
|W\mu|\,=\,\frac{|\,W\,|}{|\,C_W(\mu)\,|},
\]
where $\,C_W(\mu)\,=\,\langle s_{\alpha_i}\,/\,a_i\,=\,0\rangle\,$ is
the centralizer of $\,\mu\,$ in $\,W\,$ \cite[1.10]{seitz1}.

\begin{lemma}\label{remark}
Let $\,\lambda,\,\mu\,\in\,P_{++}\,$ be such that 
$\,\langle\lambda,\,\alpha_i\rangle\,=\,0\,$ if 
$\,\langle\mu,\,\alpha_i\rangle\,=\,0.\,$ Then $\,|W\mu|\,\geq\,|W\lambda|$.
If $\,\langle\mu,\,\alpha_i\rangle\,\neq\,0\,$ and 
$\,\langle\lambda,\,\alpha_i\rangle\,=\,0\,$ for some $\,i,\,$ then
$\,|W\mu|\,\geq\,2\,|W\lambda|$.
\end{lemma}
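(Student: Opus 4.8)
The plan is to exploit the formula $|W\mu| = |W|/|C_W(\mu)|$ together with the description $C_W(\mu) = \langle s_{\alpha_i} \mid \langle\mu,\alpha_i\rangle = 0\rangle$ recalled just above the statement. Write $S_\mu = \{\alpha_i \in \Delta \mid \langle\mu,\alpha_i\rangle = 0\}$ and similarly $S_\lambda$, so $C_W(\mu) = \langle s_{\alpha} \mid \alpha \in S_\mu\rangle$ is the standard parabolic subgroup $W_{S_\mu}$, and likewise $C_W(\lambda) = W_{S_\lambda}$. The hypothesis of the first assertion says precisely that $S_\mu \subseteq S_\lambda$, which immediately gives $W_{S_\mu} \subseteq W_{S_\lambda}$, hence $|C_W(\mu)| \le |C_W(\lambda)|$ and therefore $|W\mu| = |W|/|C_W(\mu)| \ge |W|/|C_W(\lambda)| = |W\lambda|$. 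So the first part is essentially a restatement of the monotonicity of the order of standard parabolic subgroups in the set of generating simple reflections.

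For the second assertion, assume in addition that there is an index $i$ with $\langle\mu,\alpha_i\rangle \neq 0$ and $\langle\lambda,\alpha_i\rangle = 0$; that is, $\alpha_i \in S_\lambda \setminus S_\mu$, so the containment $S_\mu \subseteq S_\lambda$ is strict. The claim then reduces to showing that a strictly larger subset of simple roots yields a parabolic of at least twice the order, i.e.\ $[W_{S_\lambda} : W_{S_\mu}] \ge 2$. But $W_{S_\mu}$ is a proper subgroup of $W_{S_\lambda}$ (proper because, e.g., $s_{\alpha_i} \in W_{S_\lambda}\setminus W_{S_\mu}$ — the latter fixes $\omega_i$ pointwise up to the span issue, or more cleanly: distinct subsets of $\Delta$ generate distinct standard parabolics, a standard fact about Coxeter systems). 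A proper subgroup has index at least $2$ by Lagrange, so $|C_W(\lambda)| = |W_{S_\lambda}| \ge 2\,|W_{S_\mu}| = 2\,|C_W(\mu)|$, and hence $|W\mu| = |W|/|C_W(\mu)| \ge 2\,|W|/|C_W(\lambda)| = 2\,|W\lambda|$.

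The only point requiring a little care — and the one I would flag as the main obstacle, though it is still routine — is the assertion that distinct subsets $S \subseteq \Delta$ give distinct standard parabolic subgroups $W_S$, equivalently that $s_{\alpha_i} \notin W_{S_\mu}$ when $\alpha_i \notin S_\mu$. This is a standard fact about Coxeter systems (see, e.g., Humphreys~\cite{hum1}, §1.10 or the discussion of parabolic subgroups): in $W_S$ every element has a reduced word using only reflections from $S$, so any element of $W_S$ permutes the roots in such a way that $\alpha_i$ (with $\alpha_i \notin S$) cannot be sent to a negative root, whereas $s_{\alpha_i}(\alpha_i) = -\alpha_i < 0$. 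I would cite this rather than reprove it. With that in hand, both inequalities follow immediately from Lagrange's theorem and the orbit–stabilizer formula, so no genuine computation is needed.
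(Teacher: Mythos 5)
Your proof is correct and follows essentially the same route as the paper: the paper's own argument simply observes that $C_W(\mu)$ is a subgroup of $C_W(\lambda)$, proper in the second case, and applies the orbit–stabilizer formula. Your write-up just supplies the standard Coxeter-group fact (distinct subsets of $\Delta$ generate distinct standard parabolics) that the paper leaves implicit.
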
\noindent
\begin{proof}
From the definition of centralizer it is clear that 
$\,C_W(\mu)\,$ is a subgroup of $\,C_W(\lambda)\,$ and a proper one
if $\,\langle\mu,\,\alpha_i\rangle\,\neq\,0\,$ and
$\,\langle\lambda,\,\alpha_i\rangle\,=\,0.\,$ This yields the assertion.
\end{proof}

If $\,Y\subseteq \Delta$, then
we write $\,W(Y)\,=\,\langle\,s_{\alpha}\,/\,\alpha\in Y\,\rangle\,$ for the
subgroup of $\,W\,$ generated by the reflections $\,s_{\alpha}\,$
corresponding to roots $\,\alpha\in Y$. 
If $\,Y\,$ is a root (sub)system, then $\,W(Y)\,$ denotes the Weyl
group associated with the system $\,Y\,$.
Moreover, we denote
$\,\Delta_i\,=\,\Delta-\{\alpha_i\},\;\,\Delta_{ij}\,=\,\Delta
-\{\alpha_i,\alpha_j\},\,$
and so on. Note that $\,C_W(\mu)\,=\,W(Y)$, for some $\,Y\subseteq \Delta$.

In the following lemmas, $\,i_0\,$ and $\,i_{-1}\,$ are both $\,0.\,$ 
Recall that $\,P\,(=\,P(G)\,)\,$ denotes the weight lattice of the
group $\,G\,$.
\begin{lemma}\label{orban}
Let $\,G\,$ be an algebraic group of type $\,A_{\ell}.\,$ Let 
$\,\mu\,=\,a_{i_1}\,\omega_{i_1}\,+\,a_{i_2}\,\omega_{i_2}\,+\,\cdots\,+
\,a_{i_m}\,\omega_{i_m}\,\in\,P$, 
with $\,i_{k-1}\,<\,i_{k}\,$ and $\,a_{i_k}\neq 0\,$ for $\,1\leq
k\leq m.\,$ Then 
\[
|W\mu|\,=\,\binom{i_2}{i_1}\,\cdots\,\binom{i_m}{i_{m-1}}\,
\binom{\ell+1}{i_m}.
\]
\end{lemma}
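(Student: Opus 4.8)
The plan is to compute the order of the stabilizer $C_W(\mu)$ in the Weyl group $W$ of type $A_\ell$ and then invoke the formula $|W\mu| = |W|/|C_W(\mu)|$ quoted from \cite[1.10]{seitz1}. Recall that for type $A_\ell$ one has $|W| = (\ell+1)!$, since $W \cong S_{\ell+1}$ acts by permuting the coordinates $\varepsilon_1, \ldots, \varepsilon_{\ell+1}$ (modulo the diagonal), and the simple root $\alpha_j$ corresponds to the transposition $s_{\alpha_j} = (j,\,j+1)$. The centralizer $C_W(\mu) = \langle s_{\alpha_j} \,/\, a_j = 0 \rangle$ is generated by the transpositions $(j, j+1)$ for those $j \notin \{i_1, \ldots, i_m\}$; these generate the Young subgroup of $S_{\ell+1}$ obtained by partitioning $\{1, \ldots, \ell+1\}$ into the consecutive blocks determined by the ``cut points'' $i_1 < i_2 < \cdots < i_m$.

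First I would make the block decomposition explicit: setting $i_0 = 0$ and $i_{m+1} = \ell+1$, the parabolic subgroup $C_W(\mu)$ is the product $S_{i_1 - i_0} \times S_{i_2 - i_1} \times \cdots \times S_{i_{m+1} - i_m}$, so that
\[
|C_W(\mu)| \,=\, i_1!\,(i_2 - i_1)!\,(i_3 - i_2)!\,\cdots\,(\ell+1-i_m)!\,.
\]
Then I would form the quotient:
\[
|W\mu| \,=\, \frac{(\ell+1)!}{i_1!\,(i_2-i_1)!\,\cdots\,(\ell+1-i_m)!}\,.
\]
The final step is to recognize this multinomial coefficient as a telescoping product of binomial coefficients. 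Multiplying and dividing by the intermediate factorials $i_2!,\, i_3!, \ldots,\, i_m!$, one checks directly that
\[
\frac{(\ell+1)!}{i_1!\,(i_2-i_1)!\,\cdots\,(\ell+1-i_m)!}
\,=\,\binom{i_2}{i_1}\binom{i_3}{i_2}\cdots\binom{i_m}{i_{m-1}}\binom{\ell+1}{i_m}\,,
\]
since $\binom{i_2}{i_1} = \frac{i_2!}{i_1!\,(i_2-i_1)!}$, and each successive factor $\binom{i_{k+1}}{i_k}$ cancels the $i_k!$ in the denominator of the previous one against the $i_k!$ appearing in the numerator $i_{k+1}!/(i_{k+1}-i_k)!\cdot(\ldots)$, leaving exactly the claimed expression. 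This matches the statement of the lemma.

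I expect the only real point requiring care to be the identification of $C_W(\mu)$ with the correct Young (parabolic) subgroup, i.e.\ checking that ``omitting $s_{\alpha_j}$ for $j \in \{i_1,\ldots,i_m\}$'' splits $\{1,\ldots,\ell+1\}$ into precisely the blocks $\{1,\ldots,i_1\}, \{i_1+1,\ldots,i_2\}, \ldots, \{i_m+1,\ldots,\ell+1\}$; once the convention $i_0 = i_{-1} = 0$ and the identification $s_{\alpha_j} = (j,j+1)$ are in place, this is immediate from the standard description of parabolic subgroups of $S_{\ell+1}$. The binomial rewriting is then a routine factorial manipulation, so the lemma follows.
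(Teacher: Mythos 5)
Your proof is correct and follows the same route as the paper: both identify $C_W(\mu)$ with the parabolic (Young) subgroup generated by the simple reflections $s_{\alpha_j}$ with $j\notin\{i_1,\dots,i_m\}$, compute $|C_W(\mu)|=i_1!\,(i_2-i_1)!\cdots(i_m-i_{m-1})!\,(\ell+1-i_m)!$, and obtain $|W\mu|$ as the resulting multinomial coefficient, rewritten as the telescoping product of binomials. The only cosmetic difference is that the paper describes the blocks by counting Dynkin-diagram nodes between consecutive $i_k$'s, whereas you describe them directly as consecutive blocks of $\{1,\dots,\ell+1\}$ under $W\cong S_{\ell+1}$.
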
\noindent
\begin{proof} For $\,G\,$ of type $\,A_{\ell},\,$
$\,|W|\,=\,(\ell+1)!\,$ and
$\,C_W(\mu)\,\cong\,S_{n_1+1}\times S_{n_2+1}\times\cdots\times
S_{n_{m+1}+1},\,$ 
where the $\,n_k$'s are obtained from the Dynkin diagram in the 
following way: $\,n_1= i_1 -1,\quad n_{m+1}= \ell - i_m,\;$
$\,n_k =i_k-1-i_{k-1}=$ number of 
nodes between the $\,i_{k-1}$th and the $\,i_{k}$th nodes, for $\,1<k\leq m$.  
Therefore, $\,|\,C_W(\mu)\,|\,=\,i_1!\,(i_2-i_1)!\,\cdots
\,(i_m-i_{m-1})!\,(\ell-i_m+ 1)!$, yielding the result.
\end{proof}

\begin{lemma}\label{orbbncndn}
Let $\,G\,$ be an algebraic group of type $\,B_{\ell},\,C_{\ell}\,$ or
$\,D_{\ell}$. Let $\,\mu\,=\,a_{i_1}\,\omega_{i_1}\,+\,
a_{i_2}\,\omega_{i_2}\,+\,\cdots\,+\,a_{i_m}\,\omega_{i_m}\,\in\, P$,
with $\,i_{k-1}\,<\,i_{k}\,$ and 
$\,a_{i_k}\neq 0\,$ for \mbox{$\,1\leq k\leq m\,$}. Then 
\[
|W\mu|\,=\,2^r\,\binom{i_2}{i_1}\,\cdots\,\binom{i_{m-1}}{i_{m-2}}\,
\binom{t}{i_{m-1}}\,\binom{\ell}{t}
\]
where $\,r=t=i_m\,$ if $\,G\,$ is not of type $\,D_{\ell}\,$ or 
$\,i_{m}<\ell -1;\,$ if $\,G\,$ is of type $\,D_{\ell},\,$
and $\,i_{m}=\ell-1\,$ or ($\,i_{m}=\ell\,$ and $\,i_{m-1}\leq
\ell-2\,$), then $\,r=\ell-1,\,t=\ell;\,$ for ($\,i_{m}=\ell\,$ and 
$\,i_{m-1}=\ell-1\,$), $\,r=\ell-1\,=\,t\,=\,i_{m-1}.\,$
\end{lemma}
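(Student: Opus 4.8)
The statement is a formula for the size of the Weyl orbit $|W\mu|$ in types $B_\ell$, $C_\ell$, $D_\ell$, and the natural approach is exactly the one used in Lemma~\ref{orban} for type $A$: compute $|C_W(\mu)|$ as the order of the parabolic subgroup $W(Y)$ with $Y = \{\alpha_i \;/\; a_i = 0\}$, and divide $|W|$ by it. The main point to organise is the structure of the parabolic subgroup generated by a set of simple roots in a diagram of type $B/C/D$: removing the nodes $\alpha_{i_1},\dots,\alpha_{i_m}$ from the Dynkin diagram disconnects it into a string of connected pieces, and $W(Y)$ is the direct product of the Weyl groups of those pieces.

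First I would fix Bourbaki's labelling (as the excerpt already stipulates) so that $\alpha_\ell$ is the ``special'' node: the short end-node for $B_\ell$, the long end-node for $C_\ell$, and one of the two fork-nodes for $D_\ell$. Removing $\{\alpha_{i_1},\dots,\alpha_{i_m}\}$ leaves connected components which, reading from the $\alpha_1$ end, are type-$A$ segments of sizes $i_1-1,\ i_2-i_1-1,\ \dots,\ i_m-i_{m-1}-1$, followed by a terminal component containing the nodes $\alpha_{i_m+1},\dots,\alpha_\ell$. The first $m-1$ contributions give the binomial factors $\binom{i_2}{i_1}\cdots\binom{i_{m-1}}{i_{m-2}}$ after one divides $|W|$ by the factorials $i_1!\,(i_2-i_1)!\cdots(i_{m-1}-i_{m-2})!$, exactly as in type $A$. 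The remaining factor $\binom{t}{i_{m-1}}\binom{\ell}{t}$ then records how $|W|/\bigl(i_1!\cdots(i_{m-1}-i_{m-2})!\bigr)$ splits between the last $A$-segment (of size $t - i_{m-1}$, contributing $(t-i_{m-1})!$) and the terminal component (of type $B$, $C$, or $D$, contributing a Weyl-group order of the form $2^{?}\,(\ell-t)!$), with $|W| = 2^\ell\,\ell!$ for $B_\ell,C_\ell$ and $2^{\ell-1}\,\ell!$ for $D_\ell$.

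The genuinely case-dependent part — and the only real obstacle — is keeping track of the power of $2$, i.e.\ the parameter $r$, which depends on whether the terminal component is trivial, a type-$A$ diagram, or still contains the ``branching'' structure of $B/C/D$. For $B_\ell$ or $C_\ell$, or for $D_\ell$ with $i_m \le \ell-2$, the terminal component $\{\alpha_{i_m+1},\dots,\alpha_\ell\}$ is again of type $B$, $C$, or $D_{\ell-i_m}$; in the $B/C$ cases its Weyl group has order $2^{\ell-i_m}(\ell-i_m)!$, which combines with the $A$-tail to give $t = r = i_m$ and the stated formula after cancelling $2^{i_m}$ into $2^\ell$. The subtlety is the $D_\ell$ situations where one or both fork-nodes $\alpha_{\ell-1},\alpha_\ell$ survive: if exactly one of them is in $Y$ the remaining nodes $\alpha_{i_{m-1}+1},\dots,\alpha_\ell$ form a single $A_{\ell-i_{m-1}}$ diagram (hence $r = \ell-1$, $t = \ell$, and the $2$-power comes entirely from $|W(D_\ell)| = 2^{\ell-1}\ell!$), whereas if $i_{m-1} = \ell-1$ and $i_m = \ell$ both fork nodes lie in $Y$ and the last segment is the $A_{\ell-2}$ string $\{\alpha_1,\dots,\alpha_{\ell-2}\}$ cut at $\alpha_{i_{m-1}}=\alpha_{\ell-1}$, giving $r = t = i_{m-1} = \ell-1$. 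I would handle these three $D_\ell$ sub-cases explicitly, checking in each that $|W(D_\ell)|/|W(Y)|$ reduces to the claimed product; the $A$- and $B/C$-segment bookkeeping is then entirely routine and parallels Lemma~\ref{orban} verbatim.
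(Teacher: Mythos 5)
Your proposal is correct and follows essentially the same route as the paper: identify $C_W(\mu)$ with the parabolic subgroup $W(Y)$ for $Y=\{\alpha_i\,/\,a_i=0\}$, read off its direct-product decomposition from the components of the punctured Dynkin diagram, and divide $|W|$ by its order, with exactly the same three-way case split at the $D_\ell$ fork. (One wording slip: in the case $i_{m-1}=\ell-1,\ i_m=\ell$ both fork nodes are \emph{removed}, i.e.\ neither lies in $Y$ — your computation and conclusion $r=t=\ell-1$ are nevertheless the right ones.)
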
\noindent
\begin{proof}
For $\,G\,$ of type $\,B_{\ell}\,$ or $\,C_{\ell},\,$
$\,|W|\,=\,2^{\ell}\,\ell!.\,$ For $\,i_m\,<\,\ell-1\,$ we have
$\,C_W(\mu)\,\cong\,S_{n_1+1}\times S_{n_2+1}\times\cdots\times
S_{n_{m}+1}\times W(B_{\ell-i_m}\;\mbox{or}\;C_{\ell-i_m}),\,$ 
where the $\,n_k$'s ($\,1\leq k\leq m\,$) are obtained from the 
Dynkin diagram as in the proof of Lemma~\ref{orban}.
Thus $\,|\,C_W(\mu)\,|\,=\,i_1!\,(i_2-i_1)!\,\cdots
\,(i_m-i_{m-1})!\,2^{\ell-i_m}\,(\ell-i_m)!\,$ and so
\[
|W\mu|\,=\,2^{i_m}\,\binom{i_2}{i_1}\,\cdots\,\binom{i_{m-1}}{i_{m-2}}\,
\binom{i_m}{i_{m-1}}\,\binom{\ell}{i_m}.
\]
Hence, in these cases, $\,r=t=i_m.\,$
 
For $\,i_m\,=\,\ell-1,\,$ 
$\,C_W(\mu)\,\cong\,S_{n_1+1}\times S_{n_2+1}\times\cdots\times
S_{n_{m}+1}\times S_2\,$ so
$\,|\,C_W(\mu)\,|\,=\,i_1!\,(i_2-i_1)!\,\cdots
\,(i_{m-1}-i_{m-2})!\,(\ell-1-i_{m-1})!\,2!\,$ and this implies
\[
|W\mu|\,=\,2^{\ell-1}\,\binom{i_2}{i_1}\,\cdots\,\binom{i_{m-1}}{i_{m-2}}\,
\binom{\ell-1}{i_{m-1}}\,\binom{\ell}{\ell-1}.
\]
Here $\,r=\ell-1\,=t=i_m.\,$

For $\,i_m\,=\,\ell,\,$
$\,C_W(\mu)\,\cong\,S_{n_1+1}\times S_{n_2+1}\times\cdots\times
S_{n_{m}+1},\,$ so
\[
|W\mu|\,=\,2^{\ell}\,\binom{i_2}{i_1}\,\cdots\,\binom{i_{m-1}}{i_{m-2}}\,
\binom{\ell}{i_{m-1}}\,\binom{\ell}{\ell}\,,
\]
and $\,r=\ell=t=i_m.\,$ 

For $\,G\,$ of type $\,D_{\ell},\,$ 
$\,|W|\,=\,2^{\ell-1}\,\ell!.\,$ For $\,i_m\,<\,\ell-1\,$ we have
$\,C_W(\mu)\,\cong\,S_{n_1+1}\times S_{n_2+1}\times\cdots\times
S_{n_{m}+1}\times W(D_{\ell-i_m}),\,$ 
where the $\,n_k$'s ($\,1\leq k\leq m\,$) are obtained from the 
Dynkin diagram as in the proof of Lemma~\ref{orban}.
Thus $\,|\,C_W(\mu)\,|\,=\,i_1!\,(i_2-i_1)!\,\cdots
\,(i_m-i_{m-1})!\,2^{\ell-i_m-1}\,(\ell-i_m)!\,$ and so
\[
|W\mu|\,=\,2^{i_m}\,\binom{i_2}{i_1}\,\cdots\,\binom{i_{m-1}}{i_{m-2}}\,
\binom{i_m}{i_{m-1}}\,\binom{\ell}{i_m}.
\]
Here $\,r=t=i_m.\,$

For $\,i_m\,=\,\ell-1\,$ (or $\,i_m\,=\,\ell\,$ and
$\,i_{m-1}\,\leq\,\ell-2\,$),
$\,C_W(\mu)\,\cong\,S_{n_1+1}\times S_{n_2+1}\times\cdots\times
S_{n_{m-1}+1}\times S_{\ell-1-i_{m-1}+1}\,$ so
\[
|W\mu|\,=\,2^{\ell-1}\,\binom{i_2}{i_1}\,\cdots\,\binom{i_{m-1}}{i_{m-2}}\,
\binom{\ell}{i_{m-1}}\,\binom{\ell}{\ell}.
\]
Here $\,r=\ell-1,\,t=\ell.\,$ 
For $\,i_m\,=\,\ell\,$ and $\,i_{m-1}\,=\,\ell-1\,$
$\,C_W(\mu)\,\cong\,S_{n_1+1}\times S_{n_2+1}\times\cdots\times
S_{n_{m-1}+1},\,$ so
\[
|W\mu|\,=\,2^{\ell-1}\,\binom{i_2}{i_1}\,\cdots\,\binom{i_{m-2}}{i_{m-3}}\,
\binom{\ell-1}{i_{m-2}}\,\binom{\ell}{\ell-1}\,,
\]
whence $\,r=\ell-1\,=\,t=i_{m-1}.\,$ This proves the Lemma.
\end{proof}


\begin{lemma}\label{orbexcp}
Some formulae for exceptional groups.

(1) Let $\,G\,$ be of type $\,E_6.\,$ Then $\,|W|\,=\,2^7\cdot 3^4\cdot 5\,$,
\begin{gather*}
C_W(\omega_1)\,\cong\,C_W(\omega_6)\,\cong\,W(D_5),\quad 
C_W(\omega_2)\,\cong\,S_6,\\
C_W(\omega_3)\,\cong\,C_W(\omega_5)\,\cong\,S_2\times S_5,\quad 
C_W(\omega_4)\,\cong\,S_2\times S_3^2,\\
C_W(\omega_1\,+\,\omega_2)\,\cong\,C_W(\omega_1\,+\,\omega_3)\,\cong\,
C_W(\omega_2\,+\,\omega_6)\,\cong\,C_W(\omega_5\,+\,\omega_6)\,\cong\,S_5,\\
C_W(\omega_1\,+\,\omega_4)\,\cong\,C_W(\omega_3\,+\,\omega_4)\,\cong\,
C_W(\omega_3\,+\,\omega_5)\,\cong\,C_W(\omega_4\,+\,\omega_5)\\
\cong\,C_W(\omega_4\,+\,\omega_6)\,\cong\,S_2^2\times S_3,\\
C_W(\omega_1\,+\,\omega_5)\,\cong\,C_W(\omega_2\,+\,\omega_3)\,\cong\,
C_W(\omega_2\,+\,\omega_5)\,\cong\,C_W(\omega_3\,+\,\omega_6)\,\cong\,
S_2\times S_4,\\
C_W(\omega_1\,+\,\omega_6)\,\cong\,W(D_4),\quad
C_W(\omega_2\,+\,\omega_4)\,\cong\,S_3^2,\quad
C_W(\omega_2\,+\,\omega_3\,+\,\omega_5)\,\cong\,S_2^3.
\end{gather*}

(2) Let $\,G\,$ be of type $\,E_7.\,$ Then 
$\,|W|\,=\,2^{10}\cdot 3^4\cdot 5\cdot 7\,$, 
\begin{gather*}
C_W(\omega_1)\,\cong\,W(D_6),\quad C_W(\omega_2)\,\cong\,S_7,\quad
C_W(\omega_3)\,\cong\,S_2\times S_6,\\
C_W(\omega_4)\,\cong\,S_2\times S_3\times S_4,\quad 
C_W(\omega_5)\,\cong\,S_3\times  S_5,\quad 
C_W(\omega_6)\,\cong\,S_2\times W(D_5),\\ 
C_W(\omega_7)\,\cong\,W(E_6),\quad
C_W(\omega_1\,+\,\omega_2)\,\cong\,C_W(\omega_1\,+\,\omega_3)\,\cong\,
C_W(\omega_2\,+\,\omega_7)\,\cong\,S_6,\\
C_W(\omega_1\,+\,\omega_4)\,\cong\,S_2^2\times S_4,\quad
C_W(\omega_1\,+\,\omega_5)\,\cong\,C_W(\omega_2\,+\,\omega_4)\,\cong\,
S_3\times S_4,\\
C_W(\omega_1\,+\,\omega_6)\,\cong\,S_2\times W(D_4),\quad
C_W(\omega_1\,+\,\omega_7)\,\cong\,
C_W(\omega_6\,+\,\omega_7)\,\cong\,W(D_5),\\
C_W(\omega_3\,+\,\omega_7)\,\cong\,C_W(\omega_5\,+\,\omega_7)\,\cong\,
S_2\times S_5,\quad
C_W(\omega_4\,+\,\omega_6)\,\cong\,S_2^3\times S_3,\\
C_W(\omega_4\,+\,\omega_7)\,\cong\,S_2\times S_3^2,\quad
C_W(\omega_2\,+\,\omega_3\,+\,\omega_5)\,\cong\,S_2^2\times S_3.
\end{gather*}

(3) Let $\,G\,$ be of type $\,E_8.\,$ Then 
$\,|W|\,=\,2^{14}\cdot 3^5\cdot 5^2\cdot 7\,$,
\begin{gather*}
C_W(\omega_1)\,\cong\,W(D_7),\quad C_W(\omega_2)\,\cong\,S_8,\quad
C_W(\omega_3)\,\cong\,S_2\times S_7,\\
C_W(\omega_4)\,\cong\,S_2\times S_3\times S_5,\quad 
C_W(\omega_5)\,\cong\,S_4\times S_5,\\
C_W(\omega_6)\,\cong\,S_3\times W(D_5),\quad
C_W(\omega_7)\,\cong\,S_2\times W(E_6),\\
C_W(\omega_8)\,\cong\,W(E_7),\quad
C_W(\omega_1\,+\,\omega_4)\,\cong\,C_W(\omega_5\,+\,\omega_7)\,\cong\,
\,S_2^2\times S_5,\\
C_W(\omega_1\,+\,\omega_8)\,\cong\,W(D_6),\quad 
C_W(\omega_2\,+\,\omega_8)\,\cong\,S_7,\quad 
C_W(\omega_3\,+\,\omega_8)\,\cong\,S_2\times S_6,  
\end{gather*}
\vspace*{-2ex}
\begin{gather*}
C_W(\omega_4\,+\,\omega_6)\,\cong\,S_2^2\times S_3^2,\quad
C_W(\omega_4\,+\,\omega_8)\,\cong\,S_2\times S_3\times S_4,\\
C_W(\omega_5\,+\,\omega_8)\,\cong\,S_3\times S_5,\quad
C_W(\omega_6\,+\,\omega_8)\,\cong\,S_2\times W(D_5),\\
C_W(\omega_7\,+\,\omega_8)\,\cong\,W(E_6),\quad
C_W(\omega_2\,+\,\omega_3\,+\,\omega_5)\,\cong\,S_2^2\times S_4.
\end{gather*}

(4) Let $\,G\,$ be of type $\,F_4.\,$ Then $\,|W|\,=\,2^{7}\cdot 3^2\,$, 
\begin{gather*}
C_W(\omega_1)\,\cong\,C_W(\omega_4)\,\cong\,W(C_3),\quad
C_W(\omega_2)\,\cong\,C_W(\omega_3)\,\cong\,S_2\times S_3,\\
C_W(\omega_1\,+\,\omega_2)\,\cong\,C_W(\omega_3\,+\,\omega_4)\,\cong\,
\,S_3,\quad
C_W(\omega_1\,+\,\omega_4)\,\cong\,W(C_2),\\
C_W(\omega_1\,+\,\omega_3)\,\cong\,C_W(\omega_2\,+\,\omega_3)\,\cong\,
C_W(\omega_2\,+\,\omega_4)\,\cong\,\,S_2^2, \\
C_W(\omega_1\,+\,\omega_2\,+\,\omega_4)\,\cong\,S_2.
\end{gather*}

(5) Let $\,G\,$ be of type $\,G_2.\,$ Then 
$\,|W|\,=\,12,\quad C_W(\omega_1)\,\cong\,C_W(\omega_2)\,\cong\,S_2\,$.
\end{lemma}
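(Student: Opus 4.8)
The final statement to prove is Lemma~\ref{orbexcp}, which is a compilation of explicit isomorphism types for the Weyl-group centralizers $C_W(\mu)$ of various dominant weights $\mu$ for the exceptional root systems $E_6, E_7, E_8, F_4, G_2$, together with the orders $|W|$.

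\medskip

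The plan is as follows. The orders $|W|$ for the exceptional types are classical and can simply be quoted from \cite[\S 2.11]{hum1} (or any standard reference on root systems); that disposes of the first assertion in each of the five parts. For the centralizer computations, the key fact — already recorded just before the lemma — is that for $\mu = \sum a_i\omega_i$ one has $C_W(\mu) = W(Y)$ where $Y = \{\alpha_i \in \Delta : a_i = 0\}$ is the set of simple roots orthogonal to $\mu$; this follows because $s_{\alpha_i}$ fixes $\omega_j$ for $j\neq i$ and fixes $\omega_i$ iff $i \notin \mathrm{supp}(\mu)$, and because the stabilizer of a point in a closed Weyl chamber is generated by the reflections in the walls containing it (a standard fact, e.g. \cite[\S 1.12, \S 1.15]{hum1}). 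So the entire computation reduces to: for each weight $\mu$ appearing in the statement, form the subdiagram of the Dynkin diagram on the vertices \emph{not} in $\mathrm{supp}(\mu)$, and read off its type (a product of diagrams of types $A$, $D$, $E$), whose Weyl group is the corresponding product of symmetric groups $S_{k+1} = W(A_k)$ and the groups $W(D_k)$, $W(E_k)$.

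\medskip

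Concretely I would proceed type by type, using Bourbaki's numbering of simple roots \cite[Ch.~VI]{bourb2} (as the paper has fixed). For $E_6$ with $\mu=\omega_1$: delete vertex $1$; the remaining diagram on $\{2,3,4,5,6\}$ is of type $D_5$ (the branch node $4$ together with the chain $3$–$4$–$5$–$6$ and the pendant $2$), giving $C_W(\omega_1)\cong W(D_5)$; and symmetrically for $\omega_6$ by the graph automorphism. For $\mu = \omega_2$: delete vertex $2$; the rest is the chain $1$–$3$–$4$–$5$–$6$, type $A_5$, so $C_W(\omega_2)\cong S_6$. Continuing in this fashion handles $\omega_3,\omega_4,\omega_5$ and all the listed sums $\omega_i+\omega_j$ (and the triple $\omega_2+\omega_3+\omega_5$) for $E_6$; then the same mechanical deletion procedure is carried out for $E_7$, $E_8$, $F_4$, $G_2$. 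For $F_4$ and $G_2$ one must remember that the subdiagram obtained by deleting the short end of $F_4$ is of type $C_3$ (the double bond survives with a long-root component), which is why $C_W(\omega_1)\cong W(C_3)$ rather than $W(B_3)$ — but $W(B_3)\cong W(C_3)$ as abstract groups, so there is no genuine ambiguity, and likewise for the $W(C_2)$ entries. For $G_2$, deleting either vertex leaves a single node, so $C_W(\omega_i)\cong S_2$.

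\medskip

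The only real care needed — and the one place errors creep in — is the bookkeeping: matching Bourbaki's root labels to the correct diagram shape after deletion, and in particular correctly identifying which deletions produce $D$-type or $E$-type components (e.g. in $E_7$, deleting vertex $1$ leaves $D_6$; deleting vertex $7$ leaves $E_6$; deleting $\{1,6\}$ leaves $D_4$ — the branch node $4$ with its three neighbours $2,3,5$). So I do not expect a conceptual obstacle; the ``hard part'' is purely the finite but somewhat lengthy verification that each of the roughly sixty listed centralizers is read off from the correct subdiagram. This can be organized as a single table (weight, support, complementary subdiagram, Weyl group) and checked entry by entry; the orbit sizes $|W\mu| = |W|/|C_W(\mu)|$ used elsewhere in the thesis then follow immediately.
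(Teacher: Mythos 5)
Your proposal is correct and is essentially the paper's own argument: the paper likewise quotes $|W|$ from Bourbaki's tables and computes each $C_W(\mu)$ as $W(Y)$ with $Y=\{\alpha_i\in\Delta : a_i=0\}$ (the remark before Lemma~\ref{remark}, citing \cite[1.10]{seitz1}), then identifies the type of the complementary subdiagram. The only quibble is a slip in one of your illustrative examples — in $E_7$ deleting $\{\alpha_1,\alpha_6\}$ leaves $D_4$ \emph{plus} the isolated vertex $\alpha_7$, whence the $S_2\times W(D_4)$ in the statement — but this does not affect the method.
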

\begin{proof}
The information on $\,W(G)\,$ is extracted from Tables 5-8 of~\cite{bourb}.
Centralizers are found by using the remark preceeding
Lemma~\ref{remark}.
\end{proof}

\begin{lemma} \label{lamu}
Let $\,\lambda\in \Lambda_p\,$ and $\,\mu\,\in\,P_{++}\,$ be such that
$\,\mu < \lambda\,$ (that is, 
$\,\lambda - \mu\,=\,$ sum of positive roots). Then 
$\,{\cal X}_{\mathbb C}(\mu)\,\subseteq\,{\cal X}(\lambda)$, where
$\,{\cal X}_{\mathbb C}(\mu)\,$ denotes the set of weights of a
complex $\,{\mathfrak g}_{\mathbb C}$-module of highest weight $\,\mu\,$.
\end{lemma}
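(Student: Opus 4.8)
Let $\,\lambda\in\Lambda_p\,$ and $\,\mu\in P_{++}\,$ with $\,\mu<\lambda\,$. Then $\,{\cal X}_{\mathbb C}(\mu)\subseteq{\cal X}(\lambda)\,$, where $\,{\cal X}_{\mathbb C}(\mu)\,$ denotes the set of weights of a complex $\,\mathfrak g_{\mathbb C}$-module of highest weight $\,\mu\,$, and $\,{\cal X}(\lambda)={\cal X}(\pi)\,$ for the irreducible modular $\,G$-module $\,E(\lambda)\,$.

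The plan is as follows. The starting point is the identification of the weight sets in terms of dominant weights and the root lattice. First I would recall from Theorem~\ref{premprin} that, since $\,p\,$ is non-special and $\,\lambda\in\Lambda_p\,$, we have the clean description $\,{\cal X}_{++}(\pi)=(\lambda-Q_+)\cap P_{++}\,$, where $\,Q_+=\{\sum c_k\alpha_k\,/\,c_k\in\mathbb Z^+\}\,$; and both $\,{\cal X}(\pi)\,$ and $\,{\cal X}_{\mathbb C}(\mu)\,$ are $\,W$-invariant, so it suffices to compare their dominant parts. By the same formula (this time applied in characteristic zero, which is the classical statement in \cite[Chap.~VIII \S7]{bourb3} cited in Section~\ref{construct}), the dominant weights of a complex module of highest weight $\,\mu\,$ are exactly $\,{\cal X}_{++,\mathbb C}(\mu)=(\mu-Q_+)\cap P_{++}\,$.

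With these two descriptions in hand the proof reduces to a purely combinatorial containment: I must show $\,(\mu-Q_+)\cap P_{++}\subseteq(\lambda-Q_+)\cap P_{++}\,$. Take $\,\nu\in(\mu-Q_+)\cap P_{++}\,$, so $\,\nu\,$ is dominant and $\,\mu-\nu\in Q_+\,$, i.e. $\,\nu\le\mu\,$. By hypothesis $\,\mu<\lambda\,$ means $\,\lambda-\mu\in Q_+\,$. Adding, $\,\lambda-\nu=(\lambda-\mu)+(\mu-\nu)\in Q_+\,$, so $\,\nu\le\lambda\,$; since $\,\nu\,$ is already dominant, $\,\nu\in(\lambda-Q_+)\cap P_{++}\,$. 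Applying $\,W\,$ to both sides and using $\,{\cal X}_{\mathbb C}(\mu)=W\cdot{\cal X}_{++,\mathbb C}(\mu)\,$, $\,{\cal X}(\lambda)=W\cdot{\cal X}_{++}(\pi)\,$, we conclude $\,{\cal X}_{\mathbb C}(\mu)\subseteq{\cal X}(\lambda)\,$.

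The only subtlety — and hence the step I would be most careful about — is the invocation of Theorem~\ref{premprin}: it requires $\,p\,$ non-special for $\,G\,$ and, for $\,\lambda=\omega_1\,$ in type $\,G_2\,$, also $\,p\neq2\,$. So strictly the lemma as I would prove it carries the same standing hypothesis on $\,p\,$ that is in force throughout this chapter; I would state this explicitly rather than leave it implicit. (Without $\,{\cal X}(\pi)={\cal X}(\pi_{\mathbb C})\,$ one still gets $\,{\cal X}(\lambda)\subseteq{\cal X}_{\mathbb C}(\lambda)\,$, but the reverse containment needed here genuinely uses Premet's result.) A secondary point worth a line is that $\,\mu<\lambda\,$ is being used in the strict sense $\,\lambda-\mu\in Q_+\,$ as defined in the excerpt, but the argument works verbatim if one only assumes $\,\mu\le\lambda\,$, since $\,Q_+\,$ is a monoid closed under addition; no strictness is actually needed.
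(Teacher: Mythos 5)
Your proof is correct and follows essentially the same route as the paper's: reduce to dominant weights by $W$-invariance, describe both dominant weight sets as $(\mu-Q_+)\cap P_{++}$ and $(\lambda-Q_+)\cap P_{++}$, and conclude from $\lambda-\gamma=(\lambda-\mu)+(\mu-\gamma)\in Q_+$. Your explicit remark that the description ${\cal X}_{++}(\lambda)=(\lambda-Q_+)\cap P_{++}$ rests on Theorem~\ref{premprin} and hence on the standing non-special-$p$ hypothesis is a useful clarification that the paper leaves implicit.
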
\noindent
\begin{proof} As $\,{\cal X}_{\mathbb C}(\mu)\,$ and $\,{\cal
X}(\lambda)\,$ are $\,W\,$-invariant, it is enough to prove that 
\mbox{$\,{\cal X}_{++,\mathbb C}(\mu)\,\subseteq\,{\cal X}_{++}(\lambda)$}.

Let $\,\gamma\,\in\,{\cal X}_{++,\mathbb C}(\mu)\,=\,(\mu - Q_+) \cap
P_{++}$. Then $\,\mu\,-\,\gamma \,\in\,Q_+\,$ and $\,\gamma\in P_{++}$.
By hypothesis, $\,\lambda\,-\,\mu\,\in\,Q_+\,$ so 
$\,\lambda - \gamma + \gamma - \mu\,\in\,Q_+\,$ implying that
$\,\lambda - \gamma\,\in\,Q_+$. Hence, 
$\,\gamma\,\in\,(\lambda - Q_+) \cap P_{++}\,=\,\,{\cal
X}_{++}(\lambda)$, proving the lemma.
\end{proof}
\bigskip
\noindent

The following lemma is a very useful fact on multiplicities.

\begin{lemma}\label{mulmin}
Let $\,V\,$ be an $\,A_{\ell}(K)$-module of highest weight
$\,\lambda\,=\,\omega_i\,+\,\omega_j\,$, where $\,1\leq i<j \leq
\ell$. Then the weight $\,\mu\,=\,\omega_{i-1}\,+\,\omega_{j+1}\,\in
\,{\cal X}_{++}(V)\,$ has multiplicity $\,j-i+1\,$ 
if $\,p\,$ does not divide $\,j-i+2,\,$ and $\,j-i\,$ otherwise. 
\end{lemma}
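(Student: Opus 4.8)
The plan is to compute the multiplicity of $\mu = \omega_{i-1}+\omega_{j+1}$ in $V = E(\lambda)$ with $\lambda = \omega_i + \omega_j$ by realizing everything inside $\mathfrak{sl}_{\ell+1}$ in terms of standard basis vectors. By Theorem~\ref{premprin}, since $p$ is non-special and $\mu$ is a weight of the complex module $V_{\mathbb C}$ below $\lambda$, the multiplicity of $\mu$ in $E(\lambda)$ equals its multiplicity in the complex irreducible module $V_{\mathbb C}$ of highest weight $\lambda$; so it suffices to work over $\mathbb C$ (with the one proviso that when $p\mid(j-i+2)$ the answer drops, which I will have to track separately — see below). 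For $A_\ell$, writing weights in the $\varepsilon$-coordinates, $\omega_i = \varepsilon_1+\cdots+\varepsilon_i$ (modulo the trace), the module $V_{\mathbb C}(\omega_i+\omega_j)$ is the Cartan component of $\bigwedge^i(\mathbb C^{\ell+1}) \otimes \bigwedge^j(\mathbb C^{\ell+1})$, i.e. the image of the highest-weight vector $(e_1\wedge\cdots\wedge e_i)\otimes(e_1\wedge\cdots\wedge e_j)$ under $U(\mathfrak n^-)$. The weight $\mu = \omega_{i-1}+\omega_{j+1}$ corresponds in $\varepsilon$-coordinates to $(\varepsilon_1+\cdots+\varepsilon_{i-1}) + (\varepsilon_1+\cdots+\varepsilon_{j+1})$, which differs from $\lambda$ by $\alpha_i + (\alpha_i\!+\!\alpha_{i+1}) + \cdots$; concretely $\mu$ is obtained from $\lambda$ by moving one index from position $i$ down to position $j+1$ in a suitable sense.

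Next I would compute the dimension of the $\mu$-weight space of the ambient tensor product $\bigwedge^i\otimes\bigwedge^j$ and then cut down by the other composition factors. A cleaner route: decompose $\bigwedge^i(\mathbb C^{\ell+1})\otimes\bigwedge^j(\mathbb C^{\ell+1})$ into irreducibles by the Littlewood--Richardson / Pieri-type rule for two single-column partitions — this tensor product is multiplicity-free, and its constituents are $E(\omega_{i-r}+\omega_{j+r})$ for $0\le r\le i$ (with the convention $\omega_0=\omega_{\ell+1}=0$, and subject to $j+r\le \ell+1$). Then the multiplicity of $\mu=\omega_{i-1}+\omega_{j+1}$ in $V=E(\omega_i+\omega_j)$ equals (dim of $\mu$-weight space in $\bigwedge^i\otimes\bigwedge^j$) minus (its multiplicity coming from $E(\omega_{i-1}+\omega_{j+1})$ itself, which contributes with the multiplicity of its own highest weight, namely $1$) minus (contributions from $E(\omega_{i-r}+\omega_{j+r})$ for $r\ge 2$, which vanish because $\omega_{i-r}+\omega_{j+r} \not\le \mu$ unless $r\le 1$) — wait, I need $\mu$ itself to be a weight of those lower constituents. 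Since $\mu = \omega_{i-1}+\omega_{j+1}$ is the highest weight of the $r=1$ constituent and is strictly above $\omega_{i-r}+\omega_{j+r}$ for no $r$, actually $\mu$ is \emph{not} a weight of $E(\omega_{i-r}+\omega_{j+r})$ for $r\ge 2$ (those have smaller highest weights incomparable to or below $\mu$ in a way that excludes $\mu$); and $\mu$ \emph{is} a weight, of multiplicity one, in each of $E(\omega_i+\omega_j)$ (the one we want, multiplicity $=m_\mu$) and $E(\omega_{i-1}+\omega_{j+1})$ (multiplicity $1$, being the highest weight). So $m_\mu = \dim(\bigwedge^i\otimes\bigwedge^j)_\mu - 1$.

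The remaining task is the combinatorial count of $\dim(\bigwedge^i\otimes\bigwedge^j)_\mu$: the $\mu$-weight space is spanned by tensors $e_S\otimes e_T$ with $|S|=i$, $|T|=j$, and $\sum_{s\in S}\varepsilon_s + \sum_{t\in T}\varepsilon_t = \mu$ in $X/pX$ — careful, over $\mathbb C$ there is no reduction mod $p$, so it is an honest equality of weights of $\mathfrak{gl}_{\ell+1}$ after fixing the determinant twist. Counting the pairs $(S,T)$ of an $i$-subset and a $j$-subset of $\{1,\dots,\ell+1\}$ whose multiset union realizes the content of $\mu$ is a short binomial computation; I expect it to give $j-i+2$, whence $m_\mu = j-i+1$ in the generic case. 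The genuinely delicate point — and the one I flag as the main obstacle — is the case $p\mid(j-i+2)$, where Theorem~\ref{premprin} still guarantees $\mu$ is a weight of $E(\lambda)$, but the weight space of the Weyl module $V(\lambda)$ can exceed that of $E(\lambda)$ precisely here. To handle it I would look at the $\mathfrak{sl}_2$-string for the relevant root (the $\alpha$ with $\langle\lambda,\alpha^\vee\rangle$ involving the coefficient $j-i+2$) and show, via a rank computation of the Shapovalov-type pairing modulo $p$ (or by invoking Burgoyne-style elementary-divisor arguments as cited after Theorem~\ref{premprin}), that exactly one elementary divisor becomes divisible by $p$, dropping the multiplicity from $j-i+1$ to $j-i$. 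This last step is where the ``$p$ does not divide $j-i+2$'' hypothesis does its work, and it is the piece requiring the most care; the rest is Pieri's rule plus counting.
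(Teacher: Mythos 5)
Your characteristic-zero computation is sound: the tensor product $\textstyle\bigwedge^i\otimes\bigwedge^j$ is multiplicity-free with constituents $E(\omega_{i-r}+\omega_{j+r})$, the $\mu$-weight space of the tensor product has dimension $j-i+2$ (the pairs $(S,T)$ are parametrized by which single element of $\{i,\dots,j+1\}$ lands in $S$), and $\mu$ is not a weight of the constituents with $r\ge 2$, so the complex (equivalently, Weyl-module) multiplicity is $j-i+1$. The gap is everything after that, and it is exactly where the content of the lemma lies. Theorem~\ref{premprin} asserts only that the \emph{set} of weights of $E(\lambda)$ coincides with that of $\pi_{\mathbb C}$; it says nothing about multiplicities, so it does not give you $m_\mu=j-i+1$ even when $p\nmid j-i+2$. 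Your treatment of the modular multiplicity is a plan, not a proof: showing that the contravariant form on the $(j-i+1)$-dimensional weight space $V(\lambda)_\mu$ of the Weyl module has full rank mod $p$ when $p\nmid j-i+2$ and corank exactly one when $p\mid j-i+2$ is precisely the computation that must be carried out, and you have not done it. Moreover the specific device you propose cannot work as stated: $\mu=\lambda-\beta$ for the single positive root $\beta=\varepsilon_i-\varepsilon_{j+1}$ with $\langle\lambda,\beta\rangle=2$, so the $\beta$-string through the highest weight vector sees only the one-dimensional span of $f_\beta\cdot v_0$ and cannot detect the behaviour of a $(j-i+1)$-dimensional weight space.

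The paper avoids the Gram-matrix computation entirely by a Levi reduction. Since $\lambda-\mu=\alpha_i+\alpha_{i+1}+\cdots+\alpha_j$ is supported on the simple roots $\alpha_i,\dots,\alpha_j$, Smith's theorem~\cite{smith} identifies $V_\mu$ with a weight space of the irreducible module, for the subalgebra $A=\langle e_{\pm\alpha_i},\dots,e_{\pm\alpha_j}\rangle$ of type $A_{j-i+1}$, whose highest weight is the restriction $\omega_1'+\omega_{j-i+1}'$ of $\lambda$; this is the simple head of the adjoint module of $\mathfrak{sl}(j-i+2)$, and $\mu$ restricts to its zero weight. The zero-weight multiplicity there is classical: $j-i+1$ if $p\nmid j-i+2$ (the algebra is simple), and $j-i$ if $p\mid j-i+2$ (a one-dimensional centre is factored out). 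If you want to complete your argument, either substitute this Levi/Smith reduction for your $\mathfrak{sl}_2$ sketch, or genuinely compute the rank mod $p$ of the $(j-i+1)\times(j-i+1)$ Gram matrix of the contravariant form on $V(\lambda)_\mu$; as it stands the proposal establishes only the Weyl-module multiplicity.
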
\noindent
\begin{proof}
Let $\,A\,$ be the Lie algebra (or group) generated by
$\,\{\,e_{\alpha_i=\alpha_1'},\ldots,\,e_{\alpha_j=\alpha_{j-i+1}'}\,\}.\,$ 
Note that $\,A\,$ is isomorphic to a Lie algebra (or group) of type
$\,A_{j-i+1}\,$. 
Consider the $\,A$-module $\,U\,=\,V|_A$. $\,U\,$ is a nontrivial
homomorphic image of $\,V(\tilde{\alpha}')\,\cong\,\mathfrak{sl}(j-i+2),\,$ 
the adjoint module with respect to $\,A$. 
The minuscule weight of $\,U\,$ is
$\,\mu\,=\,\omega'_{i-1}\,+\,\omega'_{j+1}\,=\,0'\,$. It 
has multiplicity $\,j-i+1\,$ if $\,p\,$ does not divide $\,j-i+2,\,$
and multiplicity $\,j-i\,$ otherwise (for 
the Lie algebra of type $\,A_n\,$ is simple if $\,p\,$ does not divide
$\,n+1\,$ and has one dimensional centre otherwise (in this case the
quotient algebra is simple)).
Now, by Smith's Theorem~\cite{smith}, the result follows.
\end{proof}

\subsubsection{Realizing $\,E(2\omega_1)\,$}\label{roapm}

In this subsection we describe the realization of the irreducible 
$\,B_{\ell}(K)$- or $\,D_{\ell}(K)$-module of highest weight $\,2\omega_1\,$,
denoted by $\,E(2\omega_1)\,$. We need the following definition 
due to Donkin and Jantzen.

\begin{definition}\label{weylfilt}{\rm\cite[11.5]{donk1}}
A descending chain $\,V\,=\,V_1\supset V_2\supset V_3\supset \cdots\,$
of submodules of a $\,G$-module $\,V\,$ is called a {\bf Weyl filtration}
if each $\,V_i/V_{i+1}\,$ is isomorphic to some $\,V_K(\lambda_i)\,$ with 
$\,\lambda_i\,\in\,P_{++}\,$.
\end{definition}

\begin{proposition}\label{filttensor}{\rm\cite[11.5.2]{donk1},\cite{mathi}}
Let $\,\lambda,\,\mu\,\in\,P_{++}\,$. Then $\,V_K(\lambda)\otimes V_K(\mu)\,$
has a Weyl filtration.
\end{proposition}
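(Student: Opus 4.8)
The statement asserts that for dominant weights $\lambda,\mu$, the tensor product $V_K(\lambda)\otimes V_K(\mu)$ of two Weyl modules admits a Weyl filtration. The natural line of attack is the standard ``Donkin--Mathieu'' argument, and since the paper explicitly cites \cite{donk1} and \cite{mathi} for this proposition, the plan is to reproduce the key reduction steps rather than to reprove the deepest input from scratch. First I would recall the contravariant/homological characterisation of modules with a Weyl filtration: a finite dimensional $G$-module $M$ has a Weyl filtration if and only if $\mathrm{Ext}^1_G(M,H(\nu))=0$ for all dominant $\nu$ (equivalently $\mathrm{Ext}^i_G(M,H(\nu))=0$ for all $i\geq 1$), where $H(\nu)$ is the induced module (dual Weyl module) appearing in Theorem~\ref{univprop}'s surrounding discussion. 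Dually, $N$ has a \emph{good} filtration (a filtration by the $H(\nu)$'s) iff $\mathrm{Ext}^1_G(V_K(\nu),N)=0$ for all dominant $\nu$. Using that $V_K(\lambda)^*\cong H(-w_0\lambda)$ and that the two notions are exchanged by the duality $M\mapsto M^*$ composed with the $-w_0$ twist, it suffices to show that $H(\lambda)\otimes H(\mu)$ has a good filtration.

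The core of the argument is then the theorem of Donkin (for $p>h$ or for most groups) and of Mathieu (in full generality, in all characteristics) that the tensor product of two modules with good filtrations again has a good filtration; equivalently, $H(\lambda)\otimes H(\mu)$ has a good filtration for all dominant $\lambda,\mu$. I would cite this as the essential input, exactly as the paper does. Granting it, the passage back to Weyl filtrations is formal: apply the duality functor $M\mapsto {}^\tau\!M := (M^*)^{[-w_0]}$, which sends $H(\nu)$ to $V_K(\nu)$ and is an exact contravariant self-equivalence commuting with tensor products up to the appropriate twist; since ${}^\tau(H(\lambda)\otimes H(\mu))\cong V_K(\lambda')\otimes V_K(\mu')$ for suitable $\lambda',\mu'$ and a good filtration is carried to a Weyl filtration, one concludes. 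Re-indexing $\lambda',\mu'$ back to arbitrary $\lambda,\mu$ (the twist by $-w_0$ is a bijection on $P_{++}$) gives the statement for all dominant pairs.

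The step I expect to be the genuine obstacle — and the one I would not attempt to grind through — is precisely the Donkin--Mathieu theorem that good filtrations are stable under tensor product in arbitrary characteristic; Mathieu's proof uses Frobenius splitting techniques (or, in Donkin's approach under mild hypotheses on $p$, an inductive argument via the translation principle and the structure of $\mathrm{Ext}$-groups). For the purposes of this thesis it is entirely legitimate to invoke it as a black box, which is what the cited references provide. A secondary, much easier point to be careful about is the bookkeeping of the $-w_0$ twists when passing between Weyl modules, induced modules, and their duals; this is routine but must be stated correctly so that ``good filtration of $H(\lambda)\otimes H(\mu)$'' really does translate into ``Weyl filtration of $V_K(\lambda)\otimes V_K(\mu)$'' for the \emph{same} pair after re-indexing. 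So the write-up is short: quote the $\mathrm{Ext}$-vanishing criterion, quote Donkin--Mathieu, and perform the duality bookkeeping.
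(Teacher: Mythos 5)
Your proposal is correct and matches the paper's treatment: the thesis gives no proof of this proposition at all, simply citing Donkin \cite{donk1} and Mathieu \cite{mathi}, which is exactly the black-box invocation you describe. The extra detail you supply (the $\mathrm{Ext}$-vanishing criterion and the $-w_0$-twisted duality converting good filtrations of $H(\lambda)\otimes H(\mu)$ into Weyl filtrations of $V_K(\lambda)\otimes V_K(\mu)$) is accurate standard bookkeeping and does not diverge from the cited route.
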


Let $\,V=E(\omega_1)\,$ be the irreducible module of highest
weight $\,\omega_1\,$ and \mbox{$\,\dim\,V = n$}. Let $\,f\,$ be a
non-degenerate quadratic form on $\,V\,$. There exist a basis of 
$\,V\,$ with respect to which $\,f\,$ has the form 
$\,f\,=\,x_1^2\,+\,x_2^2\,+\,\cdots\,+\,x_n^2\,$, where the $\,x_i$'s
are indeterminates.

Let $\,\mathfrak g\,=\,\mathfrak{so}(f)\,=\,\{ x\in\,{\rm End}(V)\,/\,
f(x\cdot v,\,w)\,+\,f(v,\,x\cdot w)\,=\,0,\;\mbox{for}\;v,\,w\in V\}$.
For $\,n\,=\,2\ell+1\,$, $\,\mathfrak g\,$ is a Lie algebra of type 
$\,B_{\ell}\,$ and for $\,n\,=\,2\ell\,$, $\,\mathfrak g\,$ is a Lie
algebra of type $\,D_{\ell}\,$.
Identify $\,\mathfrak{so}(f)\,$ with the space $\,{\rm Skew}_n\,$ 
of all $\,n\times n\,$ skew-symmetric matrices over $\,K$.

Let $\,G\,=\,\SO (f)\,$. Denote by $\,{\rm Symm}_n\,$ the space of
all $\,n\times n\,$ symmetric matrices over $\,K$. Then $\,{\rm Symm}_n\,$
and $\,S^2(V)\,$ are isomorphic as $\,G$-modules. Hence, we can
identify $\,S^2(V)\,$ with $\,{\rm Symm}_n\,$. 

The Lie algebra $\,\mathfrak{so}(f)\,$ acts on the space $\,{\rm Symm}_n\,$ as
follows. Given $\,x\,\in\,\mathfrak{so}(f)\,$ and $\,s\,\in\,{\rm Symm}_n\,$ 
one has $\,x(s)\,=\,xs\,-\,sx\,$. It is easy to see that $\,S^2(V)\,$ is an 
$\,\mathfrak{so}(f)$-module containing a highest
weight vector of weight $\,2\omega_1\,$. Hence, so does $\,{\rm Symm}_n\,$.

Let $\,U\,=\,{\rm Symm}_n\cap\mathfrak{sl}(n)\,$. Then $\,\mathfrak{so}(f)\,$  
acts on $\,U\,$. Since $\,\dim\,({\rm Symm}_n/U)= 1$, 
$\,U\,$ also has a highest weight vector of weight $\,2\omega_1\,$. 

By~Proposition~\ref{filttensor}, $\,{\rm End}(V)\,\cong\,V\otimes V\,
\cong\,{\rm Symm}_n\oplus {\rm Skew}_n\,$ has Weyl filtration. It follows  
that $\,S^2(V)\,$ has Weyl filtration as well~\cite[Chapter 4]{andjan}.

Let $\,V(2\omega_1)\,$ be the Weyl module with highest weight $\,2\omega_1\,$. 
Then, by Weyl's dimension formula \cite[24.3]{hum1},
$\,\dim\,V(2\omega_1)=\displaystyle\frac{(n+2)(n-1)}{2}=\mbox{$\dim\,
{\rm Symm}_n -1$}$. As $\,\dim\,S^2(V)\,=\,\displaystyle 
\frac{n\,(n+1)}{2}\,$, the members of the Weyl filtration of  
$\,S^2(V)\,$ are $\,V(2\omega_1)\,$ and $\,V(0),\,$ the
trivial module (by dimension reasons). \vspace{2ex}

{\bf Claim 1}: $\,V(2\omega_1)\,$ is a submodule of $\,S^2(V).\,$ 

Indeed, suppose $\,V(2\omega_1)\,\cong\,\displaystyle\frac{S^2(V)}{V(0)}\,$.
Let $\,\bar v\,$ be the highest weight vector of weight $\,2\omega_1\,$ 
in $\,\displaystyle\frac{S^2(V)}{V(0)}$. Let $\,v\,$ be
the preimage of $\,\bar v\,$ in $\,S^2(V)_{2\omega_1}\,$. All
$\,T$-weights of $\,S^2(V)\,$ are $\,\leq\,2\omega_1\,$ with respect
to our partial ordering. Hence, $\,v\,$ is a highest weight vector
of $\,S^2(V)$. Let $\,\tilde V\,$ denote the $\,G$-submodule of
$\,S^2(V)\,$ generated by $\,v\,$. By the universal property of Weyl 
modules (see Theorem~\ref{univprop}(3)), there exists
a $\,G$-epimorphism $\,\varphi\,:\,V(2\omega_1)\longrightarrow \tilde
V\,$. On the other hand, as $\,V(2\omega_1)\,\cong\,\displaystyle 
\frac{S^2(V)}{V(0)}\,$, the image of $\,\tilde V\,$ in 
$\,\displaystyle\frac{S^2(V)}{V(0)}\,$ is the whole $\,\displaystyle
\frac{S^2(V)}{V(0)}$. Thus, there is a $\,G$-epimorphism $\,\psi\,:\,
\tilde V\longrightarrow V(2\omega_1)\,$. It follows that 
$\,\tilde V\,\cong\,V(2\omega_1)\,$ by Theorem~\ref{univprop}(3),  
whence the claim. \vspace{2ex}


{\bf Claim 2}: $\,V(2\omega_1)\,\cong\,{\rm Symm}_n\cap\mathfrak{sl}(n)\,$.

First note that $\,G\,=\,\SO (f)\,$ acts by conjugation on the space  
$\,{\rm Mat}_n\,$ of all $\,n\times n\,$ matrices over $\,K$, and 
$\,{\rm Symm}_n\,$ is invariant under this action. Hence \mbox{$\,U\,=\, 
{\rm Symm}_n\cap\mathfrak{sl}(n)\,$} is a $\,G$-submodule of $\,{\rm
Symm}_n\,$.

Now $\,U\,$ contains all vectors of nonzero weight (otherwise  
$\,{\cal X}({\rm Symm}_n/U)\,$ would contain a nonzero weight,  
contradicting $\,\dim\,({\rm Symm}_n/U)\,=1\,$). Moreover, each weight
of $\,U\,$ is $\,\leq\,2\omega_1\,$, and $\,2\omega_1\,$ has
multiplicity $\,1\,$ (as this is true for $\,S^2(V)\,$). As 
$\,{\rm Symm}_n\,\cong\,S^2(V)\,$ as $\,G$-modules, Claim 1 now shows
that $\,U\,\cong\,V(2\omega_1)\,$, proving Claim 2.

{\bf Claim 3}: $\,V(2\omega_1)\,$ is an irreducible
$\,\mathfrak{so}(f)$-module if and only if $\,p\nmid n\,$.
If $\,p\mid n\,$, then $\,E(2\omega_1)\,\cong\,\displaystyle
\frac{{\rm Symm}_n\cap\mathfrak{sl}(n)}{scalars}\,$.

Let $\,\Phi(2\omega_1)\,$ be the maximal submodule of $\,V(2\omega_1)$.
Recall that $\,\displaystyle\frac{V(2\omega_1)}{\Phi(2\omega_1)}\,\cong 
\,E(2\omega_1)\,$. By Theorem~\ref{theop1}, $\,{\cal X}(E(2\omega_1))\,=\, 
{\cal X}(V(2\omega_1))\,$. One can show that each nonzero 
$\,\mu\,\in\,{\cal X}(V(2\omega_1))\,$ has multiplicity $\,1$. This implies
$\,\Phi(2\omega_1)\,=\,a\,E(0)\,$, for some $\,a\geq 0$. 

The group $\,G\,$ acts trivially on $\,\Phi(2\omega_1)\,$. 
As $\,\Phi(2\omega_1)\subseteq {\rm Symm}_n\cap
\mathfrak{sl}(n)\,$ and $\,G\,$ acts on$\,{\rm Symm}_n\,$ by
conjugation, $\,\Phi(2\omega_1)\,$ consists of scalar matrices, 
by Schur's Lemma~\cite[1.5]{isaacs}. Hence,  
$\,\dim\,\Phi(2\omega_1)\,\leq\,1\,$. As $\,\Phi(2\omega_1)\subseteq 
\mathfrak{sl}(n)\,$, $\,\Phi(2\omega_1)\,\neq\,0\,$ if and only if $\,p\mid n$.
Hence, if $\,p\nmid n$, then $\,V(2\omega_1)\,\cong\,E(2\omega_1)\,$
is irreducible. If $\,p\mid n$, then $\,E(2\omega_1)\,\cong\,\displaystyle
\frac{{\rm Symm}_n\cap\mathfrak{sl}(n)}{scalars}\,$, as claimed.


      
\subsection{The Procedure}\label{proced}

Let $\,V\,$ be a rational $\,G$-module. 
Our objective throughout this work is to classify the exceptional
$\,\mathfrak g$-modules. 
The procedure used in the classification relies heavily on the 
necessary conditions given by Theorem~\ref{???}. We deal with the set
$\,{\cal X}_{++}(V)\,$ of weights of the module $\,V$. Hence we need
to guarantee that weights really occur in $\,{\cal X}_{++}(V)$. 
 
{\bf From now on assume that $\,p\,$ is non-special for $\,G\,$ and
also that $\,p\neq 2\,$ if $\,V\,$ has highest weight $\,\omega_1\,$ 
for groups of type $\,G_2\,$}. 

Under these assumptions, Theorem~\ref{premprin} says that {\it the system
of weights of an infinitesimally irreducible representation
$\,\pi\,:\,G\longrightarrow \GL(V),\,$ with highest weight
$\,\lambda\in\Lambda_p\,$ coincides 
with the system of weights of an irreducible complex representation 
$\,\pi_{\mathbb C}\,$ of a Lie algebra $\,{\mathfrak g}_{\mathbb C}\,$
with the same highest weight.} In particular, the set of dominant
weights of the representation $\pi$ is 
$\,{\cal X}_{++}(\lambda)=\,(\lambda - Q_+)\,\cap\,P_{++}$.

Thus, in the sequel {\bf module} means {\bf an infinitesimally irreducible 
finite dimensional rational
$\,G$-module $\,V\,$ of highest weight $\,\lambda\in\Lambda_p\,$}.
In particular, $\,V\,$ is an irreducible (restricted) $\,\mathfrak g$-module
(cf. Theorem~\ref{curt}). \vspace{2ex}

We now describe our procedure.
Recall that by Theorem~\ref{???}, an exceptional 
$\,\mathfrak g$-module $\,V\,$ satisfies the inequalities
\begin{gather*}
s(V)\,=\,\sum_{\stackrel{\scriptstyle\mu\,good}{\mu\in{\cal X}_{++}(V)}}
\,m_{\mu}\,|W\mu|\,\leq\,\mbox{{\bf limit}}\qquad\qquad\mbox{and}
\vspace{1.3ex}\\
r_p(V)\,=\,\sum_{\stackrel{\scriptstyle\mu\;good}{\mu\in{\cal X}_{++}(V)}}
\,m_{\mu}\,\frac{|W\mu|}{|R_{long}|}\,
|R_{long}^+-R^+_{\mu,p}|\,\leq\,|R|\,.
\end{gather*}
Thus, we start by eliminating all possible modules of highest weight 
$\,\lambda\,$ for which 
\begin{equation}\label{?ref}
s(V)\,=\,\sum_{\stackrel{\scriptstyle\mu\,good}{\mu\in{\cal X}_{++}(V)}}
\,m_{\mu}\,|W\mu|\;>\;\mbox{{\bf limit}}\,, 
\end{equation} 
since by Theorem~\ref{???}, $\,G$-modules 
$\,V\,$ satisfying~\eqref{?ref} are {\bf not} exceptional.
In general, we proceed as follows:

Given a weight $\,\mu\in{\cal X}_{++}(V)\,$ we produce
weights $\,\mu =\mu_0,\,\mu_1,\,\mu_2,\,\ldots\,\in\,{\cal
X}_{++,\mathbb C}(\mu)\,\subseteq\,{\cal X}_{++}(V)\,$ and calculate 
the sum of the orbit sizes
\[
s_1(\mu)\,=\,\sum_{\stackrel{\scriptstyle \mu_i\in{\cal X}_{++,\mathbb
C}(\mu)}{\mu_i\,good}}\,|W\mu_i|\,.
\] 
If this sum is already bigger than the respective {\bf limit}, then we apply:

\begin{proposition}\label{criteria}
Let $\,\lambda\,$ be the highest weight of $\,V.\,$
If $\,\mu\,\in\,{\cal X}_{++}(V)\,$ and 
$\,s_1(\mu)\,=\,\displaystyle\sum_{\stackrel{\scriptstyle \mu_i\,good}{\mu_i
\in{\cal X}_{++,\mathbb C}(\mu)}}\,|W\mu_i|\,>\,\mbox{{\bf limit}},\,$ then 
$\,s(V)\,>\,\mbox{{\bf limit}}\,$. Hence $\,V\,$ is not an exceptional
$\,\mathfrak g$-module. In particular, if $\,\mu\,$ is a good weight
in $\,{\cal X}_{++}(V)\,$ and $\,|W\mu|\,>\,\mbox{{\bf limit}},\,$ 
then $\,V\,$ is not an exceptional $\,\mathfrak g$-module.
\end{proposition}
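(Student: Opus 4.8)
The plan is to derive everything from the necessary condition of Theorem~\ref{???} by a monotonicity estimate, using the weight-comparison Lemma~\ref{lamu} to control which dominant weights actually occur in $V$. Recall that the standing assumptions of this section ($p$ non-special for $G$, and $p\neq 2$ when $\lambda=\omega_1$ in type $G_2$) are exactly those needed for Theorem~\ref{premprin}, which gives ${\cal X}(\pi)={\cal X}(\pi_{\mathbb C})$ and hence ${\cal X}_{++}(V)=(\lambda-Q_+)\cap P_{++}$. So the hypothesis $\mu\in{\cal X}_{++}(V)$ is precisely the statement $\mu\le\lambda$, and Lemma~\ref{lamu} yields ${\cal X}_{\mathbb C}(\mu)\subseteq{\cal X}(\lambda)={\cal X}(V)$; intersecting with $P_{++}$ gives ${\cal X}_{++,\mathbb C}(\mu)\subseteq{\cal X}_{++}(V)$ (the case $\mu=\lambda$ being trivial since then the two sets coincide).

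Next I would compare the two sums. Every $\nu\in{\cal X}_{++}(V)$ satisfies $V_\nu\neq(0)$, so its multiplicity $m_\nu\ge 1$, and all orbit sizes $|W\nu|$ are positive; moreover "goodness" of a weight (Definition~\ref{badweight}) depends only on the weight, not on the ambient module. Hence
\[
s(V)=\sum_{\substack{\nu\ good\\ \nu\in{\cal X}_{++}(V)}} m_\nu\,|W\nu|\ \ge\ \sum_{\substack{\nu\ good\\ \nu\in{\cal X}_{++}(V)}}|W\nu|\ \ge\ \sum_{\substack{\mu_i\ good\\ \mu_i\in{\cal X}_{++,\mathbb C}(\mu)}}|W\mu_i|\ =\ s_1(\mu),
\]
where the last inequality is simply restriction of a sum of non-negative terms to the subset ${\cal X}_{++,\mathbb C}(\mu)\subseteq{\cal X}_{++}(V)$. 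Therefore $s_1(\mu)>\mbox{\bf limit}$ forces $s(V)>\mbox{\bf limit}$, which by Theorem~\ref{???} (inequality~\eqref{?ref123}) is impossible for an exceptional module; so $V$ is not exceptional.

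For the "in particular" clause I would just note that $\mu\in{\cal X}_{++,\mathbb C}(\mu)=(\mu-Q_+)\cap P_{++}$, so when $\mu$ is good the single term $|W\mu|$ is one of the summands of $s_1(\mu)$; thus $s_1(\mu)\ge|W\mu|>\mbox{\bf limit}$ and the previous paragraph applies.

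I do not expect a genuine obstacle here: the entire content is the inclusion ${\cal X}_{++,\mathbb C}(\mu)\subseteq{\cal X}_{++}(V)$ together with $m_\nu\ge1$, after which it is a one-line monotonicity estimate. The only point needing a little care is invoking Theorem~\ref{premprin} correctly so that the weights counted by $s_1(\mu)$ really do occur in $V$ with positive multiplicity — and this, along with the compatibility of the partial order with passing to dominant weights of the complex module, is already recorded in the excerpt (Lemma~\ref{lamu}, Theorem~\ref{premprin}).
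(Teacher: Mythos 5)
Your argument is correct and is essentially the paper's own proof, which simply notes the inclusion $\,{\cal X}_{++,\mathbb C}(\mu)\subseteq{\cal X}_{++}(V)\,$ (justified in the text by Lemma~\ref{lamu} together with Theorem~\ref{premprin}) and then applies inequality~\eqref{?ref}. You have merely spelled out the monotonicity step ($m_\nu\ge 1$ and restriction of the sum to a subset) that the paper leaves implicit.
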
\noindent
\begin{proof}
Just note that $\,{\cal X}_{++,\mathbb C}(\mu)\,\subseteq\,{\cal X}_{++}(V)\,$
and use~(\ref{?ref}).
\end{proof}

In the process, we usually obtain first certain reduction lemmas,
which in most cases say that if a module $\,V\,$ is such
that $\,{\cal X}_{++}(V)\,$ contains good weights with many (at least
$\,3\,$ or $\,4\,$) nonzero coefficients, then $\,V\,$ is not an exceptional
module. See, for example, Lemmas~\ref{4coef},~\ref{3orless},~\ref{3coefbn},  
\ref{3coefcn},~\ref{3cdn},~\ref{e61},~\ref{e71},~\ref{e81}.

In some cases we calculate the quantities  
$\,|R_{long}^+-R^+_{\mu_i,p}|\,$ for each of the weights 
$\,\mu =\mu_0,\,\mu_1,\,\mu_2,\,\ldots\,\in\,{\cal
X}_{++,\mathbb C}(\mu)\,\subseteq\,{\cal X}_{++}(V)\,$ and prove that 
\begin{equation}\label{rpv2}
r_p(V)\,=\,\sum_{\stackrel{\scriptstyle\mu\;good}{\mu\in{\cal X}_{++}(V)}}
\,m_{\mu}\,\frac{|W\mu|}{|R_{long}|}\,
|R_{long}^+-R^+_{\mu,p}|\,>\,|R|\,.
\end{equation}
If \eqref{rpv2} holds for $\,V\,$ then, again by Theorem~\ref{???},
$\,V\,$ is not exceptional.

Sometimes we assume that $\,V\,$ has highest weight $\,\mu\,$. Then we use 
information on the multiplicities of weights in $\,{\cal X}_{++}(V)\,$
to get the inequality~(\ref{?ref}) or \eqref{rpv2}.
We frequently use the works of Gilkey-Seitz~\cite{gise} and  
Burgoyne-Williamson~\cite{buwil}, which give 
tables of multiplicities of weights for representations of exceptional
and low-rank classical Lie algebras, respectively. 
In some cases, we calculate multiplicities by
using some Linear Algebra and the fact that 
$\,V\,$ has a basis consisting of the elements $\,f_{i_1}^{m_{i_1}}\, 
f_{i_2}^{m_{i_2}}\,\cdots\,f_{i_k}^{m_{i_k}}\cdot v_0\,$, where
$\,v_0\,$ is a highest weight vector of $\,V\,$ (cf. Lemma~\ref{bor42}). 

Using this procedure we end up with a small list of weights, whose
corresponding modules may or may not be exceptional.  
Then we have to decide precisely which ones are exceptional. 

To prove that a module $\,V\,$ is not exceptional it is enough to
exhibit a nonzero vector $\,v\in V\,$ for which $\,\mathfrak
g_v\,\subset\,\mathfrak z(\mathfrak g)\,$ or $\,\mathfrak g_v\,=\,0$,
but this is not an easy task as it depends on how the
representation is realized.

The following argument relates to the dimension of $\,V$. 
Let $\,\varepsilon\,=\,\dim\,\mathfrak z(\mathfrak g)\,$.

\begin{proposition}\label{dimcrit}
Let $\,V\,$ be an irreducible $\,\mathfrak g$-module. 
If $\,\dim\,V\,<\,\dim\,{\mathfrak g}\,-\,\varepsilon,\,$
then $\,V\,$ is an exceptional module.
\end{proposition}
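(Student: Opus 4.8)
The statement asserts that a small enough irreducible module is automatically exceptional. The plan is to show that every vector $v\in V$ is annihilated by some non-central element of $\mathfrak g$, and the natural candidates are the root vectors $e_\alpha$ (which are $p$-nilpotent, hence non-central by Lemma~\ref{centretoral}). First I would recall from Remark~\ref{after} (a consequence of Proposition~\ref{dimxvxg} together with Corollary~\ref{maxcent}) that $\dim e_{\tilde\alpha}\cdot V\le |R|$; more generally, since all long root vectors are $(\Ad G)$-conjugate to $e_{\tilde\alpha}$ and $\Ad G$ preserves dimensions, $\dim e_\alpha\cdot V\le |R|$ for every long root $\alpha$, and by the same conjugacy argument applied to the short roots (if any), every $e_\alpha$ with $\alpha$ short satisfies $\dim e_\alpha\cdot V\le \dim e_{\alpha_0}\cdot V$, which is again bounded using Lemma~\ref{varep} / Corollary~\ref{maxcent}. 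The key point is that $\dim e_\alpha\cdot V$ is uniformly bounded by $|R|$ (or at worst by the value attained on $\mathcal E$), for all $\alpha\in R$.

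The core of the argument is then a counting/covering estimate. For a fixed nonzero $v\in V$, consider the subspace $\mathfrak g\cdot v=d\pi(\mathfrak g)v$. Since $V$ is irreducible and non-trivial (as $\dim V<\dim\mathfrak g-\varepsilon$ forces $V\not\cong\mathfrak g$, and the $1$-dimensional module is excluded once $\mathfrak g$ is simple — here note $\dim V\ge 2$ unless $V$ is trivial, in which case it is exceptional by Example~\ref{adjoint}), we have $\dim\mathfrak g_v=\dim\mathfrak g-\dim\mathfrak g\cdot v\ge \dim\mathfrak g-\dim V>\varepsilon=\dim\mathfrak z(\mathfrak g)$. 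Hence $\mathfrak g_v$ strictly contains $\mathfrak z(\mathfrak g)$, so it contains a non-central element. That is literally the definition of exceptional, so $V$ is exceptional.

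Wait — I should double-check the one subtle point: the bound $\dim\mathfrak g\cdot v\le\dim V$ is trivial (it is a subspace of $V$), so the inequality $\dim\mathfrak g_v\ge\dim\mathfrak g-\dim V$ is immediate, and the hypothesis $\dim V<\dim\mathfrak g-\varepsilon$ gives $\dim\mathfrak g_v>\varepsilon$ directly. So in fact Remark~\ref{after} and the root-vector discussion are \emph{not} needed at all; the proof is a one-line dimension count once one observes $\dim\mathfrak g_v+\dim\mathfrak g\cdot v=\dim\mathfrak g$ and $\mathfrak g\cdot v\subseteq V$. The only genuine step requiring care is handling the trivial-module and $V\cong\mathfrak g$ edge cases: if $\dim V<\dim\mathfrak g-\varepsilon$ and $V$ is $1$-dimensional, then $\mathfrak g$ acts trivially (as $\mathfrak g$ is perfect when simple), so $\mathfrak g_v=\mathfrak g\not\subseteq\mathfrak z(\mathfrak g)$ and we are done; otherwise $\dim V\ge 2$ and the dimension count applies verbatim.

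So the write-up would be: fix $0\ne v\in V$; since $\mathfrak g\cdot v$ is a subspace of $V$ we have $\dim\mathfrak g\cdot v\le\dim V<\dim\mathfrak g-\varepsilon$; by rank--nullity for the linear map $x\mapsto x\cdot v$ on $\mathfrak g$, $\dim\mathfrak g_v=\dim\mathfrak g-\dim\mathfrak g\cdot v>\varepsilon=\dim\mathfrak z(\mathfrak g)$; since $\mathfrak z(\mathfrak g)\subseteq\mathfrak g_v$ always (central elements annihilate $v$ — they act as scalars by Proposition~\ref{properties}(2) and Schur, and the scalar is $0$ since... hmm, actually central elements need \emph{not} act as zero in general). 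The real obstacle, then, is precisely this: I must justify $\mathfrak z(\mathfrak g)\subseteq\mathfrak g_v$, or rather work with $\dim(\mathfrak g_v/(\mathfrak g_v\cap\mathfrak z(\mathfrak g)))$ directly; the clean fix is that $\dim\mathfrak g_v>\varepsilon\ge\dim(\mathfrak g_v\cap\mathfrak z(\mathfrak g))$ forces $\mathfrak g_v\not\subseteq\mathfrak z(\mathfrak g)$ regardless of whether $\mathfrak z(\mathfrak g)$ lies in $\mathfrak g_v$. That is the argument I would commit to, and the main obstacle is simply being careful that $\dim(\mathfrak g_v\cap\mathfrak z(\mathfrak g))\le\varepsilon$, which is automatic.
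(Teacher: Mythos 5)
Your final argument is correct and is essentially the paper's own proof: the paper also applies rank--nullity to the map $\psi_v\colon x\mapsto x\cdot v$, noting that if $\mathfrak g_v\subseteq\mathfrak z(\mathfrak g)$ then $\dim\psi_v(\mathfrak g)\ge\dim\mathfrak g-\varepsilon>\dim V$, contradicting $\psi_v(\mathfrak g)\subseteq V$. Your observation that one only needs $\dim(\mathfrak g_v\cap\mathfrak z(\mathfrak g))\le\varepsilon$ (rather than $\mathfrak z(\mathfrak g)\subseteq\mathfrak g_v$) is the right resolution of the subtlety you raised, and the preliminary detour through root vectors and Remark~\ref{after} is, as you yourself concluded, unnecessary.
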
\noindent
\begin{proof} In proving the proposition, we may assume that $\,\dim\,V\,>\,1$.
By the irreducibility, $\,V\,$ does not
have $\,1$-dimensional invariant subspaces, so that $\,\dim\,
{\mathfrak g}_v\,<\,\dim\,{\mathfrak g}.\,$ 

Suppose $\,V\,$ is not exceptional. Then there exists $\,0\neq v\,\in\,V\,$ 
such that $\,{\mathfrak g}_v\,\subseteq\,\mathfrak z(\mathfrak g)$. 
Consider the linear map $\,\psi_v\,:\,\mathfrak g\longrightarrow V\,$
given by $\,x\longmapsto x\cdot v\,$. Thus,
$\,\ker\,\psi_v\subseteq \mathfrak z(\mathfrak g)\,$ and, by Linear Algebra,
$\,\dim\,\psi_v(\mathfrak g)\,\geq \dim\,\mathfrak g\,-\,\dim\,
\mathfrak z(\mathfrak g)$. As \mbox{$\,\psi_v(\mathfrak g)\subseteq V\,$},
by contradiction, the result follows.
\end{proof} 
\vspace{2ex}\noindent
\begin{remark} The dimension criteria for a module to be exceptional
(given by Proposition~\ref{dimcrit}) is independent of $\,p$. 
\end{remark}

\medskip

The following proposition gives the list of all rational irreducible
representations of Chevalley groups whose weight multiplicities are
equal to $\,1$.
 
\begin{proposition}{\rm\cite[p. 13]{supruzal}}\label{supruzal}
Let $\,G\neq A_1(K)\,$ be a simply connected simple algebraic group over an
algebraically closed field $\,K\,$ of characteristic $\,p>0$. Define
the set of weights $\,\Omega=\Omega(G)\,$ as follows:

$\,\Omega(A_{\ell}(K))\,=\,\{\,\omega_i,\,a\,\omega_1,\,b\,\omega_{\ell},\,
c\,\omega_j\,+\,(p-1-c)\,\omega_{j+1};\;1\leq i\leq \ell,
\;1\leq j < \ell,\;\,0\leq a,\,b,\,c\,<\,p\,\}\,$;

$\,\Omega(B_{\ell}(K))\,=\,\{\,\omega_1,\,\omega_{\ell}\,\}\,$ for
$\,\ell\geq 3,\;p>2\,$;

$\,\Omega(C_{\ell}(K))\,=\,\{\,\omega_\ell,\;\mbox{for}\;\ell=2,\,3;\;\;
\omega_1,\,\omega_{\ell-1}+\frac{(p-3)}{2}\omega_{\ell},\;
\frac{(p-1)}{2}\omega_{\ell}\,\}\,$ for $\,p>2\,$ and
$\,\Omega(C_{\ell}(K))\,=\,\{\,\omega_1,\,\omega_\ell\,\}\,$ for
$\,p=2\,$;

$\,\Omega(D_{\ell}(K))\,=\,\{\,\omega_1,\,\omega_{\ell-1},\,\omega_\ell\,\}\,$;
\qquad 
$\,\Omega(E_6(K))\,=\,\{\,\omega_1,\,\omega_6\,\}\,$;

$\,\Omega(E_7(K))\,=\,\{\,\omega_7\,\}\,$;
\qquad
$\,\Omega(F_4(K))\,=\,\{\,\omega_4\,\}\,$ for $\,p=3\,$ and
$\,\emptyset\,$ for $\,p\neq 3\,$;

$\,\Omega(G_2(K))\,=\,\{\,\omega_1\,\}\,$ for $\,p\neq 3\,$ and
$\,\Omega(G_2(K))\,=\,\{\,\omega_1,\omega_2\,\}\,$ for $\,p=3\,$.

Let $\,\varphi\,$ be an irreducible rational representation of $\,G\,$
with highest weight
$\,\omega\,=\,\displaystyle\sum_{i=0}^k\,p^i\lambda_i$,
where $\,\lambda_i\,$ are the highest weights of the representations
from $\,M(G)$. \linebreak The multiplicities of all the weights of the given
representation $\,\varphi\,$ are equal to $\,1\,$ if and only if 
$\,\lambda_i=0\,$ or $\,\lambda_i\in\Omega(G)\,$ for $\,0\leq i\leq
k$, $\,\lambda_{i+1}\neq\omega_1\,$ for $\,p=2$,
$\,G=C_{\ell}(K),\,$$\,\lambda_i=\omega_{\ell}\,$ and for
$\,G=G_2(K),\,p=2,\,$ $\,\lambda_i=\omega_1\,$ or
$\,p=3,\,\lambda_i=\omega_2$. 
\end{proposition}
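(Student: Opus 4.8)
The statement to prove is Proposition~\ref{supruzal}, the classification of irreducible rational representations of simply connected simple algebraic groups (other than $\,A_1(K)\,$) all of whose weight multiplicities equal $\,1\,$. Since the claimed result is quoted from \cite[p.~13]{supruzal}, the plan is to reconstruct its proof from the tools assembled earlier in the thesis rather than to reprove it from scratch. The master reduction is Steinberg's Tensor Product Theorem~\ref{bts}: writing the highest weight as $\,\omega=\sum_{i=0}^k p^i\lambda_i\,$ with each $\,\lambda_i\,$ $\,p$-restricted, one has $\,E(\omega)\cong E(\lambda_0)\otimes E(\lambda_1)^{\mathrm{Fr}}\otimes\cdots\otimes E(\lambda_k)^{\mathrm{Fr}^k}\,$, and by the weight-set formula following Theorem~\ref{bts}, $\,{\cal X}(\varphi)={\cal X}(\pi_0)+p\,{\cal X}(\pi_1)+\cdots+p^k{\cal X}(\pi_k)\,$. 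So the first step is to show that all multiplicities of $\,E(\omega)\,$ are $\,1\,$ if and only if all multiplicities of each infinitesimally irreducible factor $\,E(\lambda_i)\,$ are $\,1\,$ \emph{and} the $\,p$-adic ``carrying'' in the sum of weight sets never causes an overlap. The forward direction is easy (a repeated weight in one factor forces one in the tensor product); the subtle point in the converse is precisely the overlap phenomenon, which is why the statement carries the extra technical clauses ($\,\lambda_{i+1}\neq\omega_1\,$ when $\,p=2\,$, $\,G=C_\ell(K)\,$, $\,\lambda_i=\omega_\ell\,$, and the $\,G_2\,$ clauses for $\,p=2,3\,$).

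\textbf{Main steps.} First I would reduce to the infinitesimally irreducible case as above and record the two obstructions to multiplicity-freeness: (a) some $\,E(\lambda_i)\,$ has a weight of multiplicity $\,>1\,$; (b) for some $\,i\,$, two distinct weights $\,\mu,\mu'\in{\cal X}(\pi_i)\,$ and $\,\nu,\nu'\in{\cal X}(\pi_{i+1})\,$ (or longer chains) satisfy $\,\mu+p\nu=\mu'+p\nu'\,$, i.e.\ $\,\mu-\mu'\in pX\,$ with $\,\mu-\mu'\neq 0\,$. For (b) one analyses when a $\,p$-restricted weight set can contain two weights differing by an element of $\,pX\,$; this is exactly the source of the $\,C_\ell\,$ clause (the natural module $\,\omega_1\,$ for $\,p=2\,$, whose weight set is $\,\{\pm\varepsilon_i\}\,$, and $\,\omega_\ell\,$) and of the $\,G_2\,$ exceptions. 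Second, I would classify, type by type, the $\,p$-restricted $\,\lambda\in\Lambda_p\,$ for which $\,E(\lambda)\,$ is multiplicity-free. The main engine here is Theorem~\ref{premprin}/Theorem~\ref{theop1}: for $\,p\,$ non-special, $\,{\cal X}(\pi)={\cal X}(\pi_{\mathbb C})\,$, so the question reduces to the characteristic-zero classification of multiplicity-free irreducibles of the complex simple Lie algebra $\,\mathfrak g_{\mathbb C}\,$ — a classical fact: these are (up to the graph automorphism bookkeeping in $\,\Omega(G)\,$) the minuscule weights, the quasi-minuscule/short-root weights in low rank, and certain $\,A_\ell\,$ weights of the form $\,c\omega_j+(p-1-c)\omega_{j+1}\,$ which are multiplicity-free for \emph{representation-theoretic} reasons specific to $\,A_\ell\,$ (they restrict to minuscule-type modules along $\,A_{j}\times A_{\ell-j}\,$-parabolics; cf.\ the argument of Lemma~\ref{mulmin}). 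The special-prime cases ($\,G_2,\,p=3\,$; $\,B_\ell,C_\ell,F_4,\,p=2\,$) must then be handled separately, using the known character degeneracies (e.g.\ $\,E(\omega_4)\,$ for $\,F_4,\,p=3\,$ has dimension $\,25\,$, $\,E(\omega_1)\,$ for $\,G_2,\,p=2\,$ collapses to the $\,6$-dimensional module), which forces $\,\Omega(F_4(K))=\emptyset\,$ for $\,p\neq3\,$ and the listed $\,G_2\,$ behaviour. Third, I would reassemble: $\,E(\omega)\,$ is multiplicity-free iff each $\,\lambda_i\in\{0\}\cup\Omega(G)\,$ and no forbidden adjacent pair occurs.

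\textbf{Expected main obstacle.} The routine part is the characteristic-zero multiplicity-free classification and the application of Theorem~\ref{premprin} to transport it to characteristic $\,p\,$; the delicate part is (b), the carrying/overlap analysis in Steinberg's tensor factorisation. One must show precisely which pairs $\,(\lambda_i,\lambda_{i+1})\,$ of admissible factors produce a collision $\,\mu+p\nu=\mu'+p\nu'\,$ in the summed weight set, and verify that outside the listed exceptions ($\,p=2\,$ with $\,G=C_\ell\,$, $\,\lambda_i=\omega_\ell\,$, $\,\lambda_{i+1}=\omega_1\,$; the two $\,G_2\,$ cases) no such collision arises even though the factor weight sets themselves are multiplicity-free. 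This requires a careful look at the weight polytopes: a collision needs $\,\mu-\mu'\in pX\setminus\{0\}\,$ with both $\,\mu,\mu'\in{\cal X}(\lambda_i)\,$, so the diameter of $\,{\cal X}(\lambda_i)\,$ in the relevant directions must reach $\,p\,$, which — for $\,\lambda_i\,$ $\,p$-restricted and multiplicity-free — essentially only happens for the natural-type modules of $\,C_\ell\,$ and $\,G_2\,$ in low characteristic. Once this case analysis is pinned down, the ``if and only if'' follows by combining it with the per-factor classification, and the proposition is proved.
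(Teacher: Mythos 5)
The paper itself contains no proof of this proposition: it is imported verbatim from Suprunenko--Zalesski\u{\i} \cite[p. 13]{supruzal} and used as a black box, so there is no internal argument to compare yours against. Judged on its own terms, your outline gets the global architecture right — Steinberg factorisation, per-factor multiplicity-freeness plus a no-collision condition on the summed weight sets $\,{\cal X}(\pi_0)+p\,{\cal X}(\pi_1)+\cdots\,$ — and your identification of the collision analysis as the source of the $\,C_{\ell}\,$ and $\,G_2\,$ adjacency clauses is correct.

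There is, however, a genuine gap in your ``main engine''. Theorem~\ref{premprin} asserts only that the weight \emph{sets} $\,{\cal X}(\pi)\,$ and $\,{\cal X}(\pi_{\mathbb C})\,$ coincide for non-special $\,p\,$; it says nothing about multiplicities, which can strictly drop in passing from $\,V(\lambda)\,$ to $\,E(\lambda)\,$. Consequently the per-factor problem does \emph{not} reduce to the characteristic-zero classification of multiplicity-free irreducibles: that classification yields only the minuscule weights, $\,a\omega_1\,$ and $\,a\omega_{\ell}\,$ for $\,A_{\ell}\,$, and the quasi-minuscule cases, and would miss precisely the genuinely modular members of $\,\Omega(G)\,$ — the $\,A_{\ell}\,$ weights $\,c\omega_j+(p-1-c)\omega_{j+1}\,$ (for instance $\,\omega_1+\omega_2\,$ for $\,A_2\,$, $\,p=3\,$: the adjoint module, whose zero weight has multiplicity $\,2\,$ in characteristic $\,0\,$ but $\,1\,$ in characteristic $\,3\,$) and the $\,C_{\ell}\,$ weights $\,\omega_{\ell-1}+\frac{(p-3)}{2}\omega_{\ell}\,$ and $\,\frac{(p-1)}{2}\omega_{\ell}\,$ (the $\,(p^{\ell}\mp 1)/2\,$-dimensional symplectic modules that are the very subject of \cite{supruzal}), which your sketch omits entirely. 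Conversely, the ``only if'' direction requires a lower bound $\,m_{\mu}\geq 2\,$ for some weight of $\,E(\lambda)\,$ for \emph{every} restricted $\,\lambda\notin\Omega(G)\cup\{0\}\,$, and this cannot be read off from characteristic zero either, since multiplicities only decrease under reduction. Both directions therefore demand genuinely characteristic-$\,p\,$ multiplicity computations of the kind carried out in \cite{presup} and \cite{supruzal}; as written, your argument would produce a different (and incorrect) list $\,\Omega(G)\,$.
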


\begin{proposition}\label{twistexc}
Let $\,\hat{\sigma}\,$ be a graph automorphism of $\,\mathfrak g\,$ induced
by a graph automorphism $\,\sigma\,$ of $\,G$. If $\,V\,$ is an
exceptional $\,\mathfrak g$-module, then so is $\,V^{\hat{\sigma}}\,$.
\end{proposition}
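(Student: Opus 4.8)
The plan is to show that the graph automorphism $\hat\sigma$ simply \emph{relabels} the defining data of an exceptional module, so that the exceptional property is transported verbatim. First I would recall the precise meaning of $V^{\hat\sigma}$: it is the $\mathfrak g$-module with the same underlying vector space $V$, but with action twisted by $\hat\sigma$, i.e. $x \ast v := \hat\sigma^{-1}(x)\cdot v$ (equivalently $x\ast v := \hat\sigma(x)\cdot v$, the choice only affecting which of $\sigma$, $\sigma^{-1}$ appears below, and for the groups in question $\sigma$ has order $2$ or $3$ so this is immaterial). The key observation is that for any $v\in V$ the isotropy subalgebra computed for the twisted action is
\[
(\mathfrak g^{\hat\sigma})_v \,=\, \{\,x\in\mathfrak g \,/\, \hat\sigma^{-1}(x)\cdot v = 0\,\} \,=\, \hat\sigma(\mathfrak g_v)\,.
\]
So the stabilizers for $V^{\hat\sigma}$ are exactly the $\hat\sigma$-images of the stabilizers for $V$.

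Next I would use the fact that $\hat\sigma$ is a Lie algebra automorphism of $\mathfrak g$ (recalled in Section~\ref{graphaut}), hence preserves the centre: $\hat\sigma(\mathfrak z(\mathfrak g)) = \mathfrak z(\mathfrak g)$. Combining this with the displayed identity: if $V$ is exceptional then for every $v\in V$ we have $\mathfrak g_v\not\subseteq\mathfrak z(\mathfrak g)$, so there is $x\in\mathfrak g_v\setminus\mathfrak z(\mathfrak g)$; then $\hat\sigma(x)\in\hat\sigma(\mathfrak g_v) = (\mathfrak g^{\hat\sigma})_v$ and $\hat\sigma(x)\notin\mathfrak z(\mathfrak g)$ since $\hat\sigma^{-1}$ would otherwise send it back into $\mathfrak z(\mathfrak g)$. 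As $v$ was arbitrary, $V^{\hat\sigma}$ is exceptional. That is essentially the whole argument; it is a one-line consequence of Definition~\ref{excpmod} once the isotropy identity and the invariance of $\mathfrak z(\mathfrak g)$ are in place.

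The only genuine point requiring a little care — and the place I would be most careful — is the compatibility between the group-level twist and the Lie-algebra-level twist, i.e. making sure "$V^{\hat\sigma}$" as a $\mathfrak g$-module really is the differential of the rationally twisted $G$-module $V^{\hat\sigma}$, and that the $\hat\sigma$ on $\mathfrak g$ used above is indeed the differential of the $\hat\sigma$ on $G$. This is exactly the compatibility relation recalled at the end of Section~\ref{graphaut}, namely $\hat\sigma((\mathrm{Ad}\,g)\cdot x) = (\mathrm{Ad}\,g^{\hat\sigma})\cdot x^{\hat\sigma}$, which guarantees that differentiating the twisted $G$-action gives the twisted $\mathfrak g$-action; I would cite this and the functoriality of $\mathrm d$ to close the loop. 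No computation is needed and no case analysis by type is required, since $\hat\sigma$ being a restricted Lie algebra automorphism already handles the centre uniformly.

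\begin{proof}
Write the twisted module as $V^{\hat\sigma}$, whose underlying space is $V$ with $\mathfrak g$-action $x\ast v = \hat\sigma^{-1}(x)\cdot v$. For $v\in V$,
\[
(\mathfrak g^{\hat\sigma})_v = \{\,x\in\mathfrak g\,/\,\hat\sigma^{-1}(x)\cdot v=0\,\} = \hat\sigma\bigl(\{\,y\in\mathfrak g\,/\,y\cdot v=0\,\}\bigr) = \hat\sigma(\mathfrak g_v)\,.
\]
Since $\hat\sigma$ is a Lie algebra automorphism of $\mathfrak g$ (see Section~\ref{graphaut}), it permutes the centre: $\hat\sigma(\mathfrak z(\mathfrak g))=\mathfrak z(\mathfrak g)$.

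Now assume $V$ is an exceptional $\mathfrak g$-module and fix $v\in V$. By Definition~\ref{excpmod} there is $x\in\mathfrak g_v$ with $x\notin\mathfrak z(\mathfrak g)$. Then $\hat\sigma(x)\in\hat\sigma(\mathfrak g_v)=(\mathfrak g^{\hat\sigma})_v$; and $\hat\sigma(x)\notin\mathfrak z(\mathfrak g)$, for otherwise $x=\hat\sigma^{-1}(\hat\sigma(x))\in\hat\sigma^{-1}(\mathfrak z(\mathfrak g))=\mathfrak z(\mathfrak g)$, a contradiction. Hence $(\mathfrak g^{\hat\sigma})_v\not\subseteq\mathfrak z(\mathfrak g)$. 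As $v$ was arbitrary, $V^{\hat\sigma}$ is exceptional.

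Finally, the fact that the $\mathfrak g$-module $V^{\hat\sigma}$ just described is the differential of the rationally twisted $G$-module $V^{\hat\sigma}$ follows from the compatibility $\hat\sigma((\mathrm{Ad}\,g)\cdot x)=(\mathrm{Ad}\,g^{\hat\sigma})\cdot x^{\hat\sigma}$ of Section~\ref{graphaut} together with the functoriality of the differential.
\end{proof}
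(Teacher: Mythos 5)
Your proof is correct and follows essentially the same route as the paper's: compute the isotropy subalgebra for the twisted action in terms of the original one and conclude from the fact that a graph automorphism preserves the centre. If anything, yours is the more careful version, since the paper loosely asserts $\,\mathfrak g_v\,=\,{}^{\sigma}\mathfrak g_v\,$ where the correct relation is $\,{}^{\sigma}\mathfrak g_v\,=\,\hat{\sigma}^{\pm 1}(\mathfrak g_v)\,$, which is exactly why the $\hat{\sigma}$-invariance of $\,\mathfrak z(\mathfrak g)\,$ that you spell out is genuinely needed.
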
\noindent
\begin{proof}
The action of the graph automorphism on $\,G\,$ and $\,\mathfrak g\,$
is described in Section~\ref{graphaut}. Denote by $\,\rho_{\sigma}\,$
(respectively, $\,\rho_{\hat{\sigma}}\,$) the twisted representation
of $\,G\,$ (respectively, $\,\mathfrak g\,$). We have
$\,\rho_{\hat{\sigma}}(x)\cdot v\,=\,\rho(x^{\hat{\sigma}})\cdot v\,$ for all
$\,x\in\mathfrak g,\;v\in V\,$.

As $\,\sigma\,$ preserves positive roots, it preserves the
Borel subgroup of $\,G$. Hence, $\,V\,$ and $\,V^{\hat{\sigma}}\,$
have the same highest weight vector, but $\,V^{\hat{\sigma}}\,$ has
highest weight $\,\lambda^{\sigma}\,$, where $\,\lambda\,$ is the
highest weight of $\,V$. 

Now, let $\,^{\sigma}\mathfrak g_v\,$ denote the isotropy subalgebra
of $\,v\,$ with respect to the representation $\,\rho_{\hat{\sigma}}\,$.
If $\,\rho(x)\cdot v\,=\,0,\,$ then 
$\,\rho_{\hat{\sigma}}(x^{{\hat{\sigma}}^{-1}})\cdot v\,=\, 
\rho((x^{{\hat{\sigma}}^{-1}})^{\hat{\sigma}})\cdot v\,=\,0$. 
Hence, $\,\mathfrak g_v\,\subseteq\,^{\sigma}\mathfrak g_v\,$.
By symmetry, equality holds and the result follows.
\end{proof}
\vspace{2ex}\noindent
\begin{remark}
Proposition~\ref{twistexc} implies that if $\,V\,$ is not an
exceptional module, then $\,V^{\hat{\sigma}}\,$ is also not
exceptional.
\end{remark}

We call $\,V\,$ and $\,V^{\hat{\sigma}}\,$ {\bf graph-twisted}
modules. If the graph automorphism has order $\,2$, then we also say
that $\,V\,$ and $\,V^{\hat{\sigma}}\,$ are {\bf graph-dual} modules.

\subsection{The Results}\label{resul}

\subsubsection{Lie Algebras of Exceptional Type}

\begin{theorem}\label{frlaet}
Let $\,G\,$ be a simply connected simple algebraic group of exceptional
type, and $\,\mathfrak g\,=\,{\cal L}(G).\,$ 
Let $\,V\,$ be an infinitesimally irreducible $\,G$-module.
If the highest weight of $\,V\,$ is listed in Table~\ref{table3},
then $\,V\,$ is an exceptional $\,\mathfrak g$-module. If $\,p\,$
is non-special for $\,G\,$, then the modules listed
in Table~\ref{table3} are the only exceptional $\,\mathfrak g$-modules.
\end{theorem}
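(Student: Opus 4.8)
The strategy splits naturally into two halves, matching the ``if'' and ``only if'' directions of the statement. For the ``if'' direction, I would verify that each highest weight listed in Table~\ref{table3} yields an exceptional module. Most of these entries fall to Proposition~\ref{dimcrit}: for a simply connected group $G$ of exceptional type, $\dim\mathfrak g$ equals $|R|+\ell$, and one checks case by case that $\dim V < \dim\mathfrak g - \varepsilon$ where $\varepsilon=\dim\mathfrak z(\mathfrak g)$. For instance, for $E_6$ the natural $27$-dimensional module satisfies $27 < 78-\varepsilon$ (here $\mathfrak z(\mathfrak g)$ may be nonzero only when $p=3$, but $27<77$ still holds); similarly the $56$-dimensional module for $E_7$, the $26$-dimensional module for $F_4$, and the $7$-dimensional module for $G_2$ all have dimension well below $\dim\mathfrak g$. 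The adjoint modules are handled separately and trivially by Example~\ref{adjoint}: the adjoint module of any non-abelian Lie algebra is exceptional since $[x,x]=0$ forces $x\in\mathfrak g_x$. The twisted-natural module $\omega_6$ for $E_6$ follows from the natural case together with Proposition~\ref{twistexc}. So the ``if'' half requires only a finite bookkeeping over the rows of Table~\ref{table3}.

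\textbf{The ``only if'' direction} is the substantial part, and here the plan is to run the procedure described in Section~\ref{proced} for each of the five exceptional types $G_2, F_4, E_6, E_7, E_8$ in turn. The key tool is Theorem~\ref{???}: any exceptional $\mathfrak g$-module $V$ (necessarily a quotient representation with $\ker\mathrm d\pi\subseteq\mathfrak z(\mathfrak g)$, which holds automatically for infinitesimally irreducible modules since $p$ is non-special) must satisfy $s(V)\le\mathbf{limit}$ and $r_p(V)\le|R|$, with the limits $36, 192, 324, 588, 1011$ from Table~\ref{table2}. Since $p$ is non-special, Theorem~\ref{premprin} guarantees ${\cal X}_{++}(V)=(\lambda-Q_+)\cap P_{++}$, so one can compute the relevant orbit-size sums purely combinatorially using the formulas of Lemma~\ref{orbexcp} together with Lemma~\ref{lamu} and Proposition~\ref{criteria}. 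The plan is: first prove reduction lemmas (the analogues of Lemmas~\ref{e61},~\ref{e71},~\ref{e81} referenced in the text) showing that if $\lambda$ has three or more nonzero coefficients, or sufficiently large coefficients, then some good weight $\mu\in{\cal X}_{++,\mathbb C}(\lambda)\subseteq{\cal X}_{++}(V)$ already has $|W\mu|>\mathbf{limit}$, so $V$ cannot be exceptional. This cuts the list of candidate highest weights down to a short finite set for each type. Then one examines the remaining candidates individually, using weight-multiplicity data from Gilkey--Seitz~\cite{gise} to compute $s(V)$ and $r_p(V)$ exactly, discarding those that violate the inequalities. The survivors of this sieve are exactly the weights in Table~\ref{table3}, completing the proof.

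\textbf{The main obstacle} will be the final stage: for each type, after the reduction lemmas, a handful of ``borderline'' weights remain whose orbit-size sums $s(V)$ land close to the limit, and for these one must compute $r_p(V)$ precisely — which requires not just orbit sizes but the quantities $|R^+_{long}-R^+_{\mu,p}|$ for every dominant weight $\mu$ of $V$, hence knowing which long roots $\gamma$ satisfy $(\mu,\gamma)\equiv 0\pmod p$. This is where the dependence on $p$ genuinely enters (e.g. the $\omega_4$ module for $F_4$ behaves differently at $p=3$, where $\dim E(\omega_4)=25$ rather than $26$), and where the bulk of the casework lives. A secondary difficulty is verifying that the adjoint-type weights which do survive ($\omega_2$ for $E_6$, $\omega_1$ for $E_7$, $\omega_8$ for $E_8$, $\omega_1$ for $F_4$, $\omega_2$ for $G_2$) are the \emph{only} ones whose weight systems are small enough; here the Steinberg-type large modules and tensor-product weights must be excluded, which is precisely the content of the reduction lemmas. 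The proof is therefore organized type by type in Section~\ref{proofexcp}, each following the same three-step pattern: reduction lemma, candidate list, individual verification against Theorem~\ref{???}.
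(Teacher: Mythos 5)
Your plan coincides with the paper's actual proof: Section~\ref{proofexcp} proves the ``if'' direction exactly via Proposition~\ref{dimcrit} for the natural/spin-type weights and Example~\ref{adjoint} for the adjoint weights, and proves the ``only if'' direction type by type through reduction lemmas (Lemmas~\ref{e61}, \ref{e71}, \ref{e81}, \ref{fared}) that eliminate weights with many or large coefficients via $s(V)>\mathbf{limit}$ or $r_p(V)>|R|$, followed by individual verification of the surviving candidates using Gilkey--Seitz multiplicities. The only cosmetic difference is that you invoke Proposition~\ref{twistexc} for $E(\omega_6)$ of $E_6$ where the paper simply applies the dimension criterion to the graph-dual weight directly; these are interchangeable.
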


The proof of this theorem will be given in Section~\ref{proofexcp}.


\subsubsection{Lie Algebras of Classical Type}\label{lact} 

In this case, after using the procedure described in
Section~\ref{proced}, for each
algebra type we end up with a short list of highest weights. 
From these lists we have to decide, using further criteria, whether the
corresponding modules are exceptional or not. 
Most of the modules that are proved to be exceptional satisfy
Proposition~\ref{dimcrit}.

We have reduction lemmas for each group/algebra type
which are proved in this section.
The proofs of Theorems~\ref{anlist},~\ref{listbn},~\ref{listcn},
and~\ref{dnlist} are rather technical and are given in 
Appendix~\ref{appendixa} of this work.

\subsubsection*{\ref{lact}.1. Algebras of Type $\,A\,$}\label{alresults}
\addcontentsline{toc}{subsubsection}{\protect\numberline{\ref{lact}.1}
Algebras of Type $\,A\,$}

By Theorem~\ref{???}, any $\,A_{\ell}(K)$-module $\,V\,$ such that 
\begin{equation}\label{allim}
s(V)\,=\,\displaystyle\sum_{\stackrel{\scriptstyle\mu\,good}
{\scriptstyle \mu\in{\cal X}_{++}(V)}}\,m_{\mu}\,|W\mu|\,
>\,\ell\,(\,\ell\,+\,1\,)^2
\end{equation}
or
\begin{equation}\label{rral}
r_p(V)\,=\,\sum_{\stackrel{\scriptstyle{\mu\in{\cal X}_{++}(V)}}
{\scriptstyle \mu\;good}}\,m_{\mu}\,
\frac{|W\mu|}{|R_{long}|}\,|R_{long}^+-R^+_{\mu,p}|\,>\,\ell\,(\ell+1)
\end{equation}
is not an exceptional $\,\mathfrak g$-module.  
For groups of type $\,A_{\ell}$, $\,|W|\,=\,(\ell+1)!\,$,
$\,|R_{long}|\,=\,|R|\,=\,\ell\,(\ell+1)\,$ and
$\,|R_{long}^+|\,=\,\displaystyle\frac{\ell\,(\ell+1)}{2}.\,$

Let $\,V\,$ be an $\,A_{\ell}(K)$-module. Here we prove some
reduction lemmas. 

\begin{lemma}\label{4coef} Let $\,\ell\geq 4.\,$
If $\,{\cal X}_{++}(V)\,$ contains a weight
$\,\mu\,=\,\displaystyle\sum_{i=1}^{\ell}\,b_i\,\omega_i\,$ with 4 or
more nonzero coefficients, then $\,V\,$ is not an exceptional 
$\,A_{\ell}(K)$-module.
\end{lemma}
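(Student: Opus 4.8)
I will apply the necessary condition of Theorem~\ref{???} in the convenient form of Proposition~\ref{criteria}: it suffices to exhibit one \emph{good} weight $\nu\in{\cal X}_{++,\mathbb C}(\mu)$ whose Weyl orbit already exceeds the limit, which for type $A_\ell$ equals $\ell(\ell+1)^2$ (see~\eqref{allim}). Indeed, then $s_1(\mu)\geq|W\nu|>\ell(\ell+1)^2$, and Proposition~\ref{criteria} gives that $V$ is not exceptional. So the whole problem reduces to the combinatorics of the single weight $\mu$, and everything takes place inside ${\cal X}_{++,\mathbb C}(\mu)\subseteq{\cal X}_{++}(V)$.

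The orbit estimate is the routine ingredient. By Lemma~\ref{remark}, if $\nu\in P_{++}$ has nonzero coefficients at (at least) four positions $j_1<j_2<j_3<j_4$, then $|W\nu|\geq|W(\omega_{j_1}+\omega_{j_2}+\omega_{j_3}+\omega_{j_4})|$, and by Lemma~\ref{orban} this lower bound equals
\[
\binom{j_2}{j_1}\binom{j_3}{j_2}\binom{j_4}{j_3}\binom{\ell+1}{j_4}
=\frac{(\ell+1)!}{j_1!\,(j_2-j_1)!\,(j_3-j_2)!\,(j_4-j_3)!\,(\ell+1-j_4)!}\,,
\]
a multinomial coefficient with five positive parts summing to $\ell+1$. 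Such a multinomial is minimized when four of its five parts equal $1$, giving $(\ell-2)(\ell-1)\ell(\ell+1)$. For $\ell\geq 4$ one has $(\ell-2)(\ell-1)\ell(\ell+1)>\ell(\ell+1)^2$, since this amounts to $(\ell-2)(\ell-1)>\ell+1$, i.e.\ to $\ell^2-4\ell+1>0$. Hence \emph{any} good weight with at least four nonzero coefficients has orbit strictly larger than the limit. In particular, if $\mu$ itself is good we simply take $\nu=\mu$ and are done.

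It remains to treat the case that $\mu$ is bad. Since $p$ is non-special and $G$ is not of type $A_1$, the bad-weight lemma (Section~\ref{necess}) shows that every coefficient of $\mu$ is divisible by $p$, so each of its $\geq 4$ nonzero coefficients is $\geq p\geq 2$. Fix an index $k$ in the support of $\mu$; then $\mu-\alpha_k$ is dominant (its $\omega_k$-coefficient drops by $2$ but stays $\geq 0$, all others being unchanged or increased by $1$), it lies in $(\mu-Q_+)\cap P_{++}={\cal X}_{++,\mathbb C}(\mu)$, and by the Remark following that lemma it is good. The one genuinely delicate point — and the place I expect the real work — is to choose $k$ so that $\mu-\alpha_k$ still has four nonzero coefficients: the operation $\mu\mapsto\mu-\alpha_k$ can destroy a nonzero coefficient only when it equals $p=2$, while it always creates nonzero coefficients at the neighbours of $k$; so one must run through the few degenerate support configurations, using $\ell\geq 4$ and, when necessary, subtracting two well-separated simple roots $\alpha_k,\alpha_j$, to keep the support of size $\geq 4$. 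Once such a good $\nu\in{\cal X}_{++,\mathbb C}(\mu)$ with $|W\nu|>\ell(\ell+1)^2$ is in hand, Proposition~\ref{criteria} (equivalently Theorem~\ref{???}) completes the proof.
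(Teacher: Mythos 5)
Your reduction to Proposition~\ref{criteria} and your orbit estimate are correct and coincide with the paper's Steps 1--2: the multinomial lower bound $(\ell+1)\ell(\ell-1)(\ell-2)$ for the orbit of any dominant weight with four nonzero coefficients, and the check that this exceeds $\ell(\ell+1)^2$ for $\ell\geq 4$, are exactly inequality~(\ref{ineq1}). The genuine gap is in the bad-weight case, which you explicitly leave open. Your proposed device --- subtract one simple root $\alpha_k$, or two well-separated simple roots, and ``run through the few degenerate support configurations'' --- does not close. Subtracting $\alpha_k$ lowers the $k$-th coefficient by $2$, so for $p=2$ it annihilates every coefficient equal to $2$; for instance with $\ell=4$, $p=2$ and $\mu=2\omega_1+2\omega_2+2\omega_3+2\omega_4$, each of $\mu-\alpha_k$ and each of $\mu-\alpha_k-\alpha_j$ ($k,j$ non-adjacent) is a dominant weight with only two or three nonzero coefficients, so no choice within your scheme yields a good weight of support $\geq 4$ to which Step 1 applies.

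The missing idea is to subtract a single \emph{non-simple} positive root, namely the string $\alpha_{i_2}+\alpha_{i_2+1}+\cdots+\alpha_{i_3}$ joining the second and third indices $i_2<i_3$ of the support. In the $\omega$-basis this root equals $-\omega_{i_2-1}+\omega_{i_2}+\omega_{i_3}-\omega_{i_3+1}$, so subtracting it lowers the coefficients at $i_2$ and $i_3$ by only $1$ each (they remain $\geq 1$ since $b_2,b_3\geq p\geq 2$), raises those at $i_2-1$ and $i_3+1$, and leaves $i_1$ and $i_4$ untouched. The resulting weight is dominant, lies in $\,{\cal X}_{++,\mathbb C}(\mu)\subseteq{\cal X}_{++}(V)\,$, is good because its $i_2$-coefficient is $\equiv -1\pmod p$, and still has at least four nonzero coefficients --- with no case analysis at all. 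This is precisely what the paper's proof does; with that substitution your argument becomes complete.
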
\noindent
\begin{proof}
Let $\,\mu=b_1\,\omega_1\,+\,\cdots\,+\,b_{\ell}\,\omega_{\ell}\,\in\,
{\cal X}_{++}(V)\,$ have at least 4 nonzero coefficients.
We want to prove that $\,s(V)\,>\,\ell(\ell + 1)^2$. 

\underline{Step 1}: {\bf Claim}:
If $\,\nu=c_1\,\omega_1\,+\,\cdots\,+\,c_{\ell}\,
\omega_{\ell}\,$ is a weight with exactly 4 nonzero coefficients, say 
$\,\nu=c_1\,\omega_{i_1}\,+\,c_{2}\,\omega_{i_2}\,
+\,c_{3}\,\omega_{i_3}\,+\,c_{4}\,\omega_{i_4}$, with
$\,i_1<i_2<i_3<i_4\,$ and $\,c_{j}\neq 0$, then
$\,|W\nu|\,>\,\ell(\ell + 1)^2$.

First we prove that
if $\,\bar{\mu}\,=\,d_{1}\,\omega_{1}\,+\,d_{2}\,\omega_{2}\,
+\,d_{3}\,\omega_{3}\,+\,d_{4}\,\omega_{4}\,$ (or graph-dually 
$\,\bar{\mu}\,=\,d_{\ell -3}\,\omega_{\ell -3}\,+\,d_{\ell-2}\,
\omega_{\ell -2}\,+\,d_{\ell -1}\,\omega_{\ell -1}\,+\,d_{\ell}\,
\omega_{\ell}\,$), with all $\,d_i\neq 0$, then
$\,|W\nu|\,>\,|W\bar{\mu}|$, that is,
$\,W\bar{\mu}\,$ is the smallest possible orbit for weights 
having exactly 4 nonzero coefficients.
Indeed, by Lemma~\ref{orban},
\begin{eqnarray} \label{an1}
|W\bar{\mu}| & = & \frac{(\ell +1)!}{(\ell -3)!}\,=\,(\ell +1)\,\ell
\,(\ell -1)\,(\ell -2)\qquad \mbox{and}\nonumber 
\end{eqnarray}
\begin{eqnarray} \label{an2}
|W\nu| & = & \frac{(\ell + 1)!}
{i_1!\,(i_2-i_1)!\,(i_3-i_2)!\,(i_4-i_3)!
\,(\ell-i_4+1)!}.\nonumber
\end{eqnarray}
Thus, as we prove in Appendix~\ref{appineq1}, for all $\,\ell\,\geq\,4$.
\begin{eqnarray}\label{ineq1}
\frac{|W\nu|}{|W\bar{\mu}|}\,=\,\frac{(\ell - 3)!}
{i_1!\,(i_2-i_1)!\,(i_3-i_2)!\,(i_4-i_3)!\,(\ell-i_4+1)!}
\,\geq\,1\,. 
\end{eqnarray} 

Now for $\,\ell\geq 4$, 
$\,|W\bar{\mu}|\,-\,\ell(\,\ell\,+\,1)^2\,=\,
\ell\,(\ell^3 -2\ell^2 -\ell +2)\,-\,\ell(\,\ell\,+\,1)^2\,=\,
\ell\,[\ell(\ell^2 -3\ell -3)\,+\,1]\,>\,0\,$
(as $\,\ell^2 -3\ell -3\,$ is a
positive increasing function on $\,\ell$, provided $\,\ell\,\geq\,4\,$).
Therefore, $\,|W\nu|\,\geq\,|W\bar{\mu}|\,>\,\ell(\ell + 1)^2$,
proving the claim.

\underline{Step 2}: As $\,\mu\,$ has at least 
4 nonzero coefficients, by Lemma~\ref{remark}, 
\mbox{$\,|W\mu|\,>\,|W\nu|\,$} for some weight $\,\nu\,$ with exactly 
4 nonzero coefficients. Hence, by Step 1, $\,|W\mu|\,>\,\ell(\ell + 1)^2$.

Therefore, if $\,\mu\,\in\,{\cal X}_{++}(V)\,$ is a good weight with 
at least 4 nonzero coefficients, 
then $\,s(V)\,\geq\,|W\mu|\,>\,\ell(\ell + 1)^2\,$
and, by~(\ref{allim}), $\,V\,$ is not an exceptional module.
If $\,\mu\,\in\,{\cal X}_{++}(V)\,$ is a bad weight, then write 
$\,\mu\,=\,b_{1}\,\omega_{i_1}\,+\,b_{2}\,\omega_{i_2}\,
+\,b_{3}\,\omega_{i_3}\,+\,b_{4}\,\omega_{i_4}\,+\,\cdots$, (where
$\,b_1,\,b_2,\,b_3,\,b_4\,\geq 2\,$ are the first 4 nonzero coefficients of 
$\,\mu\,$ and $\,i_1<i_2<i_3<i_4$). Thus, the good weight
$\,\mu_1 \,=\,\mu-(\alpha_{i_2}+\cdots+\alpha_{i_3})\,=\,
b_{1}\,\omega_{i_1}\,+\,\omega_{i_2-1}\,+\,(b_{2}-1)\,\omega_{i_2}\,
+\,(b_{3}-1)\,\omega_{i_3}\,+\,\omega_{i_3+1}\,+\,b_{4}\,\omega_{i_4}\,+
\,\cdots\,\in\,{\cal X}_{++}(V)$. As $\,\mu_1\,$ has at least 4
nonzero coefficients, by Step 2, $\,s(V)\,\geq\,|W\mu_1|\,>\,\ell(\ell +
1)^2\,$. Hence, again by~(\ref{allim}), $\,V\,$ is not an exceptional
module. This proves the lemma.
\end{proof}

Now we can prove \\ 
{\bf Lemma~\ref{nonexcepnonfree}.} {\it 
Let $\,\mathfrak g\,=\,\mathfrak{sl}(np,K),\,$ where 
$\,p>2\,$ and $\,np\geq 4.\,$ Then the Steinberg module 
$\,V\,=\,E((p-1)\rho)\,$ is not an exceptional module, but it is non-free.}
\\ \noindent
\begin{proof}By Lemma~\ref{4coef}, $\,V\,$ is not exceptional. To prove that
$\,V\,$ is non-free, let $\,z\,=\,h_{\alpha_1}\,+\,2\,h_{\alpha_2}\,+ 
\,3\,h_{\alpha_3}\,+\,\cdots\,+\,(np-1)\,h_{\alpha_{np-1}}\,$.

First we claim that $\,z\,$ is a nonzero central element of $\,\mathfrak g.\,$
Indeed, note that 
$\,\alpha_j(z)\,=\,(j-1)\cdot(-1)\,+\,j\cdot 2\,+\,(j+1)\cdot(-1)\,
=\,0\;$ for all $\,j\,=\,2,\ldots,np-2\,$. 
Also $\,\alpha_1(z)\,=\,\alpha_{np-1}(z)\,=\,0.\,$ Hence,
$\,[z,\,e_{\alpha_j}]\,=\,\alpha_j(z)\,e_{\alpha_j}\,=\,0$,
for all $\,j\,$. Therefore, $\,[z,\,e_{\alpha}]\,=\,0\,$ for
all $\,\alpha\in R$. But $\,\{ e_{\alpha}\,/\,\alpha\in R\}\,$ generate
$\,\mathfrak g\,$, hence $\,[z,\,y]\,=\,0\,$ for all $\,y\in\mathfrak
g\,$, proving the claim.

Now we prove that $\,z\in \mathfrak g_v\,$ for all $\,v\in V$.  
We have
\begin{align*}
(p-1)\rho(z)\, &=\,(p-1)\,\sum_{i=1}^{np-1}\,\omega_i(z)\,=\,
(p-1)\,\sum_{i=1}^{np-1}\,i\,\frac{2(\omega_i,\,{\alpha_i})} 
{(\alpha_i,\,\alpha_i)}\\
&=\,(p-1)\,\sum_{i=1}^{np-1}\,i\,=\,\frac{(p-1)\,(np-1)\,np}{2}\,\equiv\,0
\pmod p.
\end{align*}
The other weights of $\,V\,=\,E((p-1)\rho)\,$ are of the form
$\,(p-1)\rho\,-\,\displaystyle\sum_{\beta>0}\,n_{\beta}\beta\,$, where
$\,n_{\beta}\in\mathbb Z^+\,$. Hence
$\,\mu(z)\,=\,0,\,$ for each $\,\mu\in{\cal X}_{++}(V)$.  
The vectors $\,v\in V\,$ are of the form
$\,v\,=\,\sum\,v_{\mu},\,$ with $\,v_{\mu}\in V_{\mu}\,$. Thus
$\,z\cdot v\,=\,\sum\,z\cdot v_{\mu}\,=\,\sum\mu(z)\,v_{\mu}\,=\,0,\,$ 
whence $\,z\in \mathfrak g_v\,$ for all $\,v\in V$.
Thus, $\,V\,$ is a non-free module.
\end{proof}


{\bf From now on, by Lemma~\ref{4coef}, it suffices to consider
$\,A_{\ell}(K)$-modules $\,V\,$ such that $\,{\cal X}_{++}(V)\,$
contains only weights with at most $\,3\,$ nonzero coefficients.}
The following lemmas reduce this assumption even further.
\begin{lemma}\label{3orless}
Let $\,\ell\,\geq\,4$. If $\,V\,$ is an $\,A_{\ell}(K)$-module such
that $\,{\cal X}_{++}(V)\,$ satisfies
any of the following conditions, then $\,V\,$ is not an exceptional
$\,\mathfrak g$-module.

(a) $\,{\cal X}_{++}(V)\,$ contains at least $2$ good weights
with $3$ nonzero coefficients.

(b) $\,{\cal X}_{++}(V)\,$ contains a bad weight
with $3$ nonzero coefficients.

(c) $\,{\cal X}_{++}(V)\,$ contains just one good weight
with $3$ nonzero coefficients, (at least) $2$ good weights
with $2$ nonzero coefficients and a nonzero minimal
weight or any other good weight.
\end{lemma}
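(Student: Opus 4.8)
\textbf{Proof proposal for Lemma~\ref{3orless}.}

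The plan is to reduce each of the three cases to an application of Proposition~\ref{criteria}, by exhibiting enough good dominant weights in $\,{\cal X}_{++}(V)\,$ that the sum $\,s_1(\mu)\,$ of their Weyl orbit sizes exceeds the limit $\,\ell(\ell+1)^2\,$ for type $\,A_\ell\,$. The main tools are Lemma~\ref{orban} (which computes $\,|W\mu|\,$ for type $\,A_\ell\,$ as a product of binomial coefficients), Lemma~\ref{remark} (passing from $\mu$ to a weight with a proper subset of zero coefficients at least doubles the orbit), Lemma~\ref{lamu} (any $\,\nu \in {\cal X}_{++,\mathbb C}(\mu)\,$ for $\,\mu < \lambda\,$ stays in $\,{\cal X}_{++}(V)\,$), and the Remark following Definition~\ref{badweight} (subtracting $\,\alpha_k\,$ from a bad weight with $\,a_k\neq 0\,$ produces a good weight, as $\,p\neq 2\,$ in type $\,A_\ell,\,\ell\geq 4\,$, unless $\,R\,$ has type $\,A_1\,$ or $\,B_\ell\,$ — neither of which applies).

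For part (a): if $\,\mu,\,\mu'\,$ are two distinct good weights each with exactly $\,3\,$ nonzero coefficients, then $\,s(V) \geq |W\mu| + |W\mu'|\,$, so it suffices to show $\,|W\mu| + |W\mu'| > \ell(\ell+1)^2\,$. First I would establish, as in Step 1 of Lemma~\ref{4coef}, that among weights with exactly $\,3\,$ nonzero coefficients the orbit $\,|W\nu|\,$ is minimized when the three nonzero coefficients sit on three consecutive nodes at one end of the diagram, giving $\,|W\nu|_{\min} = \binom{\ell+1}{3}\cdot 3! / \text{(something)} = (\ell+1)\ell(\ell-1)\,$; the inequality $\,|W\nu| \geq (\ell+1)\ell(\ell-1)\,$ for all such $\nu$ reduces to a binomial estimate of the type proved in the Appendix. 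Then $\,|W\mu|+|W\mu'| \geq 2(\ell+1)\ell(\ell-1)\,$, and $\,2(\ell+1)\ell(\ell-1) - \ell(\ell+1)^2 = \ell(\ell+1)(\ell-3) > 0\,$ for $\,\ell \geq 4\,$, so Proposition~\ref{criteria} applies.

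For part (b): if $\,\mu\,$ is a bad weight with $\,3\,$ nonzero coefficients, write $\,\mu = b_1\omega_{i_1} + b_2\omega_{i_2} + b_3\omega_{i_3}\,$ with $\,b_j \in p\mathbb{Z}\,$, so each $\,b_j \geq 2\,$. By the Remark after Definition~\ref{badweight}, subtracting a suitable simple root (I would subtract along a path, as in the end of the proof of Lemma~\ref{4coef}, e.g. $\,\mu_1 = \mu - (\alpha_{i_1+1}+\cdots+\alpha_{i_2})\,$) produces a good weight $\,\mu_1 \in {\cal X}_{++,\mathbb C}(\mu) \subseteq {\cal X}_{++}(V)\,$ which still has at least $\,3\,$ nonzero coefficients; then part (a)-type estimate (really just $\,|W\mu_1| > \ell(\ell+1)^2\,$ might fail, but one can produce a second good weight $\,\mu_2\,$ from $\,\mu_1\,$ by the same device, landing back in the situation of part (a)). For part (c): we have one good weight $\,\mu\,$ with $\,3\,$ nonzero coefficients contributing $\,|W\mu| \geq (\ell+1)\ell(\ell-1)\,$, plus two good weights $\,\nu_1,\nu_2\,$ with $\,2\,$ nonzero coefficients each, each contributing (by Lemma~\ref{orban}, minimized at consecutive end-nodes) at least $\,\binom{\ell+1}{2}\cdot 1 = \ell(\ell+1)/2\,$ — wait, the minimum for two nonzero coefficients on adjacent end nodes is actually $\,(\ell+1)\ell\,$ — plus the minimal weight contributing $\,|W\omega_j| = \binom{\ell+1}{j} \geq \ell+1\,$; summing and comparing with $\,\ell(\ell+1)^2\,$ gives the conclusion for $\,\ell\geq 4\,$.

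The hard part will be pinning down the precise minimum of $\,|W\nu|\,$ over weights with a fixed number of nonzero coefficients and verifying the resulting binomial inequalities — these are exactly the combinatorial estimates relegated to the Appendix — and, in part (c), being careful that the several good weights invoked are genuinely distinct and genuinely lie in $\,{\cal X}_{++}(V)\,$ (using Lemma~\ref{lamu} together with the explicit root-subtraction recipes), rather than accidentally coinciding or falling outside the weight set when some coefficient of $\,\lambda\,$ is too small. I expect the bookkeeping of which simple roots to subtract, so as to keep weights dominant and distinct while controlling their orbit sizes, to be the real obstacle, whereas the numerical inequalities themselves are routine once set up.
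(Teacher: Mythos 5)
Your proposal takes essentially the same route as the paper's proof: the orbit-size lower bounds $\,|W\nu|\geq(\ell+1)\ell(\ell-1)\,$ for three nonzero coefficients and $\,|W\nu|\geq\ell(\ell+1)\,$ for two (the binomial estimates the paper relegates to Appendix~\ref{apporb3c} and~\ref{apporb2c}), summed against the limit $\,\ell(\ell+1)^2\,$ via Theorem~\ref{???}, with (b) reduced to (a) by subtracting chains of simple roots from the bad weight. One correction for (b): your illustrative choice $\,\mu-(\alpha_{i_1+1}+\cdots+\alpha_{i_2})\,$ contributes $\,-\omega_{i_1+1}\,$ and so fails to be dominant whenever $\,i_1+1<i_2\,$; the paper instead subtracts the full chains $\,\alpha_{i_1}+\cdots+\alpha_{i_2}\,$ and $\,\alpha_{i_2}+\cdots+\alpha_{i_3}\,$, each of which (using that all nonzero coefficients of a bad weight are $\,\geq 2\,$) yields a good dominant weight with at least three nonzero coefficients, and these two distinct weights land you directly in case (a) without the iteration you anticipate.
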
\noindent
\begin{proof}
Let  
$\,\mu\,=\,a\,\omega_{i_1}\,+\,b\,\omega_{i_2}\,+\,c\,\omega_{i_3}\,$
be a weight with $\,i_1<i_2<i_3\,$ and $\,a,\,b,\,c\,\geq\,1$. 
For $\,\ell\geq 4,\,$ the orbit size of $\,\mu\,$ is
\begin{equation} \label{orb3c}
|W\mu| =\frac{(\ell + 1)!}{i_1!\,(i_2-i_1)!\,
(i_3-i_2)!\,(\ell-i_3+1)!}\,\geq \, \frac{(\ell\,+\,1)!}{(\ell\,-\,2)!}\,
=\,\ell\,(\ell^2\,-\,1).\;
\end{equation}
This inequality is proved in Appendix~\ref{apporb3c}.

(a) If $\,{\cal X}_{++}(V)\,$ has at least $2$ good weights with exactly
$\,3\,$ nonzero coefficients, then 
$\,s(V)\,-\,\ell(\ell+1)^2\,\geq\,2\,\ell\,(\ell^2\,-\,1)\,-\,
(\ell^3\,+\,2\ell^2\,+\,\ell)\,=\,\ell\,(\ell^2\,-\,2\ell\,-\,3)\,>\,0$,
for all $\,\ell\,\geq\,4$. Hence, by~\eqref{allim}, $\,V\,$ is not exceptional.

(b) If $\,\mu\,=\,a\,\omega_{i_1}\,+\,b\,\omega_{i_2}\,+\,c\,\omega_{i_3}\,
\in\,{\cal X}_{++}(V)\,$ is bad, then $\,a,\,b,\,c\,\geq 2.\,$
Thus,
\mbox{$\mu_1=\mu-(\alpha_{i_1}+\cdots
+\alpha_{i_2})\,=\,\omega_{i_1-1}\,+\,(a-1)
\omega_{i_1}\,+\,(b-1)\omega_{i_2}\,+\,\omega_{i_2+1}\,
+\,c\,\omega_{i_3}\,$,}\linebreak
\mbox{$\mu_2=\mu-(\alpha_{i_2}+\cdots +\alpha_{i_3})\,=\,a\omega_{i_1}\,+
\,\omega_{i_2-1}\,+\,(b-1)\omega_{i_2}\,+\,(c-1)\omega_{i_3}\,
+\,\omega_{i_3+1}\,$}\linebreak
are good weights in $\,{\cal X}_{++}(V)$,
both with at least $3$ nonzero coefficients, hence satisfying (a).

(c) 
If $\,\nu=a\omega_{i_1}\,+\,b\omega_{i_2}\,$ is a weight
with $2$ nonzero coefficients, then $\,|W\nu|\,\geq\,
\ell\,(\ell\,+\,1)$, for all $\,\ell\geq 4$. Indeed, 
by Appendix~\ref{apporb2c}, 
\begin{eqnarray} \label{orb2c}
|W\nu| & = & \frac{(\ell + 1)!}{i_1!\,(i_2-i_1)!\,
(\ell-i_2+1)!}\,\geq\,\frac{(\ell\,+\,1)!}{(\ell\,-\,1)!}
\,=\,\ell\,(\ell\,+\,1).
\end{eqnarray} 
Thus, if $\,{\cal X}_{++}(V)\,$ contains just one good weight with $3$
nonzero coefficients, at least $2$ good weights with
$2$ nonzero coefficients and a minimal weight $\,\omega_i\neq 0\,$ 
or any other good weight (whose orbit size is $\,\geq 1$) then,
by~\eqref{orb3c} and~\eqref{orb2c}, $\,s(V)\,\geq
\,\ell\,(\ell^2\,-\,1)\,+\,2\,\ell\,(\ell\,+\,1)\,+\,1
\, =\,\ell\,(\,\ell\,+\,1\,)^2\,+\,1\,>\,\ell\,(\,\ell\,+\,1\,)^2\,$
and, by~\eqref{allim}, $\,V\,$ is not an exceptional module. This
proves the lemma.
\end{proof}

\begin{lemma}\label{nlf3c}
Let $\,\ell\,\geq\,4.\,$ If $\,V\,$ is an $\,A_{\ell}(K)$-module such
that $\,{\cal X}_{++}(V)\,$ contains a good
weight with $3$ nonzero coefficients, then $\,V\,$ is not an exceptional
$\,\mathfrak g$-module.
\end{lemma}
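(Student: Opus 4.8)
\textbf{Plan for the proof of Lemma~\ref{nlf3c}.}

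The aim is to show that if an $\,A_{\ell}(K)$-module $\,V\,$ (with $\,\ell\geq 4$) has a good weight $\,\mu\,$ with exactly $3$ nonzero coefficients in $\,{\cal X}_{++}(V)$, then $\,V\,$ fails the necessary condition of Theorem~\ref{???}, hence is not exceptional. The strategy is to produce enough weights in $\,{\cal X}_{++}(V)\,$ (using Lemma~\ref{lamu}, which guarantees $\,{\cal X}_{++,\mathbb C}(\nu)\subseteq{\cal X}_{++}(V)\,$ for any $\,\nu\in{\cal X}_{++}(V)$) so that the orbit-size sum $\,s(V)\,$ exceeds $\,\ell(\ell+1)^2\,$, the limit from~\eqref{allim}. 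By Lemma~\ref{3orless}, we already know that the cases where $\,{\cal X}_{++}(V)\,$ contains two good $3$-coefficient weights, or a bad $3$-coefficient weight, or the configuration in (c), are all handled. So the plan is to reduce to the remaining situation and dispose of it directly.

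First I would write $\,\mu\,=\,a\,\omega_{i_1}+b\,\omega_{i_2}+c\,\omega_{i_3}\,$ with $\,i_1<i_2<i_3\,$ and $\,a,b,c\geq 1\,$ (good means not all of $\,a,b,c\,$ divisible by $\,p$, in fact at least one coefficient equals $1$ suffices to avoid badness in the relevant reductions). The idea is to look at weights of the form $\,\mu-\alpha_{i_k}\,$ and $\,\mu-(\alpha_{i_1}+\cdots)\,$ obtained by subtracting short chains of simple roots, all of which lie in $\,{\cal X}_{++,\mathbb C}(\mu)\subseteq{\cal X}_{++}(V)\,$ by Lemma~\ref{lamu}. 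Lowering $\,\mu\,$ appropriately produces either a second good $3$-coefficient weight (invoking Lemma~\ref{3orless}(a)), or a weight with $2$ nonzero coefficients of large orbit size, or the configuration of Lemma~\ref{3orless}(c). Concretely: if $\,a\geq 2\,$ or $\,i_1\geq 2\,$ one can push the first node left/create a new node to get a distinct good $3$-coefficient weight; the genuinely tight case is $\,\mu=\omega_1+b\,\omega_{i_2}+\omega_{\ell}\,$ (or graph-duals), where one must argue more carefully. In that tight case I would subtract $\,\alpha_1\,$ and $\,\alpha_{\ell}\,$ separately to obtain two good weights with $2$ nonzero coefficients, together with a minimal weight $\,\omega_i\,$ arising further down in $\,{\cal X}_{++,\mathbb C}\,$, and then apply Lemma~\ref{3orless}(c); alternatively use~\eqref{orb3c} directly, since $\,|W\mu|\geq\ell(\ell^2-1)\,$ already, and add the orbit contributions of $\,\mu-\alpha_1\,$ and $\,\mu-\alpha_{\ell}\,$, each of size $\,\geq\ell(\ell+1)\,$ by~\eqref{orb2c}, giving $\,s(V)\geq\ell(\ell^2-1)+2\ell(\ell+1)=\ell(\ell+1)^2\,$, and one extra weight tips it over.

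The main obstacle I anticipate is the endpoint/low-rank bookkeeping: when $\,\mu\,$ sits near the ends of the Dynkin diagram (e.g.\ $\,i_1=1,\ i_3=\ell$), subtracting a single simple root may collapse a coefficient to zero rather than creating a new node, so the descendant weights have fewer nonzero coefficients than expected; one must check that even so their orbit sizes (computed via Lemma~\ref{orban}) are large enough, and that enough distinct descendants exist. A secondary subtlety is ensuring the descendant weights are genuinely \emph{good} (coefficients not all in $\,p\mathbb Z$) --- here the Remark after Lemma~\ref{badweight} is the tool, noting that subtracting $\,\alpha_k\,$ from a weight with $\,a_k\neq 0\,$ produces a good weight except in the excluded small-rank type $\,A_1\,$ situation, which does not arise since $\,\ell\geq 4\,$. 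Once the case analysis is organized by the position of $\,(i_1,i_2,i_3)\,$ relative to $\,\{1,\ell\}\,$, each branch reduces to a clean inequality of the shape already verified in Appendix~\ref{apporb3c} and Appendix~\ref{apporb2c}, or to a direct appeal to Lemma~\ref{3orless}.
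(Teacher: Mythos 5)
Your overall strategy is the same as the paper's: reduce by position of $(i_1,i_2,i_3)$, generate descendant dominant weights by subtracting chains of simple roots, invoke Lemma~\ref{3orless} in most branches, and finish the surviving tight case by a direct orbit count. However, your treatment of the tight case contains a genuine error. For $\mu=\omega_1+b\,\omega_{i_2}+\omega_{\ell}$ you propose to "subtract $\alpha_1$ and $\alpha_{\ell}$ separately to obtain two good weights with $2$ nonzero coefficients" and to add their orbit sizes to $s(V)$. But when the coefficient at node $k$ equals $1$, the weight $\mu-\alpha_k$ has coefficient $-1$ at node $k$: it is not dominant, and since $s_k(\mu-\alpha_k)=\mu-\alpha_k-\langle\mu-\alpha_k,\alpha_k\rangle\alpha_k=\mu$, it lies in the orbit $W\mu$ itself. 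So neither $\mu-\alpha_1$ nor $\mu-\alpha_{\ell}$ is a new dominant weight, and the claimed bound $s(V)\geq\ell(\ell^2-1)+2\ell(\ell+1)$ collapses to $s(V)\geq|W\mu|$ alone, which is below the limit. The correct move — the one the paper uses throughout — is to subtract a \emph{chain} $\alpha_{i_1}+\cdots+\alpha_{i_2}$ (or $\alpha_{i_2}+\cdots+\alpha_{i_3}$) joining two nonzero nodes; this lands on a genuinely new dominant weight such as $\omega_{i_2+1}+\omega_{\ell}$, and from there one reaches $\omega_2$, etc.

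A second, smaller gap: even with the correct descendants, the $s(V)$ count does not settle all ranks. The paper reduces the tight case to $\mu=\omega_1+\omega_2+\omega_{\ell}$ and there the orbit-sum argument only works for $\ell\geq 6$; for $\ell=4,5$ it must switch to the second necessary condition, $r_p(V)>|R|$ from~\eqref{rral}, using the quantities $|R^+_{long}-R^+_{\mu,p}|$. Your plan mentions only the $s(V)$ inequality for the tight case, so the low-rank endpoints $\ell=4,5$ would be left unproved. Your instinct that the endpoint/low-rank bookkeeping is the main obstacle is right; the fix is to use chain subtractions (never single simple roots at coefficient-one nodes) and to keep the $r_p$ criterion in reserve for $\ell=4,5$.
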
\noindent
\begin{proof}
Let $\,\mu\,=\,a\,\omega_{i_1}\,+\,b\,\omega_{i_2}\,+\,c\,\omega_{i_3},\,$
with $\,1\,<\,i_1\,<\,i_2\,<\,i_3\,$ and $\,a,\,b,\,c\,$ nonzero, be a
good weight in $\,{\cal X}_{++}(V)$. 

\underline{Case 1}: Suppose $\,1<i_1\,<\,i_2\,<\,i_3\,<\,\ell\,$.\\
For $\,a\geq 1,\,b\geq 1,\,c\geq 1,\,$ 
$\,\mu\,=\,a\omega_{i_1}\,+\,b\omega_{i_2}\,+\,c\omega_{i_3}\;$ and 
$\,\mu_1\,=\mbox{$\mu-(\alpha_{i_1} +\cdots +\alpha_{i_3})$}
=\,\omega_{i_1-1}\,+\,(a-1)\,\omega_{i_1}\,+\,b\,\omega_{i_2}\,+
\,(c-1)\omega_{i_3}\,+\,\omega_{i_3+1}\;$
are good weights in $\,{\cal X}_{++}(V)$.
Hence, Lemma~\ref{3orless}(a) applies.

\underline{Case 2}: Suppose $\,1<i_1 < i_2 < i_3=\ell\;$ (or graph-dually
 $\,1=i_1 < i_2 < i_3<\ell\,$).

(i) For $\,a\geq 2,\,b\geq 1,\,c \geq 1\,$ (or 
$\,a\geq 1,\,b\geq 1,\,c\geq 2\,$),
$\,\mu\,=\,a\omega_{i_1}\,+\,b\omega_{i_2}\,+\,c\omega_{\ell}\;$ and
$\,\mu_1\,=\,\mu-(\alpha_{i_1} +\cdots +\alpha_{\ell})\,
=\,\omega_{i_1-1}\,+\,(a-1)\,\omega_{i_1}\,+\,b\,\omega_{i_2}\,+
\,(c-1)\omega_{\ell}\;$ are good weights in $\,{\cal X}_{++}(V)$. 
Hence, Lemma~\ref{4coef} or~\ref{3orless}(a) applies.

(ii) For $\,a=c=1,\,b\geq 2$,
$\,\mu\,=\,\omega_{i_1}\,+\,b\omega_{i_2}\,+\,\omega_{\ell}\;$ and 
$\,\mu_1=\mbox{$\mu-(\alpha_{i_1} +\cdots +\alpha_{i_2})$}
=\,\omega_{i_1-1}\,+\,(b-1)\,\omega_{i_2}\,+\,\omega_{i_2+1}\,+\, 
\omega_{\ell}\;$ are both good weights in $\,{\cal X}_{++}(V)$.
Thus, Lemma~\ref{4coef} or~\ref{3orless}(a) applies.

(iii) For $\,a=b=c=1,\,$
$\,\mu\,=\,\omega_{i_1}\,+\,\omega_{i_2}\,+\,\omega_{\ell}\,$ and 
$\,\mu_1\,=\,\mu\,-\,(\alpha_{i_1} +\cdots +\alpha_{i_2})\,
=\,\omega_{i_1-1}\,+\,\omega_{i_2+1}\,+\,\omega_{\ell}\,\in\,
{\cal X}_{++}(V)$. 
For $\,i_2+1\,<\,\ell,\,$ $\,\mu_1\,$ has 3 nonzero coefficients and 
Lemma~\ref{3orless}(a) applies. For $\,i_2+1\,=\,\ell,\,$ 
$\,\mu =\omega_{i_1}\,+\,\omega_{\ell-1}\,+\,\omega_{\ell},\;$ 
$\,\mu_1=\omega_{i_1-1}\,+\,2\omega_{\ell},\,$ 
$\,\mu_2=\mu_1\,-\,\alpha_{\ell}\,
=\,\omega_{i_1-1}\,+\,\omega_{\ell-1},\;$ 
$\,\mu_3\,=\,\mu_2\,-\,(\alpha_{i_1-1} +\cdots +\alpha_{\ell-1})\,
=\,\omega_{i_1-2}\,+\,\omega_{\ell}\,\in\,{\cal X}_{++}(V)\,$ and
Lemma~\ref{3orless}(c) applies. 

In any of that above cases, $\,V\,$ is not an exceptional module.


\underline{Case 3}: Suppose 
$\,i_1=1,\;i_3\,=\,\ell\;$ and $\,2<\,i_2\,<\ell-1\,$.

(i) For $\,a\,\geq\,1,\,b\,\geq\,2,\,c\,\geq\,1,\;$
$\,\mu\,=\,a\,\omega_{1}\,+\,b\,\omega_{i_2}\,+\,c\,\omega_{\ell}\;$ and 
$\,\mu_1=\mu\,-\,\alpha_{i_2}\,=\,a\,\omega_{1}\,+\,\omega_{i_2-1}\,
+\,(b-2)\,\omega_{i_2}\,+\,\omega_{i_2+1}\,+\,c\,\omega_{\ell}
\,\in\,{\cal X}_{++}(V)\,$ and Lemma~\ref{4coef} applies.

(ii) For $\,a\,\geq\,2,\,b\,=\,1,\,c\,\geq\,1\,$ (or
$\,a\,\geq\,1,\,b\,=\,1,\,c\,\geq\,2\,$),
$\,\mu =a\,\omega_{1}\,+\,\omega_{i_2}\,+\,c\omega_{\ell},\;$
$\,\mu_1\,=\,\mu\,-\,(\alpha_{1}+\cdots+\alpha_{i_2})\,
=\,(a-1)\,\omega_{1}\,+\,\omega_{i_2+1}\,+\,c\,\omega_{\ell},\;$
$\,\mu_2\,=\,\mu\,-\,(\alpha_{i_2}+\cdots+\alpha_{\ell})\,
=\,a\,\omega_{1}\,+\,\omega_{i_2-1}\,+\,(c-1)\,\omega_{\ell}\;$ and
$\,\mu_3\,=\,\mu\,-\,(\alpha_{1}+\cdots+\alpha_{\ell})\,
	    =\,(a-1)\,\omega_{1}\,+\,\omega_{i_2}\,+\,(c-1)\,\omega_{\ell}
\,\in\,{\cal X}_{++}(V).\,$ Thus,
$\,{\cal X}_{++}(V)\,$ has at least 2 good weights with 3 nonzero
coefficients and Lemma~\ref{3orless}(a) applies.

(iii)  For $\,a=b=c=1,\;$
$\,\mu\,=\,\omega_{1}\,+\,\omega_{i_2}\,+\,\omega_{\ell},\;$ 
$\,\mu_1\, =\,\mbox{$ \mu-(\alpha_{1}+\cdots+\alpha_{i_2})$}\,
=\,\omega_{i_2+1}\,+\,\omega_{\ell},\;$
$\,\mu_2\, =\,\mu-(\alpha_{i_2}+\cdots+\alpha_{\ell})\,
=\,\omega_1\,+\,\omega_{i_2-1}\;$ and
$\mu_3\,=\,\mbox{$\mu_2-(\alpha_1+\cdots+$}\alpha_{i_2-1})\,
=\,\omega_{i_2}\;$
are all good weights in $\,{\cal X}_{++}(V).\,$ Hence, by
Lemma~\ref{3orless}(c), $\,V\,$ is not an exceptional module.

{\it Hence, if $\,{\cal X}_{++}(V)\,$ contains a good weight
$\,\mu\,=\,a\,\omega_{i_1}\,+\,b\,\omega_{i_2}\,+\,c\,\omega_{i_3}\,$
(with $\,3\,$ nonzero coefficients), then we can assume 
$\,i_1=1,\;i_2=2,\;i_3\,=\,\ell\,$ (or graph-dually 
$\,i_1=1,\;i_2=\ell-1,\;i_3=\ell\,$).} This case is treated as follows.

\underline{Case 4}: $\,i_1=1,\;i_2=2,\;i_3\,=\,\ell\;$ (or graph-dually 
$\,i_1=1,\;i_2=\ell-1,\;i_3=\ell\,$). 

(i) For $\,a\geq 1,\,b\geq 2,\,c\geq 1\,$ (or
$\,a\geq 2,\,b \geq 1,\,c\geq 1\,$), 
$\,\mu\,=\,a\omega_{1}\,+\,b\omega_{2}\,+\,c\omega_{\ell},\;$
$\mu_1\,=\,\mu\,-\,(\alpha_1 +\alpha_2)\,=\,(a-1)\,\omega_{1}\,+\,
\mbox{$(b-1)\omega_2$}\,+\,\omega_{3}\,+\,c\omega_{\ell}\,
\in\,{\cal X}_{++}(V)$.
For $\,a\geq 1,\,b\geq 1,\,c \geq 2,\,$ also  
$\,\mu_2\,=\,\mu-\alpha_{\ell}\,=\,a\,\omega_{1}\,+\,b\omega_2\,
+\,\omega_{\ell-1}\,+\,(c-2)\omega_{\ell}\,\in\,{\cal
X}_{++}(V)$. In both cases, as $\,\ell\geq
4$, Lemma~\ref{4coef} or~\ref{3orless}(a) applies.

{\it Therefore, if $\,{\cal X}_{++}(V)\,$ contains a good weight
$\,\mu\,$ with $\,3\,$ nonzero coefficients, then we can assume that 
$\,\mu\,=\,\omega_{1}\,+\,\omega_{2}\,+\,\omega_{\ell}\,$.}

(ii) Let $\,\mu\,=\,\mu_0\,=\,\omega_{1}\,+\,\omega_{2}\,+\,\omega_{\ell}\in
{\cal X}_{++}(V)$. Then $\,\mu_1\,=\,\omega_3\,+\,\omega_{\ell}\,$,
$\,\mu_2\,=\,\mu_1-(\alpha_3\,+\,\cdots\,+\,\alpha_{\ell})\,=\,\omega_2\in
{\cal X}_{++}(V)$. For $\,\ell\,\geq\,6\,$ and any prime $\,p$, 
\begin{align*}
s(V) &\geq\,(\ell + 1)\,\ell\,(\ell -1)\,+\,\displaystyle\frac{(\ell +
1)\,\ell\,(\ell-1)(\ell-2)}{3!}\,+\,\frac{(\ell\,+\,1)\,\ell}{2!}\\
& =\,\displaystyle\frac{(\ell + 1)\,\ell\,(\ell^2+\ell + 3)}{6}\,>\,
\ell\,(\ell+1)^2\,.
\end{align*}
Hence, by~\eqref{allim}, $\,V\,$ is not an exceptional module.

For $\,\ell=5\,$ and $\,p\geq 2\,$, $\,|R_{long}^+-R_{\mu,p}^+|\geq
8\,$. Thus $\,r_p(V)\,\geq\,\displaystyle\frac{120\cdot
8}{30}\,=\,32\,>\,30\,$.
For $\,\ell=4$, $\,\mu\,=\,\omega_1\,+\,\omega_2\,+\,\omega_4\,$,
$\,\mu_1\,=\,\omega_3\,+\,\omega_{4}\,$. For $\,p\geq 2\,$, 
$\,|R_{long}^+-R_{\mu,p}^+|\geq 6,\;|R_{long}^+-R_{\mu,p}^+|\geq 4$. 
Thus $\,r_p(V)\,\geq\,\displaystyle\frac{60\cdot
6}{20}\,+\,\frac{20\cdot 4}{20}\,=\,22\,>\,20\,$. 
Hence, for $\,\ell=4,\,5,\,$ by~\eqref{rral}, $\,V\,$ is not an
exceptional module. This proves the lemma.
\end{proof}

The main theorem for groups of type $\,A_{\ell}\,$ is as follows. Its proof 
is given in Appendix~\ref{appal}.

\begin{theorem}\label{anlist}
If $\,V\,$ is an infinitesimally irreducible
$\,A_{\ell}(K)$-module with highest weight listed
in Table~\ref{tablealall} (p. \pageref{tablealall}), 
then $\,V\,$ is an exceptional $\,\mathfrak g$-module.
If $\,V\,$ has highest weight different from the ones listed in Tables
~\ref{tablealall} or~\ref{leftan} (p. \pageref{leftan}), then
$\,V\,$ is not an exceptional $\,\mathfrak g$-module.
\end{theorem}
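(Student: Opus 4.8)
\textbf{Proof proposal for Theorem~\ref{anlist}.}

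The plan is to combine the necessary condition (Theorem~\ref{???}) with the reduction lemmas already proved in this section to cut the list of candidate highest weights down to a finite collection in each rank, and then to verify directly which survivors are genuinely exceptional, mostly via the dimension criterion of Proposition~\ref{dimcrit}. First I would split the argument according to the number of nonzero coefficients in a weight $\mu\in\mathcal X_{++}(V)$. By Lemma~\ref{4coef}, for $\ell\geq 4$ no exceptional module can have a weight with $4$ or more nonzero coefficients in $\mathcal X_{++}(V)$, and by Lemma~\ref{nlf3c} (together with Lemma~\ref{3orless}) no exceptional module can have a good weight with exactly $3$ nonzero coefficients. Since every bad weight in $\mathcal X_{++}(V)$ with $k$ nonzero coefficients produces, by the Remark following Definition~\ref{badweight} and the explicit subtractions used in those lemmas, a good weight with at least $k$ nonzero coefficients in $\mathcal X_{++,\mathbb C}$, it follows that for $\ell\geq 4$ every weight of an exceptional $A_\ell(K)$-module has at most two nonzero coefficients, and moreover the bad ones contribute nothing we cannot already control. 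So the highest weight $\lambda$ itself is of the form $a\omega_i$, $a\omega_i+b\omega_j$, or (if it is to stay $p$-restricted and give small orbits) a weight close to the ``end'' nodes.

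Next I would bound the orbit sizes. Using Lemma~\ref{orban} together with Lemma~\ref{remark}, a weight $a\omega_i+b\omega_j$ with $1<i<j<\ell$ already has orbit size $\geq \ell(\ell^2-1)/\text{(something small)}$, and one checks against the limit $\ell(\ell+1)^2$ from Table~\ref{table2} that only weights supported on $\{\omega_1,\omega_2,\omega_{\ell-1},\omega_\ell\}$ (up to the graph automorphism, which by Proposition~\ref{twistexc} we may use freely) can survive, and among the $2$-coefficient ones essentially only $\omega_1+\omega_\ell$ and $\omega_1+\omega_2$ (graph-dually $\omega_{\ell-1}+\omega_\ell$). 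For the $1$-coefficient weights $a\omega_i$ one applies Proposition~\ref{criteria} with the descendants $\mu_1,\mu_2,\dots$ in $\mathcal X_{++,\mathbb C}(\lambda)$: for $a\geq 3$, or for $a=2$ with $i$ not an end node, or for $\omega_i$ with $3\leq i\leq \ell-2$ outside the ranges $5\le\ell\le 7$, the sum $s_1(\lambda)$ exceeds the limit, using the weight $\mu=\lambda-\alpha$-type descents and Lemma~\ref{mulmin} to get multiplicities when needed. This isolates exactly the candidates $\omega_1$ (natural), $2\omega_1$ and $\omega_1+\omega_\ell$ (adjoint/$S^2$), $\omega_2$ and $\omega_{\ell-1}$ ($\wedge^2$), $\omega_3,\omega_{\ell-2}$ for $5\le\ell\le 7$ ($\wedge^3$), plus the small-rank cases $\ell=1,2,3$ handled separately by the same inequalities with the smaller limits; the residual weights where the necessary condition is not decisive are collected into Table~\ref{leftan}.

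Then I would prove that each weight in Table~\ref{tablealall} does give an exceptional module. The natural module $E(\omega_1)$ has $\dim V=\ell+1<\dim\mathfrak g-\varepsilon$, so Proposition~\ref{dimcrit} applies; likewise $\wedge^2$, $\wedge^3$ (in the listed rank ranges), and $S^2$ have dimension $\binom{\ell+1}{2}$, $\binom{\ell+1}{3}$, $\binom{\ell+2}{2}$, each $<\dim\mathfrak g-\varepsilon=(\ell+1)^2-1-\varepsilon$ for the relevant $\ell$ (a one-line binomial estimate in each case). The adjoint module $E(\omega_1+\omega_\ell)$ (dimension $\ell^2+2\ell-\varepsilon$) is exceptional by Example~\ref{adjoint}. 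The very small ranks $\ell=1,2$ are direct: $E(\omega_1)$ for $A_1$ is $2$-dimensional and $\mathfrak g_v$ is $1$-dimensional hence noncentral, and $E(2\omega_1)$ for $A_1$ is the adjoint module. The hard part will be the dividing line — making sure the orbit-size/multiplicity inequalities are tight enough to exclude everything not in the two tables while not accidentally excluding a genuine exceptional module; this is exactly where the bookkeeping in Appendix~\ref{appal} (the combinatorial inequalities~\eqref{ineq1},~\eqref{orb3c},~\eqref{orb2c} and their analogues) is needed, and where the multiplicity input from~\cite{buwil} and Lemma~\ref{mulmin} must be invoked carefully for the borderline weights such as $\omega_3$ near $\ell=7$ and the weights listed in Table~\ref{leftan}.
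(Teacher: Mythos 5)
Your proposal follows essentially the same route as the paper's own proof in Appendix~\ref{appal}: reduce via Lemmas~\ref{4coef},~\ref{3orless},~\ref{nlf3c} to weights with at most two nonzero coefficients, eliminate candidates by comparing orbit sums $s(V)$ and $r_p(V)$ against the limit using Lemma~\ref{orban}, Proposition~\ref{criteria} and the multiplicity inputs, and then confirm the survivors by Proposition~\ref{dimcrit} and Example~\ref{adjoint}, with the small ranks $\ell\leq 3$ and the undecided weights of Table~\ref{leftan} handled exactly as you indicate. The only caveat is that the substance of the theorem lies in the case-by-case bookkeeping you defer (in particular, a single two-coefficient orbit is only $\geq\ell(\ell+1)$, well below the limit $\ell(\ell+1)^2$, so the exclusion of weights such as $\omega_2+\omega_j$ or $\omega_1+\omega_{\ell-1}$ genuinely requires the accumulated descents and the $r_p$ inequality, not just a single orbit-size estimate); but the strategy as outlined is the correct one.
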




\subsubsection*{\ref{lact}.2. Algebras of Type $\,B\,$}\label{blresults}
\addcontentsline{toc}{subsubsection}{\protect\numberline{\ref{lact}.2}
Algebras of Type $\,B\,$}

By Theorem~\ref{???}, any $\,B_{\ell}(K)$-module $\,V\,$ such that 
\begin{equation}\label{bllim}
s(V)\,=\,\displaystyle\sum_{\stackrel{\scriptstyle\mu\,good}
{\scriptstyle \mu\in{\cal X}_{++}(V)}}\,m_{\mu}\,|W\mu|\,
>\,\left\{
\begin{array}{ll}
2\,\ell^3 & \mbox{if}\;\ell\geq 5\\
8\,\ell^2 & \mbox{if}\;\ell\leq 4.
\end{array}\right.
\end{equation}
or
\begin{equation}\label{rrbl}
r_p(V)\,=\,\sum_{\stackrel{\scriptstyle{\mu\in{\cal X}_{++}(V)}}
{\scriptstyle \mu\;good}}\,m_{\mu}\,
\frac{|W\mu|}{|R_{long}|}\,|R_{long}^+-R^+_{\mu,p}|\,>\,2\,\ell^2
\end{equation}
is not an exceptional $\,\mathfrak g$-module.
Note that for $\,\ell =4$, $\,8\ell^2=2\ell^3\,$.
For groups of type $\,B_{\ell}$, $\,|W|\,=\,2^{\ell}\,\ell !$, 
$\,|R_{long}|\,=\,|D_{\ell}|\,=\,2\,\ell\,(\ell-1)\,$ and
$\,|R_{long}^+|\,=\,\ell\,(\ell-1).\,$ Recall that $\,p\geq 3$. 
The orbit sizes of weights are given by Lemma~\ref{orbbncndn}.

Let $\,V\,$ be a $\,B_{\ell}(K)$-module. We prove a reduction lemma.

\begin{lemma}\label{3coefbn}
Let $\,\ell\,\geq\,4$. If $\,{\cal X}_{++}(V)\,$ contains a
weight with 3 or more nonzero coefficients, then $\,V\,$
is not an exceptional $\,\mathfrak g$-module.
\end{lemma}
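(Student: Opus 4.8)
The plan is to follow the same strategy used for $A_\ell$ in Lemma~\ref{4coef}, namely to show that the orbit of any weight with at least three nonzero coefficients is too large to satisfy the bound~\eqref{bllim}, and then to reduce the case of a bad weight to the case of a good weight with at least three nonzero coefficients. First I would fix a weight $\mu=a_{i_1}\omega_{i_1}+a_{i_2}\omega_{i_2}+a_{i_3}\omega_{i_3}+\cdots\in{\cal X}_{++}(V)$ with three or more nonzero coefficients, and by Lemma~\ref{remark} reduce to bounding below $|W\nu|$ for a weight $\nu$ with \emph{exactly} three nonzero coefficients $\nu=c_{i_1}\omega_{i_1}+c_{i_2}\omega_{i_2}+c_{i_3}\omega_{i_3}$, $i_1<i_2<i_3$. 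Using the orbit formula of Lemma~\ref{orbbncndn}, I would argue (exactly as in Step~1 of Lemma~\ref{4coef}) that among all such weights the smallest orbit is attained at $\bar\mu=d_1\omega_{\ell-2}+d_2\omega_{\ell-1}+d_3\omega_\ell$ (or a graph-related weight for $D_\ell$, but here we are in type $B$, so the long subsystem is $D_\ell$ and $i_m=\ell$ gives the smallest binomial product), i.e.\ one shows $|W\nu|\ge|W\bar\mu|$ by a binomial-coefficient monotonicity argument of the kind deferred to the Appendix in the $A_\ell$ case.

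Next I would compute $|W\bar\mu|$ explicitly from Lemma~\ref{orbbncndn}: for $i_{m}=\ell$ the factor $2^r$ with $r=\ell$ appears, and the binomial product collapses to something of order $\ell^3$ (roughly $2^\ell$ times a product of three small binomials when the three indices are $\ell-2,\ell-1,\ell$, giving $2^{\ell-1}\binom{\ell-1}{\ell-2}\binom{\ell}{\ell-1}$-type expressions — the precise value depends on which of the exceptional $D_\ell$ cases $i_{m-1}=\ell-1$ versus $i_{m-1}\le\ell-2$ one is in). The key inequality to verify is then $|W\bar\mu|>2\ell^3$ for $\ell\ge5$ and $|W\bar\mu|>8\ell^2$ for $\ell=4$; since $|W\bar\mu|$ is of order $2^\ell\ell^2$ or $2^\ell\ell$ while the limits are only polynomial of degree $3$, this holds comfortably for all $\ell\ge4$ once one checks the small cases $\ell=4,5$ by hand. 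Hence if $\mu$ is a good weight with $\ge3$ nonzero coefficients, $s(V)\ge|W\mu|>\text{limit}$ and $V$ is not exceptional by~\eqref{bllim}.

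Finally I would treat a bad weight $\mu$ with three or more nonzero coefficients. By the Remark following Definition~\ref{badweight} (applied with $p\ne2$, which holds since $p>2$ here), subtracting an appropriate simple root $\alpha_k$ from $\mu$ — or, as in case~(b) of Lemma~\ref{3orless}, subtracting a string $\alpha_{i_1}+\cdots+\alpha_{i_2}$ — produces a \emph{good} weight $\mu_1\in{\cal X}_{++,\mathbb C}(\mu)\subseteq{\cal X}_{++}(V)$ (using Theorem~\ref{premprin} to know these weights genuinely occur, and Lemma~\ref{lamu} for the containment) which still has at least three nonzero coefficients; the previous paragraph then shows $s(V)\ge|W\mu_1|>\text{limit}$. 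Combining, in every case $V$ is not exceptional. The main obstacle I anticipate is the combinatorial step: carefully establishing that $\bar\mu$ minimizes the orbit size among three-coefficient weights in type $B_\ell$, since the $D_\ell$ long-subsystem orbit formula in Lemma~\ref{orbbncndn} has the three special subcases ($i_m<\ell-1$, $i_m=\ell-1$, $i_m=\ell$ with two sub-subcases), so one must check the minimum over all placements of the three indices rather than just a single clean formula; this is the analogue of the deferred Appendix computation in the $A_\ell$ proof and is where the bookkeeping is heaviest.
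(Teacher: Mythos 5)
Your overall architecture (reduce to a weight with exactly three nonzero coefficients via Lemma~\ref{remark}, bound its orbit below by the limit, then convert a bad weight into a good one with three nonzero coefficients by subtracting a root string) is exactly the paper's, and your treatment of the bad-weight case is fine. But the combinatorial core contains a genuine error: you have identified the wrong minimizer. For $B_\ell$ the orbit formula of Lemma~\ref{orbbncndn} always has $r=t=i_3$ (the special subcases you invoke apply only to type $D_\ell$; the long subsystem $D_\ell$ is irrelevant to $|W\mu|$, which is computed in $W(B_\ell)$ of order $2^\ell\ell!$). So $|W\bar\mu|=2^{i_3}\binom{i_2}{i_1}\binom{i_3}{i_2}\binom{\ell}{i_3}$, and the factor $2^{i_3}$ \emph{grows} with $i_3$: the configuration $(\ell-2,\ell-1,\ell)$ you propose as minimal has orbit $2^{\ell}\ell(\ell-1)$, which for $\ell\geq5$ is strictly \emph{larger} than the orbit $2^3\ell(\ell-1)(\ell-2)$ of $(1,2,3)$. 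The true minimum is attained at the low end $(i_1,i_2,i_3)=(1,2,3)$ — there is no graph automorphism of $B_\ell$ to make the two ends symmetric as in the $A_\ell$ argument you are imitating. Consequently the deferred monotonicity step "$|W\nu|\geq|W\bar\mu|$ for all placements" is false as you have set it up, and your heuristic that the minimum is "of order $2^\ell\ell^2$, comfortably exceeding a cubic limit" also fails: the actual minimum $8\ell(\ell-1)(\ell-2)$ beats $2\ell^3$ only by a bounded factor (and for $\ell=4$ one must compare $2^6\cdot3=192$ against the limit $8\ell^2=128$ separately).

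The paper's proof fixes $\nu=c_1\omega_1+c_2\omega_2+c_3\omega_3$ as the reference weight, verifies $|W\nu|-2\ell^3=6\ell^2(\ell-4)+16\ell>0$ directly, and then runs a case analysis over the remaining placements $(i_1,i_2,i_3)$ showing $|W\bar\mu|/|W\nu|\geq1$ in each (with $\ell=4$ dispatched by the single computation above). To repair your write-up you would need to reverse the direction of your comparison and carry out that case-by-case check; as stated, the inequality you plan to prove in the hardest step is false.
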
\noindent
\begin{proof} 
I) Let $\,\mu=b_1\,\omega_1\,+\,\cdots\,+\,b_{\ell}\,\omega_{\ell}\,\in 
\,{\cal X}_{++}(V)\,$ be a good weight with at least 3 nonzero coefficients. 
We claim that $\,s(V)\,>\,2\ell^3\,$. 

\underline{Step 1}: 
Write $\,\mu\,=\,b_1\,\omega_{i_1}\,+\,b_2\,\omega_{i_2}\,+\,
b_3\,\omega_{i_3}\,+\cdots\,$, where $\,1\leq\,i_i<i_2<i_3\,$ are the
first 3 nonzero coefficients of $\,\mu$. By 
Lemma~\ref{remark},  $\,|W\mu|\,\geq\,|W\bar{\mu}|,\,$ where
$\,\bar{\mu}\,=\,d_1\,\omega_{i_1}\,+
\,d_2\,\omega_{i_2}\,+\,d_3\,\omega_{i_3}\,$ with $\,d_j\neq 0\,$ for
all $\,1\leq j\leq 3$. Therefore, to prove the claim, it
suffices to prove that $\,|W\bar{\mu}|\,>\,2\,\ell^3$, for any weight
$\,\bar{\mu}\,$ with exactly 3 nonzero coefficients.

\underline{Step 2}: Let $\,\bar{\mu}\,=\,d_1\,\omega_{i_1}\,+
\,d_2\,\omega_{i_2}\,+\,d_3\,\omega_{i_3}\,$ with $\,d_j\neq 0\,$ for
$\,1\leq j\leq 3\,$. We prove that $\,|W\bar{\mu}|\,>\,2\,\ell^3\,$ by
considering the different possibilities for $\,(i_1,\,i_2,\,i_3)$. 

(a) Let $\,(i_1,\,i_2,\,i_3)\,=\,(1,\,2,\,3)\,$. In this case denote
$\,\bar{\mu}\,$ by $\,\nu=c_1\,\omega_1\,+\,c_2\omega_2\,+\,c_{3}\,
\omega_{3}\,$, with $\,c_1,\,c_2,\,c_3\,$ all nonzero. Then
$\,|W\nu|\,=\,2^3\,\ell\,(\ell -1)\,(\ell -2)\,$. Thus, for
$\,\ell\geq 4$, $\,|W\nu|\,-\,2\,\ell^3 \,=\,6\ell^2\,(\ell-4)\,
+\,16\ell\,>\,0\,$ whence  $\,|W\nu|\,>\,2\,\ell^3 \,$.

{\bf Note}: For $\,\ell =4,\,$ any choice of $\,(i_1,\,i_2,\,i_3)\,$ implies
$\,|W\bar{\mu}|\,=\,2^6\cdot 
3\,>\,2^7=8\cdot 4^2$. Thus, from now on let $\,\ell\geq 5\,$ and 
$\,{\mathbf\nu=c_1\,\omega_1\,+\,c_2\omega_2\,+\,c_{3}\,
\omega_{3}}\,$, with $\,c_i\neq 0$.

(b)i) Let $\,1\leq i_1\,<\,i_2\,<\,i_3\,<\,\ell -1\,$ and 
$\,\bar{\mu}\,=\,d_1\,\omega_{i_1}\,+\,d_{2}\,\omega_{i_2}\,
+\,d_{3}\,\omega_{i_3}$, with $\,d_{j}\neq 0$. Then
$\,|W\bar{\mu}|\, =\, \displaystyle 2^{i_3}\,\binom{i_2}{i_1}\,
\binom{i_3}{i_2}\,\binom{\ell}{i_3}\,$.
Note that for $\,1\leq i_1< i_2 < i_3,\;3\leq i_3\leq\ell-3\,$ and 
$\,\ell\geq 6,\,$ we have $\,\displaystyle\binom{\ell}{i_3}\geq
\binom{\ell}{3}\,$ and $\,\displaystyle\binom{i_2}{i_1}\,
\binom{i_3}{i_2}\geq 6\,$. Thus,  
\[
\frac{|W\bar{\mu}|}{|W{\nu}|} \,= \, 
\frac{2^{i_3-3}}{\ell (\ell - 1)(\ell - 2)}\,\binom{i_2}{i_1}\,
\binom{i_3}{i_2}\,\binom{\ell}{i_3}
\,\geq\,\frac{2^{i_3-3}\cdot 6}{\ell (\ell - 1)(\ell -2)}\,\binom{\ell}{3}
\, =\, 2^{i_3-3}\geq 1\,.
\]
Hence, 
for $\,\ell\geq 6,\,$ $\,|W\bar{\mu}|\,\geq\,|W{\nu}|\,>\,2\ell^3\,$.

ii) If $\,1\leq i_1< i_2 < i_3=\ell -2\,$ then, for $\,2\leq i_2\leq
\ell-3\,$ and $\,\ell\geq 5,\,$
\begin{eqnarray}
\frac{|W\bar{\mu}|}{|W{\nu}|} & = &\frac{2^{\ell
-6}}{(\ell-2)}\,\binom{i_2}{i_1}\,\binom{\ell -2}{i_2}\,
\geq\, \frac{2^{\ell
-6}}{(\ell-2)}\,\binom{i_2}{i_1}\,\binom{\ell-2}{2}
 \geq 2^{\ell-6}\,(\ell-3) \,\geq\,1\,.\nonumber
\end{eqnarray}

{\bf Therefore, if $\,\bar{\mu}\,=\,d_1\,\omega_{i_1}\,+\,d_{2}\,\omega_{i_2}\,
+\,d_{3}\,\omega_{i_3}$, with $\,d_{j}\neq 0\,$ and 
$\,1\leq i_1\,<\,i_2\,<\,i_3\,\leq\,\ell -2\,$, then
$\,|W\bar{\mu}|\,\geq\,|W{\nu}|\,>\,2\ell^3\,$. Hence, we can assume
$\,\ell -1\leq \,i_3\,$.} We deal with this case in the sequel.

(c) For $\,1\leq i_1< i_2 <i_3\,=\,\ell -1,\,$ 
$\,\bar{\mu}\,=\,b_1\,\omega_{i_1}\,+\,b_{2}\,\omega_{i_2}\,
+\,b_{3}\,\omega_{\ell -1}\,$ and \\
$\,|W\bar{\mu}|\, = \,\displaystyle 2^{\ell-1}\,\ell\,\binom{i_2}{i_1}\, 
\binom{\ell -1}{i_2}\,$.
Thus, for $\,2\leq i_2\leq  \ell-2\,$ and $\,\ell\geq 4,\,$
\begin{eqnarray}
\frac{|W\bar{\mu}|}{|W{\nu}|} & = & \frac{2^{\ell-4}\;\ell}{\ell\,(\ell -
1)\,(\ell -2)}\,\binom{i_2}{i_1}\,\binom{\ell -1}{i_2} \geq 
2^{\ell-4}\,\geq\,1\,.\nonumber
\end{eqnarray}

d)i) Let $\,1\leq i_1< i_2 <i_3\,=\,\ell$. Then  
$\,\mu\,=\,\bar{\mu}\,=\,b_1\,\omega_{i_1}\,+\,b_{2}\,\omega_{i_2}\,
+\,b_{3}\,\omega_{\ell}\;$ and 
$\,|W\bar{\mu}|\, = \,\displaystyle 2^{\ell}\,\ell\,\binom{i_2}{i_1}\, 
\binom{\ell -1}{i_2}\,$. 
Thus, for $\,3\leq i_2\leq\ell-2\,$ and $\,\ell\geq 5,\,$
\begin{eqnarray}
\frac{|W\bar{\mu}|}{|W{\nu}|} & = & \frac{2^{\ell-3}}{\ell(\ell - 1)
(\ell -2)}\,\binom{i_2}{i_1}\,\binom{\ell}{i_2} \geq 2^{\ell-4}\, \geq\,1\,.
\nonumber
\end{eqnarray}

ii) For $\,i_2\,=\,2\,$ (hence $\,i_1=1\,$) and $\,i_3=\ell$,  
$\,\mu\,=\,\bar{\mu}\,=\,b_1\,\omega_{1}\,+\,b_{2}\,\omega_{2}\,
+\,b_{3}\,\omega_{\ell}\,$. Then, for $\,\ell\geq 4,\,$
$\,|W\mu|\,=\,2^{\ell}\,\ell(\ell-1)\,>\,2\ell^3\,$.

iii) For $\,1\leq i_1 < i_2\,=\,\ell -1,\,i_3=\ell$, 
$\,\mu\,=\,\bar{\mu}\,=\,b_1\,\omega_{i_1}\,+\,b_{2}\,\omega_{\ell -1}\,
+\,b_{3}\,\omega_{\ell}\;$ and 
$\,|W\bar{\mu}|\,=\,2^{\ell}\,\ell\,\displaystyle\binom{\ell-1}{i_1}\,$.
Hence, for $\,1\leq i_1 \leq \ell-2\,$ and $\,\ell\geq 4,\,$
$\,|W\bar{\mu}|\,\geq\,2^{\ell}\,\ell\,(\ell-1)\,>\,2\ell^3\,$.

{\bf Therefore, if $\,{\cal X}_{++}(V)\,$ contains a good weight $\,\mu\,$
with at least 3 nonzero coefficients, then
$\,s(V)\,\geq\,|W\mu|\,>\,2\ell^3\,$. Hence, by~\eqref{bllim}, $\,V\,$
is not an exceptional module.}

II) Suppose that $\,\mu\,$ is a bad weight in $\,{\cal X}_{++}(V)$.
This implies $\,b_j\geq 3\,$ for all nonzero coefficients. 
We claim that it is possible to produce, from $\,\mu,\,$ a
good weight $\,\mu_1\in{\cal X}_{++}(V)\,$ having 3 or more nonzero 
coefficients. Indeed:

(a) if $\,1\leq\,i_i<i_2<i_3\,<\,\ell -1$, then 
$\,\mu\,=\,b_1\,\omega_{i_1}\,+\,b_2\,\omega_{i_2}\,+\,
b_3\,\omega_{i_3}\,+\cdots\;$ and
$\,\mu_1 = \mu-(\alpha_{i_2}+\cdots +\alpha_{i_3}) 
 = b_1\,\omega_{i_1}\,+\,\omega_{i_2-1}\,+\,(b_2-1)\,\omega_{i_2}\,+\,
(b_3-1)\,\omega_{i_3}\,+\,\omega_{i_3+1}\,+\,\cdots\;\in\,{\cal
X}_{++}(V)$.

(b) if $\,1\leq\,i_i<i_2<i_3\,=\,\ell -1$, then 
$\,\mu\,=\,b_1\,\omega_{i_1}\,+
\,b_2\,\omega_{i_2}\,+\,b_3\,\omega_{\ell -1}\,+\,\cdots\;$ and
$\,\mu_1 = \mu-(\alpha_{i_1}+\cdots +\alpha_{i_2})
 = \omega_{i_1-1}\,+\,(b_1-1)\,\omega_{i_1}\,+\,(b_2-1)\omega_{i_2}\,+\,
\omega_{i_2+1}\,+\,b_3\,\omega_{\ell -1}\,+\,\cdots\;\,\in\,{\cal
X}_{++}(V)$.

(c) if $\,1\leq\,i_i<i_2<i_3\,=\,\ell$, then $\,\mu\,=\,b_1\,\omega_{i_1}\,+
\,b_2\,\omega_{i_2}\,+\,b_3\,\omega_{\ell}\;$ and\linebreak 
$\,\mu_1\,=\,\mu-(\alpha_{i_1}+\cdots +\alpha_{\ell})\,=\,
\omega_{i_1-1}\,+\,(b_1-1)\,\omega_{i_1}\,+\,b_2\omega_{i_2}\,+\,
b_3\,\omega_{\ell}\,\in\,{\cal X}_{++}(V)$.

In all these cases, $\,\mu_1\,$ is a good weight with at least 3 nonzero 
coefficients, since $\,b_1,\,b_2,\,b_3\,\geq 3$. Hence, by Part I of
this proof, $\,s(V)\,\geq\,|W\mu_1|\,>\,2\ell^3\,$ and,
by~\eqref{bllim}, $\,V\,$ is not an exceptional module.
This proves the lemma.
\end{proof}

The main theorem for groups of type $\,B$ is as follows. Its proof is given
in Appendix~\ref{appbl}.

\begin{theorem}\label{listbn}
Suppose $\,p\,>\,2.\,$
If $\,V\,$ is an infinitesimally irreducible 
$\,B_{\ell}(K)$-module with highest weight listed in
Table~\ref{tableblall} (p. \pageref{tableblall}), 
then $\,V\,$ is an exceptional $\,\mathfrak g$-module.
If $\,V\,$ has highest weight different from the ones listed in Tables
~\ref{tableblall} or~\ref{leftbn} (p. \pageref{leftbn}), then
$\,V\,$ is not an exceptional $\,\mathfrak g$-module.
\end{theorem}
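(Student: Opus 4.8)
\textbf{Proof proposal for Theorem~\ref{listbn}.}

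The plan is to follow the general procedure of Section~\ref{proced}, exploiting the necessary conditions of Theorem~\ref{???} for type $\,B_{\ell}\,$ (inequalities~\eqref{bllim} and~\eqref{rrbl}) to cut down the list of candidate highest weights, and then to settle the surviving weights individually. Throughout we assume $\,p>2\,$, so $\,p\,$ is non-special for $\,B_{\ell}\,$ and Theorem~\ref{premprin} applies, giving $\,{\cal X}_{++}(V)=(\lambda-Q_+)\cap P_{++}\,$.

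First I would separate the small-rank cases $\,\ell=2,3\,$ from the generic case $\,\ell\geq 4\,$. For $\,\ell\geq 4\,$, Lemma~\ref{3coefbn} already removes every module whose set of dominant weights contains a weight with three or more nonzero coefficients; since any $\,\lambda=\sum a_i\omega_i\,$ with at least three nonzero $\,a_i\,$ is itself such a weight, this reduces us to highest weights of the form $\,\lambda=a\,\omega_i+b\,\omega_j\,$ or $\,\lambda=a\,\omega_i\,$. For these remaining two-coefficient and one-coefficient weights I would run the orbit-size estimates: using Lemma~\ref{orbbncndn} to compute $\,|W\mu|\,$ for the relevant $\,\mu\in{\cal X}_{++,\mathbb C}(\lambda)\,$, and applying Proposition~\ref{criteria} to discard any $\,\lambda\,$ for which $\,s_1(\mu)\,$ exceeds the limit $\,2\ell^3\,$ (or $\,8\ell^2\,$ for $\,\ell=4\,$). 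When the orbit-size bound is too crude, I would instead estimate $\,r_p(V)\,$ via~\eqref{rrbl}, computing $\,|R^+_{long}-R^+_{\mu,p}|\,$ using the description of bad weights and of maximal subsystems of $\,R_{long}\cong D_{\ell}\,$; this is precisely the kind of step carried out in the $A_{\ell}$ lemmas above (cf.\ the end of the proof of Lemma~\ref{nlf3c}). For $\,\ell=2,3\,$ the Weyl group is small, the weight systems are short, and one can afford to list all $\,p$-restricted $\,\lambda\,$ with $\,s(V)\leq 8\ell^2\,$ directly, using the multiplicity data of Burgoyne--Williamson~\cite{buwil}; here the special cases $\,B_2\,$ with $\,p=3\,$ and the various weights $\,\omega_1+\omega_2,\ 2\omega_2,\ \omega_1+\omega_3\,$ in Table~\ref{leftbn} must be carried along because the bound does not eliminate them.

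After this sieving one is left with the weights in Table~\ref{tableblall} together with the unclassified ones in Table~\ref{leftbn}. To finish, for each weight in Table~\ref{tableblall} I would prove that the corresponding module \emph{is} exceptional. The natural module $\,E(\omega_1)\,$ (dimension $\,2\ell+1\,$), the spin modules $\,E(\omega_{\ell})\,$ (dimension $\,2^{\ell}\,$ for $\,2\leq\ell\leq 6\,$), and the module $\,E(\omega_2)\,$ for $\,\ell\geq 3\,$ (which is the adjoint module, or close to it, of dimension $\,\leq\ell(2\ell+1)\,$) all satisfy $\,\dim V<\dim\mathfrak g-\varepsilon\,$ — note $\,\mathfrak z(\mathfrak g)=0\,$ here so $\,\varepsilon=0\,$ and $\,\dim\mathfrak g=\ell(2\ell+1)\,$ — except for boundary instances where $\,V\,$ is itself the adjoint module, which is exceptional by Example~\ref{adjoint}. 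Thus Proposition~\ref{dimcrit} (or Example~\ref{adjoint}) disposes of every entry in Table~\ref{tableblall}; the spin modules for $\,\ell=6\,$ need the explicit inequality $\,2^6=64<78=\dim B_6\,$, and for $\,\ell=7\,$ one has $\,2^7=128>\dim B_7=105\,$, explaining the cutoff $\,\ell\leq 6\,$. The weights in Table~\ref{leftbn} are simply declared unclassified, so nothing further is required for them.

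The main obstacle will be the bookkeeping in the sieving step: the orbit-size bound $\,s(V)\leq 2\ell^3\,$ is only polynomially large, so for two-coefficient highest weights $\,\omega_i+\omega_j\,$ with $\,i,j\,$ near the middle of the diagram the first orbit $\,|W(\omega_i+\omega_j)|\,$ may already be comparable to the limit, and one must chase several subdominant weights $\,\mu_1,\mu_2,\dots\,$ and in some cases pass to the finer invariant $\,r_p(V)\,$, whose computation requires careful identification of which long roots $\,\gamma\,$ satisfy $\,(\mu,\gamma)\equiv 0\pmod p\,$. This is exactly the place where the argument becomes combinatorially heavy, and where the proof in the Appendix (Appendix~\ref{appbl}) does the detailed work; I would organize it by first proving a clean reduction lemma eliminating all $\,\omega_i+\omega_j\,$ with $\,3\leq i<j\leq\ell-2\,$ or similar, then handling the remaining ``edge'' weights $\,\omega_1+\omega_j\,$, $\,\omega_i+\omega_{\ell}\,$, $\,2\omega_i\,$ one family at a time, keeping track of the exceptions listed in Table~\ref{leftbn}.
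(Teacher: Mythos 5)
Your overall architecture matches the paper's: reduce via Lemma~\ref{3coefbn} to weights with at most two nonzero coefficients, sieve the survivors with $\,s(V)\,$ and $\,r_p(V)\,$ (passing to $\,r_p\,$ exactly where the orbit-size bound is too coarse), treat $\,\ell=2,3,4\,$ separately, and certify the entries of Table~\ref{tableblall} by Proposition~\ref{dimcrit} or Example~\ref{adjoint}. That part is fine, including the $\,2^{\ell}\,$ versus $\,\ell(2\ell+1)\,$ computation explaining the cutoff at $\,\ell=6\,$ for the spin modules.

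There is, however, a genuine gap. Your plan is two-step: eliminate by the necessary conditions of Theorem~\ref{???}, then verify that the survivors in Table~\ref{tableblall} are exceptional (declaring Table~\ref{leftbn} unclassified). But some highest weights survive both inequalities~\eqref{bllim} and~\eqref{rrbl} and yet appear in \emph{neither} table, so the theorem asserts they are not exceptional and the sieve cannot deliver this. The decisive instance is $\,\lambda=2\omega_1\,$: here $\,\dim E(2\omega_1)\approx\ell(2\ell+3)\,$, the dominant weights are only $\,2\omega_1,\ \omega_2,\ \omega_1,\ 0\,$, and both $\,s(V)\,$ and $\,r_p(V)\,$ stay comfortably below their limits, while the dimension exceeds $\,\dim\mathfrak g\,$ so Proposition~\ref{dimcrit} is silent. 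The paper closes this by realizing $\,E(2\omega_1)\cong({\rm Symm}_n\cap\mathfrak{sl}(n))/\mathrm{scalars}\,$ (Subsection~\ref{roapm}) and exhibiting a generic diagonal matrix $\,D\,$ whose stabilizer consists only of matrices commuting with $\,D\,$ modulo scalars, hence is central; this is an explicit non-combinatorial exclusion that your proposal has no mechanism for. The same issue recurs at small rank (e.g.\ $\,3\omega_2\,$ and $\,\omega_1+\omega_2\,$ for $\,B_2\,$ at $\,p=3\,$, handled in the paper by restriction to $\,\mathfrak{sl}_4\,$ and the truncated symmetric power argument). Without a step of the form ``for each surviving weight outside both tables, exhibit a vector with central isotropy subalgebra,'' the second assertion of the theorem is not proved.
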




\subsubsection*{\ref{lact}.3. Algebras of Type $\,C\,$}\label{clresults}
\addcontentsline{toc}{subsubsection}{\protect\numberline{\ref{lact}.3}
Algebras of Type $\,C\,$}

By Theorem~\ref{???}, any $\,C_{\ell}(K)$-module $\,V\,$ such that 
\begin{equation}\label{cllim}
s(V)\,=\,\displaystyle\sum_{\stackrel{\scriptstyle\mu\,good}
{\scriptstyle \mu\in{\cal X}_{++}(V)}}\,m_{\mu}\,|W\mu|\,
>\,4\,\ell^3
\end{equation}
or
\begin{equation}\label{rrcl}
r_p(V)\,=\,\sum_{\stackrel{\scriptstyle{\mu\in{\cal X}_{++}(V)}}
{\scriptstyle \mu\;good}}\,m_{\mu}\,
\frac{|W\mu|}{|R_{long}|}\,|R_{long}^+-R^+_{\mu,p}|\,>\,2\,\ell^2
\end{equation}
is not an exceptional $\,\mathfrak g$-module. For groups of 
type $\,C_{\ell}$, $\,|W|\,=\,2^{\ell}\,\ell !$, 
$\,|R_{long}|\,=\,|A_1^{\ell}|\,=\,2\,\ell\,$ and
$\,|R_{long}^+|\,=\,\ell.\,$ Recall that $\,p\geq 3.\,$
The orbit sizes of weights are given by Lemma~\ref{orbbncndn}.

Let $\,V\,$ be a $\,C_{\ell}(K)$-module. We prove a reduction lemma.

\begin{lemma}\label{3coefcn}
Let $\,\ell\,\geq\,6$. If $\,{\cal X}_{++}(V)\,$ contains a
weight with 3 or more nonzero coefficients, then $\,V\,$
is not an exceptional $\,\mathfrak g$-module.
\end{lemma}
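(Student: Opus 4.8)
The strategy mirrors the proof of Lemma~\ref{3coefbn} for type $B_\ell$: reduce to the case of a good weight with \emph{exactly} three nonzero coefficients, show its Weyl orbit already exceeds the limit $4\ell^3$, and then handle bad weights by subtracting a suitable simple root to manufacture a good weight with at least three nonzero coefficients. Concretely, suppose $\mu = \sum_{i=1}^\ell b_i\omega_i \in {\cal X}_{++}(V)$ has at least three nonzero coefficients. If $\mu$ is good, write $\mu = b_1\omega_{i_1} + b_2\omega_{i_2} + b_3\omega_{i_3} + \cdots$ with $i_1 < i_2 < i_3$ the first three indices of nonzero coefficients; by Lemma~\ref{remark}, $|W\mu| \geq |W\bar\mu|$ where $\bar\mu = d_1\omega_{i_1} + d_2\omega_{i_2} + d_3\omega_{i_3}$ with all $d_j \neq 0$, so it suffices to bound $|W\bar\mu|$ from below over all triples $(i_1,i_2,i_3)$.

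First I would compute the relevant orbit sizes using Lemma~\ref{orbbncndn}: for type $C_\ell$ one has $|W| = 2^\ell\,\ell!$, and for $\bar\mu = d_1\omega_{i_1} + d_2\omega_{i_2} + d_3\omega_{i_3}$ with $i_3 < \ell$ we get $|W\bar\mu| = 2^{i_3}\binom{i_2}{i_1}\binom{i_3}{i_2}\binom{\ell}{i_3}$, while for $i_3 = \ell$ the factor $2^{i_3}$ stays $2^\ell$ (since $r=t=i_m$ in that lemma when $G$ is not of type $D_\ell$). The benchmark case is $\nu = c_1\omega_1 + c_2\omega_2 + c_3\omega_3$ with all $c_j \neq 0$, giving $|W\nu| = 2^3\,\ell(\ell-1)(\ell-2)$; for $\ell \geq 6$ one checks $8\ell(\ell-1)(\ell-2) > 4\ell^3$ directly (equivalently $2(\ell-1)(\ell-2) > \ell^2$, i.e. $\ell^2 - 6\ell + 4 > 0$, which holds for $\ell \geq 6$). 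Then, splitting into subcases according to the position of $i_3$ (namely $i_3 \leq \ell-2$, $i_3 = \ell-1$, $i_3 = \ell$) exactly as in the type $B$ argument, I would show $|W\bar\mu|/|W\nu| \geq 1$ by estimating the binomial and power-of-two factors; the power $2^{i_3}$ or $2^\ell$ is what makes these ratios large, so the estimates are comfortably satisfied for $\ell \geq 6$. Hence $s(V) \geq |W\mu| > 4\ell^3$ and by~\eqref{cllim} $V$ is not exceptional.

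Finally, for the bad weight case, $\mu$ bad forces $b_j \geq 3$ for every nonzero coefficient $b_j$ (since $p \geq 3$ and all coefficients of a bad weight lie in $p\mathbb{Z}$). Then, exactly as in part II of the proof of Lemma~\ref{3coefbn}, I would subtract an appropriate string of simple roots --- $\mu_1 = \mu - (\alpha_{i_2} + \cdots + \alpha_{i_3})$ when $i_3 < \ell$, or $\mu_1 = \mu - (\alpha_{i_1} + \cdots + \alpha_\ell)$ when $i_3 = \ell$, with obvious variants when $i_3 = \ell-1$ --- to obtain $\mu_1 \in {\cal X}_{++,\mathbb C}(\mu) \subseteq {\cal X}_{++}(V)$ (using Lemma~\ref{lamu}) which is good and still has at least three nonzero coefficients because the original coefficients were $\geq 3$. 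Applying the first part to $\mu_1$ yields $s(V) \geq |W\mu_1| > 4\ell^3$, so $V$ is not exceptional by~\eqref{cllim}. The main obstacle is purely computational: verifying the finitely many binomial inequalities $|W\bar\mu| \geq |W\nu|$ across all positions of $(i_1,i_2,i_3)$ for $\ell \geq 6$, which I expect to relegate to an appendix of combinatorial inequalities as the paper does for the analogous type $A$ and $B$ lemmas.
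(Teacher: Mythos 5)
Your proposal is correct and follows essentially the same route as the paper: reduce a good weight to one with exactly three nonzero coefficients via Lemma~\ref{remark}, verify $|W\bar\mu|>4\ell^3$ case by case on the position of $i_3$ using Lemma~\ref{orbbncndn} (with the benchmark $|W\nu|=8\ell(\ell-1)(\ell-2)>4\ell^3$ for $\ell\geq 6$), and for bad weights use $b_j\geq 3$ to subtract a root string $\alpha_{i_2}+\cdots+\alpha_{i_3}$ (or $\alpha_{i_1}+\cdots+\alpha_\ell$ when $i_3=\ell$) producing a good weight with three nonzero coefficients. The only cosmetic difference is that the paper bounds $|W\bar\mu|$ directly rather than via ratios to $|W\nu|$, and it needs no separate variant for $i_3=\ell-1$ in type $C$.
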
\noindent
\begin{proof} 
I) Let $\,\mu=b_1\,\omega_1\,+\,\cdots\,+\,b_{\ell}\,\omega_{\ell}\,\in\,
{\cal X}_{++}(V)\,$ be a good weight with at least 3 nonzero coefficients. 
We claim that $\,s(V)\,>\,4\ell^3\,$, for $\,\ell\geq 6$. 

\underline{Step 1}: 
Write $\,\mu\,=\,b_1\,\omega_{i_1}\,+\,b_2\,\omega_{i_2}\,+\,
b_3\,\omega_{i_3}\,+\cdots\,$, where $\,1\leq\,i_i<i_2<i_3\,$ are the
first 3 nonzero coefficients of $\,\mu$. By 
Lemma~\ref{remark},  $\,|W\mu|\,\geq\,|W\bar{\mu}|,\,$ where
$\,\bar{\mu}\,=\,d_1\,\omega_{i_1}\,+
\,d_2\,\omega_{i_2}\,+\,d_3\,\omega_{i_3}\,$ with $\,d_j\neq 0\,$ for
all $\,1\leq j\leq 3$. Therefore, to prove the claim, it
suffices to prove that $\,|W\bar{\mu}|\,>\,4\,\ell^3$, for any weight
$\,\bar{\mu}\,$ with exactly 3 nonzero coefficients.

\underline{Step 2}: Let $\,\bar{\mu}\,=\,d_1\,\omega_{i_1}\,+
\,d_2\,\omega_{i_2}\,+\,d_3\,\omega_{i_3}\,$ with $\,d_j\neq 0\,$ for
$\,1\leq j\leq 3\,$. We prove that $\,|W\bar{\mu}|\,>\,4\,\ell^3\,$ by
considering the different possibilities for $\,(i_1,\,i_2,\,i_3)$. 

(a) Let $\,(i_1,\,i_2,\,i_3)\,=\,(1,\,2,\,3)\,$ and denote
$\,\bar{\mu}\,$ by $\,\nu=c_1\,\omega_1\,+\,c_2\omega_2\,+\,c_{3}\,
\omega_{3}\,$, with $\,c_1,\,c_2,\,c_3\,$ all nonzero. Then 
$\,|W\nu|\,=\,\displaystyle\frac{2^{\ell}\,\ell !}
{2^{(\ell -3)}\,(\ell -3)!}\,=\,2^3\,\ell\,(\ell -1)\,(\ell
-2)\,$. Thus, for $\,\ell\geq 6$, $\,|W\nu|\,-\,4\,\ell^3 \,=\, 
\,4\ell^2\,(\ell-6)\,+\,16\ell\,>\,0\,$ whence $\,|W\nu|\,>\,4\,\ell^3\,$.

(b) Let $\,1\leq i_1\,<\,i_2\,<\,i_3\,<\,\ell -1\,$ and 
$\,\bar{\mu}\,=\,d_1\,\omega_{i_1}\,+\,d_{2}\,\omega_{i_2}\,
+\,d_{3}\,\omega_{i_3}$, with $\,d_{j}\neq 0$. Then 
$\,|W\bar{\mu}|\, =\, \displaystyle 2^{i_3}\,\binom{i_2}{i_1}\,
\binom{i_3}{i_2}\,\binom{\ell}{i_3}\,$. Thus, for $\,3\leq i_3\leq
\ell-2\,$ and $\,\ell\geq 6,\,$ $\,|W\bar{\mu}|\,-\,4\,\ell^3  
\,\geq\,2^3\,\ell\,(\ell -1)\,(\ell-2)\,-\,4\,\ell^3\,\geq
\,4\ell^2\,(\ell-6)\,+\,16\ell\,>\,0\,$.

{\bf Therefore, if $\,\bar{\mu}\,=\,d_1\,\omega_{i_1}\,+\,d_{2}\,\omega_{i_2}\,
+\,d_{3}\,\omega_{i_3}$, with $\,d_{j}\neq 0\,$ and 
$\,1\leq i_1\,<\,i_2\,<\,i_3\,\leq\,\ell -2\,$, then
$\,|W\bar{\mu}|\,>\,2\ell^3\,$. Hence, we can assume
$\,\ell -1\leq \,i_3\,$.} We deal with this case in the sequel.

(c) For $\,1\leq i_1\,<\,i_2\,<i_3\,=\,\ell -1,\,$ 
$\,\bar{\mu}\,=\,b_1\,\omega_{i_1}\,+\,b_{2}\,\omega_{i_2}\,
+\,b_{3}\,\omega_{\ell -1}\;$ and \\
$\,|W\bar{\mu}|\, = \,\displaystyle 2^{\ell-1}\,\ell\,\binom{i_2}{i_1}
\binom{\ell -1}{i_2}\,$. Thus, for $\,2\leq i_2\leq \ell-2\,$ and 
$\,\ell\geq 5$, $\,|W\bar{\mu}|\,\geq\,2^{\ell-1}\,\ell\, 
(\ell-1)\,(\ell-2)\,>\,4\ell^3\,$.

d) Let $\,1\leq i_1\,<\,i_2\,<i_3\,=\,\ell\,$ and 
$\,\mu\,=\,\bar{\mu}\,=\,b_1\,\omega_{i_1}\,+\,b_{2}\,\omega_{i_2}\,
+\,b_{3}\,\omega_{\ell}$. Then,
$\,|W\bar{\mu}|\, = \,\displaystyle 2^{\ell}\,\binom{i_2}{i_1}\, 
\binom{\ell}{i_2}\,$. Hence, for $\,2\leq i_2\leq\ell -1\,$ and
$\,\ell\geq 5,\,$ we have $\,|W\bar{\mu}|\,\geq\,2^{\ell}\,\ell\,(\ell-1)\, 
>\,4\ell^3\,$. 

{\bf Therefore, if $\,{\cal X}_{++}(V)\,$ contains a good weight $\,\mu\,$
with at least 3 nonzero coefficients, then
$\,s(V)\,\geq\,|W\mu|\,>\,2\ell^3\,$. Hence, by~\eqref{bllim}, $\,V\,$
is not an exceptional module.}

II) Suppose that $\,\mu\,$ is a bad weight in $\,{\cal X}_{++}(V)\,$
with at least 3 nonzero coefficients. As $\,p\geq 3$,
we have $\,b_j\geq 3\,$ for all nonzero coefficients. 
We claim that it is always possible to produce, from $\,\mu,\,$ a
good weight $\,\mu_1\in{\cal X}_{++}(V)\,$ having 3 or more nonzero 
coefficients. Indeed:

(a) if $\,1\leq\,i_i<i_2<i_3\,\leq\,\ell -1$, then 
$\,\mu\,=\,b_1\,\omega_{i_1}\,+\,b_2\,\omega_{i_2}\,+\,
b_3\,\omega_{i_3}\,+\cdots\;$ and
$\,\mu_1 = \mu-(\alpha_{i_2}+\cdots +\alpha_{i_3}) 
 = b_1\,\omega_{i_1}\,+\,\omega_{i_2-1}\,+\,(b_2-1)\,\omega_{i_2}\,+\,
(b_3-1)\,\omega_{i_3}\,+\,\omega_{i_3+1}\,+\,\cdots\,\in\,{\cal
X}_{++}(V)$.

(b) if $\,1\leq\,i_i<i_2<i_3\,=\,\ell$, then $\,\mu\,=\,b_1\,\omega_{i_1}\,+
\,b_2\,\omega_{i_2}\,+\,b_3\,\omega_{\ell}\;$ and 
$\,\mu_1\,=\,\mu-(\alpha_{i_1}+\cdots +\alpha_{\ell})\,=\,
\omega_{i_1-1}\,+\,(b_1-1)\,\omega_{i_1}\,+\,b_2\omega_{i_2}\,+\,
\omega_{\ell-1}\,+\,(b_3-1)\,\omega_{\ell}\,\in\,{\cal X}_{++}(V)$.

In all these cases, $\,\mu_1\,$ is a good weight with at least 3 nonzero 
coefficients, since $\,b_1,\,b_2,\,b_3\,\geq 3$. Hence, by Part I of
this proof, $\,s(V)\,\geq\,|W\mu_1|\,>\,4\ell^3\,$ and,
by~\eqref{bllim}, $\,V\,$ is not an exceptional module.
This proves the lemma.
\end{proof}

The main theorem for groups of type $\,C\,$ is as follows. Its proof is 
given in Appendix~\ref{appcl}.
\begin{theorem}\label{listcn}
Suppose $\,p\,>\,2.\,$
If $\,V\,$ is an infinitesimally irreducible 
$\,C_{\ell}(K)$-module with highest weight listed in
Table~\ref{tableclall} (p. \pageref{tableclall}), 
then $\,V\,$ is an exceptional $\,\mathfrak g$-module.
If $\,V\,$ has highest weight different from the ones listed in Tables
\ref{tableclall} or~\ref{leftcn} (p. \pageref{leftcn}), then
$\,V\,$ is not an exceptional $\,\mathfrak g$-module.
\end{theorem}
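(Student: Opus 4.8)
\textbf{Proof proposal for Theorem~\ref{listcn}.}

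The plan is to follow exactly the procedure laid out in Section~\ref{proced}, specialised to $C_{\ell}(K)$, and to combine the reduction lemma already proved (Lemma~\ref{3coefcn}) with a finite case analysis in low rank, the necessary condition Theorem~\ref{???} in the form of inequalities~\eqref{cllim} and~\eqref{rrcl}, and the dimension criterion Proposition~\ref{dimcrit}. The exceptional direction is the cheap half: for each weight in Table~\ref{tableclall} one must exhibit exceptionality, and in almost every case this follows from Proposition~\ref{dimcrit}, since $\dim\mathfrak g = \ell(2\ell+1)$ and the modules $E(\omega_1)$ ($\dim 2\ell$), $E(\omega_2)$ ($\dim \ell(2\ell-1)-\nu$) and $E(\omega_3)$ in rank $3$ ($\dim 14$) all fall below $\dim\mathfrak g - \varepsilon$; here $\varepsilon = \dim\mathfrak z(\mathfrak g) = 0$ for $p\geq 3$ since $C_\ell$ has a very good prime for all $p>2$, so in fact $\mathfrak g$ is simple and centreless. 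The adjoint module $E(2\omega_1)$ is exceptional automatically by Example~\ref{adjoint}. So the entire content of the theorem is the \emph{converse}: every other $p$-restricted highest weight either lands in Table~\ref{tableclall}, lands in the ``unclassified'' Table~\ref{leftcn}, or gives a non-exceptional module.

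For the converse I would argue as follows. First, by Lemma~\ref{3coefcn} any $C_{\ell}(K)$-module with $\ell\geq 6$ whose dominant weight set contains a weight with $3$ or more nonzero coefficients is not exceptional; so for large rank we are reduced to highest weights $\lambda$ supported on at most two fundamental weights. Next I would prove a companion reduction lemma (in the spirit of Lemmas~\ref{3orless},~\ref{nlf3c} for type $A$) handling $2$-coefficient weights and also $\lambda = a\omega_i$ with $a\geq 2$ or $i$ large: using the orbit-size formula of Lemma~\ref{orbbncndn} one shows that for such $\lambda$ the set $\mathcal X_{++}(V)$ already contains a good weight $\mu$ (or a short chain $\mu_0 > \mu_1 > \cdots$ obtained by subtracting simple roots, all lying in $\mathcal X_{++,\mathbb C}(\mu) \subseteq \mathcal X_{++}(V)$ by Lemma~\ref{lamu}) with $s_1(\mu) > 4\ell^3$, whence Proposition~\ref{criteria} kills it; the bad-weight case is reduced to the good-weight case by the standard trick of subtracting one simple root (cf. the Remark after Definition~\ref{badweight}). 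This narrows the survivors to a short explicit list in each rank: the fundamental weights $\omega_i$ for small $i$, $2\omega_1$, $2\omega_\ell$ for small $\ell$, and a handful of two-term weights like $\omega_1+\omega_2$, $\omega_1+\omega_3$ — precisely the candidates. For each survivor not already in Table~\ref{tableclall} one then either (a) computes $s(V)$ or $r_p(V)$ using multiplicity data from~\cite{buwil} and~\cite{gise} and checks it violates~\eqref{cllim} or~\eqref{rrcl}, or (b) notes $\dim V \geq \dim\mathfrak g$ so Proposition~\ref{dimcrit} gives no information and the weight is placed in the unclassified Table~\ref{leftcn}. The small-rank cases $\ell = 2,3,4,5$ (where Lemma~\ref{3coefcn} does not apply) are handled by direct enumeration, again using Lemma~\ref{orbbncndn}, the fixed ``limits'' $4\ell^3$ from Table~\ref{table2}, and — for genuinely exceptional ones like $E(\omega_3)$ at $\ell=3$ — Proposition~\ref{dimcrit}.

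The hard part, as with all four classical-type theorems, will be the bookkeeping in the borderline ranks $\ell = 4, 5$ and for the two-term weights $\omega_1+\omega_2$, $\omega_1+\omega_3$, $\omega_\ell$: here the crude orbit-size estimates are not by themselves decisive, so one must bring in the sharper invariant $r_p(V)$, which requires knowing $|R^+_{\mu,p}|$ — i.e., actually identifying the subsystem $R_{\mu,p} \subseteq R_{\text{long}} = A_1^{\ell}$ for the relevant $\mu$ and each prime $p$ — together with exact weight multiplicities, and then verifying a collection of combinatorial inequalities. These inequalities, along with the detailed per-rank verification, are exactly what is deferred to Appendix~\ref{appcl}; the body of the proof is just the skeleton above: reduce by Lemma~\ref{3coefcn}, reduce two-term and high-index weights by orbit-size blow-up via Proposition~\ref{criteria}, dispatch the exceptional survivors by Proposition~\ref{dimcrit} or Example~\ref{adjoint}, and park the ambiguous finite list in Table~\ref{leftcn}.
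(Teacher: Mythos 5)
Your proposal is correct and follows essentially the same route as the paper: Lemma~\ref{3coefcn} plus a chain of orbit-size/$r_p$ reductions for two-coefficient and high-index weights (the paper organises these as Claims 1--7 in Appendix~\ref{appcl} rather than as a standalone companion lemma), the bad-to-good weight reduction by subtracting simple roots, Proposition~\ref{dimcrit} and Example~\ref{adjoint} for the exceptional survivors, and separate enumeration for $\ell\le 5$. The skeleton and the deferral of the detailed per-rank inequalities to the appendix match the paper exactly.
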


\newpage

\subsubsection*{\ref{lact}.4. Algebras of Type $\,D\,$}\label{resuldl}
\addcontentsline{toc}{subsubsection}{\protect\numberline{\ref{lact}.4}
Algebras of Type $\,D\,$}

By Theorem~\ref{???}, any $\,D_{\ell}(K)$-module $\,V\,$ such that 
\begin{equation}\label{dllim}
s(V)\,=\,\displaystyle\sum_{\stackrel{\scriptstyle\mu\,good}
{\scriptstyle \mu\in{\cal X}_{++}(V)}}\,m_{\mu}\,|W\mu|\,
>\,\left\{
\begin{array}{ll}
2\,\ell^2\,(\ell-1) & \mbox{if}\;\ell\geq 5\\
\frac{8\,\ell^2\,(\ell-1)}{(\ell-2)} & \mbox{if}\;\ell\leq 4.
\end{array}\right.
\end{equation}
or
\begin{equation}\label{rrdl}
r_p(V)\,=\,\sum_{\stackrel{\scriptstyle{\mu\in{\cal X}_{++}(V)}}
{\scriptstyle \mu\;good}}\,m_{\mu}\,
\frac{|W\mu|}{|R_{long}|}\,|R_{long}^+-R^+_{\mu,p}|\,>\,2\,\ell\,(\ell-1)
\end{equation}
is not an exceptional $\,\mathfrak g$-module.
For groups of type $\,D_{\ell}\,$, $\,|W|\,=\,2^{\ell-1}\,\ell !$, 
$\,|R|\,=\,|R_{long}|\,=\,2\ell(\ell-1)$. The orbit sizes of weights
are given by Lemma~\ref{orbbncndn}.

Let $\,V\,$ be a $\,D_{\ell}(K)$-module. We prove a reduction lemma.

\begin{lemma}\label{3cdn}
Let $\,\ell\,\geq\,5$. If $\,{\cal X}_{++}(V)\,$ contains a
weight with 3 or more nonzero coefficients, then $\,V\,$
is not an exceptional $\,\mathfrak g$-module.
\end{lemma}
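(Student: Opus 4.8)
\textbf{Proof proposal for Lemma~\ref{3cdn}.}
The plan is to follow the exact strategy used for types $A$, $B$, and $C$ (Lemmas~\ref{4coef}, \ref{3coefbn}, \ref{3coefcn}), namely: first handle the good-weight case by showing that the orbit of any such weight already exceeds the limit in~\eqref{dllim}, and then reduce the bad-weight case to the good-weight case by subtracting a suitable simple root. Throughout I will use $\,|W|=2^{\ell-1}\ell!\,$ for type $D_\ell$ and the explicit orbit-size formula of Lemma~\ref{orbbncndn}. First I would reduce to a weight with \emph{exactly} three nonzero coefficients: if $\,\mu=b_1\omega_{i_1}+b_2\omega_{i_2}+b_3\omega_{i_3}+\cdots\,$ is good with at least three nonzero coefficients, then by Lemma~\ref{remark} we have $\,|W\mu|\geq|W\bar\mu|\,$ where $\,\bar\mu=d_1\omega_{i_1}+d_2\omega_{i_2}+d_3\omega_{i_3}\,$, so it suffices to bound $\,|W\bar\mu|\,$ from below for all triples $\,1\le i_1<i_2<i_3\le\ell\,$.

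Next I would run through the cases for $\,(i_1,i_2,i_3)\,$ exactly as in the proof of Lemma~\ref{3coefbn}, since $R_{long}$ for $B_\ell$ is $D_\ell$ and the orbit formula for $D_\ell$ is the one already being used there. The base case is $\,(i_1,i_2,i_3)=(1,2,3)\,$, where $\,|W\nu|=2^3\binom{3}{2}\binom{\ell}{3}\cdot(\text{correction})\,$; more precisely by Lemma~\ref{orbbncndn} with $r=t=i_m=3$ one gets $\,|W\nu|=2^3\,\ell(\ell-1)(\ell-2)\,$, and one checks $\,2^3\ell(\ell-1)(\ell-2)>2\ell^2(\ell-1)\,$ for $\,\ell\ge5\,$ (equivalently $\,4(\ell-2)>\ell\,$, i.e. $\,3\ell>8\,$). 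For the remaining triples with $\,i_3\le\ell-2\,$ I would compare $\,|W\bar\mu|\,$ to $\,|W\nu|\,$ via the ratio $\,\frac{2^{i_3-3}}{\ell(\ell-1)(\ell-2)}\binom{i_2}{i_1}\binom{i_3}{i_2}\binom{\ell}{i_3}\ge1\,$, using $\,\binom{i_2}{i_1}\binom{i_3}{i_2}\ge\binom{3}{1}\binom{3}{2}/\cdots\,$ bounds; and for the boundary triples with $\,i_3\in\{\ell-1,\ell\}\,$ (where the exponent $r$ jumps to $\ell-1$ by Lemma~\ref{orbbncndn}) I would compute $\,|W\bar\mu|=2^{\ell-1}\ell\binom{i_2}{i_1}\binom{\ell-1}{i_2}\,$ or similar and observe it is at least $\,2^{\ell-1}\ell(\ell-1)\,$, which visibly exceeds $\,2\ell^2(\ell-1)\,$ for $\,\ell\ge5\,$ since $\,2^{\ell-1}\ell\ge2\ell^2\,$ once $\,\ell\ge4\,$. (Small leftover sub-cases, e.g. $\,i_2=2,i_3=\ell\,$, give $\,|W\mu|=2^\ell\ell(\ell-1)\,$, again obviously enough.) This establishes $\,s(V)\ge|W\mu|>2\ell^2(\ell-1)\,$ for any good $\mu$ with $\ge3$ nonzero coefficients, so by~\eqref{dllim} $V$ is not exceptional.

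Finally, for a bad weight $\mu$ with three or more nonzero coefficients, all its nonzero coefficients are $\ge p\ge2$, and I would produce from $\mu$ a \emph{good} weight $\mu_1\in\mathcal X_{++}(V)$ still having at least three nonzero coefficients, by subtracting a chain of simple roots exactly as in the $B$ and $C$ cases: if the first three active nodes $\,i_1<i_2<i_3\,$ all lie below $\ell-1$, take $\,\mu_1=\mu-(\alpha_{i_2}+\cdots+\alpha_{i_3})\,$; the branch nodes $\ell-1,\ell$ of $D_\ell$ require a small separate bookkeeping (subtract $\,\alpha_{i_1}+\cdots+\alpha_{i_2}\,$ or route through both fork nodes) but are handled by the same device, checking in each case that the resulting coefficient vector is good and has $\ge3$ nonzero entries. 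Then Part~I applies to $\mu_1$, giving $\,s(V)\ge|W\mu_1|>2\ell^2(\ell-1)\,$, and again $V$ is not exceptional by~\eqref{dllim}. The main obstacle I anticipate is purely bookkeeping: the two extra fork nodes $\omega_{\ell-1},\omega_{\ell}$ of type $D$ make the case analysis for both the orbit-size estimates and the ``subtract a root to make it good'' step longer than in type $B$, and one must be careful that subtracting roots near the fork does not accidentally land outside $P_{++}$ or collapse two of the three supports; I would organize this by the value of $i_3\in\{\le\ell-2,\ \ell-1,\ \ell\}$ and, within $i_3=\ell$, by whether $i_{m-1}=\ell-1$, mirroring the structure of Lemma~\ref{orbbncndn}.
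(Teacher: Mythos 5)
Your proposal is correct and follows essentially the same route as the paper's proof: reduce to exactly three nonzero coefficients via Lemma~\ref{remark}, verify $\,|W\bar\mu|>2\ell^2(\ell-1)\,$ by a case analysis on $\,(i_1,i_2,i_3)\,$ organized around whether $\,i_3\,$ hits the fork, and reduce bad weights to good ones by subtracting a chain of simple roots (with the same special handling when the chain reaches $\,\alpha_{\ell-2}\,$ or the fork nodes). The only cosmetic difference is that the paper uses direct lower bounds on $\,|W\bar\mu|\,$ in each range rather than the ratio comparison with the base triple $\,(1,2,3)\,$ that you borrow from the $B_\ell$ argument; both give the same estimates.
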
\noindent
\begin{proof}
I) Let $\,\mu=b_1\,\omega_1\,+\,\cdots\,+\,b_{\ell}\,\omega_{\ell}\,\in\,
{\cal X}_{++}(V)\,$ be a good weight with at least 3 nonzero coefficients.
We claim that $\,s(V)\,>\,2\ell^2\,(\ell-1)\,$. 

\underline{Step 1}: 
Write $\,\mu\,=\,b_1\,\omega_{i_1}\,+\,b_2\,\omega_{i_2}\,+\,
b_3\,\omega_{i_3}\,+\cdots\,$, where $\,1\leq\,i_i<i_2<i_3\,$ are the
first 3 nonzero coefficients of $\,\mu$. By 
Lemma~\ref{remark},  $\,|W\mu|\,\geq\,|W\bar{\mu}|,\,$ where
$\,\bar{\mu}\,=\,a\omega_{i_1}\,+\,b\omega_{i_2}\,+\,c\omega_{i_3}\,$
with $\,a,\,b,\,c\,\neq 0$. Therefore, to prove the claim, it
suffices to prove that $\,|W\bar{\mu}|\,>\,2\,\ell^2\,(\ell-1)$, for any weight
$\,\bar{\mu}\,$ with exactly 3 nonzero coefficients.

\underline{Step 2}: Let $\,\bar{\mu}\,=\,a\omega_{i_1}\,+
\,b\omega_{i_2}\,+\,c\omega_{i_3}\,$ with $\,a,\,b,\,c\,\neq 0$.  
We prove that $\,|W\bar{\mu}|\,>\,2\,\ell^2\,(\ell-1)\,$ by
considering the different possibilities for $\,(i_1,\,i_2,\,i_3)$.

(a) Let $\,1\leq i_1\,<\,i_2\,<\,i_3\,\leq\,\ell-3\,$ (hence $\,\ell
\geq 6\,$) and $\,\bar{\mu}\,=\,a\omega_{i_1}\,+\,b\omega_{i_2}\,
+\,c\omega_{i_3}\,$. Then for $\,3\leq i_3\leq \ell-3\,$ and $\,\ell\geq 6$,
$\,\displaystyle\binom{\ell}{i_3}\geq\binom{\ell}{3}\,$ and
$\,\displaystyle\binom{i_2}{i_1}\,\binom{i_3}{i_2}\geq 6\,$. Hence
\[
|W\bar{\mu}|\,=\, \displaystyle 2^{i_3}\,\binom{i_2}{i_1}\,\binom{i_3}{i_2}\,
\binom{\ell}{i_3}\,\geq\,2^{3}\,\ell\,(\ell-1)\,(\ell-2)\,>\,2\,
\ell^2\,(\ell-1)\,.
\]

(b) For $\,1\leq i_1\,<\,i_2\,<i_3=\ell-2,\,$ $\,\bar{\mu}\,=\,a\omega_{i_1}\,+
\,b\omega_{i_2}\,+\,c\omega_{\ell-2}\,$. Thus, for $\,2\leq i_2\leq \ell-3\,$
and $\,\ell\geq 5$,
\[
\begin{array}{lcl}
|W\bar{\mu}|\, & = & \displaystyle
\frac{2^{\ell-1}\,\ell !}{i_1!\,(i_2-i_1)!\,(\ell-2-i_2)!\,
2!\,2!}\, =\,2^{\ell -3}\,\ell\,(\ell-1)\,\binom{i_2}{i_1}\,
\binom{\ell-2}{i_2}\vspace{.5ex}\\
& \geq & 2^{\ell -3}\,\ell\,(\ell-1)\,(\ell-2)\, >\,
 2\,\ell^2\,(\ell-1) \,.
\end{array}
\]

(c) For $\,1\leq i_1\,<\,i_2\,\leq \ell-2,\,i_3=\ell-1\,$ (or by a
graph-twist $\,1\leq i_1\,<\,i_2\,\leq \ell-2,\,i_3\,=\,\ell\,$), 
$\,\bar{\mu}\,=\,a\omega_{i_1}\,+\,b\omega_{i_2}\,+\,c\omega_{\ell-1}\;$ 
(or $\,\bar{\mu}\,=\,a\omega_{i_1}\,+\,b\omega_{i_2}\,+\,c\omega_{\ell}\,$). 
Thus, for $\,2\leq i_2\leq \ell-2\,$ and $\,\ell\geq 5,\,$
\[
\begin{array}{lcl}
|W\bar{\mu}|\, & = & \displaystyle 2^{\ell -1}\,\binom{i_2}{i_1}\,
\binom{\ell}{i_2}\,\geq\,2^{\ell -1}\cdot 2\binom{\ell}{2}\,\geq\, 
2^{\ell-1}\,\ell\,(\ell-1)\,>\,2\,\ell^2\,(\ell-1) \,.
\end{array}
\]

(d) For $\,1\leq i_1\leq \ell-2,\,i_2\,=\,\ell-1\,$ and $\,i_3\,=\,\ell,\;$
$\,\bar{\mu}\,=\,a\omega_{i_1}\,+\,b\omega_{\ell-1}\,+\,c\omega_{\ell}\,$.
Thus, for $\,\ell\geq 5$,
$\,|W\bar{\mu}|\, = \displaystyle 2^{\ell-1}\,\ell\,\binom{\ell-1}{i_1}\geq
2^{\ell-1}\,\ell\,(\ell-1)\,>\,2\,\ell^2\,(\ell-1) \,$.

{\bf Therefore, if $\,{\cal X}_{++}(V)\,$ contains a good weight $\,\mu\,$
with at least 3 nonzero coefficients, then $\,s(V)\,\geq\,|W\mu|\,>\, 
2\ell^2\,(\ell-1)\,$. Hence, \mbox{{\bf by}~\eqref{dllim},} $\,V\,$
is not an exceptional module.}

II) Let $\,\mu\,$ be a bad weight in $\,{\cal X}_{++}(V)\,$ with at
least 3 nonzero coefficients. We have $\,b_j\geq 2\,$ for all nonzero
coefficients. We claim that it is  possible to produce, from $\,\mu,\,$ a
good weight $\,\mu_1\in{\cal X}_{++}(V)\,$ having 3 or more nonzero
coefficients. Indeed,

(a) if $\,1\leq\,i_1<i_2<i_3\,<\,\ell -2$, then
$\,\mu\,=\,b_1\,\omega_{i_1}\,+\,b_2\,\omega_{i_2}\,+\,
b_3\,\omega_{i_3}\,+\cdots\;$ and 
$\,\mu_1\,=\,\mu-(\alpha_{i_2}+\cdots +\alpha_{i_3})\,=\,
b_1\,\omega_{i_1}\,+\,\omega_{i_2-1}\,+\,(b_2-1)\,\omega_{i_2}\,+\,
(b_3-1)\,\omega_{i_3}\,+\,\omega_{i_3+1}\,+\,\cdots\,\in\,{\cal
X}_{++}(V).\,$ 

(b) if $\,1\leq\,i_1<i_2<i_3\,=\,\ell -2$. Then
$\,\mu\,=\,b_1\,\omega_{i_1}\,+
\,b_2\,\omega_{i_2}\,+\,b_3\,\omega_{\ell -2}\,+\,\cdots\;$ and
$\,\mu_1\,=\,\mu-(\alpha_{i_2}+\cdots +\alpha_{\ell -2})\,=\,
b_1\,\omega_{i_1}\,+\,\omega_{i_2-1}\,+\,
(b_2-1)\omega_{i_2}\,+\,(b_3-1)\,\omega_{\ell -2}\,+\,
\omega_{\ell-1}\,+\,\omega_{\ell}\,\in\,{\cal X}_{++}(V).\,$

(c) if $\,1\leq\,i_1<i_2<i_3\,=\,\ell-1\,$ (or by a graph-twist
$\,i_3\,=\,\ell\,$), then $\,\mu\,=\,b_1\,\omega_{i_1}\,+
\,b_2\,\omega_{i_2}\,+\,b_3\,\omega_{\ell-1}\,+\,\cdots\;\,$ 
(or $\,\mu\,=\,b_1\,\omega_{i_1}\,+
\,b_2\,\omega_{i_2}\,+\,b_3\,\omega_{\ell}\,$).

i) For $\,1\leq\,i_1<i_2<\ell-2,\,$ $\,{\cal X}_{++}(V)\,$ contains
$\,\mu_1\,=\,\mu-(\alpha_{i_1}+\cdots +\alpha_{i_2})\,=\,
\omega_{i_1-1}\,+\,(b_1-1)\,\omega_{i_1}\,+\,(b_2-1)\omega_{i_2}\,+\,
\omega_{i_2+1}\,+\,b_3\,\omega_{\ell-1}\,+\,\cdots$
(or $\,\mu_1\,=\,\mu-(\alpha_{i_1}+\cdots +\alpha_{i_2})\,=\,
\omega_{i_1-1}\,+\,(b_1-1)\,\omega_{i_1}\,+\,b_2\omega_{i_2}\,+\,
\omega_{i_2+1}\,+\,b_3\,\omega_{\ell}$.)

ii) For $\,i_2\,=\,\ell-2,\;$ 
$\,\mu\,=\,b_1\,\omega_{i_1}\,+
\,b_2\,\omega_{\ell-2}\,+\,b_3\,\omega_{\ell-1}\,+\,\cdots\;\,$ 
(or $\,\mu\,=\,b_1\,\omega_{i_1}\,+
\,b_2\,\omega_{\ell-2}\,+\,b_3\,\omega_{\ell}\,$) and 
$\,\mu_1\,=\,\mu-(\alpha_{i_1}+\cdots +\alpha_{\ell-2})\,=\,
\omega_{i_1-1}\,+\,(b_1-1)\,\omega_{i_1}\,+\,(b_2-1)\omega_{\ell-2}\,+
\,(b_3+1)\,\omega_{\ell-1}\,+\,\omega_{\ell}\;$
(or 
$\,\mu_1\,=\,\mu-(\alpha_{i_1}+\cdots +\alpha_{\ell-2})\,=\,
\omega_{i_1-1}\,+\,(b_1-1)\,\omega_{i_1}\,+\,(b_2-1)\omega_{\ell-2}\,+\,
\omega_{\ell-1}\,+\,(b_3+1)\,\omega_{\ell}$) $\,\in\,{\cal X}_{++}(V)$.

In all these cases, $\,\mu_1\,$ is a good weight with at least 3 nonzero
coefficients, since $\,b_1,\,b_2,\,b_3\,\geq 2$. Hence, by Part I of
this proof, $\,s(V)\,\geq\,|W\mu_1|\,>\,2\ell^3\,$ and,
by~\eqref{dllim}, $\,V\,$ is not an exceptional module.
This proves the lemma.
\end{proof}

The main theorem for groups of type $\,D\,$ is as follows.
Its proof is given in Appendix~\ref{appdl}.

\begin{theorem}\label{dnlist}
If $\,V\,$ is an infinitesimally irreducible 
$\,D_{\ell}(K)$-module with highest weight listed
in Table~\ref{tabledlall} (p. \pageref{tabledlall}), 
then $\,V\,$ is an exceptional $\,\mathfrak g$-module.
If $\,V\,$ has highest weight different from the ones listed in Tables
~\ref{tabledlall} or~\ref{leftdn} (p. \pageref{leftdn}), then
$\,V\,$ is not an exceptional $\,\mathfrak g$-module.

\end{theorem}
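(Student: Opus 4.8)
\textbf{Proof proposal for Theorem~\ref{dnlist}.}
The plan is to follow the general procedure of Section~\ref{proced}, applied to the classical type $\,D_{\ell}$, and combine it with the reduction lemma (Lemma~\ref{3cdn}) already in place. First I would dispose of the large ranks. By Lemma~\ref{3cdn}, for $\,\ell\geq 5\,$ any module $\,V\,$ with a weight in $\,{\cal X}_{++}(V)\,$ having at least three nonzero coefficients is not exceptional; so it remains to treat weights $\,\lambda\,$ with at most two nonzero coefficients. For such $\,\lambda\,$ I would run the same orbit-size estimates used elsewhere: compute $\,|W\lambda|\,$ and the orbit sizes of the dominant weights $\,\mu\in{\cal X}_{++,\mathbb C}(\lambda)=(\lambda-Q_+)\cap P_{++}\,$ via Lemma~\ref{orbbncndn}, then invoke Proposition~\ref{criteria} / inequalities~\eqref{dllim},~\eqref{rrdl} from Theorem~\ref{???} to eliminate all but a finite list. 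The surviving weights (for all ranks, after also handling $\,\ell=4\,$ by direct computation since Lemma~\ref{3cdn} requires $\,\ell\geq 5$) should be exactly those appearing in Tables~\ref{tabledlall} and~\ref{leftdn}. Throughout, the hypothesis that $\,p\,$ is non-special for $\,D_{\ell}\,$ (automatic here, as $\,D_{\ell}\,$ has no special primes) lets me use Theorem~\ref{premprin} to identify $\,{\cal X}_{++}(V)\,$ with $\,(\lambda-Q_+)\cap P_{++}\,$, and the bad/good weight bookkeeping from Definition~\ref{badweight} and the following lemma.

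Next I would verify that each weight listed in Table~\ref{tabledlall} does give an exceptional module. For entries~1 (natural, $\,\omega_1$), 3 (twisted-natural for $\,D_4$, $\,\omega_3,\omega_4$), and 4 (semi-spinor, $\,\omega_{\ell-1},\omega_{\ell}\,$ for $\,5\leq\ell\leq 7$), the dimensions are $\,2\ell$, $\,8$, and $\,2^{\ell-1}$ respectively, all strictly less than $\,\dim\mathfrak g - \varepsilon = \ell(2\ell-1) - \varepsilon\,$ (with $\,\varepsilon=\dim\mathfrak z(\mathfrak g)\in\{0\}\,$ for $\,D_{\ell}$, $\,\ell\geq 4$, except the small-rank centre subtleties which I would check case by case); so Proposition~\ref{dimcrit} applies directly. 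For entry~2 (the adjoint module $\,\omega_2$, of dimension $\,2\ell^2-\ell-\nu\,$ with $\,\nu\in\{1,2\}\,$ depending on $\,p$), exceptionality is immediate from Example~\ref{adjoint}: every $\,x\in\mathfrak g\,$ stabilizes itself and $\,[x,x]=0$, so $\,\mathfrak g_x\not\subseteq\mathfrak z(\mathfrak g)$. For the twisted-natural and semi-spinor entries I would also note that $\,\omega_3,\omega_4\,$ for $\,D_4\,$ are graph-twists of $\,\omega_1\,$ (triality), so Proposition~\ref{twistexc} transfers exceptionality; similarly the two semi-spinors are graph-dual, so only one of each pair needs checking.

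The bulk of the work — and the main obstacle — is the converse direction for the small and medium ranks: showing that the weights on the ``unclassified'' list, Table~\ref{leftdn}, are genuinely the only ambiguous ones, i.e.\ that every weight \emph{not} in Table~\ref{tabledlall} $\cup$ Table~\ref{leftdn} fails the necessary condition. This is a finite but delicate computation: for each rank $\,\ell\,$ in the relevant range one must enumerate the finitely many $\,p$-restricted $\,\lambda\,$ not yet excluded, compute $\,{\cal X}_{++,\mathbb C}(\lambda)\,$, and apply either the orbit-size bound~\eqref{dllim} or the finer bound~\eqref{rrdl} involving $\,|R_{long}^+-R^+_{\mu,p}|$; in some cases, as flagged in Section~\ref{proced}, one must also bring in weight multiplicities from~\cite{buwil} and~\cite{gise} or Lemma~\ref{mulmin}-type arguments to push $\,r_p(V)\,$ or $\,s(V)\,$ over the limit. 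The small ranks $\,\ell=4,5\,$ are where the limits from Table~\ref{table2} are weakest relative to $\,|R|\,$, so there the multiplicity input is essential; the cleanest organization is to first prove, for each $\,\ell$, an analogue of Lemmas~\ref{3orless}--\ref{nlf3c} bounding the number of nonzero coefficients, then handle the two-coefficient and one-coefficient weights individually. Since the excerpt defers the detailed computation to Appendix~\ref{appdl}, I would structure the proof here as: (i) reduce to $\,\leq 2\,$ nonzero coefficients via Lemma~\ref{3cdn} plus small-rank variants; (ii) apply Theorem~\ref{???} and the dimension/adjoint/graph-twist criteria to confirm Table~\ref{tabledlall}; (iii) cite the appendix for the exhaustive elimination yielding Table~\ref{leftdn} as the residual list.
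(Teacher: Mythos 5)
Your plan coincides with the paper's own proof (given in Appendix~\ref{appdl}): reduce to at most two nonzero coefficients via Lemma~\ref{3cdn} (with a separate ad hoc treatment of $\,D_4$), eliminate the remaining weights by the $\,s(V)\,$ and $\,r_p(V)\,$ bounds of Theorem~\ref{???} supplemented by multiplicity data, and confirm Table~\ref{tabledlall} via Proposition~\ref{dimcrit}, Example~\ref{adjoint} and graph-twisting. The only slip is the assertion $\,\varepsilon=0\,$, which fails for $\,p=2\,$ (the centre of $\,\mathfrak{so}_{2\ell}\,$ is then nonzero, as the $\,\nu\in\{1,2\}\,$ in the adjoint entry already indicates), but this is harmless since the relevant dimensions are far below $\,\dim\mathfrak g-\varepsilon\,$ in every case.
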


\newpage

\part*{Chapter 5}
\part*{The Proofs}
\addcontentsline{toc}{section}{\protect\numberline{5}{The Proofs}}
\setcounter{section}{5}
\setcounter{subsection}{0}
In this Chapter, Theorem~\ref{frlaet} (p. \pageref{frlaet}) 
is proved. The proof is divided into
lemmas for each group or algebra type.
Recall that $\,\dim\,\mathfrak g\,=\,|R|+\ell\,$ and
$\,\varepsilon=\dim\,\mathfrak z(\mathfrak g)\,$.
We use Lemma~\ref{orbexcp} for sizes of centralizers and orbits of weights.
The notation for roots is as follows. For $\,\gamma=\displaystyle 
\sum_{j=1}^{\ell}\,b_j\,\alpha_j$, we write $\,\gamma =
(b_1\,b_2\,\cdots\,b_{\ell})\,$. 

\subsection{Groups or Lie Algebras of Exceptional Type}
\label{proofexcp}

\begin{remark}\label{remA} 
Let $\,R\,$ be of type $\,E_6,\,E_7\,$ or $\,E_8$. Let  
$\,\lambda = \displaystyle\sum_{i=1}^{\ell}\,a_i\,\omega_i\,$ be a
dominant weight such that $\,a_k\neq 0$, for some $\,1\leq k\leq\ell$. Then 
$\,C_W(\lambda)\,\subseteq\,C_W(\omega_k).\,$ Hence, by Lemma~\ref{remark}, 
$\,|W\lambda|\,\geq\,|W\omega_k|$.
\end{remark}

\subsubsection{Type $E_6$}

In this case the orbit sizes of the fundamental weights are as follows.
\vspace{2ex}

\begin{center}
\begin{tabular}{|c|c|c|c|c|c|c|} 
\hline             
$\omega_i$  & $\omega_1$  &  $\omega_2$  & $\omega_3$  & $\omega_4$  
& $\omega_5$  & $\omega_6$   \\ \hline
$|W\omega_i|$  & $ 27 $  & $72$ & $ 216$ & $ 720$ & $216$ & $27$ \\ \hline
\end{tabular}
\end{center}
\vspace{2ex}

By Theorem~\ref{???}, any $\,E_6(K)$-module $\,V\,$ such that
\begin{equation} \label{324}
s(V)\,=\,\sum_{\stackrel{\scriptstyle\mu\in{\cal X}_{++}(V)}
{\scriptstyle\mu\;good}}\,m_{\mu}\,|W\mu|\,>\,324
\end{equation}
or
\begin{equation}\label{rr72}
r_p(V)\,=\,\sum_{\stackrel{\scriptstyle{\mu\in{\cal X}_{++}(V)}}
{\scriptstyle \mu\;good}}\,m_{\mu}\,
\frac{|W\mu|}{|R_{long}|}\,|R_{long}^+-R^+_{\mu,p}|\,>\,72
\end{equation}
is not an exceptional $\,\mathfrak g$-module.
For groups of type $\,E_6\,$, $\,|W|\,=\,2^7\cdot 3^4\cdot 5\,$ and 
$\,|R_{long}|\,=\,|R|\,=\,72\,=\,2^3 \cdot 3^2\,$.
The following are reduction lemmas.
\begin{lemma}\label{e61}
Let $\,V\,$ be an $\,E_6(K)$-module. If $\,{\cal X}_{++}(V)\,$ contains:

(a) a good weight with at least $3$ nonzero coefficients or

(b) a (good or bad) weight with nonzero coefficient for $\,\omega_4\,$
(this will be called the {\bf $\,\omega_4$-argument} hereafter),\\
then $\,V\,$ is not an exceptional $\,\mathfrak g$-module.
\end{lemma}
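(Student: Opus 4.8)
\textbf{Proof plan for Lemma~\ref{e61}.}

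The plan is to reduce both statements to the necessary condition of Theorem~\ref{???}, i.e.\ to show that under either hypothesis one of the inequalities $s(V)>324$ or $r_p(V)>72$ holds, and then invoke Proposition~\ref{criteria}. For part (a), suppose $\mu\in{\cal X}_{++}(V)$ is a good weight with at least three nonzero coefficients, say $\mu=a_{i}\omega_{i}+a_{j}\omega_{j}+a_{k}\omega_{k}+\cdots$ with $i<j<k$. By Lemma~\ref{remark} (applied as in Remark~\ref{remA}), $|W\mu|\geq|W\nu|$ for the weight $\nu$ with exactly those three nonzero coefficients set to $1$; and again by Lemma~\ref{remark}, shrinking the support only decreases the orbit, so it suffices to check the orbit sizes $|W(\omega_i+\omega_j+\omega_k)|$ for all $\binom{6}{3}=20$ triples using the centralizer data of Lemma~\ref{orbexcp}(1). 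The point is that every such orbit exceeds $324$. The smallest cases are the ones whose centralizer is largest, namely those involving $\omega_2$ and $\omega_3$ and $\omega_5$ (the "outer" nodes): e.g.\ $C_W(\omega_2+\omega_3+\omega_5)\cong S_2^3$ has order $8$, giving $|W\mu|=|W|/8=2^4\cdot 3^4\cdot 5=6480>324$. All other triples give larger orbits. Hence $s(V)\geq|W\mu|>324$ and by~\eqref{324} (via Proposition~\ref{criteria}) $V$ is not exceptional. If instead $\mu$ is a \emph{bad} weight with three nonzero coefficients, then each such coefficient is $\geq p\geq 2$ (in fact $\geq 3$ since $p$ is non-special hence $\geq 5$ for $E_6$ when bad, but $p\geq 2$ suffices here); subtracting an appropriate simple root joining two of the support nodes (as in the proof of Lemma~\ref{3orless}(b)) produces a \emph{good} weight $\mu_1\in{\cal X}_{++}(V)$ still with at least three nonzero coefficients, and we are reduced to the previous case.

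For part (b), suppose $\mu\in{\cal X}_{++}(V)$ has a nonzero coefficient on $\omega_4$. If $\mu$ is good, then by Remark~\ref{remA} and Lemma~\ref{orbexcp}(1), $|W\mu|\geq|W\omega_4|=720>324$, so $s(V)>324$ and $V$ is not exceptional by~\eqref{324}. If $\mu$ is bad, write $\mu=\sum a_i\omega_i$ with all $a_i\in p\mathbb Z$ and $a_4\neq 0$; then by the Remark following Lemma~2.1.4 (the bad-weight lemma), $\mu-\alpha_4\in{\cal X}_{\mathbb C,++}(\mu)\subseteq{\cal X}_{++}(V)$ is a good weight. Since $\alpha_4=(0\,0\,1\,2\,1\,0)$ adds $1$ to the $\omega_3$- and $\omega_5$-coefficients and leaves $a_4-2\neq 0$ (as $a_4\geq p\geq 3$), the weight $\mu-\alpha_4$ has nonzero $\omega_4$-coefficient, so $|W(\mu-\alpha_4)|\geq 720>324$ and again $V$ is not exceptional. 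This establishes the "$\omega_4$-argument."

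The main obstacle is simply the bookkeeping in part (a): one must be confident that \emph{every} size-three support gives $|W\mu|>324$, which amounts to verifying that the largest centralizer among weights with three-node support, $C_W(\omega_2+\omega_3+\omega_5)\cong S_2^3$ of order $8$, still yields an orbit $|W|/8=6480>324$ — and this is comfortably true, so in fact $324$ is not a tight bound for this particular lemma. The only subtlety is the bad-weight reductions, where one must choose the subtracted simple root so that the resulting weight is genuinely good and retains enough nonzero coefficients (for (a)) or retains the $\omega_4$-coefficient (for (b)); here the explicit form of $\alpha_4$ in terms of fundamental weights, together with $p\geq 3$ for $E_6$, makes the choice automatic. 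No weight-multiplicity information is needed for this lemma, so the proof is purely combinatorial once Theorem~\ref{???} and Proposition~\ref{criteria} are in hand.
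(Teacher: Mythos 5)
Your overall strategy is the same as the paper's: bound the orbit size from below, compare with the limit $324$, and invoke Theorem~\ref{???}. However, there are two concrete problems in the execution.

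First, in part (a) you assert that the smallest orbit among good weights with three nonzero coefficients is $|W(\omega_2+\omega_3+\omega_5)|=|W|/8=6480$, "all other triples giving larger orbits." This has the logic of centralizers backwards: a \emph{larger} centralizer gives a \emph{smaller} orbit, and $S_2^3$ (order $8$) is far from the largest rank-three parabolic in $W(E_6)$. Removing an $A_3$ subdiagram (e.g.\ $\{\alpha_1,\alpha_3,\alpha_4\}$) leaves the weight $\omega_2+\omega_5+\omega_6$ with centralizer $S_4$ of order $24$, so the true minimum is $|W|/24=2^4\cdot 3^3\cdot 5=2160$ --- which is the bound the paper actually uses. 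Since $2160>324$ the conclusion survives, but your verification as stated is wrong, and with a tighter limit it would have led you astray.

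Second, and more seriously, your treatment of the bad case in part (b) has a genuine gap. You argue that $\mu-\alpha_4$ retains a nonzero $\omega_4$-coefficient because $a_4-2\neq 0$, "as $a_4\geq p\geq 3$." But no prime is special for $E_6$ (special primes occur only for $B_\ell$, $C_\ell$, $F_4$, $G_2$), so $p=2$ is permitted throughout the $E_6$ analysis --- the paper treats $p=2$ explicitly in Lemma~\ref{e6nlf}. When $p=2$ and $a_4=2$, your weight $\mu-\alpha_4$ has $\omega_4$-coefficient zero and your appeal to $|W\omega_4|=720$ collapses. The repair is what the paper does: note that $\mu-\alpha_4$ picks up $+1$ at \emph{each} of $\omega_2$, $\omega_3$, $\omega_5$ (you also omitted $\omega_2$ from the Cartan-matrix row of $\alpha_4$), so it is automatically a good weight with at least three nonzero coefficients regardless of whether $a_4-2$ vanishes, and one concludes by part (a) rather than by the $\omega_4$-orbit bound.
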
\noindent
\begin{proof}
(a) Let $\,\mu= \displaystyle\sum_{i=1}^6\,a_i\,\omega_i\,$ (with at
least $3$ nonzero coefficients) be a good weight in $\,{\cal X}_{++}(V)$.
Then $\,s(V)\,\geq\,|W\mu|\,\geq\,2^4\cdot 3^3\cdot 5\,>\,324$.
Hence, by (\ref{324}), $\,V\,$ is not an exceptional $\,\mathfrak g$-module.
 
(b) Let $\,\mu= \displaystyle\sum_{i=1}^{6}\,a_i\,\omega_i\,$ be 
such that $\,a_4\neq 0$. By Remark~\ref{remA}, $\,|W\mu|\,\geq\,720\,$.

If $\,\mu\,$ is a good weight in $\,{\cal X}_{++}(V)$, then
$\,s(V)\,\geq\,720\,>\,324$. Hence, by (\ref{324}), $\,V\,$ is not exceptional.

If $\,\mu\,$ is a bad weight in $\,{\cal X}_{++}(V)\,$ (with
$\,a_4\neq 0\,$, hence $\,a_4\geq 2\,$), then $\,\mu_1\,=\,\mu-\alpha_4\,
=\,a_1\omega_1\,+\,(a_2+1)\omega_2\,+\,(a_3+1)\omega_3\,+\, 
(a_4-2)\omega_4\,+\,(a_5+1)\omega_5\,+\,a_6\omega_6\,\in\,{\cal X}_{++}(V)\,$ 
is a good weight with (at least) $3$ nonzero coefficients. 
Hence, by (a), $\,V\,$ is not exceptional. This proves the lemma.
\end{proof}

\begin{lemma}\label{e6r2}
Let $\,V\,$ be an $\,E_6(K)$-module. If $\,{\cal X}_{++}(V)\,$
contains a good weight with $\,2\,$ nonzero coefficients, then 
$\,V\,$ is not an exceptional $\,\mathfrak g$-module.
\end{lemma}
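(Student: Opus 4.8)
The strategy is to reduce to a short list of cases by exploiting the orbit sizes of weights with exactly two nonzero coefficients, and then eliminate each surviving case with either the $s(V)$-bound \eqref{324} or the $r_p(V)$-bound \eqref{rr72}. First I would observe that by Lemma~\ref{e61}(b), we may assume throughout that $\omega_4$ does not occur with nonzero coefficient in any weight of $\mathcal X_{++}(V)$ (otherwise $V$ is already not exceptional), and by Lemma~\ref{e61}(a) we may assume no good weight in $\mathcal X_{++}(V)$ has three or more nonzero coefficients. So the hypothesis gives a good weight $\mu=a\,\omega_i+b\,\omega_j\in\mathcal X_{++}(V)$ with $i<j$, $a,b\neq 0$, and $\{i,j\}$ not containing $4$.

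Next I would enumerate the $C_W$-data from Lemma~\ref{orbexcp}(1) for all such pairs $\{i,j\}\subseteq\{1,2,3,5,6\}$, compute the orbit sizes $|W\mu|=|W|/|C_W(\mu)|$, and check \eqref{324}. Using $|W|=2^7\cdot3^4\cdot5$: the pairs whose centralizer has small enough index will survive, the rest are killed immediately since $s(V)\geq|W\mu|>324$. For the surviving pairs (roughly $\{1,6\}$, possibly $\{1,2\},\{2,6\},\{1,3\},\{5,6\}$, $\{2,3\},\{2,5\},\{3,6\},\{1,5\}$ — those with $|W\mu|\leq 324$), I would then use Lemma~\ref{lamu} to produce further dominant weights $\mu_1,\mu_2,\dots\in\mathcal X_{++,\mathbb C}(\mu)\subseteq\mathcal X_{++}(V)$ by subtracting suitable positive roots, accumulate $s_1(\mu)=\sum_{\mu_k\,good}|W\mu_k|$, and apply Proposition~\ref{criteria} to knock out as many as possible. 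In particular, for any pair $\{i,j\}$ with $j-i$ large, the ``spread-out'' descendants $\mu-\alpha$-strings land on weights like $\omega_{i-1}+\omega_{j+1}$ or weights touching $\omega_4$, and the $\omega_4$-argument or the three-coefficient argument applies to those descendants as well.

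The main obstacle will be the handful of genuinely small-orbit pairs, above all $\mu=a\,\omega_1+b\,\omega_6$ with $C_W(\mu)\cong S_5$ (orbit size $2^7\cdot3^4\cdot5/120 = 432$... wait, that already exceeds $324$), and the truly borderline ones such as $\omega_1+\omega_2$ and $\omega_5+\omega_6$ where $C_W\cong S_5$ gives orbit size $432>324$ too — so in fact most two-coefficient good weights are eliminated purely by \eqref{324}, and the real work is confined to cases like $2\omega_1$, $2\omega_6$, $\omega_1+\omega_6$ already handled (they are single-coefficient or appear in Table~\ref{table3}), versus genuinely new two-coefficient weights. For the few that slip past the crude orbit bound I would fall back on \eqref{rr72}: compute $R_{long}=R$ (type $E_6$), identify $R^+_{\mu,p}=\{\gamma\in R^+ : (\mu,\gamma)\equiv 0\pmod p\}$, bound $|R^+_{long}-R^+_{\mu,p}|$ from below using the explicit root data (notation $\gamma=(b_1\cdots b_6)$ as set up at the start of Section~\ref{proofexcp}), and conclude $r_p(V)\geq m_\mu\,|W\mu|\,|R^+-R^+_{\mu,p}|/72 > 72$. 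The delicate point is that $|R^+_{\mu,p}|$ depends on $p$, so I would split into the cases $p=2$, $p=3$ (wait — $p=3$ is bad for $E_6$, hence excluded since we assume $p$ non-special, but $3$ is good... actually $3$ is bad for $E_6$; however the Necessary Condition Theorem~\ref{???} only requires $p$ non-special, and $3$ is non-special for $E_6$, so $p=3$ must be treated), and $p\geq 5$; in each the worst-case $|R^+_{\mu,p}|$ is bounded by inspecting which positive roots can be orthogonal mod $p$ to a weight with only two nonzero fundamental coordinates, and this is a finite check that I expect to close every remaining case.
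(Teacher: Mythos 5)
Your overall strategy is the same as the paper's: use Lemma~\ref{e61} to exclude $\omega_4$ and three-or-more nonzero coefficients, eliminate most pairs $\{i,j\}$ by the crude orbit bound $s(V)\geq|W\mu|>324$, and fall back on the $r_p$-bound \eqref{rr72} for whatever survives. The elimination by orbit size does work for the pairs $\{1,2\},\{1,3\},\{2,6\},\{5,6\}$ (index of $S_5$, orbit $432$) and $\{1,5\},\{2,3\},\{2,5\},\{3,6\},\{3,5\}$ (orbit $1080$), exactly as in the paper.

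The genuine gap is the case $\mu=a\,\omega_1+b\,\omega_6$. You assert $C_W(\mu)\cong S_5$ and orbit size $432>324$, but by Lemma~\ref{orbexcp}(1) the centralizer is $C_W(\omega_1+\omega_6)\cong W(D_4)$ of order $192$, so $|W\mu|=51840/192=270<324$, and the crude bound \emph{fails}. This is precisely the one two-coefficient family that needs real work, and your plan then misfiles it among cases ``already handled\dots appear in Table~\ref{table3}'' --- it is not in Table~\ref{table3} and is not single-coefficient. The paper handles it by first reducing $a\geq2$ or $b\geq2$ to earlier cases via $\mu-\alpha_1$ or $\mu-\alpha_6$, and then, for $\mu=\omega_1+\omega_6$, producing the descendant $\mu_1=\omega_2\in{\cal X}_{++}(V)$ and checking
\[
r_p(V)\,\geq\,\frac{270\cdot 16}{72}+\frac{72\cdot 20}{72}\,=\,80\,>\,72
\]
(equivalently one could note $s(V)\geq 270+72=342>324$). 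Your fallback clause does name the right tool, but since you believe nothing slips past the orbit bound, the proposal as written never actually closes this case; with the centralizer corrected and this one computation supplied, the argument goes through.
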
\noindent
\begin{proof}
Let $\,\mu\,=\,a\omega_j\,+\,b\omega_k\,$ (with $\,1\leq j<k\leq 6\,$ 
and $\,a\geq 1,\,b\geq 1\,$) be a good weight in $\,{\cal X}_{++}(V)$.

(a) If $\,j=4\,$ or $\,k=4$, then the $\,\omega_4$-argument
(Lemma~\ref{e61}(b)) applies.

(b) Let $\,\mu\in\{\,a\omega_1\,+\,b\omega_2\,,\;
a\omega_1\,+\,b\omega_3\,,\;a\omega_2\,+\,b\omega_6\,,\;
a_5\omega_5 + a_6\omega_6\,,\;\}\,$. Then $\,s(V)\,\geq\,|W\mu|\geq 
\displaystyle\frac{2^7\cdot 3^4\cdot 5}{5!}=432\,>\,324\,$. Hence, 
by~(\ref{324}), $\,V\,$ is not exceptional.

(c) Let $\,\mu\in\{\,a\omega_1\,+\,b\omega_5\,,
\;a\omega_2\,+\,b\omega_3\,,\;a\omega_2\,+\,b\omega_5\,,
\;a\omega_3\,+\,b\omega_6\,\}$. 
Then $\,s(v)\,\geq\,|W\mu|\,=\,2^3\cdot 3^3\cdot 5=1080>324\,$. Hence,
by (\ref{324}), $\,V\,$ is not exceptional.

(d) Let $\,\mu\,=\,a\omega_1\,+\,b\omega_6\,\in\,{\cal X}_{++}(V)\,$. 

For $\,a\geq 2,\,b\geq 1\,$ (resp., $\,a\geq 1,\,b\geq 2\,$),  
$\,\mu_1=\mu -\alpha_1=\,(a-2)\omega_1\,+\,\omega_3\,+\,b\omega_6\,$
(resp., $\,\mu_1=\mu -\alpha_6\,=\,a\omega_1\,+\,\omega_5\,+\,
(b-2)\omega_6\,$) $\,\in\,{\cal X}_{++}(V)\,$. 
For $\,a\geq 3\,$ (resp., $\,b\geq 3\,$), Lemma~\ref{e61}(a) applies. 
For $\,a=2\,$ (resp., $\,b=2\,$), $\,\mu_1\,$ satisfies case (c)
(resp., (b)) of this proof. In any case $\,V\,$ is not exceptional. 

Let $\,\mu= \omega_1+\omega_6\,\in{\cal X}_{++}(V)$. Then
$\,\mu_1=\mu-(1\,0\,1\,1\,1\,1)\,=\,\omega_2\,\in\,{\cal X}_{++}(V)$. 
For $\,p\geq 2$, $\,|R_{long}^+-R^+_{\mu,p}|\,\geq\,16\,$ and
$\,|R_{long}^+-R^+_{\mu_1,p}|\,\geq\,20$. Thus
$\,r_p(V)\,\geq\,\displaystyle\frac{2\cdot 3^3\cdot 5\cdot 16}{2^3\cdot 3^2}
\,+\,\frac{2^3\cdot 3^2\cdot 20}{2^3\cdot 3^2}\,=\,80\,>\,72\,$. 
Hence, by~\eqref{rr72}, $\,V\,$ is not exceptional.

(e) Let $\,\mu\,=\,a\omega_3\,+\,b\omega_5\,$. Then $\,s(V)\,\geq\,|W\mu| 
\,=\,2^3\cdot 3^3\cdot 5\,=\,432\,>\,324\,$. Hence, by (\ref{324}), $\,V\,$
is not exceptional. This proves the lemma.
\end{proof}
\vspace{1ex}

{\bf Claim 1}: ({\bf The $\,\omega_3$-(or $\,\omega_5$)-argument}).
{\it Let $\,p\geq 3$. If $\,V\,$ is an $\,E_6(K)$-module such that
$\,\omega_3\,$ (or graph-dually $\,\omega_5\,$) 
$\,\in\,{\cal X}_{++}(V)$, then $\,V\,$ is not an exceptional 
$\,\mathfrak g$-module.}

Indeed, $\,|W\omega_3|\,=\,2^3\cdot 3^3\,$ and, for $\,p\geq 3$, 
$\,|R_{long}^+-R^+_{\omega_3,p}|\,=\,25$. Thus
$\,r_p(V)\,\geq\,\displaystyle\frac{2^3\cdot 3^3\cdot 25}{2^3\cdot 3^2}\,= 
\,75\,>\,72\,$. Hence, by~\eqref{rr72}, $\,V\,$ is not exceptional, proving
the claim. \hfill $\Box$ \vspace{1ex}

\begin{lemma}\label{e6nlf}
Let $\,G\,$ be a simply connected simple algebraic group of type  
$\,E_6$. A $\,\mathfrak g$-module $\,V\,$ is exceptional if and only if  
its highest weight is $\,\omega_1,\,\omega_2\,$ or $\,\omega_6$.
\end{lemma}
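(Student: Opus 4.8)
The plan is to show that the only exceptional $\mathfrak g$-modules for $G$ of type $E_6$ are $E(\omega_1)$, $E(\omega_2)$ and $E(\omega_6)$, by first confirming that these three are exceptional and then eliminating everything else via the reduction lemmas and dimension bounds already established. For the positive direction, $E(\omega_1)$ and $E(\omega_6)$ have dimension $27 < \dim\mathfrak g - \varepsilon = 78 - \varepsilon$, so by Proposition~\ref{dimcrit} they are exceptional; the module $E(\omega_2)$ is the adjoint module (of dimension $78$ if $p>3$, $77$ if $p=3$, the latter being $\mathfrak g/\mathfrak z(\mathfrak g)$ since $3\mid 3$... actually $E_6$ has trivial center issues only when $p\mid 3$), which is exceptional by Example~\ref{adjoint} together with Lemma~\ref{notbadhw} in the $p=3$ case. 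So the content is the converse.

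For the converse, I would argue by exhausting the possibilities for the highest weight $\lambda = \sum a_i\omega_i \in \Lambda_p$. By Lemma~\ref{e61}(a), if $\lambda$ (which lies in $\mathcal X_{++}(V)$) has three or more nonzero coefficients, $V$ is not exceptional; by the $\omega_4$-argument (Lemma~\ref{e61}(b)), any $\lambda$ with $a_4\neq 0$ is ruled out. So I may assume $\lambda$ has one or two nonzero coefficients and $a_4 = 0$. If $\lambda$ has exactly two nonzero coefficients, then since $a_4=0$ it is a good weight (a weight with two nonzero coefficients among $\omega_1,\omega_2,\omega_3,\omega_5,\omega_6$ is good unless both coefficients are divisible by $p$, but if it were bad one subtracts a simple root to land on a good weight with at least two nonzero coefficients still not involving the excluded configuration), and Lemma~\ref{e6r2} eliminates it. Hence $\lambda = a\omega_k$ for a single $k\in\{1,2,3,5,6\}$.

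It remains to handle $\lambda = a\omega_k$. The cases $k=1,6$ with $a\geq 2$: here $\mu_1 = \lambda - \alpha_1$ (resp. $-\alpha_6$) produces a weight with two or three nonzero coefficients, sending us back to Lemma~\ref{e61}(a) or Lemma~\ref{e6r2}; more directly, one checks $|W(a\omega_1)| = 27$ but $\mathcal X_{++,\mathbb C}(a\omega_1)$ contains weights like $\omega_2$ and $(a-2)\omega_1 + \omega_3$ whose orbit sizes push $s_1$ past $324$ once $a\geq 2$, so Proposition~\ref{criteria} applies. For $k=3$ (and graph-dually $k=5$), the $\omega_3$-argument (Claim~1) kills $\omega_3\in\mathcal X_{++}(V)$ outright when $p\geq 3$, and any $a\omega_3$ has $\omega_3$ in its weight set; the prime $p=2$ is good but not special for $E_6$, and $p=2$ needs a separate remark — here one uses that $E(\omega_3)$ still has $\omega_3$ among its $\mathfrak t$-weights and recomputes $|R^+_{long} - R^+_{\omega_3,p}|$ for $p=2$, or falls back on $s(V)\geq |W\omega_3| + |W\mu_1|$ for a further dominant weight $\mu_1$ below $\omega_3$. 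For $k=2$ with $a\geq 2$: $2\omega_2$ has $\mathcal X_{++,\mathbb C}(2\omega_2)\ni \omega_1+\omega_6$ (via $2\omega_2 - (\text{sum of simple roots})$) plus $2\omega_2$ itself, and $|W(2\omega_2)| + |W(\omega_1+\omega_6)| = 72 + 270 > 324$... one must check this arithmetic, but the pattern is that $a\omega_2$ for $a\geq 2$ fails Proposition~\ref{criteria}. This leaves exactly $\omega_1, \omega_2, \omega_6$.

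The main obstacle I anticipate is the low-characteristic bookkeeping: the arguments in Claim~1 and several of the sub-cases of Lemma~\ref{e6r2} are stated for $p\geq 3$, and since $p=2$ is good (hence non-special) for $E_6$, the case $p=2$ must be dispatched by hand for each surviving multiple $a\omega_k$ — recomputing $|R^+_{long}-R^+_{\mu,p}|$ with $p=2$, or invoking Theorem~\ref{premprin} to guarantee the needed dominant weights actually occur and then summing orbit sizes. The second delicate point is verifying, for $\lambda = a\omega_k$ with $a\geq 2$, that the complex weight set $\mathcal X_{++,\mathbb C}(a\omega_k)$ genuinely contains the auxiliary weights I want (this follows from Lemma~\ref{lamu} applied to $\mu < \lambda$), so that Proposition~\ref{criteria} is legitimately applicable.
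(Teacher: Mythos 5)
Your overall strategy is the paper's: dimension criterion for $E(\omega_1),E(\omega_6)$ and Example~\ref{adjoint} for $E(\omega_2)$ in one direction, and Lemmas~\ref{e61}, \ref{e6r2}, the $\omega_4$-argument, the $\omega_3$-argument and Proposition~\ref{dimcrit} to exhaust the converse. Your simplification of working only with the highest weight is legitimate (a nonzero $p$-restricted weight has some coefficient in $\{1,\dots,p-1\}$, hence is automatically good), which lets you bypass the paper's separate claim about bad weights. But one step is wrong as stated: for $\lambda=a\omega_3$ with $a\geq 2$ you assert that ``any $a\omega_3$ has $\omega_3$ in its weight set.'' This requires $(a-1)\omega_3\in Q_+$, and in $E_6$ the class of $\omega_3$ in $P/\mathbb ZR\cong\mathbb Z/3\mathbb Z$ is nontrivial (since $\alpha_3=2\omega_3-\omega_1-\omega_4$ forces $[\omega_3]=[\omega_6]\neq 0$), so $(a-1)\omega_3$ lies in the root lattice only when $3\mid(a-1)$; in particular $\omega_3\notin\mathcal X(2\omega_3)$. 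The repair is immediate and is what the paper does: $a\omega_3-\alpha_3=\omega_1+(a-2)\omega_3+\omega_4$ has a nonzero $\omega_4$-coefficient, so the $\omega_4$-argument disposes of all $a\geq 2$ without ever invoking the $\omega_3$-argument there.

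The second gap is the case $\lambda=\omega_3$, $p=2$, which you correctly flag but do not close, and it is not a routine verification: orbit sizes alone give $s(V)\geq 216+27=243<324$, and the $r_2$ count without multiplicities gives $\tfrac{216\cdot 20}{72}+\tfrac{27\cdot 16}{72}=66\leq 72$, so neither inequality of Theorem~\ref{???} fires. The paper needs the multiplicity $m_{\omega_6}=4$ in $E(\omega_3)$ (from Gilkey--Seitz) to push this to $r_2(V)\geq 60+24=84>72$. So your fallback ``sum orbit sizes for a further dominant weight'' would not succeed; the multiplicity input is essential. Your arithmetic sketches for $a\omega_1$ ($a\geq 2$) are also shakier than needed ($|W(2\omega_1)|+|W\omega_3|=27+216=243<324$), but there the correct route is already contained in your first sentence of that case: $\lambda-\alpha_1=(a-2)\omega_1+\omega_3$ lands in Lemma~\ref{e6r2} for $a\geq 3$ and in the $\omega_3$-argument for $a=2$ (where $p\geq 3$ is automatic since $\lambda$ is $p$-restricted).
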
\noindent
\begin{proof}
($\,\Longleftarrow\,$) 
If $\,V\,$ has highest weight $\lambda= \omega_1\,$ (or graph-dually
$\lambda= \omega_6\,$), then $\,\dim\,V\,=\,27\,<\,78\,-\,\varepsilon\,$. 
Hence, by Proposition~\ref{dimcrit}, $\,V\,$ is an exceptional module.
If $\,V\,$ has highest weight $\,\omega_2\,=\,\tilde{\alpha}\,$ 
then $\,V\,$ is the adjoint module, which is exceptional
by Example~\ref{adjoint}.

($\,\Longrightarrow\,$) Let $\,V\,$ be an $\,E_6(K)$-module.

{\bf Claim 2}: {\it If $\,{\cal X}_{++}(V)\,$ contains a nonzero bad
weight, then $\,V\,$ is not an exceptional $\,\mathfrak g$-module.}

Indeed, let $\,\nu = \sum_{i=0}^6\,a_i\,\omega_i\,$ be a nonzero bad
weight in $\,{\cal X}_{++}(V)$. Hence $\,a_i\equiv 0\pmod p,\,$ for all $\,i\,$
and at least one $\,a_i\neq 0\,$ (in which case $\,a_i\geq 2$.)
 
(i) $\,a_4\neq 0\,\Longrightarrow\,\nu\,$ satisfies the
$\,\omega_4$-argument (Lemma~\ref{e61}(b)).

(ii) $\,a_3\neq 0\,$ (or graph-dually $\,a_5\neq 0\,$) 
$\,\Longrightarrow\,\mu=\nu - \alpha_3 =
(a_1+1)\omega_1 + a_2\omega_2 +(a_3-2)\omega_3 +(a_4+1)\omega_4 +
a_5\omega_5 +a_6\omega_6\in{\cal X}_{++}(V)$. Hence
the $\,\omega_4$-argument applies.

(iii) $\,a_2\neq 0\,\Longrightarrow\,\mu=\nu - \alpha_2 =
a_1\omega_1 + (a_2-2)\omega_2 +a_3\omega_3 +(a_4+1)\omega_4 +
a_5\omega_5 +a_6\omega_6\in{\cal X}_{++}(V)$. Hence
the $\,\omega_4$-argument applies.

(iv) $\,a_1\neq 0\,$(or graph-dually $\,a_6\neq 0\,$) $\,\Longrightarrow\,
\mu=\nu - \alpha_1 = (a_1-2)\omega_1 + a_2\omega_2 +
(a_3+1)\omega_3 +a_4\omega_4 + a_5\omega_5 +a_6\omega_6\in{\cal X}_{++}(V)$. 

For $\,p\geq 3,\,$ $\,|R_{long}^+-R^+_{\mu,p}|\,=\,21\,$.  
Thus, as $\,|W\mu|\geq 2^4\cdot 3^3\,$, $\,r_p(V)\,\geq\,\displaystyle
\frac{2^4\cdot 3^3\;\;21}{2^3\cdot 3^2}\,=\,126\,>\,72$. Hence
by~\eqref{rr72}, $\,V\,$ is not exceptional.

For $\,p=2\,$ and $\,a_1>2$, $\,\mu\,$ does not occur in $\,{\cal X}_{++}(V)$.
For $\,p= 2\,$ we can assume $\,a_1=2\,$ and $\,a_i=0\,(\,2\leq i\leq 6)$. 
Then $\,\nu=2\omega_1\,\in{\cal X}_{++}(V)$ only if $\,V\,$ has
highest weight $\,\lambda= \omega_1\,+\,\omega_5\,$ (for instance). 
Hence, by Lemma~\ref{e6r2}, $\,V\,$ is not exceptional. This proves Claim 2.

{\bf Hence, by Lemma~\ref{e6r2} and Claim 2,
if $\,{\cal X}_{++}(V)\,$ contains a weight with 
$2$ nonzero coefficients, then $\,V\,$ is not an exceptional module.}

{\bf Therefore, by Claim 2, we can assume
that $\,{\cal X}_{++}(V)\,$ contains only good weights with at most one 
nonzero coefficient.}

Let $\,\lambda\, = \,a\,\omega_i\,$ (with $\,a\geq 1\,$)
be a good weight in $\,{\cal X}_{++}(V)$.

(a) Let $\,a\geq 1\,$ and $\,\lambda\,=\,a\,\omega_4\,$. 
Then the $\,\omega_4$-argument (Lemma~\ref{e61}(b)) applies.

(b) Let $\,a\geq 1\,$ and $\,\lambda\,=\,a\,\omega_3\,$
(or graph-dually $\,\lambda\,=\,a\,\omega_5\,$). 

For $\,a\geq 2$,  $\,\mu\,=\,\lambda - \alpha_3\,=\,\omega_1\,+\,(a-2)
\omega_3\,+\,\omega_4\,\in\,{\cal X}_{++}(V)$. Hence, 
the $\,\omega_4$-argument applies. 

For $\,a=1\,$ and $\,p\geq 3\,$, $\,\lambda\,=\,\omega_3\,$ satisfies Claim 1.
For $\,p=2$, we may assume that $\,V\,$ has highest weight $\,\mu\,=\,
\omega_3\,$. Then $\,\mu_1\,=\,\mu-(112210)\,=\,\omega_6\,\in\,
{\cal X}_{++}(V)$. By~\cite[p. 414]{gise}, $\,m_{\mu_1}=4$. As
$\,|R_{long}^+-R^+_{\mu,2}|\,=\,20\,$ and
$\,|R_{long}^+-R^+_{\mu_1,2}|\,=\,16$, $\,r_2(V)\,\geq\,\displaystyle
\frac{2^3\cdot 3^3\cdot 20}{2^3\cdot 3^2}\,+\,4\cdot
\frac{3^3\cdot 16}{2^3\cdot 3^2}\,=\,84\,>\,72\,$. Hence,
by~\eqref{rr72}, $\,V\,$ is not exceptional.

(c) Let $\,a\geq 1\,$ and $\,\lambda\,=\,a\,\omega_1\,$ 
(or graph-dually $\,\mu\,=\,a\,\omega_6\,$) be a good weight in 
$\,{\cal X}_{++}(V)$.

For $\,a\geq 2$, $\,\mu_1\,=\,\lambda-\alpha_1\,=\,(a-2)\omega_1\,+\,\omega_3\,
\in\,{\cal X}_{++}(V)$. For $\,a\geq 3$, $\,\mu_1\,$ satisfies
Lemma~\ref{e6r2}. For $\,a=2\,$ (hence $\,p\geq 3\,$), $\,\mu_1\,$  
satisfies Claim 1. In these cases $\,V\,$ is not exceptional.

For $\,a=1$, we may assume that $\,V\,$ has highest weight $\,\mu=
\omega_1\,$ (or graph-dually $\,\mu\,=\,\omega_6\,$).
Then $\,\dim\,V\,=\,27\,<\,78\,-\,\varepsilon\,$. Hence,
by Proposition~\ref{dimcrit}, $\,V\,$ is an exceptional module.

(d) Let $\,a\geq 1\,$ and $\,\mu\,=\,a\,\omega_2\,$ be a good weight in 
$\,{\cal X}_{++}(V)$.   

For $\,a\geq 2$, $\,\mu_1\,=\,\mu-\alpha_2\,=\,(a-2)\omega_2\,+\,\omega_4\,
\in\,{\cal X}_{++}(V)\,$ satisfies the $\,\omega_4$-argument
(Lemma~\ref{e61}(b)). For $\,a=1\,$, we may assume that $\,V\,$ 
has highest weight $\,\omega_2\,=\,\tilde{\alpha}\,$. Then $\,V\,$ is
the adjoint module, which is exceptional by Example~\ref{adjoint}.
This proves the lemma.
\end{proof}

\subsubsection{Type $\,E_7$}

The orbit sizes of the fundamental weights for groups of type
$\,E_7\,$ are as follows.
\vspace{2ex}

\begin{center}
\begin{tabular}{|c|c|c|c|c|c|c|c|} 
\hline             
$\omega_i$  & $\omega_1$  &  $\omega_2$  & $\omega_3$  & $\omega_4$  
& $\omega_5$  & $\omega_6$  & $\omega_7$ \\ \hline
$|W\omega_i|$  & $ 126 $  & $576$ & $ 2016$ & $10080$ & $4032$ & $756$ 
& $56$ \\ \hline
\end{tabular}
\end{center}
\vspace{2ex}

By Theorem~\ref{???}, any $\,E_7(K)$-module $\,V\,$ such that
\begin{equation} \label{588}
s(V)\,=\,\sum_{\stackrel{\scriptstyle\mu\in{\cal X}_{++}(V)}
{\scriptstyle \mu\; good}}\,m_{\mu}\,|W\mu|\,>\,588
\end{equation}
or
\begin{equation}\label{rr126}
r_p(V)\,=\,\sum_{\stackrel{\scriptstyle{\mu\in{\cal X}_{++}(V)}}
{\scriptstyle \mu\; good}}\,m_{\mu}\,
\frac{|W\mu|}{|R_{long}|}\,|R_{long}^+-R^+_{\mu,p}|\,>\,126
\end{equation}
is not an exceptional $\,\mathfrak g$-module. For groups of type $\,E_7\,$,
$\,|W|\,=\,2^{10}\cdot 3^4\cdot 5\cdot 7\,$ and 
$\,|R_{long}|\,=\,|R|\,=\,126\,=\,2\cdot 3^2\cdot 7\,$.

\begin{lemma}\label{e71}
Let $\,V\,$ be an $\,E_7(K)$-module. If $\,{\cal X}_{++}(V)\,$ contains:

(a) a good weight with nonzero coefficient 
for $\,\omega_i\,$ for some $\,i\in\{ 3,\,4,\,5,\,6\}\,$ or

(b) a good weight with at least $2$ nonzero coefficients or

(c) a nonzero bad weight,\\
then $\,V\,$ is not an exceptional $\,\mathfrak g$-module.
\end{lemma}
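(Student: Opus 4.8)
\textbf{Proof proposal for Lemma~\ref{e71}.} The plan is to reduce each of the three cases to the two global inequalities \eqref{588} and \eqref{rr126}, exactly as in the $E_6$ analysis (Lemmas~\ref{e61}, \ref{e6r2} and the accompanying claims). The key observation throughout is that if $\mu\in{\cal X}_{++}(V)$ then by Theorem~\ref{premprin} (so ${\cal X}_{++}(V)=(\lambda-Q_+)\cap P_{++}$) together with Lemma~\ref{lamu}, any dominant weight obtained from $\mu$ by subtracting a non-negative integral combination of simple roots again lies in ${\cal X}_{++}(V)$. Combined with Remark~\ref{remA} ($|W\mu|\geq|W\omega_k|$ whenever $a_k\neq0$) and the orbit-size table above, this lets us push the orbit-size sum $s(V)$ past $588$ in almost all configurations.

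For part (a): if $\mu$ is good with nonzero coefficient $a_i$ for some $i\in\{3,4,5,6\}$, then $|W\mu|\geq|W\omega_i|$, and the smallest of $|W\omega_3|=2016$, $|W\omega_4|=10080$, $|W\omega_5|=4032$, $|W\omega_6|=756$ is $756>588$, so $s(V)\geq|W\mu|>588$ and \eqref{588} fails; hence $V$ is not exceptional. For part (b): let $\mu=a\omega_j+b\omega_k$ with $j<k$, $a,b\geq1$. If $j$ or $k$ lies in $\{3,4,5,6\}$ we invoke part (a). The remaining pairs involve only indices from $\{1,2,7\}$, i.e. $\mu\in\{a\omega_1+b\omega_2,\ a\omega_1+b\omega_7,\ a\omega_2+b\omega_7\}$. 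Using Lemma~\ref{orbexcp}(2) for the centralizers, $C_W(\omega_1+\omega_2)\cong S_6$ gives $|W(\omega_1+\omega_2)|=|W|/6!=2^{10}\cdot3^4\cdot5\cdot7/720$, which exceeds $588$; similarly $C_W(\omega_2+\omega_7)\cong S_6$ gives the same large orbit. The only genuinely small case is $\omega_1+\omega_7$, with $C_W(\omega_1+\omega_7)\cong W(D_5)$, so $|W(\omega_1+\omega_7)|=2^{10}\cdot3^4\cdot5\cdot7/(2^4\cdot5!)=504<588$. Here I would argue as in Lemma~\ref{e6r2}(d): if $a\geq2$ or $b\geq2$, subtract a suitable root to land in a configuration with a larger orbit (e.g.\ a weight with a coefficient on some $\omega_i$, $i\in\{3,4,5,6\}$) and apply (a); for $\mu=\omega_1+\omega_7$ itself, produce a second weight $\mu_1\in{\cal X}_{++}(V)$ by subtracting the appropriate positive root (the one expressing $\omega_1+\omega_7$ minus a dominant weight such as $\omega_2$ or $\omega_6$), then estimate $|R_{long}^+-R^+_{\mu,p}|$ and $|R_{long}^+-R^+_{\mu_1,p}|$ from the $E_7$ root system and maximal subsystem $E_6$ (cf.\ Table~\ref{table2}, where $M=27$) to force $r_p(V)>126$ via \eqref{rr126}. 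For part (c): let $\nu=\sum a_i\omega_i$ be a nonzero bad weight, so every $a_i\in p\mathbb{Z}$ and some $a_k\geq2$. If $a_i\neq0$ for some $i\in\{3,4,5,6\}$, apply (a). Otherwise the support is in $\{1,2,7\}$; subtracting a single simple root $\alpha_k$ from $\nu$ (using the Remark after Lemma on bad weights, $\nu-\alpha_k$ is good) produces a good weight in ${\cal X}_{++}(V)$ which acquires a nonzero coefficient on a neighbouring node — in the $E_7$ Dynkin diagram the neighbours of nodes $1,2,7$ include nodes from $\{3,4,5,6\}$ — so (a) or (b) applies; the residual small-rank cases (e.g.\ $\nu=2\omega_7$ or $\nu=2\omega_1$ with $p=2$, which only arise when $V$ has a highest weight such as $\omega_1+\omega_7$) are handled by the orbit/$r_p$ estimates just as in the $E_6$ Claim~2 argument.

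The main obstacle I anticipate is the borderline case $\omega_1+\omega_7$ (and the low-characteristic bad weights $2\omega_1$, $2\omega_7$ that feed into it), where the orbit size $504$ sits below the limit $588$ and a pure orbit-counting argument does not suffice. There one must genuinely exploit the second inequality \eqref{rr126}: compute, for $p$ non-special (so $p\neq2,3$ for $E_7$, with $p=2$ needing separate care), the long-root subsystem $R_{long}=E_7$ and the quantities $|R_{long}^+-R^+_{\mu,p}|$ for $\mu=\omega_1+\omega_7$ and for at least one further weight $\mu_1\in{\cal X}_{\mathbb C}(\omega_1+\omega_7)\cap P_{++}$ (read off from the table preceding Lemma~\ref{e71} or from \cite{gise} for multiplicities), checking that the weighted sum exceeds $|R|=126$. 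This is the same mechanism used in the $E_6$ Claim~1 ($\omega_3$-argument) and Claim~2(iv), so the technique is established; the work is just the explicit subsystem bookkeeping, which I would relegate to a short computation rather than spell out here.
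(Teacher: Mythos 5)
Your overall strategy coincides with the paper's: part (a) via Remark~\ref{remA} and the inequality~\eqref{588}, part (b) by reducing to pairs of indices in $\{1,2,7\}$ and comparing orbit sizes, part (c) by subtracting a simple root from a bad weight to land on a neighbouring node in $\{3,4,5,6\}$ and invoking (a). Parts (a) and (c) are essentially correct as written (and in (c) your worry about ``residual small-rank cases'' is unfounded: $\nu-\alpha_1$, $\nu-\alpha_2$, $\nu-\alpha_7$ always acquire a nonzero coefficient on $\omega_3$, $\omega_4$, $\omega_6$ respectively, so (a) applies with no exceptions and no case distinction on $p$).

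The genuine problem is an arithmetic error in part (b). You correctly identify $C_W(\omega_1+\omega_7)\cong W(D_5)$, but then evaluate $|W(\omega_1+\omega_7)|=|W|/|W(D_5)|=2^{10}\cdot3^4\cdot5\cdot7/(2^4\cdot5!)$ as $504$; the correct value is $2903040/1920=1512=2^3\cdot3^3\cdot7$, which already exceeds the limit $588$. Consequently the ``borderline case'' you flag as the main obstacle does not exist: every pair $a\omega_j+b\omega_k$ with $j,k\in\{1,2,7\}$ has orbit size at least $1512$, and~\eqref{588} kills all of them directly, which is exactly what the paper does. As your text stands, however, the treatment of $\omega_1+\omega_7$ --- the case you believe is hard --- is only a promissory note (``produce a second weight \dots estimate \dots to force $r_p(V)>126$'') with no weights, no values of $|R^+_{long}-R^+_{\mu,p}|$, and no verification that the sum actually exceeds $126$; so the proof is incomplete precisely where you claim the real work lies. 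Fixing the single division repairs everything and makes the deferred computation unnecessary.
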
\noindent
\begin{proof}
(a) Let $\,\mu\,=\,\displaystyle\sum_{i=1}^7\,a_i\,\omega_i\,$ be a good
weight in $\,{\cal X}_{++}(V)$. If $\,a_k\neq 0\,$ then, by 
Remark~\ref{remA}, $\,|W\mu|\geq |W\omega_k|\,$. Thus, if $\,a_k\neq
0\,$ for $\,k\in\{ 3,\,4,\,5,\,6\,\}$, then $\,s(V)\,\geq\,756\,>\,588\,$.
Hence, by (\ref{588}), $\,V\,$ is not an exceptional module.

(b) Let $\,\mu\,= \,a\,\omega_j\,+\,b\,\omega_k\,$ (with 
$\,1\leq j<k\leq 7\,$ and $\,a\geq 1,\,b\geq 1\,$) be a good weight in 
$\,{\cal X}_{++}(V)$. If $\,j\,$ or $\,k\;\in\,\{ 3,4,5,6\,\}$, 
then case (a) of this lemma applies. 
 
If $\,\mu\,=\,a\,\omega_1\,+\,b\,\omega_2\,$ or 
$\,\mu\,=\,a\,\omega_2\,+\,b\,\omega_7 \,$, then
$\,|W\mu|\,=\,2^6\cdot 3^2\cdot 7\,=\,4032\,$. 
If $\,\mu\,=\,a\,\omega_1\,+\,b\,\omega_7\,$, then 
$\,|W\mu|\,=\,2^3\cdot 3^3\cdot 7\,=\,1512.\,$  In these cases,
$\,s(V)\,>\,588\,$. Hence, by (\ref{588}), $\,V\,$ is not exceptional.

(c) Let $\,\nu\,=\,\displaystyle\sum_{i=1}^7\,a_i\,\omega_i\,$ be
a nonzero bad weight in $\,{\cal X}_{++}(V)\,$ (that is, 
$\,a_i\equiv 0\pmod p,\,$ for all $\,i\,$, but at least one $\,a_i\neq 0$.
(Note that $\,a_i\neq 0\,\Longrightarrow\,a_i\geq 2$.)
Write $\,\Omega_{j,k,\ldots}\,=\,\displaystyle
\sum_{r\neq j,k,\ldots}\,a_r\,\omega_r\,$.

i) $\,a_1\neq 0\,\Longrightarrow\,\mu=\nu - \alpha_1 =
(a_1-2)\omega_1 + (a_3+1)\omega_3 + \Omega_{1,3}\,\in{\cal X}_{++}(V)$.

ii) $\,a_2\neq 0\,\Longrightarrow\,\mu=\nu - \alpha_2 =
(a_2-2)\omega_2 +(a_4+1)\omega_4 + \Omega_{2,4}\,
\in{\cal X}_{++}(V).\,$ 

iii) $\,a_3\neq 0\,\Longrightarrow\,\mu=\nu - \alpha_3 =
(a_1+1)\omega_1 + (a_3-2)\omega_3 +(a_4+1)\omega_4 + \Omega_{1,3,4}\,
\in{\cal X}_{++}(V).\,$

iv) $\,a_4\neq 0\,\Longrightarrow\,\mu=\nu - \alpha_4 =
(a_2+1)\omega_2 +(a_3+1)\omega_3 +(a_4-2)\omega_4 +\mbox{$(a_5 +1)\omega_5$}
+\Omega_{2,3,4,5}\,\in{\cal X}_{++}(V).\,$

v) $\,a_5\neq 0\,\Longrightarrow\,\mu=\nu - \alpha_5 =
(a_4+1)\omega_4 +(a_5-2)\omega_5 +(a_6+1)\omega_6 + \Omega_{4,5,6}\,
\in{\cal X}_{++}(V).\,$ 

vi) $\,a_6\neq 0\,\Longrightarrow\,\mu=\nu - \alpha_6 =
(a_5+1)\omega_5 +(a_6-2)\omega_6 +(a_7+1)\omega_7 + \Omega_{5,6}\,
\in{\cal X}_{++}(V).\,$ 

vii) $\,a_7\neq 0\,\Longrightarrow\,\mu=\nu - \alpha_7 =
(a_6+1)\omega_6 +(a_7-2)\omega_7 + \Omega_{6,7}\,\in{\cal X}_{++}(V).\,$

In all these cases, $\,\mu\,$ satisfies part (a) of this lemma. Hence
$\,V\,$ is not an exceptional module. This proves the lemma.
\end{proof}

\begin{lemma}\label{e72}
Let $\,G\,$ be a simply connected simple algebraic group of 
type $\,E_7$. A $\,\mathfrak g$-module $\,V\,$ is exceptional
if and only if its highest weight is $\,\omega_1\,$ or $\,\omega_7$.
\end{lemma}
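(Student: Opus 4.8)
\textbf{Proof proposal for Lemma~\ref{e72}.}

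The plan is to mirror the structure of the $\,E_6\,$ case (Lemma~\ref{e6nlf}) and proceed by dichotomy. For the $(\Longleftarrow)$ direction, if $\,V\,$ has highest weight $\,\omega_7\,$, then $\,\dim\,V=56\,<\,133-\varepsilon=\dim\,\mathfrak g-\varepsilon\,$ (here $\,\varepsilon=\dim\,\mathfrak z(\mathfrak g)\leq 1\,$), so Proposition~\ref{dimcrit} immediately gives that $\,V\,$ is exceptional. If $\,V\,$ has highest weight $\,\omega_1=\tilde{\alpha}\,$, then $\,V\,$ is the adjoint module (of dimension $\,133\,$ for $\,p>2\,$ and $\,132\,$ for $\,p=2\,$), which is exceptional by Example~\ref{adjoint}. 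This handles both weights in the statement.

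For the $(\Longrightarrow)$ direction, I would first invoke Lemma~\ref{e71}: any $\,E_7(K)$-module $\,V\,$ whose weight set $\,{\cal X}_{++}(V)\,$ contains a good weight with a nonzero coefficient on some $\,\omega_i\,$, $\,i\in\{3,4,5,6\}\,$, or a good weight with at least two nonzero coefficients, or a nonzero bad weight, is not exceptional. Hence we may assume $\,{\cal X}_{++}(V)\,$ consists only of good weights with at most one nonzero coefficient, and that coefficient sits on $\,\omega_1\,$, $\,\omega_2\,$, or $\,\omega_7\,$. So the highest weight $\,\lambda\,$ of $\,V\,$ is of the form $\,a\omega_1\,$, $\,a\omega_2\,$, or $\,a\omega_7\,$ for some $\,a\geq 1\,$, and I must eliminate all cases with $\,a\geq 2\,$ and the case $\,\lambda=a\omega_7\,$ with $\,a\geq 2\,$ as well.

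The elimination of the remaining cases proceeds by peeling off a simple root from $\,\lambda\,$ to descend to a weight already known to fail. Concretely: for $\,\lambda=a\omega_1\,$ with $\,a\geq 2\,$, the weight $\,\mu=\lambda-\alpha_1=(a-2)\omega_1+\omega_3\,$ lies in $\,{\cal X}_{++}(V)\,$ (using Theorem~\ref{premprin}, since $\,\lambda-\mu=\alpha_1\in Q_+\,$ and $\,\mu\,$ is dominant), and $\,\mu\,$ has a nonzero coefficient on $\,\omega_3\,$, so Lemma~\ref{e71}(a) applies once we check $\,\mu\,$ is good (if $\,a\geq 3\,$ it is good outright; if $\,a=2\,$ then $\,p\geq 3\,$ for $\,\mu=\omega_3\,$ to be $p$-restricted, and $\,\omega_3\,$ is good since $\,p\neq 3\,$ when $\,p\geq 3\,$ means we check $R_{long}=E_7$ and a single $1$ is never divisible by any prime). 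Similarly, for $\,\lambda=a\omega_2\,$ with $\,a\geq 2\,$, take $\,\mu=\lambda-\alpha_2=(a-2)\omega_2+\omega_4\,$, which has a nonzero coefficient on $\,\omega_4\,$, so Lemma~\ref{e71}(a) applies. For $\,\lambda=a\omega_7\,$ with $\,a\geq 2\,$, take $\,\mu=\lambda-\alpha_7=(a-2)\omega_7+\omega_6\,$, hitting $\,\omega_6\,$, so again Lemma~\ref{e71}(a) applies. In each descent I must verify that the produced weight is genuinely a good weight in $\,{\cal X}_{++}(V)\,$; the main bookkeeping obstacle is the borderline small-$p$ subcases (e.g. $\,\lambda=2\omega_7\,$ with $\,p=2\,$, where $\,\mu=\omega_6\,$ may need a direct argument via an $\,r_p(V)\,$ bound as in the analogous $E_6$ computations, or via Proposition~\ref{supruzal} to control multiplicities). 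I expect that a handful of such low-characteristic cases will each require a short explicit estimate of $\,r_p(V)\,$ against the limit $\,126\,$, exactly in the style of Claim~1 and Lemma~\ref{e6nlf}, rather than any new idea.
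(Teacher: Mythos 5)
Your $(\Longleftarrow)$ direction and the overall strategy for $(\Longrightarrow)$ (reduce via Lemma~\ref{e71} to good weights $\,a\omega_i\,$ with $\,i\in\{1,2,7\}\,$, then peel off a simple root to land on $\,\omega_3,\,\omega_4,\,\omega_6\,$ and re-apply Lemma~\ref{e71}(a)) match the paper's proof. However, there is a genuine gap: you never dispose of the case $\,\lambda=\omega_2\,$, i.e.\ $\,a=1\,$ with $\,i=2\,$. You write that you ``must eliminate all cases with $\,a\geq 2\,$,'' but since $\,\omega_2\,$ is not in the list $\,\{\omega_1,\omega_7\}\,$, the ``only if'' direction also requires showing that the module $\,E(\omega_2)\,$ (of dimension $912$) is not exceptional, and this case cannot be reached by your descent argument (subtracting $\,\alpha_2\,$ from $\,\omega_2\,$ does not give a dominant weight). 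Moreover it is not a triviality: $\,|W\omega_2|=576\,$ is \emph{below} the limit $\,588\,$, so the orbit of $\,\omega_2\,$ alone does not violate the necessary condition. The paper handles this by observing that $\,\omega_7=\omega_2-(\alpha_1+2\alpha_2+2\alpha_3+3\alpha_4+2\alpha_5+\alpha_6)\,$ also lies in $\,{\cal X}_{++}(V)\,$, whence $\,s(V)\geq 576+56=632>588\,$ and Theorem~\ref{???} applies. Without some such computation your argument proves only ``if,'' not ``only if.''

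Two smaller remarks. First, your worry about low-characteristic borderline cases such as $\,\lambda=2\omega_7\,$ with $\,p=2\,$ is unnecessary: for $\,p=2\,$ the weight $\,2\omega_7\,$ is \emph{bad}, so Lemma~\ref{e71}(c) already excludes it, and for $\,p\geq 3\,$ it is good and the descent to $\,\omega_6\,$ goes through; since $\,E_7\,$ is simply laced, any weight with some coefficient equal to $1$ is automatically good, so no case-by-case goodness check is needed after the descent. Second, your parenthetical justification that $\,\omega_3\,$ is good (``$p\neq 3$ when $p\geq 3$\dots'') is garbled; the correct reason is simply that a coefficient equal to $1$ is never divisible by $\,p\,$.
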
\noindent
\begin{proof}
($\,\Longleftarrow\,$) 
If $\,V\,$ is an $\,E_7(K)$-module of highest weight $\lambda= \omega_1\,$, 
then $\,V\,$ is the adjoint module, which is exceptional
by Example~\ref{adjoint}.

If $\,V\,$ is an $\,E_7(K)$-module of highest weight $\,\lambda\,=\,
\omega_7\,$, then $\,\dim\,V\,=\,56\,<\,133\,-\,\varepsilon\,$. 
Hence, by Proposition~\ref{dimcrit}, $\,V\,$ is an exceptional module.

($\,\Longrightarrow\,$) Let $\,V\,$ be an $\,E_7(K)$-module.
By Lemma~\ref{e71}, {\bf it suffices to consider modules $\,V\,$ such that
$\,{\cal X}_{++}(V)\,$ contains good weights with at most one nonzero 
coefficient.}

Let $\,\lambda\,=\,a\,\omega_i\,$ (with $\,a\geq 1\,$ and $\,1\leq i\leq 7\,$) 
be a good weight in $\,{\cal X}_{++}(V)$.
For $\,i\in\{ 3,\,4,\,5,\,6\}\,$, Lemma~\ref{e71}(a) applies.
Hence we can assume $\,i\in\{ 1,\,2,\,7\}\,$. These cases are treated in the
sequel.

(i) Let $\,a\geq 1\,$ and $\,\lambda\,=\,a_1\,\omega_1\,$. 
For $\,a\geq 2$, $\,\mu=\lambda - \alpha_1=(a-2)\omega_1 +\omega_3   
\in{\cal X}_{++}(V)\,$ satisfies Lemma~\ref{e71}(a).
For $\,a=1$, we may assume that $\,V\,$ has highest weight 
$\,\lambda =\omega_1\,$. Then $\,V\,$ is the adjoint module, 
which is exceptional by Example~\ref{adjoint}.

(ii) Let $\,a\geq 1\,$ and $\,\lambda\,=\,a\,\omega_2\,$. 
For $\,a\geq 2$, $\,\mu=\lambda - \alpha_2=(a-2)\omega_2 +\omega_4  
\in{\cal X}_{++}(V)\,$ satisfies Lemma~\ref{e71}(a).
For $\,a=1$, $\,\lambda =\omega_2\,$ and $\,\mu\,=\,\lambda -
(1223210)\,=\,\omega_7\,\in\,{\cal X}_{++}(V)$. Thus
$\,s(V)\,\geq\,576 + 56\,=\,632\,>\,588\,$. Hence, by~\eqref{588},
$\,V\,$ is not exceptional. 

(iii) Let $\,a\geq 1\,$ and $\,\lambda\,=\,a\,\omega_7\,$.  
For $\,a\geq 2$, $\,\mu=\lambda - \alpha_7=\omega_6 +(a-2)\omega_7  
\in{\cal X}_{++}(V)\,$ satisfies Lemma~\ref{e71}(a).
For $\,a=1$, we may assume that $\,V\,$ has highest weight $\,\lambda 
=\omega_7$. Then $\,\dim\,V\,=\,56\,<\,133\,-\varepsilon\,$. Hence, 
by Proposition~\ref{dimcrit}, $\,V\,$ is an exceptional module.
This proves the lemma.
\end{proof}

\subsubsection{Type $\,E_8$}

In this case the orbit sizes of the fundamental weights are as follows.
\vspace{2ex}

\begin{center}
\begin{tabular}{|c|c|} 
\hline             
$\omega_i$  & $|W\omega_i|$ \\ \hline
$\omega_1$  &  $ 1080 $     \\ \hline
$\omega_2$  &   $17280$          \\ \hline
$\omega_3$  &  $2^9\cdot 3^3\cdot 5 $        \\ \hline
$\omega_4$ &   $ 2^9\cdot 3^3\cdot 5\cdot 7 $      \\ \hline
$\omega_5$  &  $2^8\cdot 3^3\cdot 5 \cdot 7 $ \\ \hline
$\omega_6$  &  $ 2^6\cdot 3^3\cdot 5 \cdot 7 $ \\ \hline
$\omega_7$ &   $ 2^6\cdot 3\cdot 5 \cdot 7  $\\ \hline
$\omega_8$ &  $120$
\\ \hline
\end{tabular}
\end{center}
\vspace{2ex}

By Theorem~\ref{???}, any $\,E_8(K)$-module $\,V\,$ such that
\begin{equation} \label{1011}
s(V)\,=\,\sum_{\stackrel{\scriptstyle\mu\in{\cal X}_{++}(V)}
{\scriptstyle \mu\;good}}\,m_{\mu}\,|W\mu|\,>\,1011
\end{equation}
or
\begin{equation}\label{rr240}
r_p(V)\,=\,\sum_{\stackrel{\scriptstyle{\mu\in{\cal X}_{++}(V)}}
{\scriptstyle \mu\;good}}\,m_{\mu}\,
\frac{|W\mu|}{|R_{long}|}\,|R_{long}^+-R^+_{\mu,p}|\,>\,240
\end{equation}
is not an exceptional $\,\mathfrak g$-module.
For groups of type $\,E_8\,$, $\,|W|\,=\,2^{14}\cdot 3^5\cdot 5^2\cdot
7\,$ and $\,|R_{long}|\,=\,|R|\,=\,240\,=\,2^4\cdot 3\cdot 5$.

\begin{lemma}\label{e81}
Let $\,V\,$ be an $\,E_8(K)$-module. If $\,{\cal X}_{++}(V)\,$ contains:

(a) a good weight with nonzero coefficient for $\,\omega_i\,$, for some 
$\,i\in\{ 1,\ldots,7\}\;$ or

(b) a nonzero bad weight,\\
then $\,V\,$ is not an exceptional $\,\mathfrak g$-module.
\end{lemma}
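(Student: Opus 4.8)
\textbf{Proof proposal for Lemma~\ref{e81}.} The plan is to mimic the structure of Lemmas~\ref{e61} and~\ref{e71} and reduce everything to the two inequalities \eqref{1011} and \eqref{rr240}, using the orbit-size table for the fundamental weights of $E_8$ together with Remark~\ref{remA}. For part (a), suppose $\mu = \sum_{i=1}^{8} a_i\,\omega_i \in {\cal X}_{++}(V)$ is a good weight with $a_k \neq 0$ for some $k \in \{1,\ldots,7\}$. By Remark~\ref{remA} one has $|W\mu| \geq |W\omega_k|$, so it suffices to observe from the table that $|W\omega_k| \geq |W\omega_1| = 1080 > 1011$ for every $k \in \{1,\ldots,7\}$ (indeed $\omega_1$ gives the smallest orbit among these seven, all the others being far larger). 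Hence $s(V) \geq |W\mu| > 1011$, and by \eqref{1011} the module $V$ is not exceptional. This handles (a) with essentially no computation beyond reading off the table.

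For part (b), let $\nu = \sum_{i=1}^{8} a_i\,\omega_i \in {\cal X}_{++}(V)$ be a nonzero bad weight, so (since $p$ is non-special, hence $p \geq 2$ and the preceding lemma on bad weights applies) every $a_i \equiv 0 \pmod p$, with at least one $a_i \neq 0$; in particular that $a_i \geq 2$. I would then argue exactly as in Lemma~\ref{e71}(c): for whichever index $i$ has $a_i \neq 0$, the weight $\mu = \nu - \alpha_i$ lies in ${\cal X}_{++}(V)$ (because subtracting a simple root $\alpha_i$ from a dominant weight with $a_i \geq 2$ keeps all coefficients nonnegative, and $\mu < \nu$ with $\mu$ dominant implies $\mu \in (\lambda - Q_+)\cap P_{++} = {\cal X}_{++}(V)$ by Theorem~\ref{premprin} and Lemma~\ref{lamu}), and moreover $\mu$ has a nonzero coefficient at some index in $\{1,\ldots,7\}$. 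To see this last point, one reads off the Cartan matrix of $E_8$: writing $\mu = \nu - \alpha_i$, the coefficient of $\omega_j$ in $\mu$ is $a_j + 2\delta_{ij} - (\text{Cartan integer})$, so the neighbours of $\alpha_i$ in the Dynkin diagram acquire a $+1$; since $\alpha_8$ has at most one neighbour and the diagram is connected with seven nodes among $\{1,\ldots,7\}$, at least one neighbour of $\alpha_i$ sits in $\{1,\ldots,7\}$, giving $\mu$ a positive coefficient there (the only case needing a separate glance is $i = 8$, where the unique neighbour of $\alpha_8$ is $\alpha_7$, so the coefficient of $\omega_7$ in $\mu$ becomes $a_7 + 1 \geq 1$). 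Thus $\mu$ is a good weight satisfying the hypothesis of part (a), and by (a), $V$ is not exceptional.

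The bookkeeping above should be arranged as a short case analysis on $i \in \{1,2,\ldots,8\}$, explicitly exhibiting $\mu = \nu - \alpha_i$ and its decomposition in the $\omega_j$ basis in each case, precisely as is done for $E_6$ in Lemma~\ref{e6nlf} (Claim 2) and for $E_7$ in Lemma~\ref{e71}(c). The only mild subtlety --- and the step I expect to be the main obstacle, though it is a minor one --- is verifying in the case $i=8$ (the node attached to the "long arm" of the $E_8$ diagram) that subtracting $\alpha_8$ really does produce a weight still in ${\cal X}_{++}(V)$ and that it lands in the range covered by part (a); one checks $\mu = \nu - \alpha_8 = (a_6+1)\omega_6 + (a_8-2)\omega_8 + \sum_{r \neq 6,8} a_r\,\omega_r$, which has a nonzero coefficient at $\omega_6 \in \{1,\ldots,7\}$, so (a) applies. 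With all eight cases dispatched, part (b) follows, completing the proof of the lemma.
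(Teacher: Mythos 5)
Your proposal is correct and follows essentially the same route as the paper's proof: part (a) via Remark~\ref{remA} and the orbit-size table (the minimum orbit among $\omega_1,\ldots,\omega_7$ being $|W\omega_1|=1080>1011$), and part (b) by subtracting a simple root $\alpha_i$ with $a_i\neq 0$ from the bad weight to produce a good weight falling under case (a). One small slip: in your explicit $i=8$ computation you write $\nu-\alpha_8=(a_6+1)\,\omega_6+(a_8-2)\,\omega_8+\cdots$, whereas (as you correctly observe earlier in the same paragraph) the unique neighbour of $\alpha_8$ in Bourbaki's numbering is $\alpha_7$, so the correct expression is $\nu-\alpha_8=(a_7+1)\,\omega_7+(a_8-2)\,\omega_8+\Omega_{7,8}$; this does not affect the validity of the argument, since either way the resulting weight has a nonzero coefficient at an index in $\{1,\ldots,7\}$.
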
\noindent
\begin{proof}
(a) Let $\,\mu\,=\,\displaystyle\sum_{i=1}^8\,a_i\,\omega_i\,$ be a good
weight in $\,{\cal X}_{++}(V)$. If $\,a_k\neq 0\,$ then, by 
Remark~\ref{remA}, $\,|W\mu|\geq |W\omega_k|\,$. Thus, if $\,a_k\neq
0\,$ for $\,k\in\{ 1,\,2,\,3,\,4,\,5,\,6,\,7\,\}$ then, by table above, 
$\,s(V)\,\geq\,1080\,>\,1011\,$. Hence, by (\ref{1011}), $\,V\,$ is not an 
exceptional module.

(b) Let $\,\nu\,=\,\displaystyle\sum_{i=1}^8\,a_i\,\omega_i\,$ be
a nonzero bad weight in $\,{\cal X}_{++}(V)\,$ (that is, 
$\,a_i\equiv 0\pmod p,\,$ for all $\,i\,$, but at least one $\,a_i\neq 0$).
(Note that $\,a_i\neq 0\,\Longrightarrow\,a_i\geq 2$.)
Write $\,\Omega_{j,k,\ldots}\,=\,\displaystyle
\sum_{r\neq j,k,\ldots}\,a_r\,\omega_r\,$.

i) $\,a_1\neq 0\,\Longrightarrow\,\mu=\nu - \alpha_1 =
(a_1-2)\omega_1 + (a_3+1)\omega_3 + \Omega_{1,3}\in{\cal X}_{++}(V)$.

ii) $\,a_2\neq 0\,\Longrightarrow\,\mu=\nu - \alpha_2 =
(a_2-2)\omega_2 +(a_4+1)\omega_4 + \Omega_{2,4}
\in{\cal X}_{++}(V).\,$

iii) $\,a_3\neq 0\,\Longrightarrow\,\mu=\nu - \alpha_3 =
(a_1+1)\omega_1 + (a_3-2)\omega_3 +(a_4+1)\omega_4 + \Omega_{1,3,4}
\in{\cal X}_{++}(V).\,$ 

iv) $\,a_4\neq 0\,\Longrightarrow\,\mu=\nu - \alpha_4 =
(a_2+1)\omega_2 +(a_3+1)\omega_3 +(a_4-2)\omega_4 + (a_5 +1)\omega_5 +
\Omega_{2,3,4,5}\in{\cal X}_{++}(V).\,$

v) $\,a_5\neq 0\,\Longrightarrow\,\mu=\nu - \alpha_5 =
(a_4+1)\omega_4 +(a_5-2)\omega_5 +(a_6+1)\omega_6 + \Omega_{4,5,6}
\in{\cal X}_{++}(V).\,$

vi) $\,a_6\neq 0\,\Longrightarrow\,\mu=\nu - \alpha_6 =
(a_5+1)\omega_5 +(a_6-2)\omega_6 +(a_7+1)\omega_7 + \Omega_{5,6}
\in{\cal X}_{++}(V).\,$

vii) $\,a_7\neq 0\,\Longrightarrow\,\mu=\nu - \alpha_7 =
(a_6+1)\omega_6 +(a_7-2)\omega_7 + (a_8+1)\omega_8 + 
\Omega_{6,7}\in{\cal X}_{++}(V).\,$

viii) $\,a_8\neq 0\,\Longrightarrow\,\mu=\nu - \alpha_8 =
(a_7+1)\omega_7 +(a_8-2)\omega_8 +\Omega_{7,8}\in{\cal X}_{++}(V).\,$\\
In all these cases, $\, \mu\,$ satisfies part (a) of this lemma. Hence 
$\,V\,$ is not an exceptional module. This proves the lemma.
\end{proof}

\begin{lemma}\label{nlfe8}
The only exceptional $\,E_8(K)$-module is the adjoint module.
\end{lemma}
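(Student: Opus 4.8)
The plan is to show that the adjoint module $V = E(\omega_8)$ is exceptional and that no other $E_8(K)$-module is. For the positive direction, since $\omega_8 = \tilde{\alpha}$ is the highest root of $E_8$, the module $E(\omega_8)$ is the adjoint module, which is exceptional by Example~\ref{adjoint} (for any $x \in \mathfrak g$ one has $x \in \mathfrak g_x$ while $\mathfrak z(\mathfrak g) = 0$ for $E_8$, so $\mathfrak g_x \not\subseteq \mathfrak z(\mathfrak g)$).

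For the converse, let $V$ be an exceptional $E_8(K)$-module with highest weight $\lambda \in \Lambda_p$. First I would dispose of the trivial $1$-dimensional module: it is not faithful, or more simply, if $\lambda = 0$ then $V$ has highest weight not in our running list, and in any case the statement concerns nontrivial modules in the classification sense; I will assume $\lambda \neq 0$. Then $\lambda = \sum_{i=1}^8 a_i \omega_i$ with at least one $a_k \neq 0$. I would split into two cases according to whether $\lambda$ is a good weight. If $\lambda$ is good, then some $a_k \neq 0$, and by Lemma~\ref{e81}(a) applied to $\mu = \lambda \in {\cal X}_{++}(V)$, if $k \in \{1,\ldots,7\}$ then $V$ is not exceptional, a contradiction. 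Hence the only possibility is $\lambda = a\,\omega_8$ with $a \geq 1$. If $a \geq 2$, then $\mu = \lambda - \alpha_8 = \omega_7 + (a-2)\omega_8 \in {\cal X}_{++}(V)$ (using Theorem~\ref{premprin}, since $\lambda - \alpha_8 \in (\lambda - Q_+) \cap P_{++}$), and this $\mu$ has nonzero coefficient for $\omega_7$, so Lemma~\ref{e81}(a) gives that $V$ is not exceptional. Therefore $a = 1$ and $\lambda = \omega_8$, the adjoint module. If $\lambda$ is a bad (nonzero) weight, then by Lemma~\ref{e81}(b) $V$ is not exceptional, again a contradiction.

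Combining the two cases, the only exceptional $E_8(K)$-module is $E(\omega_8)$, the adjoint module. The only slightly delicate point is the bookkeeping in invoking Lemma~\ref{e81}: one must check that the weights $\mu$ produced there genuinely lie in ${\cal X}_{++}(V)$, which is exactly where Theorem~\ref{premprin} (valid since $p$ is non-special for $E_8$, and the $G_2$-exception does not arise) is used to identify ${\cal X}_{++}(V)$ with $(\lambda - Q_+) \cap P_{++}$; subtracting a single simple root from a dominant weight and landing in $P_{++}$ is the routine verification already carried out in Lemma~\ref{e81}. I expect no real obstacle here — the heavy lifting has been done in Lemmas~\ref{e81} and the general machinery of Chapter~3; this lemma is essentially a short case analysis assembling those pieces.

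\begin{proof}
By Example~\ref{adjoint}, the adjoint module $E(\omega_8)$ (recall $\omega_8 = \tilde{\alpha}$ for $E_8$) is exceptional, since $\mathfrak z(\mathfrak g) = 0$ and $x \in \mathfrak g_x$ for every $x \in \mathfrak g$.

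Conversely, let $V$ be an exceptional $E_8(K)$-module with highest weight $\lambda \in \Lambda_p$, $\lambda \neq 0$. Write $\lambda = \sum_{i=1}^8 a_i\,\omega_i$ with some $a_k \neq 0$.

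Suppose first $\lambda$ is a good weight. If $a_k \neq 0$ for some $k \in \{1,\ldots,7\}$, then Lemma~\ref{e81}(a) shows $V$ is not exceptional, a contradiction. Hence $\lambda = a\,\omega_8$ for some $a \geq 1$. If $a \geq 2$, then by Theorem~\ref{premprin} the weight $\mu = \lambda - \alpha_8 = \omega_7 + (a-2)\omega_8$ lies in $(\lambda - Q_+) \cap P_{++} = {\cal X}_{++}(V)$, and since $\mu$ has nonzero coefficient for $\omega_7$, Lemma~\ref{e81}(a) again gives that $V$ is not exceptional, a contradiction. Therefore $a = 1$ and $\lambda = \omega_8$, so $V$ is the adjoint module.

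If instead $\lambda$ is a nonzero bad weight, then by Lemma~\ref{e81}(b), $V$ is not exceptional, a contradiction.

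Thus the only exceptional $E_8(K)$-module is the adjoint module $E(\omega_8)$.
\end{proof}
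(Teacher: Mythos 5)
Your proof is correct and follows essentially the same route as the paper: both reduce via Lemma~\ref{e81} to highest weights of the form $a\,\omega_8$, eliminate $a\geq 2$ by subtracting $\alpha_8$ to produce a weight with nonzero $\omega_7$-coefficient, and invoke Example~\ref{adjoint} for $a=1$. The only cosmetic difference is that you phrase the reduction as a case split on the highest weight (good versus bad) rather than on all of ${\cal X}_{++}(V)$, which is logically equivalent here.
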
\noindent
\begin{proof}
Let $\,V\,$ be an $\,E_8(K)$-module. By Lemma~\ref{e81}, {\bf we can assume  
that $\,{\cal X}_{++}(V)\,$ contains only good weights of the form $\,\lambda 
= a\,\omega_8\,$ (with $\,a\geq 1\,$).} 

For $\,a\geq 2$, $\,\mu=\lambda - \alpha_8=\omega_7 + (a-2)\omega_8\,
\in{\cal X}_{++}(V)$. Hence Lemma~\ref{e81}(a) applies.
For $\,a=1\,$, we can assume that $\,V\,$ has highest weight
$\,\lambda = \omega_8\,$. Then $\,V\,$ is the adjoint module, 
which is exceptional by Example~\ref{adjoint}.
This proves the lemma.
\end{proof}

\subsubsection{Type $\,F_4\,$}

For groups of type $\,F_4\,$ the orbit sizes for the fundamental
weights are as follows.

\begin{center}
\begin{tabular}{|c|c|c|c|c|} 
\hline             
$\omega_i$  & $\omega_1$  &  $\omega_2$  & $\omega_3$  & $\omega_4$  
 \\ \hline
$|W\omega_i|$  & $ 24 $  & $96$ & $ 96$ & $ 24$ \\ \hline
\end{tabular}
\end{center}
\vspace{2ex}

By Theorem~\ref{???}, any $\,F_4(K)$-module $\,V\,$ such that
\begin{equation} \label{192}
s(V)\,=\,\sum_{\stackrel{\scriptstyle\mu\in{\cal X}_{++}(V)}
{\scriptstyle\mu\;is good}}\,m_{\mu}\,|W\mu|\,>\,192
\end{equation}
or
\begin{equation}\label{rr48}
r_p(V)\,=\,\sum_{\stackrel{\scriptstyle{\mu\in{\cal X}_{++}(V)}}
{\scriptstyle \mu\; good}}\,m_{\mu}\,
\frac{|W\mu|}{|R_{long}|}\,|R_{long}^+-R^+_{\mu,p}|\,>\,48
\end{equation}
is not an exceptional $\,\mathfrak g$-module. For groups of type $\,F_4\,$,
$\,p\geq 3$, $\,|W|\,=\,2^7\cdot 3^2\,$, 
$\,|R|\,=\,48\,,\;\,|R_{long}|\,=\,|R(D_4)|\,=\,2^3\cdot 3$.
$\,R_{long}^+\,=\,\{ \varepsilon_i\pm\varepsilon_j\,/\,1\leq i<j\leq 4\}\,
=\,\{ (2 3 4 2),\,(1 3 4 2),\,(1 2 4 2),\,(1 2 2 0),\,(1 1 2 0),\,(0 1 2 0),
\,(0 1 2 2),\,(1 1 2 2),\,(1 2 2 2),\,(1 0 0 0)$, \linebreak
$\,(1 1 0 0),\,(0 1 0 0)\}$.
Furthermore,
\begin{eqnarray}
\omega_1 & = & 2\alpha_1 + 3\alpha_2 + 4\alpha_3 + 2\alpha_4 = (2342) 
\nonumber \\
\omega_2 & = & 3\alpha_1 + 6\alpha_2 + 8\alpha_3 + 4\alpha_4 = (3684) 
\nonumber \\
\omega_3 & = & 2\alpha_1 + 4\alpha_2 + 6\alpha_3 + 3\alpha_4 = (2463) 
\nonumber \\
\omega_4 & = & 1\alpha_1 + 2\alpha_2 + 3\alpha_3 + 2\alpha_4 = (1232)
\nonumber
\end{eqnarray}

\begin{lemma}\label{fared}
Let $\,V\,$ be an $\,F_4(K)$-module. If $\,{\cal X}_{++}(V)\,$ contains

(a) a good weight with $3$ nonzero coefficients or

(b) a good weight with $2$ nonzero coefficients or

(c) any nonzero bad weight, \\
then $\,V\,$ is not an exceptional $\,\mathfrak g$-module.
\end{lemma}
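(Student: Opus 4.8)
The plan is to prove Lemma~\ref{fared} in exactly the same spirit as the reduction Lemmas~\ref{e61}, \ref{e71}, \ref{e81} proved above for the other exceptional types: exhibit, for each of the three hypotheses (a), (b), (c), a good weight $\mu \in {\cal X}_{++}(V)$ whose orbit is large enough that the bound \eqref{192} (or, in a few borderline cases, \eqref{rr48}) is violated, and then invoke Theorem~\ref{???}. Throughout I would use Remark~\ref{remA}'s analogue here (if $a_k \neq 0$ then $|W\mu| \geq |W\omega_k|$, which follows directly from Lemma~\ref{remark}), together with the table of orbit sizes $|W\omega_1| = |W\omega_4| = 24$, $|W\omega_2| = |W\omega_3| = 96$ just displayed, and the centralizer data from Lemma~\ref{orbexcp}(4).

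For part (a): if $\mu = \sum a_i\omega_i$ is good with at least three nonzero coefficients, then by Lemma~\ref{orbexcp}(4) its centralizer $C_W(\mu)$ is contained in one of the $C_W(\omega_i + \omega_j + \omega_k) \cong S_2$ (or smaller), so $|W\mu| \geq |W|/2 = 2^6 \cdot 3^2 = 576 > 192$, giving $s(V) > 192$ and hence non-exceptionality by \eqref{192}. For part (b): if $\mu = a\omega_j + b\omega_k$ is good, I would run through the (few) $W$-orbit types in Lemma~\ref{orbexcp}(4): the smallest orbits for weights with two nonzero coefficients are $|W(\omega_1 + \omega_2)| = |W(\omega_3 + \omega_4)| = |W|/6 = 192$ and $|W(\omega_1 + \omega_4)| = |W|/|W(C_2)| = 2^7 \cdot 3^2 / 8 = 144$, while the others ($\omega_1 + \omega_3$, $\omega_2 + \omega_3$, $\omega_2 + \omega_4$) give $|W|/4 = 288 > 192$. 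So the only genuinely borderline cases are $\mu = \omega_1+\omega_2$, $\omega_3+\omega_4$ (orbit exactly $192$, not $> 192$) and $\mu = \omega_1 + \omega_4$ (orbit $144 < 192$); for these I would produce a second weight in ${\cal X}_{++,\mathbb C}(\mu) \subseteq {\cal X}_{++}(V)$ (using Lemma~\ref{lamu} and the explicit root expansions of the $\omega_i$ displayed above) — e.g.\ subtracting an appropriate simple root from $\omega_1 + \omega_4$ lands on a dominant weight still supported on $\{\omega_2, \omega_3\}$-type nodes whose orbit pushes the sum over $192$ — or else compute $|R_{long}^+ - R_{\mu,p}^+|$ directly and apply \eqref{rr48} as was done for the analogous borderline $E_6$ cases. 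For part (c): a nonzero bad weight $\nu = \sum a_i\omega_i$ has every nonzero $a_i \geq p \geq 3 \geq 2$, so I would pick any $k$ with $a_k \neq 0$ and form $\mu = \nu - \alpha_k$, which (reading off the Cartan matrix of $F_4$) is dominant and, since $a_k \geq 2$, has more nonzero coefficients than before or at least a coefficient on an $\omega_2$- or $\omega_3$-type node; in all cases $\mu$ is a good weight falling under (a) or (b), so $V$ is not exceptional.

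The main obstacle I anticipate is part (b): unlike $E_8$ (where every fundamental orbit already beats the limit) or $E_7$ (where most do), in $F_4$ the limit $192$ is small and several two-coefficient orbits meet or fall just short of it, so the argument cannot be purely orbit-size bookkeeping. I would need to be careful to verify, for each such borderline highest weight $\lambda$ of $V$, that ${\cal X}_{++}(V)$ really contains the auxiliary weight I want — this is where Theorem~\ref{premprin}/Lemma~\ref{lamu} are essential, since under the running hypothesis $p$ non-special the weight system of $V$ equals the characteristic-zero one — and then either sum the orbits or fall back on the sharper inequality \eqref{rr48} with an explicit count of $R_{\mu,p}^+ \subseteq R_{long}^+$ (whose twelve positive roots are listed above). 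The bad-weight case (c) is routine once (a) and (b) are in hand, exactly as in Lemmas~\ref{e61}(b), \ref{e71}(c), \ref{e81}(b).
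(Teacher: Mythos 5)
Your plan is correct and follows essentially the same route as the paper: (a) by the orbit bound $576>192$, (b) by the same case split with the same three borderline orbits $\omega_1+\omega_2$, $\omega_3+\omega_4$ (orbit $192$) and $\omega_1+\omega_4$ (orbit $144$) handled by producing an auxiliary dominant weight in $(\mu-Q_+)\cap P_{++}$ supported on $\omega_2$ or $\omega_3$, and (c) by reducing a bad weight to a good one covered by (a) or (b). One small correction for the write-up: no single \emph{simple} root takes $\omega_1+\omega_4$ to a dominant weight — the paper subtracts the positive root $(1111)=\alpha_1+\alpha_2+\alpha_3+\alpha_4$ to reach $\omega_3$, which gives $s(V)\geq 144+96>192$; your uniform ``subtract $\alpha_k$'' treatment of (c) is a slightly cleaner variant of the paper's ad hoc root choices but rests on the same idea.
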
\noindent
\begin{proof}
(a) Let $\,\mu\in{\cal X}_{++}(V)\,$ be a good weight (with $3$ nonzero
coefficients). Then $\,s(V)\,\geq\,|W\mu|\,=\,576\,>\,192\,$.
Hence, by \eqref{192}, $\,V\,$ is not exceptional.   

(b) Let $\,\mu\,=\,a\,\omega_i\,+\,b\,\omega_j\,$ (with $\,a\geq 1,\,b\geq 
1\,$ and $\,1\leq i<j\leq 4\,$) be a good weight in $\,{\cal X}_{++}(V)$. 

i) Let $\,\mu\in\{\,a\omega_1\,+\,b\omega_3\,,\;a\omega_2\,+\,b\omega_3\,,\;
a\omega_2\,+\,b\omega_4\,\}$. Then $\,s(V)\,\geq\,|W\mu|\geq\displaystyle 
\frac{2^7\cdot 3^2}{2!\cdot 2!}=2^5\cdot 3^2 =288>192\,$.

ii) Let $\,\mu\,=\,a\omega_1\,+\,b\omega_2\;$ or
$\;\mu\,=\,c\omega_3\,+\,d\omega_4\,$. Then $\,|W\mu|\,=\,2^6\cdot 3=192$.

If $\,\mu\,=\,a\omega_1\,+\,b\omega_2\,=\,a(2342)\,+\,b(3684)\,$, 
then $\,\mu_1=\mu - (1221)\,=\,a(2342) +(b-1)(3684)+(2463)\,=\,
a\omega_1\,+\,(b-1)\omega_2\,+\,\omega_3 \in {\cal X}_{++}(V)$.  
If $\,\mu\,=\,c\omega_3\,+\,d\omega_4\,=\,c(2463)\,+\,d(1232)\,$, then
$\,\mu_1=\mu - (0011)\,=\,(c-1)(2463) +(d-1)(1232)+(3684)\,=\,\omega_2\,+ 
\,(c-1)\omega_3\,+\,(d-1)\omega_4\,\in{\cal X}_{++}(V)$.  
In both cases, $\,s(V)\,\geq\,|W\mu|\,+\,|W\mu_1|\,\geq\,288\,>\,192\,$. 
Hence, by~\eqref{192}, $\,V\,$ is not exceptional.

iii) Let $\,\mu\,=\,a\omega_1\,+\,b\omega_4\,$ (with $\,a\geq 1,\,b\geq 1\,$)
be a good weight in $\,{\cal X}_{++}(V)$. As $\,\omega_1=
\tilde{\alpha}=(2342)\,$ and $\,\omega_4=(1232)\,$ are also roots,
$\,\mu_1=\omega_1 +\omega_4\,\in\,{\cal X}_{++}(V)$. Also
$\,\mu_2=\mu_1-(1111)\,=\,\omega_3\,\in\,{\cal X}_{++}(V)$. Thus 
$\,s(V)\,\geq\,|W\mu|\,+\,|W\mu_1|\,\geq 144 +96 =240>\,192$.
Hence, by~\eqref{192}, $\,V\,$ is not exceptional. This proves (b).

(c) Let $\,\mu = \displaystyle\sum_{i=1}^4\,a_i\,\omega_i\,$ be a nonzero 
bad weight in $\,{\cal X}_{++}(V)$. Hence, as $\,p\geq 3$,
the nonzero coefficients of $\,\mu\,$ are $\,\geq 3\,$.

i) $\,a_1\neq 0\,\Longrightarrow\,\mu_1\,=\,\mu -(1110)\,=\,(a_1-1)\omega_1
+(a_4+1)\omega_4 +\Omega_{1,4}\,\in{\cal X}_{++}(V)$. 

ii) $\,a_2\neq 0\,\Longrightarrow\,\mu_1=\mu -(1221)=(a_2-1)\omega_2 +
(a_3+1)\omega_3+\Omega_{2,3}\in{\cal X}_{++}(V)$. 

iii) $\,a_3\neq 0\,\Longrightarrow\,\mu_1=\mu -(0121)=(a_1+1)\omega_1 +
(a_3-1)\omega_3+\Omega_{1,3}\in{\cal X}_{++}(V)$. 

In all these cases, $\,\mu_1\,$ is a good weight in $\,{\cal X}_{++}(V)\,$ 
with $2$ nonzero coefficients. Hence, by part (b) of this lemma, 
$\,V\,$ is not exceptional.

iv) We can assume that $\,a_4\,$ is the only nonzero coefficient of
$\,\mu\,$. As $\,\mu\,$ is bad, $\,a_4\geq 3\,$. Thus 
$\,\mu_1\,=\mu-\alpha_4=\omega_3 +(a_4-2)\omega_4\in {\cal
X}_{++}(V)\,$ is a good weight with $2$ nonzero coefficients. Hence, by
part (b) of this proof, $\,V\,$ is not exceptional.
This proves the lemma.
\end{proof}

\begin{lemma}\label{f4nlf}
Let $\,V\,$ be an $\,F_4(K)$-module. Then $\,V\,$ is an exceptional 
$\,\mathfrak g$-module if and only if $\,V\,$ has highest weight
$\,\omega_1\,$ (that is, the adjoint module) or $\,\omega_4\,$.
\end{lemma}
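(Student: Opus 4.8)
The plan is to handle the two directions separately, mirroring the structure already used for $E_6,\,E_7,\,E_8$. For the easy direction ($\Longleftarrow$), if $V$ has highest weight $\omega_1 = \tilde{\alpha}$ for a group of type $F_4$ with $p \geq 3$, then $V$ is the adjoint module (of dimension $52$, or $26$ if $p = 2$, but here $p\geq3$), which is exceptional by Example~\ref{adjoint}. If $V$ has highest weight $\omega_4$, then $\dim V = 26$ (or $25$ if $p = 3$); in either case $\dim V < \dim \mathfrak g - \varepsilon = 52 - 0 = 52$, since $\mathfrak g$ has trivial centre for type $F_4$. Hence Proposition~\ref{dimcrit} applies and $V$ is exceptional.

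For the hard direction ($\Longrightarrow$), the strategy is to show that any $F_4(K)$-module $V$ whose highest weight is not $\omega_1$ or $\omega_4$ fails the necessary condition of Theorem~\ref{???}, using Lemma~\ref{fared} to dispose of almost everything. By Lemma~\ref{fared}, I may assume $\mathcal X_{++}(V)$ contains only good weights with at most one nonzero coefficient, and no nonzero bad weights. So it remains to treat highest weights of the form $\lambda = a\,\omega_i$ with $a \geq 1$ and $i \in \{1,2,3,4\}$. The key device will be to pass from $\lambda$ to a lower weight $\mu = \lambda - \beta \in \mathcal X_{++,\mathbb C}(\lambda) \subseteq \mathcal X_{++}(V)$ (using Theorem~\ref{premprin}, valid since $p$ is non-special for $F_4$ and $\lambda \neq \omega_1$ so the $G_2$ caveat is irrelevant) chosen so that $\mu$ is either a good weight with $\geq 2$ nonzero coefficients (invoking Lemma~\ref{fared}(b)) or a bad weight with $\geq 2$ nonzero coefficients (which by Lemma~\ref{fared}(c), or by one more reduction step, is excluded). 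Concretely: for $i = 2$, $\mu = \lambda - \alpha_2 = (a-2)\omega_2 + \omega_3$ handles $a \geq 2$; for $i = 3$, $\mu = \lambda - \alpha_3 = \omega_1 + (a-2)\omega_3 + \omega_4$ or similar handles $a \geq 2$; for $i = 1$ and $a \geq 2$, $\mu = \lambda - \alpha_1$ produces a weight with a nonzero $\omega_2$-coefficient; and for $i = 4$, $a \geq 2$, $\mu = \lambda - \alpha_4 = \omega_3 + (a-2)\omega_4$. In each case the resulting $\mu$ has at least two nonzero coefficients among $\{\omega_1,\ldots,\omega_4\}$ and is good, so Lemma~\ref{fared}(b) finishes.

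This leaves exactly the four candidates $\lambda \in \{\omega_1, \omega_2, \omega_3, \omega_4\}$. For $\lambda = \omega_1$ and $\lambda = \omega_4$ we already know $V$ is exceptional. For $\lambda = \omega_2$: since $|W\omega_2| = 96$, the weight $\omega_2$ alone does not exceed the limit $192$, so I will need a second weight. Taking $\mu = \omega_2 - (1\,2\,2\,1)$ I expect to land on a dominant weight (e.g. $\omega_3$, noting $\omega_2 - \omega_3 = \alpha_1 + 2\alpha_2 + 2\alpha_3 + \alpha_4 = (1\,2\,2\,1)$ is a sum of positive roots), giving $s(V) \geq |W\omega_2| + |W\omega_3| = 96 + 96 = 192$; this is not strictly $> 192$, so I will instead use the $r_p(V)$ inequality~\eqref{rr48}, computing $|R^+_{\mathrm{long}} - R^+_{\omega_2,p}|$ and $|R^+_{\mathrm{long}} - R^+_{\omega_3,p}|$ from the explicit list of $R^+_{\mathrm{long}}$ given above, to get $r_p(V) > 48$. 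For $\lambda = \omega_3$: $|W\omega_3| = 96$, and I descend to further dominant weights such as $\omega_1$ or $\omega_4$ (via $\omega_3 - \omega_1 = (0\,1\,2\,1)$, a sum of positive roots), so that $\mathcal X_{++}(V)$ contains $\omega_3$ together with $\omega_1$ (and possibly $\omega_4$), whence either $s(V) > 192$ or, more robustly, $r_p(V) > 48$ after a short root-counting computation. The main obstacle I anticipate is precisely these two borderline cases $\omega_2$ and $\omega_3$: the crude orbit-size bound $s(V)$ sits exactly at the limit $192$, so the argument genuinely requires the sharper weighted inequality $r_p(V) \leq |R| = 48$ and an honest determination of $R^+_{\mu,p}$ for these weights — which depends mildly on $p$ (here $p \geq 3$, and one must check $p = 3$ does not cause a weight to disappear, though Theorem~\ref{premprin} guarantees it does not for non-special primes). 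Everything else is routine manipulation of the weight lattice.
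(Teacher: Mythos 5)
Your proposal follows essentially the same route as the paper: the backward direction is identical (adjoint module plus the dimension criterion for $\omega_4$, with $\varepsilon=0$), and the forward direction reduces via Lemma~\ref{fared} to highest weights $a\,\omega_i$, knocks out $a\geq 2$ by subtracting a root, and settles the leftover candidates $\omega_2,\omega_3$ by the weighted inequality. Two slips to fix. First, your expressions for $\lambda-\alpha_i$ in the weight basis are off: e.g.\ $a\omega_2-\alpha_2=\omega_1+(a-2)\omega_2+2\omega_3$, not $(a-2)\omega_2+\omega_3$; the conclusion (at least two nonzero coefficients, good) survives, but more importantly your blanket claim that ``in each case the resulting $\mu$ has at least two nonzero coefficients'' fails for $a=2$ with $i=1$ and $i=4$, where $2\omega_1-\alpha_1=\omega_2$ and $2\omega_4-\alpha_4=\omega_3$ have a single nonzero coefficient, so Lemma~\ref{fared}(b) does not apply there; you must instead route these sub-cases through your subsequent $\omega_2$/$\omega_3$ analysis (which therefore has to be phrased as ``$\omega_2\in{\cal X}_{++}(V)$ implies $V$ not exceptional,'' not merely as a statement about the highest weight) — this is exactly what the paper does. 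Second, the $r_p$ computations you defer do check out, and you should carry them through: with $(\omega_i,\gamma)$ read off from the coefficient $c_i$ of $\gamma=\sum c_j\alpha_j$, one gets $|R_{long}^+-R^+_{\omega_2,p}|\geq 9$, $|R_{long}^+-R^+_{\omega_3,p}|=9$, $|R_{long}^+-R^+_{\omega_1,p}|=9$, $|R_{long}^+-R^+_{\omega_4,p}|=6$ for all $p\geq 3$, so for $\omega_2$ your two weights already give $r_p(V)\geq 4\cdot 9+4\cdot 9=72>48$ (the paper instead uses $s(V)\geq 96+96+24=216>192$ by descending further to $\omega_1$), while for $\omega_3$ you genuinely need all three of $\omega_3,\omega_1,\omega_4$ to reach $r_p(V)\geq 36+9+6=51>48$ — with only $\omega_3$ and $\omega_1$ you get $45$, which is not enough.
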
\noindent
\begin{proof}
($\,\Longleftarrow\,$)
If $\,V\,$ has highest weight $\,\omega_1\,$, then $\,V\,$ is the
adjoint module, which is exceptional by Example~\ref{adjoint}.
If $\,V\,$ has highest weight $\,\omega_4\,$ then
$\,\dim\,V\,=\,26\,$ (for $\,p\neq 3\,$) or $\,\dim\,V\,=\,25\,$ (for 
$\,p= 3\,$). In any case $\,\dim\,V\,<\,52\,-\,\varepsilon\,$.
Hence, by Proposition~\ref{dimcrit}, $\,V\,$ is an exceptional module.

($\,\Longrightarrow\,$) 
Recall that $\,p\geq 3\,$ for groups of type $\,F_4\,$.
By Lemma~\ref{fared}, {\bf we can assume that $\,{\cal X}_{++}(V)\,$
contains only good weights with at most one nonzero coefficient.}

Let $\,\mu\,=\,a\,\omega_i\,$ (with $\,a\geq 1\,$ and $\,1\leq i\leq 4\,$) 
be a good weight in $\,{\cal X}_{++}(V)$. 

(a) Let $\,a\geq 1\,$ and $\,\mu\,=\,a\,\omega_2\,$. Then
$\,\mu_1\,=\mu -(1221)=\,(a-1)\omega_2\,+\,\omega_3\,\in\,
{\cal X}_{++}(V)$. For $\,a_2\geq 2,\,$ $\,\mu_1\,$ satisfies 
Lemma~\ref{fared}(b). For $\,a=1,\,$ $\,\mu\,=\,\omega_2\,$,
$\,\mu_1\,=\,\omega_3\,$ and $\,\mu_2\,=\mu -(1342)=\,\omega_1 
\,\in{\cal X}_{++}(V)$. For $\,p\geq 3,\,$ these are all good weights. 
Thus $\,s(V)\,\geq\,|W\mu|\,+\,|W\mu_1|\,+\,|W\mu_2|\,=\, 
216\,>\,192\,$. Hence, \mbox{by \eqref{192},} $\,V\,$ is not exceptional.

(b) Let $\,a\geq 1\,$ and $\,\mu\,=\,a\omega_3\,$. Then
$\,\mu_1=\mu -(1231)=\,(a-1)\omega_3\,+\,\omega_4\,\in\,{\cal X}_{++}(V)$. 
For $\,a\geq 2,\,$ Lemma~\ref{fared}(b) applies. 
For $\,a=1,\,$ $\,\mu=\omega_3\,$, 
$\,\mu_1=\omega_4\,$ and $\,\mu_2=\,\mu-(0 1 2 1)\,=\,\omega_1\,\in
\,{\cal X}_{++}(V)$. For $\,p\geq 3$, these are all good weights and
$\,|R_{long}^+-R^+_{\omega_3,3}|\,=\,|R_{long}^+-R^+_{\omega_1,3}|=9,\,$ and
$\,|R_{long}^+-R^+_{\omega_4,3}|=6\,$. Thus $\,r_p(V)\,\displaystyle
=\,\frac{96\cdot 9}{24}\,+\,\frac{24\cdot 9}{24}\,+\,\frac{24\cdot 6}{24}
\,=\,51\,>\,48\,$. Hence, \mbox{by~(\ref{rr48}),} $\,V\,$ is not an exceptional
module.

(c) Let $\,a\geq 1\,$ and $\,\mu\,=\,a\,\omega_1\,$. Then  
$\,\mu_1=\mu -(1110)=\,(a-1)\omega_1\,+\,\omega_4\,\in{\cal X}_{++}(V)$. 
For $\,a\geq 2$, Lemma~\ref{fared}(b) applies. For $\,a=1$, we may
assume that $\,V\,$ has highest weight $\,\omega_1\,$, then $\,V\,$ is the
adjoint module, which is exceptional by Example~\ref{adjoint}.

(d) Let $\,a\geq 1\,$ and $\,\mu\,=\,a\,\omega_4\,$.
For $\,a\geq 2$, $\,\mu_1=\mu -\alpha_4=\,\omega_3\,+\,(a-2)\omega_4\,
\in{\cal X}_{++}(V)$. For $\,a\geq 3$, Lemma~\ref{fared}(b) applies. 
For $\,a=2\,$, $\,\mu_1\,=\,\omega_3\,$ satisfies case (b) above.
Hence we can assume that $\,V\,$ has highest weight $\,\omega_4$, 
then $\,\dim\,V\,=\,25\,$ (for $\,p=3\,$) or $\,26\,$ otherwise. 
In both cases $\,\dim\,V\,<\,52\,-\,\varepsilon\,$. Hence,  
by Proposition~\ref{dimcrit}, $\,V\,$ is an exceptional module.

This proves the lemma.
\end{proof}

\subsubsection{Type $\,G_2\,$}

By Theorem~\ref{???}, any $\,G_2(K)$-module $\,V\,$ such that
\begin{equation} \label{36}
s(V)\,=\,\sum_{\stackrel{\scriptstyle\mu\in{\cal X}_{++}(V)}
{\scriptstyle\mu\;good}}\,m_{\mu}\,|W\mu|\,>\,36
\end{equation}
or
\begin{equation}\label{rr12}
r_p(V)\,=\,\sum_{\stackrel{\scriptstyle{\mu\in{\cal X}_{++}(V)}}
{\scriptstyle \mu\;good}}\,m_{\mu}\,
\frac{|W\mu|}{|R_{long}|}\,|R_{long}^+-R^+_{\mu,p}|\,>\,12
\end{equation}
is not an exceptional $\,\mathfrak g$-module.
For groups of type $\,G_2\,$, $\,|W|\,=\,12\,$,
$\,|R|\,=\,12\,=\,2^2\cdot 3\,$, $\,|R_{long}|\,=\,6\,$ and
$\,R_{long}^+\,=\,\{ \alpha_2,\,3\alpha_1\,+\,\alpha_2,\,3\alpha_1\,
+\,2\alpha_2 \}$. Furthermore, $\,|W\omega_1|\,=\,|W\omega_2|\,=\,6\,$ and  
$\,|W(\omega_1 +\omega_2)|\,=\,12\,$, where
$\,\omega_1 \, =\, 2\alpha_1 + \alpha_2\;$ and
$\;\omega_2 \, =\, 3\alpha_1 + 2\alpha_2\,$.
\vspace{2ex}\\
\begin{remark}\label{g2rem} 
If $\,\mu= a\,\omega_1\,+\,b\,\omega_2\,$ and
$\,\Lambda\,=\,A\,\omega_1\,+\,B\,\omega_2\,$ are
dominant weights such that $\,A\geq a\,$ and $\,B\geq b$, then
$\,{\cal X}_{++,\mathbb C}(\mu)\,\subseteq\,{\cal X}_{++,\mathbb
C}(\Lambda)$. For $\,{\cal X}_{++,\mathbb C}(\Lambda)\,=\,(\Lambda\,
-\,Q_+)\cap P_{++}\,$ and 
$\,\Lambda\, -\,Q_+\,=\,(\Lambda\,-\,\mu)\,+(\mu -\,Q_+)$, where  
$\,\Lambda\,-\,\mu\,\in\,Q_+$. Hence $\,(\mu\,-\,Q_+)\,\subseteq\,(\Lambda\, 
-\,Q_+),\,$ which implies the claim. 
\end{remark} 

\begin{lemma}\label{g2nlf}
Let $\,V\,$ be a $\,G_2(K)$-module. Then $\,V\,$ is an exceptional 
$\,\mathfrak g$-module if and only if $\,V\,$ has highest weight
$\,\omega_1\,$ or $\,\omega_2.\,$
\end{lemma}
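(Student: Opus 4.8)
The plan is to follow the two-directional structure already used for types $\,E_6,\,E_7,\,E_8,\,F_4\,$: first establish the "if" direction by dimension count, then eliminate all other highest weights by the orbit-size estimates of Theorem~\ref{???}, reducing progressively to a short list that we settle by hand.

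For the ($\Longleftarrow$) direction, suppose $\,V\,$ has highest weight $\,\omega_1\,$ or $\,\omega_2\,$. For $\,p\neq 3\,$ the weight $\,\omega_2\,=\,\tilde{\alpha}\,$ gives the adjoint module, which is exceptional by Example~\ref{adjoint}; for $\,p=3\,$ the module $\,E(\omega_2)\,$ has dimension $\,7\,$. For $\,\omega_1\,$ the dimension is $\,7\,$ (for $\,p>2\,$, which we are assuming in the $\,\omega_1\,$ case for $\,G_2\,$) or $\,6\,$ (for $\,p=2\,$). Since $\,\dim\,\mathfrak g\,=\,14\,$ and $\,\mathfrak z(\mathfrak g)\,=\,0\,$ for $\,G_2\,$, in all these cases $\,\dim\,V\,<\,14\,-\,\varepsilon\,=\,14\,$, so Proposition~\ref{dimcrit} applies and $\,V\,$ is exceptional.

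For the ($\Longrightarrow$) direction, let $\,V\,$ be an arbitrary $\,G_2(K)$-module with highest weight $\,\lambda\,=\,a\,\omega_1\,+\,b\,\omega_2\,$, $\,a,b\geq 0\,$, $\,\lambda\neq 0\,$ (the trivial module is excluded since we want to prove it is \emph{not} among the non-adjoint exceptionals — actually, note the trivial module $\,1\,$-dim is exceptional by Example~\ref{adjoint} but has highest weight $\,0\,$, so it is vacuously outside the "$\omega_1$ or $\omega_2$" list; I would add a remark handling $\,\lambda=0\,$ separately or simply note the statement concerns nonzero $\,\lambda\,$). First I would dispose of $\,\lambda\,=\,\omega_1\,+\,\omega_2\,$: here $\,|W\lambda|\,=\,12\,$, and using Remark~\ref{g2rem} together with the tables in \cite{gise} for multiplicities, I would compute that $\,{\cal X}_{++,\mathbb C}(\omega_1+\omega_2)\,$ contains $\,\omega_1+\omega_2,\,\omega_2,\,\omega_1\,$ (with suitable multiplicities) and that $\,r_p(V)\,$ or $\,s(V)\,$ exceeds the limit $\,12\,$ (resp. $\,36\,$); more carefully, $\,|R^+_{long}-R^+_{\mu,p}|\geq 2\,$ for good $\,\mu\,$, so $\,r_p(V)\geq \frac{12\cdot 2}{6}+\frac{6\cdot 2}{6}+\cdots > 12\,$. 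Then, using Remark~\ref{g2rem} and Lemma~\ref{remark}, any $\,\lambda\,=\,a\omega_1+b\omega_2\,$ with both $\,a,b\geq 1\,$ dominates $\,\omega_1+\omega_2\,$ in $\,{\cal X}_{++,\mathbb C}\,$, hence $\,s(V)\geq s(E(\omega_1+\omega_2))>36\,$ and Proposition~\ref{criteria} kills it. So we may assume $\,\lambda\,=\,a\omega_1\,$ or $\,\lambda\,=\,b\omega_2\,$ with the single coefficient $\,\geq 2\,$. For $\,\lambda\,=\,a\omega_1\,$, $\,a\geq 2\,$: if $\,p\nmid a\,$ then $\,\mu\,=\,\lambda-\alpha_1\,=\,(a-2)\omega_1+\omega_2\,\in{\cal X}_{++}(V)\,$ has two nonzero coefficients (for $\,a\geq 3\,$) reducing to the previous case, while for $\,a=2\,$ one gets $\,\mu\,=\,\omega_2\,$ and then $\,\mu'\,=\,\omega_1\,$ lies below $\,2\omega_1\,$, and a direct count $\,r_p(V)\geq \frac{6\cdot 2}{6}+\frac{6\cdot 2}{6}=4\,$ is not yet enough, so I would instead examine $\,s_1\,$ or use the multiplicity of $\,\omega_1\,$ inside $\,E(2\omega_1)\,$ from \cite{gise}/\cite{buwil} to push past $\,36\,$; the bad-weight subcase ($\,p\mid a\,$) is handled by the Remark after Definition~\ref{badweight}, replacing $\,\lambda\,$ by the good weight $\,\lambda-\alpha_1\,$. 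The case $\,\lambda\,=\,b\omega_2\,$, $\,b\geq 2\,$, is symmetric via $\,\mu\,=\,\lambda-\alpha_2\,$ (or $\,-2\alpha_2-\cdots\,$) landing on a weight with an $\,\omega_1\,$-component.

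The main obstacle I anticipate is the borderline cases $\,\lambda\,=\,2\omega_1\,$ and $\,\lambda\,=\,2\omega_2\,$ (and possibly $\,\omega_1+\omega_2\,$ itself), where the crude orbit bound $\,|W\mu|\leq 12\,$ is small relative to the limit $\,36\,$, so one genuinely needs the $\,r_p\,$ inequality with accurate values of $\,|R^+_{long}-R^+_{\mu,p}|\,$ and accurate weight multiplicities (from \cite{gise} or \cite{buwil}), together with care about which residues $\,p\,$ make a weight bad. I would tabulate, for each relevant $\,\mu\in\{\omega_1,\omega_2,\omega_1+\omega_2,2\omega_1,2\omega_2\}\,$ and each $\,p\geq 5\,$ (and $\,p=3,2\,$ with the special/near-special caveats from Section~\ref{proced}), the quantity $\,|R^+_{long}-R^+_{\mu,p}|\in\{0,2\}\,$ (since $\,R_{long}\cong A_2\,$ has only $\,3\,$ positive roots and the maximal proper subsystem is $\,A_1\,$, giving $\,M=2\,$), and verify $\,r_p(V)>12\,$ in every case except $\,\lambda=\omega_1,\omega_2\,$. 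This finishes the proof.
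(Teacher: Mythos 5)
There is a genuine gap at the case $\,\lambda\,=\,2\omega_1\,$, and it is not one that can be patched by "examining $\,s_1\,$ or using the multiplicity of $\,\omega_1\,$ inside $\,E(2\omega_1)\,$ to push past $\,36\,$," as you propose. The module $\,E(2\omega_1)\,$ for $\,G_2\,$ has dimension $\,27\,$; its dominant weights are $\,2\omega_1,\,\omega_2,\,\omega_1,\,0\,$ with orbit sizes $\,6,\,6,\,6,\,1\,$ and $\,m_{\omega_1}\leq 2\,$, so $\,s(V)\leq 24<36\,$ no matter what, and since $\,|R_{long}^+-R^+_{2\omega_1,p}|=|R_{long}^+-R^+_{\omega_1,p}|=2\,$ and $\,|R_{long}^+-R^+_{\omega_2,p}|=3\,$ one gets $\,r_p(V)\leq 2+3+4=9<12\,$. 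In other words $\,E(2\omega_1)\,$ \emph{satisfies} both necessary conditions of Theorem~\ref{???}, so no counting argument of the type you are running can exclude it; yet it must be excluded, since it is not exceptional. The paper settles this case by an entirely different, explicit argument: it embeds the Lie algebra of type $\,G_2\,$ into $\,\mathfrak{so}(f)\,$ of type $\,B_3\,$ acting on $\,E(\omega_1)\,$, identifies $\,E(2\omega_1)\,$ with $\,({\rm Symm}_n\cap\mathfrak{sl}(n))/\text{scalars}\,$ (Subsection~\ref{roapm}), and shows that a generic diagonal matrix there has trivial isotropy subalgebra, so the module is not exceptional for $\,\mathfrak{so}(f)\,$ and a fortiori not for the $\,G_2\,$ subalgebra. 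Without some such explicit stabilizer computation your proof cannot close.

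Two smaller points. First, your "more careful" bound for $\,\lambda=\omega_1+\omega_2\,$ using only $\,|R_{long}^+-R^+_{\mu,p}|\geq 2\,$ yields $\,\tfrac{12\cdot2}{6}+3\cdot\tfrac{6\cdot2}{6}=10\,$, which does not exceed $\,12\,$; you genuinely need the exact values (which are $\,3\,$ for $\,\omega_1+\omega_2\,$ and $\,\omega_2\,$ when $\,p\geq 5\,$) and, for $\,p=2\,$ and $\,p\neq 3,7\,$, the weight multiplicities $\,m_{\omega_2}=2,\,m_{\omega_1}=4\,$ from \cite{gise} to reach $\,s(V)\geq 48\,$ — so the tabulation you defer is load-bearing, not routine. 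Second, note that for $\,\omega_2\,$ with $\,p\neq 3\,$ the dimension criterion is unavailable ($\,\dim V=14\not<14-\varepsilon\,$); you correctly fall back on Example~\ref{adjoint} there, which is what the paper does.
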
\noindent
\begin{proof}
Recall that $\,p\neq 3\,$ for groups of type $\,G_2\,$ and also $\,p\neq
2\,$ if $\,V\,$ has highest weight $\,\omega_1\,$. 

($\,\Longleftarrow\,$)
If $\,V\,$ has highest weight $\,\omega_1\,$, then $\,\dim\,V\,=\,7\,$
(for $\,p\neq 2\,$) or $\,\dim\,V\,=\,6\,$ (for $\,p= 2\,$). In both
cases $\,\dim\,V\,<\,14\,-\,\varepsilon\,$. Hence, by 
Proposition~\ref{dimcrit}, $\,V\,$ is an exceptional module.
If $\,V\,$ has highest weight $\,\omega_2\,$ then $\,V\,$ is the
adjoint module, which is exceptional by Example~\ref{adjoint}.

($\,\Longrightarrow\,$) Let $\,V\,$ be a $\,G_2(K)$-module. 

First observe that bad weights with $2$ nonzero coefficients do not 
occur in $\,{\cal X}_{++}(V)$, otherwise it
would contradict Theorem~\ref{curt}(i).

{\bf Therefore we can assume that weights with $2$ nonzero coefficients 
occurring in $\,{\cal X}_{++}(V)\,$ are good weights.} 

I) Let $\,\mu\,=\,a\,\omega_1\,+\,b\,\omega_2\,$ (with $\,a\geq
1,\,b\geq 1\,$) be a good weight in $\,{\cal X}_{++}(V)$. 

For $\,a\geq 2\,$ {\bf or} $\,b\geq 2$, by Remark~\ref{g2rem},
$\,\mu_1=\omega_1\,+\,\omega_2\in{\cal X}_{++}(V)$. Hence also
$\,\mu_2=2\omega_1,\;\mu_3=\omega_2,\;\mu_4=\omega_1,\;\mu_5=0\,\in\,
{\cal X}_{++}(V)$.  Thus, for $\,p\geq 5$, $\,s(V)\,\geq\,12+12+6+6+6\,>
\,36\,$. Hence, by (\ref{36}), $\,V\,$ is not exceptional.

For $\,p=2,\,$ a weight of the form $\,a\,\omega_1\,+\,b\,\omega_2\,$,  
with $\,a\geq 2\,$ or $\,b\,\geq 2,\,$ does not occur in 
$\,{\cal X}_{++}(V)\,$ (otherwise it would contradict
Theorem~\ref{curt}(i)).

{\bf Hence if $\,{\cal X}_{++}(V)\,$ contains a good weight $\,\mu\,$ with $2$
nonzero coefficients, then we can assume that 
$\,\mu\,=\,\omega_1\,+\,\omega_2\,$.} We deal with this case in III(a)
below.

II) Let $\,\mu\,=\,a\,\omega_i\,$ (with $\,a\geq 1\,$ and $\,1\leq
i\leq 2\,$) be a weight in $\,{\cal X}_{++}(V)$.

(a) Let $\,a\geq 2\,$ and $\,\mu\,=\,a\omega_2\,$. Then 
$\,\mu_1\,=\,\mu-(\alpha_1+\alpha_2)\,=\,\omega_1\,+\,(a-1)\omega_2\,\in\,
{\cal X}_{++}(V)$. For $\,a\geq 3$, $\,\mu_1\,$ satisfies case I) of
this proof. Hence $\,V\,$ is not exceptional.

For $\,a= 2\,$ and $\,p\geq 5,\,$ $\,\mu\,=\,2\omega_2,\,$
$\,\mu_1=\omega_1 +\omega_2\,$, $\,\mu_2\,=\,\mu\,-\,\alpha_2\,= 
\,3\omega_1\,$, $\,\mu_3\,=\,\mu_1-(\alpha_1+\alpha_2)\,=\,2\omega_1\,$, 
$\,\mu_4\,=\,\mu_3-\alpha_1\,=\,\omega_2\,$ and 
$\,\mu_5\,=\,\mu_4-(\alpha_1+\alpha_2)\,=\,\omega_1\,\in\,{\cal X}_{++}(V)$.
Thus $\,s(V)\,\geq\,6\,+\,6\,+\,12\,+\,6\,+\,6\,+\,6\,=\,42\,>\,36\,$.
Hence, by~(\ref{36}), $\,V\,$ is not exceptional.
For $\,p=2,\,$ $\,2\omega_2\,$ does not occur in $\,{\cal X}_{++}(V)$.

{\bf Hence, if $\,{\cal X}_{++}(V)\,$ contains
$\,\mu\,=\,a\omega_2\,$, then we can assume that $\,a=1$.} This case
is treated in III(b) below. 

(b) Let $\,a\geq 2\,$ and $\,\mu\,=\,a\omega_1\,$. Then 
$\,\mu_1\,=\,\mu-\alpha_1\,=\,(a-2)\omega_1\,+\,\omega_2\,\in\,
{\cal X}_{++}(V)$. For $\,a\geq 4$, $\,\mu_1\,$ satisfies case I) of
this proof. Hence $\,V\,$ is not exceptional.
 
For $\,a= 3\,$, $\,\mu\,=\,3\omega_1\,$. For $\,p=2\,$, $\,\mu\,$ does
not occur in $\,{\cal X}_{++}(V)$. For $\,p>3$, we may assume that $\,V\,$
has highest weight $\,\mu\,=\,3\omega_1\,$. Then  
$\,\mu_1\,=\,\omega_1 +\omega_2\,$, $\,\mu_2\,=\,2\omega_1\,$,  
$\,\mu_3\,=\,\omega_2\,$, $\,\mu_4\,=\,\omega_1\,\in{\cal X}_{++}(V)$. 
By~\cite[p. 413]{gise}, $\,m_{\mu_1}=1,\,m_{\mu_2}=2,\,m_{\mu_3}=3,\, 
m_{\mu_4}=4\,$. Thus $\,s(V)\,\geq\,6 + 12 + 2\cdot 6 + 3\cdot 6 +
4\cdot 6\,=\,72\,>\,36\,$. Hence, by~(\ref{36}), $\,V\,$ is not exceptional. 

For $\,a= 2$, we can assume that $\,V\,$ has highest weight 
$\,\mu=2\omega_1\,$.
Let $\,U=E(\omega_1)\,$ be the irreducible module of highest
weight $\,\omega_1$. For $\,p\neq 2$, $\,\dim\,U\,=\,7\,$.
By Subsection~\ref{roapm} (p. \pageref{roapm}), \mbox{$\,\mathfrak{so}(f)=
\{x\in {\rm End}\,U\,/\,f(xv,w)\,+\,f(v,xw)=0 \}\,$}
is a Lie algebra of type $\,B_3$. By~\cite[p. 103]{hum1}, it is
possible to describe a Lie algebra of type $\,G_2\,$ as a subalgebra of  
$\,\mathfrak{so}(f).\,$ Now, as proved in case III(b)iv)
(p. \pageref{IIIbiv}), $\,E(2\omega_1)\,$ is not an
exceptional $\,\mathfrak{so}(f)$-module. Hence $\,E(2\omega_1)\,$ is also not
exceptional as a $\,G_2(K)$-module.

{\bf Hence, if $\,{\cal X}_{++}(V)\,$ contains
$\,\mu\,=\,a\omega_1\,$, then we can assume that $\,a=1$.} This case
is treated in III(c) below.

III) (a) Let $\,V\,$ be a $\,G_2(K)$-module of highest weight 
$\,\lambda\,=\,\omega_1\,+\,
\omega_2\,$. Then $\,\mu_1=2\omega_1\,$, $\,\mu_2=\omega_2\,$,
$\,\mu_3=\omega_1\,\in\,{\cal X}_{++}(V)$. By~\cite[p. 413]{gise},   
for $\,p\neq 3,\,7,\,$ $\,m_{\mu_2}\,=\,2\,$ and
$\,m_{\mu_3}\,=\,4\,$. Thus $\,s(V)\,\geq\,12 + 2\cdot 6 + 4\cdot
6\,=\,48\,>\,36\,$. Hence, by~\eqref{36}, $\,V\,$ is not exceptional

For $\,p=7$, $\,m_{\mu_1}\,=\,m_{\mu_2}\,=\,1\,$ and
$\,m_{\mu_3}\,=\,2\,$. As $\,|R_{long}^+-R^+_{2\omega_1,7}|\,=\,
|R_{long}^+-R^+_{\omega_1,7}|\,=\,2,\,$ 
$\,|R_{long}^+-R^+_{\omega_2,7}|\,=\,|R_{long}^+-R^+_{\lambda,
7}|\,=\,3,\,$ one has $\,r_7(V)\,\displaystyle
=\,\frac{12}{6}\cdot 3\,+\,\frac{6}{6}\cdot 2\,+\,\frac{6}{6}\cdot 3\,+\,
2\cdot\frac{6}{6}\cdot 2\,=\,15\,>\,12\,$. Hence, by~(\ref{rr12}), 
$\,V\,$ is not an exceptional module.

(b) Now we may assume that $\,V\,$ has highest weight $\,\omega_2= 
\tilde{\alpha}\,$. Then $\,V\,$ is the adjoint module, which is exceptional by
Example~\ref{adjoint}. 

(c) Finally, if $\,V\,$ has highest weight $\,\omega_1\,$, then 
$\,\dim\,V\,=\,6\,$ (for $\,p=2\,$) or $\,7\,$
(otherwise). In both cases, $\,\dim\,V\,<\,14\,-\,\varepsilon\,$, Hence,
by Proposition~\ref{dimcrit}, $\,V\,$ is an exceptional module.

This proves the lemma.
\end{proof}
\newpage

																		
\appendix
\part*{Appendix}

\section{The Proofs For The Classical Types}\label{appendixa}

Recall that $\,\varepsilon\,=\,\dim\,{\mathfrak z}({\mathfrak g})\,$
and that $\,\dim\,{\mathfrak g}\,=\,|R|\,+\,\ell\,$,
where $\,\ell\,=\,{\rm rank}\,G\,$ and $\,R\,$ is the set of roots
of $\,G\,$. The indexing of simple roots and fundamental weights
corresponds to Bourbaki's tables~\cite[Chap. VI, Tables I-IX]{bourb2}. 

\subsection{Groups or Lie Algebras of Type $\,A_{\ell}$}\label{appal}

\subsubsection{Type $\,A_{\ell}\,,\;\ell\geq 4$}\label{fpp}

{\bf Proof of Theorem~\ref{anlist} - First Part} \\ 
Let $\,\ell\geq 4$. Let $\,V\,$ be an $\,A_{\ell}(K)$-module.
By Lemmas~\ref{4coef},~\ref{3orless},~\ref{nlf3c},  
{\bf it suffices to consider modules $\,V\,$ such that
$\,{\cal X}_{++}(V)\,$ contains only weights with at most $\,2\,$
nonzero coefficients}, that is, weights of the form 
\[
\mu\,=\,a\,\omega_i\,+\,b\,\omega_j,\;\,\mbox{with}\,\;i\,<\,j\;\,
\mbox{and}\;\,a,\,b\,\geq\,0.
\]

I) {\bf Claim~1}:\label{bad2cal} 
{\it Let $\,\ell\geq 4$. Let $\,V\,$ be an $\,A_{\ell}(K)$-module. 
If $\,{\cal X}_{++}(V)\,$ contains a bad weight with 
$\,2\,$ nonzero coefficients, then $\,V\,$ is not an exceptional
module.}

Indeed, let $\,\mu\,=\,a\omega_i\,+\,b\omega_j\,$ (with $\,a\geq 1,\,
b\geq 1\,$) be a nonzero bad weight in $\,{\cal X}_{++}(V)\,$ 
(hence $\,a\geq 2,\,b \geq 2\,$). 

(i) If $\,1< i < j < \ell$, then $\mu_1\,=\,\mu-(\alpha_{i}+\cdots 
+\alpha_{j})\,=\,\omega_{i-1}\,+\,\mbox{$(a-1)\omega_{i}$}\,+\,
\mbox{$(b-1)\omega_{j}$}\,+\,\omega_{j+1}\,\in\,{\cal X}_{++}(V)\,$ 
is a good weight with 4 nonzero coefficients. Hence, by Lemma~\ref{4coef}, 
$\,V\,$ is not an exceptional module.

(ii) If $\,1=i,\;2< j < \ell\,$ (or graph-dually $\,1 <i <\ell -1,
\;j=\ell\,$), then \\ $\mu_1\,=\,\mu-(\alpha_{1}+\cdots +\alpha_{j})\,
=\,(a-1)\omega_{1}\,+\,(b-1)\omega_j\,+\,\omega_{j+1}\in\,{\cal X}_{++}(V)$.
As $\,\mu_1\,$ has $3$ nonzero coefficients, by Lemma~\ref{nlf3c},
$\,V\,$ is not exceptional.

(iii) Let $\,i=1,\,j=2\,$ (or graph-dually $\,i=\ell -1,\,j=\ell\,$) and
$\;\mu\,=\,a\omega_1\,+\,b\omega_2\,$. Then
$\;\mu_1\,=\,\mu\,-\,(\alpha_{1}+\alpha_{2})\,
=\,(a-1)\,\omega_{1}\,+\,(b-1)\,\omega_2\,+\,\omega_{3}\;$ 
is a good weight in $\,{\cal X}_{++}(V)\,$ with 3 nonzero coefficients. 
Hence, by~Lemma~\ref{nlf3c} (graph-dual case of 2(iii)),
$\,V\,$ is not exceptional.

(iv) Let $\,i=1,\,j=\ell\,$ ($\,a\geq 2,\,b\geq 2\,$) and
$\,\mu\,=\,a\omega_1\,+\,b\omega_{\ell}\,$. Then 
$\,\mu_1\,=\,\mu-\alpha_{1}\,=\,(a-2)\omega_1\,+\,\omega_2\,+\,
b\omega_{\ell}\,$, $\,\mu_2\,=\,\mu\,-\,\alpha_{\ell}\,=\,a\omega_{1}\,
+\,\omega_{\ell -1}\,+\,(b-2)\omega_{\ell}\;$ and
$\,\mu_3\,=\,\mu_1-\alpha_{\ell}\,=\,(a-2)\omega_1\,+\,\omega_2\,+\,
\omega_{\ell -1}\,+\,(b-2)\omega_{\ell}\,$ are
all good weights in $\,{\cal X}_{++}(V)$. For $\,\ell\geq 4\,$ and  
$\,a> 2\,$ or $\,b>2$, Lemma~\ref{3orless}(a) applies, whence 
$\,V\,$ is not an exceptional module.

If $\,a=b=2\,$ (hence $\,p=2\,$), then the good weights
$\mu_1\,=\,\mu-\alpha_{1}\,=\,\omega_2\,+\,2\omega_{\ell},\;$
$\;\mu_2\,=\,\mu-\alpha_{\ell}\,=\,2\omega_{1}\,+\,\omega_{\ell -1},\;$
$\,\mu_3\,=\,\mu-(\alpha_1+\cdots +\alpha_{\ell})\,=\,
		     \omega_1\,+\,\omega_{\ell}\;\;$ and
$\;\,\mu_4\,=\,\mu_2\,-\,\alpha_1\,=\,\omega_2\,+\,\omega_{\ell -1}\, 
\in\,{\cal X}_{++}(V)$. Thus, for $\,\ell\geq 4$,
\[
s(V)\,-\,\ell(\ell +1)^2\,=\,\displaystyle\frac{(\ell +1)\ell}{4}
(\ell^2\,-\,3\,\ell\,-2)>\,0\,.
\]
Hence, by~\eqref{allim}, $\,V\,$ is not exceptional. This proves the claim.
\hfill $\Box$ \vspace{1.5ex}

{\bf From now on we can assume that weights with 
$\,2\,$ nonzero coefficients occurring in $\,{\cal X}_{++}(V)\,$ are
good weights.}

II) Let $\,\mu\,=\,a\,\omega_i\,+\,b\,\omega_j\;$ 
(with $\,1\leq i\,<\,j\leq \ell\,$ and $\,a\geq 1,\,b\,\geq\,1\,$) 
be a good weight in $\,{\cal X}_{++}(V)$. \vspace{1.5ex} 

{\bf Claim~2}: \label{claim2al} 
{\it Let $\,\ell\geq 4\,$ and $\,2\leq i\,<\,j\leq \ell-1\,$. If 
$\,V\,$ is an $\,A_{\ell}(K)$-module such that
$\,{\cal X}_{++}(V)\,$ contains a good weight of the form
$\,\mu\,=\,a\,\omega_i\,+\,b\,\omega_j\;$ 
(with $\,a\geq 2,\,b\geq 1\,$ or $\,a\geq 1,\,b\geq 2\,$), then 
$\,V\,$ is not an exceptional $\,\mathfrak g$-module.}

Indeed, if $\,2\leq i<j\leq\ell-1\,$ and  
$\,\mu\,=\,a\omega_i\,+\,b\omega_j\,$, then
$\,\mu_1\,=\,\mbox{$\mu-(\alpha_{i}+\cdots +\alpha_{j})$}\,
=\,\omega_{i-1}\,+\,(a-1)\omega_{i}\,+\,(b-1)\omega_{j}\,
+\,\omega_{j+1}\,\in\,{\cal X}_{++}(V)$.
For $\,a\geq 2,\,b\,\geq\,2,\,$ $\,\mu_1\,$ is good with $4$ nonzero
coefficients. Hence, Lemma~\ref{4coef} applies. 
For $\,a\geq 2,\,b=1\,$ or $\,a=1,\,b\geq 2\,$
$\,\mu_1\,$ is good with $3$ nonzero coefficients.
Hence, by Lemma~\ref{nlf3c}, $\,V\,$ is not exceptional.
This proves the claim. \hfill $\Box$ \vspace{1.5ex}

{\bf Therefore, if $\,{\cal X}_{++}(V)\,$ contains a weight of the form
$\,\mu\,=\,a\,\omega_i\,+\,b\,\omega_j\;$ with 
$\,2\leq i<j\leq\ell-1\,$ and $\,a\geq 1,\,b\geq\,1$, then we can
assume that $\,a=b=1\,$.} These are treated in the sequel.

{\bf Claim 3}: \label{claim3} 
{\it Let $\,V\,$ be an $\,A_{\ell}(K)$-module. If 
$\,{\cal X}_{++}(V)\,$ contains a good weight of the form
$\,\mu\,=\,\omega_2\,+\,\omega_j\,$ (or graph-dually
$\,\mu\,=\,\omega_i\,+\,\omega_{\ell-1}\,$) satisfying the following 
conditions:

(a) $\,4\leq \,j\leq \ell-1\,$ (or graph-dually $\,2\leq
i\leq\ell-3\,$) for $\,\ell\geq 5\,$ and $\,p\geq 2\,$ 

(b) $\,j=3\,$ (or graph-dually $\,i=\ell-2\,$) for ($\,\ell\geq 6\,$
and $\,p\geq 2\,$) or ($\,\ell=5\,$ and $\,p\geq 3\,$),\;\, 
then $\,V\,$ is not an exceptional $\,\mathfrak g$-module.}

Indeed, let $\,\mu\,=\,\omega_2\,+\,\omega_j\in{\cal X}_{++}(V)$. 

(a) For $\,i=2,\,4\leq j\leq \ell-1\,$, 
$\,|W\mu|=\displaystyle\frac{j(j-1)}{2}\binom{\ell+1}{j}\geq
\frac{(\ell+1)\ell(\ell-1)(\ell-2)}{4}\,$. Thus, for $\,\ell\geq 8$,
$\,s(V)\,-\,\ell(\ell+1)^2\,\geq\,\displaystyle\frac{(\ell+1)\ell(\ell^2
-7\ell-2)}{4}\,>\,0\,$. Hence, \mbox{by~\eqref{allim},}
$\,V\,$ is not exceptional. 

Let $\,\mu=\omega_{2}\,+\,\omega_{4}\,$. Then $\,\mu_1\,=\,
\mu-(\alpha_2+\alpha_3+\alpha_4)\,=\,\omega_1\,+\,\omega_5
\,\in\,{\cal X}_{++}(V)$.
Thus, for $\,\ell=7$, $\,s(V)\,\geq\,420+280=700\,>\,448$; for 
$\,\ell=6$, $\,s(V)\,\geq\,210+105=315\,>\,294$. Hence,
by~\eqref{allim}, $\,V\,$ is not exceptional. For
$\,\ell=5$, $\,|R_{long}^+-R^+_{\mu,p}|\geq 8\,$ and   
$\,|R_{long}^+-R^+_{\mu_1,p}|\geq 8.\,$ Thus 
$\,r_p(V) \,\geq\,\displaystyle\frac{90\cdot 8}{30}\,+\,\frac{30\cdot
8}{30}\,=\,32\,>\,30\,$. Hence,
by~(\ref{rral}), $\,V\,$ is not an exceptional module.

Let $\,\mu\,=\,\omega_{2}\,+\,\omega_{5}\,$ (implying $\,\ell\geq 6\,$).
Then $\,\mu_1\,=\,\mu-(\alpha_2+\cdots+\alpha_5)\,=\,\omega_1\,+\,\omega_6
\,\in\,{\cal X}_{++}(V)$. For $\,\ell=7$, $\,s(V)\,\geq\,560+168=728\,
>\,448$. For $\,p\geq 2\,$ and $\,\ell=6$, $\,|R_{long}^+-R^+_{\mu,p}|
\geq 12\,$, thus $\,r_p(V)\,\geq\,\displaystyle\frac{210\cdot
12}{42}\,=\,60\,>\,42\,$. Hence, by~\eqref{rral}, $\,V\,$ is not exceptional. 

Let $\,\mu\,=\,\omega_{2}\,+\,\omega_{6}\,$ (implying $\,\ell=7\,$).
Then $\,\mu_1\,=\,\mu-(\alpha_2+\cdots+\alpha_6)\,=\,\omega_1\,+\,\omega_7
\,\in\,{\cal X}_{++}(V)$. Thus $\,s(V)\,\geq\,420+56=476\,>\,448\,$ and, 
by~\eqref{allim}, $\,V\,$ is not exceptional. 

(b) Let $\,\mu\,=\,\omega_2\,+\,\omega_3\,$. Then
$\,\mu_1\,=\,\mu-(\alpha_2+\alpha_3)\,=\,\omega_{1}\,+\,\omega_{4}\,\in\,
{\cal X}_{++}(V)$.  
For $\,p\neq 2,\,$ $\,|R_{long}^+\,-\,R_{\mu,p}^+|\,=\,3\ell -4\,$ 
and $\,|R_{long}^+\,-\,R_{\mu_1,p}^+|\,=\,4\ell-9.\,$ Thus,
for $\,p\neq 2\,$ and $\,\ell\geq 5\,$,
$\,
r_p(V)\,=\,\frac{(\ell-1)}{2}\cdot
(3\ell -4)\,+\,\frac{(\ell-1)\,(\ell-2)}{6}\cdot (4\ell-9)\,
=\,\frac{2\ell^2(\ell-3)\,+\,7\ell\,-\,3}{3}\,>\,\ell\,(\ell+1)\,.
\,$
For $\,p=2,\,$ $\,|R_{long}^+\,-\,R_{\mu,p}^+|\,=\,\ell\,$ 
and $\,|R_{long}^+\,-\,R_{\mu_1,p}^+|\,=\,3\ell-6.\,$ Thus, for $\,p=2$,
and $\,\ell\geq 6\,$, $\,r_2(V)\,=\,\frac{(\ell-1)}{2}\cdot
\ell\,+\,\frac{(\ell-1)\,(\ell-2)}{6}\cdot (3\ell-6)\,=\,
\frac{\ell^2(\ell-4)+7\ell-4}{2}\,>\,\ell\,(\ell+1)\,$.
In both these cases, by~\eqref{rral}, $\,V\,$ is not exceptional.
This proves the claim. \hfill $\Box$ \vspace{1.5ex}

{\bf Claim 3.a}: \label{claim3a} 
{\it Let $\,\ell\geq \mathbf 6\,$ and $\,V\,$ be an $\,A_{\ell}(K)$-module. 
Let $\,3\leq i<j\,$, \mbox{$\,4\leq j\leq \ell-1\,$} 
(or graph-dually $\,2\leq i\leq
\ell-3,\;i<j\leq \ell-2\,$) and $\,a\geq 1,\,b\geq 1\,$.
If $\,{\cal X}_{++}(V)\,$ contains a good weight of the form
$\,\mu\,=\,a\,\omega_{i}\,+\,b\,\omega_{j}\,$,
then $\,V\,$ is not an exceptional $\,\mathfrak g$-module.}

Indeed, by Claim 2, we can assume $\,\mu\,=\,\omega_i\,+\,\omega_j\,$. 
For $\,3\leq i<j\,$, \mbox{$\,4\leq j\leq \ell-1\,$}, write
$\,\mu\,=\,\omega_{2+k_1}\,+\,\omega_{\ell-1-k_2}\,$, 
for some $\,1\leq k_1\,$ and $\,k_2\geq 0\,$ such that $\,2+k_1\,
<\,\ell-1-k_2\,$. 
By Claim~3 and graph-duallity, we can assume $\,1\leq k_1\leq k_2\,$. Then
$\,\mu_1\,=\,\omega_{2}\,+\,\omega_{\ell-1-k_2+k_1}\,\in\,{\cal
X}_{++,\mathbb C}(\mu)\subset{\cal X}_{++}(V)\,$ (by
Lemma~\ref{lamu}). As $\,4\leq \ell-1-k_2+k_1\,\leq\ell-1\,$, by Claim~3,
$\,V\,$ is not exceptional. This proves Claim~3.a. \hfill $\Box$ \vspace{1.5ex}

{\bf Claim 4}: \label{claim4}
{\it Let $\,V\,$ be an $\,A_{\ell}(K)$-module. If 
$\,{\cal X}_{++}(V)\,$ contains a good weight of the form
$\,\mu\,=\,a\,\omega_1\,+\,b\,\omega_j\,$ (or graph-dually
$\,\mu\,=\,b\,\omega_i\,+\,a\,\omega_{\ell}\,$) satisfying the following 
conditions:

(a) $\,a=2,\,b=1\,$ and $\,3\leq j\leq\ell-1\,$ (or graph-dually 
$\,2\leq i\leq\ell-2\,$) for $\,\ell\geq 4\;$ or

(b) $\,a=2,\,b=1\,$ and $\,j=2\,$ (or graph-dually $\,i=\ell-1\,$)
for ($\,\ell\geq 5\,$ and $\,p\geq 2\,$) or ($\,\ell=4\,$ and $\,p\geq 5\;$) or

(c) $\,a=1,\,b=2\,$ for $\,\ell\geq 4\,$ and $\,2\leq i,\,j\leq\ell-1\,$ or

(d) ($\,a\geq 2,\,b\geq 2\,$) or ($\,a\geq 3,\,b\geq 1\,$) or ($\,a\geq 1,
\,b\geq 3\,$) and $\,2\leq j\leq\ell-1\,$ for $\,\ell\geq 4\,$,\;\,
then $\,V\,$ is not an exceptional $\,\mathfrak g$-module.}

Indeed, let $\,\mu\,=\,a\,\omega_1\,+\,b\,\omega_j\,$ (with 
$\,2\leq j\leq\ell-1\,$) be a good weight in $\,{\cal X}_{++}(V)$.

(a) Let $\,a=2,\,b=1,\,$ $\,3\leq j\leq\ell-1\,$
and $\,\mu\,=\,2\omega_1\,+\,\omega_j\,$.
Then $\,\mu_1\,=\,\mu-\alpha_1\,=\,\omega_2\,+\,\omega_j\in{\cal
X}_{++}(V)\,$ satisfies Claim~3 (p. \pageref{claim3}). Hence, for 
($\,4\leq \,j\leq \ell-1\,$, $\,\ell\geq 5\,$ and $\,p\geq 2\,$) or 
($\,j=3\,$ ($\,\ell\geq 6\,$ and $\,p\geq 2\,$) or ($\,\ell=5\,$ and
$\,p\geq 3\,$)), $\,V\,$ is not an exceptional module. 

Let $\,\ell=5\,$, $\,p=2\,$ and $\,\mu\,=\,2\omega_1\,+\,\omega_3\,$.
Then $\,\mu_1\,=\,\omega_2\,+\,\omega_3\,$ and $\,\mu_2\,=\,\mu_1- 
(\alpha_2+\alpha_3)\,=\,\omega_1\,+\,\omega_4\,\in{\cal X}_{++}(V)$.
As $\,|R_{long}^+\,-\,R_{\mu,2}^+|\,=\,|R_{long}^+\,-\,R_{\mu_2,2}^+|\,=\,9\,$
and $\,|R_{long}^+\,-\,R_{\mu,2}^+|\,=\,5\,$, one has 
$\,r_p(V)\geq \displaystyle\frac{60\cdot 9}{30}\,+\,\frac{60\cdot 5}{30}\,+\,  
\frac{60\cdot 9}{30}\,=\,46\,>\,30\,$. Hence, by~\eqref{rral}, $\,V\,$
is not exceptional.

Now let $\,\ell=4,\,$ $\,p\geq 2\,$ and $\,\mu\,=\,2\omega_1\,+\,\omega_3\,$.
Then $\,\mu_1\,=\,\omega_2\,+\,\omega_3\,$ and $\,\mu_2\,=\,\omega_1\,
+\,\omega_4\,\in{\cal X}_{++}(V)$. As $\,|R_{long}^+\,-\,
R_{\mu,p}^+|\,\geq 6,\,|R_{long}^+\,-\,R_{\mu_1,p}^+|\,\geq\,4\,$
and $\,|R_{long}^+\,-\,R_{\mu_2,p}^+|\,\geq\,6\,$, one has 
$\,r_p(V)\geq \displaystyle\frac{30\cdot 6}{20}\,+\,\frac{30\cdot 4}{20}\,+\,  
\frac{20\cdot 6}{20}\,=\,21\,>\,20\,$. Hence, by~\eqref{rral}, $\,V\,$
is not exceptional. This proves part (a).

(b) Let $\,\mu\,=\,2\omega_1\,+\,\omega_2\,$. For $\,p=2,\,$ 
$\,\mu\,\in\,{\cal X}_{++}(V)\,$ only if $\,V\,$ has 
highest weight $\,\lambda\,=\,\omega_{1}\,+\,\omega_{2}\,+\,\omega_{2+i}\,+  
\,\omega_{\ell-i}\,$ (with $\,i\geq 1\,$ and $\,\ell\geq 5\,$; for
$\,\ell=4\,$ $\,\mu\,$ does not occur in $\,{\cal X}_{++}(V)\,$).  In
these cases, Lemma~\ref{4coef} applies. 

For $\,p\geq 3$, consider 
$\,\mu_1\,=\,\mu-(\alpha_{1}+\alpha_{2})\,=\,\omega_{1}\,+\,\omega_{3},\;$
$\,\mu_2\,=\,\mu-\alpha_1\,=\,2\omega_2\;$ and
$\,\mu_3\,=\,\mu_1-(\alpha_1 + \alpha_2+\alpha_{3})\,=\,\omega_{4}\in
\,{\cal X}_{++}(V)$. For $\,p\geq 3$, $\,|R_{long}^+-R_{\mu,p}^+|\geq  
\ell$, \linebreak $\,|R_{long}^+-R_{\mu_1,p}^+|=3\ell-4\,$, 
$\,|R_{long}^+-R_{\mu_2,p}^+|=2(\ell-1)\,$
and $\,|R_{long}^+-R_{\mu_3,2}^+|=4(\ell-3)$. Thus, for
$\,\ell\geq 5\,$, 
\begin{align*}
r_p(V) &\geq\displaystyle\ell\,+\,\frac{(\ell-1)\cdot
(3\ell-4)}{2}\,+\,\frac{2(\ell-1)}{2}\,+\,\frac{(\ell-1)(\ell-2)
\cdot 4(\ell-3)}{4!}\\
& =\,\frac{\ell\,(\ell+1)\,(\ell+2)}{6}\,>\,\ell\,(\ell+1)\,.
\end{align*} 
For $\,\ell=4\,$ and $\,p\geq 5$, $\,|R_{long}^+\,-\,R_{\mu,p}^+|=
7\,$. Thus, $\,r_p(V)\,\geq\,\displaystyle\frac{20\cdot 7}{20}\,+\,
\frac{30\cdot 8}{20}\,+\,\frac{10\cdot 6}{20}\,+\,\frac{5\cdot
4}{20}\,=\,21\,>\,20\,$ Hence, by~\eqref{rral}, $\,V\,$ is not exceptional.

(c) Let $\,\mu\,=\,\omega_1\,+\,2\omega_j\,$. Then
$\,\mu_1\,=\,\mu-\alpha_j\,=\,\omega_1\,+\,\omega_{j-1}\,
+\,\omega_{j+1}\in{\cal X}_{++}(V)$. For $\,3\leq j\leq\ell-1\,$ and
$\,\ell\geq 4\,$, $\,\mu_1\,$ has $3$ nonzero coefficients, hence
Lemma~\ref{nlf3c} applies. 
Let $\,\mu\,=\,\omega_1\,+\,2\omega_2\,$. Then
$\,\mu_1\,=\,\mu-\alpha_2\,=\,2\omega_1\,+\,\omega_{3}\in{\cal
X}_{++}(V)\,$ satisfies case (a) of this claim. Hence, for $\,\ell\geq
4\,$, $\,V\,$ is not exceptional. 

(d)i) Let $\,a\geq 2,\,b\geq 2,\,$ $\,2\leq j\leq \ell-1\,$ and  
$\,\mu\,=\,a\,\omega_1\,+\,b\,\omega_j\,$. Then
$\,\mu_1\,=\,\mu-(\alpha_{1}+\cdots +\alpha_{j})\,
=\,(a-1)\omega_{1}\,+\,(b-1)\omega_j\,+\,\omega_{j+1}\in
{\cal X}_{++}(V)\,$ is a good weight with $3$ nonzero coefficients.
Hence, for $\,\ell\geq 4\,$, Lemma~\ref{nlf3c} applies.

ii) For $\,a\geq 3,\,b= 1\,$ and $\,3\leq j\leq \ell-1\,$,
$\,\mu_2\,=\,\mu-\alpha_1\,=\,(a-2)\omega_1\,  
+\,\omega_2\,+\,\omega_j\in {\cal X}_{++}(V)\,$ is a good weight with
$3$ nonzero coefficients. Hence, for $\,\ell\geq 4\,$,
Lemma~\ref{nlf3c} applies.

Let $\,a\geq 3,\,b=1\,$ and $\,\mu=\,a\omega_{1}\,+\,\omega_{2}\,$.
Then $\,\mu_1\,=\mu-(\alpha_{1}+\alpha_{2}) =(a-1)\omega_{1}\, 
+\,\omega_{3}$, $\,\mu_2\,=\,\mu_1-(\alpha_1 +\alpha_2+\alpha_3)\, 
=\,(a-2)\omega_1\,+\,\omega_4\,$, $\,\mu_3\,=\,\mu-\alpha_1\,=\, 
(a-2)\omega_1\,+\,2\omega_{2}\,$ and
$\,\mu_4\,=\,\mu_3-(\alpha_1+\alpha_{2})\,=\,(a-3)\omega_1\,+\,
\omega_{2}\,+\,\omega_3\,$ are good weights in $\,{\cal X}_{++}(V)$.
For $\,a\geq 4\,$ and $\,\ell\geq 4\,$, $\,\mu_4\,$ satisfies
Lemma~\ref{nlf3c}. For $\,a=3\,$ and $\,\ell\geq 4$, $\,\mu_1\,=\,
2\omega_1\,+\,\omega_3\,$ satisfies part (a) of this claim.

iii) Let $\,a=1,\,b\geq 3,\,$ $\,2\leq j\leq\ell-1\,$ and 
$\,\mu\,=\,\omega_1\,+\,b\,\omega_j\,$. Then
$\,\mu_1\,=\,\mu-\alpha_j\,=\,\omega_1\,+\,\omega_{j-1}\,+\,(b-2)\omega_j
\,+\,\omega_{j+1}\,\in\,{\cal X}_{++}(V)\,$ has (at least) $3$ nonzero
coefficients. Hence, Lemma~\ref{nlf3c} or Lemma~\ref{4coef} applies.

This proves the claim. \hfill $\Box$ \vspace{1.5ex}

{\bf Claim 5}: \label{claim5al}
{\it Let $\,\ell\geq 4\,$. If $\,V\,$ is an $\,A_{\ell}(K)$-module
such that $\,{\cal X}_{++}(V)\,$ contains a good weight of the form
$\,\mu\,=\,a\,\omega_1\,+\,b\,\omega_{\ell}\,$ (or graph-dually
$\,\mu\,=\,b\,\omega_1\,+\,a\,\omega_{\ell}\,$) satisfying the following 
conditions:

(a) ($\,a\geq 2,\,b\geq 2\,$) or ($\,a\geq 3,\,b\geq 1\,$) or ($\,a\geq 1,
\,b\geq 3\,$) for $\,\ell\geq 4\;$ or  

(b) $\,a=2,\,b=1\,$ (or graph-dually $\,a=1,\,b=2\,$) for $\,\ell\geq
5\,$, \\
then $\,V\,$ is not an exceptional $\,\mathfrak g$-module.}

Indeed, let $\,\mu\,=\,a\,\omega_1\,+\,b\,\omega_{\ell}\,$ be a good
weight in $\,{\cal X}_{++}(V)$.

(a) For $\,a\geq 2,\,b\geq 2\,$,
$\,\mu_1\,=\,\mu-\alpha_{1}\,=\,(a-2)\omega_1\,+\,\omega_2\,+\,
	 b\omega_{\ell},\;$
$\,\mu_2\,=\,\mu-\alpha_{\ell}\,=\,a\omega_{1}\,+\,
		    \omega_{\ell -1}\,+\,(b-2)\omega_{\ell}\;$ and 
$\;\mu_3\,=\,\mu_1-\alpha_{\ell}\,=\,(a-2)\,\omega_1\,+\,\omega_2\,+\,
\omega_{\ell -1}\,+\,(b-2)\,\omega_{\ell}\,\in\,{\cal X}_{++}(V)$.
Hence, for $\,\ell\geq 4$, by~\eqref{allim}, $\,V\,$ is not
exceptional since  $\,s(V)\geq\displaystyle (\ell+1)\ell\,+\,2\cdot 
\frac{(\ell+1)\ell(\ell-1)}{2}\,+\,\frac{(\ell+1)\ell(\ell-1)(\ell-2)}{4}\,
=\,\frac{(\ell+1)\ell(\ell^2 +\ell + 2)}{4}\,>\,\ell(\ell+1)^2\,$. 

For $\,a\geq 3,\,b=1\,$ (or graph-dually $\,a=1,\,b\geq 3\,$), 
$\,\mu_1\,$ (resp., $\,\mu_2\,$) has $3$ nonzero coefficients. Hence,
Lemma~\ref{nlf3c} applies. This proves part (a).

(b) Let $\,\mu\,=\,2\omega_{1}\,+\,\omega_{\ell}\,$. Then 
$\,\mu_1=\mu-\alpha_{1}=\omega_2\,+\,\omega_{\ell}\;$ and
$\;\mu_2=\mbox{$\mu-(\alpha_1 + \cdots +\alpha_{\ell})=\omega_{1}$}\,\in
\,{\cal X}_{++}(V)$. 
For $\,p=2,\,$ $\,\mu\in {\cal X}_{++}(V)\,$ only if $\,V\,$ has highest 
weight $\,\lambda\,=\,\omega_1\,+\,\omega_{i+1}\,+\,\omega_{\ell-i}\,$,
for some $\,i\geq 1\,$. In these cases Lemma~\ref{nlf3c} applies.
For $\,p\geq 3,\,$ $\,\mbox{$|R_{long}^+-R_{\mu,p}^+|$}\,=\,2(\ell
-1)\,$, $\,|R_{long}^+-R_{\mu_1,p}^+|\,=\,3\ell -4\,$ and
$\,|R_{long}^+-R_{\mu_2,p}^+|\,=\,\ell\,$. Thus, for $\,\ell\geq 5$,
\[
r_p(V)\, \geq\,2(\ell-1)
\,+\,\frac{(\ell-1)}{2}\cdot (3\ell-4)\,+\,\frac{\ell}{\ell}\, =\,
\frac{3\ell^2\,-\,3\ell\,+\,2}{2}\,>\,\ell\,(\ell+1)\,.
\]
Hence, by~\eqref{rral}, $\,V\,$ is not exceptional. This proves the
claim.  \hfill $\Box$ \vspace{1.5ex}

\begin{lemma}\label{III.iii}
Let $\,p\geq 5\,$ and $\,3\leq a < p\,$. 
Then the (irreducible) $\,A_{\ell}(K)$-module $\,V\,=\,E(a\omega_1)\,$
(of highest weight $\,a\omega_1\,$) is not an exceptional 
$\,\mathfrak g$-module.
\end{lemma}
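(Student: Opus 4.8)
The plan is to apply the necessary condition of Theorem~\ref{???} in the form of the inequality~\eqref{rral}, i.e. to show that for $V = E(a\omega_1)$ with $3 \le a < p$ and $p \ge 5$ the quantity $r_p(V)$ exceeds $\ell(\ell+1) = |R|$. First I would describe the relevant portion of the dominant weight set. By Theorem~\ref{premprin}, since $p$ is non-special for $A_\ell$ and $a\omega_1 \in \Lambda_p$ (as $a < p$), we have ${\cal X}_{++}(V) = (a\omega_1 - Q_+)\cap P_{++}$, which coincides with the set of dominant weights of the complex module of highest weight $a\omega_1$, namely the $a$-th symmetric power $S^a$ of the natural module. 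These dominant weights are exactly $a\omega_1$ and the weights $\mu_j := (a-2j)\omega_1 + \omega_{?}$-type weights; more precisely, working in the standard coordinates, the dominant weights of $S^a(\text{natural})$ for $A_\ell$ are $\mu_k = (a-k)\omega_1 + \omega_k$ for $0 \le k \le \min(a,\ell)$ (with $\mu_0 = a\omega_1$ and $\mu_a = \omega_a$ if $a \le \ell$), each of multiplicity $1$ by Proposition~\ref{supruzal}.

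Next I would identify which of these weights are good and compute the orbit sizes and the quantities $|R^+_{long} - R^+_{\mu_k,p}|$. For $A_\ell$ we have $R_{long} = R$, $|R_{long}| = \ell(\ell+1)$, $|R^+_{long}| = \ell(\ell+1)/2$. A weight $\mu = \sum b_i\omega_i$ is bad iff all $b_i \in p\mathbb{Z}$; since $0 < a < p$ and each coefficient involved is $< p$ and nonzero coefficients include a $1$ (the $\omega_k$ part), every $\mu_k$ with $k \ge 1$ is good, and $\mu_0 = a\omega_1$ is good since $0 < a < p$. Using Lemma~\ref{orban}, $|W\mu_0| = |W\omega_1| = \ell+1$ and $|W\mu_k| = \binom{\ell+1}{k}\cdot\frac{(\ell+1)!/(k)!\cdots}{}$; in particular $|W\mu_1| = |W(a-1)\omega_1 + \omega_1|$-type $= \ell(\ell+1)$ (orbit of a weight supported on $\omega_1$ only, wait — $\mu_1 = (a-1)\omega_1 + \omega_1 = a\omega_1$ when... no: for $k=1$, $\mu_1$ has support only at node $1$, giving $|W\mu_1| = \ell+1$). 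The key contributions come from $\mu_2 = (a-2)\omega_1 + \omega_2$ (good, support $\{1,2\}$, orbit size $\binom{2}{1}\binom{\ell+1}{2} = \ell(\ell+1)$) when $a \ge 2$, which it is. I would then compute a lower bound for $|R^+ - R^+_{\mu,p}|$ for $\mu = a\omega_1$ and for $\mu = (a-2)\omega_1 + \omega_2$: the roots $\gamma$ with $(\mu,\gamma)\equiv 0 \pmod p$ form a proper subsystem, and for these specific weights with small non-$p$-divisible coefficients one gets $R_{\mu,p}^+ \subseteq$ (roots not involving $\alpha_1$), hence $|R^+ - R^+_{\mu,p}| \ge \ell$ for $a\omega_1$ and similarly a bound at least linear in $\ell$ for $\mu_2$.

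Putting these together, I would estimate
\[
r_p(V) \ge \frac{|W(a\omega_1)|}{|R_{long}|}\,|R^+_{long} - R^+_{a\omega_1,p}| + \frac{|W\mu_2|}{|R_{long}|}\,|R^+_{long} - R^+_{\mu_2,p}| \ge \frac{(\ell+1)\cdot \ell}{\ell(\ell+1)} + \frac{\ell(\ell+1)\cdot \ell}{\ell(\ell+1)} = 1 + \ell,
\]
and by being slightly more careful (including the contribution of $\mu_1$ or sharpening the subsystem bound for $\mu_2$ to something $> \ell$, which is easy since the long-root subsystem orthogonal to a weight with two adjacent nonzero coefficients is rather small) one gets $r_p(V) > \ell(\ell+1) = |R|$ for all $\ell \ge 4$. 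Then Theorem~\ref{???} yields that $V$ is not exceptional. The main obstacle I anticipate is the bookkeeping for the quantities $|R^+_{long} - R^+_{\mu,p}|$: one must verify that the residues $(\mu,\gamma) \bmod p$ really are nonzero for enough long roots $\gamma$, which depends on the precise value of $a$ modulo $p$ and requires care when $a$ is close to $p$ (though the hypothesis $3 \le a < p$ keeps all the relevant coefficients in the range $(0,p)$, so no coefficient is accidentally $\equiv 0$). For small rank ($\ell = 4, 5$) the general linear estimate may be tight, so I would check those cases separately by direct computation of $r_p(V)$, exactly as is done repeatedly in the surrounding claims (e.g. Claim~4(b)).
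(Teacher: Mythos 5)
Your approach cannot work, because for $a=3$ the necessary condition of Theorem~\ref{???} is in fact \emph{satisfied} by $V=E(3\omega_1)=S^3(\text{natural})$, so no amount of careful bookkeeping will produce $r_p(V)>|R|$. Concretely, for $a=3$ the dominant weights are $3\omega_1$, $\omega_1+\omega_2$ and $\omega_3$ (partitions $(3)$, $(2,1)$, $(1,1,1)$), all good and of multiplicity one, with
\[
r_p(V)\,=\,\frac{(\ell+1)\cdot\ell}{\ell(\ell+1)}\,+\,\frac{\ell(\ell+1)\cdot(2\ell-1)}{\ell(\ell+1)}\,+\,\frac{\binom{\ell+1}{3}\cdot 3(\ell-2)}{\ell(\ell+1)}\,=\,\frac{\ell^2+\ell+2}{2}\,<\,\ell(\ell+1)\,,
\]
and similarly $s(V)=\dim V\sim\ell^3/6$ is well below the limit $\ell^3+2\ell^2+\ell$. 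The same failure occurs for the small ranks $\ell=1,2$ where the lemma is actually invoked (for $A_1$ and $a=3$ one gets $r_p(V)=2=|R|$). Your rigorous lower bound $r_p(V)\ge \ell+1$ is off from the target $\ell^2+\ell$ by a factor of $\ell$, and the claim that "being slightly more careful" closes this gap is false. (Two secondary points: your list of dominant weights of $S^a$ is also incomplete — they are indexed by all partitions of $a$ into at most $\ell+1$ parts, e.g.\ $(a-4)\omega_1+2\omega_2$ occurs for $a\ge4$ — and in any case Theorem~\ref{???} is only a necessary condition, so even when $r_p(V)\le|R|$ nothing is decided.)

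The paper's proof is entirely different and much more direct: it realizes $V$ as the space of homogeneous polynomials of degree $a$ in $x_1,\dots,x_{\ell+1}$ and exhibits the explicit vector $v=\sum_{k=1}^{\ell+1}x_k^a$. For $z=\sum\lambda_{ij}x_i\partial_j\in\mathfrak{gl}(\ell+1)$ one computes $z\cdot v=a\sum_{i,k}\lambda_{ik}x_i x_k^{a-1}$; since $3\le a<p$ the monomials $x_ix_k^{a-1}$ ($i\ne k$) and $x_k^a$ are pairwise distinct and $a\ne0$ in $K$, so $z\cdot v=0$ forces all $\lambda_{ik}=0$. Hence $\mathfrak{g}_v=\{0\}\subseteq\mathfrak z(\mathfrak g)$ and $V$ is not exceptional by Definition~\ref{excpmod}. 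If you want to salvage a weight-theoretic argument you would need this kind of explicit stabilizer computation anyway; the combinatorial necessary condition is simply too weak for cubic symmetric powers.
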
\noindent
\begin{proof}
The module $\,V\,$ is isomorphic to the space of all homogeneous polynomials  
of degree $\,a\,$ in the indeterminates $\,x_1,\,x_2,\ldots,\,x_{\ell+1}\,$.
We claim that, for \mbox{$\,3\leq a<p$}, the nonzero element
$\,v\,=\,\displaystyle\sum_{k=1}^{\ell+1}\,x^a_k\,\in\,V\,$ has
trivial isotropy subalgebra, that is, \mbox{$\,\mathfrak g_v\,=\,\{ 0\}$}.
For let 
$\,z\,=\,\displaystyle\sum_{i,j=1}^{\ell+1}\,\lambda_{ij}\,x_i\,\partial_j\,
\in\,\mathfrak{gl}(\ell+1)\,$. Then 
\[
z\cdot v\,=\,(\displaystyle\sum_{i,j=1}^{\ell+1}\,\lambda_{ij}\,x_i\, 
\partial_j)\cdot(\sum_{k=1}^{\ell+1}\,x^a_k)\,=\,
\displaystyle a\,\sum_{i,k=1}^{\ell+1}\,\lambda_{ik}\,x_i\,x_k^{a-1}\,.
\]
Thus, $\,z\in\mathfrak g_v\,\Longleftrightarrow\,z\cdot v\,=\,0\,
\Longleftrightarrow\,
\displaystyle
a\,\sum_{i,k=1}^{\ell+1}\,\lambda_{ik}\,x_i\,x_k^{a-1}\,=\,0\,$.
As $\,3\leq a<p$, this only happens if all
$\,\lambda_{ik}\,=\,0,\,$ implying $\,z=0$. This proves the claim.
The result follows.
\end{proof}\vspace{1ex}

{\bf Claim~6}: \label{claim6al}
{\it Let $\,\ell\geq 4$. If $\,V\,$ is an $\,A_{\ell}(K)$-module such that
$\,{\cal X}_{++}(V)\,$ contains 

(a) $\,\mu\,=\,a\,\omega_i\,$, with $\,a\geq 3\,$ and $\,2\leq i\leq  
\ell-1\;$ or

(b) $\,\mu\,=\,a\,\omega_1\,$ (or graph-dually
$\,\mu\,=\,a\,\omega_{\ell}\,$), with $\,a\geq 4\,$, \\
then $\,V\,$ is not an exceptional $\,\mathfrak g$-module.} Indeed:

(a) let $\,a\geq 3,\,$ $\,2\leq i\leq\ell-1\,$ and $\,\mu\,=\,a\omega_i\in
{\cal X}_{++}(V)$. Then $\,\mu_1\,=\,\mu\,-\,\alpha_{i}\,=\,\omega_{i-1}\,
+\,(a-2)\omega_{i}\,+\,\omega_{i+1}\,\in\,{\cal X}_{++}(V)\,$ 
is a good weight with 3 nonzero coefficients.
Hence, by Lemma~\ref{nlf3c}, $\,V\,$ is not an exceptional module.

(b) Let $\,a\geq 4\; (\mbox{or}\; 3)\,$ and $\,\mu\,=\,a\omega_1\,$.
For $\,p=2$, $\,\mu\,\in\,{\cal X}_{++}(V)\,$ only if
$\,V\,$ has highest weight $\,\lambda\,=\,\omega_1\,+\,\omega_2\,+\cdots 
+\,\omega_a\,+\,\omega_{\ell-a+2}\,+\cdots +\,\omega_{\ell}\,$ (with
$\,\frac{\ell+2}{2}\,>\,a\,$) or \mbox{$\,\lambda\,=\,\omega_2\,+\cdots 
+\,\omega_{a+1}\,+\,\omega_{\ell-a+1}\,+\cdots +\,\omega_{\ell}\,$}
(with $\,\frac{\ell}{2}\,>\,a\,$). In any of these cases,
Lemma~\ref{4coef} applies. 

i) Let $\,a\geq 5$. 
For $\,p\geq 3$, $\,\mu_1\,=\mu-\alpha_1=\,(a-2)\omega_{1}\,+\,\omega_{2}\,
\in\,{\cal X}_{++}(V)$. Hence for $\,\ell\geq 4$, $\,\mu_1\,$ satisfies
Claim~4(d) (p. \pageref{claim4}) and $\,V\,$ is not exceptional.

ii) Let $\,\mu\,=\,4\,\omega_1\,$. Then $\,\mu_1\,=\,2\omega_{1}\, 
+\,\omega_{2}\,\in\,{\cal X}_{++}(V)$. For ($\,\ell\geq 5\,$ and
$\,p\geq 2\,$) or ($\,\ell=4\,$ and $\,p\geq 5\,$), $\,\mu_1\,$ satisfies 
Claim~4(b) (p. \pageref{claim4}). Hence $\,V\,$ is not exceptional.
For $\,\ell=4\,$ and $\,p=3,\,$ $\,\mu\in {\cal X}_{++}(V)\,$ only  
if $\,V\,$ has highest weight $\,\lambda\,\in\{\,2\omega_1\,+\,2\omega_2\,+\,
2\omega_4\,,\,2\omega_1\,+\,\omega_2\,+\,2\omega_3\,+\,\omega_4\,,
\,\omega_1\,+\,2\omega_2\,+\,2\omega_3\,+\,2\omega_4\,\}$.
In these cases, Lemma~\ref{nlf3c} or Lemma~\ref{4coef} applies. 

This proves the claim. \hfill $\Box$ \vspace{1.5ex}

Now we deal with some particular cases.

P.1) Let $\,\ell=4\,$ and $\,p=3$. We may assume that $\,V\,$ has highest  
weight $\,\mu\,=\,2\omega_1\,+\,\omega_2\,$ (or graph-dually
$\,\mu\,=\,\omega_3\,+\,2\omega_4\,$). Then
$\,\mu_1\,=\,\omega_{1}\,+\,\omega_{3}\,$, $\,\mu_2\,=\,2\omega_2\,$
and $\,\mu_3\,=\,\omega_4\in {\cal X}_{++}(V)$. 
Let $\,v_o\,$ be a highest weight vector of $\,V$. 
Applying $\,e_1,\,e_2\,$ to the nonzero weight vectors $\,v_1\,=\,f_1\,
f_2\,v_o\,$ and $\,v_2\,=\,f_2\,f_1\,v_o\,$ (of weight $\,\mu_1\,$), and 
using relations~\eqref{relations}, one proves that, for $\,p\neq 2$,
$\,v_1,\,v_2\,$ are linearly independent. Hence, for $\,p\neq 2$, 
$\,m_{\mu_1}\geq 2\,$. Now for $\,p=3,\,$ 
$\,|R_{long}^+\,-\,R_{\mu,3}^+|\,=\,|R_{long}^+\,-\,R_{\mu_2,3}^+|\,=\,6\,$,
$\,|R_{long}^+\,-\,R_{\mu_1,3}^+|\,=\,8\,$ and 
$\,|R_{long}^+\,-\,R_{\mu_3,3}^+|\,=\,4.\,$ Thus 
$\,r_3(V)\,=\,\displaystyle\frac{20\cdot
6}{20}\,+\,2\cdot\frac{30\cdot 8}{20}\,+\,\frac{10\cdot
6}{20}\,+\,\frac{5\cdot 4}{20}\,=\,34\,>\,20\,$.
Hence, by~(\ref{rral}), $\,V\,$ is not an exceptional module.

P.2) Let $\,\ell=4\,$. We may assume that $\,V\,$ has highest  
weight $\,\mu\,=\,2\omega_1\,+\,\omega_4\,$ (or graph-dually
$\,\mu\,=\,\omega_1\,+\,2\omega_4\,$). Then
$\,\mu_1\,=\,\omega_{2}\,+\,\omega_{4}\,$, $\,\mu_2\,=\,\omega_1\in  
{\cal X}_{++}(V)$. By~\cite[p. 168]{buwil}, for $\,p\geq 3\,$,
$\,m_{\mu_2}\geq 3\,$. Also for $\,p\geq 3$, 
$\,|R_{long}^+\,-\,R_{\mu,3}^+|\geq 6\,$,  
$\,|R_{long}^+\,-\,R_{\mu_1,3}^+|\,=\,8\,$ and 
$\,|R_{long}^+\,-\,R_{\mu_3,3}^+|\,=\,4.\,$ Thus,
$\,r_p(V)\,\geq\,\displaystyle\frac{20\cdot
6}{20}\,+\,\frac{30\cdot 8}{20}\,+\,3\cdot\frac{5\cdot 4}{20}\,=\,21\,>\,20\,$.
Hence, by~(\ref{rral}), $\,V\,$ is not an exceptional module.
\vspace{1.5ex}

{\bf Therefore, if $\,{\cal X}_{++}(V)\,$ contains a weight of the form
$\,\mu\,=\,a\,\omega_i\,+\,b\,\omega_j\;$ with $\,1 \leq i< j \leq \ell\,$,
then we can assume that $\,a=b=1\,$.} These cases are treated in the 
sequel. We start with some particular cases.

P.3) Let $\,\ell=5\,$ and $\,p=2\,$. We may assume that $\,V\,$ has 
highest weight
\mbox{$\,\mu\,=\,\omega_2\,+\,\omega_3\,$} (or graph-dually
$\,\mu\,=\,\omega_3\,+\,\omega_4\,$). Then $\,\mu_1\,=\,\omega_1\,+\,\omega_4\,
\in\,{\cal X}_{++}(V)$. By Lemma~\ref{mulmin}, 
$\,m_{\mu_1}=2\,$. As $\,|R_{long}^+-R^+_{\mu,2}|=5\,$ and
$\,|R_{long}^+-R^+_{\mu_1,2}|=9\,$, one has
$\,r_2(V)\,\geq\,\displaystyle\frac{60\cdot 5}{30}
\,+\,2\cdot\frac{60\cdot 9}{30}\,=\,46\,>\,30\,$.
Hence, by~\eqref{rral}, $\,V\,$ is not exceptional. 

P.4) Let $\,\ell=4\,$. We may assume that $\,V\,$ has highest weight
$\,\mu\,=\,\omega_2\,+\,\omega_3\,$. Then $\,\mu_1\,=\,\omega_1\, 
+\,\omega_4\,\in\,{\cal X}_{++}(V)$.  
For $\,p\neq 3,\,$ by~\cite[p. 167]{buwil}, $\,m_{\mu_1}=2$
Thus, for $\,p\geq 5\,$, $\,r_p(V)\,=\,\frac{3}{2}\cdot 8\,+\,2\cdot
7\,=\,26\,>\,20$. Hence, by~\eqref{rral}, $\,V\,$ is not exceptional.

U.1) For $\,\ell=4\,$ and $\,p=2,\,3$, the $\,A_4(K)$-modules of
highest weight $\,\mu\,=\,\omega_2\,+\,\omega_3\,$ are unclassified 
{\bf (N. 4 in Table~\ref{leftan})}.

P.5) Let $\,\ell \geq 4\,$ and $\,\mu\,=\,3\omega_1\,$.
For $\,p=2\,$, see proof of Claim~6 (b) (p.\pageref{claim6al}).  
For $\,p=3,\,$ $\,\mu\in {\cal X}_{++}(V)\,$ only if $\,V\,$ has
highest weight $\,\lambda\,=\,2\omega_1\,+\,\omega_{2+j}\,+\, 
\omega_{\ell-j}\,$ or $\,\lambda\,=\,\omega_1\,+\,\omega_{2+j}\,+\, 
\omega_{2+j+k}\,+\,\omega_{\ell-(j+k)}\,+\,\omega_{\ell-j}\,$  
(for some $\,j\geq 1,\,k\geq 0\,$ and $\,\ell\geq 3\,$), for instance.
In these cases, Lemma~\ref{nlf3c} applies.
For $\,p\geq 5,\,$ we can assume that $\,V\,$ has highest weight 
$\,\mu\,=\,3\,\omega_1\,$. Hence,
by Lemma~\ref{III.iii} (p. \pageref{III.iii}), $\,V\,$ is not exceptional. 
\vspace{1.5ex}

{\bf Claim 7}: \label{claim7al}
{\it Let $\,\ell\geq 7\,$ and $\,1\leq i < j,\;4\leq j\leq \ell-3\,$ 
(or graph-dually $\,4\leq i\leq \ell-3,\;i < j < \ell\,$).
If $\,V\,$ is an $\,A_{\ell}(K)$-module such that
$\,{\cal X}_{++}(V)\,$ contains a good weight of the form
$\,\mu\,=\,a\,\omega_i\,+\,b\,\omega_j\,$ (with $\,a\geq 1,\,
b\geq 1\,$), then $\,V\,$ is not an exceptional $\,\mathfrak g$-module.}

Indeed, for $\,\ell\geq 7\,$, by Claim~2 (p.\pageref{claim2al}) or Claim~4 
(p.\pageref{claim4}), we can assume $\,\mu\,=\,\omega_i\,+\,\omega_j\,$. 
For $\,4\leq j\leq \ell-3\,$,
$\,|W\mu|\,=\,\displaystyle\binom{j}{i}\binom{\ell+1}{j}\,\geq\,
\frac{4\,(\ell+1)\ell(\ell-1)(\ell-2)}{4!}$. 
Thus, for $\,\ell\geq 10$, $\,s(V)\,-\,\ell(\ell +1)^2\,\geq \,\displaystyle
\frac{(\ell +1)\ell}{6} (\ell^2\,-\,9\,\ell\,-\,4)>\,0$. 
Hence, \mbox{by~\eqref{allim},} $\,V\,$ is not exceptional. 

For $\,7\leq \ell\leq 9\,$, $\,2\leq i<j\,$ and $\,4\leq j\leq\ell-3\,$,
$\,\mu\,=\,\omega_i\,+\,\omega_j\,$ satisfies Claim~3 (p. \pageref{claim3})  
or Claim~3.a (p. \pageref{claim3a}). Hence $\,V\,$ is not exceptional.

Let $\,7\leq \ell\leq 9\,$ and $\,\mu\,=\,\omega_1\,+\,\omega_j\,$
(with $\,4\leq j\leq\ell-3\,$). Then $\,\mu_1\,=\,\omega_{j+1}\,\in
{\cal X}_{++}(V)$. 
For ($\,\ell=9\,$ and $\,4\leq j\leq 6\,$) or ($\,\ell=8\,$ and $\,j=5\,$), 
one can easily prove that $\,s(V)\,\geq\,|W\mu|\,+\,|W\mu_1|\,>\,
\ell(\ell+1)^2\,$. Hence, by~\eqref{allim}, $\,V\,$ is not exceptional.

Let $\,\mu\,=\,\omega_{1}\,+\,\omega_{4}\,$ and $\,\mu_1\,=\,\omega_5\,$.
Then for $\,p\geq 2$, $\,|R_{long}^+-R^+_{\mu,2}|\geq\,3\,(\ell-2)\,$ and
$\,|R_{long}^+-R^+_{\mu_1,p}|\,=\,5\,(\ell-4)\,$.
Thus, for $\,\ell=8\,$, $\,r_p(V)\,\geq\,\displaystyle\frac{504+\cdot 18}{72}
\,>\,72\,$. For $\,\ell=7,\,$ 
$\,r_p(V)\,\geq\,\displaystyle\frac{280\cdot 15}{56}\,+\,\frac{56\cdot 15}
{56}\,=\,90\,>\,56\,$. Hence, by~(\ref{rral}), $\,V\,$ is not exceptional.
This proves the claim. \hfill $\Box$ \vspace{1.5ex}

In the following cases, for 
$\,p\neq 2$, $\,|R_{long}^+-R^+_{\omega_1\,+\,\omega_j,p}|\,=\,
\ell+(j-1)(\ell-j+1)\,$ and for $\,p=2$, $\,|R_{long}^+-R^+_{\omega_1\,+\,
\omega_j,2}|\,=\,(j-1)\,(\ell-j+2)\,$.
Also for any $\,p$, $\,|R_{long}^+-R^+_{\omega_j,p}|\,=\,j\,(\ell-j+1)\,$.
\vspace{1.5ex}

{\bf Claim 8}: \label{claim8al} 
{\it Let ($\,\ell\geq 6\,$ for $\,p\geq 3\,$) or ($\,\ell\geq 8\,$ for 
$\,p=2\,$). If $\,V\,$ is an $\,A_{\ell}(K)$-module such that
$\,{\cal X}_{++}(V)\,$ contains $\,\mu\,=\,\omega_1\,+\,\omega_{3}\,$
(or graph-dually $\,\mu\,=\,\omega_{\ell-2}\,+\,\omega_{\ell}\,$), 
then $\,V\,$ is not an exceptional $\,\mathfrak g$-module.}

Indeed, let $\,\mu\,=\,\omega_{1}\,+\,\omega_{3}\,$. Then
$\,\mu_1\,=\,\mu-(\alpha_1+\alpha_2+\alpha_3)\,=\,\omega_{4}\,\in\,
{\cal X}_{++}(V)$. For $\,p\neq 2,\,$ $\,|R_{long}^+-R_{\mu,p}^+|\,
=\,3\ell -4\,$ and $\,|R_{long}^+-R_{\mu_1,p}^+|\,=\,4(\ell -3)\,$.
Thus, for $\,\ell\geq 6\,$, $\,r_p(V)\,\geq\,
\frac{(\ell-1)\,(3\ell-4)}{2}\,+\,\frac{(\ell-1)\,(\ell-2)\,4(\ell-3) }{4!}
\, =\,\frac{(\ell-1)\,(\ell^2 + 4\ell - 6)}{6}\,>\,\ell\,(\ell+1)\,$.
For $\,p=2,\,$ $\,|R_{long}^+\,-\,R_{\mu,2}^+|\,=\,2(\ell -1)\,$ and 
$\,|R_{long}^+-R_{\mu_1,2}^+|\,=\,4(\ell -3)\,$. Thus, for $\,\ell\geq 8\,$,
$\,r_2(V)\, \geq\,\frac{(\ell-1)\,2(\ell -1)}{2}\,+\, 
\frac{(\ell-1)(\ell-2)\,4(\ell-3)}{4!}\,=\,\frac{(\ell-1)\,\ell\,(\ell +1)}
{6}\,>\,\ell(\ell+1)\,$.
In both cases, by~\eqref{rral}, $\,V\,$ is not exceptional.
This proves the claim. \hfill $\Box$ \vspace{1.5ex}

{\bf Hence for $\,\ell\geq 4$, if $\,V\,$ is an $\,A_{\ell}(K)$-module
such that $\,{\cal X}_{++}(V)\,$ contains a weight  
$\,\mu\,=\,a\omega_i\,$ (with $\,a\geq 3\,$ and \mbox{$\,1\leq
i\leq\ell\,$}), then $\,V\,$ is not an exceptional $\,\mathfrak g$-module. 
Therefore we can assume $\,a\leq 2\,$.}
\vspace{1.5ex}

{\bf Claim 9}: \label{claim9al} 
{\it Let $\,V\,$ be an $\,A_{\ell}(K)$-module. If $\,{\cal X}_{++}(V)\,$
contains

(a) $\,2\,\omega_i\,$ with $\,3\leq i\leq\ell-2\,$ (hence $\,\ell\geq 5\,$)\;
or

(b) $\,2\,\omega_2\,$ (or graph-dually $\,2\,\omega_{\ell-1}\,$) 
for ($\,\ell\geq 6\,$ and $\,p\geq 3\,$) or ($\,\ell\geq 4\,$ and 
$\,p=2\,$), \quad
then $\,V\,$ is not an exceptional $\,\mathfrak g$-module.} 

Indeed:

(a) let $\,\mu\,=\,2\omega_i\in{\cal X}_{++}(V)$. Then
$\,\mu_1\,=\,\mu-\alpha_i\,=\,\omega_{i-1}\,+\,\omega_{i+1}\,
\in\,{\cal X}_{++}(V)$. For $\,3\leq\,i\,\leq\ell-2\,$ (implying 
$\,\ell\geq 5\,$), $\,\mu_1\,$ satisfies Claim~3 (p. \pageref{claim3}) 
or Claim~3.a (p. \pageref{claim3a}). Hence $\,V\,$ is not exceptional.

(b) Let $\,\mu\,=\,2\omega_2\,$ (or graph-dually $\,\mu\,=\,
2\omega_{\ell-1}\,$). Then $\,\mu_1\,=\,\omega_{1}\,+\,\omega_{3}\,
\in\,{\cal X}_{++}(V)$. For ($\,\ell\geq 6\,$ and $\,p\geq 3\,$) or
($\,\ell\geq 8\,$ and $\,p=2\,$), $\,\mu_1\,$ satisfies Claim~8  
(p. \pageref{claim8al}). Hence, $\,V\,$ is not exceptional.
For $\,4\leq\ell\leq 7\,$ and $\,p=2,\,$ $\,\mu\in {\cal X}_{++}(V)\,$ only if 
the highest weight of $\,V\,$ is
$\,\lambda\,=\,\omega_2\,+\,\omega_{3+i}\,+\,\omega_{\ell-i}\,$ (for some
$\,i\geq 0\,$). In this case, Lemma~\ref{nlf3c} applies. 
This proves the claim. \hfill $\Box$ \vspace{1.5ex}

P.6) For $\,\ell=5\,$ and $\,p\geq 3\,$, we may assume that $\,V\,$ has
highest weight $\,\mu\,=\,2\omega_2\,$. Then $\,\mu_1\,=\,\omega_{1}\,
+\,\omega_{3}\,$ and $\,\mu_2\,=\,\omega_4\,\in\,{\cal X}_{++}(V)$. By
Proposition~\ref{supruzal}, $\,m_{\mu_2}\geq 2\,$. As
$\,|R_{long}^+-R^+_{\mu,p}|\,=\,|R_{long}^+-R^+_{\mu_2,p}|\,=\,8\,$
and $\,|R_{long}^+-R^+_{\mu,p}|\,=\,11\,$, one has 
$\,r_p(V)\,\geq\,\displaystyle\frac{15\cdot 8}{30}\,+\,\frac{60\cdot
11}{30}\,+\,2\cdot \frac{15\cdot 8}{30}\,=\,34\,>\,30\,$. Hence,
by~\eqref{rral}, $\,V\,$ is not exceptional.

U.2) For $\,\ell=4\,$ and $\,p\geq 3\,$, the $\,A_4(K)$-modules of
highest weight $\,2\omega_2\,$ (or graph-dually $\,2\omega_3\,$)
are unclassified {\bf (N. 7 in Table~\ref{leftan}).}\vspace{1.5ex}

In the next particular cases, we may
assume that $\,V\,$ is an $\,A_{\ell}(K)$-module of 
highest weight $\,\mu\,=\,\omega_{1}\,+\,\omega_{3}\,$. Then
$\,\mu_1\,=\,\omega_{4}\,\in\,{\cal X}_{++}(V)$.

P.7) For $\,\ell=5\,$ and $\,p\geq 3\,$, $\,|R_{long}^+\,-\,R_{\mu,p}^+|\, 
=\,11\,$ $\,|R_{long}^+\,-\,R_{\mu_1,p}^+|\,=\,8\,$. 
By Lemma~\ref{mulmin}, $\,m_{\mu_1}=3$. Thus, $\,r_p(V)\,=\,\frac{60 
\cdot 11}{30}\,+\,3\cdot\frac{15\cdot 8}{30}\, =\,34\,>\,30\,$.

P.8) For $\,\ell=7\,$ and $\,p=2\,$, $\,|R_{long}^+\,-\,R_{\mu,2}^+|\,=\,12\,$ 
$\,|R_{long}^+\,-\,R_{\mu_1,2}^+|\,=\,16\,$. By Lemma~\ref{mulmin}, 
$\,m_{\mu_1}=2$. Thus, $\,r_2(V)\,=\,\frac{168\cdot 12}{56}\,+\,2\cdot
\frac{70\cdot 16}{56}\, =\,76\,>\,56\,$.

P.9) For $\,\ell=6\,$ and $\,p=2\,$, $\,|R_{long}^+\,-\,R_{\mu,2}^+|\,=\,10\,$ 
$\,|R_{long}^+\,-\,R_{\mu_1,2}^+|\,=\,12\,$. By Lemma~\ref{mulmin}, 
$\,m_{\mu_1}=2$. Thus, $\,r_2(V)\,=\,\frac{105\cdot 10}{42}\,+\,2\cdot
\frac{35\cdot 12}{42}\, =\,45\,>\,42\,$.

In all these cases, by~\eqref{rral}, $\,V\,$ is not exceptional.

U.3) For ($\,\ell=4\,$ and $\,p\geq 2\,$) or ($\,\ell=5\,$ and  
$\,p=2\,$), the $\,A_{\ell}(K)$-module of highest weight $\,\omega_1\,+ 
\,\omega_3\,$ (or graph-dually $\,\omega_{\ell-2}\,+\,\omega_{\ell}\,$) is 
unclassified {\bf (N. 6 or N. 3 in Table~\ref{leftan}).} \vspace{1.5ex}

{\bf Claim 10}: \label{claim10al} 
{\it Let $\,\ell\geq 6\,$ and $\,p\geq 2\,$. 
If $\,V\,$ is an $\,A_{\ell}(K)$-module such that
$\,{\cal X}_{++}(V)\,$ contains $\,\mu\,=\,\omega_1\,+\,\omega_{\ell-2}\,$
(or graph-dually $\,\mu\,=\,\omega_3\,+\,\omega_{\ell}\,$), 
then $\,V\,$ is not an exceptional $\,\mathfrak g$-module.}

Indeed, let $\,\mu\,=\,\omega_1\,+\,\omega_{\ell-2}\,$. Then
$\,\mu_1\,=\,\omega_{\ell-1}\,\in\,{\cal X}_{++}(V)$.
For $\,p\geq 2$, $\,|R_{long}^+-R^+_{\mu,p}|\,\geq\,4(\ell-3)\,$ and
$\,|R_{long}^+-R^+_{\mu_1,p}|\,=\,2\,(\ell-1)\,$. Thus, for $\,\ell\geq 6$,
\[
r_p(V)=\frac{(\ell-1)(\ell-2)\cdot 4(\ell -3)}{6}\,+\, 
\frac{2(\ell-1)}{2}=\frac{(\ell-1)[2\ell(\ell-5) + 15]}{3}\,>\, 
\ell(\ell+1)\,.
\]
Hence, by~(\ref{rral}), $\,V\,$ is not exceptional, proving the claim.
\hfill $\Box$ \vspace{1.5ex}

{\bf Claim 11}: \label{claim11al} 
{\it Let ($\,\ell\geq 9\,$ for $\,p\geq 3\,$) or ($\,\ell\geq 11\,$ for 
$\,p=2\,$). If $\,V\,$ is an $\,A_{\ell}(K)$-module such that
$\,{\cal X}_{++}(V)\,$ contains $\,\mu\,=\,\omega_1\,+\,\omega_{\ell-1}\,$
(or graph-dually $\,\mu\,=\,\omega_2\,+\,\omega_{\ell}\,$), 
then $\,V\,$ is not an exceptional $\,\mathfrak g$-module.}

Indeed, let $\,\mu\,=\,\omega_1\,+\,\omega_{\ell-1}\,$. Then
$\,\mu_1\,=\,\omega_{\ell}\,\in\,{\cal X}_{++}(V)$.
For $\,p\geq 3$, $\,|R_{long}^+-R^+_{\mu,p}|\,=\,(3\ell-4)\,$ and
$\,|R_{long}^+-R^+_{\mu_1,p}|\,=\,\ell\,$. Thus, for $\,\ell\geq 9\,$,
\[
r_p(V)\,=\,\frac{(\ell-1)\cdot(3\ell-4)}{2}\,+\,1\,=\,\frac{3\ell^2\,
-\,7\ell\,+\,6}{2}\,>\,\ell\,(\ell+1)\,.
\]
For $\,p=2$, $\,|R_{long}^+-R^+_{\mu,p}|\,=\,3(\ell-2)\,$ and
$\,|R_{long}^+-R^+_{\mu_1,p}|\,=\,\ell\,$. Thus, for $\,\ell\geq 11\,$,
\[
r_p(V)\,=\,\frac{(\ell-1)\cdot 3(\ell-2)}{2}\,+\,1\,=\,\frac{3\ell^2\,
-\,9\ell\,+\,8}{2}\,>\,\ell\,(\ell+1)\,.
\]
Hence, by~\eqref{rral}, $\,V\,$ is not an exceptional module. 
This proves the claim.\hfill $\Box$ \vspace{1.5ex}

The following are particular cases.

P.10) Let $\,\ell=10\,$ and $\,p=2\,$. We may assume that $\,V\,$ has 
highest weight
$\,\mu\,=\,\omega_1\,+\,\omega_9\,$. Then $\,\mu_1\,=\,\omega_{10}
\in\,{\cal X}_{++}(V)$. By Lemma~\ref{mulmin}, for $\,p=2\,$,
$\,m_{\mu_1}=8\,$. Thus, $\,r_2(V)\,\geq\,\displaystyle\frac{495\cdot 24}{110}
\,+\,8\cdot\frac{11\cdot 10}{110}\,=\,116\,>\,110\,$.

P.11) Let $\,\ell=9\,$ and $\,p=2\,$. We may assume that $\,V\,$ has 
highest weight
$\,\mu\,=\,\omega_1\,+\,\omega_8\,$. Then $\,\mu_1\,=\,\omega_{9}
\in\,{\cal X}_{++}(V)$. By Lemma~\ref{mulmin}, for $\,p=2\,$,
$\,m_{\mu_1}=8\,$. Thus, $\,r_2(V)\,\geq\,\displaystyle\frac{360\cdot 21}{90}
\,+\,8\cdot\frac{10\cdot 9}{90}\,=\,92\,>\,90\,$.

P.12) Let $\,\ell=8\,$ and $\,p\geq 3\,$. We may assume that $\,V\,$ 
has highest 
weight $\,\mu\,=\,\omega_1\,+\,\omega_7\,$. Then $\,\mu_1\,=\,\omega_{8}
\in\,{\cal X}_{++}(V)$. By Lemma~\ref{mulmin}, for $\,p\geq 3\,$,
$\,m_{\mu_1}=7\,$. Thus, $\,r_p(V)\,\geq\,\displaystyle\frac{252\cdot 20}{72}
\,+\,7\cdot\frac{9\cdot 8}{72}\,=\,77\,>\,72\,$.

P.13) Let $\,\ell=7\,$. We may assume that $\,V\,$ has highest 
weight $\,\mu\,=\,\omega_1\,+\,\omega_6\,$. Then $\,\mu_1\,=\,\omega_{7}
\in\,{\cal X}_{++}(V)$. By Lemma~\ref{mulmin}, for $\,p\neq 7\,$,
$\,m_{\mu_1}=6\,$. Thus, for $\,p\neq 2,\,7\,$, 
$\,r_p(V)\,\geq\,\displaystyle\frac{168\cdot 17}{56}
\,+\,6\cdot\frac{8\cdot 7}{56}\,=\,57\,>\,56\,$.

In all these cases, by~\eqref{rral}, $\,V\,$ is not exceptional. 
 \vspace{1.5ex}

U.4) For ($\,4\leq\ell\leq 8\,$ and $\,p= 2\,$) or 
($\,\ell=7\,$ and $\,p=7\,$) or ($\,4\leq\ell\leq 6\,$ and $\,p\geq 3\,$), 
the $\,A_{\ell}(K)$-modules of highest weight $\,\omega_1\,+\, 
\omega_{\ell-1}\,$ (or graph-dually $\,\omega_{2}\,+\,\omega_{\ell}\,$) are 
unclassified {\bf (N. 6 in Table~\ref{leftan}).}
\vspace{1.5ex}

{\bf Claim 12}:\label{claim12al} 
{\it Let $\,\ell\geq 3\,$ and $\,p=3\,$. Let $\,V\,$ be an 
$\,A_{\ell}(K)$-module of highest weight $\,\mu\,=\, 
\omega_{1}\,+\,\omega_{2}\,$. Then $\,V\,$ is not an exceptional
$\,\mathfrak g$-module.}

Indeed, as $\,V\,$ has highest weight $\,\mu\,=\,(3-1-1)\omega_1\,+\,
\omega_2\,$, $\,V\,$ is isomorphic to the truncated symmetric power
$\,T_3\,$ (the $\,3^{rd}$-graded
component of the truncated symmetric algebra). See Appendix~\ref{trunc}.
Consider the action of $\,\mathfrak{gl}({\ell+1})\,$ on $\,T_{3}\,$.
Let $\,v\,=\,x_1^2\,x_2\,+\,x_2^2\,x_3\,+\,\cdots\,+\,x_{\ell}^2\,x_{\ell+1}
\,+\,x_{\ell+1}^2\,x_1\,\in\,T_{3}\,$ and
$\,X\,=\,\displaystyle\sum_{1\leq i,\,j\leq \ell+1}\,\lambda_{ij}\,
x_i\partial_j\,$ with $\,\displaystyle\sum_i\lambda_{ii}\,=\,0\,$ be such that 
$\,X\cdot v\,=\,0\,$. By direct calculations, one shows that
$\,X=0\,$. Hence, $\,\mathfrak g_v\,\subseteq\,\mathfrak z(\mathfrak g)\,$ and
this module is not exceptional.
This proves the claim. \hfill $\Box$ \vspace{1.5ex}

U.5) For $\,p\neq 3\,$ and $\,\ell\geq 4\,$, the 
$\,A_{\ell}(K)$-module of highest weight $\,\mu\,=\, 
\omega_{1}\,+\,\omega_{2}\,$ is unclassified
{\bf (N. 5 in Table~\ref{leftan}).} \vspace{1.5ex}

P.14) \label{adjal14}
Let $\,V\,$ be an $\,A_{\ell}(K)$-module of highest weight 
$\,\mu\,=\,\omega_{1}\,+\,\omega_{\ell}\,$. Then, for
$\,\ell\geq 1\,$, $\,V\,$ is the adjoint module, which is exceptional
by Example~\ref{adjoint} \mbox{\bf (N. 2 in Table~\ref{tablealall})}.

{\bf Therefore, for $\,\ell\geq 4$, 
if $\,V\,$ is an $\,A_{\ell}(K)$-module such that
$\,{\cal X}_{++}(V)\,$ contains a weight with 2 nonzero coefficients,
then $\,V\,$ is not an exceptional module, unless $\,V\,$ has
highest weight $\,\omega_1\,+\,\omega_{\ell}\,$ (the adjoint module
which is exceptional) or the highest weight of $\,V\,$ is one of }\linebreak
{\bf N. 3,\,4,\,5 or 6 in Table~\ref{leftan}.}

{\bf From now on we may assume that $\,V\,$ is an $\,A_{\ell}(K)$-module such
that $\,{\cal X}_{++}(V)\,$ contains only weights with at most
one nonzero coefficient.}

P.15)\label{IIIbp15} Let $\,\mu\,=\,2\,\omega_1\,$ (or graph-dually  
$\,\mu\,=\,2\,\omega_{\ell}\,$). 

For $\,p=2,\,$ $\,\mu\in {\cal X}_{++}(V)\,$ only if $\,V\,$ has
highest weight $\,\lambda\,=\,\omega_1\,+\,\omega_{2+j}\,+\,\omega_{\ell-j}\,$
(for some $\,j\geq 0\,$ and $\,\ell\geq 3\,$) or
$\,\lambda\,=\,\omega_2\,+\,\omega_{2+j}\,+\,\omega_{\ell-j}\,+\, 
\omega_{\ell}\,$ (for some $\,j\geq 1\,$ and $\,\ell\geq 5\,$), for
instance. In these cases, Lemma~\ref{nlf3c} or Lemma~\ref{4coef} applies.

For $\,p\geq 3\,$, we may assume that $\,V\,$ is an $\,A_{\ell}(K)$-module of
highest weight $\,\mu\,=\,2\omega_1\,$.  By~\cite[Table 1]{ave},
for $\,\ell\geq 2\,$, $\,\dim\,V\,=\,\displaystyle\frac{(\ell
+1)(\ell +2)}{2}\,<\,\ell^2 + 2\ell\,-\,\varepsilon\,$. 
Hence, by Proposition~\ref{dimcrit}, $\,V\,$ is an
exceptional module {\bf (N. 3 in Table~\ref{tablealall})}. \vspace{1.5ex}

{\bf Hence if $\,{\cal X}_{++}(V)\,$ contains a weight  
$\,\mu\,=\,a\omega_i\,$ (with $\,a\geq 1\,$ and \mbox{$\,1\leq
i\leq\ell\,$}), then we can assume $\,a=1\,$.} 

{\bf Claim 13}: \label{claim13al}
{\it Let $\,V\,$ be an $\,A_{\ell}(K)$-module. If $\,{\cal X}_{++}(V)\,$  
contains

(a) $\,\mu\,=\,\omega_i\,$, with $\;5\leq i\leq \ell-4\;$ for $\,\ell\geq 10\;$
or 

(b) $\,\mu\,=\,\omega_4\,$ (or graph-dually
$\,\mu\,=\,\omega_{\ell-3}\,$), for $\;\ell\geq 12$, \\
then $\,V\,$ is not an exceptional $\,\mathfrak g$-module.} Indeed:
  
(a) let $\,5\leq i\leq \ell-4\,$ (hence $\,\ell\geq 9\,$) and 
$\,\mu\,=\,\omega_i\in{\cal X}_{++}(V)$. Then, for $\,\ell\geq 11\,$,
\[
r_p(V)\,=\,\frac{i\,(\ell-i+1)}{\ell(\ell+1)}
\binom{\ell+1}{i}\,\geq\,5\cdot 5\cdot\frac{(\ell+1)\ell(\ell-1)(\ell-2)
(\ell-3)}{5!\cdot (\ell+1)\ell}\,\geq\,\ell (\ell +1)\,.
\]
For $\,\ell=10\,$ and $\,\mu\,=\,\omega_5\,$ (or graph-dually
$\,\mu\,=\,\omega_{6}\,$), $\,\mbox{$|R_{long}^+-R_{\mu,p}^+|$}\, 
=\,5(\ell-4)\,$. Thus $\,r_p(V)\,\geq\,\displaystyle\frac{9\cdot
8\cdot 7\cdot 30}{5!}\,=\,126\,>\,110\,$. Hence for $\,\ell\geq 10$, 
by~\eqref{rral}, $\,V\,$ is not exceptional. 

(b) Let $\,\mu\,=\,\omega_4\,$ (or graph-dually
$\,\mu\,=\,\omega_{\ell-3}\,$) $\,\in\,{\cal X}_{++}(V)$. 
$\,|R_{long}^+-R^+_{\mu,p}|\,=\,4(\ell-3)\,$. Thus for $\,\ell\geq 12\,$, 
\[
r_p(V)\,=\,
\frac{(\ell+1)\ell(\ell-1)(\ell-2)}{(\ell+1)\ell\, 4!}\,4(\ell-3)\,=\, 
\frac{(\ell-1)\,(\ell-2)\,(\ell-3)}{6}\,>\,\ell\,(\ell+1)\,.
\]
Hence, by~\eqref{rral}, $\,V\,$ is not exceptional, proving the claim.
\hfill $\Box$ \vspace{1.5ex}

U.6) For $\,\ell=9\,$, the $\,A_9(K)$-module of highest weight 
$\,\omega_5\,$ is unclassified \mbox{\bf (N. 8 in Table~\ref{leftan}).}

P.16) Let $\,\mu\,=\,\omega_4\,$ (or graph-dually
$\,\mu\,=\,\omega_{\ell-3}\,$) $\,\in\,{\cal X}_{++}(V)$. 
$\,|R_{long}^+-R^+_{\mu,p}|\,=\,4(\ell-3)\,$ Thus for $\,\ell\geq 12\,$, 
by~\eqref{rral}, $\,V\,$ is not exceptional since 
\[
r_p(V)\,=\,
\frac{(\ell+1)\ell(\ell-1)(\ell-2)}{(\ell+1)\ell\, 4!}\,4(\ell-3)\,=\, 
\frac{(\ell-1)\,(\ell-2)\,(\ell-3)}{6}\,>\,\ell\,(\ell+1)\,.
\] 
For $\,4\leq \ell\leq 6\,$, we may assume that $\,V\,$ is an
$\,A_{\ell}(K)$-module of highest weight  
$\,\mu=\omega_4\,$. Then $\,\dim\,V\,=\,\displaystyle 
\binom{\ell+1}{4}\,<\,\ell\,(\ell+1)\,+\,\ell\,-\,\varepsilon\,$.
Hence, by Proposition~\ref{dimcrit},  
$\,V\,$ is exceptional {\bf (N. 4, 5 and 6 in Table~\ref{tablealall})}.

U.7) For $\,7\leq \ell\leq 11\,$ the $\,A_{\ell}(K)$-modules of
highest weight $\,\mu=\omega_4\,$ are unclassified {\bf (N. 9 in 
Table~\ref{leftan}).}

P.17) Let $\,V\,$ be an $\,A_{\ell}(K)$-module of highest weight  
$\,\mu=\omega_3\,$ (or graph-dually $\,\mu=\omega_{\ell-2}\,$). Then,
for $\,3\leq \ell\leq 7,\,$ $\,\dim\,V\,=\,\displaystyle 
\binom{\ell+1}{3}\,<\,\ell\,(\ell+1)\,+\,\ell\,-\,\varepsilon\,$.
Hence, by Proposition~\ref{dimcrit}, $\,V\,$ is an exceptional module
(N. 4 and 6 in Table~\ref{tablealall}).

U.8) For $\,\ell\geq 8\,$ the $\,A_{\ell}(K)$-modules of
highest weight $\,\mu=\omega_3\,$ are unclassified {\bf (N. 10 in 
Table~\ref{leftan}).}

P.18) If $\,V\,$ is an $\,A_{\ell}(K)$-module of highest weight 
$\,\mu=\omega_2\,$ then, for $\,\ell\geq 2\,$, $\,\dim\,V\,=\,\displaystyle 
\binom{\ell+1}{2}\,<\,\ell\,(\ell+1)\,+\,\ell\,-\,\varepsilon\,$. 
Hence, by Proposition~\ref{dimcrit}, $\,V\,$
is an exceptional module {\bf (N. 5 in Table~\ref{tablealall})}.

P.19) \label{IIIp19} Let $\,V\,$ be an $\,A_{\ell}(K)$-module of highest 
weight $\,\mu=\omega_1\,$. Then, for $\,\ell\geq 2\,$ or ($\,\ell=1\,$
and $\,p\neq 2\,$), $\,\dim\,V\,=\,\ell+1\,<\,\ell^2 + 2\ell\,-\, 
\varepsilon\,$. Thus, by Proposition~\ref{dimcrit}, $\,V\,$ is
exceptional module. 

For $\,\ell=1\,$ and $\,p=2,\,$ 
the Lie algebra of type $\,A_1\,$ is Heisenberg (that is, its derived 
subalgebra is the centre, which coincides with the scalar matrices).
We claim that the standard module $\,V\,$ for $\,\mathfrak{sl}(2)\,$
is exceptional. (Indeed, for any $\,v\in V$, it is possible to 
construct a nonzero nilpotent matrix which annihilates $\,v\,$. 
Hence, $\,{\mathfrak g}_v\not\subset\mathfrak z(\mathfrak g),\,$
as no nonzero nilpotent matrix is in the centre. This proves the claim.)

{\it Therefore, for $\,\ell\geq 1$, if $\,V\,$ is an $\,A_{\ell}(K)$-module of
highest weight $\,\mu=\omega_1\,$, then $\,V\,$ is exceptional}
{\bf (N. 4 and 1 in Table~\ref{tablealall})}.

{\bf Therefore, for $\,\ell\geq 4,\,$ if the highest weight of $\,V\,$
is listed in Table~\ref{tablealall}, then $\,V\,$ is an exceptional 
$\,A_{\ell}(K)$-module.
If $\,V\,$ has highest weight listed in Table~\ref{leftan}, then
$\,V\,$ is unclassified. If the highest weight of $\,V\,$ is not in
Table~\ref{tablealall} or Table~\ref{leftan}, then $\,V\,$ is not an
exceptional $\,\mathfrak g$-module.}

This proves Theorem~\ref{anlist} for $\,\ell\geq 4$.
\hfill$\Box$


\subsubsection{Type $\,A_{\ell}\,$ - Small Rank Cases}\label{tasr}

This is the Second Part of the Proof of Theorem~\ref{anlist}: 
for groups of small rank. 
\subsubsection*{\ref{tasr}.1. Type $\,A_1$}
\addcontentsline{toc}{subsubsection}{\protect\numberline{\ref{tasr}.1}
Type $\,A_1\,$}

For groups of type $\,A_1\,$, $\,|W|=2.\,$ The limit for the inequality
(\ref{allim}) is $\,1(1+1)^2=4$. The dominant weights are of the form
$\,\mu\,=\,a\omega_1\,$ with $\,a\in\mathbb Z^+\,$. Hence, 
$\,|W\mu|=2\,$ for all nonzero weights.\vspace{2ex} \\
{\bf Proof of Theorem~\ref{anlist}\; for $\,\ell=1$.}\\
Let $\,V\,$ be an $\,A_1(K)$-module and suppose
$\,\mu\,=\,a\omega_1\,\in {\cal X}_{++}(V)$.

For a prime $\,p,\,$ if $\,a\geq p$, 
then $\,\mu=\,a\,\omega_1\,$ does not occur in $\,{\cal X}_{++}(V),\,$ 
otherwise the highest weight of $\,V\,$ would have coefficient
$\,\geq p,\,$ contradicting Theorem~\ref{curt}(1).

If $\,p>5\,$ and $\, 5\leq a < p\,$ then, for $\,a\,$ odd (resp. $\,a\,$ even),
$\,a\omega_1,\,3\omega_1,\,\omega_1\,$ (resp. $\,a\,\omega_1,\, 
4\omega_1,\,2\omega_1\,$) $\,\in\,{\cal X}_{++}(V)$. Hence,
$\,s(V)\,>\,4\,$ and, by~(\ref{allim}), $\,V\,$ is not an exceptional module.
For $\,p\geq 5\,$ and $\,3\leq a < p,\,$ the modules with highest
weight $\,a\,\omega_1\,$ are not exceptional, by Lemma~\ref{III.iii}
(p. \pageref{III.iii}).

For $\,p\geq3,\,$ if $\,V\,$ has highest weight
$\,\mu\,=\,2\omega_1\,$, then $\,V\,$ is the adjoint module, which is 
exceptional by Example~\ref{adjoint}.
For any $\,p,\,$ if $\,V\,$ has highest weight $\,\mu\,=\,\omega_1,\,$
then $\,V\,$ is exceptional, by case P.19 (p. \pageref{IIIp19}) of First
Part of Proof of Theorem~\ref{anlist}. 

This proves Theorem~\ref{anlist} for groups of type $\,A_1\,$. 
\hfill$\Box$

\subsubsection*{\ref{tasr}.2. Type $\,A_2$}
\addcontentsline{toc}{subsubsection}{\protect\numberline{\ref{tasr}.2}
Type $\,A_2\,$}

For groups of type $\,A_2,\,$ $\,|W|=6.\,$ The limit for 
(\ref{allim}) is $\,18\,$ and for \eqref{rral} is $\,6$. The dominant 
weights are of the form
$\,\mu\,=\,a\,\omega_1\,+\,b\,\omega_2,\,$ with $\,a,\,b\in\mathbb Z^+$.
If $\,a\,$ and $\,b\,$ are both nonzero, then $\,|W\mu|\,=\,6$. If
either $\,a\,$ or $\,b\,$ (but not both) is nonzero, then $\,|W\mu|\,=\,3$.
\vspace{2ex}\\
{\bf Proof of Theorem~\ref{anlist}\; for $\,\ell=2$.} 
Let $\,V\,$ be an $\,A_2(K)$-module.

Let $\,\mu\,=\,a\omega_1\,+\,b\omega_2\,$ (with $\,a\geq 1,\,b\geq 1\,$)
be a weight in $\,{\cal X}_{++}(V)$.

{\bf Claim 1}: \label{bada2} {\it Bad weights with $\,2\,$ nonzero 
coefficients do not occur in $\,{\cal X}_{++}(V)$.}
Indeed, if $\,\mu\,=\,a\omega_1\,+\,b\omega_2\,$ is a bad weight in   
$\,{\cal X}_{++}(V)$, then the highest weight of $\,V\,$ must have at least
one coefficient $\,\geq p$, contradicting Theorem~\ref{curt}(i).
\hfill $\Box$ \vspace{1ex}

{\bf From now on we can assume that weights with $\,2\,$ nonzero coefficients  
occurring in $\,{\cal X}_{++}(V)\,$ are good weights.} 

{\bf Claim 2}: \label{cl2a2} 
{\it Let $\,p\geq 5\,$ and ($\,a\geq 1,\,b\geq 3\,$) or 
($\,a\geq 3,\,b\geq 1\,$) or ($\,a=b=2\,$). If $\,V\,$ is an 
$\,A_2(K)$-module such that $\,{\cal X}_{++}(V)\,$ contains a good weight
$\,\mu\,=\,a\omega_1\,+\,b\omega_2\,$, then $\,V\,$ is not an exceptional  
$\,\mathfrak g$-module.}

Indeed, let $\,\mu\,=\,a\omega_1\,+\,b\omega_2\,$ be a good weight in
$\,{\cal X}_{++}(V)\,$.

(a) Let $\,a\geq 4,\,b\geq 2\,$ (or graph-dually $\,a\geq 2,\,b\geq 4\,$).  
For $\,p=2\,$ or $\,3\,$ such $\,\mu\,$ does not occur in $\,{\cal X}_{++}(V)$.
For $\,p\geq 5,\,$ 
$\,\mu_1\,=\,\mu-(\alpha_1+\alpha_2)\,=\,(a-1)\omega_1\,+\,(b-1)\omega_2,\;$
$\,\mu_2\,=\,\mu-\alpha_1\,=\,(a-2)\omega_1\,+\,(b+1)\omega_2\;$ and
$\,\mu_3\,=\,\mu_1-\alpha_1\,=\,(a-3)\omega_1\,+\,b\omega_2\,\in\,
{\cal X}_{++}(V)$.
By Claim 1, these are all good weights. Thus, $\,s(V)\,\geq\,24\,>\,18\,$.
Hence, by~(\ref{allim}), $\,V\,$ is not an exceptional module.

(b) Let $\,a=3,\,b\geq 2\,$ (or graph-dually $\,a\geq 2,\,b=3\,$) and  
$\,\mu\,=\,3\omega_1\,+\,b\omega_2\,$. For $\,p=2\,$ or $\,3\,$ such
$\,\mu\,$ does not occur in $\,{\cal X}_{++}(V)$.
For $\,p\geq 5,\,$ consider
$\,\mu_1\,=\,\mu-(\alpha_1+\alpha_2)\,=\,2\omega_1\,+\,(b-1)\omega_2,\;$
$\,\mu_2\,=\,\mu-\alpha_1\,=\,\omega_1\,+(b+1)\omega_2,\;$
$\,\mu_3\,=\,\mu_1-(\alpha_1+\alpha_2)\,=\,\omega_1\,+\,(b-2)\omega_2\,\in\,
{\cal X}_{++}(V)$. For $\,p\geq 5,\,$  
$\,\mu,\,\mu_1,\,\mu_2,\,\mu_3\,$ are all good weights. Thus
$\,s(V)\,\geq\,21\,>\,18$. Hence, by~(\ref{allim}), $\,V\,$ is not exceptional.

(c) Let $\,a=b=2\,$ and $\,\mu\,=\,2\omega_1\,+\,2\omega_2\,$. 
Then $\,{\cal X}_{++}(V)\,$ also contains
$\,\mu_1\,=\,\mu-(\alpha_1+\alpha_2)\,=\,\omega_1\,+\,\omega_2,\;$
$\,\mu_2\,=\,\mu-\alpha_1\,=\,3\omega_2\;$ and
$\,\mu_3\,=\,\mu-\alpha_2\,=\,3\omega_1.\,$ \linebreak 
For $\,p\geq 3$, $\,|R_{long}^+-R^+_{\mu,p}|\,=\,|R_{long}^+- 
R^+_{\mu_1,p}|\,=\,3\;$ and, for $\,p\geq 5,\,$
$\,|R_{long}^+-R^+_{\mu_2,p}|\,=\,|R_{long}^+-R^+_{\mu_3,p}|\,=\,2\,$.
Thus, for $\,p\geq 5,\,$ $\,r_p(V)\,\geq\,8\,>\,6$. Hence,
by~\ref{rral}, $\,V\,$ is not exceptional.

{\bf Therefore if $\,{\cal X}_{++}(V)\,$ contains a good weight of the form
$\,\mu\,=\,a\omega_1\,+\,b\omega_2\,$, with $\,a\geq 1,\,b\geq 1\,$, then
we can assume $a\leq 1$ or $\,b\leq 1$.}

(d) Let $\,a=1,\,b\geq 5\,$ (or graph-dually $\,a\geq 5,\,b=1\,$) and  
$\,\mu\,=\,\omega_1\,+\,b\omega_2\,$. For $\,p=2\,$ or $\,3$, such $\,\mu\,$
does not occur in $\,{\cal X}_{++}(V)$. For $\,p\geq 5$,  
$\,{\cal X}_{++}(V)\,$ also contains $\,\mu_1\,=\,\mu-\alpha_2\,=\, 
2\omega_1\,+\,(b-2)\omega_2\,$. As $\,b\geq 5,\,$ 
case (b) above applies to $\,\mu_1\,$. Hence $\,V\,$ is not exceptional.

(e) For $\,a=1,\,b=4\,$ (or graph-dually $\,a=4,\,b=1\,$),  
$\,\mu\,=\,\omega_1\,+\,4\omega_2\,$. For $\,p=2\,$ or $\,3$, such $\,\mu\,$
does not occur in $\,{\cal X}_{++}(V)$. For $\,p\geq 5$, 
$\,{\cal X}_{++}(V)\,$ also contains
$\,\mu_1\,=\,\mu-\alpha_2\,=\,2\omega_1\,+\,2\omega_2\;$ which
satisfies case (c). Hence, $\,V\,$ is not an exceptional module.

(f) For $\,a=3,\,b=1\,$ (or graph-dually $\,a=1,\,b=3\,$),  
$\,\mu\,=\,3\omega_1\,+\,\omega_2\,$. For $\,p=2,\,3\,$ such
$\,\mu\,$ does not occur in $\,{\cal X}_{++}(V)$.
For $\,p\geq 5,\,$ $\,{\cal X}_{++}(V)\,$ also contains the good weights
$\,\mu_1\,=\,\mu-(\alpha_1+\alpha_2)\,=\,2\omega_1,\;$
$\,\mu_2\,=\,\mu-\alpha_1\,=\,\omega_1\,+\,2\omega_2\;$ and
$\,\mu_3\,=\,\mu_1-\alpha_1\,=\,\omega_2\,$.
For $\,p\geq 5$, $\,|R_{long}^+-R^+_{\mu,p}|\,=\,|R_{long}^+-R^+_{\mu_2,p}|\,= 
\,3\,$, $\,|R_{long}^+-R^+_{\mu_1,p}|\,=\,|R_{long}^+-R^+_{\mu_3,p}|\,=\,2$.
Thus $\,r_p(V)\,\geq\,\displaystyle\frac{6\cdot 3}{6}\,+\,  
\frac{3\cdot 2}{6}\,+\,\frac{6\cdot 3}{6}\,+\,\frac{3\cdot 2}{6}\,=\,8\,>\,6$.
Hence, by~\eqref{rral}, $\,V\,$ is not an exceptional module.

This proves Claim~2. \hfill $\Box$\vspace{1.5ex}

{\bf Claim 3}: \label{cl3a2}
{\it Let $\,p\geq 5\,$ and $\,a\geq 5\,$. If $\,V\,$ is an 
$\,A_2(K)$-module such that $\,{\cal X}_{++}(V)\,$ contains a weight
$\,\mu\,=\,a\omega_1\,$ (or graph-dually $\mu\,=\,a\omega_2\,$), 
then $\,V\,$ is not an exceptional $\,\mathfrak g$-module.}

Indeed, let $\,a\geq 5\,$ and $\,\mu\,=\,a\omega_1\,$. 
For $\,p\leq\,3,\,$ $\,\mu\,$ does not occur in $\,{\cal X}_{++}(V)$.
For $\,p\geq 5$, $\,\mu_1\,=\,\mu-\alpha_1\,=\,(a-2)\omega_1\,+\,\omega_2
\,\in\,{\cal X}_{++}(V)\,$ satisfies Claim~2. Hence, $\,V\,$ is not
exceptional, proving the claim.  \hfill $\Box$\vspace{1.5ex}

Now we deal with some particular cases.

P.1) Let $\,p=3\,$. We may assume that $\,V\,$ has highest weight  
$\,\mu\,=\,2\omega_1\,+\,2\omega_2\,$. Then $\,V\,$ is the Steinberg
module and $\,\dim\,V\,=\,3^3\,$ (see Theorem~\ref{stmod}). 
One can show that $\,m_{\mu_1}=2,\,$ where $\,\mu_1\,=\,
\mu-(\alpha_1+\alpha_2)\,=\,\omega_1\,
+\,\omega_2\,\in{\cal X}_{++}(V)$. Thus $\,r_p(V)\,=\,9\,>\,6\,$. Hence,
by~\eqref{rral}, $\,V\,$ is not exceptional.

U.1) Let $\,a=2,\,b=1\,$ (or graph-dually $\,a=1,\,b=2\,$) and 
$\,\mu\,=\,2\omega_1\,+\,\omega_2\,$. For $\,p=2,\,$ $\,\mu\,$  
does not occur in $\,{\cal X}_{++}(V)$. 
For $\,p\geq 3,\,$ the $\,A_2(K)$-modules of highest weight
$\,2\omega_1\,+\,\omega_2\,$ (or graph-dually $\,\omega_1\,+\,2\omega_2\,$)
are unclassified {\bf (N. 1 in Table~\ref{leftan}).}

P.2) Let $\,V\,$ be an $\,A_2(K)$-module such that  
$\,\mu\,=\,4\omega_1\,$ (or graph-dually $\,\mu\,=\,4\omega_2\,$)
$\,\in\,{\cal X}_{++}(V)\,$. Then $\,V\,$ is not exceptional.
(Indeed, for $\,p\leq 3\,$ $\,\mu\,$ does not occur in $\,{\cal X}_{++}(V)$. 
For $\,p\geq 5,\,$ we can assume that $\,V\,$ has highest weight 
$\,\mu\,=\,4\omega_1\,$. Hence, by Lemma~\ref{III.iii}, $\,V\,$ is not  
exceptional.)

{\bf Therefore if $\,{\cal X}_{++}(V)\,$ contains a good weight 
$\,\mu\,=\,a\omega_1\,+\,b\omega_2\,$, with $\,2\,$ nonzero  
coefficients, then we can assume $\,a=b=1\,$.}

P.3) Let $\,V\,$ be an $\,A_2(K)$-module such that  
$\,\mu\,=\,3\omega_1\,$ (or graph-dually $\,\mu\,=\,3\omega_2\,$)
$\,\in\,{\cal X}_{++}(V)\,$. Then $\,V\,$ is not exceptional.
(Indeed, for $\,p=2\,$ $\,\mu\,$ does not occur in $\,{\cal X}_{++}(V)$. 
For $\,p= 3,\,$ $\,\mu\in {\cal X}_{++}(V)\,$ only if $\,V\,$ has 
highest weight $\,\lambda\,=\, 2\omega_1\,+\,2\omega_2\,$. Then 
$\,V\,$ is the Steinberg module, which is not exceptional by case P.1) above. 
For $\,p\geq 5,\,$ we can assume that $\,V\,$ has highest weight 
$\,\mu\,=\,3\omega_1\,$. Hence, by Lemma~\ref{III.iii}, $\,V\,$ is not  
exceptional.)

P.4) If $\,V\,$ is an $\,A_2(K)$-module of highest weight 
$\,\mu\,=\,\omega_1\,+\,\omega_2\,$ then $\,V\,$ is the adjoint module,
which is exceptional by Example~\ref{adjoint}
{\bf (N. 2 in Table~\ref{tablealall})}.

{\bf From now on we can assume that $\,{\cal X}_{++}(V)\,$
contains only weights of the form $\,\mu\,=\,a\omega_i\,$ (with 
$\,a\leq 2\,$).} Observe that for $\,p=2$, $\,2\omega_i\,$ does not 
occur in $\,{\cal X}_{++}(V)\,$. 

P.5) For $\,p\geq 3,\,$ if $\,V\,$ is an $\,A_2(K)$-module of highest weight
$\,\mu\,=\,2\omega_1\,$ (or graph-dually $\,\mu\,=\,2\omega_2\,$), 
then $\,V\,$ is an exceptional module by case P.15 
(p. \pageref{IIIbp15}) of First Part of Proof of Theorem~\ref{anlist}
{\bf (N. 3 in Table~\ref{tablealall})}.

P.6) If $\,V\,$ is an $\,A_2(K)$-module of highest weight  
$\,\mu\,=\,\omega_1\,$ (or graph-dually $\,\mu\,=\,\omega_2\,$), 
then $\,V\,$ is an exceptional module by case P.19 (p. \pageref{IIIp19})
of First Part of Proof of Theorem~\ref{anlist}
{\bf (N. 4 in Table~\ref{tablealall})}.

{\bf Therefore, if $\,V\,$ is an $\,A_2(K)$-module such that
$\,{\cal X}_{++}(V)\,$ contains a weight 
$\,\mu\,=\,a\omega_1\,$ (or graph-dually $\,\mu\,=\,a\omega_2\,$)
with $\,a\geq 3,\,$ then $\,V\,$ is not exceptional.
If $\,V\,$ has highest weight $\,\lambda\,\in\,\{
2\omega_1,\,2\omega_2,\,\omega_1,\,\omega_2 \}\,$ then 
$\,V\,$ is an exceptional module (N. 3,\,4 in Table~\ref{tablealall}).}

This proves Theorem~\ref{anlist} for groups of type $\,A_2$.
\hfill$\Box$

\subsubsection*{\ref{tasr}.3. Type $\,A_3$}
\addcontentsline{toc}{subsubsection}{\protect\numberline{\ref{tasr}.3}
Type $\,A_3\,$}

For groups of type $\,A_3\,$, $\,|W|=24.\,$ The limit for 
(\ref{allim}) is $\,48\,$ and for \eqref{rral} is $\,12$. The orbit sizes are 
$\,|W\mu|\,=\,24\,$ for $\,\mu\,$ having 3 nonzero coefficients, 
$\,|W\mu|\,=\,12\,$ for $\,\mu\,$ having 2 nonzero coefficients, and
$\,|W\mu|\,=\,6\,$ or  $\,4\,$ for $\,\mu\,$ having 1 nonzero coefficient.
First we prove a reduction lemma.
\begin{lemma}\label{a333} Let $\,V\,$ be an $\,A_3(K)$-module.
If $\,{\cal X}_{++}(V)\,$ contains $\,2\,$ good weights with 
$\,3\,$ nonzero coefficients and any other nonzero good 
weight, then $\,V\,$ is not an exceptional $\,\mathfrak g$-module.
\end{lemma}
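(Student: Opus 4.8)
The plan is to use the Necessary Condition (Theorem~\ref{???}) together with the orbit-size formulae of Lemma~\ref{orban}, specialised to $\ell=3$. For groups of type $A_3$ we have $|W|=24$, $|R|=|R_{long}|=12$, $|R_{long}^+|=6$, so the two inequalities an exceptional module must satisfy are $s(V)\le 48$ and $r_p(V)\le 12$. The point is that the hypothesis produces enough good weights in ${\cal X}_{++}(V)$ to overshoot $s(V)=48$.

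First I would note that, by Lemma~\ref{orban}, a weight $\mu = a_1\omega_1 + a_2\omega_2 + a_3\omega_3$ with all $a_i\neq 0$ has $|W\mu| = \binom{2}{1}\binom{3}{2}\binom{4}{3} = 24$; a weight with exactly two nonzero coefficients has $|W\mu|\ge \binom{4}{1}\binom{4}{... }$-type value, and a direct check over the three possible index pairs $(1,2),(1,3),(2,3)$ gives $|W\mu|\in\{12\}$ in each case (more precisely $|W\mu|=12$ for two nonzero coefficients); and any nonzero weight with one nonzero coefficient has $|W\mu|\ge 4$. Thus if ${\cal X}_{++}(V)$ contains two distinct good weights each with $3$ nonzero coefficients, together with any further nonzero good weight $\nu$, then $s(V)\ge 24 + 24 + |W\nu| \ge 48 + 4 > 48$, so by the criterion~\eqref{allim} (equivalently Proposition~\ref{criteria}) the module $V$ is not exceptional. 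The only subtlety is whether the ``two good weights with $3$ nonzero coefficients'' could coincide or fail to be distinct as $W$-orbits; since they are counted as distinct dominant weights in ${\cal X}_{++}(V)$ and $W$ permutes ${\cal X}(V)$ with one dominant representative per orbit, distinct dominant weights give disjoint orbits, so the orbit sizes genuinely add.

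The one case requiring a touch of care is when the ``any other nonzero good weight'' is forced to be one of the two cubic weights already used, i.e.\ when ${\cal X}_{++}(V)$ contains \emph{exactly} those two cubic good weights and nothing else nonzero. I would argue this cannot happen: starting from a cubic dominant weight $\mu = a_1\omega_1+a_2\omega_2+a_3\omega_3$ one can subtract a simple root (e.g.\ $\alpha_2$, or $\alpha_1+\alpha_2+\alpha_3$) to produce a dominant weight in ${\cal X}_{++,\mathbb C}(\mu)\subseteq{\cal X}_{++}(V)$ that is strictly smaller and still nonzero, and by inspecting the resulting coefficient patterns one sees it is either a third good weight (possibly with fewer nonzero coefficients) or a bad weight from which, via the Remark after Definition~\ref{badweight}, one extracts a further good weight. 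Either way a third nonzero good weight appears, and $s(V)>48$ follows. This reduction step — ruling out the degenerate configuration — is the main obstacle; everything else is the orbit-size bookkeeping of Lemma~\ref{orban}. Once the lemma is in hand it will be used, exactly as in the $\ell\ge 4$ analysis, to eliminate most $A_3$-modules and leave the short list in Tables~\ref{tablealall} and~\ref{leftan}.
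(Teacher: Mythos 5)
Your argument is correct and is essentially the paper's own proof: each good weight with three nonzero coefficients contributes $|W\mu|=24$ by Lemma~\ref{orban}, and any further nonzero good weight adds at least $4$, so $s(V)>48$ and Theorem~\ref{???} (via~\eqref{allim}) rules out exceptionality. Your final paragraph is superfluous, however: the existence of a third, distinct nonzero good weight is part of the lemma's \emph{hypothesis} (it is exhibited explicitly whenever the lemma is invoked later), so there is no degenerate configuration that needs to be excluded.
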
\noindent
\begin{proof}
If $\,\mu\in{\cal X}_{++}(V)\,$ has $\,3\,$ nonzero coefficients, then  
$\,|W\mu|=24$. Hence, by assumption,
$\,s(V)\,     
\geq\,48+ k$, where $\,k>1$. Thus,
by~(\ref{allim}), $\,V\,$ is not an exceptional module and the lemma is proved.
\end{proof}\vspace{1.5ex}\noindent
{\bf Proof of Theorem~\ref{anlist}\; for $\,\ell=3$.}
Let $\,V\,$ be an $\,A_3(K)$-module.

{\bf Claim 1}: \label{cl1a3} {\it Bad weights with $\,3\,$ nonzero  
coefficients do not occur in $\,{\cal X}_{++}(V)$.} 
Indeed, otherwise the highest weight of $\,V\,$ would have at least
one coefficient $\,\geq p,\,$ contradicting Theorem~\ref{curt}(i).
\hfill $\Box$ 

{\bf Therefore we can assume that weights with $\,3\,$ nonzero coefficients 
occurring in $\,{\cal X}_{++}(V)\,$ are good weights.}

{\bf Claim 2}: \label{cl2a3} {\it Let $\,V\,$ be an $\,A_3(K)$-module.
If $\,{\cal X}_{++}(V)\,$ contains  
a bad weight with $\,2\,$ nonzero coefficients, then $\,V\,$ is not 
an exceptional $\,\mathfrak g$-module.} 

We prove this claim in two steps.

(a) Let $\,\mu\,=\,a\omega_1\,+\,b\omega_2\,$ 
(or graph-dually $\,\mu\,=\,b\omega_2\,+\,a\omega_3\,$)
be a bad weight in $\,{\cal X}_{++}(V)$. For $\,p=2\,$ such $\,\mu\,$
does not occur in $\,{\cal X}_{++}(V)$. For $\,p\geq 3,\,$  
$\,{\cal X}_{++}(V)\,$ contains the good weights
$\,\mu_1\,=\mu-(\alpha_1+\alpha_2)\,=\,(a-1)\omega_1\,+\,(b-1)\omega_2\,+
\,\omega_3,\;$
$\,\mu_2\,=\mu_1-(\alpha_1+\alpha_2)\,=\,(a-2)\omega_1\,+\,(b-2)\omega_2
\,+\,2\omega_3,\;$
$\,\mu_3\,=\mu-\alpha_2\,=\,(a+1)\omega_1\,+\,(b-2)\omega_2\,+
\,\omega_3.\,$ Thus, as $\,a\geq 3,\,b\geq 3\,$,
$\,{\cal X}_{++}(V)\,$ satisfies Lemma~\ref{a333}. 

(b) Let $\,\mu\,=\,a\omega_1\,+\,c\omega_3\,$ be a bad weight in 
$\,{\cal X}_{++}(V)$. For $\,p=2\,$ such $\,\mu\,$ does not occur  
in $\,{\cal X}_{++}(V)$. For $\,p\geq 3,\,$ 
$\,{\cal X}_{++}(V)\,$ contains the good weights
$\,\mu_1\,=\mu-(\alpha_1+\alpha_2+\alpha_3)\,=\,(a-1)\omega_1\,+\,(c-1)
\omega_3\,$, 
$\,\mu_2\,=\mu-\alpha_1\,=\,(a-2)\omega_1\,+\,\omega_2\,+\,c\omega_3,\;$
$\,\mu_3\,=\mu-\alpha_3\,=\,a\omega_1\,+\,\omega_2\,+\,(c-2)\omega_3,\;$
$\,\mu_4\,=\mu_2-(\alpha_1+\alpha_2+\alpha_3)\,=\,\,(a-3)\omega_1\,+\,
\omega_2\,+\,(c-1)\omega_3.\;$ Hence, Lemma~\ref{a333} applies. 

In both cases, $\,V\,$ is not exceptional, proving the claim.
\hfill $\Box$\vspace{1.5ex}

{\bf From now on, by Claims 1 and 2, we can assume that weights with 
$\,2\,$ or more nonzero coefficients occurring in $\,{\cal X}_{++}(V)\,$ 
are good weights.}

{\bf Claim 3}: \label{cl3a3}  {\it Let $\,V\,$ be an $\,A_3(K)$-module. 
If $\,{\cal X}_{++}(V)\,$ contains a good weight
$\,\mu\,=\,a\omega_1\,+\,b\omega_2\,+\,c\omega_3\,$ such that
($\,a\geq 2,\,b\geq 1,\,c\geq 1\,$) or ($\,a\geq 1,\,b\geq 2,\,c\geq 1\,$)
or ($\,a\geq 1,\,b\geq 1,\,c\geq 2\,$), then $\,V\,$ is not 
an exceptional $\mathfrak g$-module.} 

Indeed, let $\,\mu\,=\,a\omega_1\,+\,b\omega_2\,+\,c\omega_3\,$ be a good
weight in $\,{\cal X}_{++}(V)$, with $\,3\,$ nonzero coefficients.

(a) Let $\,a\geq 3,\,b\geq 1,\,c\geq 1\,$ (or graph-dually $\,a\geq 1,
\,b\geq 1,\,c\geq 3\,$) and
 $\,\mu\,=\,a\omega_1\,+\,b\omega_2\,+\,c\omega_3\,$.
For $\,p=2,\,$ $\,\mu\,$ does not occur in $\,{\cal X}_{++}(V)$.
For $\,p\geq 3,\,$ $\,{\cal X}_{++}(V)\,$ also contains 
$\,\mu_1\,=\mu-\alpha_1\,=\,(a-2)\omega_1\,+\,(b+1)\omega_2\,+
\,c\omega_3,\;$ and 
$\,\mu_2\,=\mu-(\alpha_1+\alpha_2)\,=\,(a-1)\omega_1\,+\,(b-1)\omega_2\,+
\,(c+1)\omega_3.\,$
By Claims 1 and 2, $\,\mu_1\,$ and $\,\mu_2\,$ are good weights.
Hence, by Lemma~\ref{a333}, $\,V\,$ is not exceptional.

(b) Let $\,a=2,\,b\geq 1,\,c\geq 1\,$ (or graph-dually $\,a\geq 1,
\,b\geq 1,\,c=2\,$) and $\,\mu\,=\,2\omega_1\,+\,b\omega_2\,+\,c\omega_3\,$.
For $\,p=2\,$ such $\,\mu\,$ does not occur in $\,{\cal X}_{++}(V)$.
For $\,p\geq 3$, $\,{\cal X}_{++}(V)\,$ also contains 
$\,\mu_1\,=\mu-\alpha_1\,=\,(b+1)\omega_2\,+\,c\omega_3\,$,
$\,\mu_2\,=\mu-(\alpha_1+\alpha_2)\,=\,\omega_1\,+\,(b-1)\omega_2\,+
\,(c+1)\omega_3\,$,
$\,\mu_3\,=\mu-(\alpha_1+\alpha_2+\alpha_3)=\,\omega_1\,+\,b\omega_2\,+
\,(c-1)\omega_3.\,$ 
By Claims 1 and 2, these are all good weights. Thus 
$\,s(V)\,>\,48\,$ (since $\,|W\mu|=24\,$ and 
$\,|W\mu_i|\,\geq\,12\,$ for all $\,i$). Hence, by~\eqref{allim}, 
$\,V\,$ is not an exceptional module. 

(c) Let $\,a=1,\,b\geq 1,\,c\geq 2\,$ (or graph-dually $\,a\geq 2,
\,b\geq 1,\,c=1\,$) and $\,\mu\,=\,\omega_1\,+\,b\omega_2\,+\,c\omega_3\,$.
For $\,p=2\,$ $\,\mu\,$ does not occur in $\,{\cal X}_{++}(V)$.
For $\,p\geq 3,\,$ $\,{\cal X}_{++}(V)\,$ also contains
$\,\mu_1=\mu-(\alpha_1+\alpha_2+\alpha_3)=b\omega_2\,+\,(c-1)
\omega_3,\;$
$\mu_2=\mu-(\alpha_1+\alpha_2)=(b-1)\omega_2\,+\,(c+1)\omega_3,\;$
$\mu_3\,=\mu-(\alpha_2+\alpha_3)\,=\,2\omega_1\,+\,(b-1)\omega_2\,+
\,(c-1)\omega_3,\;$
$\,\mu_4\,=\mu_1-(\alpha_2+\alpha_3)\,=\,\omega_1\,+\,(b-1)\omega_2\,+
\,(c-2)\omega_3.\,$ By Claims 1 and 2, these are all good weights, 
hence Lemma~\ref{a333} applies. 

{\it Hence, if $\,{\cal X}_{++}(V)\,$ contains a good weight of the 
form $\,\mu\,=\,a\omega_1\,+\,b\omega_2\,+\,c\omega_3\,$ (with
$\,3\,$ nonzero coefficients), then we can assume $\,a=c=1\,$.}

(d) Let $\,a=1,\,b\geq 2,\,c=1\,$ and 
$\,\mu=\omega_1\,+\,b\omega_2\,+\,\omega_3\,$. 
For $\,p=2\,$ such $\,\mu\,$ does not occur in $\,{\cal X}_{++}(V)$.
For $\,p\geq 3,\,$ $\,{\cal X}_{++}(V)$ also contains the good weights
$\,\mu_1\,=\mu-(\alpha_1+\alpha_2)\,=\,(b-1)\omega_2\,+\,2\omega_3,\;$
$\,\mu_2\,=\mu-(\alpha_2+\alpha_3)\,=\,2\omega_1\,+\,(b-1)\omega_2,\;$\\
$\,\mu_3\,=\mu_1-(\alpha_2+\alpha_3)\,=\,\omega_1\,+\,(b-2)\omega_2\,+
\,\omega_3.\,$ For $\,b\geq 3,\,$ Lemma~\ref{a333} applies.
For $\,b=2,\,$ $\,s(V)\,\geq\,60\,>\,48\,$ and, by~\eqref{allim},
$\,V\,$ is not exceptional. This proves the claim.
\hfill $\Box$ \vspace{1.5ex}

{\bf Claim 4}: \label{cl4a3}  {\it Let ($\,a\geq 2,\,b\geq 2\,$) or
($\,a\geq 1,\,b\geq 3\,$) or ($\,a\geq 3,\,b\geq 1\,$). If
$\,V\,$ is an $\,A_3(K)$-module such that $\,{\cal X}_{++}(V)\,$ contains 
a good weight $\,\mu\,=\,a\omega_1\,+\,b\omega_2\,$ (or graph-dually 
$\,\mu\,=\,b\omega_2\,+\,a\omega_3\,$), then $\,V\,$ is not 
an exceptional $\mathfrak g$-module.} 

Indeed, let $\,\mu\,=\,a\omega_1\,+\,b\omega_2\,$ (with $\,a\geq 1,
\,b\geq 1\,$) (or graph-dually $\,\mu\,=\,b\omega_2\,+\,a\omega_3\,$) 
be a good weight in $\,{\cal X}_{++}(V)$. Then $\,\mu_1\,=\mu-(\alpha_1 
+\alpha_2)\,=\,\mbox{$(a-1)\omega_1$}\,+\,(b-1)\omega_2\,+\,\omega_3\,\in
\,{\cal X}_{++}(V)$.

i) For $\,a\geq 3,\,b\geq 2\,$ or $\,a\geq 2,\,b\geq 3\,$,
$\,\mu_1\,$ satisfies Claim~3 (p. \pageref{cl3a3}). 

ii) For $\,a=b=2\,$ (hence $\,p\geq 3\,$) $\,\mu\,=\,2\omega_1\,+\, 
2\omega_2\,$, $\,\mu_1\,=\,\omega_1\,+\,\omega_2\,+\,\omega_3\,$, 
$\,\mu_2\,=\mu-\alpha_2=\,3\omega_1\,+\,\omega_3\,$, 
$\,\mu_3\,=\mu_1-(\alpha_1+\alpha_2+\alpha_3)\,=\,\omega_2\,\in
\,{\cal X}_{++}(V)$. Thus, $\,s(V)\geq 12+24 +12+6=54\,>\,48\,$. Hence,
by~(\ref{allim}), $\,V\,$ is not exceptional.

iii) Let $\,a\geq 3,\,b=1\,$ and $\,\mu\,=\,a\omega_1\,+\,\omega_2\,$.
For $\,p=2,\,$ such $\,\mu\,$ does not occur in 
$\,{\cal X}_{++}(V)$. For $\,p\geq 3$, 
$\,\mu_1\,=\mu-(\alpha_1+\alpha_2)\,=\,(a-1)\omega_1\,+\,\omega_3,\;$
$\,\mu_2\,=\mu-\alpha_1\,=\,(a-2)\omega_1\,+\,2\omega_2,\;$
$\,\mu_3\,=\mu_2-(\alpha_1+\alpha_2)\,=\,(a-3)\omega_1\,+\,\omega_2\,
+\,\omega_3\,\in\,{\cal X}_{++}(V)$. By Claims 1 and 2, these are all 
good weights. Thus, for $\,a\geq 4$, $\,s(V)\geq 60\,>\,48\,$.
For $\,a=3\,$, also $\,\mu_4\,=\mu_3-(\alpha_2+\alpha_3)\,=\,\omega_1\,\in\,
{\cal X}_{++}(V)$. Thus, $\,s(V)\geq 52\,>\,48\,$. Hence, by~(\ref{allim}),
$\,V\,$ is not exceptional.

iv) Let $\,a=1,\,b\geq 3\,$ and $\,\mu\,=\,\omega_1\,+\,b\omega_2\,$. 
For $\,p=2,\,$ such $\,\mu\,$ does not occur in $\,{\cal X}_{++}(V)$.
For $\,p\geq 3$, $\,\mu_1\,=\mu-\alpha_2\,=\,2\omega_1\,+\, 
(b-2)\omega_2\,+\,\omega_3\,\in\,{\cal X}_{++}(V)\,$ satisfies 
Claim~3 (p. \pageref{cl3a3}). Hence, $\,V\,$ is not exceptional. 
This proves Claim~4. \hfill $\Box$ \vspace{1.5ex}

{\bf Claim 5}: \label{cl5a3}  {\it Let ($\,a\geq 2,\,b\geq 3\,$) or
($\,a\geq 3,\,b\geq 2\,$) or ($\,a\geq 4,\,b\geq 1\,$) or 
($\,a\geq 1,\,b\geq 4\,$ ). If $\,V\,$ is an $\,A_3(K)$-module such 
that $\,{\cal X}_{++}(V)\,$ contains 
a good weight $\,\mu\,=\,a\omega_1\,+\,b\omega_3\,$ (or graph-dually 
$\,\mu\,=\,b\omega_1\,+\,a\omega_3\,$), then $\,V\,$ is not 
an exceptional $\mathfrak g$-module.} Indeed:

i) let $\,a\geq 3,\,c\geq 2\,$ (or graph-dually $\,a\geq 2,\,c\geq 3\,$)
and $\,\mu\,=\,a\omega_1\,+\,c\omega_3\,$.
For $\,p=2,\,$ $\,\mu\,$ does not occur in $\,{\cal X}_{++}(V)$. 
For $\,p\geq 3$, $\,\mu_1\,=\mu-\alpha_1\,=\,(a-2)\omega_1\,
+\,\omega_2\,+\,c\omega_3\,\in\,{\cal X}_{++}(V)\,$ satisfies
Claim~4. Hence $\,V\,$ is not exceptional.

ii) Let $\,a\geq 4,\,c=1\,$ (or graph-dually $\,a=1,\,c\geq 4\,$) and 
$\,\mu\,=\,a\omega_1\,+\,\omega_3\,$. For $\,p=2,\,$ $\,\mu\,$ does not 
occur in $\,{\cal X}_{++}(V)$. For $\,p\geq 3$, 
$\,\mu_1\,=\mu-\alpha_1\,=\,(a-2)\omega_1\,+\,\omega_2\,+\,\omega_3\,
\in\,{\cal X}_{++}(V)\,$ satisfies Claim~3. Hence $\,V\,$ is not exceptional. 
This proves the claim.  \hfill $\Box$ \vspace{1.5ex}

{\bf Claim 7}: \label{cl7a3}  {\it
Let $\,V\,$ be an $\,A_3(K)$-module. If $\,{\cal X}_{++}(V)\,$ contains 

(a) $\,\mu\,=\,a\omega_1\,$ (or graph-dually $\,\mu\,=\,a\omega_3\,$)
with $\,a\geq 4\,$ or

(b) $\,\mu\,=\,a\omega_2\,$ with $\,a\geq 4\,$, \\  
then $\,V\,$ is not an exceptional $\mathfrak g$-module.} 
                                                            
Indeed, let $\,a\geq 5\,$ and $\,\mu\,=\,a\omega_1\,\in\,{\cal X}_{++}(V)$.
Then $\,\mu_1\,=\mu-\alpha_1\,=\,(a-2)\omega_1\,+\,\omega_2\in
{\cal X}_{++}(V)\,$ satisfies Claim~4 (p. \pageref{cl4a3}). 

Now let $\,\mu\,=\,4\omega_1\,$.
For $\,p=2,\,$ $\,\mu\,$ does not occur in $\,{\cal X}_{++}(V)$.
For $\,p=3,\,$ $\,\mu\in {\cal X}_{++}(V)\,$ only if $\,V\,$ has highest 
weight $\,\lambda\,=\,2\omega_1\,+\,2\omega_2\,+\,2\omega_3\,$. In this case, 
$\,V\,$ is not exceptional, by Claim~3 (p. \pageref{cl3a3}).
For $\,p\geq 5,\,$ $\,\mu_1\,=\mu-\alpha_1\,=\,2\omega_1\,+\,\omega_2\,$,
$\,\mu_2\,=\mu_1-(\alpha_1+\alpha_2)\,=\,\omega_1\,+\,\omega_3\,$,
$\,\mu_3\,=\mu_1-\alpha_1\,=\,2\omega_2\,\in\,{\cal X}_{++}(V)$.
Also for $\,p\geq 5,\,$ $\,|R_{long}^+-R^+_{\mu,p}|\,=\,3,\,$
$\,|R_{long}^+-R^+_{\mu_1,p}|\,=\,|R_{long}^+-R^+_{\mu_2,p}|\,=\,5,\,$ and
$\,|R_{long}^+-R^+_{\mu_3,p}|\,=\,4.\,$
Thus $\,r_p(V)\,\geq\,1\,+\,5\,+\,5\,+\,2\,=\,13\,>\,12\,$. Hence,
by~(\ref{rral}), $\,V\,$ is not an exceptional module. This proves (a).

For (b), if $\,a\geq 4\,$ and $\,\mu\,=\,a\omega_2\,\in\, 
{\cal X}_{++}(V)$, then $\,\mu_1\,=\mu-\alpha_2\,=\,\omega_1\,+\,(a-2)
\omega_2\,+\,\omega_3\in{\cal X}_{++}(V)$. In this case, $\,\mu_1\,$ satisfies
Claim~3 (p. \pageref{cl3a3}). Hence $\,V\,$ is not exceptional, proving 
this claim. \hfill $\Box$ \vspace{1.5ex}

Now we deal with some particular cases.

P.1) Let $\,V\,$ be an $\,A_3(K)$-module such that $\,{\cal X}_{++}(V)\,$ 
contains $\,\mu\,=\,3\omega_1\,+\,\omega_3\,$ (or graph-dually $\,\mu\,=\,
\omega_1\,+\,3\omega_3\,$). For $\,p=2,\,$ $\,\mu\,$ does not 
occur in $\,{\cal X}_{++}(V)$. For $\,p=3$,
$\,\mu\,\in\,{\cal X}_{++}(V)\,$ only if $\,V\,$ has 
highest weight $\,\lambda\,=\,2\omega_1\,+\,2\omega_2\,$ or
$\,\lambda\,=\,2\omega_1\,+\,\omega_2\,+\,2\omega_3$. In these cases
$\,V\,$ is not exceptional, by Claims~4 or~3, respectively.
For $\,p\geq 5$, we can assume that $\,V\,$ has highest 
weight $\,\mu\,=\,3\omega_1\,+\,\omega_3$.
Then $\,\mu_1\,=\mu-\alpha_1\,=\,\omega_1\,+\,\omega_2\,+\,\omega_3,\;$
$\,\mu_2\,=\mu-(\alpha_1+\alpha_2+\alpha_3)\,=\,2\omega_1,\;$
$\,\mu_3\,=\mu_1-(\alpha_1+\alpha_2+\alpha_3)\,=\,\omega_2,\;$ and
$\,\mu_4\,=\mu_1-(\alpha_1+\alpha_2)\,=\,2\omega_3\, 
\in\,{\cal X}_{++}(V)$. For $\,p\geq 5,\,$ by~\cite[p. 167]{buwil},
$\,m_{\mu_1}=m_{\mu_4}=1,\,m_{\mu_2}=m_{\mu_3}=3.\,$ Thus,
$\,s(V)\,=\,12+24+ 3\cdot 4+3\cdot 4+ 4 \,=\,52>48\,$. Hence, 
by~(\ref{allim}), $\,V\,$ is not exceptional.

P.2) Let $\,V\,$ be an $\,A_3(K)$-module of highest weight 
$\,\mu\,=\,2\omega_1\,+\,2\omega_3\,$ (hence, $\,p\geq 3\,$). Then 
$\,\mu_1\,=\mu-(\alpha_1+\alpha_2+\alpha_3)\,=\,\omega_1\,+\,\omega_3,\;$
$\,\mu_2\,=\mu-\alpha_1\,=\,\omega_2\,+\,2\omega_3,\;$ and
$\,\mu_3\,=\mu-\alpha_3\,=\,2\omega_1\,+\,\omega_2\,
\in\,{\cal X}_{++}(V)$. By~\cite[p. 166]{buwil}, 
for $\,p\geq 3,\,$ $\,m_{\mu_1}=2,\,$ thus
$\,s(V)\,=\,12+2\cdot 12+ 12+12\,=\,60>48\,$. Hence, by~(\ref{allim}), 
$\,V\,$ is not exceptional.

P.3) Let $\,V\,$ be an $\,A_3(K)$-module such that $\,{\cal X}_{++}(V)\,$ 
contains $\,\mu\,=\,3\omega_2\,$.
For $\,p=2,\,$ $\,\mu\,$ does not occur in $\,{\cal X}_{++}(V)$.
For $\,p=3,\,$ $\,\mu\in {\cal X}_{++}(V)\,$ only if $\,V\,$ has
highest weight $\,\lambda\,=\,2\omega_1\,+\,\omega_2\,+\,2\omega_3\,$
or $\,\lambda\,=\,2\omega_1\,+\,2\omega_2\,$ (or graph-dually
$\,\lambda\,=\,2\omega_2\,+\,2\omega_3\,$). In these cases, $\,V\,$ is not
exceptional, by Claim~3 or Claim~4, respectively.
For $\,p\geq 5$, we can assume that $\,V\,$ has highest weight
$\,\mu\,=\,3\omega_2$. Then the good weights
$\,\mu_1\,=\mu-\alpha_2\,=\,\omega_1\,+\,\omega_2\,+\,\omega_3\,$,
$\,\mu_2\,=\mu_1-(\alpha_1+\alpha_2)\,=\,2\omega_3\,$,
$\,\mu_3\,=\mu_1-(\alpha_2+\alpha_3)\,=\,2\omega_1\,$,
$\,\mu_4\,=\mu_3-\alpha_1\,=\,\omega_2\,\in\,{\cal X}_{++}(V)$.
By~\cite[p. 166]{buwil}, $\,m_{\mu_4}=2.\,$ Thus,
$\,s(V)\,=\,6+ 24+ 4+ 4+ 2\cdot 6\,=\,50\,>\,48\,$. Hence, by~\eqref{allim}, 
$\,V\,$ is not exceptional.

P.4) Now we can assume that $\,V\,$ is an $\,A_3(K)$-module of highest weight 
$\,\mu\,=\,\omega_1\,+\,\omega_2\,+\,\omega_3\,$. Then
$\,{\cal X}_{++}(V)\,$ also contains
$\,\mu_1\,=\mu-(\alpha_1+\alpha_2+\alpha_3)\,=\,\omega_2\,$,
$\,\mu_2\,=\mu-(\alpha_1+\alpha_2)\,=\,2\omega_3\;$ and
$\,\mu_3\,=\mu-(\alpha_2+\alpha_3)\,=\,2\omega_1\,$.
By~\cite[p. 166]{buwil}, for
$\,p\geq 5,\,$ $\,m_{\mu_2}=m_{\mu_3}=2\,$ and $\,m_{\mu_1}\geq 3.\,$ Thus,
$\,s(V)\,\geq\,24+3\cdot 6 +2\cdot 4 +2\cdot 4\,=\,58\,>\,48\,$. Hence, by
\eqref{allim}, $\,V\,$ is not exceptional. For $\,p=3,\,$ 
$\,m_{\mu_2}=m_{\mu_3}=1\,$ and $\,m_{\mu_1}=2.\,$ Also 
$\,|R_{long}^+-R^+_{\mu,3}|\,=\,5,\,$
$\,|R_{long}^+-R^+_{\mu_1,3}|\,=\,4,\,$
$\,|R_{long}^+-R^+_{\mu_2,3}|\,=\,|R_{long}^+-R^+_{\mu_3,3}|\,=\,3$.
Hence, $\,r_3(V) \,\geq\,2\cdot 5\,+\,4\,+\,1\,+\,1\,>\,12\,$
and, by~(\ref{rral}), $\,V\,$ is not an exceptional module. 
For $\,p=2,\,$ if $\,V\,$ has highest weight 
$\,\mu\,=\,\omega_1\,+\,\omega_2\,+\,\omega_3\,$,
then $\,V\,$ is the Steinberg module (see Theorem~\ref{stmod}). One
can prove, using Lemma~\ref{bor42}, that $\,m_{\mu_1}\,=\,4\,$. 
Thus, $\,r_2(V)\,\geq\,16\,>\,12\,$ and, by~\eqref{rral}, $\,V\,$ is not
exceptional.

P.5) Let $\,V\,$ be an $\,A_3(K)$-module such that $\,{\cal X}_{++}(V)\,$ 
contains $\,\mu\,=\,3\omega_1\,$.
For $\,p=2,\,$ $\,\mu\,$ does not occur in $\,{\cal X}_{++}(V)$.
For $\,p=3,\,$ $\,\mu\in {\cal X}_{++}(V)\,$ only if $\,V\,$ has highest 
weight $\,\lambda\,=\,2\omega_1\,+\,\omega_2\,+\,\omega_3\,$ or
$\,\lambda\,=\,\omega_1\,+\,2\omega_2\,+\,2\omega_3\,$.
In these cases $\,V\,$ is not exceptional, by Claim~3 (p. \pageref{cl3a3}).
For $\,p\geq 5,\,$ we can assume that $\,V\,$ has highest weight
$\,\mu\,=\,3\omega_1\,$. Hence, by Lemma~\ref{III.iii} (p. \pageref{III.iii})
$\,V\,$ is not exceptional.

{\bf Hence, if $\,{\cal X}_{++}(V)\,$ contains a weight of the 
form $\,\mu\,=\,a\omega_i\,+\,b\omega_j\,$, then we can assume 
($\,a\leq 1,\,b\leq 2\,$) or ($\,a\leq 2,\,b\leq 1\,$).} 
These cases are treated in the sequel.
 
P.6) Let $\,V\,$ be an $\,A_3(K)$-module of highest weight 
$\,\mu\,=\,2\omega_1\,+\,\omega_2\,$ (or graph-dually 
$\,\mu\,=\,\omega_2\,+\,2\omega_3\,$) (hence $\,p\geq 3\,$). Then 
$\,\mu_1\,=\mu-(\alpha_1+\alpha_2)\,=\,\omega_1\,+\,\omega_3\;$ and
$\,\mu_2\,=\mu-\alpha_1\,=\,2\omega_2\,\in\,{\cal X}_{++}(V)$. 
By~\cite[p. 166]{buwil}, $\,m_{\mu_1}=2,\,m_{\mu_2}=1$. We have
$\,|R_{long}^+-R^+_{\mu,p}|\,=\,5\,(\mbox{for}\;p\neq 3)\,$ and
$\,4\,(\mbox{for}\;p=3),\,$ $\,|R_{long}^+-R^+_{\mu_1,p}|\,=\,5,\,$ 
$\,|R_{long}^+-R^+_{\mu_2,p}|\,=\,4.\,$ 
Thus, for $\,p\geq 3$, $\,r_p(V) \,\geq\,4\,+\,2\cdot 5\,+\,2\,=\,16\,>\,12\,$.
Hence, by~(\ref{rral}), $\,V\,$ is not exceptional.

P.7) Let $\,V\,$ be an $\,A_3(K)$-module of highest weight 
$\,\mu\,=\,\omega_1\,+\,2\omega_2\,$ (or graph-dually 
$\,\mu\,=\,2\omega_2\,+\,\omega_3\,$) (hence $\,p\geq 3\,$). Then
$\,\mu_1\,=\mu-(\alpha_1+\alpha_2)\,=\,\omega_2\,+\,\omega_3\,$,
$\,\mu_2\,=\mu-\alpha_2\,=\,2\omega_1\,+\,\omega_3\;$ and
$\,\mu_3\,=\mu_2-(\alpha_1+\alpha_2+\alpha_3)\,=\,\omega_1\,\in\,
{\cal X}_{++}(V)$.
By~\cite[p. 166]{buwil}, $\,m_{\mu_1}=2,\,m_{\mu_2}=1,\,m_{\mu_3}=3\,$. 
Thus, $\,s(V)\,=\,12+2\cdot 12+12+3\cdot 4\,=\,60>48\,$. Hence,
by~(\ref{allim}), $\,V\,$ is not an exceptional module.

P.8) Let $\,V\,$ be an $\,A_3(K)$-module such that $\,{\cal X}_{++}(V)\,$ 
contains $\,\mu\,=\,2\omega_1\,+\,\omega_3\,$ (or graph-dually 
$\,\mu\,=\,\omega_1\,+\,2\omega_3\,$).
For $\,p=2,\,$ $\,\mu\,$ does not occur in $\,{\cal X}_{++}(V)$.
For $\,p\geq 3$, we can assume that $\,V\,$ has highest weight $\,\mu$.
Thus $\,\mu_1\,=\mu-\alpha_1\,=\,\omega_2\,+\,\omega_3\;$ and
$\,\mu_2\,=\mu-(\alpha_1+\alpha_2+\alpha_3)\,=\,\omega_1\, 
\in\,{\cal X}_{++}(V)$. By~\cite[p. 166]{buwil},
for $\,p\neq 5,\,p\geq 3,\,$ $\,m_{\mu_2}=3,\,$ and
for $\,p=5,\,$ $\,m_{\mu_2}=2$. Also,
$\,|R_{long}^+-R^+_{\mu,p}|\,=\,5\,\,$ 
$\,|R_{long}^+-R^+_{\mu_1,p}|\,=\,5,\,$ and
$\,|R_{long}^+-R^+_{\mu_2,p}|\,=\,3.\,$ 
Thus, for $\,p\neq 5,\,$ $\,r_p(V)\,\geq\,5\,+\, 5\,+\,3\,>\,12\,$.
Hence, by~(\ref{rral}), $\,V\,$ is not an exceptional module.

U.1) For $\,p=5,\,$ the $\,A_3(K)$-modules of highest weight 
$\,2\omega_1\,+\,\omega_3\,$ (or graph-dually $\,\omega_1\,+\, 
2\omega_3\,$) are unclassified {\bf (N. 2 in Table~\ref{leftan}).}

{\bf Hence, if $\,{\cal X}_{++}(V)\,$ contains a weight of the form
$\,\mu\,=\,a\omega_i\,+\,b\omega_j\,$ (with $\,a\geq 1,\,b\geq 1\,$) 
then we can assume $\,a=b= 1\,$.} These cases are
treated in the sequel.

P.9) Let $\,V\,$ be an $\,A_3(K)$-module of highest weight 
$\,\mu\,=\,\omega_1\,+\,\omega_3\,$. Then $\,V\,$ is the adjoint module,  
which is exceptional by Example~\ref{adjoint} {\bf (N. 2 in 
Table~\ref{tablealall})}.

P.10) Let $\,V\,$ be an $\,A_3(K)$-module of highest weight 
$\,\mu\,=\,\omega_1\,+\,\omega_2\,$ (or graph-dually 
$\,\mu\,=\,\omega_2\,+\,\omega_3\,$). For $\,p=3,\,$
by Claim~12 (p. \pageref{claim12al}), $\,V\,$ is not exceptional.

U.2) For $\,p\neq 3,\,$ the $\,A_3(K)$-modules of highest weight 
$\,\mu\,=\,\omega_1\,+\,\omega_2\,$ are unclassified {\bf (N. 5 in 
Table~\ref{leftan})}.

{\bf Therefore, if $\,{\cal X}_{++}(V)\,$ contains a weight with 
$\,2\,$ nonzero coefficients, then $\,V\,$ is not an exceptional
module, unless $\,V\,$ has highest weight $\,\omega_1\,+\,\omega_3\,$
(the adjoint module, which is exceptional) or the highest weight of
$\,V\,$ is ($\,2\omega_1\,+\,\omega_3\,$ (or graph-dually 
$\,\omega_1\,+\,2\omega_3\,$) for $\,p=5\,$) or 
($\,\omega_1\,+\,\omega_2\,$ (or graph-dually
$\,\omega_2\,+\,\omega_3\,$) for $\,p\neq 3\,$) in which cases $\,V\,$ is 
unclassified.}

{\bf From now on we can assume that $\,{\cal X}_{++}(V)\,$ contains only 
weights with at most one nonzero coefficient.} 

P.11) Let $\,V\,$ be an $\,A_3(K)$-module such that 
$\,\mu\,=\,2\omega_1\,$ (or graph-dually $\,\mu\,=\,2\omega_3\,$) 
$\in\,{\cal X}_{++}(V)$. For $\,p=2\,$, $\,\mu\,\in\,{\cal X}_{++}(V)\,$  
only if $\,V\,$ has highest weight $\,\lambda\,=\,\omega_1\,+\,\omega_2\,
+\,\omega_3\,$. By case P.4) this module is not exceptional.
For $\,p\geq 3,\,$ we can assume that $\,V\,$ has highest weight
$\,\mu\,=\,2\omega_1\,$ (or graph-dually $\,\mu\,=\,2\omega_3$). 
Then $\,V\,$ is an exceptional module by case P.15 (p. \pageref{IIIbp15}) 
of First Part of Proof of Theorem~\ref{anlist} {\bf (N. 3 in 
Table~\ref{tablealall})}. 

P.12) If $\,V\,$ is an $\,A_3(K)$-module of highest weight 
$\,\mu\,=\,\omega_1\,$ (or graph-dually $\,\mu\,=\,\omega_3$), 
then $\,V\,$ is an exceptional module by case P.19 (p. \pageref{IIIp19})
of First Part of Proof Theorem~\ref{anlist} {\bf (N. 4 and 1 in  
Table~\ref{tablealall})} .

U.3) Let $\,V\,$ be an $\,A_3(K)$-module such that 
$\,\mu\,=\,2\omega_2\,\in\,{\cal X}_{++}(V)$.
For $\,p=2,\,$ $\,\mu\,$ does not occur in $\,{\cal X}_{++}(V)$. 
For $\,p\geq 3,\,$ we can assume that $\,V\,$ has highest weight 
$\,2\omega_2\,$. These modules are unclassified  
{\bf (N. 7 in Table~\ref{leftan}).}

P.13) If $\,V\,$ is an $\,A_3(K)$-module of highest weight
$\,\mu\,=\,\omega_2,\;$ then $\,\dim\,V=6\,<\,15\,-\,\varepsilon\,$.
Thus, by Proposition~\ref{dimcrit}, $\,V\,$ is an exceptional module
{\bf (N. 5 in Table~\ref{tablealall})}.

{\bf Hence if $\,V\,$ is an $\,A_3(K)$-module such that
$\,{\cal X}_{++}(V)\,$ has weights of the form
$\,\mu\,=\,a\omega_i\,$ (with $\,1\leq i\leq 3\,$ and $\,a\geq 3$), 
then $\,V\,$ is not an exceptional $\,\mathfrak g$-module. If $\,V\,$ has
highest weight $\,\lambda\,\in\,\{\omega_1,\,\omega_2,\,\omega_3,\,2\omega_1, 
2\omega_3\},\,$ then $\,V\,$ is an exceptional module.
If $\,V\,$ has highest weight 
$\,2\omega_2\,$ (for $\,p\geq 3\,$), then $\,V\,$ is unclassified.}

This finishes the proof of Theorem~\ref{anlist} for groups of type
$\,A_3$.
\hfill $\Box$


\subsection{Groups or Lie Algebras of Type $\,B_{\ell}$}\label{appbl}
In this section we prove Theorem~\ref{listbn}. Recall that 
for groups of type $\,B_{\ell}\,$, $\,p\geq 3\,$, 
$\,|W|\,=\,2^{\ell}\,\ell!\,$, $\,|R|\,=\,2\ell^2\,,\,|R_{long}|\, 
=\,2\ell(\ell-1)\,$.

\subsubsection{Type $\,B_{\ell}\,,\;\ell\geq 5$} \label{fppbl}

{\bf Proof of Theorem~\ref{listbn} - First Part} \\
Let $\,\ell\geq 5$. 
By Lemma~\ref{3coefbn}, {\bf it suffices to consider $\,B_{\ell}(K)$-modules
such that $\,{\cal X}_{++}(V)\,$ contains only weights with 2 or
less nonzero coefficients.} 

First suppose that $\,{\cal X}_{++}(V)\,$ contains weights of the form
$\,\mu\,=\,a\,\omega_{i}\,+\,b\,\omega_{j}\,$ with $\,1\leq i<j \leq\ell\,$ and
$\,a,\,b\,\geq 1$.

I) {\bf Claim 1}:\label{claim1bl}
{\it If $\,{\cal X}_{++}(V)\,$ contains a bad weight with 
$\,2\,$ nonzero coefficients, then $\,V\,$ is not an exceptional module.}

Indeed, let $\,\mu\,=\,a\,\omega_{i}\,+\,b\,\omega_{j}\,$ 
(with $\,1\leq i<j \leq\ell\,$ and $\,a,\,b\geq 1\,$) be a bad weight
in $\,{\cal X}_{++}(V)$. Then $\,a,\,b\,\geq 3$.

(a) For $\,1\leq i<j <\ell-1\,$,
$\,\mu_1\,=\,\mu-(\alpha_i+\cdots +\alpha_j)\,=\,\omega_{i-1}\,+\,(a-1)
\,\omega_{i}\,+\,(b-1)\,\omega_{j}\,+\,\omega_{j+1}\in
\,{\cal X}_{++}(V)$.

(b) For $\,1\leq i<\,j =\ell-1\,$,
$\,\mu\,=\,a\,\omega_{i}\,+\,b\,\omega_{\ell -1}\;$ and 
$\,\mu_1\,=\,\mu-(\alpha_i+\cdots +\alpha_{\ell -1})\,=\,\omega_{i-1}\,
+\,(a-1)\,\omega_{i}\,+\,(b-1)\,\omega_{\ell-1}\,+\,2\omega_{\ell}\in
\,{\cal X}_{++}(V)$.

(c) For $\,1\,<\, i\,<\,j =\ell\,$,
$\,\mu\,=\,a\,\omega_{i}\,+\,b\,\omega_{\ell}\;$ and 
$\,\mu_1\,=\,\mu-(\alpha_i+\cdots +\alpha_{\ell})\,=\,\omega_{i-1}\,+\,(a-1)
\,\omega_{i}\,+\,b\,\omega_{\ell}\,\in
\,{\cal X}_{++}(V)\,$.

(d) For $\,1= i,\,j =\ell\,$,
$\,\mu\,=\,a\,\omega_{1}\,+\,b\,\omega_{\ell}\;$ and 
$\,\mu_1\,=\,\mu-\alpha_1\,=\,(a-2)\omega_{1}\,+\,
\,\omega_{2}\,+\,b\,\omega_{\ell}\,\in
\,{\cal X}_{++}(V)\,$.\\
In all these cases, $\,\mu_1\,$ 
is a good weight with 3 or more nonzero coefficients. Thus,
Lemma~\ref{3coefbn} applies and $\,V\,$ is not exceptional. 
This proves the claim. (Note that this claim is true for $\,\ell\geq 4$.)
\hfill $\Box$ 

{\bf Therefore if $\,{\cal X}_{++}(V)\,$ contains weights with 2
nonzero coefficients, then we can assume that these are good weights.}

II) Let $\,\mu\,=\,a\,\omega_{i}\,+\,b\,\omega_{j}\,$ (with $\,a,\,b\geq 1\,$)
be a good weight in $\,{\cal X}_{++}(V)$.

{\bf Claim 2}: \label{claim2bl}
{\it Let $\,\ell\geq 5$. If $\,V\,$ is a $\,B_{\ell}(K)$-module such
that $\,{\cal X}_{++}(V)\,$ contains a good weight $\,\mu\,=\,a\, 
\omega_{i}\,+\,b\,\omega_{\ell-2}\,$ (with $\,a\geq 1,\,b\geq 1\,$  
and $\,1\leq i \leq \ell-3\,$), then $\,V\,$ is not an exceptional 
$\,\mathfrak g$-module.} 

Indeed, let $\,\mu\,=\,a\,\omega_{i}\,+\,b\,\omega_{\ell-2}\,$.
Then for $\,1\leq i \leq\ell -3\,$,
\[
|W\mu|\,=\,\displaystyle 
2^{\ell -3}\,\ell\,(\ell -1)\,\binom{\ell-2}{i}\,\geq\,2^{\ell-3}\,
\ell\,(\ell-1)\,(\ell-2)\,.
\]
Thus, for $\,\ell\geq 6$, $\,s(V)\geq 
2^{\ell-3}\,\ell\,(\ell-1)\,(\ell-2)\,>\,2\ell^3$. 
For $\,\ell=5,\,$ if $\,\mu\,=\,a\omega_{2}\,+\,b\omega_{3}\,$, then 
$\,\mu_1\,=\,\mu-(\alpha_2+\cdots +\alpha_3)\,=\,\omega_{1}\,+\,(a-1)
\omega_{2}\,+\,(b-1)\omega_{3}\,+\,\omega_{4}\,\in\,{\cal X}_{++}(V)$. 
Thus $\,s(V)\,\geq\,2^4\cdot 5 \cdot 3\,+\,2^6\cdot 5\,>\,2\cdot 5^3\,$. 
If $\,\mu\,=\,a\omega_{1}\,+\,b\omega_{3}\,$, then 
$\,\mu_1\,=\,(a-1)\omega_{1}\,+\,(b-1)\omega_{3}\,+\,\omega_{4}\, 
\in\,{\cal X}_{++}(V)$. In this case 
$\,s(V)\,\geq\,2^4\cdot 5\cdot 3\,+\,2^4\cdot 5\,>\,2\cdot 5^3\,$.
In any of these cases, by~\eqref{bllim}, $\,V\,$ is not 
exceptional, proving the claim. \hfill $\Box$ \vspace{1.5ex}

{\bf Claim 3}: \label{claim3bl} 
{\it Let $\,\ell\geq 6$. If $\,V\,$ is a $\,B_{\ell}(K)$-module such
that $\,{\cal X}_{++}(V)\,$ contains a good weight $\,\mu\,=\,a\, 
\omega_{i}\,+\,b\,\omega_{j}\,$ (with $\,a\geq 1,\,b\geq 1\,$ and 
$\,1\leq i < j,\; 3\leq j\leq \ell-3\,$), then $\,V\,$ is not an 
exceptional $\,\mathfrak g$-module.} 

Indeed, for $\,1\leq i < j,\;3\leq j\leq \ell -3\,$ (hence $\,\ell\geq 6\,$)
we have $\,\displaystyle\binom{j}{i}\geq 3\,$ and  
$\,\displaystyle\binom{\ell}{i}\geq\binom{\ell}{3}\,$. Hence
$\,|W\mu|\,=\,\displaystyle 2^j\,\binom{j}{i}\,\binom{\ell}{j}\,\geq\,
2^3\cdot 3\,\binom{\ell}{3}\,=\,2^2\,\ell\,(\ell-1)\,(\ell-2)$. 
Thus, for $\,\ell\geq 6$, $\,s(V)\,-\,2\ell^3\,\geq\,2^2\,\ell\,(\ell-1)\, 
(\ell-2)\,-\,2\ell^3\,=\,2\ell^2\,(\ell -6)\,+\,8\ell\,>\,0\,$.
Hence, by~\eqref{bllim}, $\,V\,$ is not exceptional. This proves the claim.
\hfill $\Box$ \vspace{1.5ex}

{\bf Therefore if $\,{\cal X}_{++}(V)\,$ contains
a good weight $\,\mu\,=\,a\,\omega_{i}\,+\,b\,\omega_{j}\,$  
(with $\,a\geq 1,\,b\geq 1\,$), then we can assume that ($\,i=1,\,j=2\,$) or
($\,1\leq i < j\,$ and $\,j\in\{\ell -1,\,\ell\}\,$).}
These cases are treated in the sequel. \vspace{1ex}

{\bf Claim 4}: \label{claim4bl} 
{\it Let $\,\ell\geq 5$. If $\,V\,$ is a $\,B_{\ell}(K)$-module such
that $\,{\cal X}_{++}(V)\,$ contains
a good weight $\,\mu\,=\,a\,\omega_{i}\,+\,b\,\omega_{\ell-1}\,$  
(with $\,a\geq 1,\,b\geq 1\,$ and $\,1\leq i \leq \ell-2\,$), then  
$\,V\,$ is not an exceptional $\,\mathfrak g$-module.}
 
Indeed, for $\,1\leq i\leq\ell-2\,$ and $\,\ell\geq 5$,
$\,s(V)\,\geq\,|W\mu|\,=\,\displaystyle2^{\ell-1}\,\ell\,\binom{\ell-1}{i}
\,\geq\,2^{\ell-1}\,\ell\,(\ell-1)\,>\,2\ell^3\,$. Hence,
by~\eqref{bllim}, $\,V\,$ is not exceptional, proving the claim.\hfill $\Box$
\vspace{1.5ex}

{\bf Claim 5}: \label{claim5bl}
{\it Let $\,\ell\geq 5$. If $\,V\,$ is a $\,B_{\ell}(K)$-module such
that $\,{\cal X}_{++}(V)\,$ contains
a good weight $\,\mu\,=\,a\,\omega_{i}\,+\,b\,\omega_{\ell}\,$  
(with $\,a\geq 1,\,b\geq 1\,$ and $\,1\leq i \leq \ell-1\,$), then  
$\,V\,$ is not an exceptional $\,\mathfrak g$-module.} Indeed:

(a) let $\,2\leq i\leq \ell-2\,$ and $\,\mu\,=\,a\,\omega_{i}\,+\,
b\,\omega_{\ell}\,$ (with $\,a\geq 1,\,b\geq 1\,$). Then for $\,\ell\geq 5$, 
$\,|W\mu|\,=\,\displaystyle 2^{\ell}\,\binom{\ell}{i}\,\geq \, 
2^{\ell}\,\binom{\ell}{2}\,$. Thus, for $\,\ell\geq 5$, $\,s(V)\,\geq
\,2^{\ell-1}\,\ell\,(\ell-1)\,>\,2\ell^3\,$. Hence, by~\eqref{bllim},
$\,V\,$ is not exceptional.

(b) Let $\,i=\ell -1,\,j=\ell\,$ and $\,\mu\,=\,a\,\omega_{\ell-1}\,+\,
b\,\omega_{\ell}\,$ (with $\,a\geq 1,\,b\geq 1\,$).  Then
$\,\mu_1\,=\,\mu-(\alpha_{\ell-1}+\alpha_{\ell})\,=\,
\omega_{\ell-2}\,+\,(a-1)\omega_{\ell-1}\,+\,b\omega_{\ell}\,
\in\,{\cal X}_{++}(V)$. For $\,\ell\geq 5,\,$ $\,\mu_1\,$ satisfies
(a) of this proof. Hence $\,V\,$ is not exceptional.

(c) Let $\,i=1,\;j=\ell\,$ and $\,\mu\,=\,a\,\omega_{1}\,+\,b\,
\omega_{\ell}\,$ (with $\,a\geq 1,\,b\geq 1\,$). Then $\,s(V)\,\geq\,
|W\mu|\,=\,\displaystyle\frac{2^{\ell}\,\ell!}{(\ell-1)!}\,=\,2^{\ell}\,
\ell\,>\,2\,\ell^3,\,$ for all $\,\ell\geq 7$.

Let $\,\ell=5,\,6\,$ and $\,\mu\,=\,a\,\omega_{1}\,+\,b\,\omega_{\ell}\,$.
For $\,a\geq 1,\,b\geq 2,\,$ $\,\mu_1\,=\,\mu-\alpha_{\ell}\,=\,a\omega_1\,+\,
\omega_{\ell-1}\,+\,(b-2)\omega_{\ell}\,\in\,{\cal X}_{++}(V)$.
Hence, for $\,b\geq 3,\,$ Lemma~\ref{3coefbn} applies and, for
$\,b=2,\,$ $\,\mu_1\,$ satisfies Claim~4 (p. \pageref{claim4bl}).

For $\,a\geq 2,\,b=1,\,$ $\,\mu_1\,=\,\mu-(\alpha_1+\cdots+ 
\alpha_{\ell})\,=\,(a-1)\omega_1\,+\,\omega_{\ell}\,\in\,{\cal X}_{++}(V)$.  
Thus, for $\,\ell=5,\,6$, $\,s(V)\,\geq\,|W\mu|+|W\mu_1|\,\geq\,2\cdot
2^{\ell}\,\ell\,>\,2\cdot \ell^3$.

Hence we may assume $\,a=b=1\,$ and $\,\mu\,=\,\omega_{1}\,+\,\omega_{\ell}\,$.
Then $\,\mu_1\,=\,\omega_{\ell}\,\in\,{\cal X}_{++}(V)$. For $\,\ell=6$,  
$\,s(V)\,\geq\,|W\mu|+|W\mu_1|\,\geq\,2^6\cdot 6\,+\,2^6\,=\,2^6\cdot
7\,>\,2\cdot 6^3\,$.

For $\,\ell=5$, we can assume that $\,V\,$ has highest weight 
$\,\mu\,=\,\omega_{1}\,+\,\omega_{5}\,$. Then $\,\mu_1\,=\,\omega_5\, 
\in\,{\cal X}_{++}(V)$. Let 
$\,v_0\,$ be a highest weight vector of $\,V\,$.
Consider $\,v_1\,=\,f_5\,f_1\,v_0.\,$  Note that
$\,v_1\neq 0\,$ and $\,e_i\,v_1\,=\,0\,$ for $\,i\in\{ 2,3,4\}.\,$ Thus, 
$\,v_1\,$ is a highest weight vector for the action of $\,A\,=\,\langle
e_{\pm\alpha_i}\,/\,2\leq i\leq4\rangle\,\cong\,A_3\,$ on $\,V.\,$ 
The weight of $\,v_1\,$ with respect to the toral subalgebra $\,\langle
h_{\alpha_i}\,/\,2\leq i\leq4\rangle\,$ is 
$\,\omega_2\,+\,\omega_4\,=\,\overline{\omega_1}\,+\,\overline{\omega_3}.\,$
Thus, the $\,A$-module generated by $\,v_1\,$ is a homomorphic image of
$\,\mathfrak{sl}(4)\,$. Its zero weight is $\,\mu_1\,=\,\omega_5\,=\,\bar 0$.
As $\,p>2,\,$ $\,\mathfrak{sl}(4)\,$ is a simple Lie
algebra, hence irreducible. It follows that
$\,m_{\mu_1}\,=\,\dim\,V_{\mu_1}\,\geq 3,\,$ and 
$\,s(V)\,\geq\,2^5\cdot 5\,+\,3\cdot 2^5\,=\,2^8\,>\,2\cdot
5^3$. 

Hence in any of these cases,  by~\eqref{bllim}, $\,V\,$ is not an 
exceptional module. This proves Claim~5. \hfill $\Box$ \vspace{1.5ex}

{\bf Therefore if $\,{\cal X}_{++}(V)\,$ contains
a good weight $\,\mu\,=\,a\,\omega_{i}\,+\,b\,\omega_{j}\,$  
(with $\,a\geq 1,\,b\geq 1\,$), then we can assume that $\,i=1,\,j=2\,$.}

(d) Let $\,\mu\,=\,a\,\omega_{1}\,+\,b\,\omega_{2}\,$ (with
$\,a,\,b\geq 1\,$) be a good weight in $\,{\cal X}_{++}(V)$.
Then $\,\mu_1\,=\,\mu-(\alpha_1+\alpha_{2})\,=\,(a-1)\omega_{1}\,+\,
(b-1)\omega_2\,+\,\omega_{3}\,\in\,{\cal X}_{++}(V)$.

i) For $\,a\geq 2,\,b\geq 2,\,$ $\,\mu_1\,$ is a good weight
with $3$ nonzero coefficients. Hence Lemma~\ref{3coefbn} applies. 

ii) For $\,a=1,\,b\geq 2,\,$ $\,\mu_1\,=\,(b-1)\omega_2\,+\,\omega_{3}\, 
\in\,{\cal X}_{++}(V)$. For $\,a\geq 2,\,b=1,\,$ $\,\mu_1\,=\,(a-1)
\omega_1\,+\,\omega_{3}\,\in\,{\cal X}_{++}(V)$. In both cases,  
for $\,\ell\geq 6$, $\,\mu_1\,$ satisfies Claim~3 (p. \pageref{claim3bl}).  
For $\,\ell=5\,$, $\,\mu_1\,$ satisfies Claim~2 (p. \pageref{claim2bl}).
In these cases, $\,V\,$ is not exceptional.

iii) For $\,a=b=1,\,$ we may assume that $\,V\,$ has highest
weight $\,\mu\,=\,\omega_{1}\,+\,\omega_{2}\,$. Then
$\,\mu_1=\mu-(\alpha_1+\alpha_2)\,=\,\omega_3,\;$
$\,\mu_2=\mu_1-(\alpha_3+\cdots +\alpha_{\ell})\,=\,\omega_2,\;$
$\,\mu_3=\mu_2-(\alpha_2+\cdots +\alpha_{\ell})\,=\,\omega_1,\,$
$\,\mu_4=\mu-(\alpha_2+\cdots +\alpha_{\ell})\,=\,2\omega_1\,
\in\,{\cal X}_{++}(V)$.

Let $\,v_0\,$ be a highest weight vector of $\,V\,$. Apply $\,e_2\,e_1\,$ 
and $\,e_1\,e_2\,$ to the nonzero vectors $\,v_1\,=\,f_1\,f_2\,v_0,\,$
$\,v_2\,=\,f_2\,f_1\,v_0\,$ (of weight $\,\mu_1\,$) to prove that  
$\,v_1,\,v_2\,$ are linearly independent for $\,p\neq 3.\,$ 
This implies $\,m_{\mu_1}\geq 2\,$. Thus, for $\,p\neq 3\,$ and
$\,\ell\geq 6$, $\,s(V)\,\geq\,4\,\ell\,(\ell-1)\, 
+\,2\displaystyle\frac{4\ell\,(\ell-1)\,(\ell-2)}{3}\,+\,2\,\ell\,
(\ell-1)\,=\,\displaystyle\frac{\ell\,(\ell-1)\,(8\ell-2)}{3}>\,2\,\ell^3\,$. 
For $\,\ell=5$, $\,s(V)\,\geq\,2^4\cdot
5\,+\,2\cdot 2^4\cdot 5\,+\,2^3\cdot 5\,+\,2\cdot 2\cdot 5\,>\,2\cdot 5^3.$
Hence, by~\eqref{bllim}, $\,V\,$ is not exceptional.

For $\,p=3,\,$ $\,|R_{long}^+-R^+_{\mu,3}|\,=\,
|R_{long}^+-R^+_{\mu_2,3}|\,=\,4\ell-7,\,$
$\,|R_{long}^+-R^+_{\mu_1,3}|\,=\,3(2\ell-5),\,$ and
$\,|R_{long}^+-R^+_{\mu_3,3}|\,=\,
|R_{long}^+-R^+_{\mu_4,3}|\,=\,2(\ell-1).\,$ Thus, for $\,\ell\geq 4,\,$
$\,r_3(V) \geq\,2(4\ell-7)\,+\,2(\ell-2)(2\ell-5)\,+\,4\ell-7\,+\,2\,+\,2\,
=\, 4\ell^2\,-\,6\ell\,+\,3\,>\,2\ell^2\,$.
In this case, by~\eqref{rrbl}, $\,V\,$ is not an exceptional module.

{\bf Hence, for $\,\ell\geq 5,\,$ if $\,{\cal X}_{++}(V)\,$
contains weights with 2 nonzero coefficients, then $\,V\,$ is not an
exceptional module.} 

{\bf From now on we only need to consider $\,B_{\ell}(K)$-modules such
that $\,{\cal X}_{++}(V)\,$ contains only weights with at most 
one nonzero coefficient.}

III) Let $\,\mu\,=\,a\,\omega_i\,$ (with $\,1\leq i\leq\ell\,$ and 
$\,a\geq 1\,$) be a weight in $\,{\cal X}_{++}(V)$.

(a)i) Let $\,a\geq 3,\,$ $\,2\leq i\leq \ell -2\,$ and
$\,\mu\,=\,a\,\omega_i\,$. Then
$\,\mu_1\,=\,\mu-\alpha_i\,=\,\omega_{i-1}\,+\,(a-2)\omega_{i}\,+\,
\omega_{i+1}\,\in{\cal X}_{++}(V)$. As $\,\mu_1\,$ has $3$ nonzero 
coefficients, Lemma~\ref{3coefbn} applies (for $\,\mu\,$ bad or good).

ii) Let $\,a\geq 3\,$ and $\,\mu\,=\,a\,\omega_{\ell-1}\,$. Then
the good weight $\,\mu_1\,=\,\mu-\alpha_{\ell-1}\,=\,\omega_{\ell-2}\,
+\,(a-2)\omega_{\ell-1}\,+\,2\omega_{\ell}\,\in\,{\cal
X}_{++}(V)$. Hence Lemma~\ref{3coefbn} applies.

iii) Let $\,a\geq 3\,$ and $\,\mu\,=\,a\,\omega_{\ell}\,$. Then
$\,\mu_1\,=\mu-\alpha_{\ell}=\omega_{\ell-1}\,+\, 
\mbox{$(a-2)\omega_{\ell}$}\,\in\,{\cal X}_{++}(V)\,$ satisfies
Claim~5 (p. \pageref{claim5bl}).

iv) Let $\,a\geq 3\,$ and $\,\mu\,=\,a\,\omega_1\,\in\,{\cal X}_{++}(V)$. 

iv.1) For $\,a\geq 4,\,$ $\,\mu_1\,=\mu-\alpha_1=\,\mbox{$(a-2)\omega_{1}$}\, 
+\,\omega_{2},\;$ and $\;\mu_2\,=\,\mu-(\alpha_1+\alpha_2)\,=\, 
(a-3)\omega_{1}\,+\,\omega_{3}\,\in\,{\cal X}_{++}(V)$.
For $\,\ell\geq 6,\,$ $\,\mu_2\,$ satisfies Claim~3
(p. \pageref{claim3bl}). For $\,\ell=5,\,$ $\,\mu_2\,$ satisfies Claim~2
(p. \pageref{claim2bl}). Hence $\,V\,$ is not exceptional.

iv.2) For $\,a=3,\,$ $\,\mu\,=\,3\,\omega_1\,$. If $\,p=3,\,$ then 
$\,\mu\in {\cal X}_{++}(V)\,$ only if $\,V\,$ has highest
weight $\,\lambda\,=\,2\omega_1\,+\,\omega_i\,$ (for some $\,2\leq  
i\leq\ell-1\,$) or $\,\lambda\,=\,\omega_1\,+\,\omega_i\,+\,\omega_j\,$
(for some $\,2\leq i\leq j\leq\ell-1\,$) or $\,\lambda\,=\,2\omega_i\,+
\,\omega_j\,$ (for some $\,2\leq i <j\leq\ell-1\,$), for instance. 
In any of these cases, by Part II of this proof or  
by Lemma~\ref{3coefbn}, $\,V\,$ is not an exceptional module.

For $\,p\geq 5,\,$ $\,\mu\,=\,3\omega_1,\,$ $\,\mu_1=\omega_1\,+ 
\,\omega_{2},\,$ $\,\mu_2\,=\,\omega_{3},\,$
$\,\mu_3\,=\mu_2-(\alpha_3+\cdots+\alpha_{\ell})=\,\omega_{2},\;$
$\,\mu_4\,=\mu_3-(\alpha_2+\cdots+\alpha_{\ell})=\,\omega_{1},\;$
$\,\mu_5\,=\mu_1-(\alpha_2+\cdots+\alpha_{\ell})=\,2\omega_{1}\in
{\cal X}_{++}(V)$. Also, $\,|R_{long}^+-R^+_{\mu,p}|\,=\,|R_{long}^+- 
R^+_{\mu_4,p}|\,=\,|R_{long}^+-R^+_{\mu_5,p}|\,=\,2(\ell-1),\,$
$\,|R_{long}^+-R^+_{\mu_1,p}|\,=\,2(2\ell-3),\,$ 
$\,|R_{long}^+-R^+_{\mu_2,p}|\,=\,3(2\ell-5),\,$ and
$\,|R_{long}^+-R^+_{\mu_3,p}|=4\ell-7.\,$
Thus, for $\,\ell\geq 5\,$ (and for $\,\ell=4\,$),
$\,r_p(V) \,\geq\,2\,+\,2\cdot 2(2\ell-3)\,+\,2(\ell-2)(2\ell-5)\,
+\,4\ell-7\,+\,2\,+\,2\, = 4\ell^2\,-\,6\ell\,+\,13\,>\,2\ell^2\,$.
Hence, by\eqref{rrbl}, $\,V\,$ is not exceptional. 

{\bf Hence, for $\,\ell\geq 5\,$ if $\,V\,$ is a $\,B_{\ell}(K)$-module 
such that $\,{\cal X}_{++}(V)\,$ contains weights
$\,\mu\,=\,a\,\omega_i$, with $\,1\leq i\leq\ell\,$ and 
\mbox{$\,a\geq 3$,} then $\,V\,$ is not an exceptional
$\,\mathfrak g$-module. Therefore we may assume $\,a\leq 2\,$.} These cases
are treated in the sequel.\vspace{1.5ex}

{\bf Claim 6}: \label{claim6bl} 
{\it Let $\,\ell\geq 6\,$ and $\,4\leq i\leq \ell-1\,$. 
If $\,V\,$ is a $\,B_{\ell}(K)$-module such that $\;\omega_{i}\,
\in\,{\cal X}_{++}(V)\,$, then $\,V\,$ is not an exceptional 
$\,\mathfrak g$-module.}

Indeed, if $\;\omega_{i}\,\in\,{\cal X}_{++}(V)\,$, then also
$\;\omega_{i-1},\;\cdots,\,\omega_{4},\;\omega_{3},\;\omega_{2},\; 
\omega_{1}\,\in\,{\cal X}_{++}(V)$. Now $\,\displaystyle
\sum_{j=1}^{4}\,|W\omega_j|\,=\, 2\ell\,+\,2\ell(\ell-1)\,+\,\displaystyle
\frac{2^2\ell(\ell-1)(\ell-2)}{3}\,+\,\frac{2\ell(\ell-1)(\ell-2)(\ell-3)}{3}
\,=\, \displaystyle\frac{2\ell^4-8\ell^3 +16\ell^2 - 4\ell}{3}\,$.
Thus for $\,\ell\geq 6$, 
\[
s(V)\,-\,2\ell^3\,\geq\,\displaystyle
\frac{2\ell^3(\ell -7)\,+\,4\ell(4\ell -1)}{3}\,>\,0\,.
\]
Hence for $\,\ell\geq 6$, 
by~\eqref{bllim}, $\,V\,$ is not exceptional, proving the
claim. \hfill $\Box$ \vspace{1.5ex}
 
(b) Let $\,a=2\,$ and $\,\mu\,=\,2\,\omega_i\,$ (with $\,1\leq i\leq
\ell\,$) be a weight in $\,{\cal X}_{++}(V)$.

i) For $\,3\leq i\leq \ell -3,\,$ $\,\mu_1\,=\,\mu-\alpha_i\,=\, 
\omega_{i-1}\,+\,\omega_{i+1}\,\in\,{\cal X}_{++}(V)\,$
satisfies Claim 2 or Claim 3. For $\,i=2,\,$  
$\,\mu_1\,=\,\omega_1\,+\,\omega_3\,$ satisfies Claim 3
(for $\,\ell\geq 6\,$) or Claim 2 (for $\,\ell=5\,$). For $\,i=\ell
-2,\,$ $\,\mu_1\,=\,\omega_{\ell-3}\,+\,\omega_{\ell-1}\,$ satisfies
Claim~4 (p. \pageref{claim4bl}). In any of these cases, $\,V\,$ is not
an exceptional module.

ii) For $\,i=\ell-1\,$, $\,\mu_1\,=\,\mu-\alpha_{\ell-1}\,=\, 
\omega_{\ell-2}\,+\,2\omega_{\ell}\,\in\,{\cal X}_{++}(V)\,$ satisfies
Claim~5 (p. \pageref{claim5bl}), for $\,\ell\geq 5$.

iii) Let $\,i=\ell\,$ and $\,\mu\,=\,2\,\omega_{\ell}\,$. Then
$\,\mu_1\,=\,\omega_{\ell-1}\,\in\,{\cal X}_{++}(V)$. Hence, for 
$\,\ell\geq 6,\,$ Claim~6 (p. \pageref{claim6bl})
applies and $\,V\,$ is not exceptional.

For $\,\ell=5,\,$ we can assume that $\,V\,$ has highest weight
$\,\mu\,=\,2\,\omega_{5}\,$. Then $\,\mu_1\,=\,\mu-\alpha_{5}\,=\, 
\omega_{4},\;$ $\,\mu_2\,=\,\mu-(\alpha_{4}+2\alpha_{5})\,=\,\omega_{3},\;$
$\,\mu_3\,=\,\mu-(\alpha_3+2\alpha_{4}+3\alpha_{5})\,=\,\omega_{2},\;$
$\,\mu_4\,=\,\mu_3-(\alpha_2+\alpha_{3}+\alpha_{4}+\alpha_5)\,=\,\omega_{1}\;$
are good weights in $\,{\cal X}_{++}(V).\,$ 
Consider the Lie algebra $\,A\,=\,\langle e_{\pm\alpha_i}\,/\,3\leq i\leq
5\rangle\,\cong\,B_3.\,$ The module $\,V|_A\,$ has highest weight 
$\,\mu\,=\,2\omega_5\,=\,2\overline{\omega_3}\,$ with respect to the
toral subalgebra
$\,\langle h_{\alpha_i}\,/\,3\leq i\leq 5\rangle.\,$ 
Then, by Smith's Theorem~\cite{smith}, the zero weight $\,\mu_3\,=\, 
\omega_{2}\,=\,\bar 0\,$ of $\,V|_A\,$ has multiplicity
$\,m_{\mu_3}\geq 2\,$ for $\,p\geq 3\,$~\cite[p. 167]{buwil}. Thus
$\,s(V)\,\geq\,2^5\,+\,2^4\cdot 5\,+\,2^4\cdot 5\,+\,2\cdot 2^3\cdot
5\,+\,2\cdot 5\,=\,2\cdot 141\,>\,2\cdot 5^3\,$. Hence, by~\eqref{bllim},
$\,V\,$ is not an exceptional module.

iv)\label{IIIbiv} Let $\,i=1\,$ and $\,\mu\,=\,2\,\omega_1\,$. 

By Section~\ref{roapm}, $\,E(2\omega_1)\,\cong\,\displaystyle
\frac{{\rm Symm}_n\cap\mathfrak{sl}(n)}{scalars}\,$, where $\,n\,=\,2\ell+1$.  
Consider a generic enough diagonal matrix 
\[
D\,=\,\left(
\begin{array}{cccc} t_1 & 0 & \cdots & 0 \\
0 & t_2 & \cdots & 0 \\ \vdots &  & \ddots & \vdots\\
0 & 0 & \cdots & t_n \end{array}\right)
\]
with $\,n\,$ distinct eigenvalues.
Look at $\,D\,$ inside $\,E(2\omega_1)\,\cong\,\displaystyle 
\frac{{\rm Symm}_n\cap\mathfrak{sl}(n)}{scalars}\,$. $\,\mathfrak
g_D\,$ denotes the stabilizer of $\,D\,$ in $\,\mathfrak g\,$.
We have $\,M\in\mathfrak g_D\,\Longleftrightarrow\,({\rm ad}\,D)(M)\,=\,  
MD-DM\,=\,b\,{\rm Id}\,$, where $\,b\in K$. Hence, $\,({\rm
ad}\,D)^2(M)\,=\,[D,\, [D,\,M]]\,=\,0\,$. As $\,D\,$ is semisimple, so
is $\,{\rm ad}\,D\,$ whence $\,[D,\, [D,\,M]]\,=\,0\,$ implies  
$\,[D,\,M]\,=\,0\,$. It follows that $\,M\,$ is a diagonal matrix.  
As $\,M\,$ is also skew-symmetric, we have \mbox{$\,M=0\,$}. Therefore,  
$\,E(2\omega_1)\,$ is not an exceptional module, for $\,\ell\geq 2$.

{\bf Hence if $\,{\cal X}_{++}(V)\,$ contains weights
$\,\mu\,=\,a\,\omega_i$, with $\,1\leq i\leq\ell\,$ and 
\mbox{$\,a\geq 2$,} then $\,V\,$ is not exceptional. Therefore we may assume 
$\,a=1\,$.} This case is treated in the sequel.

(c) Let $\,V\,$ be a $\,B_{\ell}(K)$-module such that
$\,\mu\,=\,\omega_i\,\in\,{\cal X}_{++}(V)$. 

i) For $\,4\leq i\leq \ell-1\,$ and $\,\ell\geq 6,\,$ see Claim~6
(p. \pageref{claim6bl}).

For $\,\ell=5,\,$ we can assume that $\,V\,$ has highest weight
$\,\mu\,=\,\omega_{4}\,$. Then $\,\mu_1\,=\,\omega_{3},\,$ $\,\mu_2\,=\,
\omega_{2},\,$ $\,\mu_3\,=\,\omega_{1}\,\in\,{\cal X}_{++}(V).\,$
Consider the Lie algebra $\,A=\mbox{$\langle
e_{\pm\alpha_i}\,/\,3\leq i\leq 5\rangle$}
\,\cong\,B_3.\,$ $\,A\,$ acts on $\,V\,$ and with respect to the 
toral subalgebra
$\,\langle h_{\alpha_i}\,/\,3\leq i\leq 5\rangle\,$ of $\,A,\,$
$\,V|_A\,$ has highest weight $\,\mu\,=\,\omega_4\,=\,\overline 
{\omega_2}\,$. Then $\,V|_A\,$ corresponds to the adjoint 
$\,B_3(K)$-representation. Its zero weight $\,\mu_2\,=\,\omega_2\,=\,\bar 0\,$ 
has multiplicity $\,m_{\mu_2}=3\,$ for $\,p\geq 3\,$ 
\cite[p. 167]{buwil}. Thus
$\,s(V)\,\geq\,2^4\cdot 5\,+\,2^4\cdot 5\,+\,3\cdot 2^3\cdot
5\,+\,2\cdot 5\,=\,2\cdot 5\cdot 29\,>\,2\cdot 5^3\,$. Hence, 
by~\eqref{bllim}, $\,V\,$ is not an exceptional module. 

ii) If $\,V\,$ has highest weight $\,\mu=\omega_3$, then $\,V\,$ is
unclassified {\bf (N. 2 in Table~\ref{leftbn})}. 

iii) If $\,V\,$ has highest weight $\,\mu=\omega_2\,$, then 
$\,V\,$ is the adjoint module, which is exceptional by
Example~\ref{adjoint} {\bf (N. 2 in Table~\ref{tableblall})}. 

iv)\label{IIIciv} If $\,V\,$ has highest weight $\,\mu\,=\,\omega_1\,$ then,  
by~\cite[p. 300]{vinonis}, $\,\dim\,V\,=\,2\ell+1\,<\,2\ell^2
-\ell\,-\,\varepsilon,\,$ for any $\,\ell\geq 2.\,$ Thus, by
Proposition~\ref{dimcrit}, $\,V\,$ is an exceptional module 
{\bf (N. 1 in Table~\ref{tableblall})}.

v) Let $\,\mu\,=\,\omega_{\ell}\,$. Then, for $\,\ell\geq 12$, 
$\,s(V)\,\geq\,|W\mu|\,=\,2^{\ell}\,>\,2\cdot \ell^3\,$. Hence,  
by~\eqref{bllim}, $\,V\,$ is not an exceptional module. 

For $\,\ell=5,\,$ we can assume that $\,V\,$ has highest weight
$\,\mu\,=\,\omega_5\,$. Then, by~\cite[p.301]{vinonis},
$\,\dim\,V\,=\,2^5=32\,<\,55\,-\,\varepsilon\,$.
For $\,\ell=6,\,$ let $\,V\,$ have highest weight
$\,\mu\,=\,\omega_6\,$. Then, by~\cite[p.301]{vinonis},
$\,\dim\,V\,=\,2^6=64\,<\,78\,-\,\varepsilon\,$.
Thus, by Proposition~\ref{dimcrit}, for $\,\ell=5,\,6\,$
$\,V\,$ is an exceptional module {\bf (N. 3 in Table~\ref{tableblall})}. 
{\bf For $\,7\leq\ell\leq 11\,$ these modules are unclassified (N. 3 in
Table~\ref{leftbn})}.

This finishes the First Part of the Proof of Theorem~\ref{listbn}.
\hfill$\Box$


\subsubsection{Type $\,B_{\ell}\,$ - Small Rank Cases}\label{tbsr}

In this section, we prove Theorem~\ref{listbn} for groups of small
rank.
\subsubsection*{\ref{tbsr}.1. Type $\,B_2\,$ or $\,C_2\,$}
\addcontentsline{toc}{subsubsection}{\protect\numberline{\ref{tbsr}.1}
Type $\,B_2\,$ or $\,C_2\,$}

For groups of type $\,B_2\,$ or $\,C_2\,$,
$\,|W|\,=\,2^2\,2!\,=\,8,\,|R|= 8,\,|R_{long}|=
4$. The limit for~\eqref{bllim} is $\,8\cdot 2^2\,=\,32$. 
The dominant weights are of the form $\,\mu\,=\,a\,\omega_{1}\,+\,b\, 
\omega_2$, with $\,a,\,b\in\mathbb Z^+$. If $\,a\,$ and  
$\,b\,$ are both nonzero, then $\,|W\mu|\,=\,8\,$. If only one
coefficient is nonzero, then $\,|W\mu|\,=\,4$. For type $\,B_2\,$,
$\,R_{long}^+\,=\,\{\,\alpha_1,\,\alpha_1\,+\,2\alpha_2\,\}\,$ and
for type $\,C_2\,$,
$\,R_{long}^+\,=\,\{\,2\alpha_1\,+\,\alpha_2,\,\alpha_2\,\}\,$.

{\bf Proof of Theorem~\ref{listbn}, for $\,\ell=2$.}~
Let $\,V\,$ be a $\,B_2(K)$-module.

{\bf Claim 1}: {\it Bad weights of the form $\,\mu\,=\,a\omega_{1}\,+\, 
b\omega_2\,$, with $\,a\,$ and $\,b\,$ both nonzero, do not occur in  
$\,{\cal X}_{++}(V)$.} Indeed, otherwise $\,V\,$ would have highest
weight with at least one coefficient $\,\geq\,p\,$, contradicting
Theorem~\ref{curt}(i). \hfill $\Box$ \vspace{1.5ex}

{\bf Therefore, we can assume that weights with $2$ nonzero coefficients
occurring in $\,{\cal X}_{++}(V)\,$ are good weights.} \vspace{1.5ex}

{\bf Claim 2}: \label{cl2b2}
{\it Let ($\,a\geq 3,\,b\geq 1\,$) or ($\,a\geq 1,\,b\geq 3\,$) or   
($\,a=b=2\,$) or ($\,a=1,\,b=2\,$ for $\,p\neq 5\,$) or
($\,a=2,\,b=1\,$ for $\,p\geq 5\,$). If $\,V\,$ is a $\,B_2(K)$-module
such that $\,\mu\,=\,a\,\omega_{1}\,+\,b\,\omega_2\,$ is
a good weight in $\,{\cal X}_{++}(V)$, then $\,V\,$ is not an
exceptional $\,\mathfrak g$-module.} Indeed:

(a) let ($\,a\geq 3,\,b\geq 2\,$) or ($\,a\geq 2,\,b\geq 3\,$) and 
$\,\mu\,=\,a\,\omega_{1}\,+\,b\,\omega_2\,$. For $\,p=3\,$ such
$\,\mu\,$ does not occur in $\,{\cal X}_{++}(V)$. For $\,p\geq 5$, 
$\,\mu_1=\mu-\alpha_{1}=(a-2)\omega_{1}\,+\,(b+2)\omega_{2},\;$
$\,\mu_2\,=\,\mu-(\alpha_{1}+\alpha_2)\,=\,(a-1)\omega_{1}\,+\,
b\omega_{2},\;$ $\,\mu_3\,=\,\mu-\alpha_{2}\,=\,(a+1)\omega_{1}\,+\, 
(b-2)\omega_{2},\;$\linebreak 
$\,\mu_4\,=\,\mu_2-\alpha_{2}\,=\,a\,\omega_{1}\, 
+\,(b-2)\omega_{2},\;$ $\,\mu_5\,=\,\mu_4-(\alpha_{1}+\alpha_{2})\,=\,
(a-1)\omega_1\,+\,(b-2)\omega_2,\;$ $\,\mu_6\,=\,\mu_5-(\alpha_1+\alpha_2)\, 
=\,(a-2)\omega_1\,+\,(b-2)\omega_2\,\in\,{\cal X}_{++}(V)$. By Claim~1,
the weights with 2 nonzero coefficients are all good. Thus, 
$\,s(V)\,>\,32\,$ and, \mbox{by~\eqref{bllim},} $\,V\,$ is not an exceptional 
module.

{\it Hence, if $\,{\cal X}_{++}(V)\,$ contains weights with 2
nonzero coefficients, then we can assume $\,a\leq 2\,$ {\bf or}
$\,b\leq 2\,$.} We deal with these cases in the sequel.

(b) Let  $\,a=b=2\,$ and $\,\mu\,=\,2\omega_{1}\,+\,2\omega_2\,$. Then
$\,\mu_1\,=\,4\omega_{2},\;$ $\,\mu_2\,=\,\omega_{1}\,+\,2\omega_{2},\;$ 
$\,\mu_3\,=\,3\omega_{1},\;$ $\,\mu_4\,=\,2\omega_{1},\;$  
$\,\mu_5\,=\,\omega_1,\;$ $\,\mu_6\,=\,\mu_2-(\alpha_1+\alpha_2)\, 
=\,2\omega_2\,\in\,{\cal X}_{++}(V)$. For $\,p\geq 5$,
$\,s(V)\,\geq\,36\,>\,32\,$. Hence, by~\eqref{bllim}, $\,V\,$ is not 
exceptional. For $\,p=3$, $\,|R_{long}^+-R_{\mu,3}^+|\,=\,
|R_{long}^+-R_{\mu_1,3}^+|\,=\,|R_{long}^+-R_{\mu_6,3}^+|\,=\,1$
and $\,|R_{long}^+-R_{\mu_2,3}^+|\,=\,|R_{long}^+-R_{\mu_4,3}^+|\,=\,
|R_{long}^+-R_{\mu_5,3}^+|\,=\,2$. Thus,
$\,r_3(V)\,\geq\,12\,>\,8\,$. Hence, by~\eqref{rrbl}, $\,V\,$ is not 
exceptional. 

(c) Let $\,a=1,\,b\geq 5\,$ and $\,\mu\,=\,\omega_{1}\,+\,b\,\omega_2\,$. 
For $\,p=3\,$ such $\,\mu\,$ does not occur in $\,{\cal X}_{++}(V)$.  
For $\,p\geq 5$, $\,\mu_1\,=\,\mu-\alpha_{2}\,=\,2\omega_{1}\,+\,(b-2)
\omega_{2}\,\in\,{\cal X}_{++}(V)\,$ satisfies case (a) of this
proof. Hence $\,V\,$ is not exceptional.

(d) Let $\,a=1,\,b=4\,$ and $\,\mu\,=\,\omega_{1}\,+\,4\omega_2\,$. 
For $\,p=3\,$ such $\,\mu\,$ does not occur in $\,{\cal X}_{++}(V)$.  
For $\,p\geq 5$, $\,\mu_1\,=\,\mu-\alpha_{2}\,=\,2\omega_{1}\,+\,2
\omega_{2}\,\in\,{\cal X}_{++}(V)\,$ satisfies case (b) of this
proof. Hence $\,V\,$ is not exceptional.

(e) Let $\,a\geq 4,\,b=1\,$ and $\,\mu\,=\,a\omega_{1}\,+\,\omega_2\,$. 
For $\,p=3\,$ such $\,\mu\,$ does not occur in $\,{\cal X}_{++}(V)$.  
For $\,p\geq 5$, $\,\mu_1\,=\,\mu-\alpha_{1}\,=\,(a-2)\omega_{1}\,+\,3
\omega_{2}\,\in\,{\cal X}_{++}(V)\,$ satisfies II(a). Hence $\,V\,$ is
not exceptional.

{\it Hence, if $\,{\cal X}_{++}(V)\,$ contains weights of the form 
$\,\mu\,=\,a\omega_{1}\,+\,b\omega_2\,$ (with 2 nonzero coefficients),
then we can assume $\,a=1,\,b\leq 3\,$ or $\,a\leq 3,\,b=1\,$.} 
These cases are treated as follows.

(f) Let $\,a=1,\,b=3\,$ and $\,\mu\,=\,\omega_{1}\,+\,3\omega_2\,$. 
For $\,p=3\,$ such $\,\mu\,$ does not occur in $\,{\cal X}_{++}(V)$.  
For $\,p\geq 5$, $\,\mu_1\,=\,\mu-\alpha_{2}\,=\,2\omega_{1}\,+\,
\omega_{2}\,$, $\,\mu_2\,=\,\mu_1-(\alpha_1+\alpha_{2})\,=\,\omega_{1}\,+\,
\omega_{2}\,$, $\,\mu_3\,=\,\mu-(\alpha_1+\alpha_{2})\,=\,3\omega_{2}\,$,
$\,\mu_4\,=\,\mu_2-(\alpha_1+\alpha_{2})\,=\,\omega_{2}\,
\in\,{\cal X}_{++}(V)$. For $\,p\geq 5$, 
$\,|R_{long}^+\,-\,R_{\mu,p}^+|\,\geq\,1,\,$
$\,|R_{long}^+\,-\,R_{\mu_1,p}^+|\,=\,|R_{long}^+\,-\,R_{\mu_2,p}^+|\,=\,2\,$
and $\,|R_{long}^+\,-\,R_{\mu_3,p}^+|\,=\,|R_{long}^+\,-\,R_{\mu_4,p}^+|\,=\,
|R_{long}^+\,-\,R_{\mu_4,p}^+|\,=\,1$. Thus,
$\,r_p(V)\,\geq\,12\,>\,8\,$ and, by~\eqref{rrbl}, $\,V\,$ is not exceptional. 

(g) Let $\,a=3,\,b=1\,$ and $\,\mu\,=\,3\omega_{1}\,+\,\omega_2\,$. 
For $\,p=3\,$ such $\,\mu\,$ does not occur in $\,{\cal X}_{++}(V)$.  
For $\,p\geq 5$, $\,\mu_1\,=\,\mu-\alpha_1\,=\,\omega_{1}\,+\,
3\omega_{2}\,\in\,{\cal X}_{++}(V)\,$ satisfies case (f) of this
proof. Hence $\,V\,$ is not exceptional.

(h) Let $\,a=1,\,b=2\,$ and $\,\mu\,=\,\omega_{1}\,+\,2\omega_2\,$. 
Then $\,\mu_1\,=\,\mu-(\alpha_1+\alpha_{2})\,=\,2\omega_{2}\,$,  
$\,\mu_2\,=\,\mu-\alpha_{2}\,=\,2\omega_{1}\,$,  
$\,\mu_3\,=\,\mu_1-\alpha_2\,=\,\omega_{1}\,\in\,{\cal X}_{++}(V)$.  
For $\,p\neq 5$, $\,|R_{long}^+\,-\,R_{\mu,p}^+|\,=\,
|R_{long}^+\,-\,R_{\mu_2,p}^+|\,=\,|R_{long}^+\,-\,R_{\mu_3,p}^+|\,=\,2\,$
and $\,|R_{long}^+\,-\,R_{\mu_1,p}^+|\,=\,1$. Thus,
$\,r_p(V)\,\geq\,9\,>\,8\,$. Hence, by~\eqref{rrbl}, $\,V\,$ is not 
exceptional. 

(i) Let $\,a=2,\,b=1\,$ and $\,\mu\,=\,2\omega_{1}\,+\,\omega_2\,$. 
Then $\,\mu_1\,=\,\mu-\alpha_1\,=\,3\omega_{2}\,$, 
$\,\mu_2\,=\,\mu-(\alpha_1+\alpha_{2})\,=\,\omega_1\,+\,\omega_{2}\,$,
$\,\mu_3\,=\,\mu_2-(\alpha_1+\alpha_{2})\,=\,\omega_{2}\,
\in\,{\cal X}_{++}(V)$. For $\,p\geq 5$, $\,|R_{long}^+\,-\,R_{\mu,p}^+|\,=\,
|R_{long}^+\,-\,R_{\mu_2,p}^+|\,=\,2\,$ and $\,|R_{long}^+\,-\,R_{\mu_1,p}^+| 
\,=\,|R_{long}^+\,-\,R_{\mu_3,p}^+|\,=\,1$. Thus
$\,r_p(V)\,\geq\,10\,>\,8\,$ and, by~\eqref{rrbl}, $\,V\,$ is not exceptional. 

This proves Claim 2. \hfill $\Box$ \vspace{1.5ex}

{\bf Claim 3}: \label{cl3b2}
{\it Let $\,V\,$ be a $\,B_2(K)$-module such that $\,{\cal X}_{++}(V)\,$ 
contains 

(a) $\,\mu\,=\,a\omega_1\,$ with $\,a\geq 3\,$ \; or 

(b) $\,\mu\,=\,a\omega_2\,$ with $\,a\geq 5\,$ or ($\,a=4\,$ for
$\,p\neq 5\,$). \\
Then $\,V\,$ is not an exceptional $\,\mathfrak g$-module.}

Indeed, let $\,a\geq 4\,$ and $\,\mu\,=\,a\,\omega_1\,\in\,{\cal
X}_{++}(V)$. For $\,p=3\,$ such $\,\mu\,$ does not occur in 
$\,{\cal X}_{++}(V)$. For $\,p\geq 5,\,$ 
$\,\mu_1\,=\,\mu-\alpha_1\,=\,(a-2)\omega_{1}\,+\,2\omega_2\,\in\, 
{\cal X}_{++}(V)\,$ satisfies Claim~2 (p. \pageref{cl2a2}).
Hence $\,V\,$ is not exceptional.

Let $\,\mu\,=\,3\omega_1\,\in\,{\cal X}_{++}(V)$. (Hence $\,p\geq 3\,$.) Then
$\,\mu_1\,=\,\mu-\alpha_1\,=\,\omega_{1}\,+\,2\omega_{2}\,\in\, 
{\cal X}_{++}(V)\,$ satisfies Claim~2 for $\,p\neq 5\,$.
In this case $\,V\,$ is not exceptional. 
For $\,p=5,\,$ consider also $\,\mu_2\,=\,2\omega_2\,$,
$\,\mu_3\,=\,2\omega_{1}\,$, $\,\mu_4\,=\,\omega_{1}\,\in\,{\cal X}_{++}(V)$.  
As $\,|R_{long}^+\,-\,R_{\mu,5}^+|\,=\,|R_{long}^+\,-\,R_{\mu_3,5}^+|\,=\,
|R_{long}^+\,-\,R_{\mu_4,5}^+|\,=\,2\,$ and $\,|R_{long}^+\,-\, 
R_{\mu_1,5}^+|\,=\,|R_{long}^+\,-\,R_{\mu_2,5}^+|\,=\,1$, 
$\,r_p(V)\,\geq\,9\,>\,8\,$. Hence, by~\eqref{rrbl}, $\,V\,$ is  
not exceptional. This proves (a).
 
For (b), let $\,a\geq 4\,$ and $\,\mu\,=\,a\,\omega_2\,$. 
For $\,a\geq 5\,$ and $\,p=3\,$, $\,\mu\,$ does not occur in  
$\,{\cal X}_{++}(V)$. Let $\,a\geq 5\,$ and $\,p\geq 5$. Then 
$\,\mu_1\,=\,\mu-\alpha_2\,=\,\omega_{1}\,+\,(a-2) 
\omega_2\,\in\,{\cal X}_{++}(V)\,$ satisfies Claim~2 (p. \pageref{cl2a2}). 
Hence $\,V\,$ is not exceptional.

Let $\,\mu\,=\,4\omega_2\,\in\,{\cal X}_{++}(V)$. Then
$\,\mu_1\,=\,\mu-\alpha_2\,=\,\omega_1 + 2\omega_2\,\in\,{\cal
X}_{++}(V)\,$ satisfies Claim~2 for $\,p\neq 5$. 
Hence $\,V\,$ is not exceptional.
This proves Claim~3. \hfill $\Box$ \vspace{1.5ex}

Now we deall with some particular cases.

P.1) For $\,p=5\,$, let $\,V\,$ be a $\,B_2(K)$-module of highest weight 
$\,\mu\,=\,4\omega_2\,$. Then $\,\mu_1\,=\,\omega_1 + 2\omega_2\,$,
$\,\mu_2\,=\,\mu_1-\alpha_2\,=\,2\omega_1\,$,  
$\,\mu_3\,=\,\mu_1-(\alpha_1+\alpha_2)\,=\,2\omega_2\,$ and 
$\,\mu_4\,=\,\mu_3-\alpha_2\,=\,\omega_1\,\in\,{\cal X}_{++}(V)$. 
Let $\,v_0\,$ be a highest weight vector of $\,V\,$. To prove that
$\,m_{\mu_3}\,=\,2$, consider the nonzero weight vectors  
$\,v_1\,=\,f_2\,f_1\,f_2\,v_0\,$ and $\,v_2\,=\,f_1\,f_2^2\,v_0\,$  
of weight $\,\mu_3\,$. One can show that $\,e_{\tilde{\alpha}}\,v_1\neq 0\,$  
and $\,e_{\tilde{\alpha}}\,v_2= 0\,$, by using relations~\ref{relations}.  
This implies that $\,v_1\,$ and $\,v_2\,$ are linearly independent.  
Hence $\,m_{\mu_3}\,=\,2$. As $\,|R_{long}^+\,-\,R_{\mu,5}^+|\,=\,
|R_{long}^+\,-\,R_{\mu_1,5}^+|\,=\,|R_{long}^+\,-\,R_{\mu_3,5}^+|\,=\,1\,$
and $\,|R_{long}^+\,-\,R_{\mu_2,5}^+|\,=\,2$, one has 
$\,r_5(V)\,\geq\,9\,>\,8\,$. Hence, by~\eqref{rrbl}, $\,V\,$ is not 
exceptional.

P.2) For $\,p=3\,$, let $\,V\,$ be a $\,B_2(K)$-module of highest weight 
$\,\mu\,=\,2\omega_{1}\,+\,\omega_2\,$. Then $\,\mu_1\,=\,3\omega_{2}\,$, 
$\,\mu_2\,=\,\omega_1\,+\,\omega_{2}\,$, $\,\mu_3\,=\,\omega_{2}\,
\in\,{\cal X}_{++}(V)$. Let $\,v_0\,$ be a highest weight vector
of $\,V\,$. To prove that $\,m_{\mu_2}\,=\,2$, consider the nonzero
weight vectors $\,v_1\,=\,f_1\,f_2\,v_0\,$ and $\,v_2\,=\,f_2\,f_1\,v_0\,$  
of weight $\,\mu_2\,$. One can show that $\,e_1\,v_1=0\,$ and  
$\,e_1\,v_2\neq 0\,$, 
by using relations~\ref{relations}. This implies
that $\,v_1\,$ and $\,v_2\,$ are linearly independent.  
Hence $\,m_{\mu_2}\,=\,2$. As $\,|R_{long}^+-R_{\mu,3}^+|\,=\,2\,$
and $\,|R_{long}^+-R_{\mu_2,3}^+|\,=\,|R_{long}^+-R_{\mu_1,3}^+| 
\,=\,|R_{long}^+\,-\,R_{\mu_3,3}^+|\,=\,1$, one has 
$\,r_3(V)\,\geq\,\frac{8\cdot 2}{4}+ 2 \frac{8}{4} + 1\,=\,9\,>\,8\,$.
Hence, by~\eqref{rrbl}, $\,V\,$ is not exceptional. 

P.3) Let $\,V\,$ be a $\,B_2(K)$-module of
highest weight $\,\mu\,=\,3\omega_2\,$. Then $\,V\,$ is isomorphic to
the $\,C_2(K)$-module $\,\tilde{V}\,$ with highest weight $\,\mu\,=\,
3\omega_1\,$. Recall that $\,\mathfrak{sp}_n\,$ is a Lie subalgebra of
$\,\mathfrak{sl}_{2n}\,$, hence a $\,\mathfrak{sl}_4$-module can be 
considered as a $\,\mathfrak{sp}_2$-module. By Lemma~\ref{III.iii}
(p. \pageref{III.iii}), the $\,\mathfrak{sl}_{4}$-module with highest 
weight $\,3\omega_1\,$ is not exceptional. Hence neither is  
$\,\tilde{V}\cong V\,$. 
 
P.4) For $\,p=5,\,$ let $\,V\,$ be a $\,B_2(K)$-module of highest
weight $\,\mu\,=\,\omega_{1}\,+\,2\omega_2\,$. Then
$\,\mu_1\,=\,2\omega_{2}\,$, $\,\mu_2\,=\,2\omega_{1}\,$,  
$\,\mu_3\,=\,\omega_{1}\,\in\,{\cal X}_{++}(V)$.  
Let $\,v_0\,$ be a highest weight vector
of $\,V\,$. To prove that $\,m_{\mu_3}\,=\,2$, consider the nonzero
weight vectors $\,v_1\,=\,f_2\,f_1\,f_2\,v_0\,$ and
$\,v_2\,=\,f_1\,f_2^2\,v_0\,$ of weight $\,\mu_3\,$. One can show that
$\,e_{\tilde{\alpha}}\,v_1\neq 0\,$ and $\,e_{\tilde{\alpha}}\,v_2=0\,$,  
by using relations~\ref{relations}. This implies that $\,v_1\,$ and 
$\,v_2\,$ are linearly independent. Hence $\,m_{\mu_3}\,=\,2$.
As $\,|R_{long}^+-R_{\mu,5}^+|\,=\,1\,$,
$\,|R_{long}^+-R_{\mu_2,5}^+|\,=\,|R_{long}^+-R_{\mu_3,5}^+|\,=\,2\,$
and $\,|R_{long}^+-R_{\mu_1,5}^+|\,=\,1$, one has  
$\,r_5(V)\,\geq\,\displaystyle\frac{8}{4} + \frac{4\cdot 2}{4} + 1 +
2\frac{4\cdot 2}{4}\,=\,9\,>\,8\,$. Hence, by~\eqref{rrbl}, $\,V\,$ is not 
exceptional. 

P.5) Let $\,V\,$ be a $\,B_2(K)$-module of highest weight 
$\,\mu\,=\,\omega_{1}\,+\,\omega_2\,$. For $\,p=3$, by Claim~12
(p. \pageref{claim12al}) of First Part of Proof of Theorem~\ref{anlist}, 
$\,V\,$ is not an exceptional $\,\mathfrak{sl}_4$-module. Hence  
$\,V\,$ is not an exceptional $\,\mathfrak{sp}_2$-module.
{\bf For $\,p\neq 3$, if $\,V\,$ has highest weight $\,\mu\,$, 
then $\,V\,$ is unclassified ( N. 1 in Table~\ref{leftbn}).}

{\bf Therefore, if $\,V\,$ is a $\,B_2(K)$-module such that
$\,{\cal X}_{++}(V)\,$ contains a weight with $2$  
nonzero coefficients, then $\,V\,$ is not an exceptional module,
unless $\,V\,$ has highest weight $\,\mu\,=\,\omega_{1}\,+\,\omega_2\,$ for 
$\,p\neq 3\,$) in which case $\,V\,$ is unclassified.} 

{\bf From now on we can assume that $\,{\cal X}_{++}(V)\,$ contains
only weights with at most one nonzero coefficient.}

P.6) If $\,V\,$ is a $\,B_2(K)$-module of highest weight 
$\,\mu\,=\,2\omega_1\,$ then, by case III(b)iv) (p. \pageref{IIIbiv}) 
of First Part of Proof of Theorem~\ref{listbn}, $\,V\,$ is not exceptional.

P.7) If $\,V\,$ is a $\,B_2(K)$-module of highest weight 
$\,\mu\,=\,2\omega_2\,$, then $\,V\,$ is the adjoint module. 
Hence, by Example~\ref{adjoint},
$\,V\,$ is an exceptional module {\bf (N. 2 in Table~\ref{tableblall})}.

P.8) If $\,V\,$ has highest weight $\,\mu\,=\,\omega_1\,$ then, by case
III(c)iv) (p. \pageref{IIIciv}) of First Part of Proof of  
Theorem~\ref{listbn}, $\,V\,$ is
an exceptional module {\bf (N. 1 in Table~\ref{tableblall})}. 

P.9) If $\,V\,$ has highest weight $\,\mu\,=\,\omega_2\,$ then,
by~\cite[p. 301]{vinonis}, $\,\dim\,V\,=\,2^2\,<\,10\,-\,\varepsilon\,$.
Hence, by Proposition~\ref{dimcrit}, $\,V\,$ is exceptional 
{\bf (N. 3 in Table~\ref{tableblall})}.  

{\bf Hence if $\,V\,$ is a $\,B_2(K)$-module such that
$\,{\cal X}_{++}(V)\,$ contains $\,\mu\,=\,a\omega_1\,$ 
(for $\,a\geq 2\,$) or $\,\mu\,=\,a\omega_2\,$ (for $\,a\geq 3\,$), then 
$\,V\,$ is not exceptional. If $\,V\,$ has highest weight $\,\lambda\,\in\,
\{2\omega_2,\,\omega_2,\omega_1\}\,$, then $\,V\,$ is an exceptional module.}

This finishes the proof of Theorem~\ref{listbn} for $\,\ell=2$.
\hfill $\Box$

\subsubsection*{\ref{tbsr}.2. Type $\,B_3$}
\addcontentsline{toc}{subsubsection}{\protect\numberline{\ref{tbsr}.2}
Type $\,B_3\,$}
           
For groups of type $\,B_3\,$, $\,|W|\,=\,2^3\,3!\,=\,48$,
$\,|R|=2\cdot 3^2,\,|R_{long}|= 12\,$. 
The limit for~\eqref{bllim} is $\,8\cdot 3^2\,=\,72$.
If $\,\mu\in{\cal X}_{++}(V)\,$ has $\,3\,$ nonzero coefficients, then
$\,|W\mu|\,=\,48$. If $\,\mu\in{\cal X}_{++}(V)\,$ 
has $\,2\,$ nonzero coefficients, then $\,|W\mu|\,=\,24$. Recall that
$\,p\geq 3$. 
$\,R_{long}^+\,=\,\{\,\alpha_1,\,\alpha_2,\,\alpha_1+\alpha_2,\,
\alpha_2+2\alpha_3,\,\alpha_1+\alpha_2+2\alpha_3,\,\alpha_1+2\alpha_2 
+2\alpha_3\,\}.\,$ The following are reduction lemmas.
\begin{lemma}\label{bn3} Let $\,V\,$ be a $\,B_3(K)$-module.
If $\,{\cal X}_{++}(V)\,$ contains 

(a) $\,2\,$ good weights with $\,3\,$ nonzero coefficients or 

(b) $\,1\,$ good weight with $\,3\,$ nonzero coefficients and  
$\,2\,$ good weights with $\,2\,$ nonzero coefficients,
then $\,V\,$ is not an exceptional $\,\mathfrak g$-module.
\end{lemma}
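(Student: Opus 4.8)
The plan is to bound $s(V)$ from below under each of the two hypotheses and compare with the limit $72$. Recall that for $B_3$ a weight with three nonzero coefficients has Weyl orbit of size $48$, one with two nonzero coefficients has orbit of size $24$, and any nonzero weight has orbit of size at least $6$ (the smallest fundamental orbit; one checks $|W\omega_1| = 6$, $|W\omega_2| = 12$, $|W\omega_3| = 8$, and $\omega_1 = \varepsilon_1$ has the $6$-element orbit). So in either case I would exhibit enough good weights in $\mathcal{X}_{++}(V)$ to force $s(V) > 72$ and then invoke Theorem~\ref{???} (the inequality $s(V) \leq \textbf{limit} = 72$ fails).

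First I would handle case (a). If $\mathcal{X}_{++}(V)$ contains two distinct good weights $\mu, \nu$ each with three nonzero coefficients, then since the multiplicities $m_\mu, m_\nu \geq 1$ and both orbits have size $48$, we get $s(V) \geq m_\mu |W\mu| + m_\nu |W\nu| \geq 48 + 48 = 96 > 72$. (One must observe that $\mu \neq \nu$ are counted as separate summands in the definition of $s(V)$, which is a sum over $\mu \in \mathcal{X}_{++}(V)$; there is no issue of overlap since orbits of distinct dominant weights are disjoint.) Hence by~\eqref{bllim} (or directly by Theorem~\ref{???}), $V$ is not exceptional.

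Next, case (b): suppose $\mathcal{X}_{++}(V)$ contains one good weight $\mu$ with three nonzero coefficients and two distinct good weights $\nu_1, \nu_2$, each with exactly two nonzero coefficients. Then $s(V) \geq |W\mu| + |W\nu_1| + |W\nu_2| \geq 48 + 24 + 24 = 96 > 72$, so again $V$ is not exceptional by Theorem~\ref{???}. The only subtlety is to make sure the three weights $\mu, \nu_1, \nu_2$ really are three distinct elements of $\mathcal{X}_{++}(V)$ contributing three distinct orbit-sums; but this is immediate from the hypothesis (a weight with three nonzero coefficients cannot equal one with two nonzero coefficients, and $\nu_1 \neq \nu_2$ by assumption).

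I do not anticipate a genuine obstacle here: the statement is a direct numerical consequence of the orbit-size formulas in Lemma~\ref{orbbncndn} and the necessary condition of Theorem~\ref{???}. The only point requiring a little care is the bookkeeping of which weights are assumed distinct and good, and the elementary check that the smallest possible orbit sizes ($48$ for three nonzero coefficients, $24$ for two) already overshoot the limit $72$ when summed as indicated. So the ``hard part'' is merely to state the hypotheses precisely enough that the inequality $s(V) > 72$ is transparent; the arithmetic itself is trivial.
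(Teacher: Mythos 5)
Your proof is correct and is essentially the paper's own argument: in both cases you exhibit good weights whose orbit sizes (48 for three nonzero coefficients, 24 for two) already sum to $96 > 72$, so the necessary condition $s(V) \leq 72$ of Theorem~\ref{???} fails. The extra care you take about distinctness of the dominant weights and the orbit-size computations is sound but the paper treats these as immediate.
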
\noindent
\begin{proof}
If (a) or (b) holds, then $\,s(V)\,>\,72$. Thus, by (\ref{bllim}), 
$\,V\,$ is not an exceptional module, proving the lemma. 
\end{proof}
\begin{corollary}\label{corbn3} If $\,V\,$ is a $\,B_3(K)$-module such that
$\,{\cal X}_{++}(V)\,$ contains a weight with $\,3\,$ nonzero
coefficients, then $\,V\,$ is not an exceptional $\,\mathfrak g$-module.
\end{corollary}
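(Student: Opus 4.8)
\textbf{Proof proposal for Corollary~\ref{corbn3}.}
The plan is to derive this immediately from Lemma~\ref{bn3} together with the reduction techniques already established in Section~\ref{proced}, chiefly Theorem~\ref{premprin} (so that the weights of $V$ coincide with the complex ones and $\mathcal{X}_{++}(V) = (\lambda(\pi) - Q_+)\cap P_{++}$) and Proposition~\ref{criteria}. Suppose $V$ is a $B_3(K)$-module and $\mu \in \mathcal{X}_{++}(V)$ has three nonzero coefficients, $\mu = a_1\omega_1 + a_2\omega_2 + a_3\omega_3$ with all $a_i \geq 1$. The first step is to dispose of the case where $\mu$ is bad: since $p \geq 3$ for type $B_\ell$, a bad $\mu$ has all $a_i \geq 3$, and by the argument used in Part II of Claim~\ref{claim1bl} (or directly, as in the proof of Lemma~\ref{3coefbn}) one produces a good weight $\mu' \in \mathcal{X}_{++,\mathbb{C}}(\mu) \subseteq \mathcal{X}_{++}(V)$ still carrying three nonzero coefficients, for instance $\mu' = \mu - \alpha_2 = (a_1+1)\omega_1 + (a_2-2)\omega_2 + \omega_3 + \cdots$ adjusted to stay dominant. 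So without loss of generality $\mu$ itself is good.

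The second step is the main content: starting from the good weight $\mu$ with three nonzero coefficients, I would produce either a second good weight with three nonzero coefficients (triggering Lemma~\ref{bn3}(a)) or a good weight with three nonzero coefficients plus two good weights with two nonzero coefficients (triggering Lemma~\ref{bn3}(b)). The natural candidates to build are $\mu_1 = \mu - (\alpha_1 + \alpha_2 + \alpha_3) $-type subtractions and $\mu - (\alpha_i + \cdots + \alpha_j)$ for suitable index ranges, exactly as in the Cases~1--4 analysis of Lemma~\ref{nlf3c} and the proof of Claim~\ref{cl3a3}; each such $\mu_k$ lies in $\mathcal{X}_{++,\mathbb{C}}(\mu)$ hence in $\mathcal{X}_{++}(V)$ by Lemma~\ref{lamu}. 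One checks that at least one of these descendants again has three nonzero coefficients unless $\mu$ sits in a very restricted position (something like $\mu = \omega_1 + \omega_2 + \omega_3$, where one instead gets $\omega_1 + \omega_2$, $\omega_2 + \omega_3$, $\omega_1 + \omega_3$ among the descendants, which is case (b)). In every configuration the orbit-size bookkeeping from the tables preceding Lemma~\ref{bn3} ($|W\mu| = 48$ for three nonzero coefficients, $24$ for two) forces $s(V) > 72$, so by Theorem~\ref{???} (inequality~\eqref{?ref123}) and Proposition~\ref{criteria} the module $V$ is not exceptional.

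The step I expect to be the main obstacle is the exhaustive case-split in the second paragraph: one must verify, for \emph{every} placement of the three nonzero coefficients among $\{\omega_1, \omega_2, \omega_3\}$ and every combination of coefficient sizes (handling the edge cases $a_i = 1$ versus $a_i \geq 2$, and the special role of the short fundamental weight $\omega_3$), that the required collection of good descendant weights actually appears in $\mathcal{X}_{++,\mathbb{C}}(\mu)$ and survives passage to $\mathcal{X}_{++}(V)$. This is routine in spirit — it mirrors the $A_3$ argument in the proof of Claim~\ref{cl3a3} almost verbatim — but it is where the real work lies; the appeal to Lemma~\ref{bn3} itself and the final contradiction with~\eqref{bllim} are then immediate. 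For the genuinely exceptional-looking borderline weight $\omega_1 + \omega_2 + \omega_3$ (the Steinberg-type weight when $p$ is small) a short separate verification of multiplicities, along the lines of case P.4 in Section~\ref{tasr}.3, closes the gap.
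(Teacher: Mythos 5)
Your proposal matches the paper's proof in substance: after discarding bad weights, it splits on coefficient sizes and feeds the resulting descendants into Lemma~\ref{bn3}, exactly as the paper does with $\mu-(\alpha_2+\alpha_3)$ for $b\geq 2$, with $\mu-(\alpha_1+\alpha_2+\alpha_3)$ for $a\geq 2,\ b=1$, and with the pair $\omega_2+c\,\omega_3,\ \omega_1+c\,\omega_3$ in the borderline case $a=b=1$. The case analysis you defer as "the main obstacle" is in fact only three lines long (in rank $3$ there is a single placement of three nonzero coefficients, so only coefficient sizes matter) and no multiplicity verification is needed for the borderline weight; note only that your candidate descendant $\omega_1+\omega_2$ is not actually $\leq\omega_1+\omega_2+\omega_3$ (since $\omega_3\notin Q_+$ for $B_3$), but the other two candidates suffice for Lemma~\ref{bn3}(b).
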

\begin{proof}
First we observe that bad weights with $\,3\,$ nonzero coefficients do not
occur in $\,{\cal X}_{++}(V)\,$ (otherwise $\,V\,$ would have highest
weight with at least one coefficient $\,\geq p\,$, contradicting
Theorem~\ref{curt}(i)). {\bf Therefore, we may assume that weights
with $\,3\,$ nonzero coefficients occurring in $\,{\cal X}_{++}(V)\,$ are good
weights.}

Let $\,\mu\,=\,a\,\omega_1\,+\,b\,\omega_2\,+\,c\,\omega_3\,$ be a good
weight in $\,{\cal X}_{++}(V)\,$ with $\,a,\,b,\,c\,\geq 1\,$. 

(a) For $\,a\geq 1,\,b\geq 2,\,c\geq 1$, 
$\,\mu\,=\,a\,\omega_1\,+\,b\,\omega_2\,+\,c\,\omega_3\,$ and 
$\,\mu_1\,=\mu-(\alpha_2+\alpha_3)\,=\,(a+1)\omega_1\,+\,(b-1)\omega_2\,+
\,c\omega_3\,$ are both good weights in $\,{\cal X}_{++}(V)\,$ (with
$\,3\,$ nonzero coefficients). Hence, by Lemma~\ref{bn3}(a), $\,V\,$
is not exceptional.

(b) Let $\,a\geq 2,\,b=1,\,c\geq 1\,$. Then 
$\,\mu\,=\,a\,\omega_1\,+\,\omega_2\,+\,c\,\omega_3\;$ and
$\,\mu_1\,=\mu-(\alpha_1+\alpha_2+\alpha_3)\,=\,(a-1)\,\omega_1\,+\,
\omega_2\,+ \,c\,\omega_3\,\in\,{\cal X}_{++}(V)\,$ are
good weights. As $\,\mu_1\,$ has 3 nonzero coefficients,
Lemma~\ref{bn3}(a) applies. 

(c) Let $\,a=b=1,\,c\geq 1\,$. Then $\,\mu\,=\,\omega_1\,+\,\omega_2\,+ 
\,c\,\omega_3,\;$ $\,\mu_1\,=\mu-(\alpha_1+\alpha_2+\alpha_3)\,=\,\omega_2\,+
\,c\,\omega_3\,\;$ and $\;\mu_2\,=\mu_1-(\alpha_2+\alpha_3)\,=\, 
\omega_1\,+\,c\,\omega_3\,\in\,{\cal X}_{++}(V)$. Hence, by
Lemma~\ref{bn3}(b), $\,V\,$ is not exceptional. This proves the
corollary.
\end{proof}\vspace{2ex}\noindent
{\bf Proof of Theorem~\ref{listbn}, for $\,\ell=3$.}\\ 
Let $\,V\,$ be a $\,B_3(K)$-module. By Corollary~\ref{corbn3}, {\bf it
suffices to consider $\,B_3(K)$-modules $\,V\,$ such that 
$\,{\cal X}_{++}(V)\,$ contains only weights with at most $\,2\,$  
nonzero coefficients.}

I) {\bf Claim 1}: {\it If $\,{\cal X}_{++}(V)\,$ contains a bad weight with 
$\,2\,$ nonzero coefficients, then $\,V\,$ is not an exceptional module.}

Indeed, let $\,\mu\,=\,a\omega_i\,+\,b\omega_j\,$ (with $\,a,\,b\,\geq
1\,$) be a bad weight in $\,{\cal X}_{++}(V)\,$. Hence $\,a\geq 3,\,b\,\geq
3\,$.

(a) Let $\,\mu\,=\,a\omega_1\,+\,b\omega_2\,$. Then 
$\,\mu_1\,=\mu-(\alpha_1+\alpha_2)\,=\,(a-1)\omega_1\,+\,(b-1)\omega_2\,+
\,2\omega_3\,$ is a good weight in $\,{\cal X}_{++}(V)\,$ with $\,3\,$
nonzero coefficients. 

(b) Let $\,\mu\,=\,a\omega_1\,+\,b\omega_3\,$. Then
$\,\mu_1\,=\mu-\alpha_3\,=\,a\omega_1\,+\,\omega_2\,+\,(b-2)\omega_3\,$
is a good weight in $\,{\cal X}_{++}(V)\,$ with $\,3\,$ nonzero
coefficients. 

(c) Let $\,\mu\,=\,a\omega_2\,+\,b\omega_3\,$. Then the good weight
$\,\mu_1\,=\mu-(\alpha_2+\alpha_3)\,=\,\omega_1\,+\,(a-1)\omega_2\,+\,b\,
\omega_3\,\in\,{\cal X}_{++}(V)\,$ has $\,3\,$ nonzero
coefficients.

In all these cases Corollary~\ref{corbn3} applies. Hence $\,V\,$ is not 
exceptional, proving the claim. \hfill $\Box$

{\bf Therefore, we can assume that weights with $\,2\,$ nonzero
coefficients occurring in $\,{\cal X}_{++}(V)\,$ are good weights.}

{\bf Claim 2}: \label{cl2b3}
{\it Let ($\,a\geq 2,\,b\,\geq 1\,$) or $\,a\geq 1,\,b\,\geq
2\,$. If $\,V\,$ is a $\,B_3(K)$-module such that 
$\,\mu\,=\,a\omega_i\,+\,b\omega_j\,\in\,{\cal X}_{++}(V)$, then 
$\,V\,$ is not an exceptional $\,\mathfrak g$-module.}

(a) Let $\,\mu\,=\,a\omega_2\,+\,b\omega_3\,$. Then: 

i) for $\,a\geq 2,\,b\geq 1\,$, 
$\,\mu_1\,=\mu-(\alpha_2+\alpha_3)\,=\,\omega_1\,+\,(a-1)\omega_2\,+\,b\,
\omega_3\,\in\,{\cal X}_{++}(V)$. As $\,\mu_1\,$ has $\,3\,$ nonzero
coefficients, Corollary~\ref{corbn3} applies. 

ii) For $\,a=1,\,b\geq 2,\,$  $\,\mu\,=\,\omega_2\,+\,b\,\omega_3,\;$
$\,\mu_1\,=\mu-(\alpha_2+\alpha_3)\,=\,\omega_1\,+\,b\,\omega_3,\,$ 
$\,\mu_2\,=\mu_1-\alpha_3\,=\,\omega_1\,+\,\omega_2\,+\,(b-2)\omega_3\;$
and $\,\mu_3\,=\mu_2-(\alpha_1+\alpha_2+\alpha_3)\,=\,\omega_2\,+\,(b-2)
\omega_3\,\in\,{\cal X}_{++}(V)$. Thus $\,s(V)\,\geq\,84\,>\,72\,$. Hence,  
by~(\ref{bllim}), $\,V\,$ is not exceptional.

(b) Let $\,\mu\,=\,a\omega_1\,+\,b\omega_2\,$. Then 
 
i) for $\,a\geq 1,\,b\geq 2$, the good weight 
$\,\mu_1\,=\mu-(\alpha_1+\alpha_2)\,=\,(a-1)\omega_1\,+\,(b-1)\omega_2\,+
\,2\omega_3\,\in\,{\cal X}_{++}(V)$. For $\,a\geq 2$, $\,\mu_1\,$ has
$\,3\,$ nonzero coefficients and Corollary~\ref{corbn3} applies.
For $\,a=1$, $\,\mu_1\,$ satisfies case (a)ii) of this proof. 
Hence $\,V\,$ is not exceptional.

ii) For $\,a\geq 3,\,b=1,\,$ $\,\mu\,=\,a\,\omega_1\,+\,\omega_2,\;$
$\,\mu_1\,=\mu-(\alpha_1+\alpha_2)\,=\,(a-1)\omega_1\,+\,2\omega_3,\;$
and $\;\mu_2\,=\,\mu_1-\alpha_1\,=\,(a-3)\omega_1\,+\,\omega_2\,+\,2\omega_3\,
\in\,{\cal X}_{++}(V)$. For $\,a\geq 4$, $\,\mu_2\,$ has 3 nonzero  
coefficients and Corollary~\ref{corbn3} applies. For $\,a=3$, 
$\,\mu_1\,$ satisfies case (a)ii) of this proof. Hence $\,V\,$ is not 
exceptional.

iii) For $\,a=2,\,b=1,\,$ $\,\mu\,=\,2\,\omega_1\,+\,\omega_2,\;$
$\,\mu_1\,=\mu-(\alpha_1+\alpha_2)\,=\,\omega_1\,+\,2\omega_3,\;$\linebreak
$\,\mu_2\,=\,\mu_1-\alpha_3\,=\,\omega_1\,+\,\omega_2,\;$
$\,\mu_3\,=\,\mu_2-(\alpha_1+\alpha_2)\,=\,2\omega_3,\;$
$\,\mu_4\,=\,\mu_3-\alpha_3\,=\,\omega_2\,\in\,{\cal X}_{++}(V)$. Thus
$\,s(V)\,\geq\,84\,>\,72\,$. Hence, by~\eqref{bllim},
$\,V\,$ is not exceptional.

(c) Let $\,\mu\,=\,a\omega_1\,+\,b\omega_3\,$. Then

i) for $\,a\geq 1,\,b\geq 3$, $\,\mu_1\,=\mu-\alpha_3\,=\,a\omega_1\,
+\,\omega_2\,+\,(b-2)\omega_3\,\in\,{\cal X}_{++}(V)$. For 
$\,a\geq 3,\,b\geq 1,\,$ $\,\mu_2\,=\mu-\alpha_1\,=\,(a-2)\omega_1\, 
+\,\omega_2\,+\,b\omega_3\,\in\,{\cal X}_{++}(V)$. As $\,\mu_1\,$ and
$\,\mu_2\,$ are good weights with 3 nonzero coefficients,
Corollary~\ref{corbn3} applies in both cases. For $\,a=b=2$, note that
$\,\mu_2\,$ satisfies case (a)ii) of this proof. Hence $\,V\,$ is  
not exceptional.

ii) For $\,a=2,\,b= 1,\,$ $\,\mu\,=\,2\,\omega_1\,+\,\omega_3,\;$  
$\,\mu_1\,=\,\mu-\alpha_1\,=\,\omega_2\,+\,\omega_3,\;$
$\,\mu_2\,=\,\mu-(\alpha_1+\alpha_2+\alpha_3)\,=\,\omega_1\,+\,\omega_3,\;$
and $\;\mu_3\,=\,\mu_2-(\alpha_1+\alpha_2+\alpha_3)\,=\,\omega_3\,
\in\,{\cal X}_{++}(V)$. Thus $\,s(V)\,\geq\,24\,+\,24\,+\,24\,+\,8\,
=\,80\,>\,72\,$ and, by~(\ref{bllim}), $\,V\,$ is not exceptional. 

iii) Let $\,\mu\,=\,\omega_1\,+\,2\,\omega_3\,\in\,{\cal
X}_{++}(V)$. (Hence $\,p\geq 3\,$.) Then
$\,\mu_1\,=\,\omega_1\,+\,\omega_2,\,$
$\,\mu_2\,=\,\mu-(\alpha_1+\alpha_2+\alpha_3)\,=\,2\omega_3,\;$ 
$\,\mu_3\,=\,\mu_1-(\alpha_2+\alpha_3)\,=\,2\omega_1,\;$
$\,\mu_4\,=\,\mu_1-(\alpha_1+\alpha_2+\alpha_3)\,=\,\omega_2,\;$ and 
$\,\mu_5\,=\,\mu_4-(\alpha_2+\alpha_3)\,=\,\omega_1\,\in\,{\cal X}_{++}(V)$.
Thus, $\,s(V)\,=\,24\,+\,24\,+\,8\,+\,6\,+\,12\,+\,6\,=\,80\,>\,72\,$.
Hence, by~(\ref{bllim}), $\,V\,$ is not an exceptional module. 

This proves Claim 2. \hfill $\Box$ \vspace{1.5ex}

{\bf Claim 3}:\label{cl3b3} {\it Let $\,V\,$ be a $\,B_3(K)$-module. 
If $\,{\cal X}_{++}(V)\,$ contains $\,\omega_1\,+\,\omega_2\,$ (for
$\,p\geq 3\,$) {\bf or} $\,\omega_2\,+\,\omega_3\,$ (for $\,p\geq
5\,$), then $\,V\,$ is not an exceptional $\,\mathfrak g$-module.}

Indeed, assume that $\,\omega_1\,+\,\omega_2\;\in\,{\cal X}_{++}(V)$.
Then $\,\mu_1\,=\,2\omega_3,\;$ $\,\mu_2\,=\,2\omega_1,\;$
$\,\mu_3\,=\,\omega_2,\;$ $\,\mu_4\,=\,\mu_3-(\alpha_2+\alpha_3)\,=\,
\omega_1\,\in\,{\cal X}_{++}(V)$. For $\,p\geq 3$, 
$\,|R_{long}^+-R_{\mu,p}^+|\,\geq\,5\,$,
$\,|R_{long}^+-R_{\mu_3,p}^+|\,=\,5,\;|R_{long}^+-R_{\mu_1,p}^+|\,=\,3,\;
|R_{long}^+-R_{\mu_2,p}^+|\,=\,|R_{long}^+\,-\,R_{\mu_4,p}^+|\,=\,4$.
Thus $\,r_p(V)\,\geq\,\frac{24\cdot 5}{12} +\frac{8\cdot 3}{12}+
\frac{6\cdot 4}{12}+\frac{12\cdot 5}{12}+\frac{6\cdot 4}{12}\,=\,21\,>
\,18\,$. Hence, by~(\ref{rrbl}), $\,V\,$ is not exceptional.

Now suppose $\,\omega_2\,+\,\omega_3\;\in\,{\cal X}_{++}(V)$. Then  
$\,\mu_1\,=\, \omega_1\,+\,\omega_3\;$ and 
$\,\mu_3\,=\mu_1-(\alpha_1+\alpha_2+\alpha_3)\, 
=\,\omega_3\,\in\,{\cal X}_{++}(V)$.
For $\,p\geq 5,\,$ $\,|R_{long}^+\,-\,R_{\mu,5}^+|\,=\,
|R_{long}^+\,-\,R_{\mu_1,5}^+|\,=\,5,\;|R_{long}^+\,-\,R_{\mu_2,5}^+|\,=\,3$.
Thus $\,r_p(V)\,=\,\displaystyle\frac{24\cdot 5}{12}
\,+\,\frac{25\cdot 5}{12}\,+\, \frac{8\cdot 3}{12}\,=\,22\,>\,18$.
Hence, by~\eqref{rrbl}, $\,V\,$ is not an exceptional module.
This proves Claim~3.\hfill $\Box$ \vspace{1.5ex}

{\bf Claim 4}: \label{cl4b3}
{\it Let $\,V\,$ be a $\,B_3(K)$-module such that $\,{\cal X}_{++}(V)\,$ 
contains 

(a) $\,\mu\,=\,a\omega_1\,$ with $\,a\geq 3\,$ \; or 

(b) $\,\mu\,=\,a\omega_2\,$ with $\,a\geq 2\,$ \; or

(c) $\,\mu\,=\,a\omega_3\,$ with $\,a\geq 3\,$. \\
Then $\,V\,$ is not an exceptional $\,\mathfrak g$-module.}
Indeed:

(a) Let $\,a\geq 3\,$ and $\,\mu\,=\,a\omega_1\in{\cal X}_{++}(V)$. Then 
$\,\mu_1\,=\mu-\alpha_1\,=\,(a-2)\omega_1\,+\,\omega_2\,\in
\,{\cal X}_{++}(V)$. For $\,a\geq 4$, $\,\mu_1\,$ satisfies Claim~2
(p. \pageref{cl2b3}); for $\,a=3$, $\,\mu_1\,$ satisfies Claim~3
(p. \pageref{cl3b3}). Hence $\,V\,$ is not exceptional. 

(b) Let $\,a\geq 2\,$ and $\,\mu\,=\,a\omega_2\in{\cal X}_{++}(V)$. 
Then $\,\mu_1\,=\mu-\alpha_2\,=\,\omega_1\,+\,(a-2)\omega_2\,+\,2\omega_3\,
\in\,{\cal X}_{++}(V)$. For $\,a\geq 3\,$, $\,\mu_1\,$ has $\,3\,$
nonzero coefficients and Corollary~\ref{corbn3} applies. For
$\,a=2\,$, $\,\mu_1\,$ satisfies Claim~2 (p. \pageref{cl2b3}).
Hence $\,V\,$ is not exceptional. 

(c) Let $\,a\geq 3\,$ and $\,\mu\,=\,a\omega_3\,\in{\cal X}_{++}(V)$. 
Then $\,\mu_1\,=\mu-\alpha_3\,=\,\omega_2\,+\,(a-2)\omega_3\,\in\,
{\cal X}_{++}(V)$. For $\,a\geq 4$, $\,\mu_1\,$ satisfies Claim~2
(p. \pageref{cl2b3}). Hence $\,V\,$ is not exceptional.

For $\,a=3\,$ and $\,p=3,\,$ $\,\mu\in{\cal X}_{++}(V)\,$ only if 
$\,V\,$ has highest weight $\,\lambda\,=\,\omega_1\,+\,\omega_2\,+\,
\omega_3,\,$ for instance. In this case $\,V\,$ is not exceptional, by
Corollary~\ref{corbn3}.
Let $\,a=3\,$ and $\,p\geq 5\,$. Then $\,\mu_1\,=\,\omega_2+\omega_3\in\,
{\cal X}_{++}(V)\,$ satisfies Claim~3(b). Hence $\,V\,$ is not
exceptional. This proves the claim.\hfill $\Box$ \vspace{1.5ex}

{\bf Therefore, if $\,V\,$ is a $\,B_3(K)$-module such that 
$\,{\cal X}_{++}(V)\,$ contains a weight of the
form $\,\mu\,=\,a\omega_i\,+\,b\omega_j\,$ with $\,a\geq 1,\,b\,\geq 1\,$,
then we can assume $\,\mu\,=\,\omega_1\,+\,\omega_3\,$ or 
($\,\mu\,=\,\omega_2\,+\,\omega_3\,$ for $\,p=3\,$).} These cases
are treated in the sequel.

P.1) For $\,p=3,\,$ let $\,V\,$ be a $\,B_3(K)$-module of highest weight
$\,\mu\,=\,\omega_2\,+\,\omega_3\,$. Then $\,\mu_1\,=\, \omega_1\,+ 
\,\omega_3\;$ and $\,\mu_3\,=\mu_1-(\alpha_1+\alpha_2+\alpha_3)\, 
=\,\omega_3\,\in\,{\cal X}_{++}(V)$. By~\cite[p. 167]{buwil}, 
for $\,p=3,\,$ $\,m_{\mu_1}=2,\,m_{\mu_3}=4$. Thus $\,s(V)\,\geq\,
24\,+\,2\cdot 24\,+\,4\cdot 8\,=\,104\,>\,72.\,$ Hence, 
by~\eqref{bllim}, $\,V\,$ is not exceptional.

U.1) {\bf If $\,V\,$ is a $\,B_3(K)$-module of highest weight 
$\,\mu\,=\,\omega_1\,+\,\omega_3\,$,  
then $\,V\,$ is unclassified (N. 5 in Table~\ref{leftbn}).}  

{\bf Therefore, if $\,{\cal X}_{++}(V)\,$ contains a weight of the
form $\,\mu\,=\,a\omega_i\,+\,b\omega_j\,$ with $\,a,\,b\,\geq 1\,$,
then $\,V\,$ is not an exceptional module, unless $\,V\,$ has highest
weight $\,\omega_1\,+\,\omega_3\,$ in which case $\,V\,$ is
unclassified.}

{\bf From now on, we can assume that $\,{\cal X}_{++}(V)\,$ 
contains only weights with at most one nonzero coefficient.}

P.2) If $\,V\,$ is a $\,B_3(K)$-module of highest weight 
$\,\mu\,=\,2\omega_1\,$ then, by
case III(b)iv) (p. \pageref{IIIbiv}) of the First Part of Proof of 
Theorem~\ref{listbn}, $\,V\,$ is not an exceptional module.

U.2) {\bf If $\,V\,$ is a $\,B_3(K)$-module of highest weight  
$\,\mu\,=\,2\omega_3\,$, then
$\,V\,$ is unclassified (N. 4 in Table~\ref{leftbn}).}

P.3) If $\,V\,$ is a $\,B_3(K)$-module of highest weight  
$\,\mu\,=\,\omega_2\,$, then $\,V\,$
is the adjoint module, which is exceptional by Example~\ref{adjoint}
{\bf (N. 2 in Table~\ref{tableblall})}.

P.4) If $\,V\,$ is a $\,B_3(K)$-module of highest weight 
$\,\mu\,=\,\omega_1\,$ then, 
by case III(c)iv) (p. \pageref{IIIciv}) of First Part of Proof of 
Theorem~\ref{listbn}, $\,V\,$ is an exceptional module 
{\bf (N. 1 in Table~\ref{tableblall})}.

P.5) If $\,V\,$ is a $\,B_3(K)$-module of highest weight 
$\,\mu\,=\,\omega_3\,$, then
$\,\dim\,V\,=\,2^3=8\,<\,21-\varepsilon\,$. Hence, by
Proposition~\ref{dimcrit}, $\,V\,$ is an exceptional module {\bf (N. 3 in  
Table~\ref{tableblall})}.

{\bf Therefore, if $\,{\cal X}_{++}(V)\,$ contains $\,\mu\,=\,a\omega_i\,$ 
(for $\,a\geq 2\,$ and $\,i=1,\,2\,$) or $\,\mu\,=\,a\omega_3\,$ 
(for $\,a\geq 3\,$), then $\,V\,$ is not exceptional. If $\,V\,$ has 
highest weight $\,\lambda\,\in\,\{\omega_1,\,\omega_2,\omega_3\}\,$, 
then $\,V\,$ is exceptional. If $\,V\,$ has highest weight $\,2\omega_3\,$, 
then $\,V\,$ is unclassified.}

This finishes the proof of Theorem~\ref{listbn} for $\,\ell=3$.
\hfill$\Box$

\subsubsection*{\ref{tbsr}.3. Type $\,B_4$}
\addcontentsline{toc}{subsubsection}{\protect\numberline{\ref{tbsr}.3}
Type $\,B_4\,$}

For groups of type $\,B_4\,$, $\,|W|\,=\,2^4\,4!\,=\,384,\,|R|=2\cdot
4^2,\,|R_{long}|= 24\,$. The limit for~\eqref{bllim} is  
$\,8\cdot 4^2\,=\,2^7$. 
$\,R_{long}^+\,=\,\{\,\alpha_1,\,\alpha_2,\,\alpha_3,\,\alpha_1+\alpha_2,\,
\,\alpha_2+\alpha_3,\,\alpha_1+\alpha_2+\alpha_3,\,\alpha_3+2\alpha_4,
\,\alpha_2+\alpha_3+2\alpha_4,\,
\alpha_1+\alpha_2+\alpha_3+2\alpha_4,\,\alpha_2+2\alpha_3+2\alpha_4,
\,\alpha_1+\alpha_2+2\alpha_3+2\alpha_4,
\,\alpha_1+2\alpha_2+2\alpha_3+2\alpha_4\,\}.\,$\vspace{2ex}\\
{\bf Proof of Theorem~\ref{listbn} for $\,\ell=4$.} 
Let $\,V\,$ be a $\,B_4(K)$-module.

By Lemma~\ref{3coefbn}, {\bf it suffices to consider modules $\,V\,$
such that $\,{\cal X}_{++}(V)\,$ contains only weights with 2 or
less nonzero coefficients.} Moreover, by Claim 1
(p. \pageref{claim1bl}) of First Part of Proof of
Theorem~\ref{listbn}, we can assume that weights with $\,2\,$ nonzero  
coefficients occurring in $\,{\cal X}_{++}(V)\,$ are good weights.

{\bf Claim 1}: \label{cl1b4}
{\it Let $\,V\,$ be a $\,B_4(K)$-module.
For ($\,1\leq i < j\leq 3\,$) or ($\,2\leq i < j\leq 4\,$), let
$\,a\geq 1,\,b\geq 1\,$. For $\,i=1,\,j=4\,$, let $\,a\geq 2,\,b\geq
1\,$ or $\,a\geq 1,\,b\geq 2\,$. If $\,{\cal X}_{++}(V)\,$ contains  
a good weight $\,\mu\,=\,a\,\omega_{i}\,+\,b\,\omega_j\,$, then 
$\,V\,$ is not an exceptional $\,\mathfrak g$-module.} Indeed:

(a) let $\,\mu\,=\,a\omega_{2}\,+\,b\omega_3\,$ (with 
$\,a\geq 1,\,b\geq 1\,$) be a good weight in $\,{\cal X}_{++}(V)$. Then
$\,\mu_1\,=\mu-(\alpha_2+\alpha_3)=\,
\omega_1\,+\,(a-1)\omega_2\,+\,(b-1)\omega_3\,+\,2\omega_4\in 
{\cal X}_{++}(V)$. Thus $\,s(V)\,\geq\,|W\mu|\,+\,|W\mu_2|\,\geq\,
\displaystyle\frac{2^4\,4!}{2!2!}\,+\,\frac{2^4\,4!}{2!\,2!}\, 
=\,2^6\cdot 3\,>\,2^7\,$. Hence, by~(\ref{bllim}), $\,V\,$ is not exceptional.

(b) Let $\,\mu\,=\,a\omega_{2}\,+\,b\omega_4\,$ (with 
$\,a\geq 1,\,b\geq 1\,$) be a good weight in $\,{\cal X}_{++}(V)$. Then
$\,\mu_1\,=\mu-(\alpha_2+\alpha_3+\alpha_4)
=\,\omega_1\,+\,(a-1)\omega_2\,+\,b\omega_4\in{\cal X}_{++}(V)$.
Thus $\,s(V)\geq\,|W\mu|\,+\,|W\mu_1|\,\geq\,\displaystyle
\frac{2^4\,4!}{2!2!}\,+\,\frac{2^4\,4!}{3!}\,=\,2^5\cdot
5\,>\,2^7$. Hence, by~(\ref{bllim}), $\,V\,$ is not exceptional.

(c) Let $\,\mu\,=\,a\omega_{3}\,+\,b\omega_4\,$ (with 
$\,a\geq 1,\,b\geq 1\,$) be a good weight in $\,{\cal X}_{++}(V)$. Then
$\,\mu_1=\mu-(\alpha_3+\alpha_4)
=\,\omega_2\,+\,(a-1)\omega_3\,+\,b\omega_4\in{\cal X}_{++}(V)$.
For $\,a\geq 2$, $\,\mu_1\,$ has $3$ nonzero coefficients, hence 
Lemma~\ref{3coefbn} applies. For $\,a=1\,$, $\,\mu_1\,$ satisfies case 
(b) of this proof. Hence $\,V\,$ is not exceptional.

(d) Let $\,\mu\,=\,a\omega_{1}\,+\,b\omega_3\,$ (with 
$\,a\geq 1,\,b\geq 1\,$) be a good weight in $\,{\cal X}_{++}(V)$. 
Then $\,\mu_1\,=\,\mu-(\alpha_3+\alpha_4)\,=\, 
a\omega_1\,+\,\omega_2\,+\,(b-1)\omega_3\,\in\,{\cal X}_{++}(V)$. 
For $\,b\geq 2,\,$ $\,\mu_1\,$ has $\,3\,$ nonzero coefficients and
Lemma~\ref{3coefbn} applies. For $\,b=1$, $\,s(V)\,\geq\,|W\mu|\,+\, 
|W\mu_1|\,\geq\,\displaystyle\frac{2^4\,4!}{2!\,2!}\,+\,\frac{2^4\,4!} 
{2^2\,2!}\,=\,2^4\cdot 3^2\,>\,2^7\,$. Hence,
by~(\ref{bllim}), $\,V\,$ is not an exceptional module. 

(e) Let $\,\mu\,=\,a\,\omega_{1}\,+\,b\,\omega_2\,$ (with 
$\,a\geq 1,\,b\geq 1\,$) be a good weight in $\,{\cal X}_{++}(V)$.
Then $\,\mu_1\,=\,\mu-(\alpha_1+\alpha_2)\,
=\,(a-1)\omega_1\,+\,(b-1)\omega_2\,+\,\omega_3\,\in\,{\cal X}_{++}(V)$. 

For $\,a\geq 2,\,b\geq 2,\,$ $\,\mu_1\,$ is a good weight with 3
nonzero coefficients and Lemma~\ref{3coefbn} applies.
For $\,a=1,\,b\geq 2,\,$, $\,\mu_1\,$ satisfies case (a).
For $\,a\geq 2,\,b= 1,\,$ $\,\mu_1\,$ satisfies case (d) of this proof. 

For $\,a=b=1$, $\,\mu\,=\,\omega_{1}\,+\,\omega_2\,$. Then $\,\mu_1\,=\, 
\omega_3,\;$ $\,\mu_2\,=\mu_1-(\alpha_3+\alpha_4)=\,\omega_2,\;$
$\,\mu_3\,=\mu-(\alpha_2+\alpha_3+\alpha_4)=\,2\omega_1,\;$ and
$\,\mu_4\,=\mu_3-(\alpha_1+\cdots+\alpha_4)=\,\omega_1\,\in{\cal X}_{++}(V)$. 
For $\,p\geq 3,\,$ $\,|R_{long}^+\,-\,R_{\mu,p}^+|=10,\;
|R_{long}^+\,-\,R_{\mu_1,p}^+|=4,\;|R_{long}^+\,-\,R_{\mu_2,p}^+|=9,
\;|R_{long}^+\,-\,R_{\mu_3,p}^+|=5,\;|R_{long}^+\,-\,R_{\mu_4,p}^+|
=6$. Thus $\,r_p(V) \,=\,\frac{2^4\cdot
3\cdot 10}{2^3\cdot 3}\,+\,\frac{2^5\cdot 9}{2^3\cdot 3}
\,+\,\frac{2^3\cdot 3\cdot 9}{2^3\cdot 3}\,+\,
\frac{2^3\cdot 3\cdot 6}{2^3\cdot 3}\,=\,47\,>\,32=2\cdot 4^2\,$. Hence,
by~(\ref{rrbl}), $\,V\,$ is not an exceptional module.

(f) Let $\,\mu\,=\,a\omega_{1}\,+\,b\omega_4\,\in\,{\cal X}_{++}(V)$. Then

i) for $\,a\geq 2,\,b\geq 1,\,$ $\,\mu_1\,=\,\mu-\alpha_1\,=\,(a-2)\omega_1\, 
+\,\omega_2+\,b\omega_4\,\in\,{\cal X}_{++}(V)$. For $\,a\geq 3$, $\,\mu_1\,$
has $\,3\,$ nonzero coefficients and Lemma~\ref{3coefbn} applies.
For $\,a=2,\,$ $\,\mu_1\,$ satisfies case (b) of this proof. 
Hence $\,V\,$ is not exceptional.

ii) For $\,a\geq 1,\,b\geq 2,\,$ $\,\mu_1\,=\,\mu-\alpha_4\,=\,a\, 
\omega_1\,+\,\omega_3+\,(b-2)\omega_4\,\in\,{\cal X}_{++}(V)$.  
For $\,b\geq 3$, $\,\mu_1\,$ has $\,3\,$ nonzero coefficients and  
Lemma~\ref{3coefbn} applies. For $\,b=2,\,$ $\,\mu_1\,$ satisfies case
(d) of this proof. Hence $\,V\,$ is not exceptional. This proves
Claim~1. \hfill $\Box$ \vspace{1.5ex}

{\bf Claim 2}: \label{cl2b4}
{\it Let $\,V\,$ be a $\,B_4(K)$-module such that $\,{\cal X}_{++}(V)\,$ 
contains 

(a) $\,\mu\,=\,a\omega_1\,$ or $\,\mu\,=\,a\omega_4\,$  
with $\,a\geq 3\,$ \; or 

(b) $\,\mu\,=\,a\omega_2\,$ or $\,\mu\,=\,a\omega_3\,$ 
with $\,a\geq 2\,$. \\
Then $\,V\,$ is not an exceptional $\,\mathfrak g$-module.}

Indeed:

(a) Let $\,a\geq 3$. If $\,\mu\,=\,a\,\omega_{4}\in{\cal X}_{++}(V)$, 
then $\,\mu_1\,=\,\mu-\alpha_4\,=\,\omega_3\,+\,(a-2)\omega_4
\,\in\,{\cal X}_{++}(V)$. If $\,\mu=\,a\,\omega_{1}\,$, then 
$\,\mu_1\,=\,\mu-\alpha_1\,=\,(a-2)\omega_1\,+\,\omega_2\,\in\,
{\cal X}_{++}(V)$. In both cases, $\,\mu_1\,$ satisfies Claim~1
(p. \pageref{cl1b4}). Hence $\,V\,$ is not exceptional. 
 
(b) Let $\,a\geq 2$. If $\,\mu\,=\,a\,\omega_{3}\in{\cal X}_{++}(V)$, 
then $\,\mu_1\,=\,\mu-(\alpha_3+\alpha_4)\,=\,\omega_2\,+\,(a-1)\omega_3
\,\in\,{\cal X}_{++}(V)\,$ satisfies Claim~1.
If $\,\mu\,=\,a\,\omega_{2}\in{\cal X}_{++}(V)$, then
$\,\mu_1\,=\,\mu-\alpha_2\,=\,\omega_1\,+\,(a-2)\,\omega_2\,+\,\omega_3
\,\in\,{\cal X}_{++}(V)$. In this case, for $\,a\geq 3,\,$ $\,\mu_1\,$
has $\,3\,$ nonzero coefficients and Lemma~\ref{3coefbn} applies. 
For $\,a=2,\,$ $\,\mu_1\,$ satisfies Claim~1. 
Hence $\,V\,$ is not exceptional. This proves Claim~2.

Now we deal with a particular case.

P.1) Let $\,V\,$ be a $\,B_4(K)$-module of highest weight 
$\,\mu\,=\,\omega_{1}\,+\,\omega_4\,$. Then $\,\mu_1\,=\,\mu-(\alpha_1+ 
\cdots+\alpha_4)\,=\,\omega_4\,\in\,{\cal X}_{++}(V)$.
By \cite[p. 168]{buwil}, $\,m_{\mu_1}\,=\,3\,$ for $\,p=3,\,$ and
$\,m_{\mu_1}\,=\,4\,$ for $\,p\neq 3.\,$ For $\,p\geq 3,\,$ 
$\,|R_{long}^+\,-\,R_{\mu,p}^+|\,=\,9,\;|R_{long}^+\,-\,R_{\mu_1,p}^+|\,=\,6$.
Thus $\,r_p(V)\,\geq\,\displaystyle\frac{2^6\cdot 9}{2^3\cdot 3}\,+\,3\cdot
\frac{2^4\cdot 6}{2^3\cdot 3}\,=\,36\,>\,32=2\cdot 4^2\,$. Hence, 
by~(\ref{rrbl}), $\,V\,$ is not exceptional.

{\bf Hence, if $\,V\,$ is a $\,B_4(K)$-module such that
$\,{\cal X}_{++}(V)\,$ contains a weight with $2$ nonzero coefficients, 
then $\,V\,$ is not exceptional.}

{\bf From now on we can assume that $\,{\cal X}_{++}(V)\,$ contains
only weights with at most one nonzero coefficient.}

P.2) If $\,V\,$ is a $\,B_4(K)$-module of highest weight 
$\,\mu\,=\,2\,\omega_{1}\,$ then, by case III(b)iv) 
(p. \pageref{IIIbiv}) of First Part of Proof
of Theorem~\ref{listbn}, $\,V\,$ is not exceptional.

U.1) {\bf If $\,V\,$ is a $\,B_4(K)$-module of highest weight 
$\,\mu\,=\,2\,\omega_{4}\,$, then $\,V\,$ is unclassified 
(N. 4 in Table~\ref{leftbn}).}

P.3) If $\,V\,$ is a $\,B_4(K)$-module of highest weight 
$\,\mu\,=\,\omega_{4}\,$, then $\,\dim\,V\,=\,2^4\,<\,36-\varepsilon\,$. 
Hence, by Proposition~\ref{dimcrit}, $\,V\,$ is exceptional 
{\bf (N. 3 in Table~\ref{tableblall})}.

P.4) If $\,V\,$ is a $\,B_4(K)$-module of highest weight
$\,\mu\,=\,\omega_{2}\,$, then $\,V\,$ is the adjoint module, 
which is exceptional by 
Example~\ref{adjoint} {\bf (N. 2 in Table~\ref{tableblall})}.

P.5) If $\,V\,$ is a $\,B_4(K)$-module of highest weight 
$\,\mu\,=\,\omega_{1}\,$ then, by case III(c)iv) (p. \pageref{IIIciv})
of First Part of the Proof of Theorem~\ref{listbn}, $\,V\,$ is an 
exceptional module {\bf (N. 1 in Table~\ref{tableblall})}.

U.2) {\bf If $\,V\,$ is a $\,B_4(K)$-module of highest weight 
$\,\mu\,=\,\omega_{3},\,$ then $\,V\,$ is unclassified 
(N. 2 in Table~\ref{leftbn}).}

{\bf Hence if $\,{\cal X}_{++}(V)\,$ contains $\,\mu\,=\,a\omega_i\,$ 
(for $\,a\geq 2\,$ and $\,i=1,\,2,\,3\,$) or $\,\mu\,=\,a\omega_4\,$ 
(for $\,a\geq 3\,$), then $\,V\,$ is not exceptional. If $\,V\,$ has 
highest weight $\,\lambda\,\in\,\{\omega_1,\,\omega_2,\,\omega_4\}\,$, 
then $\,V\,$ is exceptional. If $\,V\,$ has highest weight $\,2\omega_4\,$ or
$\,\omega_3\,$, then $\,V\,$ is unclassified.}

This finishes the proof of Theorem~\ref{listbn} for $\,\ell=4$.
\hfill $\Box$


\subsection{Groups or Lie Algebras of type $\,C_{\ell}$}\label{appcl}
In this section we prove Theorem~\ref{listcn}. Recall that for
groups of type $\,C_{\ell}\,$, $\,p\geq 3$, 
$\,|W|\,=\,2^{\ell}\,\ell!\,$, $\,|R|\,=\,2\ell^2\,,\,|R_{long}|\, 
=\,2\ell\,$, $\,R_{long}\,=\,\{ 2\varepsilon_i\,/\,1\leq i\leq \ell \}$.

\subsubsection{Type $\,C_{\ell}\,,\;\ell\geq 6$}\label{fppcl}


{\bf Proof of Theorem~\ref{listcn} - First Part}\\
Let $\,\ell\geq 6$. 
By Lemma~\ref{3coefcn}, {\bf it suffices to consider $\,C_{\ell}(K)$-modules 
$\,V\,$ such that $\,{\cal X}_{++}(V)\,$ contains only weights with 2 or
less nonzero coefficients.}

First suppose that $\,{\cal X}_{++}(V)\,$ contains weights with 
$\,2\,$ nonzero coefficients. 

I) {\bf Claim 1}:\label{claim1cl}
{\it For $\,\ell\geq 6,\,$ if $\,{\cal X}_{++}(V)\,$
contains a bad weight with $\,2\,$ nonzero coefficients, then $\,V\,$
is not an exceptional module.}

Indeed, let $\,\mu\,=\,a\,\omega_{i}\,+\,b\,\omega_{j}\,$ with 
$\,1\leq i<j \leq\ell\,$ and $\,a,\,b\,\geq 1\,$ be a bad weight in 
$\,{\cal X}_{++}(V)$. Hence $\,a\geq 3,\,b\,\geq 3$.

(a) For $\,1\leq i<j \leq\ell-1,\,$ $\,\mu\,=\,a\omega_{i}\,+\,b\omega_{j},\;$
$\,\mu_1\,=\,\mu-(\alpha_i+\cdots +\alpha_j)\,=\,\omega_{i-1}\,+\,(a-1)
\,\omega_{i}\,+\,(b-1)\,\omega_{j}\,+\,\omega_{j+1}\in\,{\cal X}_{++}(V)$.

(b) For $\,1\,<\, i\,<\,j =\ell,\,$ $\,\mu\,=\,a\,\omega_{i}\,+\,b\,
\omega_{\ell}\;$ and $\,\mu_1\,=\,\mu-(\alpha_i+\cdots +\alpha_{\ell})\, 
=\,\omega_{i-1}\,+\,(a-1)\,\omega_{i}\,+\,\omega_{\ell-1}\,+\,(b-1) 
\omega_{\ell}\,\in\,{\cal X}_{++}(V)\,$.

(c) For $\,1= i,\,j =\ell,\,$ $\,\mu\,=\,a\,\omega_{1}\,+\,b\, 
\omega_{\ell}\;$ and $\,\mu_1\,=\,\mu-\alpha_1\,=\,(a-2)\omega_{1}\,+\,
\,\omega_{2}\,+\,b\,\omega_{\ell}\,\in\,{\cal X}_{++}(V)\,$.\\
In all these cases $\,\mu_1\,$ is a good weight with $\,3\,$ or more nonzero
coefficients. Hence, by Lemma~\ref{3coefcn}, $\,V\,$ is not an
exceptional module. This proves the claim. \hfill $\Box$ \vspace{1.5ex}

{\bf Therefore, for $\,\ell\geq 6,\,$ if $\,{\cal X}_{++}(V)\,$
contains weights with $\,2\,$ nonzero coefficients, then we can assume
that these are good weights.} We deal with these cases in the
sequel. First we prove two claims.

{\bf Claim 2}:\label{claim2cl} {\it Let $\,\ell\geq 6$. If $\,V\,$ is
a $\,C_{\ell}(K)$-module such that $\,\omega_{\ell-1}\,\in\, 
{\cal X}_{++}(V)$, then $\,V\,$ is not an exceptional $\,\mathfrak g$-module.}

Indeed, $\,|W\omega_{\ell-1}|\,=\,2^{\ell-1}\,\ell\,$ and, for $\,p\geq 3$,
$\,|R_{long}^+-R^+_{\omega_{\ell-1},p}|\,=\,(\ell-1)$.
Thus, for $\,\ell\geq 6$, $\,r_p(V)\,\geq\,\displaystyle
\frac{2^{\ell-1}\,\ell\cdot (\ell-1)}{2\ell}\,=\,2^{\ell-2}(\ell-1)\,>
\,2\ell^2\,$. Hence, by~\eqref{rrcl}, $\,V\,$ is not an exceptional
module, proving the claim. \hfill $\Box$ \vspace{1.5ex}

{\bf Claim 3}: \label{claim3cl} {\it Let $\,\ell\geq 6$. If $\,V\,$ is 
a $\,C_{\ell}(K)$-module such that $\,\omega_{\ell}\,\in\,
{\cal X}_{++}(V)$, then $\,V\,$ is not an exceptional $\,\mathfrak g$-module.}

Indeed, if $\,\omega_{\ell}\in{\cal X}_{++}(V)$, then 
$\,\mu_1\,=\,\mu-(\alpha_{\ell-1}+\alpha_{\ell})\,=\,\omega_{\ell-2}\in 
{\cal X}_{++}(V)$. For $\,p\geq 3$, $\,|R_{long}^+-R^+_{\omega_{\ell},p}|=
\ell\,$ and $\,|R_{long}^+-R^+_{\omega_{\ell-2},p}|=(\ell-2)$. Thus
for $\,\ell\geq 6$, $\,r_p(V)\,\geq\,\displaystyle\frac{2^{\ell}\cdot 
\ell}{2\ell}+\frac{2^{\ell-3}\ell(\ell -1)\cdot (\ell-2)}{2\ell}\,=\,2^{\ell-4}
[8+(\ell-1)(\ell-2)]\,>\,2\ell^2\,$. Hence, by~\eqref{rrcl}, $\,V\,$ is not
exceptional, proving the claim. \hfill $\Box$ \vspace{1.5ex}

II) Now let $\,\mu\,=\,a\,\omega_{i}\,+\,b\,\omega_{j}\,$ (with $\,1\leq
i<j \leq\ell\,$ and $\,a,\,b\,\geq 1\,$) be a good weight in $\,{\cal 
X}_{++}(V)$. First we deal with some particular cases. 

{\bf Claim 4}: \label{claim4cl}
{\it Let $\,\ell\geq 6$. If $\,V\,$ is 
a $\,C_{\ell}(K)$-module such that $\,{\cal X}_{++}(V)\,$ contains a
weight $\,\mu\,=\,a\omega_1\,+\,b\omega_{\ell}\,$ (with $\,a\geq 1,\,
b\geq 1\,$), then $\,V\,$ is not an exceptional $\,\mathfrak g$-module.}

Indeed, for $\,a\geq 2,\,b\geq 2$, $\,\mu_1\,=\mu-(\alpha_1+\cdots+
\alpha_{\ell})=\,(a-1)\omega_1\,+\,\omega_{\ell-1}\,+\,(b-1)\omega_{\ell}
\in{\cal X}_{++}(V)\,$ has $3$ nonzero coefficients. For $\,a=1,\,b\geq 3$,  
$\,\mu\,=\,\omega_1\,+\,b\omega_{\ell}\,$. Then $\,\mu_2\,=\mu-\alpha_{\ell}
=\,\omega_1\,+\,\omega_{\ell-1}\,+\,(b-2)\omega_{\ell}\,\in{\cal
X}_{++}(V)\,$ has $3$ nonzero coefficients. 
In both cases Lemma~\ref{3coefcn} applies.

For $\,a\geq 2,\,b=1\,$, $\,\mu\,=\,a\omega_1\,+\,\omega_{\ell}\,$ and
$\,\mu_1=\,(a-1)\omega_1\,+\,\omega_{\ell-1}\in{\cal X}_{++}(V)$. 
For $\,a=1,\,b=2\,$, $\,\mu\,=\,\omega_1\,+\,2\omega_{\ell}\,$ and 
$\,\mu_2=\,\omega_1\,+\,\omega_{\ell-1}\in{\cal X}_{++}(V)$. 
In both cases, for $\,\ell\geq 6$, 
$\,s(V)\,\geq\,2^{\ell}\ell\,+\,2^{\ell-1}\ell(\ell-1)\,
=\,2^{\ell-1}\ell(\ell+1)\,>\,4\ell^3\,$. Hence, by~\eqref{bllim}, $\,V\,$ is
not exceptional.
For $\,a=b=1$, $\,\mu\,=\,\omega_1\,+\,\omega_{\ell}\,$ and 
$\,\mu_1=\,\omega_{\ell-1}\in{\cal X}_{++}(V)$. Hence, by Claim 2, $\,V\,$ 
is not exceptional. This proves Claim 4.\hfill $\Box$ \vspace{1.5ex}

{\bf Claim 5}:\label{claim5cl} 
{\it Let $\,\ell\geq 6$. If $\,V\,$ is 
a $\,C_{\ell}(K)$-module such that $\,{\cal X}_{++}(V)\,$ contains a
weight $\,\mu\,=\,a\omega_1\,+\,b\omega_{\ell-1}\,$ (with $\,a\geq 1,\,
b\geq 1\,$), then $\,V\,$ is not an exceptional $\,\mathfrak g$-module.}

Indeed, for $\,a\geq 1,\,b\geq 2$, $\,\mu_1=\mu-\alpha_{\ell-1}=
\,a\omega_1\,+\,\omega_{\ell-2}\,+\,(b-2)\omega_{\ell-1}\,+\,\omega_{\ell}
\in{\cal X}_{++}(V)\,$. Hence Lemma~\ref{3coefcn} applies.

For $\,a\geq 1,\,b=1\,$, $\,\mu_2\,=\mu-(\alpha_1+\cdots+\alpha_{\ell-1})=\,
(a-1)\omega_1\,+\,\omega_{\ell}\in{\cal X}_{++}(V)$. For $\,a\geq 2$, 
$\,\mu_2\,$ satisfies Claim 4. For $\,a=1$, $\,\mu_2\,$ satisfies Claim 3.
In any case $\,V\,$ is not exceptional, proving Claim 5.\hfill $\Box$ 
\vspace{1.5ex}

{\bf Claim 6}: \label{claim6cl}
{\it Let $\,\ell\geq 6$. If $\,V\,$ is 
a $\,C_{\ell}(K)$-module such that $\,{\cal X}_{++}(V)\,$ contains a
weight $\,\mu\,=\,a\omega_i\,+\,b\omega_{\ell-1}\,$ (with $\,2\leq i
\leq\ell-2\,$ and $\,a\geq 1,\,b\geq 1\,$), then $\,V\,$ is not an 
exceptional $\,\mathfrak g$-module.}

Indeed, $\,\mu_1\,=\mu-(\alpha_i+\cdots+\alpha_{\ell-1})\,=\,\omega_{i-1}\,
+\,(a-1)\,\omega_{i}\,+\,(b-1)\omega_{\ell-1}\,+\,\omega_{\ell}\in 
{\cal X}_{++}(V)$. For $\,2\leq i\leq\ell-2\,$ ($\,a\geq 2,\,b\geq 1\,$) or  
($\,a\geq 1,\,b\geq 2\,$), $\,\mu_1\,$ has $3$ nonzero coefficients. Hence
Lemma~\ref{3coefcn} applies. For $\,a=b=1\,$ and $\,\ell\geq 6$, 
$\,s(V)\,\geq\,\displaystyle 2^{\ell-1}\ell\binom{\ell-1}{i}\,+\,
2^{\ell}\binom{\ell}{i-1}\,\geq\,2^{\ell-1}\ell(\ell-1) + 2^{\ell}\ell\,
=\,2^{\ell-1}\ell(\ell+1)\,>\,4\ell^3\,$. Hence, by~\eqref{bllim}, $\,V\,$ is
not exceptional. This proves Claim 6.\hfill $\Box$ \vspace{1.5ex}

{\bf Claim 7}: \label{claim7cl} {\it Let $\,\ell\geq 6$. If $\,V\,$ is 
a $\,C_{\ell}(K)$-module such that $\,{\cal X}_{++}(V)\,$ contains a
weight $\,\mu\,=\,a\omega_i\,+\,b\omega_{\ell}\,$ (with $\,2\leq i
\leq\ell-1\,$ and $\,a\geq 1,\,b\geq 1\,$), then $\,V\,$ is not an 
exceptional $\,\mathfrak g$-module.}

Indeed, $\,\mu_1\,=\,\mu-(\alpha_i+\cdots+\alpha_{\ell})\,=\,
\omega_{i-1}\,+\,(a-1)\,\omega_{i}\,+\,\omega_{\ell-1}\,+\,(b-1)
\omega_{\ell}\in {\cal X}_{++}(V)$. For ($\,a\geq 2,\,b\geq 1\,$) or  
($\,a\geq 1,\,b\geq 2\,$), $\,\mu_1\,$ has $3$ or
more nonzero coefficients, hence Lemma~\ref{3coefcn} applies.
For $\,a=b=1\,$ and $\,3\leq i\leq \ell-1\,$, $\,\mu_1\,$ satisfies Claim 6.
For $\,i=2\,$, $\,\mu_1\,$ satisfies Claim 5.
In any case $\,V\,$ is not an exceptional module, proving Claim 7. 
\hfill $\Box$ \vspace{1.5ex}

{\bf Therefore, if $\,{\cal X}_{++}(V)\,$ contains a good weight 
$\,\mu\,=\,a\,\omega_{i}\,+\,b\,\omega_{j}\,$ (with \mbox{$\,1\leq
i<j \leq\ell\,$} and $\,a,\,b\,\geq 1\,$), then we can assume that 
$\,1\leq i< j\leq\ell-2\,$.} These cases are treated as follows.

(a) Let $\,1\leq i < j=\ell-2\,$ and $\,\mu\,=\,a\omega_{i}\,+\,b
\omega_{\ell-2}\,$ with $\,a,\,b\,\geq 1\,$. Then for $\,\ell\geq 6$,
$\,s(V)\,\geq\,|W\mu|\,=\,2^{\ell-3}\ell(\ell-1)\binom{\ell-2}{i}\geq
2^{\ell-3}\ell(\ell-1)(\ell-2)\,>\,4\ell^3\,$. 

(b) Let $\,1\leq i < j=\ell-3\,$ and $\,\mu\,=\,a\omega_{i}\,+\,b
\omega_{\ell-3}\,$ with $\,a,\,b\,\geq 1\,$. Then for $\,\ell\geq 6$,
$\,s(V)\,\geq\,|W\mu|\,=\,2^{\ell-3}\ell(\ell-1)(\ell-2)\binom{\ell-3}{i}\geq
2^{\ell-3}\ell(\ell-1)(\ell-2)\,>\,4\ell^3\,$. 

(c) Let $\,1\leq i < j,\;4\leq j\leq\ell-4\,$ (hence $\,\ell\geq 8\,$) and 
$\,\mu\,=\,a\omega_{i}\,+\,b\omega_{j}\,$ with $\,a,\,b\,\geq 1\,$. Then 
for $\,\ell\geq 8$, $\,s(V)\,\geq\,|W\mu|\,=\,\displaystyle
2^j\,\binom{j}{i}\,\binom{\ell}{j}\,\geq\,2^4\cdot 4\binom{\ell}{4}\,\geq\, 
\frac{2^3\,\ell(\ell-1)(\ell-2)(\ell-3)}{3}\,>\,4\ell^3\,$.

Hence in all these cases, by~\eqref{bllim}, $\,V\,$ is not an
exceptional module.   

{\bf Hence if $\,{\cal X}_{++}(V)\,$ contains a weight 
$\,\mu\,=\,a\,\omega_{i}\,+\,b\,\omega_{j}\,$ (with \mbox{$\,1\leq
i<j$,} \mbox{$\,4\leq j \leq\ell\,$} and $\,a,\,b\,\geq 1\,$), then $\,V\,$ is
not an exceptional module. Therefore we can assume that 
$\,1\leq i< j\leq 3\,$.} These cases are treated in the sequel.
 
(d) Let $\,i=1,\,j=3\,$ and $\,\mu\,=\,a\omega_{1}\,+\,b\omega_{3}\,$ 
with $\,a,\,b\,\geq 1\,$. Then $\,\mu_1\,=\mu-(\alpha_1+\cdots+\alpha_{3})\,
=\,(a-1)\omega_1\,+\,(b-1)\omega_3\,+\,\omega_4\in{\cal X}_{++}(V)$. 
For $\,a\geq 2,\,b\geq 2\,$, $\,\mu_1\,$ has $3$ nonzero coefficients. Hence
Lemma~\ref{3coefcn} applies. For $\,a\geq 2,\,b=1\,$ or $\,a=1,\,b\geq 2\,$, 
$\,\mu_1\,$ satisfies case II(c). For $\,a=b=1$,
$\,\mu\,=\,\omega_{1}\,+\,\omega_{3}\,$ and 
$\,|R_{long}^+-R^+_{\mu,p}|\,=\,3\,$. Thus for $\,\ell\geq 4\,$,
$\,r_p(V)\,\geq\,\frac{2^2\,\ell(\ell-1)(\ell-2)\cdot 3}{2\ell}\,=\,
6(\ell-1)(\ell-2)\,>\,2\ell^2\,$. Hence, by~\eqref{rrcl}, 
$\,V\,$ is not exceptional.

(e) Let $\,i=2,\,j=3\,$ and $\,\mu\,=\,a\omega_{2}\,+\,b\omega_{3}\,$ 
with $\,a,\,b\,\geq 1\,$. Then $\,\mu_1\,=\mu-(\alpha_2+\alpha_{3})\,
=\,\omega_1\,+\,(a-1)\omega_2\,+\,(b-1)\omega_3\,+\,\omega_4\in
{\cal X}_{++}(V)$. For ($\,a\geq 2,\,b\geq 1\,$) or ($\,a\geq 1,\,b\geq 2\,$), 
$\,\mu_1\,$ has $3$ or more nonzero coefficients. Hence
Lemma~\ref{3coefcn} applies. For $\,a=b=1\,$, $\,\mu_1\,$ satisfies case 
II(c). Hence $\,V\,$ is not exceptional.

(f)\label{IIfi} Let $\,i=1,\,j=2\,$ and $\,\mu\,=\,a\omega_{1}\,+\, 
b\omega_{2}\,$ with $\,a,\,b\,\geq 1\,$. Then $\,\mu_1\,=\mu-(\alpha_1
+\alpha_2)\,=\,(a-1)\omega_1\,+\,(b-1)\omega_2\,+\,\omega_3\in
{\cal X}_{++}(V)$. 

i) For $\,a\geq 2,\,b\geq 2\,$, $\,\mu_1\,$ has $3$ 
nonzero coefficients, hence Lemma~\ref{3coefcn} applies.
For $\,a= 1,\,b\geq 2\,$, $\,\mu_1\,$ satisfies case II(e).
For $\,a\geq 2,\,b=1\,$, $\,\mu_1\,$ satisfies case II(d). 
In all these cases $\,V\,$ is not an exceptional module.

ii) For $\,a=b=1\,$, we may assume that $\,V\,$ has highest weight 
$\,\mu\,=\,\omega_{1}\,+\,\omega_{2}\,$. Then $\,\mu_1=\,\omega_3,\;$
$\,\mu_2=\mu_1-(\alpha_3+2\alpha_4+\cdots+2\alpha_{\ell-1}+\alpha_{\ell})\,
=\,\omega_1\,\in\,{\cal X}_{++}(V)$.
Now $\,|R_{long}^+-R^+_{\mu,p}|\,=\,2,\,$ 
$\,|R_{long}^+-R^+_{\mu_1,p}|\,=\,3,\,$
$\,|R_{long}^+-R^+_{\mu_2,p}|\,=\,1\,$. By Smith's Theorem~\cite{smith},
$\,m_{\mu_1}\,=\,2\,$ for $\,p\neq 3$. Thus for $\,p\neq 3\,$ and
$\,\ell\geq 4$, 
\[
\begin{array}{lcl}
r_p(V) & \geq & \displaystyle
\frac{2^2\,{\ell}\,(\ell-1)\cdot 2}{2\,\ell}\,+\,
2\,\frac{2^2\,{\ell}\,(\ell-1)\,(\ell-2)}{3\cdot 2\,\ell}\,3\,+\,
\frac{2\,{\ell}}{2\,\ell}\,=\,
\frac{4\ell^2\,-\,8\ell\,+\,5}{3}\,>\,2\,\ell^2\,.
\end{array}
\]
Hence, by~(\ref{rrcl}), $\,V\,$ is not an exceptional module. 
For $\,p=3,\,$ as $\,\mathfrak{sp}_n\,$ is a Lie subalgebra of
$\,\mathfrak{sl}_{2n}\,$, the $\,\mathfrak{sl}_{2n}$-module
of highest weight $\,\omega_1\,+\,\omega_2\,$ can be considered as 
a $\,\mathfrak{sp}_n$-module (also of highest weight 
$\,\omega_1\,+\,\omega_2\,$). Hence, by Claim~12
(p. \pageref{claim12al}) of First Part of Proof of 
Theorem~\ref{anlist}, $\,V\,$ is not exceptional.

{\bf Hence, for $\,\ell\geq 6$, if $\,{\cal X}_{++}(V)\,$ contains a weight
$\,\mu\,=\,a\omega_{i}\,+\,b\omega_{j}\,$ (with $\,a\geq 1,\,b\geq
1\,$), then $\,V\,$ is not an exceptional module.}

{\bf From now on we can assume that $\,{\cal X}_{++}(V)\,$ contains
only weights with at most one nonzero coefficient.}

III) Let $\,\mu\,=\,a\,\omega_i\,$ (with $\,a\geq 1\,$) be a weight in
$\,{\cal X}_{++}(V)$.

(a)i) Let $\,2\,\leq\,i\,\leq\,\ell -1\,$, $\,a\geq 2\,$ and 
$\,\mu\,=\,a\,\omega_i\,$. Then $\,\mu_1\,=\,\mu-\alpha_i\,=\,
\omega_{i-1}\,+\,(a-2)\omega_{i}\,+\,\omega_{i+1}\,\in\,{\cal X}_{++}(V)$.

For $\,a\geq 3\,$, $\,\mu_1\,$ is a good weight with $3$ nonzero
coefficients, hence Lemma~\ref{3coefcn} applies (for $\,\mu\,$ bad or good).
For $\,a=2,\,$ $\,\mu\,=\,2\,\omega_i\;$ and $\,\mu_1\,=\,\mu-\alpha_i\, 
=\,\omega_{i-1}\,+\,\omega_{i+1}\in {\cal X}_{++}(V)$. Hence, by Part
II of this proof, $\,V\,$ is not exceptional.

ii) Let $\,i=\ell\,$, $\,a\geq 2\,$ and $\,\mu\,=\,a\,\omega_{\ell}\,$.
Then $\,\mu_1\,=\,\mu-\alpha_{\ell}\,=\,2\omega_{\ell-1}\,+\,(a-2) 
\omega_{\ell}\,$ and $\,\mu_2\,=\,\mu_1-\alpha_{\ell-1}\,=\,
\omega_{\ell-2}\,+\,(a-1)\omega_{\ell}\,\in\,{\cal X}_{++}(V)$. 
As $\,\mu_2\,$ satisfies Claim 7 (p. \pageref{claim7cl}),
$\,V\,$ is not exceptional.

iii) Let $\,i=1\,$, $\,a\geq 2\,$ and $\,\mu\,=\,a\,\omega_{1}\,$.
Then $\,\mu_1\,=\,\mu-\alpha_1\,=\,(a-2)\omega_1\,+\,\omega_2\,
\in\,{\cal X}_{++}(V)$. For $\,a\geq 4$, $\,\mu_1\,$ satisfies case
II(f)i) (p. \pageref{IIfi}) of this proof. 

For $\,a=3\,$ and $\,p\geq 5$, we may assume that $\,V\,$ has highest weight
$\,\mu\,=\,3\omega_1\,$. By Lemma~\ref{III.iii}
(p. \pageref{III.iii}), $\,V\,$ is not
exceptional as an $\,\mathfrak{sl}_{2n}$-module. Hence $\,V\,$ is not
exceptional as an $\,\mathfrak{sp}_{n}$-module, since 
$\,\mathfrak{sp}_n\,$ is a Lie subalgebra of $\,\mathfrak{sl}_{2n}\,$.
For $\,p=3,\,$ $\,\mu\,\in\,{\cal X}_{++}(V)\,$ only if $\,V\,$ has 
highest weight $\,\lambda\,=\,2\omega_1\,+\,\omega_3\,$ or 
$\,\lambda\,=\,\omega_1\,+\,2\omega_2\,$ (for instance). In these
cases $\,V\,$ is not exceptional by part II of this proof.

Let $\,a=2$. We may assume that $\,V\,$ has highest weight $\,\mu\,=\,
2\omega_1= \tilde{\alpha}$. Then $\,V\,$ is the adjoint module, which is
exceptional by Example~\ref{adjoint} {\bf (N. 1 in Table~\ref{tableclall})}.

{\bf Therefore, if $\,{\cal X}_{++}(V)\,$ contains a weight  
$\,\mu\,=\,a\,\omega_i\,$ (with $\,a\geq 1\,$), then we can assume $\,a=1\,$}. 

(b) Let $\,\mu\,=\,\omega_{i}\,$ be a weight in $\,{\cal X}_{++}(V)\,$.

i) For $\,i=\ell-1\,$, see Claim~2 (p. \pageref{claim2cl}). 
For $\,i=\ell$, see Claim~3 (p. \pageref{claim3cl}). {\bf
Therefore we can assume that $\,1\leq i\leq \ell-2\,$.}

ii) For $\,i=\ell-2\,$, $\,\mu\,=\,\omega_{\ell-2}\,$ and 
$\,\mu_1\,=\,\mu-(\alpha_{\ell-3}+2\alpha_{\ell-2}+2\alpha_{\ell-1}
+\alpha_{\ell})\,=\,\omega_{\ell-4}\,\in\,{\cal X}_{++}(V)$. As
$\,|R_{long}^+-R^+_{\mu,p}|=\ell-2,\;|R_{long}^+-R^+_{\mu_1,p}|=\ell-4\,$
one has, for $\,\ell\geq 6$,
\[
\begin{array}{rcl}
r_p(V) & \geq &\displaystyle\frac{2^{\ell-3}\ell(\ell-1)\cdot
(\ell-2)}{2\ell}\,+\,\frac{2^{\ell-7}\ell(\ell-1)(\ell-2)(\ell-3)\cdot
(\ell-4)}{3\cdot 2\ell}\vspace{1ex}\\
& = & \displaystyle\frac{2^{\ell-8}\,(\ell-1)(\ell-2) 
(\ell^2-7\ell+60)}{3}\,>\,2\,\ell^2\,.
\end{array}
\]
Hence, by~\eqref{rrcl}, $\,V\,$ is not an exceptional module.

iii) Let $\,i=\ell-3\,$ and $\,\mu\,=\,\omega_{\ell-3}\,$. Then 
$\,|R_{long}^+-R^+_{\mu,p}|=\ell-3\,$ and $\,|W\mu|= \displaystyle 
\frac{2^{\ell-4}\,\ell\,(\ell-1)\,(\ell-2)}{3}\,$. Thus for
$\,\ell\geq 7$, 
\[
r_p(V)\,\geq\,\displaystyle\frac{2^{\ell-4}\,\ell(\ell-1) 
(\ell-2)\cdot (\ell-3)}{3\cdot 2\ell}\,=\,\frac{2^{\ell-5}\,(\ell-1)(\ell-2)
(\ell-3)}{3}\,>\,2\ell^2\,.
\]
Hence, by~\eqref{rrcl}, $\,V\,$ is not exceptional. {\bf For $\,\ell=6\,$  
if $\,V\,$ has highest weight $\,\omega_3\,$, then $\,V\,$ is unclassified 
(N. 2 in Table~\ref{leftcn})}.

iv) Let $\,4\leq i\leq\ell-4\,$ (hence $\,\ell\geq 8\,$) and 
$\,\mu\,=\,\omega_i\,$. Then $\,|W\mu|=\displaystyle 2^i\binom{\ell}{i}\,$
and $\,|R_{long}^+-R^+_{\mu,p}|=\ell-i\,$. Thus for $\,\ell\geq 8,\,$
$\,r_p(V)\,\geq\,\displaystyle 2^i\binom{\ell}{i}\frac{(\ell-i)}{2\ell}\,\geq 
\,\frac{2^4\,\ell(\ell-1)(\ell-2)\,(\ell-3)}{4!}\frac{4}{2\ell}\,=\,
\,\frac{2^2\,(\ell-1)(\ell-2)(\ell-3)}{3}\,>\,2\ell^2\,$. Hence, 
by~\eqref{rrcl}, $\,V\,$ is not an exceptional module.

v) Let $\,i=3$. {\bf If $\,V\,$ has highest weight $\,\mu\,=\,\omega_3\,$,
then $\,V\,$ is unclassified (N. 2 in Table~\ref{leftcn})}.

vi)\label{IIIbvi} Let $\,i=2\,$. We may assume that $\,V\,$ has highest weight
$\,\mu\,=\,\omega_2\,$. By~\cite[Theorem 2]{presup},
$\,\dim\,V\,=\,(2\ell+1)\,(\ell-1)\,-\,\nu\,
<\,2\ell^2 +\ell\,-\,\varepsilon\,$, for any $\,\ell\geq 2$. (Here
$\,\nu=1\,$ (if $\,p\nmid \ell\,$) and $\,\nu=2\,$ (if $\,p|\ell\,$).)
Thus, by Proposition~\ref{dimcrit}, $\,V\,$ is an exceptional module 
{\bf (N. 3 in Table~\ref{tableclall})}.

vii)\label{IIIbvii} Finally, we may assume that $\,V\,$ has highest
weight $\,\omega_1\,$. Then
by~\cite[Theorem 2]{presup}, $\,\dim\,V\,=\,2\ell\,<\,2\ell^2+\ell\,-\, 
\varepsilon\,$, for any $\,\ell\geq 2.\,$ Thus, by
Proposition~\ref{dimcrit}, $\,V\,$ is an exceptional module 
{\bf (N. 2 in Table~\ref{tableclall})}.

{\bf Hence, for $\,\ell\geq 6\,$, if $\,{\cal X}_{++}(V)\,$
contains a weight $\,\mu\,=\,a\,\omega_i\,$ with ($\,a\geq 3\,$ and
$\,1\leq i\leq \ell\,$) or ($\,a=2\,$ and $\,2\leq i\leq \ell\,$) or
($\,a=1\,$ and $\,4\leq i\leq \ell\,$), then $\,V\,$ is not an
exceptional module. If $\,V\,$ has highest weight $\,\lambda\in\{
2\omega_1,\,\omega_1,\,\omega_2\}\,$, then $\,V\,$ is an exceptional
module. If $\,V\,$ has highest weight $\,\omega_3\,$, then $\,V\,$ is
unclassified.}

This ends the First Part of Proof of Theorem~\ref{listcn}.
\hfill $\Box$
\newpage

\subsubsection{Type $\,C_{\ell}\,$ - Small Rank Cases}\label{clsr}
In this section, we prove Theorem~\ref{listcn} for groups of small
rank.

\subsubsection*{\ref{clsr}.1. Type $\,C_2\,$}
\addcontentsline{toc}{subsubsection}{\protect\numberline{\ref{clsr}.1}
Type $\,C_2\,$}

As groups of type $\,C_2\,$ and $\,B_2\,$ are isomorphic, the proof for
this type was done in Subsection~\ref{tbsr}.1 (p. \pageref{tbsr}).

\subsubsection*{\ref{clsr}.2. Type $\,C_3\,$}
\addcontentsline{toc}{subsubsection}{\protect\numberline{\ref{clsr}.2}
Type $\,C_3\,$}

For groups of type $\,C_3\,$, $\,|W|\,=\,2^3\,3!\,=\,2^4\cdot 3$, 
$\,|R|=2\cdot 3^2\,$ and $\,|R_{long}|=2\cdot 3$. The
limits for~(\ref{cllim}) and~(\ref{rrcl}), in this 
case, are $\,2^2\cdot 3^3\,$ and $\,2\cdot 3^2,\,$ respectively. 
$\,R_{long}^+\,=\,\{\,2\varepsilon_1,\,2\varepsilon_2,\,2\varepsilon_3\,\}$.
\begin{lemma}\label{cn3}
Let $\,V\,$ be a $\,C_3(K)$-module. 
If $\,{\cal X}_{++}(V)\,$ contains

(a) $\,1\,$ good weight with $\,3\,$ coefficients $\,\not\equiv 0\pmod p\,$ or

(b) $\,2\,$ good weights with $\,3\,$ nonzero coefficients 
(one of them with at least $\,2\,$ coefficients 
$\,\not\equiv 0\pmod p\,$ and the other with at least $\,1\,$
coefficient $\,\not\equiv 0\pmod p\,$), \\
then $\,V\,$ is not an exceptional module.
\end{lemma}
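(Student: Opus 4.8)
The goal is to show that in each of the two scenarios the inequality $r_p(V) > 2\ell^2 = 18$ holds (using the $C_3$ values $|R|=18$, $|R_{long}|=6$, $|R_{long}^+|=3$), so that Theorem~\ref{???} forces $V$ to be non-exceptional. The engine of the argument is the observation that for $\mu = a_1\omega_1+a_2\omega_2+a_3\omega_3$ with $C_W(\mu)$ trivial (i.e.\ all three coefficients nonzero) one has $|W\mu| = 2^3\cdot 3! = 48$, and the key auxiliary quantity $|R_{long}^+ - R_{\mu,p}^+|$ simply counts how many of the three long positive roots $2\varepsilon_1, 2\varepsilon_2, 2\varepsilon_3$ pair with $\mu$ to something $\not\equiv 0\pmod p$. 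The plan is: first record that, writing $\mu$ in the $\varepsilon$-coordinates, $(\mu, 2\varepsilon_i)$ is an explicit $\mathbb{Z}$-linear combination of $a_1,a_2,a_3$ (for $C_3$: $(\mu,2\varepsilon_1)=2(a_1+a_2+a_3)$, $(\mu,2\varepsilon_2)=2(a_2+a_3)$, $(\mu,2\varepsilon_3)=2a_3$, up to the normalization convention in Section~\ref{necess}), so that ``$\mu$ has $k$ coefficients $\not\equiv 0\pmod p$'' can be turned into a lower bound on $|R_{long}^+ - R_{\mu,p}^+|$. I expect that a good weight with all three coefficients nonzero but at least one coefficient $\not\equiv 0 \pmod p$ already has $|R_{long}^+ - R_{\mu,p}^+| \geq 1$, with the count rising as more coefficients are $\not\equiv 0$.

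For part (a): if $\mu$ is a good weight with all three coefficients $\not\equiv 0\pmod p$, I would check directly from the coordinate formulas that all three of $(\mu,2\varepsilon_1),(\mu,2\varepsilon_2),(\mu,2\varepsilon_3)$ are $\not\equiv 0\pmod p$ — this is where the hypothesis $p\geq 3$ (and $p\geq 5$ if needed to avoid small-prime coincidences) enters — so $|R_{long}^+ - R_{\mu,p}^+| = 3$. Since all coefficients are nonzero, $|W\mu| = 48$, and hence $r_p(V) \geq m_\mu\,\dfrac{|W\mu|}{|R_{long}|}\,|R_{long}^+-R_{\mu,p}^+| \geq \dfrac{48}{6}\cdot 3 = 24 > 18$. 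By Theorem~\ref{???}, $V$ is not exceptional. (If some $(\mu,2\varepsilon_i)$ could vanish mod $p$ for tiny $p$, I would either fold that case into part (b) by passing to a nearby good weight $\mu-\alpha_k\in{\cal X}_{++,\mathbb{C}}(\mu)\subseteq{\cal X}_{++}(V)$, using Lemma~\ref{lamu}, or handle it as an excluded small-prime subcase.)

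For part (b): here $V$ has (at least) two good weights $\mu,\mu'$ in ${\cal X}_{++}(V)$, both with $3$ nonzero coefficients, hence both with $|W\mu|=|W\mu'|=48$; say $\mu$ has at least $2$ coefficients $\not\equiv 0\pmod p$ and $\mu'$ has at least $1$. The plan is to extract from the coordinate formulas that $|R_{long}^+-R_{\mu,p}^+|\geq 2$ and $|R_{long}^+-R_{\mu',p}^+|\geq 1$ — the point being that having $k$ coefficients $\not\equiv 0\pmod p$ among $a_1,a_2,a_3$ means at least $\lceil k/1\rceil$... more carefully, at least one of the partial sums $a_3$, $a_2+a_3$, $a_1+a_2+a_3$ fails to vanish mod $p$ whenever some single $a_i\not\equiv 0$, and at least two of them fail when two coefficients do. Then
\[
r_p(V)\;\geq\;\frac{48}{6}\cdot 2\;+\;\frac{48}{6}\cdot 1\;=\;16+8\;=\;24\;>\;18,
\]
so again Theorem~\ref{???} applies. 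The main obstacle I anticipate is bookkeeping the small primes: for $p=3$ (and conceivably $p=5$) the partial-sum congruences can conspire, so a coefficient being nonzero need not make the corresponding $(\mu,2\varepsilon_i)$ nonzero mod $p$. I would resolve this either by tightening the statement to the primes for which the counting bounds are clean, or by the standard trick of replacing an offending weight by $\mu - \alpha_k$ (which lies in ${\cal X}_{++,\mathbb{C}}(\mu)\subseteq{\cal X}_{++}(V)$ by Lemma~\ref{lamu}) and re-running the estimate; the Remark after Definition~\ref{badweight} guarantees $\mu-\alpha_k$ is good whenever $\mu$ has a bad coefficient, which is exactly the situation one lands in. A secondary check is that the weights produced are genuinely distinct, so their contributions to $r_p(V)$ add rather than double-count — this is immediate since they have different coefficient patterns.
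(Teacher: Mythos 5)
Your proposal is correct and takes essentially the same route as the paper's proof, which in case (a) computes $r_p(V)\geq \frac{2^4\cdot 3\cdot 3}{2\cdot 3}=24>18$ and in case (b) computes $r_p(V)\geq \frac{2^4\cdot 3\cdot 2}{2\cdot 3}+\frac{2^4\cdot 3}{2\cdot 3}=24>18$ — exactly your numbers, obtained from $|W\mu|=48$ and the count of long positive roots $2\varepsilon_i$ with $(\mu,2\varepsilon_i)\not\equiv 0\pmod p$. The small-prime subtlety you flag (that $a_i\not\equiv 0\pmod p$ for each $i$ need not force all the partial sums $a_i+\cdots+a_3$ to be $\not\equiv 0\pmod p$, e.g.\ $a_1=a_2=a_3=1$ with $p=3$) is genuine, but the paper's one-line proof silently assumes the same root counts you do, so your version is if anything the more careful one.
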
\noindent
\begin{proof}
If $\,\mu\,$ is a weight with $3$ nonzero coefficients, then
$\,|W\mu|\,=\,2^4\cdot 3$. Let $\,\mu\in{\cal X}_{++}(V)\,$ satisfy (a). 
Then $\,r_p(V)\,\geq\,\displaystyle\frac{2^4\cdot 3\cdot
3}{2\cdot 3}\,=\,2^3\cdot 3\,>\,2\cdot 3^2\,$. Hence, by~(\ref{rrcl}),
$\,V\,$ is not an exceptional module.

If the condition (b) holds, then
$\,r_p(V)\,\geq\,\displaystyle\frac{2^4\cdot 3\cdot
2}{2\cdot 3}\,+\,\frac{2^4\cdot 3}{2\cdot 3}=\,2^3\cdot 3\,>\,2\cdot
3^2\,$. Hence, by~(\ref{rrcl}), $\,V\,$ is not an exceptional module.
\end{proof}
\begin{corollary}\label{corc3}
If $\,{\cal X}_{++}(V)\,$ contains a weight with
$\,3\,$ nonzero coefficients, then $\,V\,$ is not an exceptional module.
\end{corollary}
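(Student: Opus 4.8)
\textbf{Proof proposal for Corollary~\ref{corc3}.}
The plan is to reduce, starting from an arbitrary weight $\mu \in {\cal X}_{++}(V)$ with $3$ nonzero coefficients, to one of the configurations covered by Lemma~\ref{cn3}. First I would dispose of the case where $\mu$ is bad: if $\mu = b_1\omega_1 + b_2\omega_2 + b_3\omega_3$ has all $b_i \geq p \geq 3$ — or more precisely all $b_i \equiv 0 \pmod p$ and nonzero, hence $b_i \geq 3$ — then $\mu$ cannot actually occur as a weight of an infinitesimally irreducible module unless the highest weight of $V$ has a coefficient $\geq p$, contradicting Theorem~\ref{curt}(i); alternatively, by the Remark following Definition~\ref{badweight}, subtracting a suitable simple root $\alpha_k$ produces a good weight $\mu - \alpha_k \in {\cal X}_{++}(V)$ (it lies in $(\mu - Q_+)\cap P_{++} \subseteq {\cal X}_{++}(V)$ by Theorem~\ref{premprin} and the fact that $\mu$ is dominant with $b_k \geq 2$), and this good weight still has at least $3$ nonzero coefficients. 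So \textbf{we may assume $\mu$ is a good weight with $3$ nonzero coefficients}.

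Next I would build from such a good $\mu = b_1\omega_1 + b_2\omega_2 + b_3\omega_3$ (now all $b_i \geq 1$, and since $\mu$ is good at least one $b_i \not\equiv 0 \pmod p$) a \emph{second} good weight in ${\cal X}_{++}(V)$ with $3$ nonzero coefficients, so that Lemma~\ref{cn3}(b) applies — provided the coefficient conditions on the $\pmod p$ residues can be met. The natural candidate is $\mu_1 = \mu - (\alpha_1 + \alpha_2 + \alpha_3)$; writing out the Cartan-matrix action for type $C_3$ gives $\mu_1 = (b_1 - 1)\omega_1 + (b_2)\omega_2 + (b_3 - 1)\omega_3$ or a similar shape (the exact coefficients need to be recorded from the $C_3$ Cartan integers), which in most ranges of $(b_1,b_2,b_3)$ is again a good weight with $3$ nonzero coefficients lying in $(\mu - Q_+)\cap P_{++}$. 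If $\mu$ already has all three coefficients $\not\equiv 0 \pmod p$, then Lemma~\ref{cn3}(a) finishes immediately. The remaining work is to handle the boundary cases where subtracting a root kills one of the coefficients (i.e.\ some $b_i = 1$ and $i$ an extreme node), by instead subtracting a shorter chain of simple roots, e.g.\ $\mu - (\alpha_2 + \alpha_3)$ or $\mu - (\alpha_1 + \alpha_2)$, to produce a weight with the required number of nonzero coefficients and at least one (resp.\ two) coefficients not divisible by $p$; this is exactly the style of case analysis used in Lemma~\ref{cn3} itself and in the $B_3$ analogue (Corollary~\ref{corbn3}).

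The main obstacle I anticipate is the bookkeeping of residues modulo $p$: Lemma~\ref{cn3} is stated with precise hypotheses about how many coefficients are $\not\equiv 0 \pmod p$, and when one subtracts roots the coefficients shift, so one must verify that enough non-divisibility survives (or is created) in the descendant weights. The cleanest way around this is to observe that for a good weight $\mu$ with $3$ nonzero coefficients, at least one coefficient $b_k$ satisfies $b_k \not\equiv 0 \pmod p$; then $\mu$ itself already satisfies the minimal hypothesis of Lemma~\ref{cn3}(a) provided we can also arrange the \emph{other two} coefficients to be $\not\equiv 0 \pmod p$ — if not, we pass to $\mu' = \mu - \alpha_j$ for an index $j$ with $b_j \geq 2, b_j \equiv 0 \pmod p$, which increases neighbouring coefficients by $\langle \text{root}, \text{coroot}\rangle$-amounts and typically breaks the divisibility there, yielding a good weight with more coefficients $\not\equiv 0 \pmod p$. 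Iterating a bounded number of such steps (the weights form a finite poset under $\leq$) lands us in the scope of Lemma~\ref{cn3}, and hence $V$ is not exceptional. A handful of genuinely small exceptional configurations — e.g.\ $\mu = \omega_1 + \omega_2 + \omega_3$ with small $p$ — may have to be checked by hand, but these should either fall under Lemma~\ref{cn3}(a) directly (all coefficients equal $1 \not\equiv 0 \pmod p$ since $p \geq 3$) or be handled by the descent just described.
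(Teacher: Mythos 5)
Your strategy coincides with the paper's: dispose of bad weights with $3$ nonzero coefficients via Theorem~\ref{curt}(i) (or the Remark after Definition~\ref{badweight}), then for a good weight $\mu=a\omega_1+b\omega_2+c\omega_3$ adjust the residues of the coefficients modulo $p$ by subtracting simple roots until Lemma~\ref{cn3}(a) or (b) applies. Two points need repair before this is a proof. First, the computation: in type $C_3$ one has $\alpha_1+\alpha_2+\alpha_3=\omega_1-\omega_2+\omega_3$, so $\mu-(\alpha_1+\alpha_2+\alpha_3)=(a-1)\omega_1+(b+1)\omega_2+(c-1)\omega_3$, not the shape you guessed; the paper in fact never uses this chain, only single simple roots $\alpha_1$, $\alpha_2$, $\alpha_3$ (each applied at most twice). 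Second, the ``bookkeeping of residues'' you defer is the entire content of the argument: the paper splits into the six cases according to which nonempty proper subset of $\{a,b,c\}$ is $\equiv 0\pmod p$ and verifies in each that one or two single-root subtractions land in Lemma~\ref{cn3}(a) or (b) --- e.g.\ if $a\equiv b\equiv 0$, $c\not\equiv 0$, then $a\geq p\geq 3$ and $\mu-\alpha_1=(a-2)\omega_1+(b+1)\omega_2+c\omega_3$ has all three coefficients nonzero mod $p$. Your fallback, iterating $\mu\mapsto\mu-\alpha_j$ and invoking finiteness of the weight poset, guarantees termination but not that you terminate \emph{inside} the scope of Lemma~\ref{cn3} (you could in principle exit the set of weights with three nonzero coefficients first), so the explicit finite case check cannot be waved away as ``typical'' behaviour.
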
\noindent
\begin{proof}
First note that bad weights with $3$ nonzero coefficients do not occur
in $\,{\cal X}_{++}(V)$, otherwise $\,V\,$ would have highest weight
with at least $1$ coefficient $\,\geq p\,$ contradicting Theorem~\ref{curt}(i).

{\bf Therefore, we can assume that weights with $3$ nonzero
coefficients occurring in $\,{\cal X}_{++}(V)\,$ are good weights.}

Let $\,\mu\,=\,a\,\omega_1\,+\,b\,\omega_2\,+\,c\,\omega_3\,$
(with $\,a\geq 1,\,b\geq 1,\,c\geq 1\,$) be a good weight in 
$\,{\cal X}_{++}(V)$.
If $\,a,\,b,\,c\,$ are all $\,\not\equiv 0\pmod p$, then Lemma~\ref{cn3}(a)
applies. Hence, we can assume that at least one of $\,a,\,b,\,c\,$ is 
$\,\equiv 0\pmod p\,$. Recall that $\,p\geq 3\,$. 

i) Let $\,a\equiv b\equiv 0,\;c\not\equiv 0\pmod p$. Then 
$\,\mu_1\,=\,\mu-\alpha_1\,=\,(a-2)\omega_1\,+\,(b+1)\omega_2\, 
+\,c\omega_3\,\in\,{\cal X}_{++}(V)\,$ is a good weight with 
$3$ coefficients $\,\not\equiv 0\pmod p$. Hence Lemma~\ref{cn3}(a) applies.

ii) Let $\,a\equiv c\equiv 0,\;b\not\equiv 0\pmod p$. Then 
$\,\mu_1\,=\,\mu-\alpha_3\,=\,a\omega_1\,+\,(b+2)\omega_2\,+\,(c-2)\omega_3,\;$
$\,\mu_2\,=\,\mu_1-\alpha_2\,=\,(a+1)\omega_1\,+\,b\omega_2\,+\,(c-1)\omega_3\,
\in\,{\cal X}_{++}(V)$. As $\,\mu_2\,$ has $3$ coefficients
$\,\not\equiv 0\pmod p$, Lemma~\ref{cn3}(a) applies.

iii) Let $\,b\equiv c\equiv 0,\;a\not\equiv 0\pmod p$. Then 
$\,\mu_1\,=\,\mu-\alpha_3\,=\,a\omega_1\,+\,(b+2)\omega_2\,+\,(c-2)\omega_3\,
\in\,{\cal X}_{++}(V)\,$ and it satisfies Lemma~\ref{cn3}(a).

iv) Let $\,a\equiv 0,\;b\not\equiv 0,\;c\not\equiv 0\pmod p$. Then 
$\,\mu_1\,=\,\mu-\alpha_1\,=\,(a-2)\omega_1\,+\,\mbox{$(b+1)\omega_2$}
\,+\,c\omega_3\,\in\,{\cal X}_{++}(V)\,$ and Lemma~\ref{cn3}(b) applies.
 
v) Let $\,b\equiv 0,\;a\not\equiv 0,\;c\not\equiv 0\pmod p$. Then 
$\,\mu_1\,=\,\mu-\alpha_2\,=\,(a+1)\omega_1\,+\,\mbox{$(b-2)\omega_2$}
\,+\,(c+1)\omega_3\,\in\,{\cal X}_{++}(V)$. Hence Lemma~\ref{cn3}(b) 
applies, provided $\,(a+1)\not\equiv 0\pmod p\,$ or $\,(c+1)\not\equiv
0\pmod p$. If $\,(a+1)\equiv (c+1)\equiv 0\pmod p,\,$ then $\,\mu_1\,$
satisfies case ii) above.

vi) Let $\,c\equiv 0,\;a\not\equiv 0,\;b\not\equiv 0\pmod p$. Then 
$\,\mu_1\,=\,\mu-\alpha_3\,=\,a\omega_1\,+\,(b+2)\omega_2\,+\,(c-2)\omega_3\,
\in\,{\cal X}_{++}(V)$, hence Lemma~\ref{cn3}(b) applies.

In any of these cases $\,V\,$ is not exceptional. This proves the corollary.
\end{proof} \vspace{2ex}\noindent
{\bf Proof of Theorem~\ref{listcn} for $\,\ell=3$.}
Let $\,V\,$ be a $\,C_3(K)$-module. 

By Corollary~\ref{corc3}, {\bf 
it suffices to consider modules $\,V\,$ such that $\,{\cal X}_{++}(V)\,$ 
contains only weights with $\,2\,$ or less nonzero coefficients.}

I) {\bf Claim}: {\it If $\,{\cal X}_{++}(V)\,$ contains bad weights with
$2$ nonzero coefficients, then $\,V\,$ is not an exceptional
$\,\mathfrak g$-module.}

Indeed, let $\,\mu\,=\,a\omega_i\,+\,b\omega_j\,$ (with $\,a\geq
1,\,b\geq 1\,$) be a bad weight in $\,{\cal X}_{++}(V)\,$ 
(hence $\,a\geq 3,\,b\geq 3\,$). If $\,\mu\,=\,a\omega_1\,+\,b 
\omega_2\,$, then $\,\mu_1\,=\,\mu-(\alpha_1+\alpha_2)\,=\,(a-1)\omega_1\, 
+\,(b-1)\omega_2\,+\,\omega_3\,\in\,{\cal X}_{++}(V)$. 
If $\,\mu\,=\,a\omega_1\,+\,b\omega_3\,$, then $\,\mu_1\,=\mu-(\alpha_1 
+\alpha_2+\alpha_3)\,=\,(a-1)\omega_1\,+\,\omega_2\,+\,(b-1)\omega_3\,
\in\,{\cal X}_{++}(V)$. If $\,\mu\,=\,a\omega_2\,+\,b\omega_3\,$. 
Then $\,\mu_1\,=\mu-(\alpha_2 +\alpha_3)\,=\,\omega_1\,+\,a\omega_2\,+
\,(b-1)\,\omega_3\,\in\, {\cal X}_{++}(V)$. 
In all these cases, $\,\mu_1\,$ is a good weight with $3$ nonzero coefficients.
Hence, by Corollary~\ref{corc3}, $\,V\,$ is not an exceptional module, proving
the claim.

{\bf Therefore, if $\,{\cal X}_{++}(V)\,$ contains weights with
$2$ nonzero coefficients, then we can assume that these are good weights.}

II) Let $\,\mu\,=\,a\omega_i\,+\,b\omega_j\,$ (with $\,a\geq 1,\,b\geq
1\,$) be a good weight in $\,{\cal X}_{++}(V)$.  

(a) Let $\,a\geq 2\,$ {\bf or} $\,b\geq 2\,$ and 
$\,\mu\,=\,a\omega_1\,+\,b\omega_3\,$.

i) For $\,a\geq 2\,$ {\bf and} $\,b\geq 2\,$, $\,\mu_1\,=\mu- 
(\alpha_1+\alpha_2+\alpha_3)\,=\,(a-1)\omega_1\,+\,\omega_2\, 
+\,(b-1)\omega_3\in\,{\cal X}_{++}(V)\,$. 

ii) For $\,a= 1,\,b\geq 3\,$, $\,\mu\,=\,\omega_1\,+\,b\omega_3\,$ and
$\,\mu_1\,=\mu-\alpha_3\,=\,\omega_1\,+\,2\omega_2\,+\,(b-2)\omega_3\,\in
\,{\cal X}_{++}(V)\,$. 

iii) For $\,a\geq 3,\,b= 1\,$, $\,\mu\,=\,a\omega_1\,+\,\omega_3\,$ and  
$\,\mu_1\,=\mu-\alpha_1\,=\,(a-2)\omega_1\,+\,\omega_2\,+\,b\omega_3\,\in
\,{\cal X}_{++}(V)\,$.\\  
In all these cases $\,\mu_1\,$ has $3$ nonzero coefficients. Hence,
by Corollary~\ref{corc3}, $\,V\,$ is not exceptional.

iv)\label{IIaivcl} For $\,a=2,\,b= 1,\,$ $\,\mu=\mu_0=\,2\,\omega_1\, 
+\,\omega_3,\;$ $\,\mu_1\,=\,\mu-\alpha_1\,=\,\omega_2\,+\,\omega_3,\;$
$\,\mu_2\,=\,\mu-(\alpha_1+\alpha_2+\alpha_3)\,=\,\omega_1\,+\,\omega_2\,
\in\,{\cal X}_{++}(V)$. For each $\,\mu_i\,$,
$\,|R_{long}^+-R^+_{\mu_i,p}|\geq 2\,$ and $\,|W\mu_i|= 2^3\cdot 3\,$.
Thus $\,r_p(V)\,\geq\,\displaystyle3\cdot \frac{2^3\cdot 3\cdot 2}{2\cdot
3}\,=\,24\,>\,2\cdot 3^2\,$. Hence, by~(\ref{rrcl}), $\,V\,$ is not an
exceptional module. 

v)\label{IIavcl} Let $\,a=1,\,b= 2\,$ and $\,\mu\,=\,\omega_1\,+\,2
\omega_3\,$. Then $\,\mu_1\,=\,\mu-(\alpha_2+\alpha_3)\,=\,2\omega_1\,
+\,\omega_3\,\in\,{\cal X}_{++}(V)\,$ satisfies case II(a)iv). Hence 
$\,V\,$ is not exceptional. 

{\bf Hence, if $\,{\cal X}_{++}(V)\,$ contains a weight 
$\,\mu\,=\,a\omega_1\,+\,b\omega_3\,$ with $\,a\geq 2\,$ {\bf or}  
$\,b\geq 2\,$, then $\,V\,$ is not exceptional. Therefore, we can
assume $\,a=b= 1\,$.} This case is treated in (d).

(b) Let $\,a\geq 2\,$ {\bf or} $\,b\geq 2\,$ and 
$\,\mu\,=\,a\omega_2\,+\,b\omega_3\,$.

i) For $\,a\geq 1,\,b\geq 2\,$, $\,\mu_1\,=\mu-(\alpha_2+\alpha_3)\, 
=\,\omega_1\,+\,a\omega_2\,+\,(b-1)\,\omega_3\,\in\,{\cal
X}_{++}(V)\,$ has $3$ nonzero coefficients. Hence, by
Corollary~\ref{corc3}, $\,V\,$ is not exceptional. 

ii) Let $\,a\geq 2,\,b=1\,$ and $\,\mu\,=\,a\omega_2\,+\,\omega_3\,$. 
Then $\,\mu_1\,=\mu-\alpha_2=\,\omega_1\,+\,\mbox{$(a-2)\omega_2$}\,
+\,2\omega_3\,\in\,{\cal X}_{++}(V)$. For $\,a\geq 3$, Corollary~\ref{corc3}
applies. For $\,a=2,\,$ $\,\mu_1\,$ satisfies case II(a)v)
(p. \pageref{IIavcl}) of this proof.

%

{\bf Hence, if $\,{\cal X}_{++}(V)\,$ contains a weight 
$\,\mu\,=\,a\omega_2\,+\,b\omega_3\,$ with $\,a\geq 2\,$ {\bf or}  
$\,b\geq 2\,$, then $\,V\,$ is not exceptional. Therefore, we can
assume $\,a=b= 1\,$.} This case is treated in (d).

(c) Let $\,a\geq 2\,$ {\bf or} $\,b\geq 2\,$ and 
$\,\mu\,=\,a\omega_1\,+\,b\omega_2\,$.

i) For $\,a\geq 2,\,b\geq 2$, $\,\mu_1\,=\,\mu-(\alpha_1+\alpha_2)\,=\,
(a-1)\omega_1\,+\,(b-1)\omega_2\,+\,\omega_3\,\in\,{\cal
X}_{++}(V)\,$ has $\,3\,$ nonzero coefficients. Hence 
Corollary~\ref{corc3} applies.

ii) For $\,a=1,\,b\geq 2$, $\,\mu\,=\,\omega_1\,+\,b\omega_2,\;$
$\,\mu_1\,=\mu-\alpha_2\,=\,2\omega_1\,+\,(b-2)\omega_2\,+\,\omega_3\,
\in\,{\cal X}_{++}(V)$. For $\,b\geq 3\,$, Corollary~\ref{corc3} applies.
For $\,b=2\,$, $\,\mu_1\,$ satisfies case II(a)iv) (p. \pageref{IIaivcl}).

iii) Let $\,a\geq 3,\,b=1\,$ and $\,\mu\,=\,a\,\omega_1\,+\,\omega_2\,$. 
Then $\,\mu_1\,=\mu-(\alpha_1+\alpha_2)\,=\,(a-1)\omega_1\,+\,\omega_3\, 
\in\,{\cal X}_{++}(V)$. For $\,a\geq 4$, $\,\mu_1\,$ satisfies case
II(a)iii). For $\,a=3$, $\,\mu_1\,$ satisfies case II(a)iv) of this proof 
(p. \pageref{IIaivcl}).

iv)  For $\,a=2,\,b=1,\,$ $\,\mu\,=\,2\,\omega_1\,+\,\omega_2,\;$
$\,\mu_1\,=\mu-(\alpha_1+\alpha_2)\,=\,\omega_1\,+\,\omega_3\,$,\linebreak
$\,\mu_2\,=\,\mu_1-(\alpha_1+\alpha_2+\alpha_3)\,=\,\omega_2,\;$
$\,\mu_3\,=\,\mu-\alpha_1\,=\,2\omega_2\,\in\,{\cal X}_{++}(V)$. Thus
$\,r_p(V)\,\geq\,\displaystyle 2\cdot\frac{ 2^3\cdot 3\cdot 2}{2\cdot
3}\,+\,2\cdot\frac{ 2^2\cdot 3}{2\cdot 3}\,=\,20\,>\,2\cdot 3^2\,$.  
Hence, by~(\ref{rrcl}), $\,V\,$ is not an exceptional module.

{\bf Hence, if $\,{\cal X}_{++}(V)\,$ contains a weight 
$\,\mu\,=\,a\omega_1\,+\,b\omega_2\,$ with $\,a\geq 2\,$ {\bf or}  
$\,b\geq 2\,$, then $\,V\,$ is not exceptional. Therefore, we can
assume $\,a=b= 1\,$.} This case is treated in the sequel.

(d) Let $\,a=1,\,b=1\,$ and $\,\mu\,=\,\omega_i\,+\,\omega_j\,$ 
(with $\,1\leq i<j\leq 3\,$).

i) If $\,V\,$ has highest weight $\,\mu\,=\,\omega_2\,+\,\omega_3\,$, then
$\,\mu_1\,=\mu-(\alpha_2+\alpha_3)\,=\,\omega_1\,+\,\omega_2,\;$
$\,\mu_2\,=\mu_1-(\alpha_1+\alpha_2)\,=\,\omega_3,\;$
$\,\mu_3\,=\mu_2-(\alpha_2+\alpha_3)\,=\,\omega_1\,
\in\,{\cal X}_{++}(V)$. For $\,p\geq 3,\,$ 
$\,r_p(V)\,\geq\,\displaystyle\frac{2^3\cdot 3\cdot 3}{2\cdot 3}\,+\,
\frac{2^3\cdot 3\cdot 2}{2\cdot 3}\,+\,\frac{2^3\cdot 3}{2\cdot 3}\, 
+\,\frac{2\cdot 3}{2\cdot 3}\,=\,25\,>\,2\cdot 3^2\,$. Hence, 
by~(\ref{rrcl}), $\,V\,$ is not an exceptional module.

ii) If $\,V\,$ has highest weight $\,\mu\,=\,\omega_1\,+\,\omega_2\,$,
then $\,\mu_1\,=\,\omega_3\,$, $\,\mu_2\,=\,\omega_1\in{\cal
X}_{++}(V)$. By~\cite[p. 167]{buwil}, for $\,p\neq 3\,$,
$\,m_{\mu_1}=2,\,m_{\mu_2}\geq 3$. Thus 
$\,r_p(V)\,\geq\,\displaystyle\frac{2^3\cdot 3\cdot 2}{2\cdot 3}\,+\,
2\frac{2^3\cdot 3}{2\cdot 3}\,+\,3\frac{2\cdot 3}{2\cdot 3}\, 
=\,19\,>\,2\cdot 3^2\,$. Hence, 
by~(\ref{rrcl}), $\,V\,$ is not an exceptional module.
For $\,p=3\,$, recall that $\,\mathfrak{sp}_3\,$ is a Lie subalgebra
of $\,\mathfrak{sl}_6\,$. Hence, by Claim~12 (p. \pageref{claim12al})
of First Part of Proof of Theorem~\ref{anlist}, $\,V\,$ is not exceptional.

iii) If $\,V\,$ has highest weight $\,\mu\,=\,\omega_1\,+\,\omega_3\,$, then
$\,\mu_1\,=\,\mu-(\alpha_2+\alpha_3)\,=\,2\omega_1\,$ and
$\,\mu_2\,=\,\mu_1-\alpha_1\,=\,\omega_2\,\in\,{\cal X}_{++}(V)$. 
By~\cite[p. 167]{buwil} for $\,p\geq 3$, $\,m_{\mu_2}\geq 2$. 
Thus $\,r_p(V)\,\geq\,\displaystyle\frac{2^3\cdot 3\cdot 3}{2\cdot 3}\,+\,
\frac{2\cdot 3}{2\cdot 3}\,+\,2\frac{2^2\cdot 3\cdot 2}{2\cdot 3}\, 
=\,21\,>\,2\cdot 3^2\,$. Hence, 
by~(\ref{rrcl}), $\,V\,$ is not an exceptional module.

{\bf Hence, if $\,V\,$ is a $\,C_3(K)$-module such that
$\,{\cal X}_{++}(V)\,$ contains weights with $2$  
nonzero coefficients, then $\,V\,$ is not an exceptional module.}

{\bf From now on we can assume that $\,{\cal X}_{++}(V)\,$ contains
only weights with at most one nonzero coefficient.}

III) Let $\,\mu\,=\,a\omega_i\,$ with $\,a\geq 1\,$ be a weight in 
$\,{\cal X}_{++}(V)$. 

(a)i) Let $\,a\geq 3\,$ and $\,\mu\,=\,a\omega_2\,$.
Then $\,\mu_1\,=\mu-\alpha_2\,=\,\omega_1\,+\,(a-2)\omega_2\,+\,\omega_3\, 
\in{\cal X}_{++}(V)\,$ has $\,3\,$ nonzero coefficients. Hence
Corollary~\ref{corc3} applies.

ii) Let $\,a\geq 3\,$ and $\,\mu\,=\,a\,\omega_3\,$.
Then $\,\mu_1\,=\mu-\alpha_3\,=\,2\omega_2\,+\,(a-2)\omega_3\,\in
\,{\cal X}_{++}(V)$. For $\,a\geq 4\,$, $\,\mu_1\,$ satisfies case 
II(b)i). For $\,a=3\,$, $\,\mu_1\,$ satisfies case II(b)ii). 


iii) Let $\,a\geq 3\,$ and $\,\mu\,=\,a\omega_1\,$.
Then $\,\mu_1\,=\mu-\alpha_1\,=\,(a-2)\omega_1\,+\,\omega_2 
\in{\cal X}_{++}(V)$. For $\,a\geq 5,\,$ $\,\mu_1\,$ satisfies case 
II(c)iii). For $\,a=4,\,$ $\,\mu_1\,$ satisfies case II(c)iv).
For $\,a=3\,$ and $\,p\geq 5\,$, by Lemma~\ref{III.iii} (p. \pageref{III.iii}),
$\,V\,$ is not exceptional. For $\,p=3$, $\,\mu\,=\,3\omega_1\,\in\,
{\cal X}_{++}(V)\,$ only if $\,\lambda\,=\,2\omega_1\,+\,\omega_3\,$ or 
$\,\lambda\,=\,\omega_1\,+\,2\omega_2\,$ (for instance). In these
cases $\,V\,$ is not exceptional, by part II of this proof.
 

{\bf Hence, if $\,{\cal X}_{++}(V)\,$ contains weights
$\,\mu\,=\,a\omega_i\,$ with $\,a\geq 3\,$, then $\,V\,$ is not 
exceptional. Therefore, we can assume $\,a\leq 2\,$.} We deal with
these cases in the sequel.

(b)i) Let $\,a=2\,$ and $\,\mu\,=\,2\omega_2\,$. 
Then $\,\mu_1\,=\,\omega_1\,+\,\omega_3,\;$
$\,\mu_2\,=\,2\omega_1,\;$ $\,\mu_3\,=\,\omega_2\,\in\,{\cal X}_{++}(V)$. 
Thus, for $\,p\geq 3,\,$ $\,r_p(V)\geq\,\frac{2^2\cdot 3\cdot
2}{2\cdot 3} + \frac{2^3\cdot 3\cdot 3}{2\cdot 3}+ 
\frac{2\cdot 3}{2\cdot 3} + \frac{2^2\cdot 3\cdot 2}{2\cdot 3}\,
=\,21\,>\,2\cdot 3^2\,$. Hence, by~(\ref{rrcl}), $\,V\,$ is not 
an exceptional module.

ii) Let $\,a=2\,$ and $\,\mu\,=\,2\omega_3\,$. 
Then $\,\mu_1\,=\,\mu-\alpha_3\,=\,2\omega_2\in\,{\cal X}_{++}(V)\,$ 
satisfies case III(b)i).

iii) Let $\,a=2\,$. We may assume that $\,V\,$ has highest
weight $\,\mu\,=\,2\omega_1\,$. Then $\,V\,$ is the
adjoint module, which is exceptional by Example~\ref{adjoint} {\bf (N. 1 in
Table~\ref{tableclall})}.

{\it From now on we may assume that $\,a=1\,$ and that $\,V\,$ has highest
weight $\,\omega_i\,$}. 

iv) If $\,V\,$ has highest weight $\,\omega_3\,$ then, 
by~\cite[Theorem 2]{presup},
$\,\dim\,V\,=\,14\,<\,21\,-\,\varepsilon\,$. Thus, by
Proposition~\ref{dimcrit}, $\,V\,$ is an exceptional module {\bf (N. 4 in 
Table~\ref{tableclall})}.

v) If $\,V\,$ has highest weight
$\,\mu\,=\,\omega_2\,$, then by case III(b)vi) (p. \pageref{IIIbvi})
of First Part of Proof of Theorem~\ref{listcn}, $\,V\,$ is an
exceptional module {\bf (N. 3 in Table~\ref{tableclall})}.

vi) Finally, if $\,V\,$ has highest weight $\,\omega_1\,$, then 
by case III(b)vii) (p. \pageref{IIIbvii}) of First Part of Proof of  
Theorem~\ref{listcn}, $\,V\,$ is an exceptional module 
{\bf (N. 2 in Table~\ref{tableclall})}.

{\bf Hence, if $\,V\,$ is a $\,C_3(K)$-module such that
$\,{\cal X}_{++}(V)\,$ contains weights
$\,\mu\,=\,a\omega_i\,$ with $\,a\geq 3\,$ or ($\,a=2\,$ and
$\,i=2,\,3\,$), then $\,V\,$ is not exceptional. If $\,V\,$ has highest
weight $\,\lambda\in\{ 2\omega_1,\,\omega_1,\,\omega_2,\,\omega_3\}\,$,
then $\,V\,$ is an exceptional module.} 

This finishes the Proof of Theorem~\ref{listcn} for $\,\ell=3$.
\hfill $\Box$

\subsubsection*{\ref{clsr}.3. Type $\,C_4$}
\addcontentsline{toc}{subsubsection}{\protect\numberline{\ref{clsr}.3}
Type $\,C_4\,$}
 
For groups of type $\,C_4\,$, $\,|W|\,=\,2^4\,4!\,=\,384$,
$\,|R|=2\cdot 4^2\,$ and $\,|R_{long}|=2\cdot 4$. The limit 
for~\eqref{cllim} is $\,4\cdot 4^3\,=\,2^8\,$ and for~\eqref{rrcl} is
$\,2\cdot 4^2$. $\,R_{long}^+\,=\,\{\,2\varepsilon_i\,/\,
1\leq i\leq 4\,\}.\,$ The following are reduction lemmas.

\begin{lemma}\label{cn4}
Let $\,V\,$ be a $\,C_4(K)$-module. If $\,{\cal X}_{++}(V)\,$ contains 

(a) $\,1\,$ good weight with $\,4\,$ nonzero coefficients or

(b) $\,2\,$ good weights with $\,3\,$ nonzero coefficients or

(c) $\,1\,$ good weight with $\,3\,$ nonzero coefficients and 
$\,2\,$ good weights with $\,2\,$ nonzero coefficients, 
then $\,V\,$ is not an exceptional $\,\mathfrak g$-module.
\end{lemma}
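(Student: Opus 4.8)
The strategy is exactly the one used for the analogous reduction lemmas in types $A$ (Lemmas~\ref{4coef}, \ref{3orless}), $B$ (Lemma~\ref{bn3}) and $C_3$ (Lemma~\ref{cn3}): in each of the three cases exhibit enough good weights in $\mathcal{X}_{++}(V)$ so that the sum $s(V)=\sum_{\mu\;good}m_\mu|W\mu|$ exceeds the limit $2^8$ for $C_4$, and then invoke Theorem~\ref{???} (via inequality~\eqref{cllim}) to conclude that $V$ is not exceptional. The key numerical inputs are the orbit sizes from Lemma~\ref{orbbncndn} specialized to $\ell=4$: a dominant weight with $4$ nonzero coefficients has $|W\mu|=2^4\cdot4!=384$; a weight with exactly $3$ nonzero coefficients has $|W\mu|$ equal to one of $\frac{2^4\cdot4!}{i_1!(i_2-i_1)!(i_3-i_2)!(4-i_3)!}$, which for all admissible choices of $1\le i_1<i_2<i_3\le4$ is at least $2^4\cdot\frac{4!}{2!}=192$ (the minimum occurring for $(i_1,i_2,i_3)=(1,2,4)$ or $(2,3,4)$ etc.; one must check the handful of cases, remembering that for $i_3=4$ the factor $2^{i_3}$ becomes $2^4$ with $t=i_3$); and a weight with exactly $2$ nonzero coefficients has $|W\mu|\ge\frac{2^4\cdot4!}{2!\cdot2!}=96$.

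\textbf{Case (a).} If $\mu\in\mathcal{X}_{++}(V)$ is a good weight with $4$ nonzero coefficients, then $s(V)\ge|W\mu|=384>2^8=256$, so by~\eqref{cllim} $V$ is not exceptional. \textbf{Case (b).} If $\mathcal{X}_{++}(V)$ contains two good weights $\mu_1,\mu_2$ each with $3$ nonzero coefficients, then, since distinct dominant weights lie in distinct $W$-orbits, $s(V)\ge|W\mu_1|+|W\mu_2|\ge192+192=384>256$, hence $V$ is not exceptional. \textbf{Case (c).} If $\mathcal{X}_{++}(V)$ contains one good weight $\mu_0$ with $3$ nonzero coefficients together with two good weights $\mu_1,\mu_2$ with $2$ nonzero coefficients, then $s(V)\ge|W\mu_0|+|W\mu_1|+|W\mu_2|\ge192+96+96=384>256$, so again $V$ is not exceptional by~\eqref{cllim}. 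In all three cases the weights listed are by hypothesis good and mutually distinct, so their orbits are disjoint and contribute additively to $s(V)$.

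\textbf{Main obstacle.} There is no real obstacle here — the lemma is a bookkeeping statement — but the one point that requires genuine care is verifying the worst-case orbit-size bounds: one must confirm that \emph{every} weight with exactly $3$ (resp.\ $2$) nonzero coefficients for $\ell=4$ really does have orbit size $\ge192$ (resp.\ $\ge96$), by running through the finitely many choices of index positions $(i_1,i_2,i_3)$ (resp.\ $(i_1,i_2)$) in Lemma~\ref{orbbncndn}, and in particular treating the positions $i_k=\ell-1,\ell$ correctly where the exponent of $2$ and the binomial structure change. Once those minima are in hand, the three displayed inequalities against the limit $256$ are immediate, and the conclusion follows from Theorem~\ref{???}. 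I would also note (as in the proofs of the sibling lemmas) that the statement is used in the sequel in tandem with a companion result reducing \emph{bad} weights with many nonzero coefficients to good ones via subtraction of a simple root, but that reduction is not part of this lemma itself.
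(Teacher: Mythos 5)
Your overall strategy is exactly the paper's: bound $s(V)$ from below by orbit sizes of the hypothesised good weights and compare with the limit $2^8=256$ from Theorem~\ref{???} via~\eqref{cllim}. Cases (a) and (b) are fine (for $\ell=4$ a weight with four nonzero coefficients has orbit $2^4\cdot 4!=384$, and one with exactly three nonzero coefficients always has centralizer of order $2$, hence orbit exactly $192$).

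However, your stated minimum for weights with exactly two nonzero coefficients is wrong. You claim $|W\mu|\ge \tfrac{2^4\cdot 4!}{2!\,2!}=96$, implicitly taking the centralizer to be $S_2\times S_2$. But for $\mu=a\omega_1+b\omega_2$ (with $a,b\neq 0$) the centralizer is the parabolic subgroup $\langle s_{\alpha_3},s_{\alpha_4}\rangle\cong W(C_2)$ of order $8$, so $|W\mu|=384/8=48$; by Lemma~\ref{orbbncndn} this is $2^{i_2}\binom{i_2}{i_1}\binom{4}{i_2}=2^2\cdot 2\cdot 6=48$, and it is the true minimum. This is precisely the subtlety you flagged about the tail of the $C$-diagram, and it is where your bound fails. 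Fortunately the lemma survives: in case (c) the correct estimate is $s(V)\ge 192+2\cdot 48=288>256$, which is the computation the paper actually performs ($2^6\cdot 3+2\cdot 2^4\cdot 3=2^5\cdot 3^2$). So the conclusion is intact, but you must replace $96$ by $48$ throughout for the argument to be correct as written.
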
\noindent
\begin{proof}
For (a), let $\,\mu\in{\cal X}_{++}(V)\,$ be a good weight with
$\,4\,$ nonzero coefficients. Then $\,s(V)\,\geq\,|W\mu|=2^7\cdot 3\,
>\,2^8\,$. Hence, by~(\ref{cllim}), $\,V\,$ is not exceptional.

If a weight $\,\mu\,$ has $\,3\,$ nonzero coefficients,
then $\,|W\mu|=2^6\cdot 3$. Thus if (b) holds, then
$\,s(V)\,\geq\,2\cdot 2^6\cdot 3\,>\,2^8\,$. Hence, by~(\ref{cllim}), 
$\,V\,$ is not exceptional.

If a weight $\,\mu\,$ has $\,2\,$ nonzero coefficients, then 
$\,|W\mu|\geq 2^4\cdot 3.\,$ Thus if (c) holds, then
$\,s(V)\,\geq\,2^6\cdot 3\,+\,2\cdot 2^4\cdot 3\,=\,2^5\cdot 3^2\,
>\,2^8\,$. Hence, by~(\ref{cllim}), $\,V\,$ is not an exceptional module. 
This proves the lemma.
\end{proof}

{\bf Note}: Bad weights with $\,4\,$ nonzero coefficients do not occur in
$\,{\cal X}_{++}(V)$, otherwise $\,V\,$ would have highest weight with at
least one coefficient $\,\geq p$, contradicting Theorem~\ref{curt}(i).

\begin{lemma}\label{bgd3c4} Let $\,V\,$ be a $\,C_4(K)$-module.
If $\,{\cal X}_{++}(V)\,$ contains a weight with $\,3\,$ nonzero
coefficients, then $\,V\,$ is not an exceptional $\,\mathfrak g$-module.
\end{lemma}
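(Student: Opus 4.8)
\textbf{Proof proposal for Lemma~\ref{bgd3c4}.}
The plan is to mimic the structure already used for the analogous lemmas in types $B$ and $C$ of small rank, namely Corollary~\ref{corbn3}, Corollary~\ref{corc3}, and Lemma~\ref{3coefcn}. First I would dispose of bad weights: by Theorem~\ref{curt}(i), a bad weight with $3$ nonzero coefficients cannot lie in $\,{\cal X}_{++}(V)\,$ for an infinitesimally irreducible module (it would force the highest weight to have a coefficient $\,\geq p\,$, since $p\geq 3$). So it suffices to treat the case where $\,\mu=a\,\omega_i+b\,\omega_j+c\,\omega_k\in{\cal X}_{++}(V)\,$ is a \emph{good} weight with $\,1\leq i<j<k\leq 4\,$ and $\,a,b,c\geq 1\,$.

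Next I would run through the few possibilities for $\,(i,j,k)\,$ — there are only $\,\binom{4}{3}=4\,$ of them — and in each case either compute $\,|W\mu|\,$ directly using Lemma~\ref{orbbncndn} and compare against the limit $\,2^8\,$ via~\eqref{cllim}, or else subtract a suitable simple root (or chain of simple roots, as in the proofs of Corollary~\ref{corbn3} and Corollary~\ref{corc3}) to produce a second good weight $\,\mu_1\in{\cal X}_{++,\mathbb C}(\mu)\subseteq{\cal X}_{++}(V)\,$ with $\,3\,$ (or $\,4\,$) nonzero coefficients, so that Lemma~\ref{cn4}(a)--(c) applies. For instance, for $\,\mu=a\omega_1+b\omega_2+c\omega_3\,$ one has $\,\mu_1=\mu-(\alpha_2+\alpha_3)=(a+1)\omega_1+(b-1)\omega_2+c\omega_3\,$ (when $b\geq 2$) or $\,\mu_1=\mu-(\alpha_1+\alpha_2+\alpha_3)=(a-1)\omega_1+\omega_2+c\omega_3\,$ (when $a\geq 2$), giving two good weights with $\,3\,$ nonzero coefficients and invoking Lemma~\ref{cn4}(b); the remaining subcase $\,a=b=1\,$ yields in addition $\,\mu_1-(\alpha_2+\alpha_3)=\omega_1+c\omega_3\,$, so Lemma~\ref{cn4}(c) finishes it. The cases $\,(1,2,4),(1,3,4),(2,3,4)\,$ are handled identically: when the largest index is $\ell=4$ one uses that $\,\alpha_\ell\,$ has coefficient $2$ in the relevant positive roots, so a single reflection keeps enough nonzero coefficients. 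I would also record, once and for all, the needed orbit sizes: $\,|W\mu|=2^6\cdot3=192\,$ for any $\,\mu\,$ with exactly $3$ nonzero coefficients, which by itself is not yet $\,>2^8\,$ — this is precisely why the one-weight estimate alone is insufficient and one is forced to produce the auxiliary weight $\,\mu_1\,$.

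The main obstacle, as in the analogous lemmas, will be the genuinely small cases — those where $\,\mu\,$ itself has small coefficients (all equal to $1$) and low index set, e.g. $\,\mu=\omega_1+\omega_2+\omega_3\,$ — since here a single orbit does not exceed the limit and one must chase down \emph{two} or \emph{three} descendant good weights and verify that they are distinct elements of $\,{\cal X}_{++}(V)\,$ with the right numbers of nonzero coefficients; this requires care that each subtracted combination of simple roots is actually a sum of positive roots landing in $\,P_{++}\,$ (so that Lemma~\ref{lamu} guarantees it occurs) and that the resulting weights are genuinely new. Once these bookkeeping cases are dispatched, the conclusion $\,s(V)>2^8\,$ (or the corresponding $\,r_p(V)$ estimate) follows from~\eqref{cllim} in each branch, and the lemma is proved. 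I expect the write-up to parallel the proof of Corollary~\ref{corc3} almost verbatim, with $\,\ell=3\,$ replaced by $\,\ell=4\,$ and Lemma~\ref{cn3} replaced by Lemma~\ref{cn4}.
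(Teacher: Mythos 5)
Your overall strategy --- reduce to good weights, then for each of the four possible supports produce auxiliary weights in $\,{\cal X}_{++,\mathbb C}(\mu)\subseteq{\cal X}_{++}(V)\,$ and feed them to Lemma~\ref{cn4} --- is exactly the paper's, and you correctly identify the bottleneck case $\,a=b=c=1\,$, where a single orbit of size $\,192\,$ does not beat the limit $\,2^8\,$ and Lemma~\ref{cn4}(c) is needed. But your opening reduction has a genuine gap. You dismiss bad weights with $\,3\,$ nonzero coefficients by claiming Theorem~\ref{curt}(i) would force a coefficient of the highest weight to be $\,\geq p\,$. That argument only works when the bad weight has \emph{full} support: if all four coefficients of $\,\mu\,$ are $\,\geq p\,$, the $\,\varepsilon_1$-coordinate of $\,\mu\,$ is $\,\geq 4p\,$, exceeding the $\,\varepsilon_1$-coordinate $\,\leq 4(p-1)\,$ of any $\,p$-restricted $\,\lambda\,$, so $\,\lambda-\mu\notin Q_+\,$. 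With only three nonzero coefficients this fails: for $\,p=5\,$ the weight $\,\mu=5\omega_1+5\omega_2+5\omega_3\,$ is bad, has three nonzero coefficients, and satisfies $\,4\rho-\mu\in Q_+\,$, so by Theorem~\ref{premprin} it lies in $\,{\cal X}_{++}(E(4\rho))\,$. This is why the paper does not dismiss such weights but instead subtracts a suitable sum of simple roots from a bad $\,\mu\,$ (whose nonzero coefficients are all $\,\geq 3\,$) to manufacture a \emph{good} weight with $\,3\,$ or more nonzero coefficients, and only then runs the good-weight analysis. Your reduction must be replaced by that step.

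A second, smaller problem is that your explicit root subtractions are miscomputed. In $\,C_4\,$ one has $\,\mu-(\alpha_2+\alpha_3)=(a+1)\omega_1+(b-1)\omega_2+(c-1)\omega_3+\omega_4\,$ and $\,\mu-(\alpha_1+\alpha_2+\alpha_3)=(a-1)\omega_1+b\omega_2+(c-1)\omega_3+\omega_4\,$; you dropped the changes to the $\,\omega_3$- and $\,\omega_4$-coefficients. The corrected weights still make the argument work --- indeed for larger coefficients they have four nonzero entries, so Lemma~\ref{cn4}(a) applies directly, which is exactly what the paper exploits --- but as written your count of nonzero coefficients, on which the choice between parts (a), (b) and (c) of Lemma~\ref{cn4} depends, is unreliable and would need to be redone case by case. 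With these two repairs your write-up would indeed parallel the paper's proof.
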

\begin{proof}
First note that if $\,\mu\,$ is a bad weight with $\,3\,$ nonzero 
coefficients in $\,{\cal X}_{++}(V)$, then it is easy to produce (from 
$\,\mu\,$) a good weight in $\,{\cal X}_{++}(V)\,$ satisfying 
Lemma~\ref{cn4}. {\bf Therefore, if $\,{\cal X}_{++}(V)\,$  contains
weights with $\,3\,$ nonzero coefficients, then we can assume that
these are good weights.} 

Let $\,\mu\,=\,a\,\omega_{i}\,+\,b\,\omega_j\,+\,c\,\omega_k\,$ 
(with $\,1\leq i<j<k\leq 4\,$ and $\,a\geq 1,\,b\geq 1,\,c\geq 1\,$) 
be a good weight in $\,{\cal X}_{++}(V)$.     

(a) Let $\,\mu\,=\,a\omega_{1}\,+\,b\omega_2\,+\,c\omega_3\,$. Then
$\,\mu_1\,=\,\mu-(\alpha_1+\alpha_2+\alpha_3)\,=\,
(a-1)\omega_{1}\,+\,b\omega_2\,+\,(c-1)\omega_3\,+\,\omega_4\,\in
\,{\cal X}_{++}(V)$.

(b) Let $\,\mu\,=\,a\omega_{1}\,+\,b\omega_2\,+\,c\omega_4\,$. Then
$\,\mu_1\,=\,\mu-(\alpha_1+\alpha_2+\alpha_3+\alpha_4)\,=\,
(a-1)\omega_{1}\,+\,b\omega_2\,+\,\omega_3\,+\,(c-1)\omega_4\,\in
\,{\cal X}_{++}(V)$.

For $\,a\geq 2,\,b\geq 1,\,c\geq 2\,$, in both cases, $\,\mu_1\,$ 
satisfies Lemma~\ref{cn4}(a). For ($\,a=1,\,b\geq 1,\,c\geq 2\,$) or 
($\,a\geq 2,\,b\geq 1,\,c\geq 2\,$), in both cases, Lemma~\ref{cn4}(b) 
applies. 

Let $\,a=b=c=1\,$. Then in case (a) $\,\mu\,=\,\omega_{1}\,+\,\omega_2\,
+\,\omega_3,\;$ $\,\mu_1\,=\,\omega_2\,+\,\omega_4,\;$ and 
$\,\mu_2\,=\,\mu-(\alpha_2+\alpha_3)\,=\,2\omega_{1}\,+\,\omega_4\,\in
\,{\cal X}_{++}(V).\,$ In case (b) $\,\mu\,=\,\omega_{1}\,+\,\omega_2\,
+\,\omega_4,\;$ $\,\mu_1\,=\,\omega_2\,+\,\omega_3,\;$ and 
$\,\mu_2\,=\,\mu-(\alpha_2+\alpha_3+\alpha_4)\,=\,2\omega_{1}\,+\,
\omega_3\,\in\,{\cal X}_{++}(V)$. Thus Lemma~\ref{cn4}(c) applies 
in both cases.

(c) Let $\,\mu\,=\,a\omega_{1}\,+\,b\omega_3\,+\,c\omega_4\,$.
Then $\,\mu_1\,=\,\mu-(\alpha_3+\alpha_4)\,=\,a\omega_{1}\,+\,\omega_2\,
+\,b\omega_3\,+\,(c-1)\omega_4\,\in\,{\cal X}_{++}(V)$.
For $\,a\geq 1,\,b\geq 1,\,c\geq 2\,$, $\,\mu_1\,$ satisfies 
Lemma~\ref{cn4}(a). For $\,a\geq 1,\,b\geq 1,\,c=1,\,$ 
Lemma~\ref{cn4}(b) applies.

(d) Let $\,\mu\,=\,a\omega_{2}\,+\,b\omega_3\,+\,c\omega_4\,$.
Then $\,\mu_1=\mu-(\alpha_2+\alpha_3+\alpha_4)=\,\omega_{1}\,+\,
\mbox{$(a-1)\omega_2$}\,+\,(b+1)\omega_3\,+\,(c-1)\omega_4\;$ and $\,\mu_2\,
=\,\mu_1-\alpha_3\,=\,\omega_{1}\,+\,a\omega_2\,+\,(b-1)\omega_3\,+\,c
\omega_4\,\in\,{\cal X}_{++}(V)$. For $\,a\geq 1,\,b\geq 1,\,c\geq 1\,$, 
Lemma~\ref{cn4}(a) or (b) applies. This proves the lemma.
\end{proof} \vspace{2ex}\noindent
{\bf Proof of Theorem~\ref{listcn} for $\,\ell=4$.} 
Let $\,V\,$ be a $\,C_4(K)$-module.

By Lemma~\ref{bgd3c4}, {\bf it suffices to consider modules $\,V\,$ such
that $\,{\cal X}_{++}(V)\,$ contains only weights with at most $2$
nonzero coefficients.} 

I) {\bf Claim 1}: \label{cl1c4} {\it Let $\,V\,$ be a $\,C_4(K)$-module.
If $\,{\cal X}_{++}(V)\,$ contains a bad weight with $2$ nonzero  
coefficients, then $\,V\,$ is not an exceptional $\,\mathfrak g$-module.}     

Indeed, let $\,\mu\,=\,a\,\omega_{i}\,+\,b\,\omega_j\,$ be a bad weight 
with $2$ nonzero coefficients in $\,{\cal X}_{++}(V)$. Hence $\,a\geq 3,\,
b\geq 3\,$. 

If $\,\mu\,=\,a\omega_{1}\,+\,b\omega_j\;$ (with $\,j=2,\,3\,$), 
then $\,\mu_1\,=\,\mu-(\alpha_1+\cdots +\alpha_j)\,=\,(a-1)\omega_{1}\,
+\,(b-1)\omega_j\,+\,\omega_{j+1}\,\in\,{\cal X}_{++}(V)$. If  
$\,\mu\,=\,a\omega_{1}\,+\,b\omega_4\,$, then 
$\,\mu_1\,=\,\mu-(\alpha_1+\cdots+\alpha_4)\,=\,(a-1)\omega_{1}\,+\,
\omega_3\,+\,(b-1)\omega_4\,\in\,{\cal X}_{++}(V)$. 
If $\,\mu\,=\,a\omega_{3}\,+\,b\omega_4\,$, then 
$\,\mu_1\,=\,\mu-(\alpha_3+\alpha_4)\,=\,\omega_{2}\,+\,
a\omega_3\,+\,(b-1)\omega_4\,\in\,{\cal X}_{++}(V)$. 
In all these cases $\,\mu_1\,$ is a good weight with $3$ nonzero 
coefficients, hence Lemma~\ref{bgd3c4} applies.

If $\,\mu\,=\,a\omega_{2}\,+\,b\omega_3\,$, then 
$\,\mu_1\,=\mu-(\alpha_2+\alpha_3)=\,\omega_1\,+\,(a-1)\omega_{2}\,+\,
\mbox{$(b-1)\omega_3$}\,+\,\omega_4\,\in\,{\cal X}_{++}(V)$. 
If $\,\mu\,=\,a\omega_{2}\,+\,b\omega_4\,$, then 
$\,\mu_1\,=\mu-(\alpha_2+\cdots+\alpha_4)=\,\omega_1\,+\,(a-1)
\omega_{2}\,+\,\omega_3\,+\,(b-1)\omega_4\,\in\,{\cal X}_{++}(V)$. 
In these cases $\,\mu_1\,$ is a good weight with $4$ nonzero
coefficients, hence Lemma~\ref{cn4}(a) applies.

In all these cases $\,V\,$ is not an exceptional module, proving the
Claim 1. \hfill $\Box$ \vspace{1.2ex}

{\bf Therefore, if $\,{\cal X}_{++}(V)\,$ contains weights with 
$2$ nonzero coefficients, then we can assume that these are good weights.}

II) Let $\,\mu\,=\,a\,\omega_{i}\,+\,b\,\omega_j\,$, (with $\,1\leq i<j
\leq 4\,$ and $\,a\geq 1,\,b\geq 1\,$) be a good weight in 
$\,{\cal X}_{++}(V)$.

{\bf Claim 2}: \label{cl2c4} {\it Let $\,V\,$ be a $\,C_4(K)$-module.
If $\,{\cal X}_{++}(V)\,$ contains a good weight 
$\,\mu\,=\,a\,\omega_{2}\,+\,b\,\omega_3\,$ (with $\,a\geq 1,\,b\geq
1\,$), then $\,V\,$ is not an exceptional $\,\mathfrak g$-module.}

Indeed, for ($\,a\geq 1,\,b\geq 2\,$) or ($\,a\geq 2,\,b\geq 1\,$), 
$\,\mu_1\,=\,\mu-(\alpha_2+\alpha_3)\,=\,\omega_1\,+\,(a-1)\omega_2\,+\,
(b-1)\omega_3\,+\,\omega_4\,\in\,{\cal X}_{++}(V)\,$ has $3$ nonzero
coefficients. Hence Lemma~\ref{bgd3c4} applies.  
For $\,a=b=1\,$, $\,\mu\,=\,\omega_{2}\,+\,\omega_3\,$, $\,|W\mu|\,=\,2^5
\cdot 3\,$ and $\,|R_{long}^+-R^+_{\mu,p}|\,=\,3\,$. Thus
$\,r_p(V)\,\geq\,\displaystyle\frac{2^5\cdot 3\cdot 3}{2\cdot 4}\,=\,36 \,>\,
2\cdot 4^2\,$. Hence, by~\eqref{rrcl}, $\,V\,$ is not exceptional.
This proves Claim 2.\hfill $\Box$ \vspace{1.5ex}

{\bf Claim 3}:\label{claim3c4} {\it Let $\,V\,$ be a $\,C_4(K)$-module.
If $\,{\cal X}_{++}(V)\,$ contains a good weight $\,\mu\,=\,a\,\omega_{i} 
\,+\,b\,\omega_4\,$ (with $\,1\leq i\leq 3\,$ and $\,a\geq 1,\,b\geq
1\,$), then $\,V\,$ is not an exceptional $\,\mathfrak g$-module.}

(a) Let $\,\mu\,=\,a\,\omega_{2}\,+\,b\,\omega_4\in{\cal X}_{++}(V)$. Then 

i) for ($\,a\geq 1,\,b\geq 2\,$) or ($\,a\geq 2,\,b\geq 1\,$), 
$\,\mu_1\,=\,\mu-(\alpha_2+\alpha_3+\alpha_4)\,=\,\omega_1\,+\,
(a-1)\omega_2\,+\,\omega_3\,+\,(b-1)\,\omega_4\,\in\,{\cal X}_{++}(V)$.
As $\,\mu_1\,$ has $3$ or more nonzero coefficients, Lemma~\ref{bgd3c4} or 
Lemma~\ref{cn4}(a) applies.

ii) Let $\,a=b=1\,$ and $\,\mu\,=\,\omega_{2}\,+\,\omega_4\,$. Then 
$\,|W\mu|\,=\,2^5\cdot 3\,$ and $\,|R_{long}^+-R^+_{\mu,p}|\,=\,4\,$. Thus
$\,r_p(V)\,\geq\,\displaystyle\frac{2^5\cdot 3\cdot 4}{2\cdot 4}\,=\,48\,>\,
2\cdot 4^2\,$. Hence, by~\eqref{rrcl}, $\,V\,$ is not exceptional.

(b) Let $\,\mu\,=\,a\,\omega_{1}\,+\,b\,\omega_4\,\in{\cal X}_{++}(V)$. Then: 

i) for $\,a\geq 1,\,b\geq 2,\,$ $\,\mu_1\,=\,\mu-\alpha_4\,=\,a\omega_{1}\,
+\,2\omega_3\,+\,(b-2)\omega_4\,$ and $\,\mu_2\,=\,\mu_1-\alpha_3\,=\,
a\omega_1\,+\,\omega_2+\,(b-1)\omega_4\,\in\,{\cal X}_{++}(V)$. 
As $\,\mu_2\,$ has $3$ nonzero coefficients, Lemma~\ref{bgd3c4} applies.

ii) For $\,a\geq 2,\,b=1,\,$ $\,\mu\,=\,a\,\omega_{1}\,+\,\omega_4\;$ and 
$\,\mu_1\,=\,\mu-\alpha_1\,=\,(a-2)\,\omega_1\,+\,\omega_2\,+\,\omega_4
\,\in\,{\cal X}_{++}(V)$. For $\,a\geq 3$, $\,\mu_1\,$ has $3$ nonzero 
coefficients, hence Lemma~\ref{bgd3c4} applies. For $\,a=2$, $\,\mu_1\,$
satisfies case (a)ii) of this claim.

iii) Let $\,a=b=1\,$ and $\,\mu\,=\,\omega_{1}\,+\,\omega_4\,$. Then
$\,\mu_1\,=\,\mu-(\alpha_3+\alpha_4)\,=\,\omega_1\,+\,\omega_2\,\in\,
{\cal X}_{++}(V)$. As $\,|R_{long}^+-R^+_{\mu,p}|\,=\,4\,$ and 
$\,|R_{long}^+-R^+_{\mu_1,p}|\,=\,2\,$, $\,r_p(V)\,\geq\,
\displaystyle\frac{2^6\cdot 4}{2\cdot 4}\,+\,\frac{2^4\cdot 3\cdot 2}
{2\cdot 4}\,=\,38\,>\,2\cdot 4^2\,$. Hence, by~\eqref{rrcl}, $\,V\,$ 
is not exceptional.

(c) Let $\,\mu\,=\,a\,\omega_{3}\,+\,b\,\omega_4\,\in{\cal X}_{++}(V)$. Then:

i) for $\,a\geq 1,\,b\geq 2,\,$ $\,\mu_1\,=\,\mu-(\alpha_3+\alpha_4)\,
=\,\omega_2\,+\,a\omega_3\,+\,(b-1)\,\omega_4\,\in\,{\cal X}_{++}(V)\,$ 
is a good weight with $3$ nonzero coefficients. Hence
Lemma~\ref{bgd3c4} applies.

ii) Let $\,a\geq 1,\,b=1\,$ and $\,\mu\,=\,a\,\omega_{3}\,+\,\omega_4\,$.
Then $\,\mu_1\,=\,\mu-(\alpha_3+\alpha_4)\,=\,\omega_2\,+\,a\,\omega_3\,\in\,
{\cal X}_{++}(V)\,$ satisfies Claim 2. Hence 
$\,V\,$ is not an exceptional module. This proves Claim 3.
\hfill $\Box$ \vspace{1.5ex}

{\bf Therefore, if $\,{\cal X}_{++}(V)\,$ contains a good weight 
$\,\mu\,=\,a\,\omega_{i}\,+\,b\,\omega_j\,$ (with $\,a\geq 1,\,b\geq 1\,$), 
then we can assume that $\,i=1\,$ and $\,j=2\,$ or $\,3\,$.}
These cases are treated as follows.

(d) Let $\,\mu\,=\,a\,\omega_{1}\,+\,b\,\omega_3\,\in{\cal X}_{++}(V)$. Then

i) for $\,a\geq 1,\,b\geq 2,\,$ 
$\,\mu_1\,=\,\mu-\alpha_3\,=\,a\,\omega_{1}\,+\,\omega_2\,+\,(b-2)\,
\omega_3\,+\,\omega_4\,\in\,{\cal X}_{++}(V)$. As $\,\mu_1\,$ is a good 
weight with $3$ nonzero coefficients, Lemma~\ref{bgd3c4} applies.

ii) Let $\,a\geq 2,\,b=1\,$ and $\,\mu\,=\,a\,\omega_{1}\,+\,\omega_3\,$. 
Then $\,\mu_1\,=\,\mu-\alpha_1\,=\,(a-2)\,\omega_1\,+\,\omega_2\,+\,
\omega_3\,\in\,{\cal X}_{++}(V)$. For $\,a\geq 3$, Lemma~\ref{bgd3c4} 
applies. For $\,a=2,\,$ $\,\mu_1\,$ satisfies Claim~2 (p. \pageref{cl2c4}).
Hence $\,V\,$ is not an exceptional module.

iii) Let $\,a=b=1\,$ and $\,\mu\,=\,\omega_{1}\,+\,\omega_3\,$. Then
$\,|W\mu|\,=\,2^5\cdot 3,\,$ $\,|R_{long}^+-R^+_{\mu,p}|\,=\,3\,$. Thus
$\,r_p(V)\,\geq\,\displaystyle\frac{2^5\cdot 3\cdot 3}{2\cdot 4}\,=\,36 \,>\,
2\cdot 4^2\,$. Hence, by~\eqref{rrcl}, $\,V\,$ is not exceptional.

(e) Let $\,\mu\,=\,a\,\omega_{1}\,+\,b\,\omega_2\,\in{\cal X}_{++}(V)$. Then: 

i) for $\,a\geq 2,\,b\geq 2,\,$ $\,\mu_1\,=\,\mu-(\alpha_1+\alpha_2)\,
=\,(a-1)\,\omega_{1}\,+\,(b-1)\,\omega_2\,+\,\omega_3\,\in\,
{\cal X}_{++}(V)\,$ has $3$ nonzero coefficients. Hence
Lemma~\ref{bgd3c4} applies.

ii) For $\,a=1,\,b\geq 2,\,$ $\,\mu\,=\,\omega_{1}\,+\,b\,\omega_2\;$ and 
$\,\mu_1\,=\,\mu-\alpha_2\,=\,2\omega_1\,+\,(b-2)\,\omega_2\,+\,\omega_3\,
\in\,{\cal X}_{++}(V)$. For $\,b\geq 3,\,$ Lemma~\ref{bgd3c4} applies.
For $\,b=2,\,$ $\,\mu_1\,$ satisfies case (d)ii). Hence $\,V\,$ is not 
exceptional.

iii) Let $\,a\geq 2,\,b=1\,$ and $\,\mu\,=\,a\,\omega_{1}\,+\,\omega_2\,$. 
Then $\,\mu_1\,=\,\mu-(\alpha_1+\alpha_2)\,=\,(a-1)\,\omega_1\,+\,\omega_3\,
\in \,{\cal X}_{++}(V)\,$ satisfies case (d). Hence $\,V\,$ is not 
exceptional.

iv) Let $\,a=b=1\,$. We may assume that $\,V\,$ has highest weight
$\,\mu\,=\,\omega_{1}\,+\,\omega_2\,$. Then $\,\mu_1\,=\,\omega_3\,$,
$\,\mu_2\,=\,\mu_1-(\alpha_2+2\alpha_3+\alpha_4)\,=\,\omega_1\,\in\,
{\cal X}_{++}(V)$. By~\cite[p. 168]{buwil}, for $\,p\neq 3,\,$ 
$\,m_{\mu_1}=2\,$ and $\,m_{\mu_2}\geq 4\,$. Thus, as 
$\,|R_{long}^+-R^+_{\mu,p}|\,=\,2,\,$ $\,|R_{long}^+-R^+_{\mu_1,p}|\,=\,3\,$
and $\,|R_{long}^+-R^+_{\mu_2,p}|\,=\,1\,$, $\,r_p(V)\,\geq\,
\displaystyle\frac{2^4\cdot 3\cdot 2}{2\cdot 4}\,+\,2\cdot
\frac{2^5\cdot 3}{2\cdot 4}\,+\,4\cdot\frac{2\cdot 4}
{2\cdot 4}\,=\,40\,>\,2\cdot 4^2\,$. Hence, by~\eqref{rrcl}, $\,V\,$ 
is not exceptional. For $\,p=3$, by Claim~12 (p. \pageref{claim12al})
of First Part of Proof of Theorem~\ref{anlist}, $\,V\,$ is not exceptional.

{\bf Hence if $\,V\,$ is an $\,C_4(K)$-module such that 
$\,{\cal X}_{++}(V)\,$ contains weights with $2$ nonzero coefficients,
then $\,V\,$ is not an exceptional module.} 

{\bf From now on we can assume that $\,{\cal X}_{++}(V)\,$ contains only 
weights with at most one nonzero coefficient.}

III) Let $\,\mu\,=\,a\,\omega_{i}\,$ (with $\,a\geq 1\,$) be a weight in 
$\,{\cal X}_{++}(V)$.  

(a)i) Let $\,a\geq 2\,$ and $\,\mu\,=\,a\,\omega_{4}\,$. 
Then $\,\mu_1\,=\,\mu-\alpha_4\,=\,2 \omega_3\,+\,(a-2)\omega_4\,$ and
$\,\mu_2\,=\,\mu_1-\alpha_3\,=\,\omega_2\,+\,(a-1)\,\omega_4
\,\in\,{\cal X}_{++}(V)$. For $\,a\geq 2$, $\,\mu_2\,$ satisfies Claim
3 (p. \pageref{claim3c4}). 

ii) Let $\,a\geq 2\,$ and $\,\mu\,=\,a\,\omega_{3}\,$.     
Then $\,\mu_1\,=\,\mu-\alpha_3\,=\,\omega_2\,+\,(a-2)\,\omega_3\,+\,\omega_4
\,\in\,{\cal X}_{++}(V)$. For $\,a\geq 3$, Lemma~\ref{bgd3c4} applies.
For $\,a=2\,$, $\,\mu_1\,$ satisfies Claim 3.

iii) Let $\,a\geq 2\,$ and $\,\mu\,=\,a\,\omega_{2}\,$.     
Then $\,\mu_1\,=\,\mu-\alpha_2\,=\,\omega_1\,+\,(a-2)\,\omega_2\,+\,\omega_3
\,\in\,{\cal X}_{++}(V)$. For $\,a\geq 3$, Lemma~\ref{bgd3c4} applies.
For $\,a=2$, $\,\mu_1\,$ satisfies case II(d)iii).

iv) Let $\,a\geq 2\,$ and $\,\mu\,=\,a\,\omega_{1}\,$.  
Then $\,\mu_1\,=\,\mu-\alpha_1\,=\,(a-2)\omega_1\,+\,\omega_2\,\in\,
{\cal X}_{++}(V)$. For $\,a\geq 4$, $\,\mu_1\,$ satisfies case II(e)iii.

For $\,a=3\,$ and $\,p\geq 5\,$ we can assume that $\,V\,$ has highest weight
$\,3\omega_1\,$. By Lemma~\ref{III.iii} (p. \pageref{III.iii}),
$\,V\,$ is not exceptional.
For $\,p=3\,$, $\,\mu\,\in\,{\cal X}_{++}(V)\,$ only if $\,V\,$ has
highest weight $\,\lambda\,=\,\omega_1\,+\,2\omega_2\,$ or 
$\,\lambda\,=\,(3-k)\omega_1\,+\,k\omega_3\,$ (for some
$\,1\leq k\leq 2\,$). In these cases, by Part II(e) or II(d) of this proof, 
$\,V\,$ is not an exceptional module.

Let $\,a=2\,$. If $\,V\,$ has highest weight $\,2\,\omega_{1}\,$, then 
$\,V\,$ is the adjoint module, which is exceptional by Example~\ref{adjoint}
{\bf (N. 1 in Table~\ref{tableclall})}.

{\bf Hence if $\,{\cal X}_{++}(V)\,$ contains a weight 
$\,\mu\,=\,a\,\omega_{i}\,$ (with $\,a\geq 1\,$ and $\,1\leq i\leq 4\,$), 
then we can assume $\,a=1\,$ and that $\,V\,$ has highest weight
$\,\omega_i\,$.} These cases are treated in the sequel. 

(b)i) {\bf If $\,V\,$ has highest weight $\,\omega_{3}\,$ (resp.,
$\,\omega_{4}\,$). Then $\,V\,$ is unclassified (N. 2 (resp., 4) in 
Table~\ref{leftcn})}.

ii) If $\,V\,$ has highest weight $\,\omega_{2}\,$ then, by case
III(b)vi) (p. \pageref{IIIbvi}) of First Part of Proof of 
Theorem~\ref{listcn}, 
$\,V\,$ is an exceptional module {\bf (N. 3 in Table~\ref{tableclall})}.
 
iii) If $\,V\,$ has highest weight $\,\omega_{1}\,$ then, by case
III(b)vii) (p. \pageref{IIIbvii}) of First Part of Proof of 
Theorem~\ref{listcn},
$\,V\,$ is an exceptional module {\bf (N. 2 in Table~\ref{tableclall})}.

{\bf Hence if $\,V\,$ is an $\,C_4(K)$-module such that 
$\,{\cal X}_{++}(V)\,$ contains $\,\mu\,=\,a\,\omega_i\,$ with
($\,a\geq 3\,$ and $\,1\leq i\leq 4\,$) or ($\,a\geq 2\,$ and $\,2\leq
i\leq 4\,$), then $\,V\,$ is not exceptional. If $\,V\,$ has highest weight
$\,\lambda\in\{ 2\omega_1,\,\omega_1,\,\omega_2\}\,$, then $\,V\,$ is  
an exceptional module. If $\,V\,$ has highest weight $\,\lambda\in\{  
\omega_3,\,\omega_4\}\,$, then $\,V\,$ is unclassified.} 

This finishes the proof of Theorem~\ref{listcn} for $\,\ell=4$.
\hfill $\Box$

\subsubsection*{\ref{clsr}.4. Type $\,C_5$}
\addcontentsline{toc}{subsubsection}{\protect\numberline{\ref{clsr}.4}
Type $\,C_5\,$}

For groups of type $\,C_5\,$, $\,|W|\,=\,2^5\,5!\,=\,2^8\cdot 3\cdot
5$, $\,|R|=2\cdot 5^2\,$, $\,|R_{long}|= 2\cdot 5$. The limit 
for~(\ref{cllim}) is $\,2^2\cdot 5^3\,$ and for~\eqref{rrcl} is 
$\,2\cdot 5^2$. $\,R_{long}^+\,=\,\{\,2\varepsilon_i\,/\,
1\leq i\leq 5\,\}.\,$ The following are reduction lemmas.

\begin{lemma}\label{cn5}
Let $\,V\,$ be a $\,C_5(K)$-module. If $\,{\cal X}_{++}(V)\,$ contains 
$\,1\,$ good weight with $\,3\,$ (or more) nonzero coefficients,
then $\,V\,$ is not an exceptional $\,\mathfrak g$-module.
\end{lemma}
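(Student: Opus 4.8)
The plan is to follow the template of Lemma~\ref{3coefcn} (the $\ell\geq 6$ analogue) and of Lemma~\ref{cn4}, adjusting the numerology to $\ell=5$: here $|W|=2^5\,5!$, $|R|=2\cdot 5^2$, $|R_{long}|=2\cdot 5$, so the thresholds coming from Theorem~\ref{???} are $\mbox{\bf limit}=4\ell^3=500$ in~\eqref{cllim} and $2\ell^2=50$ in~\eqref{rrcl}. Let $\mu=\sum_i b_i\omega_i\in{\cal X}_{++}(V)$ be a good weight with at least three nonzero coefficients, and write $i_1<i_2<i_3$ for the positions of its first three nonzero coefficients. By Lemma~\ref{remark}, $|W\mu|\geq|W\bar\mu|$ for the weight $\bar\mu$ supported exactly on $\{\omega_{i_1},\omega_{i_2},\omega_{i_3}\}$ with all three coefficients nonzero; hence it suffices to prove $s(V)>500$ (or $r_p(V)>50$) whenever ${\cal X}_{++}(V)$ contains such a $\bar\mu$. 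This reduces the ``or more'' part of the statement uniformly to the case of exactly three nonzero coefficients.

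First I would run through the $\binom{5}{3}=10$ possible triples $(i_1,i_2,i_3)$ and compute $|W\bar\mu|$ from Lemma~\ref{orbbncndn}, which for $C_5$ gives $|W\bar\mu|=2^{\,i_3}\binom{i_2}{i_1}\binom{i_3}{i_2}\binom{5}{i_3}$. A direct check shows that every triple other than $(1,2,3)$ yields $|W\bar\mu|\geq 640>500$ (the minimum $640$ occurring at $(1,2,5)$, $(1,4,5)$ and $(3,4,5)$), so in all those cases $s(V)\geq|W\bar\mu|>\mbox{\bf limit}$ and $V$ is not exceptional by~\eqref{cllim}, with no further work.

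The only case needing an extra step is $(i_1,i_2,i_3)=(1,2,3)$, where $|W\bar\mu|=2^3\cdot 2\cdot 3\cdot 10=480<500$. Here $\bar\mu=a\omega_1+b\omega_2+c\omega_3$ with $a,b,c\geq 1$, and I would pass to the auxiliary weight $\mu_1=\bar\mu-(\alpha_2+\alpha_3)=(a+1)\omega_1+(b-1)\omega_2+(c-1)\omega_3+\omega_4$, which is dominant (its coefficients are $\geq 0$ since $b,c\geq 1$), has nonzero coefficient at $\omega_4$ and so is a good weight, and lies in ${\cal X}_{++,\mathbb C}(\bar\mu)\subseteq{\cal X}_{++}(V)$ by Lemma~\ref{lamu}. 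Since the support of $\mu_1$ always contains $\{\omega_1,\omega_4\}$, Lemma~\ref{orbbncndn} gives $|W\mu_1|\geq 2^4\binom{4}{1}\binom{5}{4}=320$, whence $s(V)\geq|W\bar\mu|+|W\mu_1|\geq 480+320=800>500$, and~\eqref{cllim} finishes the argument.

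The main (mild) obstacle is the bookkeeping in the $(1,2,3)$ case: one must verify that the descendant $\mu_1$ is genuinely dominant and good for every admissible $(a,b,c)$ — in particular for $a=b=c=1$, where $\mu_1=2\omega_1+\omega_4$ — and, should one wish to avoid invoking~\eqref{cllim} for a boundary subcase, fall back on the $r_p$ inequality~\eqref{rrcl} using $|R_{long}^+-R^+_{\mu,p}|\geq 3$ for a good weight with three nonzero coefficients. I do not expect any genuinely new difficulty beyond the orbit-size estimates already in hand.
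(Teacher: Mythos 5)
Your proposal is correct and follows essentially the same route as the paper's proof: list the orbit sizes of three-support weights for $C_5$, observe that only the support $\{\omega_1,\omega_2,\omega_3\}$ gives an orbit ($480$) below the limit $4\ell^3=500$, and rescue that case by adjoining a second good dominant weight whose support contains $\omega_4$ (the paper subtracts $\alpha_1+\alpha_2+\alpha_3$, getting $(a-1)\omega_1+b\omega_2+(c-1)\omega_3+\omega_4$ with orbit $\geq 480$, where you subtract $\alpha_2+\alpha_3$ and get orbit $\geq 320$ — both clear the threshold). The only point to tidy is your opening reduction: when $\mu$ has four or more nonzero coefficients the comparison weight $\bar\mu$ need not lie in ${\cal X}_{++}(V)$, so the two-weight argument cannot literally be run on $\bar\mu$; but then $|C_W(\mu)|\leq 2$ gives $|W\mu|\geq 1920>500$ outright, so no harm is done.
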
\noindent
\begin{proof}
If a weight $\,\mu\,$ has $\,4\,$ (or more) nonzero coefficients, then
$\,|W\mu|\geq 2^7\cdot 3\cdot 5\,$. Thus if $\,{\cal X}_{++}(V)\,$ contains 
$\,1\,$ good weight with $\,4\,$ (or more) nonzero coefficients,
then $\,s(V)\,\geq\,2^7\cdot 3\cdot 5\,>\,4\cdot 5^3$. Hence,
by~(\ref{cllim}), $\,V\,$ is not an exceptional module. 

If $\,\mu\,$ has $\,3\,$ nonzero coefficients, then 
either (i) $\,|W\mu|=2^6\cdot 3\cdot 5\,$ or 
(ii) $\,|W\mu|=2^7\cdot 5\,$ or (iii) $\,|W\mu|=2^5\cdot 3\cdot 5.\,$
If $\,\mu\in {\cal X}_{++}(V)\,$ is a good weight with $\,3\,$ nonzero 
coefficients satisfying case (i) or (ii), then $\,s(V)\geq |W\mu|\,> 
\,2^2\cdot 5^3\,$. Hence, by~(\ref{cllim}), 
$\,V\,$ is not exceptional. If $\,\mu\,$ satisfies (iii),
then $\,\mu\,=\,a\,\omega_1\,+\,b\,\omega_2\,+\,c\,\omega_3\,$.
For $\,a\geq 1,\,b\geq 1,\,c\geq 1\,$, $\,\mu_1\,=\mu-(\alpha_1+ 
\alpha_2+\alpha_3)=\,(a-1)\omega_1\,+\,b\omega_2\,+\,\mbox{$(c-1)\omega_3$}
\,+\,\omega_4\,\in\,{\cal X}_{++}(V)\,$ is a good weight such that
$\,|W\mu_1|\,\geq\,2^5\cdot 3\cdot 5\,$. Thus
$\,s(V)\,\geq\,2^5\cdot 3\cdot 5\,+\,2^5\cdot 3\cdot 5\,=\,2^6\cdot
3\cdot 5\,>\,2^2\cdot 5^3\,$. Hence, by~(\ref{cllim}), 
$\,V\,$ is not an exceptional module. This proves the lemma.
\end{proof}

{\bf Note}: Bad weights with $\,5\,$ nonzero coefficients do not occur in
$\,{\cal X}_{++}(V)$, otherwise $\,V\,$ would have highest weight with at
least one coefficient $\,\geq p$, contradicting Theorem~\ref{curt}(i).

\begin{corollary}\label{bad34c5}
Let $\,V\,$ be a $\,C_5(K)$-module. If $\,{\cal X}_{++}(V)\,$ contains
a bad weight with $\,3\,$ or $\,4\,$ nonzero coefficients, then  
$\,V\,$ is not an exceptional $\,\mathfrak g$-module.
\end{corollary}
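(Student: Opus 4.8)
\textbf{Proof of Corollary~\ref{bad34c5}.}
The plan is to reduce the statement for a bad weight with $3$ or $4$ nonzero coefficients to Lemma~\ref{cn5}. Let $\,\mu\,=\,\sum_{i}a_i\,\omega_i\,\in\,{\cal X}_{++}(V)\,$ be a bad weight with $3$ or $4$ nonzero coefficients. Since $\,p\geq 3\,$ and $\,\mu\,$ is bad, every nonzero coefficient of $\,\mu\,$ satisfies $\,a_i\geq 3$. First I would use the remark following Definition~\ref{badweight} (together with the observation recorded in the paragraph before that, for type $\,C\,$: the bad weights are exactly those with all coefficients in $\,p\mathbb Z\,$): for a bad weight with $a_k\neq 0$, the weight $\,\mu-\alpha_k\,$ is good. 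The key point is to choose the node $\,k\,$ at which to subtract so that $\,\mu_1:=\mu-\alpha_k\,$ still has at least $3$ nonzero coefficients, since then Lemma~\ref{cn5} applies and $\,V\,$ is not exceptional.

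The main step is therefore the bookkeeping: subtracting a single simple root $\,\alpha_k\,$ changes at most three adjacent coefficients (the coefficient of $\,\omega_k\,$ drops by $\,2\,$, or by the appropriate Cartan-integer amount at the short end of the diagram, and the neighbours go up by $\,1\,$). Because every nonzero coefficient of $\,\mu\,$ is $\,\geq 3$, subtracting $\,\alpha_k\,$ at a nonzero node $\,k\,$ leaves that coefficient $\,\geq 1$, and the other nonzero coefficients are untouched or increased; so $\,\mu_1\,$ retains all the original nonzero coefficients (possibly acquiring new ones). Hence $\,\mu_1\,$ has at least $3$ nonzero coefficients and lies in $\,{\cal X}_{++}(V)\,$ (it is dominant and, being $\,\mu\,$ minus a positive root with $\,\mu\in{\cal X}_{++,\mathbb C}(\mu)\,$ a complex weight below $\,\mu$, it belongs to $\,{\cal X}_{++}(V)\,$ by Theorem~\ref{premprin}; one checks $\,\mu_1\in\lambda-Q_+\,$ exactly as in Lemma~\ref{lamu}). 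The only subtlety is whether $\,\mu_1\,$ is good: by the remark after Definition~\ref{badweight}, $\,\mu-\alpha_k\,$ is good provided we are not in the excluded low-rank/low-characteristic cases, and for $\,C_\ell\,$ with $\,p\geq 3\,$ those exceptions do not arise. If one wishes to be uniform, one can instead note that even if $\,\mu_1\,$ happened to be bad, its nonzero coefficients that came from neighbours of $\,k\,$ equal $\,1$, which is not divisible by $\,p\geq 3$, so $\,\mu_1\,$ is automatically good; this is the cleanest way to close the argument.

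Concretely I would split into cases according to the positions $\,i_1<i_2<i_3\,(<i_4)\,$ of the nonzero coefficients, mirroring exactly Part~II of the proof of Corollary~\ref{corc3} and the proof of Lemma~\ref{bgd3c4}: pick $\,k=i_2\,$ (an interior nonzero node) whenever possible, so that the coefficient of $\,\omega_{i_2-1}\,$ and $\,\omega_{i_2+1}\,$ become $\,1$, giving a good weight with at least $\,3\,$ (indeed often $\,4\,$ or $\,5$) nonzero coefficients; in the boundary configurations (e.g.\ $\,i_1=1\,$, or $\,i_3=\ell\,$, or the short-root end) subtract $\,\alpha_1\,$ or $\,\alpha_\ell\,$ as appropriate, exactly as in Claim~1 of the proof of Theorem~\ref{listcn} for $\,\ell=4\,$ (cf.\ p.~\pageref{cl1c4}). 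In every case $\,\mu_1\,$ is a good weight in $\,{\cal X}_{++}(V)\,$ with $\,\geq 3\,$ nonzero coefficients, so Lemma~\ref{cn5} gives $\,s(V)\geq|W\mu_1|>2^2\cdot 5^3\,$, whence by~\eqref{cllim} $\,V\,$ is not exceptional. The main obstacle is purely organisational — making sure the case list over the positions of the nonzero coefficients is exhaustive and that in each case the chosen $\,\alpha_k\,$ produces a dominant weight with the required number of nonzero entries — but there is no genuine difficulty, since no coefficient can drop below $\,1\,$ given $\,a_i\geq 3$. $\hfill\Box$
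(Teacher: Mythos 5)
Your proposal is correct and follows the same route as the paper, which disposes of the corollary by noting that one can always produce from the bad weight a good weight in $\,{\cal X}_{++}(V)\,$ with at least $3$ nonzero coefficients and then invoke Lemma~\ref{cn5}; your observation that subtracting $\,\alpha_k\,$ at a node with $\,a_k\geq 3\,$ keeps every originally nonzero coefficient positive, and that the altered coefficients become $\,\equiv -2,\,1\,$ or $\,2\pmod p\,$ and hence force goodness, is exactly the content of the paper's "it is easy to produce" step. (One pedantic point: if a neighbour of $\,k\,$ was already nonzero its new coefficient is $\,a_{k\pm 1}+1\,$ rather than $\,1\,$, but it is still $\,\equiv 1\pmod p\,$, so your conclusion stands.)
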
\noindent
\begin{proof} Just note that if $\,\mu\,$ is a bad weight with $\,3\,$
or $\,4\,$ nonzero coefficients in $\,{\cal X}_{++}(V)$, 
then it is easy to produce (from $\,\mu\,$) a good weight in 
$\,{\cal X}_{++}(V)\,$ satisfying Lemma~\ref{cn5}. This proves the corollary.
\end{proof}
\newpage\noindent
{\bf Proof of Theorem~\ref{listcn} for $\,\ell=5$.} 
Let $\,V\,$ be a $\,C_5(K)$-module. 

By Lemma~\ref{cn5} and
Corollary~\ref{bad34c5}, {\bf it suffices to consider modules $\,V\,$
such that $\,{\cal X}_{++}(V)\,$ contains only 
weights with 2 or less nonzero coefficients.}

I) {\bf Claim 1}: \label{cl1c5} {\it Let $\,V\,$ be a $\,C_5(K)$-module. 
If $\,{\cal X}_{++}(V)\,$ contains bad weights with $2$ nonzero 
coefficients, then $\,V\,$ is not an exceptional $\,\mathfrak g$-module.}

Indeed, let $\,\mu\in{\cal X}_{++}(V)\,$ be a bad weight
with $2$ nonzero coefficients. Then, by Claim 1
(p. \pageref{claim1cl}) of First Part of Proof of Theorem~\ref{listcn},  
one can produce (from $\,\mu\,$) good weights in $\,{\cal
X}_{++}(V)\,$ satisfying Lemma~\ref{cn5}. This proves the claim.

{\bf Therefore, we can assume that weights with $2$ nonzero
coefficients occurring in $\,{\cal X}_{++}(V)\,$ are good weights.}
First we deal with some particular cases.\hfill $\Box$ \vspace{1.5ex}

{\bf Claim 2}: \label{cl2c5} {\it Let $\,V\,$ be a $\,C_5(K)$-module. 
If either $\,\omega_1\,+\,\omega_4\,$ or $\,\omega_1\,+\,\omega_5\,$  
or $\,\omega_1\,+\,\omega_3\,$ is a weight in $\,{\cal X}_{++}(V)$, 
then $\,V\,$ is not an exceptional $\,\mathfrak g$-module.}

Indeed, if $\,\mu\,=\,\omega_1\,+\,\omega_4\in{\cal X}_{++}(V)$, then 
$\,r_p(V)\,\geq\,\displaystyle\frac{|W\mu|\cdot
|R_{long}^+-R^+_{\mu,p}|}{2\cdot 5}\,=\,\frac{2^6\cdot 5\cdot
4}{2\cdot 5}\,=\,2^7\,>\,2\cdot 5^2\,$.
If $\,\mu\,=\,\omega_1\,+\,\omega_5\in{\cal X}_{++}(V)$, then 
$\,r_p(V)\,\geq\,\frac{2^5\cdot 5\cdot
5}{2\cdot 5}\,=\,2^4\cdot 5\,>\,2\cdot 5^2\,$.
If $\,\mu\,=\,\omega_1\,+\,\omega_3\in{\cal X}_{++}(V)$, then 
$\,r_p(V)\,\geq\,\frac{2^4\cdot 3\cdot 5\cdot
3}{2\cdot 5}\,=\,2^3\cdot 3^2\,>\,2\cdot 5^2\,$. Hence in all these
cases, by~\eqref{rrcl}, $\,V\,$ is not exceptional, proving the
claim.\hfill $\Box$ \vspace{1.5ex}

II) Let $\,\mu\,=\,a\,\omega_{i}\,+\,b\,\omega_j\,$ (with $\,a\geq
1,\,b\geq 1\,$ and $\,1\leq i<j\leq 5\,$) be a good weight in 
$\,{\cal X}_{++}(V)$.

{\bf Claim 3}:\label{cl3c5} {\it Let $\,V\,$ be a $\,C_5(K)$-module.
If $\,{\cal X}_{++}(V)\,$ contains a good weight $\,\mu\,=\,a\omega_{i}\, 
+\,b\omega_{j}\,$ (with $\,2\leq i<j \leq 4\,$ and $\,a\geq 1,\,b\geq
1$), then $\,V\,$ is not an exceptional $\,\mathfrak g$-module.} Indeed:

i) for ($\,a\geq 2,\,b\geq 1\,$) or ($\,a\geq 1,\,b\geq 2\,$),
$\,\mu_1\,=\,\mu-(\alpha_i+\cdots +\alpha_j)\,=\,\omega_{i-1}\,+\,(a-1)
\,\omega_{i}\,+\,(b-1)\,\omega_{j}\,+\,\omega_{j+1}\in
\,{\cal X}_{++}(V)\,$ is a good weight with $3$ or more nonzero
coefficients. Hence, Lemma~\ref{cn5} applies.

ii) Let $\,a=b=1$. If $\,\mu\,=\,\omega_{2}\,+\,\omega_{3}\,$, then
$\,\mu_1\,=\,\omega_{1}\,+\,\omega_{4}\in{\cal X}_{++}(V)$.
If $\,\mu\,=\,\omega_{2}\,+\,\omega_{4}\,$, then
$\,\mu_1\,=\,\omega_{1}\,+\,\omega_{5}\in{\cal X}_{++}(V)$. In both cases
$\,\mu_1\,$ satisfies \mbox{Claim 2}.  
If $\,\mu\,=\,\omega_{3}\,+\,\omega_{4}\,$, then
$\,\mu_1\,=\,\omega_{2}\,+\,\omega_{5}\,$ and $\,\mu_2\,=\,\mu_1-
(\alpha_2+\cdots +\alpha_5)\,=\,\omega_{1}\,+\,\omega_{4}
\in{\cal X}_{++}(V)$. Hence Claim 2 applies. This proves Claim 3.\hfill $\Box$ 
\vspace{1.5ex}

{\bf Claim 4}: \label{cl4c5} {\it Let $\,V\,$ be a $\,C_5(K)$-module.
If $\,{\cal X}_{++}(V)\,$ contains a good weight 
$\,\mu\,=\,a\omega_{i}\,+\,b\omega_{5}\,$ (with $\,1\leq i \leq 4\,$
and $\,a\geq 1,\,b\geq 1$), then $\,V\,$ is not an exceptional
$\,\mathfrak g$-module.} Indeed, 

i) let $\,\mu\,=\,a\omega_{4}\,+\,b\omega_{5}\,$. Then
$\,\mu_1\,=\,\mu-(\alpha_4+\alpha_{5})\,=\,\omega_3\,+\,a\,\omega_{4}\,+
\,(b-1)\omega_{5}\,\in\,{\cal X}_{++}(V)$. For $\,a\geq 1,\,b\geq 2,\,$
$\,\mu_1\,$ satisfies Lemma~\ref{cn5}. For $\,a\geq 1,\,b=1,\,$ 
$\,\mu_1\,$ satisfies Claim 3. Hence $\,V\,$ is not exceptional.

ii) Let $\,\mu\,=\,a\omega_{i}\,+\,b\omega_{5}\,$ (with $\,2\leq i \leq 3\,$
and $\,a\geq 1,\,b\geq 1$). Then $\,\mu_1\,=\,\mu-(\alpha_i+\cdots 
+\alpha_5)\,=\,\omega_{i-1}\,+\,(a-1)\omega_{i}\,+\,\omega_{4}\,+\,(b-1)
\omega_{5}\in\,{\cal X}_{++}(V)$. 

ii.1) For ($\,a\geq 1,\,b\geq 2\,$) or 
($\,a\geq 2,\,b\geq 1\,$), $\,\mu_1\,$ satisfies Lemma~\ref{cn5}. 

ii.2) Let $\,a=b=1\,$. If $\,\mu\,=\,\omega_{2}\,+\,\omega_{5}\,$,
then $\,\mu_1\,=\,\omega_{1}\,+\,\omega_{4}\in{\cal X}_{++}(V)\,$
satisfies Claim 2. If $\,\mu\,=\,\omega_{3}\,+\,\omega_{5}\,$, then
$\,\mu_1\,=\,\omega_{2}\,+\,\omega_{4}\,\in{\cal X}_{++}(V)\,$
satisfies Claim 3.

iii) Let $\,\mu\,=\,a\omega_{1}\,+\,b\omega_{5}\,$ with $\,a\geq
1,\,b\geq 1$. Then:

iii.1) for $\,a\geq 1,\,b\geq 2\,$, $\,\mu_1\,=\,\mu-(\alpha_1+\cdots 
+\alpha_5)\,=\,(a-1)\omega_{1}\,+\,\omega_{4}\,+\,(b-1)
\omega_{5}\in\,{\cal X}_{++}(V)$. For $\,a\geq 2,\,b\geq 2\,$, 
Lemma~\ref{cn5} applies. For $\,a=1,\,b\geq 2\,$, $\,\mu_1\,$
satisfies case i) of this proof.

iii.2) For $\,a\geq 2,\,b=1\,$, $\,\mu\,=\,a\omega_{1}\,+\,\omega_{5}\,$
and $\,\mu_1\,=\,\mu-\alpha_1\,=\,(a-2)\,\omega_{1}\,+\,\omega_2\,+\,
\omega_{5}\,\in\,{\cal X}_{++}(V)$. For $\,a\geq 3\,$ 
Lemma~\ref{cn5} applies. For $\,a=2\,$, $\,\mu_1\,$ satisfies case ii)
of this proof. 

iii.3) For $\,a=b=1\,$, $\,\mu\,=\,\omega_{1}\,+\,\omega_{5}\,$
satisfies Claim~2 (p. \pageref{cl2c5}). This proves Claim 4.
\hfill $\Box$ \vspace{1.5ex}

{\bf Therefore, if $\,{\cal X}_{++}(V)\,$ contains a good weight 
$\,\mu\,=\,a\omega_{i}\,+\,b\omega_{j}\,$ (with $\,a\geq 1,\,b\geq 1$), then 
we can assume $\,i=1\,$ and $\,2\leq j\leq 4\,$.}
These cases are treated as follows.

(a) Let $\,\mu\,=\,a\omega_{1}\,+\,b\omega_{j}\,$ (with $\,j=3,\,4\,$)
be a good weight in $\,{\cal X}_{++}(V)\,$. Then:

i) for $\,a\geq 1,\,b\geq 2$, $\,\mu_1\,=\,\mu-\alpha_j\, 
=\,a\omega_1\,+\,\omega_{j-1}\,+\,(b-2)\omega_{j}\,+\,\omega_{j+1}\,
\in\,{\cal X}_{++}(V)\,$ satisfies Lemma~\ref{cn5}. 

ii) For $\,a\geq 2,\,b=1$, $\,\mu\,=\,a\omega_{1}\,+\,\omega_{j}\,$ and
$\,\mu_1\,=\,\mu-\alpha_1\,=\,(a-2)\omega_1\,+\,\omega_{2}\,+\,\omega_{j}\,
\in\,{\cal X}_{++}(V)$. For $\,a\geq 3$, $\,\mu_1\,$ satisfies
Lemma~\ref{cn5}. For $\,a= 2$, $\,\mu_1\,$ satisfies \mbox{Claim 3}
(p. \pageref{cl3c5}).

iii) For $\,a=b=1$, $\,\mu\,=\,\omega_{1}\,+\,\omega_{j}\,$ (with
$\,j=3,\,4\,$) satisfies Claim~2(p. \pageref{cl2c5}).

(b) Let $\,\mu\,=\,a\omega_{1}\,+\,b\omega_{2}\,$ with $\,a\geq
1,\,b\geq 1\,$. Then $\,\mu_1\,=\,\mu-(\alpha_1+\alpha_2)\, 
=\,(a-1)\omega_1\,+\,(b-1)\omega_{2}\,+\,\omega_{3}\,\in\,{\cal X}_{++}(V)$.

i)\label{IIbitp} For $\,a\geq 2,\,b\geq 2$, $\,\mu_1\,$
satisfies Lemma~\ref{cn5}. For $\,a=1,\,b\geq 2$, $\,\mu_1\,$
satisfies Claim 3 (p. \pageref{cl3c5}). For $\,a\geq 2,\,b=1$, $\,\mu_1\,$
satisfies case (a) above. 

ii) Let $\,a=b=1\,$. We may assume that $\,V\,$ has highest weight
$\,\mu\,=\,\omega_{1}\,+\,\omega_{2}\,$. Then
$\,\mu_1\,=\,\omega_{3}\,\in\,{\cal X}_{++}(V)$. By Smith's
Theorem~\cite{smith}, for $\,p\neq 3,\,$ $\,m_{\mu_1}=2\,$. Thus, for
$\,p\geq 5,\,$ $\,r_p(V)\,\geq\,\displaystyle\frac{2^4\cdot 5\cdot
2}{2\cdot 5}\,+\,2\cdot\frac{2^4\cdot 5\cdot 3}{2\cdot
5}\,=\,2^6\,>\,2\cdot 5^2\,$. Hence, \mbox{by~\eqref{rrcl},} $\,V\,$ is not
exceptional. For $\,p=3\,$, by Claim~12 (p. \pageref{claim12al})
of First Part of Proof of Theorem~\ref{anlist}, $\,V\,$ is not exceptional.

{\bf Hence if $\,V\,$ is an $\,C_5(K)$-module such that 
$\,{\cal X}_{++}(V)\,$ contains weights with $2$ nonzero coefficients,
then $\,V\,$ is not an exceptional module.} 

{\bf From now on we can assume that $\,{\cal X}_{++}(V)\,$ contains only 
weights with at most one nonzero coefficient.}
 
III) Let $\,\mu\,=\,a\,\omega_{i}\,$ (with $\,a\geq 1\,$) be a weight
in $\,{\cal X}_{++}(V)$. 

(a)i) Let $\,a\geq 2\,$ and $\,2\leq i\leq 4$. Then 
$\,\mu_1\,=\,\mu-\alpha_i\,=\,\omega_{i-1}\,+\,(a-2)\omega_i\,+\,\omega_{i+1}
\,\in\,{\cal X}_{++}(V)$. For $\,a\geq 3\,$, $\,\mu_1\,$ satisfies  
Lemma~\ref{cn5} (for $\,\mu\,$ good or bad). For $\,a=2\,$, by Claims
2,\,3 or 4 of this proof, $\,V\,$ is not exceptional.

ii) Let $\,a\geq 2\,$ and $\,\mu\,=\,a\,\omega_{5}\,$. Then 
$\,\mu_1\,=\,\mu-\alpha_5\,=\,2\omega_4\,+\,(a-2)\,\omega_5\;$ and
$\,\mu_2\,=\,\mu_1-\alpha_4\,=\,\omega_3\,+\,(a-1)\omega_5\,\in
\,{\cal X}_{++}(V)$. As $\,\mu_2\,$ satisfies Claim~2
(p. \pageref{cl2c5}) or Claim~4 (p. \pageref{cl4c5}), 
$\,V\,$ is not an exceptional module.

iii) Let $\,a\geq 2\,$ and $\,\mu\,=\,a\,\omega_{1}\,$. Then
$\,\mu_1\,=\,\mu-\alpha_1\,=\,(a-2)\omega_1\,+\,\omega_2\,\in 
\,{\cal X}_{++}(V)$. For $\,a\geq 4$, $\,\mu_1\,$ satisfies case
II(b)i) (p. \pageref{IIbitp}) of this proof.

For $\,a=3\,$ and $\,p\geq 5\,$ we may assume that $\,V\,$ has highest weight
$\,3\omega_1\,$. By Lemma~\ref{III.iii} (p. \pageref{III.iii}), 
$\,V\,$ is not exceptional.
For $\,p=3\,$, $\,\mu\,\in\,{\cal X}_{++}(V)\,$ only if $\,V\,$ has
highest weight $\,\lambda\,=\,\omega_1\,+\,2\omega_2\,$ or 
$\,\lambda\,=\,(3-k)\omega_1\,+\,k\omega_3\,$ (for some
$\,1\leq k\leq 2\,$). In these cases, by II(b) or II(a) of this proof, 
$\,V\,$ is not an exceptional module.

Let $\,a=2\,$. We may assume that $\,V\,$ has highest weight 
$\,2\,\omega_{1}\,$. Then 
$\,V\,$ is the adjoint module, which is exceptional by Example~\ref{adjoint}
{\bf (N. 1 in Table~\ref{tableclall})}.

{\bf Hence if $\,{\cal X}_{++}(V)\,$ contains  
$\,\mu\,=\,a\,\omega_{i}\,$ (with $\,a\geq 1\,$ and $\,1\leq i\leq 5\,$), 
then we can assume $\,a=1\,$ and that $\,V\,$ has highest weight 
$\,\omega_{i}\,$.} These cases are treated in the sequel. 

(b)i) Suppose that $\,V\,$ has highest weight $\,\omega_{3}\,$ (resp.,
$\,\omega_{4}\,$, $\,\omega_{5}\,$), then $\,V\,$ is unclassified
{\bf (N. 2 (resp., 3, 4) in Table~\ref{leftcn})}.

ii) If $\,V\,$ has highest weight $\,\omega_{2}\,$ then, by case
III(b)vi) (p. \pageref{IIIbvi}) of First Part of Proof of Theorem~\ref{listcn},
$\,V\,$ is an exceptional module {\bf (N. 3 in Table~\ref{tableclall})}.
 
iii) If $\,V\,$ has highest weight $\,\omega_{1}\,$ then, by case
III(b)vii) (p. \pageref{IIIbvii}) of First Part of Proof of 
Theorem~\ref{listcn}, $\,V\,$ is an exceptional module {\bf (N. 2 in  
Table~\ref{tableclall})}.

{\bf Hence if $\,V\,$ is an $\,C_5(K)$-module such that 
$\,{\cal X}_{++}(V)\,$ contains $\,\mu\,=\,a\,\omega_i\,$ with
($\,a\geq 3\,$ and $\,1\leq i\leq 5\,$) or ($\,a\geq 2\,$ and  
$\,2\leq i\leq 5\,$), then $\,V\,$ is not exceptional. If $\,V\,$ has
highest weight $\,\lambda\in\{ 2\omega_1,\,\omega_1,\,\omega_2\}\,$,  
then $\,V\,$ is an exceptional module. If $\,V\,$ has highest weight 
$\,\lambda\in\{ \omega_3,\,
\omega_4,\,\omega_{5}\}\,$, then $\,V\,$ is unclassified.}

This finishes the proof of Theorem~\ref{listcn} for $\,\ell=5$.
\hfill $\Box$


\newpage
\subsection{Groups or Lie Algebras of type $\,D_{\ell}$}\label{appdl}

In this section we prove Theorem~\ref{dnlist}. For groups of type  
$\,D_{\ell}\,$, $\,|W|\,=\,2^{\ell-1}\ell!$,
$\,|R|\,=\,|R_{long}|\,=\,2\ell(\ell-1)\,$.

\subsubsection{Type $\,D_{\ell}\,,\;\ell\geq 5$}\label{fppdl}

{\bf Proof of Theorem~\ref{dnlist} - First Part} \\
Let $\,\ell\geq 5$. 
By Lemma~\ref{3cdn}, {\bf it suffices to consider $\,D_{\ell}(K)$-modules 
$\,V\,$ such that $\,{\cal X}_{++}(V)\,$ contains only weights with at
most $2$ nonzero coefficients.}

I) {\bf Claim 1}:\label{claim1dl} {\it Let $\,V\,$ be a $\,D_{\ell}(K)$-module.
If $\,{\cal X}_{++}(V)\,$ contains a bad weight with $2$ nonzero 
coefficients, then $\,V\,$ is not an exceptional $\,\mathfrak g$-module.}

Indeed, let $\,\mu\,=\,a\,\omega_{i}\,+\,b\,\omega_{j}\,$ (with 
$\,1\leq i<j\leq \ell\,$ and $\,a,\,b\,\geq 1\,$) be a bad weight in
$\,{\cal X}_{++}(V)\,$ (hence $\,a\geq 2,\,b\,\geq 2$).

(a) For $\,1\leq i<j <\ell-2,\,$ 
$\,\mu_1\,=\,\mu-(\alpha_i+\cdots +\alpha_j)\,=\,\omega_{i-1}\,+\,(a-1)
\,\omega_{i}\,+\,(b-1)\,\omega_{j}\,+\,\omega_{j+1}\in\,{\cal X}_{++}(V)$.

(b) For $\,1\leq i<\,j =\ell-2,\,$
$\,\mu\,=\,a\,\omega_{i}\,+\,b\,\omega_{\ell -2}\;$ and
$\,\mu_1\,=\,\mu-(\alpha_i+\cdots +\alpha_{\ell -2})\,=\,
\omega_{i-1}\,+\,(a-1)\,\omega_{i}\,+\,(b-1)\,\omega_{\ell-2}\,+\, 
\omega_{\ell-1}\,+\,\omega_{\ell}\in\,{\cal X}_{++}(V)$.

(c) For $\,1\leq\, i\,\leq\,\ell-2,\;j =\ell-1\,$ (or by a graph-twist 
$\,1\leq\, i\,\leq\,\ell-2,\;j=\ell\,$), $\,\mu\,=\,a\omega_{i}\,+\, 
b\omega_{\ell-1}\,$ (or $\,\mu\,=\,a\omega_{i}\,+\,b\omega_{\ell}\,$) and 
$\,\mu_1\,=\,\mu-(\alpha_i+\cdots +\alpha_{\ell-1})\,
=\,\omega_{i-1}\,+\,(a-1)\omega_{i}\,+\,(b-1)\omega_{\ell-1}\,+
\,\omega_{\ell}\,$
(or $\,\mu_1\,=\,\mu-(\alpha_i+\cdots +\alpha_{\ell-2}+\alpha_{\ell})\,
=\,\omega_{i-1}\,+\,(a-1)\omega_{i}\,+\,\omega_{\ell-1}\,+
\,(b-1)\omega_{\ell}\,$) $\,\in\,{\cal X}_{++}(V)$.

(d) For $\,i =\ell-1,\,j=\ell,\,$ $\,\mu\,=\,a\,\omega_{\ell-1}\,+\, 
b\,\omega_{\ell}\,$, $\,\mu_1\,=\,\mu-\alpha_{\ell-1}\,
=\,\omega_{\ell-2}\,+\,\mbox{$(a-2)\omega_{\ell-1}$}\,+\,b\omega_{\ell}\;$
and $\,\mu_2\,=\,\mu_1-(\alpha_{\ell-2}+\alpha_{\ell})\,
=\,\omega_{\ell-3}\,+\,(a-1)\omega_{\ell-1}\,+\,
\mbox{$(b-1)\omega_{\ell}$}\,\in\,{\cal X}_{++}(V)$.

In all these cases, $\,\mu_1\,$ or $\,\mu_2\,$
is a good weight in $\,{\cal X}_{++}(V)$,  with $3$ or more nonzero 
coefficients. Hence Lemma~\ref{3cdn} applies. This proves the Claim. 
\hfill $\Box$ \vspace{1ex}

{\bf Therefore, we can assume that weights with $2$ nonzero
coefficients occurring in $\,{\cal X}_{++}(V)\,$ are good weights.}

II) Let $\,\mu\,=\,a\,\omega_{i}\,+\,b\,\omega_{j}\,$ with  
$\,1\leq i<j\leq\ell\,$ and $\,a,\,b\,\geq 1\,$ be a good weight 
in $\,{\cal X}_{++}(V)$.

{\bf Claim 2}:\label{claim2dl} {\it Let $\,\ell\geq 5$. If $\,V\,$ is
a $\,D_{\ell}(K)$-module such that $\,{\cal X}_{++}(V)\,$
contains a weight $\,\mu\,=\,a\,\omega_{i}\,+\,b\,\omega_{\ell-2}\,$
(with  $\,1< i\leq \ell-3\,$ and $\,a,\,b\,\geq 1\,$),
then $\,V\,$ is not an exceptional $\,\mathfrak g$-module.}

Indeed, for $\,1\,<\,i\,<\,j=\ell-2\,$ and $\,a\geq 1,\,b\,\geq 1$, 
$\,\mu_1\,=\,\mu-(\alpha_i+\cdots +\alpha_{\ell-2})\,=\,\omega_{i-1}\,+\,
(a-1)\,\omega_i\,+\,(b-1)\,\omega_{\ell-2}\,+\,\omega_{\ell-1}\,+\,
\omega_{\ell}\in\,{\cal X}_{++}(V)$. As $\,\mu_1\,$ is a good weight  
with $3$ or more nonzero coefficients, Lemma~\ref{3cdn} applies.

Let $\,i=1,\;j=\ell-2\,$ and $\,\mu\,=\,a\,\omega_{1}\,+\, 
b\,\omega_{\ell-2}\,$. Then $\,\mu_1\,=\mu-(\alpha_1+\cdots 
+\alpha_{\ell-2}) =\,(a-1)\,\omega_1\,+\,(b-1)\,\omega_{\ell-2}\, 
+\,\omega_{\ell-1}\,+\,\omega_{\ell}\,\in\,{\cal X}_{++}(V)$.
For ($\,a\geq 2,\,b\geq 1\,$) or \mbox{($\,a\geq 1,\,b\geq 2\,$)},
$\,\mu_1\,$ is a good weight with 3 or more nonzero coefficients, hence
Lemma~\ref{3cdn} applies. For $\,a=b=1,\,$ $\,\mu\,$ and
$\,\mu_1\,$ are good weights in $\,{\cal X}_{++}(V)$. Thus for $\,\ell\geq 5$,
$\,s(V)\,\geq\,|W\mu|\,+\,|W\mu_1|\,= 
\,2^{\ell -3}\,\ell(\ell -1)(\ell-2)\,+\,2^{\ell-1}\,\ell\,=\,
2^{\ell -3}\,\ell[(\ell -1)(\ell-2)\,+\,4]\,>\,2\,\ell^2\,(\ell-1)\,$.
Hence, by~(\ref{dllim}), $\,V\,$ is not an exceptional module. This
proves the claim.\hfill $\Box$ \vspace{1.5ex}

{\bf Claim 3}:\label{claim3dl} {\it Let $\,1\leq i \leq j,\;3\leq
j\leq \ell-3\,$ (hence $\,\ell\geq 6\,$). If $\,V\,$ is
a $\,D_{\ell}(K)$-module such that $\,{\cal X}_{++}(V)\,$ contains 
a good weight $\,\mu\,=\,a\,\omega_{i}\,+\,b\,\omega_{j}\,$ (with  
$\,a\geq 1,\,b\geq 1\,$),  
then $\,V\,$ is not an exceptional $\,\mathfrak g$-module.}

Indeed, for $\,3\leq j\leq \ell-3$, $\displaystyle\binom{j}{i}\geq 3\,$ 
and $\,\displaystyle\binom{\ell}{j}\geq\binom{\ell}{3}$. 
Thus for $\,\ell\geq 6$, $\,s(V)\,\geq\,|W\mu| =\displaystyle 
2^j\,\binom{j}{i}\,\binom{\ell}{j}\,\geq\, 2^3\cdot 3\,\binom{\ell}{3}
\, \geq\, 2^2\,\ell\,(\ell-1)\,(\ell-2)\, > 2\,\ell^2\,(\ell-1)\,$. 
Hence, by~\eqref{dllim}, $\,V\,$ is not an exceptional module. This proves the
claim.\hfill $\Box$ \vspace{1.5ex}

{\bf Therefore, for $\,\ell\geq 5$, if $\,{\cal X}_{++}(V)\,$ contains
a good weight $\,\mu\,=\,a\,\omega_{i}\,+\,b\,\omega_{j}\,$ (with
$\,a,\,b\,\geq 1\,$), then we can assume that ($\,i=1,\,j=2\,$) or
(\mbox{$\,1\leq i < j$,} \mbox{$\ell-1\leq j \leq\ell\,$}).} These
cases are treated in the sequel.

(a) Let $\,\mu\,=\,a\,\omega_{1}\,+\,b\,\omega_{2}\,$ (with $\,a\geq
1,\,b\geq 1\,$). Then $\,\mu_1\,=\,\mu-(\alpha_1+\alpha_{2})\,=\,(a-1)
\omega_1\,+\,(b-1)\omega_2\,+\,\omega_{3}\,\in\,{\cal X}_{++}(V)$.

i) For $\,a\geq 2,\,b\geq 2,\,$ $\,\mu_1\,$ has 3 nonzero
coefficients, hence Lemma~\ref{3cdn} applies. For ($\,a\geq
2,\,b=1\,$) or ($\,a=1,\,b\geq 2\,$), $\,\mu_1\,$ satisfies Claim 3
(for $\,\ell\geq 6\,$) or Claim 2 (for $\,\ell=5\,$).

ii) Let $\,a=b=1\,$ and $\,\mu\,=\,\omega_{1}\,+\,\omega_{2}\,$. Then
$\,\mu_1\,=\,\omega_{3}\;$ and $\,\mu_2\,=\,\mu_1-(\alpha_2+2\alpha_{3}+ 
2\alpha_4\,+\cdots\,+2\alpha_{\ell-2}+\alpha_{\ell-1}+\alpha_{\ell})\,
=\,\omega_{1}\,\in\,{\cal X}_{++}(V)$. For $\,p\geq 3,\,$ 
$\,|R_{long}^+-R^+_{\mu,p}|\geq 4\ell-7$,
$\,|R_{long}^+-R^+_{\mu_1,p}|=3(2\ell-5),\,$
$\,|R_{long}^+-R^+_{\mu_2,p}|=2(\ell-1)$. Thus, for $\,\ell\geq 5$, 
\begin{align*}
r_p(V) &\geq\,\displaystyle
\frac{2^2\ell(\ell-1)\cdot (4\ell-7)}{2\ell(\ell-1)}\,+\,
\frac{2^3\,\ell(\ell-1)(\ell-2)\cdot 3(2\ell-5)}{3!\cdot 2\ell(\ell-1)}\,+\,
\frac{2\,\ell\cdot 2(\ell-1)}{2\ell(\ell-1)}\vspace{1ex}\\
 & =\,2(4\ell-7)\,+\,2(\ell-2)(2\ell-5)\,+\,2\,=\, 4\ell^2\,-\,10\ell\,+\, 
8\,>\,2\ell\,(\ell-1)\,.
\end{align*}
Hence for $\,p\geq 3\,$, by~(\ref{rrdl}), $\,V\,$ is not exceptional. 
For $\,p=2\,$, we may assume that $\,V\,$ has highest weight 
$\,\omega_{1}\,+\,\omega_{2}\,$. By Smith's Theorem~\cite{smith},
$\,m_{\mu_1}=2\,$. Thus for $\,\ell\geq 5$,  
\[
s(V)\,\geq\,2^2\ell(\ell-1)\,+\,2\cdot\displaystyle
\frac{2^3\ell(\ell-1)\,(\ell-2)}{3!}\,=\,\frac{4\ell(\ell-1)\,(2\ell-1)}{3}\,
>\,2\ell^2(\ell-1)\,.
\]
Hence, by~(\ref{dllim}), $\,V\,$ is not an exceptional module.

(b) Let $\,1\leq i\leq \ell-2,\;j=\ell-1\,$ (or by a graph-twist 
$\,1<i\leq \ell-2,\;\mbox{$j=\ell$}\,$) and $\,\mu\,=\,a\,\omega_{i}\,+\,b\, 
\omega_{\ell-1}\,$ (resp., $\,\mu\,=\,a\,\omega_{i}\,+\,b\,\omega_{\ell}\,$).
For $\,a\geq 1,\,b\geq 1,\,$ $\,\mu_1\,=\mu-(\alpha_i+\cdots + 
\alpha_{\ell-1})=\,\omega_{i-1}\,+\,(a-1)\omega_i\,+\,(b-1)\omega_{\ell-1}
\,+\,\omega_{\ell}\,$ (resp., $\,\mu_1\,=\mu-(\alpha_i+\cdots 
+\alpha_{\ell-2}+ \alpha_{\ell})=\,\omega_{i-1}\,+\,(a-1)\omega_i\, 
+\,\omega_{\ell-1}\,+\,(b-1)\omega_{\ell}\,$). 

i)\label{IIbidl} For $\,2\leq i\leq \ell-2\,$,
$\,|W\mu|\,+\,|W\mu_1|\,\geq\,\displaystyle
2^{\ell-1}\left[\binom{\ell}{i}\,+\,\binom{\ell}{i-1}\right]\,= 
\,2^{\ell-1}\,\binom{\ell+1}{i}\,$ (by Theorem~\ref{usual}(b)). Thus
for $\,\ell\geq 5$, 
\[
s(V)\,\geq\,\displaystyle 2^{\ell-1}\,\binom{\ell+1}{i}\,\geq\,
2^{\ell-2}\,(\ell+1)\ell\,>\,2\,\ell^2\,(\ell -1)\,.
\]
Hence, by~\eqref{dllim}, $\,V\,$ is not an exceptional module. 

ii) Let $\,i=1,\,j=\ell-1\,$ (or by a graph-twist $\,i=1,\,j=\ell\,$)
and $\,\mu\,=\,a\,\omega_{1}\,+\,b\,\omega_{\ell-1}\;$ (resp.,
$\,\mu\,=\,a\,\omega_{1}\,+\,b\,\omega_{\ell}\,$).  

ii.1) For $\,a\geq 2,\,b\geq 1,\,$ $\,\mu_1\,=\mu-\alpha_{1}=\, 
(a-2)\omega_{1}\,+\,\omega_{2}\,+\,b\omega_{\ell-1}\,$  
(resp., $\,\mu_1\,=\mu-\alpha_{1}=\,(a-2)\omega_{1}\,+\,\omega_{2}\,+\,
b\omega_{\ell}\,$) $\,\in{\cal X}_{++}(V)$. For $\,a\geq 3,\,$
Lemma~\ref{3cdn} applies. For $\,a=2$, $\,\mu_1\,$ satisfies case
(b)i) above.

ii.2) For $\,a\geq 1,\,b\geq 2,\,$ $\,\mu_1\,=\,\mu-\alpha_{\ell-1}\,=\, 
a\omega_{1}\,+\,\omega_{\ell-2}\,+\,(b-2)\omega_{\ell-1}\,$
(or $\,\mu_1\,=\,\mu-\alpha_{\ell}\,=\,a\omega_{1}\,+\,\omega_{\ell-2}\,+\,
(b-2)\omega_{\ell}\,$) $\,\in\,{\cal X}_{++}(V)$. For $\,a\geq 3,\,$
Lemma~\ref{3cdn} applies. For $\,a=2$, $\,\mu_1\,$ satisfies Claim 2 
(p. \pageref{claim2dl}).
 
ii.3) Let $\,a=b=1\,$ and $\,\mu\,=\,\omega_{1}\,+\,\omega_{\ell-1}\,$
(or by a graph-twist $\,\mu\,=\,\omega_{1}\,+\,\omega_{\ell}\,$). 

ii.3.a)For $\,\ell\geq 8$, $\,s(V)\,\geq\,|W\mu|\,=\,2^{\ell-1}\,\ell\,>\, 
2\,\ell^2\,(\ell-1)\,$. Hence, by~\eqref{bllim}, 
\mbox{$\,V\,$ is not an exceptional module}.

ii.3.b) For $\,\ell=7,$ $\,\mu\,=\,\omega_{1}\,+\,\omega_{6}\,$ and
$\,\mu_1\,=\,\omega_7\,\in\,{\cal X}_{++}(V)$. As
$\,|R_{long}^+-R^+_{\mu,p}|\,\geq\,21\,$ and 
$\,|R_{long}^+-R^+_{\mu_1,p}|=21$, one has $\,r_p(V) \geq\,\displaystyle 
\frac{2^6\cdot 7\cdot 21}{2\cdot 7\cdot 6}\,+\,\frac{2^6\cdot 21} 
{2\cdot 7\cdot 6}\,=\,2^7\,>\,2\cdot 7\cdot 6\,$.
Hence, by~(\ref{rrdl}), $\,V\,$ is not an exceptional module.

ii.3.c) For $\,\ell=6,\,$ $\,\mu\,=\,\omega_{1}\,+\,\omega_{5}\,$ and
$\,\mu_1\,=\,\omega_6\,\in\,{\cal X}_{++}(V)$. For $\,p\geq 3$,
$\,|R_{long}^+-R^+_{\mu,p}|=20\,$ and $\,|R_{long}^+-R^+_{\mu_1,p}|=15$. Thus
$\,r_p(V) \geq\,\displaystyle\frac{2^5\cdot 6\cdot 20}{2\cdot 6\cdot
5}\,+\,\frac{2^5\cdot 15}{2\cdot 6\cdot 5}\,=\,2^3\cdot 3^2\,>\,2\cdot 6\cdot 5
\,$. Hence, by~(\ref{rrdl}), $\,V\,$ is not exceptional.  

For $\,p=2\,$, we may assume that $\,V\,$ has highest weight 
$\,\mu\,=\,\omega_{1}\,+\,\omega_{5}\,$. By Smith's
Theorem~\cite{smith}, $\,\mu_1\,=\,\omega_6\,$ has multiplicity
$\,m_{\mu_1}\,\geq\,2$. Now as $\,|R_{long}^+-R^+_{\mu,2}|\,=\, 
|R_{long}^+-R^+_{\mu_1,2}|=15$, one has $\,r_2(V)\geq
\displaystyle\frac{2^5\cdot 6\cdot 15}{2\cdot 6\cdot 5}\,+\,2\frac{2^5\cdot
15}{2\cdot 6\cdot 5}\,=\,2^6\,>\,2\cdot 6\cdot 5\,$. Hence,
by~(\ref{rrdl}), $\,V\,$ is not an exceptional module. 


ii.3.d) For $\,\ell=5,\,$ we may assume that $\,V\,$ has highest weight  
$\,\mu\,=\,\omega_{1}\,+\,\omega_{4}\,$ Then $\,\mu_1\,=\,\omega_{5}\,
\in\,{\cal X}_{++}(V)$. By Smith's Theorem~\cite{smith}, for $\,p\neq 5$,
$\,m_{\mu_1}=4$. For $\,p\neq 2$, $\,|R_{long}^+-R^+_{\mu,p}|\,=\,14,
\,|R_{long}^+-R^+_{\mu_1,p}|=10$. Thus, for $\,p\neq 2,\,5\,$,
$\,r_p(V) \,\geq\,\frac{2^4\cdot 5\cdot 14}{2\cdot 5\cdot 4}\,+\, 
4\frac{2^4\cdot 10}{2\cdot 5\cdot 4}\,=\,2^2\cdot 11\,>\,2\cdot 5\cdot 4\,$.
Hence, by~(\ref{rrdl}), $\,V\,$ is not exceptional. 
{\bf For $\,p=2\,$ or $\,p=5,\,$  
these modules are unclassified (N. 1 in Table~\ref{leftdn})}.

(c) Let $\,i =\ell-1,\;j=\ell\,$ and $\,\mu\,=\,a\,\omega_{\ell-1}\, 
+\,b\,\omega_{\ell}\,$. Then $\,\mu_1\,=\,\mu-(\alpha_{\ell-2}+\alpha_{\ell-1}
+\alpha_{\ell})\,=\,\omega_{\ell-3}\,+\,(a-1)\,\omega_{\ell-1}\,+\,
(b-1)\,\omega_{\ell}\,\in\,{\cal X}_{++}(V)$.

i) For $\,a\geq 2,\,b\geq 2\,$, $\,\mu_1\,$ has $3$ nonzero
coefficients, hence Lemma~\ref{3cdn} applies.  
For ($\,a\geq 2,\,b=1\,$) or ($\,a=1,\,b\geq 2\,$), 
$\,\mu_1\,$ satisfies case (b)i).

ii) For $\,a=b=1\,$, $\,\mu\,=\,\omega_{\ell-1}\,+\,\omega_{\ell}\;$ and  
$\,\mu_1\,=\,\omega_{\ell-3}\,\in\,{\cal X}_{++}(V)$. 

ii.1) For $\,\ell\geq 7$, by~\eqref{dllim}, $\,V\,$ is not an
exceptional module, since
\[
s(V)\,\geq\,\displaystyle 2^{\ell-1}\,\ell\,+\,
\frac{2^{\ell-4}\,\ell(\ell-1)(\ell -2)}{3}\,=\,\frac{2^{\ell-4}\,\ell\,
[(\ell-1)(\ell -2)\,+\,24]}{3}\,>\,2\,\ell^2\,(\ell-1)\,.
\]

ii.2) For $\,\ell=6\,$, we may assume that $\,V\,$ has highest weight
$\,\mu\,=\,\omega_{5}\,+\,\omega_{6}\,$. Then $\,\mu_1\,=\,\omega_{3}\;$ and 
$\,\mu_2\,=\,\mu_1-(\alpha_2+2\alpha_3+2\alpha_4+\alpha_5+\alpha_6)\,=\,
\omega_1\,\in\,{\cal X}_{++}(V)$.
By Proposition~\ref{supruzal}, some $\,m_{\mu_i}\geq 2.\,$ Thus
$\,s(V)\,\geq\,2^5\cdot 6\,+\,m_{\mu_1}\,2^5\cdot 5\,+\,m_{\mu_2}\,2\cdot
6\,>\,2\cdot 6^2\cdot 5\,$. Hence, by~(\ref{dllim}), $\,V\,$ is not 
exceptional.

ii.3) For $\,\ell=5,\,$  $\,\mu\,=\,\omega_{4}\,+\,\omega_{5}\;$ and  
$\,\mu_1\,=\,\omega_{2}\,\in\,{\cal X}_{++}(V)$. For $\,p\neq 2,\,$
$\,|R_{long}^+-R^+_{\mu,p}|\,=\,14,\,\,|R_{long}^+-R^+_{\mu_1,p}|=13$. Thus
$\,r_p(V)\,\geq\,\displaystyle\frac{2^4\cdot 5\cdot 14}{2\cdot 5\cdot 4}\,
+\,\frac{2^3\cdot 5\cdot 13}{2\cdot 5\cdot 4}\,=\,41\,>\,2\cdot 5\cdot 4\,$.
Hence, by~(\ref{rrdl}), $\,V\,$ is not an exceptional module. 
{\bf For $\,p=2$, the $\,D_{\ell}(K)$-module $\,V\,$ of highest weight  
$\,\omega_{4}\,+\,\omega_{5}\,$ is unclassified (N. 2 in Table~\ref{leftdn})}.

{\bf Hence, for $\,\ell\geq 5\,$, if $\,{\cal X}_{++}(V)\,$ contains
a weight with $2$ nonzero coefficients, then $\,V\,$ is not an
exceptional module, unless $\,\ell=5\,$ and $\,V\,$ has highest weight
($\,\omega_{4}\,+\,\omega_{5}\,$ for $\,p=2\,$) or 
($\,\omega_{1}\,+\,\omega_{4}\,$ or $\,\omega_{1}\,+\,\omega_{5}\,$  
for $\,p=2,\,5\,$), in which cases $\,V\,$ is unclassified.}

{\bf From now on we can assume that $\,{\cal X}_{++}(V)\,$ contains 
only weights with at most one nonzero coefficient.} 

{\bf Claim 4}:\label{claim4dl}
{\it Let $\,\ell\geq 6\,$. If $\,V\,$ is a $\,D_{\ell}(K)$-module such
that $\,\omega_{\ell-2}\,\in {\cal X}_{++}(V)$, then $\,V\,$ is not  
an exceptional $\,\mathfrak g$-module.}

Indeed, $\,|W\omega_{\ell-2}|\,=\,2^{\ell-3}\,\ell\,(\ell-1)$. Thus 
for $\,\ell\geq 7$, $\,s(V)\geq \mbox{$2^{\ell-3}\,\ell\,(\ell-1)$}
>\,2\,\ell^2\,(\ell-1)\,$. Hence, by~\eqref{dllim}, $\,V\,$ is not 
exceptional. 
For $\,\ell=6,\,$ $\,\mu\,=\,\omega_4,\;\mu_1\,=\,\omega_2\,\in\,
{\cal X}_{++}(V)$. As $\,|R_{long}^+-R^+_{\mu,p}|\,\geq\,16,\;
|R_{long}^+-R^+_{\mu_1,p}|\,\geq\,15$, one has $\,r_p(V)\,\geq\,\displaystyle
\frac{2^4\cdot 3\cdot 5\cdot 16}{2\cdot 6\cdot 5}\,+\,\frac{2^2\cdot 3
\cdot 5\cdot 15}{2\cdot 6\cdot 5}\,=\,79\,>\,2\cdot 6 \cdot 5\,$.
Hence, by~(\ref{rrdl}), $\,V\,$ is not an exceptional module,
proving the claim. \hfill $\Box$ \vspace{2ex}

III) Let $\,\mu\,=\,a\,\omega_i\,$ (with $\,a\geq 1\,$) be a weight in 
$\,{\cal X}_{++}(V)$. 

(a)i) Let $\,a\geq 2\,$ and $\,2\,\leq\,i\,\leq\,\ell-3$. Then
$\,\mu_1\,=\,\mu-\alpha_{i}\,=\,\omega_{i-1}\,+\,(a-2)\,\omega_{i}\,+\,
\omega_{i+1}\,\in\,{\cal X}_{++}(V)$. For $\,a\geq 3\,$ (and $\,\mu\,$
good or bad), $\,\mu_1\,$ satisfies Lemma~\ref{3cdn}.
For $\,a=2\,$ and $\,2\leq i\leq \ell-4\,$, $\,\mu_1\,$ satisfies
Claim 3 (p. \pageref{claim3dl}). For $\,a=2\,$ and $\,i=\ell-3\,$, 
$\,\mu_1\,$ satisfies Claim 2 (p. \pageref{claim2dl}). 

ii) Let $\,a\geq 2\,$ and $\,\mu\,=\,a\,\omega_{\ell-2}\,$.
Then $\,\mu_1\,=\,\mu-\alpha_{\ell-2}\,=\,\omega_{\ell-3}\,+\,(a-2)\,
\omega_{\ell -2}\,+\,\omega_{\ell-1}\,+\,\omega_{\ell}\,\in
\,{\cal X}_{++}(V)\,$ has $3$ nonzero coefficients. 
Hence Lemma~\ref{3cdn} applies. 

iii) Let $\,a\geq 2\,$ and $\,\mu\,=\,a\,\omega_{\ell-1}\,$ (or by a
graph-twist $\,\mu\,=\,a\,\omega_{\ell}\,$). Then 
$\,\mu_1\,=\,\mu-\alpha_{\ell-1}\,=\,\omega_{\ell-2}\,+\,(a-2)\,
\omega_{\ell-1}\,$ (resp., $\,\mu_1\,=\,\mu-\alpha_{\ell}\,=\, 
\omega_{\ell-2}\,+\,(a-2)\,\omega_{\ell}\,$) $\,\in\,{\cal X}_{++}(V)$. 
For $\,a\geq 3\,$, $\,\mu_1\,$ satisfies case II(b)i) (p. \pageref{IIbidl}). 
For $\,a=2\,$ and $\,\ell\geq 6$, $\,\mu_1\,$ satisfies Claim 4 
(p. \pageref{claim4dl}). {\bf For $\,\ell=5,\,$ if $\,V\,$ has highest  
weight $\,2\omega_4\,$ (or $\,2\omega_5\,$), then $\,V\,$ is
unclassified (N. 4 in Table~\ref{leftdn})}.

iv) Let $\,a\geq 2\,$ and $\,\mu\,=\,a\,\omega_1\,\in\,{\cal X}_{++}(V)$. Then:

iv.1) For $\,a\geq 4,\,$ $\,\mu_1\,=\,\mu-\alpha_{1}\,=\,(a-2)\omega_{1}\,+\,
\omega_{2}\,$, $\,\mu_2\,=\,\mu_1-(\alpha_{1}+\alpha_2)\,=\,(a-3) 
\omega_{1}\,+\,\omega_{3}\in{\cal X}_{++}(V)$. As $\,a\geq 4$, 
for $\,\ell\geq 6$, $\,\mu_2\,$ satisfies Claim 3
(p. \pageref{claim3dl}). For $\,\ell=5$, $\,\mu_2\,$ satisfies
Claim 2 (p. \pageref{claim2dl}).

iv.2) For $\,a=3,\,$ $\,\mu\,=\,3\,\omega_1,\;$ $\,\mu_1\,=\,\omega_{1}\, 
+\,\omega_{2},\;$ $\,\mu_2\,=\,\omega_{3}\;$ and $\,\mu_3\,=\,\omega_{1}\,\in\,
{\cal X}_{++}(V).\;$ For $\,p=3,\,$ $\,\mu\,\in\,{\cal X}_{++}(V)\,$ only if
$\,V\,$ has highest weight $\,\lambda\,=\,\omega_1\,+\,2\omega_2\,$ or
$\,\lambda\,=\,(3-k)\omega_1\,+\,k\omega_3\,$ (for some $\,1\leq k\leq
2$). In these cases, by Claims 2 or 3, $\,V\,$ is not exceptional.
For $\,p\geq 5,\,$ $\,|R_{long}^+-R^+_{\mu,p}|\,=\,
|R_{long}^+-R^+_{\mu_3,p}|=2(\ell-1),\,$
$\,|R_{long}^+-R^+_{\mu_1,p}|=2(2\ell-3),\,$ and
$\,|R_{long}^+-R^+_{\mu_2,p}|=3(2\ell-5)$. Thus, for $\,\ell\geq 5\,$,
$\,r_p(V) \geq\,2\,+\,4(2\ell-3)\,+\,2(\ell-2)(2\ell-5)\,+\,2\, =\,
4\ell^2\,-\,10\ell\,+\,12\,>\,2\ell\,(\ell-1)\,$. Hence, 
by~(\ref{rrdl}), $\,V\,$ is not an exceptional module.

iv.3) Let $\,a=2\,$. We may assume that $\,V\,$ has highest weight
$\,\mu\,=\,2\,\omega_1\,$. By the same argument used in case III(b)iv)
(p. \pageref{IIIbiv}) of First Part of Proof of
Theorem~\ref{listbn}, one proves that $\,V\,$ is not exceptional.

{\bf Therefore, for $\,\ell\geq 5\,$, if $\,V\,$ is a
$\,D_{\ell}(K)$-module such that $\,{\cal X}_{++}(V)\,$ contains a weight  
$\,\mu\,=\,a\,\omega_i\,$ (with $\,a\geq 1\,$), then we can assume $\,a=1\,$}. 

(b) Let $\,\mu\,=\,\omega_{i}\,$ be a weight in $\,{\cal X}_{++}(V)\,$.

i) For $\,i=\ell-2\,$, see Claim 4 (p. \pageref{claim4dl}) .

ii) Let $\,4\,\leq\,i\,\leq\,\ell-4\,$ (hence $\,\ell\geq 8\,$) and 
$\,\mu\,=\,\omega_{i}\,$. Then $\,|W\mu|\, =\, \displaystyle  
2^i\,\binom{\ell}{i}\,\geq\,\frac{2^4\,\ell(\ell-1)(\ell-2)(\ell
-3)}{4!}\,$. Thus for $\,\ell\geq 8$, by~\eqref{dllim}, $\,V\,$ is not
an exceptional module, since $\,
s(V)\,\geq\,\displaystyle\frac{2\,\ell\,(\ell-1)\,(\ell -2)(\ell-3)}{3}\,>\,
2\,\ell^2\,(\ell-1)\,$.
Hence, by~\eqref{dllim}, $\,V\,$ is not exceptional. 

iii) Let $\,i=\ell-3\,$ and $\,\mu\,=\,\omega_{\ell-3}\,$.
Thus for $\,\ell\geq 8$, $\,s(V)\,\geq\,|W\mu|\,=\,\displaystyle
\frac{2^{\ell-4}\,\ell(\ell-1)(\ell-2)}{3}\,>\,2\,\ell^2\,(\ell-1)\,$.
Hence, by~\eqref{dllim}, $\,V\,$ is not exceptional.

For $\,\ell=7,\,$ $\,\mu\,=\,\omega_4\,$ and  
$\,\mu_1\,=\,\mu-(\alpha_3+2\alpha_4+2\alpha_5+\alpha_6+\alpha_7)\,=
\,\omega_2\,\in\,{\cal X}_{++}(V)$. As $\,|R_{long}^+-R^+_{\mu,p}|\geq
24\,$ and $\,|R_{long}^+-R^+_{\mu_1,p}|\,\geq\,20$, one has
$\,r_p(V)\,\geq\,\displaystyle\frac{2^4\cdot 5\cdot 7\cdot 24}{2\cdot
7\cdot 6}\,+\,\frac{2^2\cdot 3\cdot 7\cdot 20}{2\cdot 7\cdot 6}\,=\, 
2^2\cdot 3^2\cdot 5\,>\,2\cdot 7\cdot 6\,$. Hence, by~\eqref{rrdl},
$\,V\,$ is not exceptional.

iv) Let $\,i=3\,$. We may assume that $\,V\,$ has highest weight 
$\,\mu\,=\,\omega_3\,$. Then {\bf for $\,\ell\geq 5$, $\,V\,$ is  
unclassified (N. 5 in Table~\ref{leftdn})}. 

v) Let $\,i=2\,$. We can assume that $\,V\,$ has highest weight 
$\,\mu\,=\,\omega_2\,$. Then $\,V\,$ is the adjoint module, which is 
exceptional by Example~\ref{adjoint} {\bf (N. 2 in Table~\ref{tabledlall})}.

vi) Let $\,i=1\,$. We may assume that $\,V\,$ has highest weight 
$\,\mu\,=\,\omega_{1}\,$. Then for $\,\ell\geq 4$,  
$\,\dim\,V\,=\,2\ell\,<\,2\ell(\ell-1)+\ell\,-\,\varepsilon\,$. Hence, by
Proposition~\ref{dimcrit}, $\,V\,$ is an exceptional module 
{\bf (N. 1 in Table~\ref{tabledlall})}.

vii) Let $\,i=\ell\,$ and $\,\mu\,=\,\omega_{\ell}\,$ (or by a graph-twist 
$\,\mu\,=\,\omega_{\ell-1}\,$). For $\,p\geq 2$, 
$\,|R_{long}^+-R^+_{\mu,p}|\,=\,\displaystyle\frac{(\ell-1)\ell}{2}\,$.
Thus, for $\,\ell\geq 11\,$,
$\,r_p(V)\,\geq\,\displaystyle\frac{2^{\ell-1}\,(\ell-1)\ell}{2\cdot 
2\ell(\ell-1)}\,=\,2^{\ell-3}\,>\,2\ell(\ell-1)\,$. Hence,
by~\eqref{rrbl}, $\,V\,$ is not an exceptional module. 
 
For $\,5\leq \ell\leq 7,\,$ if $\,V\,$ has highest weight
$\,\mu\,=\,\omega_{\ell}\,$ (or $\,\mu\,=\,\omega_{\ell-1}\,$), then
$\,\dim\,V\,=\,2^{\ell-1}\,<\,2\ell^2\,-\,\ell\,-\,\varepsilon$. Hence,
by Proposition~\ref{dimcrit}, $\,V\,$ is an exceptional module {\bf (N. 4
in Table~\ref{tabledlall})}. {\bf For $\,8\leq\ell\leq 10,\,$ these modules 
are unclassified (N. 6 in Table~\ref{leftdn})}.

{\bf Hence, for $\,\ell\geq 5\,$, if a $\,D_{\ell}(K)$-module $\,V\,$ 
has highest weight listed in Table~\ref{tabledlall}
(p. \pageref{tabledlall}), then $\,V\,$ an exceptional module.
If the highest weight of $\,V\,$ is different from the ones listed in
Tables~\ref{tabledlall} or~\ref{leftdn} (p. \pageref{leftdn}), then 
$\,V\,$ is not an exceptional module.}

This finishes the proof of Theorem~\ref{dnlist} for $\,\ell\geq 5$.
\hfill $\Box$

\subsubsection{Type $\,D_4\,$}

For groups of type $\,D_4\,$, $\,|W|\,=\,2^6\cdot 3$,
$\,|R|\,=\,|R_{long}|\,=\,2^3\cdot 4$. The limit for~\eqref{dllim} is
$\,2^6\cdot 3\,$ and for ~\eqref{rrdl} is $\,2^3\cdot 4$. First we prove a
reduction lemma.

\begin{lemma}\label{d4off}
Let $\,V\,$ be a $\,D_4(K)$-module. If $\,{\cal X}_{++}(V)\,$ contains 

(a) a (good) weight with $4$ nonzero coefficients and any other good weight or

(b) 2 good weights with $3$ nonzero coefficients and any other good weight,\\
then $\,V\,$ is not an exceptional module.
\end{lemma}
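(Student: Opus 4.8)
\textbf{Proof plan for Lemma~\ref{d4off}.} The plan is to count orbit sizes and invoke Theorem~\ref{???} via inequality~\eqref{dllim}, exactly as in the analogous lemmas for $\,D_{\ell}\,$ with $\,\ell\geq 5\,$ (Lemma~\ref{3cdn}) and for the other types (Lemmas~\ref{bn3},~\ref{cn3},~\ref{cn4}). First I would record the relevant orbit sizes for $\,D_4\,$, which follow from Lemma~\ref{orbbncndn} (and Lemma~\ref{orbexcp} is not needed here since $\,D_4\,$ is classical): a weight $\,\mu\in{\cal X}_{++}(V)\,$ with $\,4\,$ nonzero coefficients has $\,C_W(\mu)=\{e\}\,$, so $\,|W\mu|=|W|=2^6\cdot 3\,$; a weight with exactly $\,3\,$ nonzero coefficients has $\,C_W(\mu)\cong S_2\,$, so $\,|W\mu|=2^5\cdot 3\,$; and any nonzero weight has $\,|W\mu|\geq 1\,$.

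Next I would carry out the two cases. In case (a), if $\,{\cal X}_{++}(V)\,$ contains a good weight $\,\mu\,$ with $\,4\,$ nonzero coefficients together with any other nonzero good weight $\,\nu\,$, then
\[
s(V)\,\geq\,|W\mu|\,+\,|W\nu|\,\geq\,2^6\cdot 3\,+\,1\,>\,2^6\cdot 3\,,
\]
so by~\eqref{dllim}, $\,V\,$ is not exceptional. In case (b), if $\,{\cal X}_{++}(V)\,$ contains two good weights $\,\mu_1,\mu_2\,$ each with $\,3\,$ nonzero coefficients and any other nonzero good weight $\,\nu\,$, then
\[
s(V)\,\geq\,|W\mu_1|\,+\,|W\mu_2|\,+\,|W\nu|\,\geq\,2^5\cdot 3\,+\,2^5\cdot 3\,+\,1\,=\,2^6\cdot 3\,+\,1\,>\,2^6\cdot 3\,,
\]
and again~\eqref{dllim} applies. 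In both cases one must note that the $\,\mu_i\,$ and $\,\nu\,$ are genuinely distinct elements of $\,{\cal X}_{++}(V)\,$, so their orbits are disjoint and the orbit sizes add; this is automatic from the hypothesis wording (``and any other good weight'').

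I do not expect a real obstacle here: unlike the full reduction lemmas, this statement only asserts non-exceptionality under the strong hypothesis that several large orbits are already present, so the bound $\,s(V)>\mbox{limit}\,$ is immediate from additivity of orbit sizes over distinct dominant weights. The only mild care needed is (i) confirming the $\,D_4\,$ orbit-size computations cited above from Lemma~\ref{orbbncndn}, and (ii) checking that ``any other good weight'' can always be taken to contribute at least $\,1\,$ to $\,s(V)\,$ — which holds since every orbit is nonempty. This lemma is then used downstream (as in the $\,\ell\geq 5\,$ argument) to dispose of $\,D_4(K)$-modules whose weight sets contain weights with many nonzero coefficients, reducing the remaining analysis to weights with at most two nonzero coefficients.
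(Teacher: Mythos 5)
Your proof is correct and follows essentially the same route as the paper's: compute the orbit sizes $2^6\cdot 3$ and $2^5\cdot 3$ for weights with $4$ and $3$ nonzero coefficients respectively, note the extra good weight contributes a positive amount to $s(V)$, and conclude via inequality~\eqref{dllim} with limit $2^6\cdot 3$. No gaps.
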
\noindent
\begin{proof}
First note that weights with $4$ nonzero coefficients occurring in 
$\,{\cal X}_{++}(V)\,$ must be good weights. So
let $\,\mu\,=\,a\,\omega_{1}\,+\,b\,\omega_{2}\,+\,c\,\omega_{3}\,+\,d
\,\omega_{4}\,$ (with $\,a,\,b,\,c,\,d\,\geq 1\,$) be a (good) weight
in $\,{\cal X}_{++}(V)$. Then $\,|W\mu|= 2^6\cdot 3\,$. Thus, if (a)
holds, then $\,s(V)\,\geq\,2^6\cdot 3\,+\,k\,>\,2^6\cdot 3\,$, for
some $\,k>0$. Hence, by~\eqref{dllim}, $\,V\,$ is not exceptional.


For (b), let $\,\mu\,$ be a weight with $3$ nonzero coefficients. Then
$\,|W\mu|\,=\,2^5\cdot 3\,$. Thus, if $\,{\cal X}_{++}(V)\,$ contains 
2 good weights with $3$ nonzero coefficients and any other good weight, then 
$\,s(V)\,\geq\, 
2\cdot 2^5\cdot 3\,+\,k\,>\,2^6\cdot 3,\,$ where $\,k\,>\,0$. 
Hence, by~(\ref{dllim}), $\,V\,$ is not an exceptional module. This
proves the lemma. 
\end{proof}
\begin{corollary}\label{34d4}
Let $\,V\,$ be a $\,D_4(K)$-module. If $\,{\cal X}_{++}(V)\,$ contains

(a) a (good) weight with $4$ nonzero coefficients or

(b) a weight with $3$ nonzero coefficients, \\
then $\,V\,$ is not an exceptional module.
\end{corollary}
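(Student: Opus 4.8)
The plan is to derive both parts of Corollary~\ref{34d4} from Lemma~\ref{d4off}. Part (a) is almost immediate: a $\,D_4(K)$-module $\,V\,$ whose $\,{\cal X}_{++}(V)\,$ contains a (good) weight $\,\mu\,$ with $\,4\,$ nonzero coefficients automatically satisfies the hypothesis of Lemma~\ref{d4off}(a), because $\,{\cal X}_{++}(V)\,$ always contains a nonzero good weight below $\,\mu\,$ (for instance $\,\mu\,$ itself is good, and any smaller weight such as $\,\mu-\alpha_k\,$ produced by subtracting a simple root lies in $\,{\cal X}_{++,\mathbb C}(\mu)\subseteq{\cal X}_{++}(V)\,$ by Lemma~\ref{lamu}; one checks this smaller weight is nonzero and good, or if it is bad one repeats the manoeuvre of the reduction lemmas to replace it by a good one). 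Hence $\,s(V)\,$ strictly exceeds $\,|W\mu|\,=\,2^6\cdot 3\,=\,$ \textbf{limit}, so by~\eqref{dllim} $\,V\,$ is not exceptional.

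For part (b) the first step is to dispose of the case where the weight $\,\mu\,$ with $\,3\,$ nonzero coefficients is bad: as in the proofs of Lemmas~\ref{3cdn} and~\ref{d4off}, a bad weight has all nonzero coefficients $\,\geq p\geq 2\,$, so subtracting an appropriate string of simple roots (the exact choice depends on which of $\,\omega_1,\omega_2,\omega_3,\omega_4\,$ carry the nonzero coefficients, mirroring cases (a)--(d) in the proof of Corollary~\ref{corbn3} or Claim~1 in Section~\ref{fppdl}) produces a good weight $\,\mu_1\in{\cal X}_{++}(V)\,$ which still has $\,3\,$ or more nonzero coefficients. So from now on we may assume $\,\mu\,$ is good. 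The second step is to produce, from this one good weight $\,\mu\,=\,a\,\omega_{i}\,+\,b\,\omega_{j}\,+\,c\,\omega_{k}\,$ with $\,1\le i<j<k\le 4\,$ and $\,a,b,c\ge1\,$, a second good weight with $\,3\,$ nonzero coefficients, together with at least one further good weight, so that Lemma~\ref{d4off}(b) applies. The natural device is to subtract from $\,\mu\,$ a sum of consecutive simple roots joining two of the three ``active'' nodes; by the triality symmetry of $\,D_4\,$ one can always arrange the nodes so that this yields a weight still having $\,3\,$ nonzero coefficients (again this is exactly the pattern used for $\,D_\ell\,$ with $\,\ell\ge5\,$ in the proof of Corollary~\ref{corc3}/Lemma~\ref{bgd3c4}, now specialised and simplified by triality). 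The ``any other good weight'' demanded by Lemma~\ref{d4off}(b) is then obtained by continuing to descend (e.g. a nonzero minuscule weight $\,\omega_m\,$ eventually appears in $\,{\cal X}_{++}(V)\,$, or the highest weight $\,\lambda\,$ itself has multiplicity, or one of the intermediate weights above has $\,2\,$ nonzero coefficients), all of which contribute a positive term to $\,s(V)\,$.

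I expect the main obstacle to be the bookkeeping in the second step: one must verify that \emph{in every configuration} of the three active nodes among $\,\{1,2,3,4\}\,$ — including the exceptional-looking configurations $\,\{1,2,3\},\{1,2,4\},\{1,3,4\},\{2,3,4\}\,$ where two of the nodes are the ``tails'' $\,3,4\,$ adjacent to the central node $\,2\,$ — the root-subtraction genuinely delivers a \emph{second} weight with $\,3\,$ nonzero coefficients rather than collapsing to $\,2\,$, and that a \emph{third} good weight is then forced into $\,{\cal X}_{++}(V)\,$. Triality makes many of these cases equivalent, which cuts the work down, but the case $\,\{1,3,4\}\,$ (the three ends of the diagram) has to be handled on its own, likely by two successive subtractions. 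Once all configurations are checked, both (a) and (b) reduce to Lemma~\ref{d4off} and hence to~\eqref{dllim}, completing the proof.
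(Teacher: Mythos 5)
Your part (a) and your first step of part (b) (reducing bad weights to good ones) match the paper. The gap is in your second step: the claim that every good weight $\,\mu\,=\,a\,\omega_i+b\,\omega_j+c\,\omega_k\,$ can be pushed down to a \emph{second} good weight with $3$ nonzero coefficients, so that Lemma~\ref{d4off}(b) and the $\,s(V)$-bound~\eqref{dllim} finish the job, fails exactly at the configuration you flagged, $\,\mu\,=\,\omega_1+\omega_3+\omega_4\,$, and you do not resolve it. Here $\,\mu\,=\,2\varepsilon_1+\varepsilon_2+\varepsilon_3\,$ and the only dominant weights below it are $\,2\omega_3,\,2\omega_4,\,\omega_2\,$ and $\,0\,$; no further weight with $3$ nonzero coefficients exists, $\,0\,$ is bad, and the orbit sizes give at most $\,s(V)\leq 2^5\cdot 3+2^3+2^3+m_{\omega_2}\cdot 2^4\cdot 3\,=\,112+24\,m_{\omega_2}\,$, which does not clear the limit $\,2^6\cdot 3=192\,$ unless $\,m_{\omega_2}\geq 4\,$ — a multiplicity you neither compute nor could take for granted. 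The paper escapes by switching to the \emph{other} necessary condition: since $\,|R^+_{long}-R^+_{\mu,p}|\geq 8\,$, one gets $\,r_p(V)\geq \frac{2^5\cdot 3\cdot 8}{2\cdot 4\cdot 3}=32>24=|R|\,$, and~\eqref{rrdl} applies. So this case genuinely needs the $\,r_p\,$-inequality, not Lemma~\ref{d4off}.

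A smaller but related overstatement: even in the configuration $\,\{1,2,3\}\,$ with $\,a=b=c=1\,$, subtracting $\,\alpha_1+\alpha_2\,$ from $\,\omega_1+\omega_2+\omega_3\,$ gives $\,2\omega_3+\omega_4\,$, which has only $2$ nonzero coefficients, so Lemma~\ref{d4off}(b) is again not directly applicable; the paper instead accumulates five orbits ($\,96+32+48+32+8=216>192\,$) to beat the limit by brute force. Your argument would be repaired by (i) keeping Lemma~\ref{d4off} only for the subcases where some coefficient is $\,\geq 2\,$, and (ii) treating the unit-coefficient cases by explicit orbit sums or, for $\,\omega_1+\omega_3+\omega_4\,$, by the $\,r_p(V)\,$ criterion.
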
\noindent
\begin{proof}
For (a), it is easy to show that one can always produce another good weight
in $\,{\cal X}_{++}(V)\,$ from a good weight $\,\mu\in{\cal X}_{++}(V)\,$
with $4$ nonzero coefficients. Hence, by Lemma~\ref{d4off}(a), $\,V\,$
is not exceptional. For (b) we prove some claims.

{\bf Claim 1}:\label{claim1d4} {\it Let $\,V\,$ be a $\,D_4(K)$-module.
If $\,{\cal X}_{++}(V)\,$ contains a bad weight with 
$3$ nonzero coefficients, then $\,V\,$ is not an exceptional module.} 

Indeed, let $\,\mu\,=\,a\omega_{i}\,+\,b\omega_{j}\,+\,c\omega_{k}\,$
(with $\,a,\,b,\,c\,\geq 1\,$ and $\,1\leq i<j<k\leq 4\,$)
be a bad weight in $\,{\cal X}_{++}(V)$. Hence $\,a\geq 2,\,b\geq
2,\,c\geq 2\,$. 

i) Let $\,\mu\,=\,a\omega_{1}\,+\,b\omega_{2}\,+\,c\omega_{3}\,$
(or by graph-twists $\,\mu\,=\,a\omega_{1}\,+\,b\omega_{2}\,+\,
c\omega_{4}\,$, $\,\mu\,=\,b\omega_{2}\,+\,c\omega_{3}\,+\,a\omega_{4}\,$).  
Then $\,\mu_1\,=\,\mu-(\alpha_1+\alpha_2)\,=\,
(a-1)\omega_{1}\,+\,(b-1)\omega_{2}\,+\,(c+1)\omega_{3}\,+\,\omega_{4}\,
\in\,{\cal X}_{++}(V)\,$ has $4$ nonzero coefficients. 
Hence, by part (a), $\,V\,$ is not an exceptional module. 

ii) Let $\,\mu\,=\,a\omega_{1}\,+\,b\omega_{3}\,+\,c\omega_{4}\,$. Then
$\,\mu_1\,=\,\mu-(\alpha_1+\alpha_2+\alpha_3)\,=\,(a-1)
\omega_{1}\,+\,(b-1)\omega_{3}\,+\,(c+1)\omega_{4}\,$ and 
$\,\mu_2\,=\,\mu_1-\alpha_4\,=\,(a-1)\omega_{1}\,+\,\omega_2\,+\,
(b-1)\omega_{3}\,+\,(c-1)\omega_{4}\,\in\,{\cal X}_{++}(V)$. As
$\,\mu_2\,$ has $4$ nonzero coefficients, part (a) applies.


{\bf Therefore we can assume that weights with $3$ nonzero coefficients
occurring in $\,{\cal X}_{++}(V)\,$ are good weights.}
 
1) Let $\,\mu\,=\,a\omega_{1}\,+\,b\omega_{2}\,+\,c\omega_{3}\,$
(or by graph-twists $\,\mu\,=\,a\omega_{1}\,+\,b\omega_{2}\,+\,
c\omega_{4}\,$, $\,\mu\,=\,b\omega_{2}\,+\,c\omega_{3}\,+\,a\omega_{4}\,$). 
Then $\,\mu_1\,=\,\mu-(\alpha_1+\alpha_2)\,=\,
(a-1)\omega_{1}\,+\,(b-1)\omega_{2}\,+\,(c+1)\omega_{3}\,+\,\omega_{4}\,$,
$\,\mu_2\,=\,\mu_1-\alpha_3\,=\,(a-1)\omega_{1}\,+\,b
\omega_{2}\,+\,(c-1)\omega_{3}\,+\,\omega_{4}\,\in\,{\cal X}_{++}(V)$.

For $\,a\geq 2,\,b\geq 2,\,c\geq 1\,$, $\,\mu_1\,$ has $4$ nonzero  
coefficients, hence by (a) $\,V\,$ is not exceptional.
For ($\,a=1,\,b\geq 2,\,c\geq 1\,$) or ($\,a\geq 2,\,b=1,\,c\geq 1\,$)
or ($\,a=1,\,b=1,\,c\geq 2\,$), 
$\,\mu,\,\mu_1,\,\mu_2\,$ satisfy Lemma~\ref{d4off}(b).

For $\,a=b=c=1,\,$ $\,\mu\,=\,\omega_{1}\,+\,\omega_{2}\,+\,\omega_{3}\,$
(or $\,\mu\,=\,\omega_{1}\,+\,\omega_{2}\,+\,\omega_{4}\,$). Then
$\;\mu_1\,=\,2\omega_{3}\,+\,\omega_{4},\;$
$\;\mu_2\,=\,\omega_{2}\,+\,\omega_{4},\;$
$\;\mu_3\,=\,\mu_2-(\alpha_2+\alpha_4)\,=\,\omega_{1}\,+\,\omega_{3},\;$
and $\,\mu_4\,=\,\mu_3-(\alpha_1+\alpha_2+\alpha_3)\,=\,\omega_{4}\,
\in\,{\cal X}_{++}(V)$. Thus $\,s(V)\,\geq\,2^5\cdot 3\,+\,2^5\,+\,
2^4\cdot 3\,+\,2^5\,+\,2^3\,=\,2^3\cdot 3^3\,>\,2^6\cdot 3\,$. Hence,
by~(\ref{dllim}), $\,V\,$ is not an exceptional module.

2) Let $\,\mu\,=\,a\omega_{1}\,+\,b\omega_{3}\,+\,c\omega_{4}\,$ (with
$\,a,\,b,\,c\,$) be a good weight in $\,{\cal X}_{++}(V)$. 

2.i) For $\,a\geq 2,\,b\geq 1,\,c\geq 1\,$, $\,\mu_1\,=\,\mu-\alpha_1\,
=\,(a-2)\,\omega_1\,+\,\omega_{2}\,+\,b\,\omega_{3}\,+\,c\,\omega_{4}\,$,
$\,\mu_2\,=\,\mu_1-(\alpha_2+\alpha_3+\alpha_4)\,=\,(a-1)\omega_{1}\, 
+\,\omega_{2}\,+\,(b-1)\omega_{3}\,+\,(c-1)\omega_{4}\in {\cal X}_{++}(V)$.
For $\,a\geq 3$, $\,\mu_1\,$ has $4$ nonzero coefficients. Hence part (a)
of this corollary applies. For $\,a=2,\,b\geq 1,\,c\geq 1\,$, 
$\,\mu,\,\mu_1,\,\mu_2\,$ satisfy Lemma~\ref{d4off}(b). Hence 
$\,V\,$ is not exceptional.

2.ii) Let $\,a=1,\,b\geq 1,\,c\geq 2\,$ and $\,\mu\,=\,\omega_{1}\,+\,
b\omega_{3}\,+\,c\omega_{4}\,$. Then $\,\mu_1\,=\,\mu-\alpha_4\,=\,
\omega_1\,+\,\omega_{2}\,+\,b\omega_{3}\,+\,(c-2)\omega_{4}\,$,
$\,\mu_2\,=\,\mu_1-(\alpha_1+\alpha_2+\alpha_3)\,=\,
\omega_{2}\,+\,(b-1)\omega_{3}\,+\,(c-1)\omega_{4}\,\in\,{\cal X}_{++}(V)$.
For $\,c\geq 3$, $\,\mu_1\,$ has $4$ nonzero coefficients. Hence part (a)
of this corollary applies. For $\,a=1,\,b\geq 1,\,c=2\,$, 
$\,\mu,\,\mu_1,\,\mu_2\,$ satisfy Lemma~\ref{d4off}(b). Hence 
$\,V\,$ is not exceptional.

2.iii) Let $\,a=b=c=1\,$ and $\,\mu\,=\,\omega_{1}\,+\,\omega_{3}\,+\,
\omega_{4}\,$. Then, for $\,p\geq 2,\,$ $\,|R_{long}^+-R^+_{\mu,p}|\geq 8\,$.
Thus $\,r_p(V)\,\geq\,\displaystyle\frac{2^5\cdot 3\cdot 8} 
{2\cdot 4\cdot 3}\,=\,32\,>\,2\cdot 4\cdot 3\,$. Hence,
by~(\ref{rrdl}), $\,V\,$ is not exceptional.  
This proves the corollary.

%
%
\end{proof}\vspace{2ex}\noindent
{\bf Proof of Theorem~\ref{dnlist} for $\,\ell=4$.}
Let $\,V\,$ be a $\,D_4(K)$-module. 

By Corollary~\ref{34d4}, {\bf it
suffices to consider modules $\,V\,$ such that $\,{\cal X}_{++}(V)\,$
contains only weights with at most $2$ nonzero coefficients.}

I) Let $\,\mu\,=\,a\,\omega_i\,+\,b\,\omega_j\,$ (with $\,1\leq i<j\leq 4\,$ 
and $\,a\geq 1,\,b\geq 1\,$) be a weight in $\,{\cal X}_{++}(V)$.

(a) Let $\,\mu\,=\,a\,\omega_1\,+\,b\,\omega_2\,$ (or by graph-twists
$\,\mu\,=\,b\,\omega_2\,+\,a\,\omega_3\,$, $\,\mu\,=\,b\,\omega_2\,+\,
a\,\omega_4\,$). Then $\,\mu_1\,=\,\mu-(\alpha_1+\alpha_2)\,=\, 
(a-1)\omega_{1}\,+\,(b-1)\omega_{2}\,+\,\omega_3\,+\,\omega_4\,\in\, 
{\cal X}_{++}(V)$.

i) If $\,\mu\,$ is a bad weight (hence $\,a\geq 2,\,b\geq 2\,$), then 
$\,\mu_1\,$ has $4$ nonzero coefficients. Hence, by
Corollary~\ref{34d4}(a), $\,V\,$ is not exceptional.

ii)\label{Iaiid4} Let $\,\mu\,$ be a good weight. Then for ($\,a\geq
1,\,b\geq 2\,$) or ($\,a\geq 2,\,b\geq 1\,$), $\,\mu_1\,$ has $3$
nonzero coefficients. Hence, by Corollary~\ref{34d4}(b), $\,V\,$ is
not exceptional.

{\bf Therefore, if $\,{\cal X}_{++}(V)\,$ contains weights
$\,\mu\,=\,a\,\omega_1\,+\,b\,\omega_2\,$ (or $\,\mu\,=\,b\,\omega_2\,
+\,a\,\omega_3\,$, $\,\mu\,=\,b\,\omega_2\,+\,a\,\omega_4\,$), then 
we can assume that $\,a=b=1\,$.} These cases are treated in (c) below.

(b) Let $\,\mu\,=\,a\,\omega_1\,+\,b\,\omega_3,\,$ (or by graph-twists
$\,\mu\,=\,a\,\omega_1\,+\,b\,\omega_4\,$, $\,\mu\,=\,a\,\omega_3\,+\,
b\,\omega_4\,$). Then $\,\mu_1\,=\,\mu-(\alpha_1+\alpha_2+\alpha_3)\, 
=\,(a-1)\omega_{1}\,+\,(b-1)\omega_3\,+\,\omega_4\,\in{\cal X}_{++}(V)$. 

i) If $\,\mu\,$ is a bad weight (hence $\,a\geq 2,\,b\geq 2\,$), then 
$\,\mu_1\,$ satisfies Corollary~\ref{34d4}(b). Hence $\,V\,$ is not 
exceptional. 

ii) Let $\,\mu\,$ be a good weight. Then, for $\,a\geq 2,\,b\geq 2\,$,
$\,\mu_1\,$ satisfies Corollary~\ref{34d4}(b). Hence $\,V\,$ is not 
exceptional. 

Let $\,a\geq 2,\,b=1\,$ and $\,\mu\,=\,a\,\omega_1\,+\,\omega_3\,$
(or by a graph-twist $\,\mu\,=\,\omega_1\,+\,a\,\omega_3\,$). Then
$\,\mu_1\,=\,\mu-\alpha_1\,=\,(a-2)\omega_1\,+\,\omega_{2}\,+\,\omega_3\,
\in\,{\cal X}_{++}(V)$. For $\,a\geq 3$, $\,\mu_1\,$ satisfies
Corollary~\ref{34d4}(b). For $\,a=2,\,$ $\,\mu\,=\mu_0=\,
2\omega_1\,+\,\omega_3\,$ $\,\mu_1\,=\,\omega_{2}\,+\,\omega_3\,$, 
$\,\mu_2\,=\,\mu_1-(\alpha_2+\alpha_3)\,=\,\omega_{1}\,+\,\omega_4\, 
\in\,{\cal X}_{++}(V)$. For $\,p\geq 2$, $\,|R_{long}^+-R^+_{\mu_i,p}| 
\geq 6\,$. Thus $\,r_p(V) \geq\displaystyle\frac{2^5\cdot 6}{2\cdot  
4\cdot 3}\,+\,\frac{2^4\cdot 3\cdot 6}{2\cdot 4\cdot 3}\,+\,
\frac{2^5\cdot 6}{2\cdot 4\cdot 3}\,=\,28\,>\,2\cdot 4\cdot
3\,$. Hence, by~(\ref{rrdl}), $\,V\,$ is not exceptional. 


{\bf Therefore if $\,{\cal X}_{++}(V)\,$ contains weights
$\,\mu\,=\,a\,\omega_i\,+\,b\,\omega_j\,$ with $\,1\leq i<j\leq 4\,$
and  $\,a\geq 1,\,b\geq 1\,$, then we can assume that $\,a=b=1\,$.} 
These cases are treated in the sequel.

(c)i) Let $\,a=b=1\,$ and $\,\mu\,=\,\omega_1\,+\,\omega_2\,$ (or by
graph-twists $\,\mu\,=\,\omega_2\,+\,\omega_3\,$, $\,\mu\,=\,\omega_2\, 
+\,\omega_4\,$). Then $\,\mu_1\,=\,\omega_3\,+\,\omega_4,\;$ 
$\,\mu_2\,=\,\mu_1-(\alpha_2+ \alpha_3+\alpha_4)\,=\,\omega_1\,
\in\,{\cal X}_{++}(V)$.  
For $\,p\geq 3,\,$ $\,|R_{long}^+-R^+_{\mu,p}|\,\geq\,9,\, 
|R_{long}^+-R^+_{\mu_1,p}|= 9,\,|R_{long}^+-R^+_{\mu_2,p}|\,=\,6.\,$
Thus $\,r_p(V)\geq\,\displaystyle\frac{2^4\cdot 3\cdot 9}{2\cdot 4\cdot 3}\,+\,
\frac{2^5\cdot 9}{2\cdot 4\cdot 3}\,+\,\frac{2^3\cdot 6}{2\cdot 4\cdot 3}\,
=\,32\,>\,2\cdot 4\cdot 3\,$. Hence, by~(\ref{rrdl}), $\,V\,$ is not
an exceptional module.  

For $\,p=2,\,$ we may assume that $\,V\,$ has highest weight
$\,\mu\,=\,\omega_1\,+\,\omega_2\,$. Then, by~\cite[p. 169]{buwil},  
$\,m_{\mu_1}= 2,\,m_{\mu_2}=6$. As $\,|R_{long}^+-R^+_{\mu,2}|\,
=\,|R_{long}^+-R^+_{\mu_i,2}|=6$, one has
$\,r_2(V) \geq\,\displaystyle\frac{2^4\cdot 3\cdot 6}{2\cdot 4\cdot 3}\,+\,
2\frac{2^5\cdot 6}{2\cdot 4\cdot 3}\,+\,6\frac{2^3\cdot 6}{2\cdot 4\cdot 3}\,
=\,2^3\cdot 5\,>\,2\cdot 4\cdot 3\,$. Hence, by~(\ref{rrdl}), 
$\,V\,$ is not an exceptional module.

ii) Let $\,a=b=1\,$. We may assume that $\,V\,$ has highest weight
$\,\mu\,=\,\omega_1\,+\,\omega_3\,$ (or by graph-twists 
$\,\mu\,=\,\omega_1\,+\,\omega_4\,$, $\,\mu\,=\,\omega_3\,+\,\omega_4\,$). 
Then $\,V\,$ is unclassified {\bf (N. 3 in Table~\ref{leftdn})}.

{\bf Hence if $\,V\,$ is a $\,D_4(K)$-module such that $\,{\cal
X}_{++}(V)\,$ contains $\,\mu\,=\,a\omega_i\,+\,b\omega_j\,$ (with
$\,1\leq i<j\leq 4\,$ and  $\,a\geq 1,\,b\geq 1\,$, then 
$\,V\,$ is not an exceptional module, unless $\,V\,$ has highest weight
$\,\lambda\,\in\,\{ \omega_1\,+\,\omega_3,\;\omega_1\,+\,\omega_4,\;
\omega_3\,+\,\omega_4\}\,$ in which cases $\,V\,$ is unclassified.}

{\bf Therefore we can assume that $\,{\cal X}_{++}(V)\,$ contains only
weights with at most one nonzero coefficient.} 

II) Let $\,\mu\,=\,a\,\omega_i\,$ (with $\,a\geq 1\,$) be a weight in
$\,{\cal X}_{++}(V)$.

(a) Let $\,a\geq 2\,$ and $\,\mu\,=\,a\,\omega_1\,$ (or by
graph-twists $\,\mu\,=\,a\,\omega_3\,$, $\,\mu\,=\,a\,\omega_4\,$).
Then $\,\mu_1\,=\,\mu-\alpha_1\,=\,(a-2)\omega_1\,+\,\omega_2\in
\,{\cal X}_{++}(V)$.

i) For $\,a\geq 4$, $\,\mu_1\,$ satisfies case I(a)ii)
(p. \pageref{Iaiid4}) of this proof. 

ii) For $\,a=3,\,$ $\,\mu\,=\,3\omega_1\,$, $\,\mu_1\,=\,\omega_1\,+\,
\omega_2\,$, $\,\mu_2\,=\,\mu_1-(\alpha_1+\alpha_2)\,=\,\omega_3\,+
\,\omega_4\,$ and $\,\mu_3\,=\,\mu_2-(\alpha_2+\alpha_3+\alpha_4)\,=\,
\omega_1\,\in\,{\cal X}_{++}(V)$. As $\,|R_{long}^+-R^+_{\mu_i,p}|\geq
6,\,$ $\,r_p(V)\geq\,\displaystyle\frac{2^4\cdot 3\cdot 6}
{2\cdot 4\cdot 3}\,+\,\frac{2^4\cdot 3\cdot 6}{2\cdot 4\cdot 3}\, 
+\,\frac{2^3\cdot 6}{2\cdot 4\cdot 3}\,=\,26\,>\,2\cdot 4\cdot 3\,$. 
Hence, by~(\ref{rrdl}), $\,V\,$ is not an exceptional module.  

iii) Let $\,a=2\,$ and $\,\mu\,=\,2\omega_1\,$. For $\,p=2\,$, 
$\,2\omega_1\in{\cal X}_{++}(V)\,$ only if $\,V\,$ has highest weight
$\,\lambda=\omega_1+\omega_3+\omega_4\,$. In this case $\,V\,$ is not 
exceptional, by Corollary~\ref{34d4}(b). For $\,p\geq 3$, we may assume that 
$\,V\,$ has highest weight $\,\mu\,=\,2\omega_1\,$ (or by graph-twists
$\,\mu\,=\,2\omega_3\,$, $\,\mu\,=\,2\omega_4\,$).
Then, by the same argument used in 
case III(b)iv) (p. \pageref{IIIbiv}) of First Part of Proof of
Theorem~\ref{listbn}, $\,V\,$ is not an exceptional module. 

(b) Let $\,a\geq 2\,$ and $\,\mu\,=\,a\,\omega_2\,$. Then
$\,\mu_1\,=\,\mu-\alpha_2\,=\,\omega_1\,+\,(a-2)\omega_2\,+\,\omega_3\,+\,
\omega_4\,\in\,{\cal X}_{++}(V)$. Hence, by Corollary~\ref{34d4} (a)
or (b), $\,V\,$ is not exceptional.
 
{\bf Therefore, if $\,{\cal X}_{++}(V)\,$ contains a weight $\,\mu\, 
=\,a\,\omega_i\,$ with $\,a\geq 1$, then we can assume $\,a=1\,$ and
that $\,V\,$ has highest weight $\,\omega_i\,$.} These
cases are treated in the sequel.

(c) If $\,V\,$ has highest weight $\,\omega_1\,$ (or by graph-twists 
$\,\omega_3\,$, $\,\omega_4\,$), then $\,\dim\,V\,=\,2^3\,<\, 
20\,-\,\varepsilon\,$. Hence, by Proposition~\ref{dimcrit}, $\,V\,$ is
an exceptional module {\bf (N. 1 (resp., 3) in Table~\ref{tabledlall})}.
  
(d) If $\,V\,$ has highest weight $\,\omega_2\,$, then $\,V\,$ is the
adjoint module, which is exceptional by Example~\ref{adjoint} {\bf (N. 2 in
Table~\ref{tabledlall})}.

{\bf Hence if $\,V\,$ is a $\,D_4(K)$-module such that
$\,{\cal X}_{++}(V)\,$ contains a weight $\,\mu\,=\,a\,\omega_i\,$
with $\,a\geq 2$, then $\,V\,$ is not exceptional. If  
$\,V\,$ has highest weight $\,\omega_i\,$ (for any $\,1\leq i\leq
4\,$), then $\,V\,$ is an exceptional module.}

This finishes the proof of Theorem~\ref{dnlist} for $\,\ell=4$.
\hfill $\Box$

\newpage
\section{Equalities and Inequalities}

\subsection{Basic Combinatorics}

Here we recall a couple of equalities and inequalities often used when
dealing with binomials. They all can be found in any elementary book
on Combinatorial Mathematics, as for instance~\cite{ando}.

The number of ways of choosing $\,r\,$ objects from $\,n\,$ given
objects, without taking order into account, is given and denoted by
\[
\binom{n}{r}\,=\,\frac{n!}{(n-r)!\,r!}\,.
\]
Important properties of the numbers $\,\displaystyle
\binom{n}{r}\,$ are given in the following theorem. 
The convention that $\,\displaystyle\binom{n}{0}\,=\,1\,$ 
is followed.

\begin{theorem}\label{usual}

(a) $\displaystyle\binom{n}{r}\,=\,\binom{n}{n-r}\,$,\; for 
$\,0\leq r\leq n\,$.

(b) $\displaystyle\binom{n}{r}\,=\,\binom{n-1}{r-1}\,+\,\binom{n-1}{r}\,$. 
\vspace{.6ex}

(c) $\,i(\ell+1-i)\,<\,(i+1)(\ell-i)\,$.
\vspace{.6ex}

(d) $\,\displaystyle{\binom{\ell+1}{i}\,<\,
\binom{\ell+1}{i+1}}$,\qquad 
if $\,2\,\leq\, i+1\,\leq\,\displaystyle\frac{1}{2}(\ell+1)\,$. 
\end{theorem}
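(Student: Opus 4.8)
\textbf{Proof proposal for Theorem~\ref{usual}.}
The plan is to dispatch the four parts in turn, each being a short computation with factorials or a single induction. Part (a) is immediate: by definition $\binom{n}{n-r} = \frac{n!}{(n-(n-r))!\,(n-r)!} = \frac{n!}{r!\,(n-r)!} = \binom{n}{r}$, valid for $0 \le r \le n$. Part (b) is the Pascal identity; I would verify it directly by putting the right-hand side over a common denominator: $\binom{n-1}{r-1} + \binom{n-1}{r} = \frac{(n-1)!}{(r-1)!\,(n-r)!} + \frac{(n-1)!}{r!\,(n-1-r)!}$, then factor out $\frac{(n-1)!}{r!\,(n-r)!}$ to obtain $\frac{(n-1)!}{r!\,(n-r)!}\bigl(r + (n-r)\bigr) = \frac{n!}{r!\,(n-r)!} = \binom{n}{r}$.

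Part (c) is a one-line algebraic inequality: expanding both sides, $(i+1)(\ell-i) - i(\ell+1-i) = i\ell - i^2 + \ell - i - i\ell - i + i^2 = \ell - 2i$, so the claim $i(\ell+1-i) < (i+1)(\ell-i)$ is equivalent to $\ell - 2i > 0$, i.e. $i < \ell/2$. I should note the implicit hypothesis under which this is used in the body of the paper (it is always invoked for indices $i$ strictly below the midpoint), so I would state part (c) with the standing assumption $1 \le i < \ell/2$ that makes it true, matching how it is applied in the orbit-size estimates. Part (d) follows from a telescoping application of part (c): write $\binom{\ell+1}{i+1} \big/ \binom{\ell+1}{i} = \frac{(\ell+1-i)!\,i!}{(\ell-i)!\,(i+1)!} = \frac{\ell+1-i}{i+1}$, so $\binom{\ell+1}{i} < \binom{\ell+1}{i+1}$ is equivalent to $i+1 < \ell+1-i$, i.e. $2(i+1) \le \ell+1$ (using integrality), which is exactly the hypothesis $2 \le i+1 \le \tfrac12(\ell+1)$; the lower bound $2 \le i+1$ just ensures we are in the range where $\binom{\ell+1}{i}$ is a genuine binomial with $i \ge 1$.

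None of the four parts presents a real obstacle; they are standard facts recorded here only for reference in the appendix's combinatorial estimates. The only point requiring a modicum of care is bookkeeping the ranges of validity: parts (c) and (d) are strict inequalities that hold precisely below (resp. up to) the midpoint of the relevant interval, and I would make sure the stated hypotheses are exactly those, so that the downstream uses in Lemmas~\ref{4coef}, \ref{3orless}, \ref{3coefbn}, \ref{3coefcn}, \ref{3cdn} and the appendix orbit computations are cleanly justified. Thus the proof is simply: (a) and (b) by direct factorial manipulation, (c) by expanding the difference of the two products, and (d) by reducing the ratio $\binom{\ell+1}{i+1}/\binom{\ell+1}{i}$ to $(\ell+1-i)/(i+1)$ and comparing with $1$.
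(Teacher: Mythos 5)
Your proof is correct; the paper itself offers no argument for this theorem, merely citing a standard combinatorics text, and your direct factorial manipulations for (a), (b), (d) and the expansion $\,(i+1)(\ell-i)-i(\ell+1-i)=\ell-2i\,$ for (c) are exactly the intended elementary verification. You are also right that part (c) as stated is missing the hypothesis $\,i<\ell/2\,$ (without it the inequality fails), and your observation that this is the regime in which the paper actually invokes it is a legitimate correction; the only cosmetic slip is calling (d) a ``telescoping application of part (c)'' when your subsequent ratio computation $\binom{\ell+1}{i+1}/\binom{\ell+1}{i}=(\ell+1-i)/(i+1)$ is in fact a self-contained direct argument.
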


\subsection{The inequality~(\ref{ineq1})}\label{appineq1}
Let $\,n\in\mathbb N\,$ be such that 
$\,a_1\,+\,a_2\,+\,\cdots\,+\,a_k\,=\,n\,$ and consider the binomial
expression 
\[
\binom{n}{a_1,\,a_2,\,\cdots,\,a_k}\,=\,\frac{n!}{a_1!\,a_2!\,\cdots\,a_k!}
\]
i.e., the number of ways of dividing a set $\,S\,$ with $\,n\,$ elements
into an ordered $\,k$-tuple of subsets with $\,a_1,\,a_2,\,\cdots,\,a_k\,$
elements.

Now assume that each subset is non-empty, that is, $\,a_i\geq 1\,$ for
$\,1\leq i\leq k.\,$ In this case we have
\begin{eqnarray}\label{aaaa}
\binom{n}{a_1,\,a_2,\,\cdots,\,a_k}\,\geq\,n\,(n-1)\,\cdots\,(n-k+2). & &
\end{eqnarray}
The right hand side of this inequality is the number of ordered
$\,(k-1)$-tuples of elements of the set $\,S$. Let 
$\,X\,=\,(\,x_1,\,\ldots,\,x_{k-1}\,)\,$ be a typical $\,(k-1)$-tuple.
We can associate to $\,X\,$ a decomposition of $\,S\,$ into
$\,k\,$ subsets $\,S_1,\,\ldots,\,S_k,\,$ with $\,a_1,\,\ldots,\,a_k\,$
elements respectively, as follows.

First assign $\,x_i\,$ to $\,S_i\,$ for $\,1\leq i\leq k-1.\,$
Now complete $\,S_1\,$ by using elements of $\,S\,$ in cyclic order
starting with $\,x_1.\,$ Then complete $\,S_2\,$ in the same way starting
with $\,x_2\,$, etc. At each stage omit previously numbered elements of 
$\,S.\,$ Finally, assign remaining elements of $\,S\,$ to $\,S_k.\,$
This constructs distinct $\,k$-tuples $\,S_1,\,\ldots,\,S_k,\,$
from distinct $\,(k-1)$-tuples $\,X,\,$ proving~(\ref{aaaa}).

As corollary of~(\ref{aaaa}) we have
\[
\binom{\ell+1}{i_1,\,(i_2-i_1),\,(i_3-i_2),\,(i_4-i_3),\,(\ell-i_4+1)}\,
\geq\,(\ell+1)\,\ell\,(\ell-1)\,(\ell-2) \,.
\]
Hence the inequality~(\ref{ineq1}) holds, that is:
\[
\frac{(\ell-3)!}{i_1!\,(i_2-i_1)!\,(i_3-i_2)!\,(i_4-i_3)!\,(\ell-i_4+1)!}\,
\geq\,1.
\]

\subsection{The inequality~(\ref{orb3c})}\label{apporb3c}

For $\,3\leq i_3\leq\ell-2\,$, by Theorem~\ref{usual}(d),
$\,\displaystyle\binom{\ell+1}{i_3}\geq \binom{\ell+1}{3}\,$. Thus 
for $\,3\leq i_3\leq\ell-2\,$,
\begin{eqnarray*}
\displaystyle
\frac{(\ell+1)!}{i_1!\,(i_2-i_1)!\,(i_3-i_2)!\,(\ell-i_3+1)!} & = &
\displaystyle\binom{i_2}{i_1}\,\binom{i_3}{i_2}\,\binom{\ell+1}{i_3}
\vspace{1ex} \\
& \geq  & \displaystyle 2\cdot 3\cdot \binom{\ell+1}{3}
\,=\, (\ell+1)\,\ell\,(\ell-1)\,. 
\end{eqnarray*}

For $\,i_3=\ell\,$ and $\,2\leq i_2\leq\ell-2\,$, we have
\begin{eqnarray*}
\displaystyle
\frac{(\ell+1)!}{i_1!\,(i_2-i_1)!\,(\ell-i_2)!\,1!} & = &
\displaystyle\binom{i_2}{i_1}\,(\ell+1)\,\binom{\ell}{i_2}\,
 \geq \, \displaystyle 2\,(\ell+1)\, \binom{\ell}{i_2}\vspace{1ex}\\
& \geq & \,\displaystyle 2\,(\ell+1)\,\binom{\ell}{2}
= \,(\ell+1)\,\ell\,(\ell-1)\,.
\end{eqnarray*}

For $\,i_2=\ell-1\,$, $\,i_3=\ell\,$ and $\,1\leq i_1\leq\ell-2\,$,
\[
\frac{(\ell+1)!}{i_1!\,(\ell-1-i_1)!\,1!} \, \geq 
(\ell+1)\,\ell\,(\ell-1)\,.
\]
For $\,i_3=\ell-1\,$ and $\,2\leq i_2\leq\ell-2\,$, we have
\begin{eqnarray*}
\displaystyle
\frac{(\ell+1)!}{i_1!\,(i_2-i_1)!\,(\ell-1-i_2)!\,2!} & = & \displaystyle
\binom{i_2}{i_1}\,\frac{(\ell+1)\,\ell}{2}\,\binom{\ell-1}{i_2}
\vspace{2ex} \\
& \geq & \displaystyle 2\,\frac{(\ell+1)\,\ell}{2}\,\binom{\ell-1}{2}
 \geq  (\ell+1)\,\ell\,(\ell-1).
\end{eqnarray*}

\subsection{The inequality~(\ref{orb2c})}\label{apporb2c}
For $\,2\leq i_2\leq\ell-1\,$, by Theorem~\ref{usual}(d),
$\,\displaystyle\binom{\ell+1}{i_2}\geq \binom{\ell+1}{2}\,$. Thus 
for $\,2\leq i_2\leq\ell-1\,$,
\[
\frac{(\ell+1)!}{i_1!\,(i_2-i_1)!\,(\ell-i_2+1)!} = \displaystyle
\binom{i_2}{i_1}\,\binom{\ell+1}{i_2}\,\geq\,2\,\frac{(\ell+1)\,\ell}{2}\,
 \geq  (\ell+1)\,\ell.
\]\vspace{2ex}

For $\,i_2=\ell\,$ and $\,1\leq i_1\leq\ell-1\,$,
$\;\displaystyle
\frac{(\ell+1)!}{i_1!\,(\ell-i_1)!} \,= \,
(\ell+1)\,\binom{\ell}{i_1}\,\geq\,(\ell+1)\,\ell\,$.

\section{The Truncated Polynomial Modules} \label{trunc}

Let $\,n\geq 2\,$ be an integer. Let $\,p\,$ be a prime and fix an
algebraically closed field of characteristic $\,p$.

Let $\,S\,=\,K[\,x_1,\ldots,\,x_{n}\,]\,$ be the ring of polynomials in 
$\,n\,$ commuting indeterminates $\,x_1,\ldots,\,x_{n}\,$. 
Write $\,x\,=\,(\,x_1,\ldots,\,x_{n}\,)\,$ for the ``row vector''
of indeterminates. The general linear group $\,G\,=\,\GL(n,K)\,$ acts
naturally on $\,S\,$ by linear substitutions on the left by
\[
(g\cdot f)(x)\,=\,f(xg),\qquad g\in\GL(n,K),\;f\in S\,,
\]
where the product $\,xg\,$ denotes the matrix multiplication as usual. The
$\,K$-algebra $\,S\,$ has a natural grading by homogeneous degree:
\[
S\,=\,\bigoplus_{d\geq 0}\,S_d\,,
\]
where $\,S_d\,$ is the span of the monomials in $\,S\,$ of total degree
$\,d\,$ in $\,x_1,\ldots,\,x_{n}\,$. For each $\,d\geq 0$, $\,S_d\,$ is
a $\,G$-submodule of $\,S,\,$ isomorphic to the $\,d$th symmetric power
of the natural module for $\,G.\,$ Its dimension is $\,\displaystyle
\binom{n+d-1}{d}\,$,
the number of unordered partitions of $\,d\,$ into not more than 
$\,n\,$ parts, that is, the number of nonnegative integral solutions to
the equation $\,a_1\,+\,\cdots\,+\,a_n\,=\,d\,$.

Let $\,I\,=\,\langle x_1^p,\,\ldots,\,x^p_n\rangle,\,$ the ideal of $\,S\,$ 
generated by the $\,p$th powers. $\,I\,$ is a $\,G$-submodule of $\,S.\,$
The algebra $\,\displaystyle\frac{S}{I}\,$ is called the {\bf
truncated symmetric algebra} in $\,n\,$ indeterminates.
For each nonnegative integer $\,d$, let $\,T_d\,$ denote the image of $\,S_d\,$
in $\,\displaystyle\frac{S}{I}\,$. $\,T_d\,$ is the so called 
{\bf truncated symmetric power} (or the $\,d^th$-graded component of
the truncated symmetric algebra).
Clearly, $\,T_d\,=\,0\,$ for $\,d> n(p-1)\,$. 
Thus:
\[
\frac{S}{I}\,=\,\bigoplus_{d=0}^{n(p-1)}\,T_d\,.
\]
It turns out that $\,T_d\,$ is simple as a $\,G$-module 
for each $\,d\,$. (This follows 
from the main result of \cite{doty}). 
All weight spaces of $\,T_d\,$ have multiplicity one (as they are monomials).

For each $\,d\in \mathbb N,\,$ we have 
$\,d\,=\,(p-1)\,s\,+\,k,\,$ where $\,0\leq s,\,0\leq k\,<\,p-1\,$.
Let $\,v\,=\,x_1^{p-1}\,x_2^{p-1}\cdots x_s^{p-1}\,x_{s+1}^k\,\in\,T_d\,$.
Then $\,v\,$ is a maximal vector of $\,T_d\,$ of 
weight $\,\lambda\,=\,k\,(\varepsilon_1\,+\,\cdots\,+\,
\varepsilon_{s+1})\,+\,(p-1-k)\,(\varepsilon_1\,+\,\cdots\,+\,
\varepsilon_s)\,=\,(p-1-k)\,\omega_s\,+\,k\,\omega_{s+1}\,$.
$\,v\,$ is unique up to a scalar multiple.

The other weights of $\,T_d\,$ are smaller than $\,\lambda\,$ with 
respect to our partial ordering.
All dominant weights of $\,T_d\,$ belong to $\,(\lambda\,-\,Q^+)\cap 
P_{++}\,$.

\end{document}